\definecolor{allrefcolors}{rgb}{.05,.45,.6}
\newtheorem{thm}{Theorem}[section]
\newtheorem{prop}[thm]{Proposition}
\newtheorem{lem}[thm]{Lemma}
\newtheorem{cor}[thm]{Corollary}
\theoremstyle{definition}
\newtheorem{defn}[thm]{Definition}
\newtheorem{exmp}[thm]{Example}
\newtheorem{notn}[thm]{Notation}
\newtheorem{asmpt}[thm]{Assumption}
\theoremstyle{remark}
\newtheorem{rem}[thm]{Remark}
\crefname{thm}{Theorem}{Theorems}
\crefname{prop}{Proposition}{Propositions}
\crefname{lem}{Lemma}{Lemmas}
\crefname{cor}{Corollary}{Corollaries}
\crefname{quest}{Question}{Questions}
\crefname{clm}{Claim}{Claims}
\crefname{defn}{Definition}{Definitions}
\crefname{con}{Construction}{Constructions}
\crefname{exmp}{Example}{Examples}
\crefname{notn}{Notation}{Notations}
\crefname{asmpt}{Assumption}{Assumptions}
\crefname{rem}{Remark}{Remarks}
\crefname{warn}{Warning}{Warnings}
\crefname{section}{Section}{Sections}
\crefname{subsection}{Subsection}{Subsections}
\crefname{sec}{Section}{Sections}
\crefname{subsec}{Subsection}{Subsections}
\crefname{eqn}{Equation}{Equations}
\crefname{part}{Part}{Parts}
\numberwithin{equation}{section}
\newcommand{\IC}{\mathbb{C}}
\newcommand{\ID}{\mathbb{D}}
\newcommand{\IL}{\mathbb{L}}
\newcommand{\IR}{\mathbb{R}}
\newcommand{\IS}{\mathbb{S}}
\newcommand{\IZ}{\mathbb{Z}}
\newcommand{\sA}{\mathcal{A}}
\newcommand{\sB}{\mathcal{B}}
\newcommand{\sC}{\mathcal{C}}
\newcommand{\sD}{\mathcal{D}}
\newcommand{\sE}{\mathcal{E}}
\newcommand{\sF}{\mathcal{F}}
\newcommand{\sG}{\mathcal{G}}
\newcommand{\sH}{\mathcal{H}}
\newcommand{\sI}{\mathcal{I}}
\newcommand{\sJ}{\mathcal{J}}
\newcommand{\sK}{\mathcal{K}}
\newcommand{\sKJ}{\mathcal{KJ}}
\newcommand{\sKV}{\mathcal{KV}}
\newcommand{\sM}{\mathcal{M}}
\newcommand{\sN}{\mathcal{N}}
\newcommand{\sO}{\mathcal{O}}
\newcommand{\sP}{\mathcal{P}}
\newcommand{\sR}{\mathcal{R}}
\newcommand{\sU}{\mathcal{U}}
\newcommand{\sV}{\mathcal{V}}
\newcommand{\sW}{\mathcal{W}}
\newcommand{\sX}{\mathcal{X}}
\newcommand{\sY}{\mathcal{Y}}
\newcommand{\fm}{\mathfrak{m}}
\newcommand{\fn}{\mathfrak{n}}
\newcommand{\fo}{\mathfrak{o}}
\renewcommand{\mod}[1]{#1{\operatorname{\mathbf{-mod}}}}
\newcommand{\homod}[1]{#1{\operatorname{\mathbf{-homod}}}}
\renewcommand{\line}[1]{#1{\operatorname{\mathbf{-line}}}}
\newcommand{\hooklongrightarrow}{\lhook\joinrel\longrightarrow}
\newcommand{\hooklongleftarrow}{\longleftarrow\joinrel\rhook}
\newcommand{\BO}{\mathrm{BO}}
\newcommand{\OO}{\mathrm{O}}
\newcommand{\osr}{\overline{\mathcal{R}}}
\newcommand{\Wu}[2]{\overline W^u_{#2}(#1)}
\newcommand{\tw}{\mathrm{tw}}
\newcommand{\OC}{\mathcal{OC}}
\newcommand{\ang}[1]{\langle #1 \rangle}
\newcommand{\un}{\underline}
\DeclareMathOperator{\Core}{Core}
\DeclareMathOperator*{\colim}{colim}
\DeclareMathOperator{\Ho}{Ho}
\DeclareMathOperator*{\hocolim}{hocolim}
\DeclareMathOperator{\maps}{Map}
\DeclareMathOperator{\Nbhd}{Nbhd}
\newcommand{\id}{\mathbbm{1}}
\DeclareMathOperator{\im}{im}
\DeclareMathOperator{\Ob}{Ob}
\DeclareMathOperator{\ev}{ev}
\DeclareMathOperator{\crit}{crit}
\DeclareMathOperator{\codim}{codim}
\DeclareMathOperator{\ind}{ind}
\DeclareMathOperator{\Spec}{\mathbf{Spec}}
\DeclareMathOperator{\BU}{BU}
\DeclareMathOperator{\BPin}{BPin}
\DeclareMathOperator{\BGL}{BGL}
\DeclareMathOperator{\EGL}{EGL}
\DeclareMathOperator{\lag}{U/O}
\DeclareMathOperator{\B}{B}
\DeclareMathOperator{\GL}{GL}
\DeclareMathOperator{\Gr}{Gr}
\DeclareMathOperator{\Hw}{Hw}
\DeclareMathOperator{\Ends}{Ends}
\DeclareMathOperator{\End}{End}
\DeclareMathOperator*{\holim}{holim}
\DeclareMathOperator{\hofib}{hofib}
\DeclareMathOperator{\hocofib}{hocofib}
\begin{document}

\title{Spectral equivalence of nearby Lagrangians}

\author{Johan Asplund}
\address{Department of Mathematics, Stony Brook University, 100 Nicolls Road, Stony Brook, NY 11794, USA}
\email{johan.asplund@stonybrook.edu}
\author{Yash Deshmukh}
\address{School of Mathematics, Institute for Advanced Study, 1 Einstein Drive, Princeton, NJ 08540, USA}
\email{deshmukh@ias.edu}
\author{Alex Pieloch}
\address{Department of Mathematics, Massachusetts Institute of Technology, 77 Massachusetts Avenue, Cambridge, MA 02139, USA}
\email{pieloch@mit.edu}

\begin{abstract}
	Let $R$ be a commutative ring spectrum. We construct the wrapped Donaldson--Fukaya category with coefficients in $R$ of any stably polarized Liouville sector. We show that any two $R$-orientable and isomorphic objects admit $R$-orientations so that their $R$-fundamental classes coincide.  Our main result is that any closed exact Lagrangian $R$-brane in the cotangent bundle of a closed manifold is isomorphic to an $R$-brane structure on the zero section in the wrapped Donaldson--Fukaya category, generalizing a well-known result over the integers.  To achieve this, we prove that the Floer homotopy type of the cotangent fiber is given by the stable homotopy type of the based loop space of the zero section.
\end{abstract}
\maketitle
\tableofcontents

\section{Introduction}\label{sec:intro}
\subsection{Statement of results}

Let $R$ be a commutative ring spectrum\footnote{We use ring spectra and commutative ring spectra to mean $A_\infty$ and $E_\infty$ algebras in spectra. See \cref{sec:background_spectra} for our conventions.} and let $X$ be a stably polarized Liouville sector.\footnote{That is, after stabilizing (if necessary) the tangent bundle $TX$ is equipped with a global Lagrangian distribution.}  Given a Lagrangian $L$ in $X$, the polarization of $X$ gives rise to a Lagrangian Gauss map $\sG_L \colon L \to \lag$.

\begin{defn}[{\cref{defn:Brane}}]\label{defn:intro_brane}
A \emph{Lagrangian $R$-brane} in $X$ is a graded exact conical Lagrangian $L \subset X$ equipped with a choice of lift of its Lagrangian Gauss map $\sG_L \colon L \to \lag$ to a map $\sG_L^\# \colon L \to (\lag)^\#$, where $(\lag)^\#$ denotes the the homotopy fiber of the following composition:
\[ \lag \overset{\beta^{-1}}{\longrightarrow} {\B}^2{\OO} \overset{{\B}^2{J}}{\longrightarrow} {\B}^2{\GL_1}(\IS) \longrightarrow {\B}^2{\GL_1}(R), \]
where $\GL_1(R)$ and $\GL_1(\IS)$ denote the spaces of units associated to the ring spectra $R$ and $\IS$ (see \cref{defn:space_of_units}), $\beta^{-1}$ is given by (the inverse of) the Bott periodicity map followed by the projection, and $J$ is the $J$-homomorphism.
\end{defn}

The first goal of this paper is to construct the Donaldson--Fukaya category with Lagrangian $R$-branes as objects, with coefficients in a commutative ring spectrum. To each pair of Lagrangian $R$-branes $L_0$ and $L_1$ we associate an $R$-oriented flow category $\sC\sW(L_0,L_1;R)$.  The Cohen--Jones--Segal geometric realization of this flow category is an of $R$-module spectrum, which we denote by $HW(L_0,L_1;R)$.  As with Floer theory over discrete rings, there is a homotopy associative product
\[ \mu^2 \colon HW(L_0,L_1;R) \wedge_R HW(L_1,L_2;R) \longrightarrow HW(L_0,L_2;R). \]

\begin{thm}[\cref{dfn:wfuk_K}]\label{thm:intro_main}
Suppose that $X$ is a stably polarized Liouville sector. There is a category $\sW(X;R)$ that is enriched over the homotopy category of $R$-modules with objects given by Lagrangian $R$-branes and morphisms given by the homotopy classes of $R$-modules $HW(-,-;R)$, and the composition is induced from the product $\mu^2$. 
\end{thm}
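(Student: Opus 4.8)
The plan is to realize the morphism objects and the product $\mu^2$ as Cohen--Jones--Segal geometric realizations of $R$-oriented flow categories of pseudoholomorphic polygons, and then to check that these assemble into a category enriched over the ordinary monoidal category $(\Ho(\mod{R}),\wedge_R,R)$. Because the enriching category is ordinary, the only structure that needs verification beyond the definitions is: (i) that $HW(L_0,L_1;R)$ is well defined in $\Ho(\mod R)$, independently of auxiliary choices; (ii) that $\mu^2$ is homotopy associative; (iii) that each object $L$ carries a unit $e_L\colon R\to HW(L,L;R)$; and (iv) that $e_L$ is a two-sided unit up to homotopy. No higher coherences (pentagon and beyond) are needed, which is what keeps the argument manageable.

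For (i) we would fix, for each ordered pair $(L_0,L_1)$, Floer data and a generic compatible almost complex structure, forming the flow category $\sC\sW(L_0,L_1;R)$ whose objects are Hamiltonian chords and whose morphism spaces are the compactified moduli of Floer strips. The stable index bundles of these moduli carry a classifying map to $\lag$ assembled from the stable polarization of $X$ and the Lagrangian Gauss maps $\sG_{L_0}$, $\sG_{L_1}$; post-composing with the map of \cref{defn:intro_brane} and inserting the brane lifts $\sG_{L_0}^\#$, $\sG_{L_1}^\#$ trivializes the resulting obstruction, coherently with the gluing maps of the flow category. This is precisely an $R$-orientation, and the Cohen--Jones--Segal construction then yields the $R$-module spectrum $HW(L_0,L_1;R)$. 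A continuation argument run at the level of $R$-oriented flow categories, interpolating between two choices of data, produces a canonical isomorphism in $\Ho(\mod R)$ between the two resulting spectra, and the standard triangle of continuation maps shows these isomorphisms are compatible, so $HW(L_0,L_1;R)$ is well defined.

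For (ii)--(iv) we would organize the moduli of disks with $n+1$ boundary punctures, asymptotic to chords for a tuple $L_{i_0},\dots,L_{i_n}$ and fibered over the associahedra $K_{n+1}$, into an $A_\infty$-structure at the level of flow categories; each of these moduli is $R$-orientable for the same reason as in (i), now with trivializations compatible with the face stratification of $K_{n+1}$. Applying the (lax monoidal) Cohen--Jones--Segal realization turns this geometric structure into an $R$-linear one: from $K_3$ one obtains $\mu^2$, and from $K_4\cong[0,1]$ an $R$-module homotopy between $\mu^2\circ(\mu^2\wedge_R\id)$ and $\mu^2\circ(\id\wedge_R\mu^2)$, which is (ii). For the unit, a moduli space of once-punctured disks carrying a Hamiltonian-perturbed equation, $R$-oriented by the brane structure of $L$, realizes to $e_L\colon R\to HW(L,L;R)$; a one-parameter family of moduli of strips with an extra boundary marked point constrained to the chord representing $e_L$ realizes to homotopies $\mu^2\circ(e_{L_0}\wedge_R\id)\simeq\id$ and $\mu^2\circ(\id\wedge_R e_{L_1})\simeq\id$, giving (iii) and (iv). Combining these with the compatibility of the invariance isomorphisms of (i) with $\mu^2$ (one more continuation argument), the data $L\mapsto L$, $(L_0,L_1)\mapsto HW(L_0,L_1;R)$, $\mu^2$, $e_L$ assembles into the enriched category $\sW(X;R)$.

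The main obstacle will be the coherence of the $R$-orientations on the moduli of holomorphic polygons: one must show that their stable index data --- built from the stable polarization of $X$ and the Gauss maps of the $L_i$ --- admits lifts along the composite $\lag\to{\B}^2{\GL_1}(R)$ of \cref{defn:intro_brane} that are simultaneously compatible with all gluing maps and all face maps of the associahedra, so that the Cohen--Jones--Segal realization carries the geometric $A_\infty$-structure to an $R$-linear one. This is exactly the role played by the lifts $\sG_{L_i}^\#$, and getting this coherence right --- together with the transversality and compactness input for the polygon moduli, which is the same as in the discrete-coefficient case since $R$ enters only through orientations --- is where the real work is.
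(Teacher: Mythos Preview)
Your proposal is correct and follows essentially the same route as the paper: it constructs $HW(L_0,L_1;R)$ as the CJS realization of an $R$-oriented flow category of Floer strips (with the $R$-orientation supplied by the brane lifts $\sG_{L_i}^\#$), proves invariance via continuation bimodules, defines $\mu^2$ from the $3$-punctured disk moduli, and checks homotopy associativity and unitality via the $4$-punctured and $1$-punctured disk moduli organized as flow bordisms. The only point you gloss over that the paper handles explicitly is that when neither Lagrangian is compact the grading on $\sC\sW(L_0,L_1)$ need not be bounded below, so $HW$ is defined as a homotopy colimit over action-filtered subcategories before taking the CJS realization.
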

\begin{rem}
    As stated, $\sW(X;R)$ is a category that is enriched over the homotopy category of $R$-modules.  One can apply the stable homotopy groups functor $\pi_\bullet(-)$ to each of the morphism spectra to obtain a category that is enriched over the category of $\pi_\bullet(R)$-modules. In particular, letting $R$ be the Eilenberg--MacLane spectrum of the integers recovers the classical wrapped Floer cohomology with integer coefficients:
    \[ \pi_\bullet(HW(L_0,L_1;H\IZ)) \cong HW^{-\bullet}(L_0,L_1;\IZ).\]
\end{rem}
\begin{rem}\label{rem:comparison_ps_large}
    We compare our category $\sW(X;R)$ with other versions that has recently appeared in the literature.
    
    In the case $R=\IS$, the set of objects of $\sW(X;\IS)$ is larger than in the category considered by Porcelli--Smith \cite{porcelli2024bordism} and Large \cite{large2021spectral}; they only consider stably framed Liouville manifolds, and each object come equipped with a null-homotopy of the Lagrangian Gauss map that is compatible with the background stable framing.
    
    On the other hand, in \cite{porcelli2024spectral} Porcelli--Smith construct a Donaldson--Fukaya category $\sF(X;(\varTheta,\varPhi))$ associated to a graded tangential pair $\varTheta \to \varPhi$ lying over $\BO \xrightarrow{c} \BU$. The category defined in \cite{porcelli2024bordism} corresponds to $\varTheta = \varPhi = \{\mathrm{pt}\}$. In their language, our category $\sW(X;R)$ can conjecturally be obtained from theirs in the case $(\varTheta,\varPhi) = ((\lag)^{\#},\BO)$ via a certain base change (and taking duals), see \cref{rem:relation_ps} for details.
\end{rem}

The second goal of this paper is to study this category in the context of cotangent bundles and nearby Lagrangians.  To this end, let $Q$ be a closed manifold and recall that $T^*Q$ is a Liouville manifold; we equip it with the tautological polarization.\footnote{The tautological polarization is the one corresponding to the global Lagrangian distribution given by the tangent spaces of the cotangent fibers.}  A \emph{nearby Lagrangian} in $T^*Q$ is a closed exact Lagrangian submanifold $L \subset T^*Q$.  It is well-known that every relatively pin nearby Lagrangian in $T^*Q$ is Floer theoretically equivalent in the usual wrapped Fukaya category with integer coefficients to the zero section equipped with a particular choice of $\IZ$-brane structure \cite{fukaya2008symplectic,abouzaid2012nearby}.  Our main theorem is the following generalization.

\begin{thm}[{\cref{thm:modules_main}}]\label{thm:EquivToZero}
Let $R$ be a commutative ring spectrum. Let $L \subset T^* Q$ be a nearby Lagrangian that is equipped with an $R$-brane structure.  Then $L$ is isomorphic in $\sW(T^*Q;R)$ to the zero section $Q \subset T^*Q$ equipped with a choice of $R$-brane structure.
\end{thm}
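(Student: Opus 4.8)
The plan is to recognise $L$ through its pairing with the cotangent fibre and to match that data with the one coming from the zero section. Fix a generic point $q\in Q$, let $F=T^*_qQ$ be the corresponding cotangent fibre equipped with its standard stably framed (hence $R$-) brane structure, and arrange $F\pitchfork L$; since $\sW(T^*Q;R)$ is independent of $q$ up to equivalence, nothing is lost. Note that an $R$-brane structure on $L$ in particular makes $L$ graded, so its Maslov class vanishes — the hypothesis we would have had to import from Abouzaid--Kragh in the classical argument is built in. The first ingredient is that $F$ split-generates $\sW(T^*Q;R)$ (or its pretriangulated closure): this is the $R$-linear analogue of the Abouzaid--Seidel generation theorem and should follow from an $R$-linear version of Abouzaid's generation criterion applied to the open--closed map out of $F$, using the identification of symplectic cohomology $SH(T^*Q;R)\simeq R\wedge\Sigma^\infty_+\sL Q$ and the fact that $F$ hits the unit. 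Consequently the Yoneda-type functor $\sY\colon \sW(T^*Q;R)\to \mathrm{Mod}_{A}$, $N\mapsto HW(F,N;R)$, with $A:=HW(F,F;R)$, is fully faithful, and the theorem reduces to exhibiting an equivalence of $A$-modules $HW(F,L;R)\simeq HW(F,Q';R)$ for a suitable $R$-brane $Q'$ on the zero section.

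The second, and main, ingredient is the identification of $A$. One proves that the Floer homotopy type of the cotangent fibre is the stable homotopy type of the based loop space, $HW(F,F;\IS)\simeq \Sigma^\infty_+\Omega_qQ$ as $A_\infty$-ring spectra (up to a degree shift by $n=\dim Q$ depending on conventions), whence $A\simeq R\wedge\Sigma^\infty_+\Omega_qQ$ by base change along $\IS\to R$. The mechanism is to upgrade the Abbondandolo--Schwarz/Abouzaid chain-level quasi-isomorphism $CW^*(F,F)\simeq C_{-*}(\Omega_qQ)$ to an equivalence of Floer homotopy types: compare the wrapped flow category of $(F,F)$ with a Morse-theoretic flow category for the energy functional on the based path space, which realises $\Sigma^\infty_+\Omega_qQ$, by constructing a map of stably framed flow categories out of the moduli of half-strips with boundary on $F$ whose underlying chain map is the classical comparison; since both spectra are connective after the shift, a $H\IZ$-homology equivalence is an equivalence, and the multiplicative (ring) structures are matched via the corresponding comparison of product moduli spaces. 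The $R$-brane structure on $L$ (and the standard ones on $F$ and $Q$) provides the coherent orientation data making all of these flow categories $R$-oriented throughout.

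With $A$ in hand, the third step computes the two modules and concludes. Since $F$ and the zero section meet transversally in a single point and the geodesic-flow computation shows there are no wrapping generators and no nonconstant strips, $HW(F,Q;R)\simeq R$ (same shift), and as an $A$-module this is the augmentation module $R_\varepsilon$, pulled back along $A\to R$ induced by $\Omega_qQ\to \mathrm{pt}$; twisting the brane of $Q$ by a rank-one $R$-local system $\sL$ on $Q$ twists $R_\varepsilon$ to the module $R_\sL$ corresponding to $\sL$ under the Koszul-duality equivalence between $\mathrm{Mod}_{A}$ and $R$-local systems on $Q$. On the other side, $M_L:=HW(F,L;R)$ is a perfect $A$-module (as $L$ lies in the thick subcategory generated by $F$) and is proper (since $HF^\bullet(L,L;R)\cong H^{-\bullet}(L;R)$ is finite), hence corresponds to an $R$-local system $\sF_L$ on $Q$ with perfect stalks; following the strategy of Fukaya--Seidel--Smith and Abouzaid, now carried out $R$-linearly, one shows $\sF_L$ is invertible of rank one — using that the projection $L\to Q$ has degree $\pm1$ to pin down its class in $K_0$, together with a connectivity/degree argument to rule out contributions in other degrees. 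Taking $\sL=\sF_L$ then gives $M_L\simeq R_\sL=HW(F,(Q,\sL);R)$, and full faithfulness of $\sY$ yields $L\simeq(Q,\sL)$ in $\sW(T^*Q;R)$, which is an $R$-brane structure on the zero section as required.

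The main obstacle is the second step: promoting $CW^*(F,F)\simeq C_{-*}(\Omega Q)$ from a chain-level quasi-isomorphism to an equivalence of Floer homotopy types. One must equip the comparison moduli spaces with compatible smooth corner structures and with stable (normal) framings relative to the polarization, control the non-compactness caused by wrapping, verify compatibility of the two ring structures up to coherent homotopy, and check that the identification respects the $R$-orientation data; only then do base change to $R$ and the Koszul-duality dictionary apply cleanly. Secondary obstacles are establishing the $R$-linear generation criterion and the Koszul-duality equivalence in the generality needed here (perfect and proper modules over $R\wedge\Sigma^\infty_+\Omega_qQ$ versus $R$-local systems on the closed manifold $Q$), and upgrading the Euler-characteristic computation $[L]\cdot[F_q]=\pm1$ to the statement that $\sF_L$ is genuinely a rank-one invertible local system.
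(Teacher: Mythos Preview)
Your broad architecture matches the paper's: pass through the Yoneda functor associated to the cotangent fibre $F$, identify $A=HW(F,F;R)\simeq \varSigma^\infty_+\varOmega Q\wedge R$, show $HW(F,L;R)$ is a rank-one $A$-module, and match it with $HW(F,Q^\#;R)$ for some brane $Q^\#$ on the zero section. But the paper diverges from you sharply on how to establish the two key lemmas---full faithfulness of the Yoneda functor and the rank-one property of $HW(F,L;R)$---and this divergence is the whole point of their approach.

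You propose to prove full faithfulness via an $R$-linear Abouzaid generation criterion (open--closed maps, spectral symplectic cohomology), and to prove rank one via a Koszul-duality equivalence between $A$-modules and $R$-local systems on $Q$ together with a degree/connectivity argument. You correctly flag these as the main obstacles: none of this machinery is available in the spectral setting the paper works in (their $\sW(T^*Q;R)$ is only enriched over the \emph{homotopy} category of $R$-modules, so even formulating split-generation is delicate). The paper sidesteps all of it. After reducing to connective $R$, they observe that for any connective $R$-module map $\varphi$, if $\varphi\wedge_R Hk$ is an equivalence (where $k=\pi_0 R$), then $\varphi$ is an equivalence---a Whitehead theorem for $R$-modules. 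Since $\pi_\bullet(HW(-,-;R)\wedge_R Hk)$ recovers ordinary wrapped Floer cohomology over $k$, both full faithfulness and the rank-one property follow immediately from the already-known results over discrete rings (Abouzaid, Fukaya--Seidel--Smith). No generation criterion, no Koszul duality, no degree argument is needed.

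You do in fact invoke this Whitehead trick once, to upgrade the Abbondandolo--Schwarz/Abouzaid quasi-isomorphism to a spectrum-level equivalence $HW(F,F;R)\simeq \varSigma^\infty_+\varOmega Q\wedge R$. The paper's insight is that the same trick works for \emph{every} step, so the whole proof becomes a systematic bootstrap from the discrete case rather than a reconstruction of the classical machinery over $R$. There is one additional subtlety you pass over: because the category is only homotopy-enriched, the Yoneda image $HW(F,L;R)$ is a priori only a \emph{homotopy} $A$-module, and the paper devotes a full section to constructing a lift $\sY_{\varOmega Q}$ valued in genuine $(\varSigma^\infty_+\varOmega Q\wedge R)$-modules, via flow categories with local systems.
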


\begin{rem}
   For a nearby Lagrangian $L$, the composition ${\B^2}{J} \circ \beta^{-1} \circ \sG_L$ in \cref{defn:intro_brane} is null-homotopic (see \cite[Corollary 1.3]{jin2020microlocal} or \cite[Theorem B]{abouzaid2020twisted}).  Hence every nearby Lagrangian admits an $\IS$-brane structure and consequently an $R$-brane structure. Therefore, in the context of cotangent bundles and nearby Lagrangians, one can always work with coefficients in $\IS$. Defining the Donaldson--Fukaya category for a general commutative ring spectrum is nevertheless useful as will be shown in the sequel to this paper \cite{MSpin}, in which we work with coefficients in the Thom spectrum $\mathrm{MO}\langle k\rangle$ of the $(k-1)$-connected cover of the orthogonal group.
\end{rem}
\begin{rem}
    Our proof of \cref{thm:EquivToZero} relies on relating the Floer homotopy theory of a cotangent fiber with the stable homotopy theory of the based loop space $\varOmega Q$ and using Whitehead-type theorems to bootstrap off of the analogous result known over discrete rings, see \cref{sec:sketch_proof} for more details.
\end{rem}
\begin{rem}
    During the preparation of the current version of this paper, Porcelli--Smith \cite[Theorem 1.2]{porcelli2025bordism} proved a generalization of \cref{thm:EquivToZero} using obstruction-theoretic methods. Our proof avoids the use of obstruction theory using geometric arguments, but is restricted to cotangent bundles.
\end{rem}

\subsection{Properties of the category}
    \begin{thm}[{\cref{thm:inclusion_pushforward}}]
        An inclusion of stably polarized Liouville sectors $X\hookrightarrow X'$ induces a pushforward functor $\sW(X;R) \to \sW(X';R)$.
    \end{thm}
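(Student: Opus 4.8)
The plan is to follow the construction of the inclusion functor on wrapped Fukaya categories due to Ganatra--Pardon--Shende, lifting it to the level of $R$-oriented flow categories. Write $\iota \colon X \hookrightarrow X'$ for the inclusion. On objects, a Lagrangian $R$-brane in $X$ determines one in $X'$: a graded exact conical Lagrangian $L \subset X$ is, after rounding corners near $\partial X$ if necessary, a graded exact conical Lagrangian in $X'$; and since the stable polarization of $X$ is by hypothesis the restriction of that of $X'$, the Lagrangian Gauss map $\sG_{\iota L} \colon L \to \lag$ agrees (up to the relevant homotopy) with $\sG_L$, so the chosen lift $\sG_L^\# \colon L \to (\lag)^\#$ transports to a lift $\sG_{\iota L}^\#$. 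This defines $\iota$ on objects.

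For morphisms, recall that $HW(L_0,L_1;R)$ is the Cohen--Jones--Segal realization of the flow category $\sC\sW(L_0,L_1;R)$, which is built from the Floer data of a cofinal family of wrappings $L_0 \rightsquigarrow L_0^w$, i.e.\ from flow categories associated to the pairs $(L_0^w,L_1)$. Choosing Floer data on $X'$ that restricts to the given data near $\partial X$ and extends it, a wrapping of $L_0$ supported inside $X$ yields a wrapping in $X'$ supported inside $\iota(X)$; by the \emph{no-escape lemma} for Liouville sectors, the rigid and one-dimensional moduli spaces of Floer trajectories from $L_0^w$ to $L_1$ computed in $X'$ in fact lie in $\iota(X)$, and hence are canonically diffeomorphic to those computed in $X$. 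Because the stable polarization, and therefore all the framing and $J$-homomorphism data underlying the $R$-orientation of these moduli spaces, on $X$ is pulled back from $X'$, this identification is compatible with the $R$-orientations. One thus obtains a map of $R$-oriented flow categories $\sC\sW_X(L_0,L_1;R) \to \sC\sW_{X'}(\iota L_0,\iota L_1;R)$ (possibly after passing to a common cofinal refinement of the two systems of wrappings), and after applying Cohen--Jones--Segal realization an $R$-module map $HW_X(L_0,L_1;R) \to HW_{X'}(\iota L_0,\iota L_1;R)$.

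It then remains to check that these morphism-level maps assemble into a functor enriched over the homotopy category of $R$-modules: that they send identity morphisms to identity morphisms and intertwine the composition $\mu^2$. Both reduce to the same principle: the moduli spaces of holomorphic disks with two, respectively three, boundary punctures defining the units and the product for tuples of Lagrangians in $X$ are, again by no-escape, identified with their counterparts in $X'$ compatibly with the $R$-orientations, so the corresponding diagrams of $R$-modules commute up to homotopy. Strict compatibility of the underlying moduli identifications under composition then shows that composable inclusions induce composable functors and that $\mathrm{id}_X$ induces the identity functor.

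The main difficulty will be matching the two homotopy colimits that compute $HW_X$ and $HW_{X'}$: the family of wrappings cofinal in $X$ need not be carried to one cofinal in $X'$, since there is more room to wrap in $X'$. I would handle this by choosing the two cofinal systems compatibly — for instance from a single Liouville-type vector field or Hamiltonian on $X'$ that restricts to an admissible one on $X$ — or by passing to a common cofinal refinement and checking that the induced map on colimits is independent of the choice. Closely related, and where most of the genuine technical work lies, is pinning down the sectorial no-escape lemma in the form needed (so that curves neither escape through $\partial X$ nor through the boundary at infinity) and verifying that the moduli identifications respect every piece of coherence data — perturbation choices, gluing profiles, and the stable framings — that enters the $R$-orientation of the flow category; that the polarization on $X$ is pulled back makes this bookkeeping essentially formal, but it still has to be carried out.
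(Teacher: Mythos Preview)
Your approach is correct in spirit and matches the paper's: both identify the no-escape argument as the key geometric input, and the compatibility of stable polarizations as what makes the $R$-brane and $R$-orientation data transport. The paper's own proof is in fact much terser than yours---it simply cites the discussion in Ganatra--Pardon--Shende and notes that condition (iii) on compatible almost complex structures (holomorphicity of the projection to $\IC$ near the stop) confines $J$-holomorphic curves with boundary on Lagrangians in $X$ to $X$.

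One point of mismatch worth flagging: you describe $HW(L_0,L_1;R)$ as built from a cofinal family of wrappings $L_0 \rightsquigarrow L_0^w$, which is the GPS localization model. This paper instead uses a single quadratic-at-infinity Hamiltonian $H$ and defines $HW$ as a homotopy colimit over the \emph{action filtration} of the resulting flow category $\sC\sW(L_0,L_1;H,J_t)$. Consequently, the ``main difficulty'' you anticipate---reconciling two cofinal systems of wrappings in $X$ versus $X'$---does not actually arise here. In the paper's setup one extends the admissible pair $(H,J_t)$ from $X$ to $X'$ and invokes no-escape to identify the moduli spaces (and hence the $R$-oriented flow categories, together with their action filtrations) outright; there is no colimit over wrappings to match.
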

    \begin{thm}[{\cref{thm:subcrit_trivial}}]
        If $X$ is a stably polarized subcritical Weinstein sector, then for any $L_0,L_1 \in \Ob(\sW(X;R))$ we have that $HW(L_0,L_1;R)$ is the zero $R$-module.
    \end{thm}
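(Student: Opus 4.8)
The plan is first to reduce to the statement that the identity morphism vanishes. The composition $\mu^2$ makes $HW(L_1,L_1;R)$ a homotopy-associative unital $R$-algebra and $HW(L_0,L_1;R)$ a unital right module over it; the module action of $\mathrm{id}_{L_1}\in\pi_0 HW(L_1,L_1;R)$ on $HW(L_0,L_1;R)$ is therefore homotopic to the identity map. If $\mathrm{id}_{L_1}$ is null for every object $L_1$, this action is also null, and hence $HW(L_0,L_1;R)\simeq 0$. Thus it suffices to fix a Lagrangian $R$-brane $L$ and show that $\mathrm{id}_L=0$ in $\pi_0 HW(L,L;R)$.

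Next I would put $X$ into a convenient split form. Using invariance of $\sW(-;R)$ under Weinstein homotopies (which follows from the pushforward functoriality of inclusions applied along such a homotopy) together with the sectorial analogue of Cieliebak's splitting theorem for subcritical domains, we may assume $X=X_0\times\IC$, where $X_0$ is a stably polarized Weinstein sector and $\IC$ is the standard radial Weinstein plane. One then runs, at the level of the $R$-oriented flow categories underlying $HW(L,L;R)$, the classical mechanism by which the unit of a subcritical domain dies: wrapping in the $\IC$-factor produces a short Hamiltonian chord $x$ of $L$ with the property that the image of $\mathrm{id}_L$ under the continuation map of the wrapping telescope equals $\partial x$, so $\mathrm{id}_L$ becomes a boundary and hence vanishes in $HW(L,L;R)=\hocolim_k HW(L,L;H_k;R)$. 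Concretely, the compactified moduli space of Floer trajectories realizing $\partial x=\mathrm{id}_L$ should be a single point, cut out transversally and carrying a canonical stable framing; its Cohen--Jones--Segal contribution is then exactly the identification of $\mathrm{id}_L$, after one stage of wrapping, with $\partial x$, hence with $0$, at the level of $R$-modules.

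I expect the main obstacle to be precisely this last step: identifying the correct short chord $x$ and proving that its bounding moduli space is a transversally cut-out point with compatible orientation and framing data, for a general conical exact Lagrangian $L$ — in particular one whose asymptotics wrap nontrivially around the $\IC$-factor. This is where the subcriticality hypothesis enters (it is what guarantees the existence of the short, unit-bounding chord coming from the $\IC$-direction), and it is the geometric heart of the classical theorem; once it is in place, the passage to an arbitrary commutative ring spectrum $R$ is formal, since the only $R$-dependent ingredient is that a framed point realizes a null-homotopy of a map of $R$-modules. An alternative route, if one has constructed a symplectic-cohomology $R$-module spectrum $SH(X;R)$ together with its closed--open ring map $SH(X;R)\to HW(L,L;R)$, is to prove $SH(X;R)\simeq 0$ for subcritical $X$ by the same $\IC$-factor argument and to deduce $\mathrm{id}_L=0$ from unitality of the closed--open map; this trades the geometry above for the construction of $SH(X;R)$.
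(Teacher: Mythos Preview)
Your reduction to showing the unit $\mathrm{id}_L$ vanishes is sound, but the route you then take is far more elaborate than the paper's, and the gap you yourself flag is real and unnecessary.

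The paper's argument is purely geometric and avoids any moduli-space or orientation analysis. Since $X$ is subcritical, for dimension reasons one may assume $L_0$ and $L_1$ are disjoint from the core, and then Cieliebak's displacement result provides a Hamiltonian isotopy pushing $L_1$ off every compact subset of $X$. Tracking this with a family $(H^r,J_t^r)_{r\geq 0}$ of admissible Floer data, one finds that every Hamiltonian chord from $L_0$ to $L_1$ has action bounded below by $Ce^{2r}$ for $r$ large. Hence for any fixed action cutoff $A$ there is an $r$ making the action-filtered flow category $\sC\sW^{\leq A}(L_0,L_1;H^r,J_t^r)$ \emph{empty}, whose CJS realization is zero. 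Invariance of $HW^{\leq A}$ under change of Floer data (already established via continuation bimodules) then gives $HW^{\leq A}(L_0,L_1)\simeq 0$ for every $A$, and passing to the homotopy colimit finishes.

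The key point you are missing is that displacement kills $HW$ at the level of the \emph{object set} of the flow category, so no bounding chord, no transversality, and no framing compatibility is needed; the argument is entirely insensitive to $R$. Your approach instead requires a sectorial Cieliebak splitting, invariance under Weinstein homotopy (which does not follow from pushforward along inclusions alone), and the delicate identification of a specific chord and its one-point moduli space---none of which you supply. The $SH(X;R)$ alternative would require constructing the closed--open map over $R$, which the paper does not do either.
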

    \begin{thm}
        Let $L$ and $K$ be two closed Lagrangian $R$-branes which are isomorphic in $\sW(X;R)$. 
        \begin{enumerate}
            \item (\cref{lem:PSS}) There is an equivalence of $R$-modules $L^{-TL} \wedge R \simeq K^{-TK} \wedge R$.
            \item (\cref{thm:R-ori_lags_represent_same_class}) If $L$ is $R$-oriented, then $K$ admits an $R$-orientation so that the $R$-fundamental classes of $L$ and $K$, determined by these $R$-orientations, coincide in $H_n(X;R)$.
        \end{enumerate}
    \end{thm}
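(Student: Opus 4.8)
The plan is to derive both statements from two geometric inputs---a PSS-type identification of the self-wrapped Floer homotopy type of a closed Lagrangian $R$-brane, and the homotopy-coherent naturality of the closed--open map---and then to conclude formally.

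For part~(1), I would first unwind the hypothesis: an isomorphism $L\cong K$ in $\sW(X;R)$ is a pair of classes $\phi\in\pi_0 HW(L,K;R)$ and $\psi\in\pi_0 HW(K,L;R)$ that are inverse to one another under $\mu^2$. Working with a spectral (or $A_\infty$) enhancement of the flow-category--level structure underlying $\sW(X;R)$, the operation ``multiply on both sides by $\psi$ and $\phi$'' then defines an equivalence of $R$-modules $HW(L,L;R)\simeq HW(K,K;R)$, with the analogous operation as homotopy inverse; it carries the unit $\id_L$ to $\id_K$ and is multiplicative up to homotopy. Next I would invoke PSS: for any closed Lagrangian $R$-brane $L$ the $R$-oriented flow category $\sC\sW(L,L;R)$ is equivalent, after a small Hamiltonian perturbation, to the brane-twisted Morse flow category of $L$ (here one uses that for a \emph{closed} Lagrangian the Gauss map contributes a canonically nullhomotopic twist to the self-Floer complex), and the Cohen--Jones--Segal realization of the latter is $L^{-TL}\wedge R$; Atiyah duality identifies this with the function spectrum $F(L_+,R)$, which recovers $\pi_{-\bullet}HW(L,L;H\IZ)\cong H^\bullet(L;\IZ)$. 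This equivalence carries $\id_L$ to the canonical class $1_L\in\pi_0(L^{-TL}\wedge R)=R^0(L)$. Splicing, one obtains the desired equivalence $L^{-TL}\wedge R\simeq HW(L,L;R)\simeq HW(K,K;R)\simeq K^{-TK}\wedge R$, carrying $1_L$ to $1_K$.

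For part~(2), recall that an $R$-orientation of the closed $n$-manifold $L$ is the same datum as an $R$-fundamental class $[L]\in H_n(L;R)$, and that the associated $R$-fundamental class in the ambient is $[L]_R=(\iota_L)_*[L]$, where $\iota_L\colon L\hookrightarrow X$. Dualizing the equivalence of part~(1) gives an equivalence of $R$-modules $\Psi\colon\Sigma^\infty_+L\wedge R\xrightarrow{\ \sim\ }\Sigma^\infty_+K\wedge R$ compatible with the diagonal coalgebra structures and the augmentations; since such an equivalence intertwines cap products, it carries $R$-fundamental classes to $R$-fundamental classes, so $[K]:=\Psi_*[L]$ is a legitimate $R$-orientation of $K$. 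It then remains to establish the compatibility
\[
(\iota_K)_*\circ\Psi\ \simeq\ (\iota_L)_*\colon\ \Sigma^\infty_+L\wedge R\longrightarrow\Sigma^\infty_+X\wedge R,
\]
for then $[K]_R=(\iota_K)_*\Psi_*[L]=(\iota_L)_*[L]=[L]_R$. Dualizing once more, this compatibility says that the conjugation equivalence of part~(1) intertwines the restriction maps $F(X_+,R)\to F(L_+,R)$ and $F(X_+,R)\to F(K_+,R)$; under the PSS identification these restrictions factor as $F(X_+,R)\to SH(X;R)$ followed by the closed--open maps $SH(X;R)\to HW(L,L;R)$ and $SH(X;R)\to HW(K,K;R)$, and the closed--open map is natural with respect to morphisms in $\sW(X;R)$---its image multiplies through any morphism---so conjugating by $\phi$ and $\psi$ and using $\psi\phi=\id_K$ yields exactly the required identity.

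The step I expect to be the main obstacle is precisely this homotopy-coherence bookkeeping: realizing the closed--open map (equivalently the relevant PSS maps) at the level of flow categories or of $R$-module spectra so that its compatibility with $\mu^2$---and hence its invariance under the conjugation of part~(1)---is visible, which requires a spectral (or $A_\infty$) refinement of the homotopy-category--enriched structure recorded in \cref{thm:intro_main}. Granted such a refinement, part~(1) is essentially formal, and part~(2) reduces to the naturality statement above together with the routine identification of the twisted Thom spectra involved---$-TL$, the Weinstein normal bundle $\nu_L\cong T^*L$, and $TX|_L$---and the accompanying degree shifts.
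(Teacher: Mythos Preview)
Your approach to part~(1) is essentially the same as the paper's: identify $HW(L,L;R)\simeq L^{-TL}\wedge R$ via a PSS-type Morse--Floer comparison (\cref{lem:PSS} together with \cref{lma:coh_morse_flow_thom_space}), and then conjugate by the given isomorphism in $\sW(X;R)$.

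For part~(2), however, your route diverges substantially from the paper's. You propose to go through the \emph{closed--open} map $SH(X;R)\to HW(L,L;R)$ and its centrality, then dualize. The paper instead works with the \emph{open--closed} map $\OC_L\colon HW(L,L;R)\to \varSigma^{\infty-n}_+X\wedge R$ directly (constructed in \cref{dfn:oc} via half-disks with an interior marked point followed by intersection with Morse stable manifolds on $X$). The key geometric input is \cref{lem:oc_unital_wrt_mu2}: a flow bordism, built from disks with two inputs and a \emph{moving} interior marked point, witnessing $\OC_L\circ\mu^2 \simeq \OC_K^{\tw}\circ\mu^2_{\mathrm{sw}}$, where the $R$-orientation on $\OC_K^{\tw}$ is \emph{produced} by a two-out-of-three argument for orientations on flow bimodules (\cref{lem:comp_two_out_of_three}). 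One then shows $|\OC_K^{\tw}|\circ\eta_K$ is an $R$-fundamental class of $K$ not by dualization, but by a Hurewicz reduction: pass to the connective cover $R_{\geq 0}$, then to $Hk$ with $k=\pi_0R$, and check directly at the Morse chain level that the only zero-dimensional contribution sends the minimum to the maximum with a unit coefficient (\cref{lem:fundamental_class_detection}).

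What this buys the paper is that it never needs to construct $SH(X;R)$ with spectral coefficients, a spectral closed--open map, or an $A_\infty$-refinement of $\sW(X;R)$---all of which your route requires and which you correctly flag as the main obstacle. The homotopy coherence you need is packaged instead at the level of flow categories: the bordism of \cref{lem:oc_unital_wrt_mu2} is a single geometric object, and its CJS realization already lives in $\Ho\mod R$. Your approach, if the infrastructure were in place, would be more conceptual and would likely give a ring equivalence $L^{-TL}\wedge R\simeq K^{-TK}\wedge R$; the paper's is more economical but only yields the $R$-module statement plus the matching of fundamental classes. Note also that your dualization step---asserting that the conjugation equivalence is compatible with coalgebra structures after PSS---would require knowing that the PSS identification intertwines $\mu^2$ with the cup product at the spectral level, which the paper does not establish (\cref{lem:PSS} only records unit-preservation).
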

    
\subsection{Sketch of the proof of the main theorem}\label{sec:sketch_proof}
The idea behind the proof of \cref{thm:EquivToZero} is similar to the well-known proof of the analogous result over $\IZ$ \cite{fukaya2008symplectic,abouzaid2012nearby}. In this section, we assume for simplicity that $R$ is connective.  The general case is deduced from the connective case by using connective covers, see \cref{rem:reduce_to_connective}. Let $F$ denote a cotangent fiber.

\begin{thm}[\cref{prop:evinv}]\label{thm:intro_hw_loop}
    There is an equivalence of $R$-module spectra
    \[
    HW(F,F;R) \simeq \varSigma^\infty_+ \varOmega Q \wedge R.
    \]
    Moreover, this equivalence intertwines the product $\mu^2$ and the Pontryagin product up to homotopy.
\end{thm}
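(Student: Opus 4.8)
The plan is to build a geometric comparison map from the Floer homotopy type of $F$ to $\varSigma^\infty_+\varOmega Q\wedge R$, and then to prove it is an equivalence by reducing, through a Whitehead-type argument, to the classical chain-level quasi-isomorphism $HW(F,F;\IZ)\simeq C_*(\varOmega Q;\IZ)$ of Abbondandolo--Schwarz and Abouzaid. As in the rest of this section we may assume $R$ is connective; the general case follows by passing to connective covers as in \cref{rem:reduce_to_connective}.

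\emph{Construction of the comparison map.} Write $F=T^*_q Q$ and let $\pi\colon T^*Q\to Q$ be the projection. For a suitable quadratic-at-infinity wrapping Hamiltonian, the generators of the flow category $\sC\sW(F,F;R)$ are the chords from $F$ to $F$, and each such chord is the cotangent lift of a reparametrized geodesic loop based at $q$, hence records a point of the Moore based loop space $\varOmega^{\mathrm{M}} Q$ --- a strictly associative topological monoid that is homotopy equivalent to $\varOmega Q$. For any Floer solution $u$ contributing to a morphism space of $\sC\sW(F,F;R)$, the restriction of $\pi\circ u$ to the boundary, concatenated with the loops coming from the asymptotic chords, is a based Moore loop in $Q$, and this assignment is compatible with the Gromov compactification: a broken configuration is sent to the concatenation of the Moore loops of its pieces. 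Because $F$ is contractible, its $R$-brane data and the coherent framings carried by the moduli spaces are canonically trivialized, so these maps assemble into a morphism $\ev$ of $R$-oriented flow categories over $\varOmega^{\mathrm{M}} Q$. Carrying this out coherently on the compactified, framed moduli spaces --- compatibly with every boundary stratum, including the degenerations forced by the cotangent-fiber condition, and with the system of framings that defines $\sC\sW(F,F;R)$ --- is the main technical point of the proof.

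\emph{Realization and multiplicativity.} Applying the Cohen--Jones--Segal realization to $\ev$ produces a map of $R$-modules
\[ \varPhi\colon HW(F,F;R)\longrightarrow \varSigma^\infty_+\varOmega^{\mathrm{M}} Q\wedge R\simeq \varSigma^\infty_+\varOmega Q\wedge R. \]
The moduli spaces defining $\mu^2$ are disks with three boundary punctures, two inputs and one output, all asymptotic to chords of $F$; under $\ev$ the Moore loop recorded at the output is, on the nose, the concatenation of the two loops recorded at the inputs, which is the multiplication of the monoid $\varOmega^{\mathrm{M}} Q$ and hence the Pontryagin product after applying $\varSigma^\infty_+(-)\wedge R$. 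A standard gluing argument then shows that the square relating $\mu^2$, $\varPhi$, and the Pontryagin product commutes up to homotopy, so $\varPhi$ intertwines $\mu^2$ with the Pontryagin product up to homotopy. (Working with Moore loops, rather than ordinary loops, is what lets this identification be strict before one passes to homotopy.)

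\emph{Reduction to discrete coefficients.} Both $HW(F,F;R)$ and $\varSigma^\infty_+\varOmega Q\wedge R$ are connective $R$-modules, so by the Whitehead-type theorem for connective $R$-modules it suffices to check that $H\pi_0(R)\wedge_R\varPhi$ is an equivalence. By the base-change property of flow-category realizations this map is
\[ \varPhi_0\colon HW(F,F;H\pi_0(R))\longrightarrow \varSigma^\infty_+\varOmega Q\wedge H\pi_0(R), \]
i.e.\ a chain map over the discrete ring $\pi_0(R)$, and by construction --- after comparing Hamiltonians by continuation --- it agrees up to chain homotopy with the Abbondandolo--Schwarz/Abouzaid comparison map with coefficients in $\pi_0(R)$. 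The latter is a quasi-isomorphism of bounded-below complexes of free $\IZ$-modules, and hence base-changes to a quasi-isomorphism over $\pi_0(R)$. Therefore $\varPhi_0$, and hence $\varPhi$, is an equivalence; combined with the previous paragraph, this proves the theorem.
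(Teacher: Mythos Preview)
Your overall architecture matches the paper's: build a geometric comparison map, verify it intertwines $\mu^2$ with the Pontryagin product via a bordism argument, and prove it is an equivalence by a Whitehead reduction to the known chain-level isomorphism over $k=\pi_0(R)$. The endgame (connectivity of both sides, base change to $Hk$, invoking Abouzaid's $\sF^1$) is essentially the paper's proof of \cref{prop:evinv}.

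The gap is in the construction of the comparison map. As written your $\ev$ does not produce a usable structure. First, the morphism spaces of $\sC\sW(F,F;R)$ are strips with \emph{both} boundary arcs on $F$, and since $\pi(F)=\{q\}$ the restriction of $\pi\circ u$ to $\partial(\IR\times[0,1])$ is constant; the only loop data you extract this way sits at the asymptotic chords, i.e.\ at the generators, not on the moduli spaces. Second, and more seriously, CJS realization takes as input a flow \emph{bimodule} between flow categories, and you never name a target flow category whose realization is $\varSigma^\infty_+\varOmega Q\wedge R$; ``a morphism of $R$-oriented flow categories over $\varOmega^{\mathrm M}Q$'' is not a structure the realization functor accepts. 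Finally, the ``on the nose'' claim for multiplicativity cannot hold: even in the paper's setup one needs a genuine bordism (\cref{lem:ring_struct}) to relate the two compositions.

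What the paper actually does is: (i) model $\varSigma^\infty_+\varOmega Q\wedge R$ as the CJS realization of the Morse--Smale flow category $\sM_{\varOmega Q}$ of the energy functional on finite-dimensional broken-geodesic approximations of $\varOmega Q$ (\cref{notn:morse_flow_cat_loop_sp}); and (ii) build an $R$-oriented flow bimodule $\sN\colon\sC\sW(F,F;R)\to\sM_{\varOmega Q}$ using auxiliary \emph{half-strip} moduli spaces $\overline\sM(a)=\osr(a,\xi;\xi)$, disks with one boundary arc on $Q$ and two corners at $\xi\in F\cap Q$, evaluating that $Q$-boundary arc into $\varOmega Q$ and then taking fiber products with stable manifolds of the Morse flow (\cref{lem:evbimod}). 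The multiplicativity bordism is likewise built from four-punctured disks $\osr(a_1,a_2,\xi;\xi)$ with a $Q$-boundary arc. Once this is in place, your reduction-to-$Hk$ argument is exactly right.
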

 The composition $\mu^2$ in the Donaldson--Fukaya category a priori only gives a homotopy $R$-algebra structure on $HW(F,F;R)$. \cref{thm:intro_hw_loop} allows us to equip $HW(F,F;R)$ with a (highly structured) $R$-algebra structure such that there is an equivalence of $R$-algebra spectra $HW(F,F;R) \simeq \varSigma^\infty_+ \varOmega Q \wedge R$ (\cref{cor:hw_loops_alg}).
Moreover, in \cref{sec:str_lift}, we show that the Yoneda functor
\begin{align*}
    \sY_F \colon \sF(T^*Q;R) &\longrightarrow \homod{HW(F,F;R)} \cong \homod{(\varSigma^\infty_+ \varOmega Q \wedge R)}\\
    L &\longmapsto HW(F,L;R).
\end{align*}
admits a lift to a functor
\begin{align*}
	\sY_{\varOmega Q} \colon \sF(T^*Q;R) &\longrightarrow \Ho \mod{(\varSigma^\infty_+ \varOmega Q \wedge R)}\\
	L &\longmapsto HW(\varOmega Q,L;R).
\end{align*}
(see \cref{sec:str_lift} for notation). The Lagrangian Gauss map of the zero section admits a canonical null-homotopy, determining a distinguished $R$-brane structure on $Q$. The $R$-brane structure $Q^\#$ thus corresponds to the choice of an $R$-line bundle on $Q$; namely, a homotopy class of maps $Q_+ \to \BGL_1(R)$. Looping such a map determines a ring map $(\varOmega Q)_+ \to \GL_1(R)$.  We show that the $(\varSigma^\infty_+ \varOmega Q \wedge R)$-module $HW(\varOmega Q,Q^\#;R)$ coincides with the module structure on the rank one free $R$-module $HW(F,Q^\#;R)$ induced by this ring map (see \cref{lem:TechnicalHelp}).

\begin{lem}\label{lem:ModuleActionRealized}
Every $(\varSigma^\infty_+ \varOmega Q \wedge R)$-module structure on a rank one free $R$-module
is, up to isomorphism, of the form $HW(\varOmega Q,Q^\#;R)$ for some $R$-brane structure $Q^\#$ on the zero section $Q$.
\end{lem}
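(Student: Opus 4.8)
The plan is to show that the assignment $Q^\# \mapsto HW(\varOmega Q, Q^\#; R)$ hits, up to isomorphism, every $(\varSigma^\infty_+ \varOmega Q \wedge R)$-module structure on a rank one free $R$-module, by identifying both the source of this assignment and its target with homotopy classes of $R$-line bundles on $Q$ and then invoking \cref{lem:TechnicalHelp}. Throughout we may assume $Q$ is connected, the general case following by treating connected components separately.

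First I would translate the target into algebra. Let $M$ be a $(\varSigma^\infty_+ \varOmega Q \wedge R)$-module whose underlying $R$-module is free of rank one, and fix an $R$-module equivalence $M \simeq R$. Using $\End_R(R) \simeq R$, the module structure on $M$ is the same datum as a morphism of $A_\infty$ $R$-algebras $\varSigma^\infty_+ \varOmega Q \wedge R \to R$; two such give isomorphic $(\varSigma^\infty_+ \varOmega Q \wedge R)$-modules precisely when they are conjugate by $\Aut_R(R) = \GL_1(R)$, and since $R$ is commutative this conjugation action is coherently trivial. By the free $R$-algebra adjunction this is the same as an $A_\infty$-ring map $\varphi \colon \varSigma^\infty_+ \varOmega Q \to R$, and by the $(\varSigma^\infty_+, \varOmega^\infty)$-adjunction for $A_\infty$-monoids, the same as a map of $A_\infty$-spaces $\varOmega Q \to \varOmega^\infty R$, where $\varOmega^\infty R$ carries its multiplicative monoid structure.

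Next I would observe that such a map automatically factors through the units. Since $\varOmega Q$ is grouplike, the induced monoid map $\pi_1 Q = \pi_0 \varOmega Q \to \pi_0 \varOmega^\infty R = \pi_0 R$ lands in $(\pi_0 R)^\times$, so the map $\varOmega Q \to \varOmega^\infty R$ factors, uniquely up to homotopy since $\GL_1(R) \subset \varOmega^\infty R$ is a union of path components, through $\GL_1(R)$. As $\varOmega Q$ and $\GL_1(R) = \varOmega\BGL_1(R)$ are grouplike, this corresponds under delooping — and the identification $Q \simeq \B\varOmega Q$ valid for connected $Q$ — to a pointed map $Q \to \BGL_1(R)$, that is, to an $R$-line bundle $\lambda_M$ on $Q$. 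Since $\BGL_1(R)$ is an infinite loop space and hence simple, pointed and unbased homotopy classes agree here, matching the description of $R$-line bundles as homotopy classes of maps $Q_+ \to \BGL_1(R)$. Running these identifications in reverse, $\varphi$ is recovered from $\lambda_M$ by looping its classifying map and composing with $\GL_1(R) \hookrightarrow \varOmega^\infty R$.

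Finally I would combine this with the two inputs recalled earlier: the $R$-brane structures on the zero section are parametrized by $R$-line bundles on $Q$, with the canonical null-homotopy of $\sG_Q$ furnishing the basepoint of this torsor, so there is an $R$-brane structure $Q^\#$ whose associated line bundle is $\lambda_M$; and by \cref{lem:TechnicalHelp}, $HW(\varOmega Q, Q^\#; R)$ is exactly the rank one free $R$-module equipped with the module structure induced by the $A_\infty$-ring map $\varSigma^\infty_+ \varOmega Q \to R$ built from $\lambda_M$ in the way just described — that is, the module structure classified by $\varphi$. Hence $M \cong HW(\varOmega Q, Q^\#; R)$ as $(\varSigma^\infty_+ \varOmega Q \wedge R)$-modules, proving the lemma. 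The step I expect to require the most care is the middle one: verifying the factorization through $\GL_1(R)$ and checking that the resulting bijection between $A_\infty$-ring maps $\varSigma^\infty_+ \varOmega Q \to R$ and homotopy classes $[Q_+, \BGL_1(R)]$ is compatible on the nose both with the torsor parametrization of $R$-brane structures on the zero section and with the formula of \cref{lem:TechnicalHelp}; the remaining bookkeeping with basepoints and the trivial conjugation action is routine given that $R$ is commutative.
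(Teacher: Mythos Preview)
Your proposal is correct and follows essentially the same approach as the paper. The paper's argument (carried out in the proof of \cref{lem:iso_loop_yoneda}) simply asserts that a $(\varSigma^\infty_+ \varOmega Q \wedge R)$-module structure on a rank one free $R$-module is encoded by a map $(\varOmega Q)_+ \to \GL_1(R)$, identifies this with an $R$-brane structure via the torsor description, and invokes the computation $HW(\varOmega Q,Q^\#;R) \simeq R_\xi$; you unpack the first step more carefully through the chain of adjunctions and the grouplike-factors-through-units argument, but the route is the same. One minor point: the result you invoke at the end---that $HW(\varOmega Q,Q^\#;R)$ realizes the module structure determined by the line bundle---is \cref{prop:brane_twisted_R_mod} in the body of the paper, not \cref{lem:TechnicalHelp} (the introduction's cross-reference is itself a bit loose here).
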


Let $\sF(T^*Q;R) \subset \sW(T^*Q;R)$ denote the full subcategory of closed Lagrangian $R$-branes. It follows that in the category $\Ho \mod{(\varSigma^\infty_+ \varOmega Q \wedge R)}$, every $(\varSigma^\infty_+ \varOmega Q \wedge R)$-module that is isomorphic to $R$ as an $R$-module is obtained from the image of the Yoneda functor
\begin{align*}
	\sY_{\varOmega Q} \colon \sF(T^*Q;R) &\longrightarrow \Ho \mod{(\varSigma^\infty_+ \varOmega Q \wedge R)}\\
	L &\longmapsto HW(\varOmega Q,L;R).
\end{align*}
Consequently, we reduce the proof of \cref{thm:EquivToZero} to proving the following lemma:  
\begin{lem}[{\cref{lma:rank1} and \cref{lma:whitehead_cats}}]\label{lem:TechnicalHelp}
Let $L$ and $K$ be two nearby Lagrangian $R$-branes.
\begin{enumerate}
    \item There is an equivalence of $R$-modules $HW(\varOmega Q,L;R) \simeq \varSigma^\ell R$ for some $\ell \in \IZ$.
    \item The functor $\sY_{\varOmega Q}$ induces an equivalence of morphism spaces:
        \[ HW(L,K;R) \overset{\simeq}{\longrightarrow} F_{\varSigma^\infty_+ \varOmega Q \wedge R}(HW(F,L;R),HW(F,K;R)). \]
\end{enumerate}
\end{lem}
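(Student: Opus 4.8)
The plan is to deduce both parts from the corresponding statements over $\IZ$ — the classical nearby Lagrangian theorem \cite{fukaya2008symplectic,abouzaid2012nearby} and the generation of $\sW(T^*Q;\IZ)$ by the cotangent fiber — by base change followed by Whitehead-type arguments. Since the Cohen--Jones--Segal realization is natural in the coefficient ring, there is an equivalence $HW(-,-;R)\simeq HW(-,-;\IS)\wedge_\IS R$, and since every nearby Lagrangian admits an $\IS$-brane structure, standard reductions let us take $R=\IS$ throughout. A second ingredient is that, once part (1) is known, $HW(F,L;\IS)$ is a \emph{perfect} $\varSigma^\infty_+\varOmega Q$-module: the augmentation module is perfect because $Q$ is a closed, hence finitely dominated, manifold. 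Consequently forming mapping spectra of $\varSigma^\infty_+\varOmega Q$-modules commutes with $-\wedge_\IS R$.

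For part (1), set $M=HW(F,L;\IS)$. By the classical nearby Lagrangian theorem, $L$ is isomorphic to the zero section in $\sW(T^*Q;\IZ)$, so $HW^\bullet(F,L;\IZ)\cong HW^\bullet(F,Q;\IZ)$ is free abelian of rank one concentrated in a single degree. Base change identifies $M\wedge_\IS H\IZ$ with $HW(F,L;H\IZ)$, whose homotopy groups compute $HW^{-\bullet}(F,L;\IZ)$; hence the integral homology of $M$ is $\IZ$ concentrated in a single degree $\ell$. An action-filtration argument shows $M$ is bounded below, and a bounded-below spectrum whose integral homology is $\IZ$ concentrated in degree $\ell$ is equivalent to $\varSigma^\ell\IS$: by Hurewicz a generator of $\pi_\ell M$ determines a map $\varSigma^\ell\IS\to M$ that is an integral homology isomorphism, and its cofiber is bounded below with trivial homology, hence contractible. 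Smashing with $R$ gives $HW(\varOmega Q,L;R)\simeq\varSigma^\ell R$.

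For part (2), the displayed map is the one induced by the (strictly unital) Yoneda functor — that is, by the composition operations of the flow categories — and is a map of $R$-modules; by the reduction above we take $R=\IS$. Both sides are bounded below: the left because $L$ and $K$ are compact (so $HW(L,K;\IS)$ is in fact finite), and the right because part (1) and perfectness identify it with a shift of the Spanier--Whitehead dual $F(\varSigma^\infty_+Q,\IS)$. Its cofiber is therefore bounded below, so it suffices to show the map is an equivalence after $-\wedge_\IS H\IZ$. Using base change and the perfectness of $HW(F,-;\IS)$, this reduces to the assertion that
\[ HW(L,K;H\IZ)\longrightarrow F_{\varSigma^\infty_+\varOmega Q\wedge H\IZ}\bigl(HW(F,L;H\IZ),\,HW(F,K;H\IZ)\bigr) \]
is an equivalence; equivalently, since $L$ and $K$ are both isomorphic to the zero section over $\IZ$, that the Yoneda functor of the cotangent fiber restricts to a fully faithful functor on the nearby Lagrangians — over $\IZ$ this is the classical statement that $F$ generates $\sW(T^*Q;\IZ)$ \cite{abouzaid2012nearby}. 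Hence the cofiber of the original map has trivial integral homology and is contractible, so the map is an equivalence over $\IS$, and therefore over every $R$ after base change.

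The step I expect to be the main obstacle — and the only one that does not follow formally from the results over $\IZ$ — is establishing that the Floer homotopy types $HW(F,L;\IS)$ and $HW(L,K;\IS)$ are bounded below. This is where the geometry of the cotangent bundle genuinely enters, through action filtrations and the compactness of $L$ and $K$, and it is precisely what makes the Whitehead-type arguments applicable. A subsidiary technical point is the compatibility of the map on mapping spectra induced by the Yoneda functor with base change along $\IS\to H\IZ$, together with the perfectness of $HW(F,L;\IS)$ over $\varSigma^\infty_+\varOmega Q$; these are structural and follow from the constructions of the preceding sections.
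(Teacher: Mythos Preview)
Your overall strategy---bootstrap from discrete coefficients via Whitehead-type arguments, using boundedness below and finiteness/perfectness to control mapping spectra---matches the paper's. The substantive difference is in \emph{which} base change you perform. The paper never reduces to $\IS$: instead it passes to the connective cover $R_{\geq 0}$ (where $R$-brane structures and $R_{\geq 0}$-brane structures coincide, since $\GL_1(R_{\geq 0})\simeq\GL_1(R)$) and then applies the Hurewicz map $R_{\geq 0}\to Hk$ with $k=\pi_0 R$. All arguments are then run directly for the given $R$-brane structures, with $HW(-,-;R)\wedge_R Hk$ computing the usual Floer cohomology over $k$.

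Your detour through $\IS$ has a genuine gap. The equivalence $HW(-,-;R)\simeq HW(-,-;\IS)\wedge_\IS R$ is not a ``standard reduction'': it holds only when the $R$-brane structures on $L,K$ are induced from $\IS$-brane structures along the unit $\IS\to R$. The lemma is stated for \emph{given} $R$-brane structures, and while nearby Lagrangians do admit $\IS$-branes, nothing says the given $R$-branes arise this way (the map $[L,\BGL_1(\IS)]\to[L,\BGL_1(R)]$ is generally not surjective). You could patch this by arguing separately that both statements are invariant under changing the $R$-brane---twisting by an $R$-line bundle on $L$ tensors $HW(F,L)$ and $HW(L,K)$ by mutually inverse rank-one factors, so the Yoneda map twists compatibly---but this adds a step you don't mention, and the paper's route via $R\to H\pi_0 R$ sidesteps the issue entirely. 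Your identification of boundedness below as the key geometric input, and the use of finiteness of $Q$ to control the mapping spectrum under base change, are both in line with the paper's proof (see \cref{lma:whitehead_cats}).
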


\begin{proof}[Proof of \cref{thm:EquivToZero}]
    By \cref{lem:ModuleActionRealized} and \cref{lem:TechnicalHelp}(i), $\sY_{\varOmega Q}(L)$ is isomorphic to $\sY_{\varOmega Q}(Q^\#)$ in $\Ho \mod{(\varSigma^\infty_+ \varOmega Q \wedge R)}$ for some choice of $R$-brane structure $Q^\#$ on $Q$.  By \cref{lem:TechnicalHelp}(ii), $\sY_{\varOmega Q}$ is fully faithful and hence $L$ is isomorphic to $Q^\#$ in $\sW(T^*Q;R)$.
\end{proof}

To prove \cref{lem:TechnicalHelp}, we use Whitehead-type theorems to bootstrap results over discrete rings to $R$. Let $k \coloneqq \pi_0(R)$.  Under the connectivity hypothesis on $R$, there is an $\IS$-algebra map $\Hw \colon R \to Hk$, and if $M$ is an $R$-module, this induces a \emph{Hurewicz map},
\[
    \Hw_M \colon M \overset{\simeq}{\longrightarrow} M \wedge_R R \xrightarrow{\id \wedge_R \Hw} M \wedge_R Hk.
\]
This Hurewicz map leads to $R$-module versions of the classical Whitehead theorem.  In particular:
\begin{enumerate}
    \item (\cref{thm:whitehead}) If $M$ and $M'$ are connective $R$-modules and there is an $R$-module map $\varphi \colon M \to M'$ such that
\[ \varphi_\bullet \colon \pi_i(M \wedge_R Hk) \longrightarrow \pi_i(M' \wedge_R Hk)\]
is an equivalence for all $i \in \IZ$, then $\varphi$ is an equivalence of $R$-modules.
    \item (\cref{cor:whitehead_cor}) If $M$ is an $(n-1)$-connective $R$-module and
\[
	\pi_\bullet(M \wedge_R Hk) = \begin{cases}
		k, & \bullet = n \\
		0, & \text{else}
	\end{cases},
\]
then there is an equivalence of $R$-modules $\varSigma^{n} R \to M$.
\end{enumerate}
The homotopy class of $R$-modules $HW(L_0,L_1;R)$ is related via the Hurewicz map to the usual construction of the wrapped Floer cohomology with coefficients in $k$ (e.g. \cite{ganatra2020covariantly}) by
\[ \pi_\bullet(HW(L_0,L_1;R) \wedge_R Hk) \cong HW^{-\bullet}(L_0,L_1;k). \]

\begin{proof}[Proof of \cref{lem:TechnicalHelp}]
    \begin{enumerate}
        \item It is known that $HW(F,L;k) = k[\ell]$ for some $\ell \in \IZ$ (see \cite[Section 1]{fukaya2008symplectic} or \cite[Appendix C]{abouzaid2012nearby}). Item (ii) above then gives that $HW(F,L;R) \simeq \varSigma^\ell R$.
        \item Since the functor 
        \[ \pi_\bullet(\sY_{\varOmega Q} \wedge_R Hk) \colon \sF(T^\ast Q;k) \longrightarrow \mod{\pi_\bullet(\varSigma^\infty_+ \varOmega Q \wedge_R Hk)} \]
        is fully faithful on nearby Lagrangians that admit $k$-brane structure (using standard Floer theory over discrete commutative rings, see \cite[Theorem 1.13]{ganatra2022sectorial} and \cite[Lemma C.1]{abouzaid2012nearby}), item (i) above yields the fully faithfulness of $\sY_F$.
    \end{enumerate}
\end{proof}

\begin{rem}
    The usage of Whitehead-type theorems to obtain results in Floer homotopy theory from Floer homology is not new.  For example, this is the approach taken by Abouzaid--Kragh in \cite{abouzaidkragh_immersions}.
\end{rem}

\subsection{Outline}

The paper is organized as follows:  The first half is dedicated to establishing the requisite data needed to define the spectral Donaldson--Fukaya category. In \cref{sec:background_flow}, we discuss some background material on flow categories; this involves definitions of flow categories, flow multimodules, and bordisms along with notions of compatible $R$-orientations on each of these structures. We also introduce the notion of flow categories with local systems which is used later in \cref{sec:str_lift}.  In \cref{sec:cjs}, we review the definition of the Cohen--Jones--Segal realization functor and study its functoriality properties with respect to flow multimodules and bordisms.  Finally, we give a discussion of how the above structures naturally arise in the context of Morse theory.  In \cref{sec:AbstractBranes}, we define abstract $R$-branes, $R$-orientations on index bundles, and spaces of abstract strip-caps, all of which are preliminaries for establishing $R$-orientations.  In \cref{sec:Polarizations}, we review the notions of stable polarizations and Lagrangian Gauss maps, and define the notion of an $R$-brane structure. In \cref{sec:ModuliSpaces}, we define the moduli spaces of $J$-holomorphic curves that naturally arise in our constructions. In \cref{sec:canonical_orientations}, we construct canonical $R$-orientations for moduli spaces of $J$-holomorphic curves.

The second half of the paper applies the above sections to define the Donaldson--Fukaya category over $R$ and study nearby Lagrangians.  In \cref{sec:donaldson-fukaya}, we combine the above sections to give the construction of the spectral Donaldson--Fukaya category.  In \cref{sec:loops}, we specialize to cotangent bundles and establish an isomorphism between $\varSigma^\infty_+ \varOmega Q \wedge R$ and the wrapped Floer homotopy type of a cotangent fiber. In \cref{sec:str_lift}, we discuss the construction of structured lifts necessary in the proof of the main theorem. In \cref{sec:modules}, we give the proof of our main theorem \cref{thm:EquivToZero}. 

Finally, we have a string of appendices: In \cref{sec:geom_background}, we give some geometric background on Liouville sectors.  In \cref{sec:background_spectra}, we give a background discussion on $R$-orientations of vector bundles and associated structures. In \cref{sec:background_connective} we discuss connective covers.

\subsection*{Acknowledgments}
We thank Mohammed Abouzaid, Andrew Blumberg, and Oleg Lazarev for helpful conversations. We also thank Noah Porcelli and Ivan Smith for pointing out an incomplete argument in an early draft of this paper, and a gap in the first version.
Part of this work was completed at a Summer Collaborators Program hosted by the Institute for Advanced Study; we are grateful for their hospitality and financial support. JA was supported by the Knut and Alice Wallenberg Foundation and the Swedish Royal Academy of Sciences. YD thanks Max Planck Institute for Mathematics in Bonn and Institute for Advanced Study for their hospitality and financial support. YD was partially supported by NSF grant DMS-1926686. AP was supported by a National Science Foundation Postdoctoral Research Fellowship through NSF grant DMS-2202941.

\section{Background on flow categories}\label{sec:background_flow}
	Flow categories were first introduced by Cohen--Jones--Segal \cite{cohen1995floer} originally used to package Morse-theoretic information. Informally, the objects of the flow category associated to a Morse function are the of critical points of the Morse function and morphisms are moduli spaces of broken flow lines of the Morse function (of all dimensions). 
 
    The purpose of this section is to provide definitions of flow categories, flow multimodules, bordisms, and the notion of $R$-orientations on such. Although the definitions have appeared elsewhere in the literature, the conventions about e.g.\@ gradings and orientations are not uniform. We thus find it convenient to spell out our definitions to set the conventions.
    \subsection{$\ang{k}$-manifolds}
		We recall the definition of a $\ang{k}$-manifold from \cite{jänich1968on,laures2000cobordism}.
		\begin{defn}
			\begin{itemize}
				\item A \emph{manifold with corners} is a topological manifold with boundary which has a smooth atlas locally modeled on open subsets of $\IR_+^n \coloneqq (\IR_{\geq 0})^n$.
				\item The \emph{codimension} of a point $x$ in a manifold with corners $M$ is the number of zeroes in $\phi(x) \in \IR^n_+$ for any chart $\phi$ on $M$ containing $x$.  It is denoted $c(x)$.
				\item A \emph{codimension $k$ connected face} of a manifold with corners $M$ is the closure of a connected component of the subset $\{x \in M \mid c(x) = k\}$. We refer to codimension $1$ connected face as simply a \emph{connected face}.  A \emph{face} of $M$ is any disjoint union of connected faces of $M$.
			\end{itemize}
			\end{defn}
		The definition of a $\ang{k}$-manifold imposes restrictions on how the faces overlap.
		\begin{defn}[$\ang{k}$-manifold]\label{def:k_mfd}
			A \emph{$\ang{k}$-manifold} $M$ is a manifold with corners and a $k$-tuple of faces $(\partial_1 M, \ldots ,\partial_k M)$ of $M$ that satisfy the following conditions:
			\begin{enumerate}
				\item Each $x \in M$ belongs to exactly $c(x)$ of the $\partial_i M$.
				\item $\partial_1 M \cup \cdots \cup \partial_k M = \partial M$.
				\item For $i \neq j$, $\partial_i M \cap \partial_j M$ is a face of both $\partial_i M$ and $\partial_j M$.
			\end{enumerate}
		\end{defn}
		\begin{rem}
            Item (iii) makes sense because each $\partial_i M$ is a manifold with corners. Moreover, it follows from the definition that each face $\partial_i M$ of a $\ang k$-manifold is itself a $\ang{k-1}$-manifold with faces $\partial_i M \cap \partial_j M$ for $i \neq j$.
		\end{rem}
	\subsection{Collars and gluing}
		\begin{defn}[Coherent system of collars]\label{dfn:k-mfd_collars}
			A \emph{coherent system of collars} for a $\ang{k}$-manifold $M$ is a choice of smooth embeddings 
			\[
				\kappa_{F,G} \colon F \times [0,\varepsilon)^{\codim F-\codim G} \hooklongrightarrow G, 
			\]
            for every inclusion of faces $F \subset G$ of $M$,
            such that 
			\begin{enumerate}
				\item $\kappa_{F,G}|_{F \times \left\{0\right\}}$ is the inclusion map onto $F \subset \Nbhd_G(F)$.
				\item The following diagram commutes for any pair of faces $F \subset G$
				\[
					\begin{tikzcd}[row sep=scriptsize, column sep=1cm]
						F \times [0,\varepsilon)^{\codim F} \rar{\kappa_{F,G} \times \id} \drar[swap]{\kappa_{F,M}} & G \times [0,\varepsilon)^{\codim G} \dar{\kappa_{G,M}} \\
					    {} & M
					\end{tikzcd}.
				\]
			\end{enumerate}
		\end{defn}

		\begin{lem}[{\cite[Proposition 2.1.7]{laures2000cobordism}}]
			Any $\ang{k}$-manifold admits a coherent system of collars.
            \qed
		\end{lem}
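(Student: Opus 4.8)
The plan is to argue by induction on $k$, proving the slightly stronger relative statement: given a $\ang{k}$-manifold $M$ and a coherent system of collars already prescribed on a (possibly empty) subfamily of its codimension one faces, mutually compatible along pairwise intersections, the prescribed collars extend to a coherent system of collars on all of $M$. The original claim is the instance with empty subfamily. The case $k=0$ is vacuous and the case $k=1$ is the classical collar neighborhood theorem: a neighborhood of $\partial M$ is swept out by the flow of any vector field that is inward pointing along $\partial M$, and such a vector field is produced by a partition of unity since inward pointing-ness is a convex condition (if $M$ is noncompact one lets $\varepsilon$ be a positive smooth function, which is routine). I would also record at the outset that a coherent system of collars is determined by its top members $\kappa_{\partial_S M, M}\colon \partial_S M\times[0,\varepsilon)^{|S|}\hooklongrightarrow M$: once these are chosen and suitably nested, the intermediate collars $\kappa_{F,G}$ for $F\subset G$ are forced by condition (ii), so it suffices to produce the maps $\kappa_{\partial_S M, M}$.

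For the inductive step, the faces $\partial_1 M,\dots,\partial_k M$ are $\ang{k-1}$-manifolds by the remark following \cref{def:k_mfd}. The geometric core is, for each $i$, an embedding $c_i\colon \partial_i M\times[0,\varepsilon)\hooklongrightarrow M$ onto a neighborhood of $\partial_i M$ compatible with the corner structure, in the sense that $c_i$ carries $(\partial_i M\cap\partial_j M)\times[0,\varepsilon)$ into $\partial_j M$ for every $j\neq i$. Such a $c_i$ is the time flow of a vector field $v_i$, defined near $\partial_i M$, that is inward pointing along $\partial_i M$ and tangent to each $\partial_j M$ with $j\neq i$; one builds $v_i$ by a partition of unity subordinate to a cover by corner charts $\IR^{n-|S|}\times\IR_+^{S}$, taking $v_i = \partial/\partial x_i$ on a chart with $i\in S$ and $v_i = 0$ on a chart disjoint from $\partial_i M$. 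Being inward pointing along $\partial_i M$ and being tangent to $\partial_j M$ are both convex conditions, and near $\partial_i M$ some chart contributes a genuinely inward field, so $v_i$ has the required properties. Transporting a coherent system of collars on $\partial_i M$ through $c_i$ (times the identity in the new variable) then produces all the top collars $\kappa_{F,M}$ for faces $F\subseteq \partial_i M$; the needed coherent system on $\partial_i M$ is supplied by the inductive hypothesis in its relative form, starting from the collars already induced on the faces $\partial_i M\cap\partial_j M$ by the $c_j$ built so far.

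The delicate point, and the main obstacle, is that the $c_i$ for different $i$ must be mutually compatible near the corner strata: unravelling coherence condition (ii) at a codimension two face $\partial_i M\cap\partial_j M$ shows that $c_i$ and $c_j$ must commute on a neighborhood of it. Constructing all the $c_i$ at once by a single partition of unity fails, since commutativity of flows is governed by the vanishing of the Lie brackets $[v_i,v_j]$, which is neither convex nor preserved by averaging (a quick computation in coordinates shows that merely requiring $v_i$ tangent to $\partial_j M$, rather than horizontal for the $c_j$ direction, is not enough). The remedy is to build the collars one direction at a time, $c_k, c_{k-1},\dots, c_1$: once $c_{i+1},\dots,c_k$ have been fixed they trivialize a neighborhood of the relevant corner stratum as a product, and one then chooses $v_i$ to be horizontal for that product near the corner, i.e.\ to have no component along the collar coordinates already introduced. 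Horizontality \emph{is} a convex condition, so it can be imposed in the partition of unity construction above, and the resulting $c_i$ then pairwise commute near the corners. Iterating, the induced top collars are coherently nested and yield, via the reduction in the first paragraph, a coherent system of collars on $M$; in the relative version one simply begins the iteration with the prescribed collars in hand and never modifies them.

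Everything else is standard: existence of the vector fields, the fact that their short-time flows are defined on a neighborhood of the relevant face and restrict to embeddings onto open sets, and the bookkeeping that passes from a constant $\varepsilon$ to a positive smooth function in the noncompact case. I would treat these as routine and concentrate the write-up on the sequential, horizontality-respecting construction of the $c_i$.
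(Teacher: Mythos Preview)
The paper does not give a proof of this lemma at all; it simply cites \cite{laures2000cobordism} and ends with a \qed. Your sketch is essentially the standard argument (and is in the spirit of what Laures does): build inward-pointing vector fields $v_1,\dots,v_k$ one at a time, imposing at each stage that the new field be horizontal with respect to the product structure already introduced by the earlier collars, so that the resulting flows commute near the corner strata. This is correct.

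One remark on economy: once you have pairwise-commuting flows $c_1,\dots,c_k$ (each $c_i$ tangent to all $\partial_j M$ with $j\neq i$), the collar of any face $\partial_S M = \bigcap_{s\in S}\partial_s M$ is given directly by
\[
\kappa_{\partial_S M, M}\bigl(x,(t_s)_{s\in S}\bigr) \;=\; \Bigl(\textstyle\prod_{s\in S}\mathrm{flow}^{t_s}_{v_s}\Bigr)(x),
\]
and coherence condition~(ii) is immediate from commutativity. So the detour through the relative inductive hypothesis on each $\partial_i M$ is not actually needed: the commuting vector fields already determine the entire coherent system. Your relative formulation is not wrong, but it obscures that the whole content lies in producing the commuting $v_i$.
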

		\begin{defn}\label{dfn:gluing}
            Let $I$ and $J$ be two finite sets, and let $\iota \colon J \to I^2 \smallsetminus \varDelta$ be a function with components $\iota_1(j)$ and $\iota_2(j)$, where $\varDelta \subset I^2$ denotes the diagonal. Let $\{M_i\}_{i\in I}$ and $\{N_j\}_{j\in J}$ be two collections of $\ang k$-manifolds. Assume that for every $j \in J$ that there are maps
            \begin{equation}\label{eq:abs_gluing_maps}
                M_{\iota_1(j)} \overset{\nu_{j,1}}{\longleftarrow} N_j \overset{\nu_{j,2}}{\longrightarrow} M_{\iota_2(j)},
            \end{equation}
            that satisfies the following:
            \begin{enumerate}
                \item $\nu_{j,1}$ and $\nu_{j,2}$ are embeddings onto codimension $1$ faces of $M_{\iota_1(j)}$ and $M_{\iota_2(j)}$, respectively.
                \item Any codimension $1$ connected face of any $M_i$ is in the image of at most one of the maps $\nu_{j,\ell}$.
            \end{enumerate}
			Define the \emph{gluing of $\{M_i\}_{i\in I}$ along the maps \eqref{eq:abs_gluing_maps}} to be the topological space $X$ that is defined as the colimit of the following diagram
			\[
				\begin{tikzcd}[row sep=scriptsize, column sep=1.5cm]
					\displaystyle \bigsqcup_{j\in J}N_j \rar[shift left]{\bigsqcup_{j\in J}\nu_{j,1}} \rar[shift right,swap]{\bigsqcup_{j\in J}\nu_{j,2}} & \displaystyle\bigsqcup_{i\in I}M_i
				\end{tikzcd}.
			\]
		\end{defn}
		\begin{lem}[{\cite[Lemmas 3.21 and 3.22]{porcelli2024bordism}}]\label{lem:smooth_gluing}
			The topological space $X$ in \cref{dfn:gluing} admits the structure of a $\ang k$-manifold.
            \qed
		\end{lem}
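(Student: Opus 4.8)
The plan is to realize $X$ as a smooth manifold with corners by gluing the charts of the $M_i$ across collar neighborhoods of the identified faces, and then to read off the distinguished $k$-tuple of faces from those of the $M_i$.

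First I would record the point-set picture: $X$ is the quotient of $\bigsqcup_{i\in I}M_i$ by the relation generated by $\nu_{j,1}(p)\sim\nu_{j,2}(p)$ for $p\in N_j$ and $j\in J$. Since each $\nu_{j,\ell}$ is an embedding onto a closed codimension-$1$ face and, by hypothesis (ii) of \cref{dfn:gluing}, each codimension-$1$ connected face of each $M_i$ is hit by at most one $\nu_{j,\ell}$, this relation identifies every point of $\bigsqcup_i M_i$ with at most one other point. Hence the quotient map $q\colon\bigsqcup_i M_i\to X$ is a closed surjection, $X$ is Hausdorff and second countable, and $q$ restricts to a homeomorphism onto an open set away from the glued faces; in particular a point of $X$ lying on no glued face has a neighborhood which is simply a manifold-with-corners chart inherited from the relevant $M_i$.

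The real content is the smooth structure transverse to the glued faces, for which I would fix a coherent system of collars on each $M_i$ (which exists by \cite[Proposition 2.1.7]{laures2000cobordism}). Near a point of $X$ lying on exactly one glued face $q(\nu_{j,1}(N_j))=q(\nu_{j,2}(N_j))$, writing $F_\ell\coloneqq\nu_{j,\ell}(N_j)$, the collars supply smooth embeddings $\kappa_{F_1,M_{\iota_1(j)}}\colon F_1\times[0,\varepsilon)\hooklongrightarrow M_{\iota_1(j)}$ and $\kappa_{F_2,M_{\iota_2(j)}}\colon F_2\times[0,\varepsilon)\hooklongrightarrow M_{\iota_2(j)}$ restricting to the inclusions on $F_\ell\times\{0\}$; precomposing with $\nu_{j,\ell}$ and joining the two half-open intervals at their zero endpoints produces a map $N_j\times(-\varepsilon,\varepsilon)\to X$ which is a homeomorphism onto an open neighborhood of the glued face and a smooth open embedding on each of $N_j\times[0,\varepsilon)$ and $N_j\times(-\varepsilon,0]$. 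As $N_j$ is itself a manifold with corners, products of its charts with $(-\varepsilon,\varepsilon)$ give $\IR^n_+$-charts of $X$, smoothly compatible with the charts of the previous paragraph because the $\kappa$'s are smooth embeddings. Near a point lying on several glued and/or unglued faces simultaneously one iterates this construction, inserting a full interval $(-\varepsilon,\varepsilon)$ for each glued normal direction and a half-interval $[0,\varepsilon)$ for each unglued one; here the \emph{coherence} of the system of collars (the commuting square in \cref{dfn:k-mfd_collars}) is precisely what ensures that performing these normal extensions in different orders yields the same chart, which I would verify by induction on the corner codimension as in \cite{laures2000cobordism}. This equips $X$ with the structure of a smooth manifold with corners.

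It remains to exhibit the $\ang k$-structure: I would take $\partial_\ell X$ to be the image under $q$ of $\bigsqcup_{i\in I}\partial_\ell M_i$ with the glued portions excised (the glued faces having become interior hypersurfaces). That $\partial_1 X\cup\dots\cup\partial_k X=\partial X$ and that each point lies on exactly $c(x)$ of the $\partial_\ell X$ follow from the corresponding statements on each $M_i$ together with the fact that gluing along a glued face lowers the corner codimension by exactly one in that one normal direction; the remaining axiom, that $\partial_\ell X\cap\partial_m X$ is a face of each for $\ell\ne m$, is a local statement, inherited from the $M_i$ away from the glued faces and reducing across a glued face $N_j$ to the compatibility of $\nu_{j,1}$ and $\nu_{j,2}$ with the $\ang k$-structures of the two $M_i$'s (so that the corner strata on the two sides of $N_j$ agree and glue to a face of $\partial_\ell X$). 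I expect the main obstacle to be bookkeeping rather than a conceptual difficulty — checking that the collar charts attached along different glued faces are mutually smoothly compatible near corners lying on two or more of them, and that the $\partial_\ell X$ above really satisfy the $\ang k$-axioms — both of which, once a coherent system of collars is chosen, reduce to the corresponding facts for the individual $M_i$ and $N_j$; this is carried out in \cite[Lemmas 3.21 and 3.22]{porcelli2024bordism} (compare \cite{laures2000cobordism}), and I would follow that route.
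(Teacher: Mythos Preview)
Your proposal is correct and follows the standard collar-gluing argument, which is what the cited reference \cite[Lemmas 3.21 and 3.22]{porcelli2024bordism} carries out; the paper itself gives no proof beyond that citation, so there is nothing further to compare. One small point worth tightening: for the $\ang k$-structure on $X$ to be well defined as you describe it, you implicitly need that for each $j$ the faces $\nu_{j,1}(N_j)$ and $\nu_{j,2}(N_j)$ lie in the \emph{same} $\partial_\ell$ on both sides (equivalently, that the embeddings $\nu_{j,\ell}$ respect the $\ang k$-labeling of the corner strata of $N_j$), so that the residual face structures on the two sides match across the glued hypersurface; this compatibility is part of the setup in the cited reference and should be stated.
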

\subsection{Flow categories}\label{subsec:FlowCategories}
       Throughout this section fix a commutative ring spectrum $R$. One may wish to review the discussion of $R$-orientations in \cref{sec:background_spectra}.
  
  \begin{defn}[Flow category]\label{dfn:flow_cat}
			A \emph{flow category} is a (non-unital) topologically enriched category $\sM$ consisting of the following: 
			\begin{enumerate}
				\item A set of objects $\Ob(\sM)$ equipped with a function $\mu \colon \Ob(\sM) \to \IZ$, called the \emph{grading function}.
				\item For each tuple $a,b \in \Ob(\sM)$, a space of morphisms $\sM(a,b)$ which  is a $(\mu(a) - \mu(b) -1)$-dimensional smooth manifold with corners.
				\item For each tuple $a,b,c \in \Ob(\sM)$, a composition map
					\[ \mu_{abc} \colon \sM(a,b) \times \sM(b,c) \longrightarrow \sM(a,c),\]
					that is a diffeomorphism onto a face of $\mathcal M(a,c)$.
                \item For each $a\in \Ob(\sM)$, $\bigsqcup_{b\in \Ob(\sM)} \sM(a,b)$ is compact.
			\end{enumerate}
			We moreover require for each $a,c\in \Ob(\sM)$ that $\sM(a,c)$ is a $\ang{\mu(a) - \mu(c)-1}$-manifold with faces given by
			\[
				\partial_i \sM(a,c) \coloneqq \bigsqcup_{\substack{b \in \Ob(\sM) \\ \mu(a) - \mu (b) = i}} \sM(a,b) \times \sM(b,c).
			\]
		\end{defn}
        \begin{rem}
            For Floer theory outside of the exact setting, condition (iv) in \cref{dfn:flow_cat} is in general not satisfied. One way of adapting the definition to such settings is to demand the existence of a locally constant energy functional 
            \[ \sA \colon \bigsqcup_{p,q \in \Ob(\sM)} \sM(p,q) \longrightarrow \IR,\]
            that is additive with respect to compositions, such that for any $p \in \Ob(\sM)$, its restriction to $\bigsqcup_q \sM(p,q) \longrightarrow \IR$ is proper (cf.\@ \cite{abouzaid2024foundation}). Similar modifications can be made to the definitions of flow bimodules and bordisms in \cref{sec:morph_flow_cats,sec:bordism_of_flow_bimods}, outside of the exact setting.
        \end{rem}
		\begin{defn}\label{dfn:metric_flow_cat}
			A \emph{metric} on a flow category $\sM$ is a choice of a Riemannian metric on each morphism space $\sM(a,b)$ such that the product metric on $\sM(a,b) \times \sM(b,c)$ coincides with the pullback of metric from $\sM(a,c)$ under the composition map $\mu_{abc}$.
		\end{defn}
		A metric on a flow category $\sM$ induces isomorphisms 
		\begin{equation}\label{eq:tancomp}
			\kappa_{abc} \colon T\sM(a,b) \oplus \underline \IR \oplus T\sM(b,c) \overset{\cong}{\longrightarrow} \mu_{abc}^* T\sM(a,c) .
		\end{equation}
		Define the index bundle of a flow category $\sM$ by
		\[ I(a,b) \coloneqq \underline \IR \oplus T\sM(a,b)\]
		for every $a,b \in \Ob(\sM)$. We fix isomorphisms
		\begin{align}\label{eq:indexiso}
			\rho_{abc} \colon I(a,b) \oplus I(b,c) &\overset{\cong}{\longrightarrow}  \mu_{abc}^* I(a,c)\\ \nonumber
            (\alpha \oplus u) \oplus (\beta \oplus v) &\longmapsto \alpha \oplus \kappa_{abc}(u \oplus \beta \oplus v).
		\end{align}
        These isomorphisms are compatible with the composition maps in $\sM$ in the sense that the following diagram of vector bundles over $\sM(a,b) \times \sM(b,c) \times \sM(c,d)$ commutes:
		\begin{equation}\label{eq:indexcom}
			\begin{tikzcd}
				I(a,b) \oplus I(b,c) \oplus I(c,d) \rar{\rho_{abc} \oplus \id} \dar{\id \oplus \rho_{bcd}} & I(a,c) \oplus I(c,d) \dar{\rho_{acd}}\\
				I(a,b) \oplus I(b,d) \rar{\rho_{abd}} & I(a,d)
			\end{tikzcd}.
		\end{equation}

		Let $I_R(a,b)$ denote the associated $R$-line bundle of the vector bundle $I(a,b)$ over $\sM(a,b)$ (see \cref{dfn:assoc_rline}). It follows from \eqref{eq:indexiso} and \cref{rem:assoc_vector_bundles}(ii) that we have an isomorphism
		\begin{equation}\label{eq:index_flow_cat_cmpat}
			\rho_R^{abc} \colon (\mu^\ast_{abc}I(a,c))_R \overset{\simeq}{\longrightarrow} I_R(a,b) \otimes_R I_R(b,c),
		\end{equation}
        of $R$-line bundles over $\sM(a,b) \times \sM(b,c)$.
		\begin{defn}[$R$-orientation on a flow category]\label{defn:rorcoh}
			An \emph{$R$-orientation} on a flow category consists of a rank one free $R$-module $\fo(a)$ (\cref{defn:free_rmod}) for every $a \in \Ob(\sM)$ and an isomorphism of $R$-line bundles over $\sM(a,b)$
			\begin{equation}\label{eq:floworient}
				 \fo(a,b) \colon \fo(a) \overset{\simeq}{\longrightarrow} I_R(a,b) \otimes_R \fo(b),
			\end{equation}
            for each pair $a,b\in \Ob(\sM)$, that are compatible with compositions in the sense that the following diagram of $R$-line bundles over $\sM(a,b) \times \sM(a,c)$ commutes
			\begin{equation}\label{eq:rorcoh}
				\begin{tikzcd}[row sep=scriptsize, column sep=1.5cm]
					\fo(a) \dar{\fo(a,c)} \rar{\fo(a,b)} & I_R(a,b) \otimes_R \fo(b) \dar{\id \otimes_R \fo(b,c)} \\
					I_R(a,c) \otimes_R \fo(c) \rar{\rho^{abc}_R \otimes_R \id} & I_R(a,b) \otimes_R I_R(b,c) \otimes_R \fo(c)
				\end{tikzcd}.
			\end{equation}
            We denote the choice of this data by $\fo$, and call the tuple $(\sM,\fo)$ an \emph{$R$-oriented flow category}.
		\end{defn}

		By \eqref{eq:floworient} and after passing to Thom spectra, there is an induced equivalence of $R$-module spectra (see \cref{rem:thommul})
		\begin{equation}\label{eq:floworientthom}
		    \sM(a,b) \wedge \fo(a) \overset{\simeq}{\longrightarrow} \sM(a,b)^{I_R(a,b)} \wedge_R \fo(b).
		\end{equation}
		By \cref{rem:assoc_vector_bundles}(i) we have
		\[
			\sM(a,b)^{I_R(a,b)} \wedge_R \fo(b) \simeq (\sM(a,b)^{I(a,b)} \wedge R) \wedge_R \fo(b) \simeq \sM(a,b)^{I(a,b)} \wedge \fo(b) .
		\]
		We therefore get a map of $R$-modules
		\begin{equation}\label{eq:flowthom}
			\mathcal M^{-\fo(a,b)} \colon \fo(a) \wedge \sM(a,b)^{-I(a,b)} \longrightarrow \fo(b),
		\end{equation}
		via the composition
        \[
        \fo(a) \longrightarrow \sM(a,b) \wedge \fo(a) \overset{\text{\eqref{eq:floworientthom}}}{\longrightarrow} \sM(a,b)^{I_R(a,b)} \wedge_R \fo(b) \simeq \sM(a,b)^{I(a,b)} \wedge \fo(b)
        \]
		\begin{defn}[Thom spectrum of a flow category]\label{dfn:Thom_spec_flow_cat}
			The \emph{Thom spectrum} of a flow category $\sM$ is the spectrally enriched (non-unital) category $\sM^{-I}$ that is defined to have the same objects as $\sM$ and morphisms given by the Thom spectra
			\[ \sM^{-I} (a,b) \coloneqq \sM(a,b)^{-I(a,b)}.\]
		\end{defn}
		\begin{rem}
            Associativity of the composition in $\sM^{-I}$ follows from \eqref{eq:indexcom}.
        \end{rem}

		\begin{defn}[$R$-module system on $\sM$]\label{def:kmodsys}
			An \emph{$R$-module system} on a flow category $\sM$ is a spectrally enriched functor $\fo \colon \sM^{-I} \to \mod{R}$. 
		\end{defn}
		\begin{lem}\label{lma:Rori_to_spec_system}
			An $R$-orientation on a flow category $\mathcal M$ defines an $R$-module system on $\mathcal M$ by the assignment $a \mapsto \fo(a) $ with morphisms
			\[
				    \mathcal M^{-\fo(a,b)} \colon \fo (a) \wedge \mathcal M^{-I}(a,b) \longrightarrow \fo (b),
			\]
			as in \eqref{eq:flowthom}.
		\end{lem}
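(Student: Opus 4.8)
My plan is to check directly the three conditions making the assignment $a\mapsto\fo(a)$, $(a,b)\mapsto\mathcal M^{-\fo(a,b)}$ into a spectrally enriched functor $\fo\colon\sM^{-I}\to\mod R$ out of the non-unital category $\sM^{-I}$ (\cref{dfn:flow_cat}): that it sends objects to objects, that it sends morphism spectra to morphism spectra, and that it respects composition; since $\sM^{-I}$ is non-unital there is no unit axiom. The first two are essentially formal. The object $\fo(a)$ is a rank one free $R$-module in the data of \cref{defn:rorcoh}, hence an object of $\mod R$. The map $\mathcal M^{-\fo(a,b)}\colon\fo(a)\wedge\sM^{-I}(a,b)\to\fo(b)$ from \eqref{eq:flowthom} is built from $R$-linear ingredients — the Thom equivalence \eqref{eq:floworientthom} induced by the $R$-line bundle isomorphism $\fo(a,b)$, the identification of the Thom spectrum of $I(a,b)$ with that of the associated $R$-line bundle $I_R(a,b)$ from \cref{rem:assoc_vector_bundles}(i), and a Pontryagin--Thom collapse — so it is a map of $R$-modules, and its adjoint gives the structure map $\sM^{-I}(a,b)\to F_R(\fo(a),\fo(b))$.

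The substance of the lemma is compatibility with composition. For each triple $a,b,c\in\Ob(\sM)$ one must show that applying $\mathcal M^{-\fo(a,b)}$ and then $\mathcal M^{-\fo(b,c)}$ to $\fo(a)\wedge\sM^{-I}(a,b)\wedge\sM^{-I}(b,c)$ agrees with first applying the composition law $\sM^{-I}(a,b)\wedge\sM^{-I}(b,c)\to\sM^{-I}(a,c)$ of $\sM^{-I}$ and then $\mathcal M^{-\fo(a,c)}$. I would prove this by unwinding \eqref{eq:flowthom} on both sides and reducing to an identity of isomorphisms of $R$-line bundles over $\sM(a,b)\times\sM(b,c)$, both going from the constant $R$-line bundle $\fo(a)$ to $I_R(a,b)\otimes_R I_R(b,c)\otimes_R\fo(c)$: one side is $\fo(a,b)$ followed by $\id\otimes_R\fo(b,c)$, the other is $\fo(a,c)$ restricted along the face inclusion $\mu_{abc}$ and rewritten by the isomorphism $\rho^{abc}_R$ of \eqref{eq:index_flow_cat_cmpat} — which is exactly the bundle datum, built from $\rho_{abc}$, that defines the composition of $\sM^{-I}$. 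The commutativity of diagram \eqref{eq:rorcoh} in \cref{defn:rorcoh} says precisely that these two bundle isomorphisms coincide. Since taking Thom spectra is functorial, the induced Thom equivalences coincide, and — using that the composition of $\sM^{-I}$ is associative, which is already known to follow from \eqref{eq:indexcom} — this is what is needed.

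I expect the main obstacle to be promoting the strictly commuting diagram \eqref{eq:rorcoh} of $R$-line bundles over a product of morphism spaces to a commuting diagram of maps of Thom spectra. Concretely, one has to check that the Pontryagin--Thom collapse map for $\sM(a,c)$ restricts, along the collar of the face $\sM(a,b)\times\sM(b,c)$ provided by a coherent system of collars (\cref{dfn:k-mfd_collars}), to the external smash of the collapse maps for $\sM(a,b)$ and $\sM(b,c)$, and that this restriction is compatible with the identification of \cref{rem:assoc_vector_bundles}(i) and the Thom equivalence \eqref{eq:floworientthom} under the bundle isomorphisms $\rho_{abc}$ and $\rho^{abc}_R$. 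This is a standard but somewhat technical naturality property of the Pontryagin--Thom construction for $\ang k$-manifolds; once it is established, the rest is formal bookkeeping driven entirely by \eqref{eq:rorcoh} and \eqref{eq:indexcom}.
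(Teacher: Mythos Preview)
Your approach is correct and is exactly the paper's: the paper writes down the functoriality square with vertical arrow $\id\wedge\sM^{-\rho_{abc}}$ and observes it commutes by unwinding \eqref{eq:flowthom} and invoking the orientation compatibility \eqref{eq:rorcoh}, just as you outline.

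One correction: the obstacle you anticipate in your final paragraph is not actually present. At this stage no coherent system of collars has been chosen and none is needed --- the composition in $\sM^{-I}$ is built directly from the face map $\mu_{abc}$ and the bundle isomorphism $\rho_{abc}$ via the covariant functoriality of the Thom spectrum construction in both base and bundle, so promoting the commuting square \eqref{eq:rorcoh} of $R$-line bundles to the required square of Thom spectra is purely formal. The Pontryagin--Thom collapse maps and collars you describe enter only later, in the CJS realization (\cref{sec:cjs}).
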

		\begin{proof}
			Indeed, \eqref{eq:indexcom} and \eqref{eq:flowthom} yield a commutative diagram:
			\[
				\begin{tikzcd}[row sep=scriptsize, column sep=1.5cm]
					\fo(a) \wedge \sM^{-I}(a,b) \wedge \sM^{-I}(b,c)\rar{\mathcal M^{-\fo(a,b)} \wedge \id} \dar{\id \wedge \sM^{-\rho_{abc}}} & \fo(b) \wedge \sM^{-I}(b,c) \dar{\mathcal M^{-\fo(b,c)}} \\ 
					\fo(a) \wedge \sM^{-I}(a,c) \rar{\mathcal M^{-\fo(a,c)}} & \fo(c) \\
				\end{tikzcd},
			\]
			where $\mathcal M^{-\rho_{abc}}$ is the induced map on Thom spectra from the map $\rho_{abc}$ defined in \eqref{eq:indexiso}. Hence the assignment defines a functor, as claimed.
		\end{proof}

		\subsection{Flow bimodules}\label{sec:morph_flow_cats}
			We now discuss how to define morphisms between flow categories. The right notion in this context is that of a bimodule of two flow categories. 

			\begin{defn}[Flow bimodule]\label{dfn:flow_bimodule}
				Let $\sM_1$ and $\sM_2$ be two flow categories. A \emph{flow bimodule of degree $k$} $\sN \colon \sM_1 \to \sM_2$ is an assignment of a $(\mu_1(a) - \mu_2(b)+k)$-dimensional manifold with corners $\sN(a,b)$ for each tuple $(a,b) \in \Ob(\sM_1) \times \Ob(\sM_2)$, such that for any $a\in \Ob(\sM_1)$, $\bigsqcup_{b\in \Ob(\sM_2)} \sN(a,b)$ is compact. A flow bimodule is furthermore equipped with action maps
				\begin{align*}
					\nu^1_{a a' b} \colon \sM_1(a,a') \times \sN(a', b)  &\longrightarrow \sN(a,b), \quad \forall (a,a',b) \in \Ob(\sM_1)^2 \times \Ob(\sM_2) \\
					\nu^2_{a b' b} \colon \sN(a, b') \times \sM_2(b', b) &\longrightarrow \sN(a,b), \quad \forall (a,b',b) \in \Ob(\sM_1) \times \Ob(\sM_2)^2,
				\end{align*}
				each of which is a diffeomorphism onto a face of $\mathcal N(a,b)$. The action maps are required to satisfy the following:
				\begin{description}
					\item[\textsc{Compatibility}] For $a,a',a'' \in \Ob(\mathcal M_1)$ and $b \in \Ob(\mathcal M_2)$ the following diagram is commutative
					\[
						\begin{tikzcd}[row sep=scriptsize, column sep=1.5cm]
							\mathcal M_1(a,a') \times \mathcal M_1(a',a'') \times \mathcal N(a'',b) \rar{\id \times \nu^1_{a' a'' b}} \dar{\mu^1_{a a' a''} \times \id} & \mathcal M_1(a,a') \times \mathcal N(a',b) \dar{\nu^1_{a a' b}} \\
							\mathcal M_1(a,a'') \times \mathcal N(a'',b) \rar{\nu^1_{a a'' b}} & \mathcal N(a,b)
						\end{tikzcd}.
					\]
					Similarly, for $a \in \Ob(\mathcal M_1)$ and $b,b',b'' \in \Ob(\mathcal M_2)$ the following diagram is commutative
					\[
						\begin{tikzcd}[row sep=scriptsize, column sep=1.5cm]
							\mathcal N(a,b'') \times \mathcal M_2(b'',b') \times \mathcal M_2(b'',b) \rar{\nu^2_{a b'' b'} \times \id} \dar{\id \times \mu^2_{b'' b' b}} & \mathcal N(a,b') \times \mathcal M_2(b',b) \dar{\nu^2_{a b' b}} \\
							\mathcal N(a,b'') \times \mathcal M_2(b'',b) \rar{\nu^2_{a b'' b}} & \mathcal N(a,b)
						\end{tikzcd}
					\]
					\item[\textsc{Commutativity}] For $a,a' \in \Ob(\mathcal M_1)$ and $b,b' \in \Ob(\mathcal M_2)$, the following diagram is commutative
					\[
						\begin{tikzcd}[row sep=scriptsize, column sep=1.5cm]
							\mathcal M_1(a,a') \times \mathcal N(a',b') \times \mathcal M_2(b',b) \dar{\id \times \nu^2_{a' b' b}} \rar{\nu^1_{a a' b'} \times \id} & \mathcal N(a,b') \times \mathcal M_2(b',b) \dar{\nu^2_{a b' b}} \\ \mathcal M_1(a,a') \times \mathcal N(a',b) \rar{\nu^1_{a a' b}} & \mathcal N(a,b)
						\end{tikzcd}
					\]
				\end{description}
				Moreover we require that $\sN(a,b)$ is a $\ang{\mu_1(a)-\mu_2(b)+k}$-manifold with faces given by
				\[
					\partial_i \sN(a,b) \coloneqq \bigsqcup_{\substack{a' \in \Ob(\sM_1) \\ \mu_1(a) - \mu_1(a') = i}} (\sM_1(a,a') \times \sN(a', b)) \sqcup \bigsqcup_{\substack{b' \in \Ob(\sM_2) \\ \mu_1(a) - \mu_2(b') + 1 + k = i}} (\sN(a,b') \times \sM_2(b', b)).
                \]
			\end{defn}

            \begin{notn}
                We will often refer to a flow bimodule of degree $0$ simply as a \emph{flow bimodule}.
            \end{notn}
            \begin{rem}\label{rem:bimod_deg}
            Given a flow category $\sM$, let $\sM[k]$ denote the flow category with the same objects and morphisms, with the translated grading function $\mu_{\sM[k]} (-) = \mu_\sM (-)$. A degree $k$ flow bimodule $\sM_1 \to \sM_2$ is equivalent to a degree $0$ flow bimodule $\sM_1[k] \to \sM_2$.
            \end{rem}

			Define the index bundle of the flow bimodule $\sN \colon \mathcal M_1 \to \mathcal M_2$ by
			\[ I(a,b) \coloneqq T\sN(a,b),\]
			for every $(a,b) \in \Ob(\sM_1) \times \Ob(\sM_2)$. We denote the associated $R$-line bundle by $I_R(a,b)$. Denote by $I_1(a,a')$ and $I_2(b,b')$ the index bundles of the flow categories $\mathcal M_1$ and $\mathcal M_2$, respectively. We denote their associated $R$-line bundles by $I_{1,R}(a,a')$ and $I_{2,R}(b,b')$, respectively. Similar to \cref{dfn:metric_flow_cat} and \eqref{eq:indexiso}, fixing a metric on $\mathcal N$ yields isomorphisms of vector bundles over $\sM_1(a,b) \times \sN(b,c)$ and $\sN(a,b) \times \sM_2(b,c)$, respectively
			\begin{align}\label{eq:index_bun_map1}
				\sigma^1_{abc} \colon (\nu^1_{abc})^\ast I(a,c) &\overset{\cong}{\longrightarrow} I_1(a,b) \oplus I(b,c) \\ \label{eq:index_bun_map2}
				\sigma^2_{abc} \colon (\nu^2_{abc})^\ast I(a,c) &\overset{\cong}{\longrightarrow} I(a,b) \oplus I_2(b,c).
			\end{align}
			By abuse of notation, we use the same notation for the induced isomorphisms of $R$-line bundles over $\sM_1(a,b) \times \sN(b,c)$ and $\sN(a,b) \times \sM_2(b,c)$, respectively
			\begin{align*}
				\sigma^1_{abc} \colon ((\nu^1_{abc})^\ast I(a,b))_R &\overset{\simeq}{\longrightarrow} I_{1,R}(a,b) \otimes_R I_R(b,c) \\
				\sigma^2_{abc} \colon ((\nu^2_{abc})^\ast I(a,b))_R &\overset{\simeq}{\longrightarrow} I_R(a,b) \otimes_R I_{2,R}(b,c)
			\end{align*}
			\begin{lem}\label{lem:index_maps_compat}
				Let $\mathcal N \colon \mathcal M_1 \to \mathcal M_2$ be a flow bimodule. The maps $\sigma^1_{abc}$ and $\sigma^2_{abc}$ satisfies the following:
				\begin{description}
					\item[Commutativity]
						For any $a,a' \in \Ob(\mathcal M_1)$ and $b,b' \in \Ob(\mathcal M_2)$, the following diagram of isomorphisms $R$-line bundles over $\sM_1(a,a') \times \sN(a',b') \times \sM_2(b',b)$ commutes.
						\[
							\begin{tikzcd}[row sep=scriptsize, column sep=1.5cm]
								I_R(a,b) \dar{\sigma^1_{a a' b}} \rar{\sigma^2_{abc}} & I_R(a,b') \otimes_R I_{2,R}(b',b) \dar{\sigma^1_{a a' b'} \otimes_R \id} \\
								I_{1,R}(a,a') \otimes_R I_R(a',b) \rar{\id \otimes_R \sigma^2_{a' b' b}} & I_{1,R}(a,a') \otimes_R I_{R}(a',b') \otimes_R I_{2,R}(b',b)
							\end{tikzcd}
						\]
					\item[Compatibility with $\rho_R$] The maps $\sigma^1_{abc}$ and $\sigma^2_{abc}$ are compatible with the maps $\rho_{1,R}^{abc}$ and $\rho_{2,R}^{abc}$ defined in \eqref{eq:index_flow_cat_cmpat} in the sense that the following diagrams commute as isomorphisms of $R$-line bundles over $\sM_1(a,a') \times \sM_1(a',a'') \times \sN(a'',b)$ and $\sN(a,b'') \times \sM_2(b'',b') \times \sM_2(b',b)$, respectively.
					\[
						\begin{tikzcd}[row sep=scriptsize, column sep=1.5cm]
							I_R(a,b) \rar{\sigma^1_{a a'' b}} \dar{\sigma^1_{a a' b}} & I_{1,R}(a,a'') \otimes_R I_R(a'',b) \dar{\rho^{a a' a''}_{1,R}\otimes_R \id} \\
							I_{1,R}(a,a') \otimes_R I_R(a',b) \rar{\id \otimes_R \sigma^1_{a' a'' b}} & I_{1,R}(a,a') \otimes_R I_{1,R}(a',a'') \otimes_R I_R(a'',b)
						\end{tikzcd}
					\]
					\[
						\begin{tikzcd}[row sep=scriptsize, column sep=1.5cm]
							I_R(a,b) \rar{\sigma^2_{a b' b}} \dar{\sigma^2_{a b'' b}} & I_R(a,b') \otimes_R I_{2,R}(b',b) \dar{\sigma^2_{a b'' b} \otimes_R \id} \\
							I_R(a,b'') \otimes_R I_{2,R}(b'',b) \rar{\id \otimes_R \rho_{2,R}^{b'' b' b}} & I_R(a,b'') \otimes_R I_{2,R}(b'',b') \otimes_R I_{2,R}(b',b)
						\end{tikzcd}
					\]
				\end{description}
            \qed
			\end{lem}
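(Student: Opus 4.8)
The plan is to reduce both commutative diagrams to the corresponding statements about honest vector bundles, and then invoke the functoriality of the associated $R$-line bundle construction together with its compatibility with direct sums. Recall that each of $\sigma^1_{abc}$, $\sigma^2_{abc}$, $\rho^{abc}_{1,R}$ and $\rho^{abc}_{2,R}$ is, by definition, the map of $R$-line bundles induced via $(-)_R$ (see \cref{dfn:assoc_rline} and \cref{rem:assoc_vector_bundles}) from a canonical isomorphism of vector bundles: one splits the tangent bundle of a morphism space of $\sM_1$, $\sM_2$ or $\sN$ along the collar direction transverse to a codimension $1$ face, using the chosen metric, and then uses the evident identification of the tangent bundle of a product with the direct sum of the tangent bundles. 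Since $(-)_R$ is functorial and carries direct sums to tensor products (\cref{rem:assoc_vector_bundles}(ii)), it suffices to show that the two composite isomorphisms of vector bundles obtained by traversing each square agree, with matching orderings of the direct summands, so that no braiding of $R$-line bundle factors is introduced after applying $(-)_R$.

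The key geometric input is that a metric on $\sN$, defined as in \cref{dfn:metric_flow_cat}, restricts to a product metric on every face; hence near a codimension $2$ corner the metric is a local product in the two transverse collar directions. Consequently, restricting the tangent bundle of $\sN(a,b)$ to such a corner yields a canonical direct sum decomposition into the two transverse normal lines together with the tangent bundle of the corner, and the order in which the two normal directions are peeled off does not matter. Granting this, I would check the Commutativity square by identifying the relevant corner: by the $\ang{k}$-manifold axioms for $\sN$ (from \cref{dfn:flow_bimodule}), $\sM_1(a,a') \times \sN(a',b') \times \sM_2(b',b)$ is a codimension $2$ corner of $\sN(a,b)$ that is simultaneously a face of $\sM_1(a,a') \times \sN(a',b)$ (via $\nu^2$) and of $\sN(a,b') \times \sM_2(b',b)$ (via $\nu^1$). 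Unwinding \eqref{eq:index_bun_map1}--\eqref{eq:index_bun_map2}, the left-bottom composite peels off the $\sM_1$-collar and then the $\sM_2$-collar, while the top-right composite peels off the $\sM_2$-collar and then the $\sM_1$-collar; by the local-product structure both produce the same ordered decomposition (the two normal lines remain separated by the $T\sM_1(a,a') \oplus T\sN(a',b')$ summand in both orderings, so no transposition occurs). Therefore the vector bundle square commutes on the nose, and applying $(-)_R$ gives the desired commutativity of $R$-line bundles.

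The two ``Compatibility with $\rho_R$'' squares are handled by the same method. For the first, the corner $\sM_1(a,a') \times \sM_1(a',a'') \times \sN(a'',b)$ is a face of $\sM_1(a,a') \times \sN(a',b)$ in two ways — either by applying $\nu^1$ with the first factor of $\sM_1(a,a') \times \sM_1(a',a'')$ grouped off, or by first composing via $\mu^1_{aa'a''}$ and then applying $\nu^1$ — and, after unwinding the definition of $\rho^{aa'a''}_{1,R}$ from \eqref{eq:indexiso} and \eqref{eq:index_flow_cat_cmpat} (which keeps the outermost $\underline{\IR}$ summand in front and reorganizes the remaining factors via $\kappa_{aa'a''}$), the asserted commutativity becomes exactly the associativity relation \eqref{eq:indexcom} for $\sM_1$ combined with the local-product structure of the metric on $\sN$ near that corner. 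The second square is the mirror-image argument with $\sM_2$, $\mu^2_{b''b'b}$ and $\rho^{b''b'b}_{2,R}$ replacing $\sM_1$, $\mu^1_{aa'a''}$ and $\rho^{aa'a''}_{1,R}$. The one point requiring genuine care throughout is the bookkeeping of the orderings of the direct summands, to be certain that no transposition of $R$-line bundle factors sneaks in when passing from vector bundles to $R$-line bundles; the local-product form of the metric near corners is precisely what makes this transparent, and is the main (and essentially only) obstacle.
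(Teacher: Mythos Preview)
Your proposal is correct and is essentially the argument the paper is implicitly relying on. Note that the paper itself provides no proof of this lemma: it is stated with a terminal \qed\ immediately after the diagrams, treating the result as routine. Your write-up supplies exactly the verification one would expect, reducing to the underlying vector bundle isomorphisms via functoriality of $(-)_R$ and appealing to the product metric structure near corners together with the compatibility and commutativity axioms built into \cref{dfn:flow_bimodule}; the care you take with orderings of direct summands is appropriate and is the only point where one could go wrong.
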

   
			\begin{defn}[$R$-orientation on a flow bimodule]\label{dfn:r_ori_flow_bimod}
				Let $(\sM_1,\fo_1)$ and $(\sM_2,\fo_2)$ be two $R$-oriented flow categories. An \emph{$R$-orientation} on a flow bimodule $\sN \colon \sM_1 \to \sM_2$ consists of isomorphisms
				\[
					 \fm(a,b) \colon \fo_1(a) \overset{\simeq}{\longrightarrow} I_R(a,b) \otimes_R \fo_2(b) 
				\]
				of $R$-line bundles over $\sN(a,b)$ such that for any $a,a' \in \Ob(\mathcal M_1)$ and $b,b' \in \Ob(\mathcal M_2)$, the following diagrams commute as isomorphisms of $R$-line bundles over $\sM_1(a,a') \times \sN(a',b)$ and $\sN(a,b') \times \sM_2(b',b)$, respectively.
				\begin{equation}\label{eq:bimod_ext_ori1}
					\begin{tikzcd}[row sep=scriptsize, column sep=1.5cm]
						\fo_1(a) \rar{\fm(a,b)} \dar{\fo_1(a,a')} & I_R(a,b) \otimes_R \fo_2(b) \dar{\sigma^1_{a a' b} \otimes_R \id} \\
						I_{1,R}(a,a') \otimes_R \fo_1(a') \rar{\id \otimes_R \fm(a',b)} & I_{1,R}(a,a') \otimes_R I_R(a',b) \otimes_R \fo_2(b)
					\end{tikzcd}
                \end{equation}
				\begin{equation}\label{eq:bimod_ext_ori2}
					\begin{tikzcd}[row sep=scriptsize, column sep=1.5cm]
					\fo_1(a) \rar{\fm(a,b)} \dar{\fm(a,b')} & I_R(a,b) \otimes_R \fo_2(b) \dar{\sigma^2_{a b' b} \otimes_R \id} \\
					I_R(a,b') \otimes \fo_2(b') \rar{\id \otimes_R \fo_2(b',b)} & I_R(a,b') \otimes_R I_{2,R}(b',b) \otimes_R \fo_2(b)
					\end{tikzcd}.
                \end{equation}
                We denote the choice of this data by $\fm$, and call the tuple $(\sN,\fm)$ an \emph{$R$-oriented flow bimodule}.
			\end{defn}

			Note that the action maps on index bundles $\sigma^1$ and $\sigma^2$ defined in \eqref{eq:index_bun_map1} and \eqref{eq:index_bun_map2} induce maps
			\begin{align}\label{eq:thombimod1}
				\mathcal N^{-\sigma^1_{a a' b}} \colon \sM_1^{-I_{1}}(a,a') \wedge \sN^{-I}(a',b) &\longrightarrow \sN^{-I}(a,b) \\ \label{eq:thombimod2}
				\mathcal N^{-\sigma^2_{a b' b}} \colon \sN^{-I}(a,b') \wedge \sM_2^{-I_2}(b',b) &\longrightarrow \sN^{-I}(a,b).
			\end{align}
			These maps satisfy suitable compatibility conditions with the maps $\mathcal M_1^{-\rho_1}$ and $\mathcal M_2^{-\rho_2}$ as a consequence of \cref{lem:index_maps_compat}.
			\begin{defn}
				Given a flow bimodule $\sN \colon \sM_1 \to \sM_2$, denote by $\sN^{-I}$ the spectrally enriched $(\sM_1^{-I_1},\sM_2^{-I_2})$-bimodule
				\[ \sN^{-I} \colon (\sM_1^{-I_1})^{\mathrm{op}} \wedge \sM_2^{-I_2} \longrightarrow \Spec \]
				defined on objects by $(a,b) \mapsto \sN^{-I(a,b)}(a,b)$ and on morphisms by the maps \eqref{eq:thombimod1} and \eqref{eq:thombimod2}.
			\end{defn}

			Now, let $(\sM_1,\fo_1)$ and $(\sM_2,\fo_2)$ be two $R$-oriented flow categories. These give rise to a $(\sM_1^{-I_1},\sM_2^{-I_2})$-bimodule
			\[\sF_{\fo_1,\fo_2} \colon (\sM_1^{-I_1})^{\mathrm{op}} \wedge \sM_2^{-I_2} \longrightarrow \Spec\]
			given on objects by $(a,b) \mapsto F_R(\fo_1(a), \fo_2(b))$, where $F_R(-,-)$ denotes the mapping spectrum of maps of $R$-modules.

            Recall from \cref{lma:Rori_to_spec_system} that an $R$-orientation on a flow category yields an $R$-module system.
			\begin{defn}[Bimodule extension of $R$-module systems]\label{dfn:bimod_ext_mod_syst}
				Let $\fo_1$ and $\fo_2$ be two $R$-module systems on the flow categories $\sM_1$ and $\sM_2$, respectively. An \emph{extension} of the $R$-module systems $\fo_1$ and $\fo_2$ along a flow bimodule $\sN \colon \sM_1 \to \sM_2$ is a natural transformation between bimodules
				\[ \sN^{-I} \Longrightarrow \sF_{\fo_1,\fo_2}.\]
				More explicitly, this consists of a collection of maps of $R$-modules
				\[ \mathcal N^{-\fm(a,b)} \colon \fo_1(a) \wedge \sN(a,b)^{-I(a,b)} \longrightarrow \fo_2(b),\]
				compatible with the action maps of $\sN$ in the sense that the following diagrams commute for every $a,a' \in \Ob(\mathcal M_1)$ and $b,b' \in \Ob(\mathcal M_2)$
				\[
					\begin{tikzcd}[row sep=scriptsize, column sep=1.75cm]
						\fo_1(a) \wedge \sM_1^{-I_1}(a,a') \wedge \sN^{-I}(a',b)\rar{\id \wedge \mathcal N^{-\sigma^1_{a a' b}}} \dar{\mathcal M_1^{-\fo_1(a,a')} \wedge \id} &\fo_1(a) \wedge \sN^{-I} (a,b) \dar{\mathcal N^{-\fm(a,b)}} \\
						\fo_1(a') \wedge \sN^{-I}(a',b)\rar{\mathcal N^{-\fm(a',b)}} & \fo_2(b).
					\end{tikzcd}
				\]
				\[
					\begin{tikzcd}[row sep=scriptsize, column sep=1.75cm]
						\fo_1(a) \wedge \sN^{-I}(a,b') \wedge \sM_2^{-I_2}(b',b) \rar{\id \wedge\sigma^2_{a b' b}} \dar{\mathcal N^{-\fm(a,b')}\wedge \id} & \fo_1(a) \wedge \sN^{-I} (a,b) \dar{\mathcal N^{-\fm(a,b)}} \\
						\fo_2(b') \wedge \sM_2^{-I_2}(b',b) \rar{\mathcal M_2^{-\fo_2(b',b)}} & \fo_2(b).
                    \end{tikzcd}
				\]
			\end{defn}
			An $R$-oriented flow bimodule between $R$-oriented flow categories induces a bimodule extension of the associated $R$-module systems on the flow categories. This is similar to the proof of \cref{lma:Rori_to_spec_system}.

			\begin{defn}[Composition of flow bimodules]\label{dfn:composition_flow_bimods}
				Let $\sM_1$, $\sM_2$ and $\sM_3$ be three flow categories and let $\sN_1 \colon \mathcal M_1 \to \mathcal M_2$ and $\sN_2 \colon \mathcal M_2 \to \mathcal M_3$ be two flow bimodules. We define the composition of $\mathcal N_1$ and $\mathcal N_2$ to be the flow bimodule
				\[
					\mathcal N_2 \circ \mathcal N_1 \colon \mathcal M_1 \longrightarrow \mathcal M_3,
				\]
				by declaring $(\mathcal N_2 \circ \mathcal N_1)(a,c)$ to be the colimit of the following diagram
				\[
					\begin{tikzcd}[row sep=scriptsize, column sep=1.25cm]
							\displaystyle\bigsqcup_{b,b' \in \Ob(\mathcal M_2)} (\mathcal N_1(a,b) \times \mathcal M_2(b,b') \times \mathcal N_2(b',c)) \rar[shift left]{\id \times \nu^2_{b b' c}} \rar[shift right,swap]{\nu^2_{a b b'} \times \id} & \displaystyle\bigsqcup_{b \in \Ob(\mathcal M_2)} (\mathcal N_1(a,b) \times \mathcal N_2(b,c)).
					\end{tikzcd}
				\]
			\end{defn}
            \begin{lem}\label{lem:r_ori_composition_bimods}
                Let $(\sM_i,\fo_i)$ be three $R$-oriented flow categories. Let $(\sN_1,\fm_1) \colon \sM_1 \to \sM_2$ and $(\sN_2,\fm_2) \colon \sM_2 \to \sM_3$ be two $R$-oriented flow bimodules. The composition $\sN_{12} \coloneqq \sN_2 \circ \sN_1$ carries a naturally induced $R$-orientation.
            \end{lem}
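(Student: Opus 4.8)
The plan is to construct the $R$-orientation $\fm_{12}$ on $\sN_{12}=\sN_2\circ\sN_1$ directly, chart by chart, using the presentation of $\sN_{12}(a,c)$ from \cref{dfn:composition_flow_bimods} as the result of gluing the products $\sN_1(a,b)\times\sN_2(b,c)$, $b\in\Ob(\sM_2)$, along their internal faces of the form $\sN_1(a,b)\times\sM_2(b,b')\times\sN_2(b',c)$. First I would fix a metric on $\sN_{12}$ restricting on each chart $\sN_1(a,b)\times\sN_2(b,c)$ to the product of the chosen metrics on $\sN_1$ and $\sN_2$; this is consistent along the gluing loci precisely because the metrics on $\sM_2$, $\sN_1$ and $\sN_2$ are already compatible in the sense of \cref{dfn:metric_flow_cat}. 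Over such a chart the tangent bundle splits as $T\sN_1(a,b)\oplus T\sN_2(b,c)$, i.e.\ as $I^{\sN_1}(a,b)\oplus I^{\sN_2}(b,c)$ where $I^{\sN_1},I^{\sN_2}$ denote the index bundles of the bimodules $\sN_1,\sN_2$, so by multiplicativity of associated $R$-line bundles there is a canonical identification $I_R(a,c)|_{\sN_1(a,b)\times\sN_2(b,c)}\cong I^{\sN_1}_R(a,b)\otimes_R I^{\sN_2}_R(b,c)$.

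On the chart $\sN_1(a,b)\times\sN_2(b,c)$ I would then define $\fm_{12}(a,c)$ to be the composite of pullbacks
\[ \fo_1(a)\xrightarrow{\ \fm_1(a,b)\ } I^{\sN_1}_R(a,b)\otimes_R\fo_2(b)\xrightarrow{\ \id\otimes_R\fm_2(b,c)\ } I^{\sN_1}_R(a,b)\otimes_R I^{\sN_2}_R(b,c)\otimes_R\fo_3(c)\cong I_R(a,c)\otimes_R\fo_3(c). \]
The essential point is to check that these chartwise formulas agree over the internal faces so that they assemble to one isomorphism of $R$-line bundles over $\sN_{12}(a,c)$. On the codimension-one face $\sN_1(a,b)\times\sM_2(b,b')\times\sN_2(b',c)$, shared by the $b$-chart and the $b'$-chart, I would restrict the $b$-chart formula and use the left-action compatibility \eqref{eq:bimod_ext_ori1} for $\fm_2$ to rewrite $\fm_2(b,c)$ in terms of the $R$-orientation $\fo_2(b,b')$ of $\sM_2$ and $\fm_2(b',c)$; and restrict the $b'$-chart formula and use the right-action compatibility \eqref{eq:bimod_ext_ori2} for $\fm_1$ to rewrite $\fm_1(a,b')$ in terms of the \emph{same} $\fo_2(b,b')$ and $\fm_1(a,b)$. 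After inserting the canonical index-bundle identifications (the maps $\sigma^1$ for $\sN_2$ and $\sigma^2$ for $\sN_1$ from \eqref{eq:index_bun_map1}--\eqref{eq:index_bun_map2}), both restrictions collapse to the common composite built from $\fm_1(a,b)$, the middle factor $\fo_2(b,b')$, and $\fm_2(b',c)$, with the $\fo_2(b)$- and $\fo_2(b')$-factors cancelling. On the deeper internal faces $\sN_1(a,b)\times\sM_2(b,b_1)\times\cdots\times\sM_2(b_{j-1},b_j)\times\sN_2(b_j,c)$, where three or more charts meet, the resulting transition isomorphisms compose around to the identity by the cocycle condition \eqref{eq:rorcoh} for $\fo_2$ together with the commutativity and $\rho$-compatibility of $\sigma^1,\sigma^2$ recorded in \cref{lem:index_maps_compat}. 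This produces a well-defined $\fm_{12}(a,c)\colon\fo_1(a)\overset{\simeq}{\to}I_R(a,c)\otimes_R\fo_3(c)$.

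It then remains to verify the two action-compatibility squares \eqref{eq:bimod_ext_ori1}--\eqref{eq:bimod_ext_ori2} for $(\sN_{12},\fm_{12})$. Since the $\sM_1$-action and $\sM_3$-action on $\sN_{12}$ are induced chartwise from the $\sM_1$-action on $\sN_1$ and the $\sM_3$-action on $\sN_2$, these squares restrict on each chart to the corresponding squares for $\fm_1$ (tensored with the identity on the $\sN_2$-factor) and for $\fm_2$ (tensored with the identity on the $\sN_1$-factor), which hold by hypothesis; because $\fm_{12}$ is already globally well-defined, checking them chartwise suffices. Finally, the qualifier "naturally induced" is justified as follows: the only auxiliary input was the metric on $\sN_{12}$, and two compatible choices produce canonically identified index bundles and hence the same $\fm_{12}$ up to canonical isomorphism; moreover the assignment $(\fm_1,\fm_2)\mapsto\fm_{12}$ is associative with respect to a further composition, and under the correspondence of \cref{lma:Rori_to_spec_system} and \cref{dfn:bimod_ext_mod_syst} it realizes the composite of the bimodule extensions of $R$-module systems determined by $\fm_1$ and $\fm_2$, mediated by the $R$-module-system structure on $\fo_2$.

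The main obstacle is the gluing step: turning the chartwise formulas into a single global isomorphism requires a somewhat delicate diagram chase that simultaneously juggles the bimodule-orientation compatibilities of $\fm_1$ and $\fm_2$, the cocycle identity for $\fo_2$, and the mutual compatibilities of the index identification maps $\rho,\sigma^1,\sigma^2$, all while keeping careful track of the bracketing of the iterated $\otimes_R$'s and of the associated-Thom-spectrum coherence isomorphisms. Once this bookkeeping is set up carefully, the remaining steps are formal.
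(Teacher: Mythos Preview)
Your proposal is correct and follows essentially the same approach as the paper: define $\fm_{12}(a,c)$ chartwise on each $\sN_1(a,b)\times\sN_2(b,c)$ as the composite $(\id\otimes_R\fm_2(b,c))\circ\fm_1(a,b)$, check that the two restrictions to the internal face $\sN_1(a,b)\times\sM_2(b,b')\times\sN_2(b',c)$ agree, and conclude that the result descends to the colimit and satisfies the two action-compatibility squares. The paper's proof is more terse---it records the key agreement on overlaps as a one-line equality \eqref{eq:compos_bimod_ori2} without spelling out the diagram chase you describe, and it omits your discussion of metrics, higher-codimension faces, and naturality---but the underlying argument is the same.
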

            \begin{proof}
                 For any tuple $(a,b,c) \in \Ob(\sM_1) \times \Ob(\sM_2) \times \Ob(\sM_3)$ we have isomorphisms of $R$-line bundles over $\sN_1(a,b) \times \sN_2(b,c)$
                \begin{equation}\label{eq:compos_bimod_ori}
                    \fo_1(a) \xrightarrow{\fm_1(a,b)} I_R^{\sN_1}(a,b) \otimes_R \fo_2(b) \xrightarrow{\id \otimes_R \fm_2(b,c)} I_R^{\sN_1}(a,b) \otimes_R I_R^{\sN_2}(b,c) \otimes_R \fo_3(c),
                \end{equation}
                For any $b,b' \in \Ob(\sM_2)$ we have 
                \begin{equation}\label{eq:compos_bimod_ori2}
                    (\id \times \nu^2_{bb'c})^\ast\text{\eqref{eq:compos_bimod_ori}} = (\nu^2_{abb'} \times \id)^\ast \text{\eqref{eq:compos_bimod_ori}}.
                \end{equation}
                Therefore the composition \eqref{eq:compos_bimod_ori} descends to an isomorphism of $R$-line bundles on the colimit $(\sN_2\circ \sN_1)(a,c)$
                \[
                    \fm_{12}(a,c) \colon \fo_1(a) \longrightarrow I^{\sN_{12}}_R(a,c) \otimes_R \fo_3(c),
                \]
                such that the following two diagrams commute:
                \[
                    \begin{tikzcd}[row sep=scriptsize,column sep=1.5cm]
                        \fo_1(a) \rar{\fm_{12}(a,c)} \dar{\fo_1(a,a')} & I^{\sN_{12}}_R(a,c) \otimes_R \fo_3(c) \dar{\sigma^1_{aa'c} \otimes_R \id} \\
                        I^{\sM_1}_R(a,a') \otimes_R \fo_1(a') \rar{\id \otimes_R \fm_{12}(a',c)} & I^{\sM_1}_R(a,a') \otimes_R I_R^{\sN_{12}}(a',c) \otimes_R \fo_3(c)
                    \end{tikzcd}
                \]
                \[
                    \begin{tikzcd}[row sep=scriptsize,column sep=1.5cm]
                        \fo_1(a) \rar{\fm_{12}(a,c)} \dar{\fm_{12}(a,c')} & I^{\sN_{12}}_R(a,c) \otimes_R \fo_3(c) \dar{\sigma^3_{ac'c} \otimes_R \id} \\
                        I^{\sN_{12}}_R(a,c') \otimes_R \fo_3(c') \rar{\id \otimes_R \fo_3(c',c)} & I_R^{\sN_{12}}(a,c') \otimes_R I_R^{\sN_3}(c',c) \otimes_R \fo_3(c)
                    \end{tikzcd}.
                \]
            \end{proof}
   
    \subsection{Bordism of flow bimodules}\label{sec:bordism_of_flow_bimods}
            The notion of a homotopy between maps is captured on the flow categorical level by bordisms.
			\begin{defn}[Bordism of flow bimodules]\label{dfn:bordism_of_flow_bimods}
				Suppose that $\mathcal N_1, \mathcal N_2 \colon \mathcal M_1 \to \mathcal M_2$ are two flow bimodules of two flow categories $\mathcal M_1$ and $\mathcal M_2$. A \emph{bordism from $\mathcal N_1$ to $\mathcal N_2$}
				\[
					\begin{tikzcd}[row sep=scriptsize, column sep=scriptsize]
						\mathcal M_1 \ar[r, bend left, "\mathcal N_1", ""{name=U,inner sep=1pt,below}] \ar[r, bend right, "\mathcal N_2"{below}, ""{name=D,inner sep=1pt}] & \mathcal M_2 \ar[Rightarrow, from=U, to=D, "\mathcal B"]
					\end{tikzcd}
				\]
				is an assignment of a $(\mu_1(a) - \mu_2(b)+1)$-dimensional smooth manifold with corners $\mathcal B(a,b)$ for each tuple $(a,b) \in \Ob(\sM_1) \times \Ob(\sM_2)$ such that for any $a\in \Ob(\sM_1)$, $\bigsqcup_{b\in \Ob(\sM_2)} \sB(a,b)$ is compact. A bordism us furthermore equipped with action maps
				\begin{align*}
					\beta^1_{a a' b} \colon \sM_1(a,a') \times \mathcal B(a', b) &\longrightarrow \mathcal B(a,b), \quad \forall (a,a',b) \in \Ob(\sM_1)^2 \times \Ob(\sM_2) \\
					\beta^2_{a b' b} \colon \mathcal B(a, b') \times \sM_2(b', b) &\longrightarrow \mathcal B(a,b), \quad \forall (a,b',b) \in \Ob(\sM_1) \times \Ob(\sM_2)^2 \\
                \end{align*}
                and inclusions
                \begin{align*}
                    \alpha^1_{a b} \colon \mathcal N_1(a,b) &\longrightarrow \mathcal B(a,b), \quad \forall (a,b) \in \Ob(\sM_1) \times \Ob(\sM_2)\\
                    \alpha^2_{a b} \colon \mathcal N_2(a,b) &\longrightarrow \mathcal B(a,b), \quad \forall (a,b) \in \Ob(\sM_1) \times \Ob(\sM_2)
				\end{align*}
				each of which is a diffeomorphism onto a face of $\mathcal B(a,b)$. The action maps are required to satisfy similar compatibility and commutativity relations as flow bimodules (see \cref{dfn:flow_bimodule}). Moreover, they are required to be compatible with the action maps of $\mathcal N_i$ under the inclusions. Define
                \[
                    \partial^\circ_i \mathcal B(a,b) \coloneqq \bigsqcup_{\substack{a' \in \Ob(\sM_1) \\ \mu_1(a)-\mu_1(a')  = i}} (\mathcal M_1(a,a') \times \mathcal B(a',b)) \sqcup \bigsqcup_{\substack{b' \in \Ob(\sM_2) \\ \mu_1(a) - \mu_2(b') + 2 = i}} (\mathcal B(a,b') \times \mathcal M_2(b',b)).
                \]
                We moreover require that $\sB(a,b)$ is a $\ang{\mu_1(a)-\mu_2(b)+2}$-manifold with faces given by
				\[
					\partial_i \mathcal B(a,b) \coloneqq 
						\begin{cases}
							\sN_1(a,b) \sqcup \sN_2(a,b), & i = 1 \\ 
							\partial^\circ_{i-1} \mathcal B(a,b), & i > 1.
						\end{cases}
				\]
			\end{defn}
			The index bundle of a bordism $\sB \colon \mathcal N_1 \Rightarrow \mathcal N_2$ is the virtual vector bundle
			\[ I(a,b) \coloneqq T\sB(a,b) - \underline{\IR},\]
			for any $(a,b) \in \Ob(\sM_1) \times \Ob(\sM_2)$. We denote the associated $R$-line bundle by $I_R(a,b)$. 
   
			In the following we denote the corresponding index bundles of $\mathcal M_1$ and $\mathcal M_2$ by $I_{\mathcal M_1}$ and $I_{\mathcal M_2}$, respectively. There are induced isomorphisms of vector bundles over $\sM_1(a,b) \times \sB(b,c)$, $\sB(a,b) \times \sM_2(b,c)$, $\sN_1(a,b)$, and $\sN_2(a,b)$, respectively
			\begin{align}\label{eq:bordism_iso1}
				(\beta^1_{abc})^\ast I(a,c) &\overset{\cong}{\longrightarrow} I_{\mathcal M_1}(a,b) \oplus I(b,c) \\ \label{eq:bordism_iso2}
				(\beta^2_{abc})^\ast I(a,c)&\overset{\cong}{\longrightarrow} I(a,b) \oplus I_{\mathcal M_2}(b,c) \\ \label{eq:bordism_iso3}
				(\alpha^1_{ab})^\ast I(a,b) &\overset{\cong}{\longrightarrow} I_{\mathcal N_1}(a,b) \\ \label{eq:bordism_iso4}
				(\alpha^2_{ab})^\ast I(a,b) &\overset{\cong}{\longrightarrow} I_{\mathcal N_2}(a,b).
			\end{align}
			\begin{lem}
                The isomorphisms of vector bundles (\ref{eq:bordism_iso1}--\ref{eq:bordism_iso4}) induce isomorphisms of the associated $R$-line bundles that satisfy similar compatibility diagrams as those in \cref{lem:index_maps_compat}.
            \qed
			\end{lem}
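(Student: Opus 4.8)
The plan is to obtain the statement by transporting the already-established vector bundle facts through the associated $R$-line bundle functor $(-)_R$ of \cref{dfn:assoc_rline}, exactly as the diagram \eqref{eq:indexcom} for flow categories and the diagrams of \cref{lem:index_maps_compat} for flow bimodules were obtained, since this data is precisely what will be needed to make sense of an $R$-orientation on a bordism. Two inputs are required. First, the isomorphisms \eqref{eq:bordism_iso1}--\eqref{eq:bordism_iso4} of (virtual) vector bundles should themselves fit into commuting compatibility diagrams at the level of vector bundles. Second, $(-)_R$ is a pullback-compatible monoidal functor from (virtual) vector bundles and their isomorphisms to $R$-line bundles, carrying $\oplus$ to $\otimes_R$ in the given order and the trivial bundle to the unit; this is \cref{rem:assoc_vector_bundles}, extended to virtual bundles as in \cref{sec:background_spectra}. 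Granting these, applying $(-)_R$ to \eqref{eq:bordism_iso1}--\eqref{eq:bordism_iso4} produces the asserted isomorphisms of $R$-line bundles, and applying it to the diagrams of the first input produces the asserted compatibility diagrams, a functor carrying commuting diagrams to commuting diagrams.

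So the content is the first input. First I would fix a metric on $\sB$ compatible with the metrics on $\sM_1,\sM_2,\sN_1,\sN_2$ under the action maps $\beta^1,\beta^2$ and the inclusions $\alpha^1,\alpha^2$, that is, so that each relevant product metric agrees with the restriction of the metric on $\sB(a,b)$ along the corresponding face; equivalently, one extends the given coherent system of collars (\cref{dfn:k-mfd_collars}) on the boundary strata of $\sB(a,b)$ — which assemble by \cref{lem:smooth_gluing} — to a coherent system on $\sB(a,b)$. The inward collar normal along each codimension-one face then supplies the canonical splitting $T(\mathrm{face}) \oplus \underline{\IR} \cong T\sB(a,b)|_{\mathrm{face}}$; cancelling a trivial summand via $I(a,b) = T\sB(a,b) - \underline{\IR}$ gives \eqref{eq:bordism_iso1} and \eqref{eq:bordism_iso2} for the faces $\sM_1(a,b)\times\sB(b,c)$ and $\sB(a,b)\times\sM_2(b,c)$, while the same splitting, now using $I_{\sN_i}(a,b) = T\sN_i(a,b)$, gives \eqref{eq:bordism_iso3}--\eqref{eq:bordism_iso4} for the faces $\sN_1(a,b)$ and $\sN_2(a,b)$.

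Next I would record the compatibility diagrams satisfied by these vector bundle isomorphisms, checking commutativity in each case by observing that both composites are induced by one and the same higher-codimension collar, exactly as in \cref{lem:index_maps_compat}. There are three families: associativity squares for $\beta^1$ composed with $\beta^1$ and for $\beta^2$ composed with $\beta^2$, matching the squares of \cref{lem:index_maps_compat} with the composition isomorphisms of $\sM_1,\sM_2$ in place of $\rho_{1,R},\rho_{2,R}$; the $\beta^1$-versus-$\beta^2$ commutativity square, expressing that the left $\sM_1$-action and the right $\sM_2$-action on $\sB$ commute; and squares stating that the face inclusions $\alpha^1,\alpha^2$ intertwine \eqref{eq:bordism_iso1}--\eqref{eq:bordism_iso2} with the isomorphisms \eqref{eq:index_bun_map1}--\eqref{eq:index_bun_map2} attached to $\sN_1,\sN_2$, and carry the bordism action maps to the bimodule action maps.

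The one ingredient with no counterpart in the bimodule case — and the step I expect to demand the most bookkeeping — is the last family: a bordism records not only two one-sided actions but also the embeddings of $\sN_1$ and $\sN_2$ as faces, and one must verify that the collar of $\sB$ restricts along those faces to the collars already chosen on $\sN_1$ and $\sN_2$, so that the splittings agree on all the overlapping codimension-two strata $\sM_1(a,a')\times\sN_i(a',b)$ and $\sN_i(a,b')\times\sM_2(b',b)$. This is arranged by choosing the coherent collar system on $\sB$ to extend the given data, which is possible since the boundary strata glue by \cref{lem:smooth_gluing}; once this is in place the verification is formally identical to that of the analogous compatibility in \cref{lem:index_maps_compat}, and no new analytic input is needed.
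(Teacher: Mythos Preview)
Your proposal is correct and follows essentially the same approach the paper has in mind: the paper omits the proof entirely (marking the lemma with \qed), treating it as an immediate consequence of applying the monoidal functor $(-)_R$ to the vector bundle isomorphisms \eqref{eq:bordism_iso1}--\eqref{eq:bordism_iso4} and invoking the same collar-based compatibility verification already carried out for bimodules in \cref{lem:index_maps_compat}. Your write-up simply spells out in detail the routine bookkeeping the paper leaves implicit, including the one new wrinkle for bordisms---that the collars on $\sB$ must restrict correctly along the faces $\sN_1,\sN_2$---which is indeed handled by the coherent collar extension you describe.
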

			\begin{defn}[$R$-orientation on a bordism]\label{dfn:r_ori_flow_bord}
            Let $(\sM_1,\fo_1)$ and $(\sM_2,\fo_2)$ be two $R$-oriented flow categories and let $(\sN_i,\fm_i) \colon \sM_1 \to \sM_2$ be two $R$-oriented flow bimodules for $i\in \{1,2\}$. An \emph{$R$-orientation} on a bordism $\sB \colon \sN_1 \Rightarrow \sN_2$ consists of isomorphisms 
            \[\fn(a,b) \colon \fo_1(a) \overset{\simeq}{\longrightarrow} I_R(a,b) \otimes_R \fo_2(b)  \]
            of $R$-line bundles over $\sB(a,b)$ which satisfy compatibilities similar to those in \cref{dfn:r_ori_flow_bimod} and moreover which agrees with the $R$-orientations $\fm_i$ on $\sN_i(a,b)$ under the isomorphisms (\ref{eq:bordism_iso3}--\ref{eq:bordism_iso4}). We denote the choice of an $R$-orientation on $\sB$ by $\fn$, and call the tuple $(\sB,\fn)$ an \emph{$R$-oriented flow bordism}.
			\end{defn}

			From the action maps on a bordism $\sB \colon \sN_1 \Rightarrow \sN_2$, we get induced maps
			\begin{align*}
				\sM_1^{-I_{\mathcal M_1}}(a,a') \wedge \sB^{-I}(a',b') \wedge \sM_2^{-I_{\mathcal M_2}}(b',b) &\longrightarrow \sB^{-I}(a,b) \\
				\sN_i^{-I_{\sN_i}}(a,b) &\longrightarrow \sB^{-I}(a,b),
			\end{align*}
			that are appropriately compatible with the composition data, similar to the diagrams in \cref{dfn:bimod_ext_mod_syst}.  These allow us to define a $(\sM_1^{-I_{\mathcal M_1}},\sM_2^{-I_{\mathcal M_2}})$-bimodule $\sB^{-I}$ that is given on objects by
			\[  (a,b) \longmapsto \sB^{-I}(a,b), \]
			which comes equipped with natural transformations for $i\in \{1,2\}$
			\[ \iota_i \colon \sN_i^{-I_{\sN_i}} \Longrightarrow \sB^{-I}.\]

			\begin{defn}[Bordism extension of $R$-module systems]\label{dfn:bordism_extension_rmod_system}
			Let $\fo_1$ and $\fo_2$ be two $R$-module systems on the flow categories $\sM_1$ and $\sM_2$, respectively. Let $\fm_1$ and $\fm_2$ be extensions of these $R$-module systems along the flow bimodules $\sN_1$ and $\sN_2$, respectively. An \emph{extension} of the $R$-module system extension $\fm_1$ and $\fm_2$ along a bordism $\sB \colon \sN_1 \Rightarrow \sN_2$ is a natural transformation
				\[ \fn \colon \sB^{-I} \Longrightarrow \sF_{\fo_1,\fo_2},\]
				such that $\fn \circ \iota_i = \fm_i$ for $i\in \{1,2\}$.
			\end{defn}
			A bordism extension of $R$-orientations induces a bordism extension of the associated $R$-module systems.
        
		\subsection{Flow multimodules}
            The flow categorical notion corresponding to operations that allow for ``multiple inputs'' is that of a flow multimodule.
            \begin{defn}[Flow multimodule]\label{dfn:flow_multimodule}
                Let $\mathcal M_0, \ldots, \mathcal M_n$ be flow categories. Let
                \[
                d \coloneqq \sum_{i=1}^n \mu_i(a_i)-\mu_0(a_0).
                \]
                A \emph{flow multimodule} $\mathcal N\colon \mathcal M_1, \ldots, \mathcal M_n \to \mathcal M_0$ is an assignment of a $d$-dimensional smooth manifold with corners $\mathcal N(\vec a;a_0)$ for each tuple $(a_0,\vec a) \coloneqq (a_0,a_1,\ldots,a_n) \in \prod_{i=0}^n \Ob(\mathcal M_i)$, such that for any $\vec a \in \prod_{i=1}^n \Ob(\sM_i)$, $\bigsqcup_{a_0\in \Ob(\sM_0)} \sN(\vec a;a_0)$ is compact. A flow multimodule is furthermore equipped with action maps
                \begin{align*}
                    \mathcal M_i(a_i,a_i') \times \mathcal N(a_1,\ldots,a_i',\ldots,a_n;a_0) &\longrightarrow \mathcal N(\vec a;a_0), \quad i\in \left\{1,\ldots,n\right\} \\
                    \mathcal N(\vec a;a_0') \times \mathcal M_0(a_0',a_0) &\longrightarrow \mathcal N(\vec a;a_0),
                \end{align*}
                each of which is a diffeomorphism onto a face of $\sN(\vec a;a_0)$. The action maps are required to satisfy similar compatibility and commutativity relations as flow bimodules (see \cref{dfn:flow_bimodule}). In addition, the actions of $\mathcal M_i, i \in \{1,\dots,n\}$ are required to commute with each other.

                We moreover require that $\sN(\vec a;a_0)$ is a $\ang d$-manifold with faces given by
                \begin{align*}
                    \partial_i \sN(\vec a;a_0) \coloneqq &\bigsqcup_{j=1}^n \left(\bigsqcup_{\substack{a_j' \in \Ob(\sM_j) \\ \mu_j(a_j) - \mu_j(a_j') = i}} (\sM_j(a_j,a_j') \times \sN(a_1,\ldots,a_j',\ldots,a_n;a_0))\right) \\
                    &\qquad \sqcup \bigsqcup_{\substack{a_0' \in \Ob(\sM_0) \\ \sum_{\ell=1}^n \mu_\ell(a_\ell) -\mu_0(a_0') + 1= i}} (\sN(\vec a;a_0') \times \sM_0(a_0',a_0)).
                \end{align*}
            \end{defn}
            
            \begin{defn}[$R$-orientation on a flow multimodule]\label{dfn:r_ori_flow_multimod}
                Let $(\sM_i,\fo_i)$ be $R$-oriented flow categories for $i\in \left\{0,\ldots,n\right\}$. An \emph{$R$-orientation} on a flow multimodule $\sN \colon \sM_1,\ldots,\sM_n \to \sM_0$ consists of isomorphisms
                \[ \fm(\vec a;a_0) \colon \fo_1(a_1) \otimes_R \cdots \otimes_R \fo_n(a_n) \overset{\simeq}{\longrightarrow} I_R(\vec a;a_0) \otimes_R \fo_0(a_0)\]
                of $R$-line bundles over $\sN(\vec a;a_0)$ which are compatible with the action and composition maps of $\sM_i$, in the sense that there are commutative diagrams similar to \eqref{eq:bimod_ext_ori1} and \eqref{eq:bimod_ext_ori2}.
            \end{defn}

            The action maps of a flow multimodule induce maps
            \begin{equation}\label{eq:thommulti}
                \bigwedge_{j=1}^n \sM_j^{-I_j}(a_j,a_j') \wedge \sN^{-I}(a_1',\ldots,a_n';a_0') \wedge \sM_0^{-I_0}(a_0',a_0) \longrightarrow \sN^{-I}(\vec a;a_0).
            \end{equation}
            \begin{defn}
                Given a flow multimodule $\sN \colon \sM_1,\ldots,\sM_n \to \sM_0$, denote by $\sN^{-I}$ the $\mod{R}$-enriched $(\sM_1^{-I_1}, \ldots, \sM_n^{-I_n}; \sM_0^{-I_0})$-multimodule
                \[ \sN^{-I} \colon \left(\bigwedge_{j=1}^n \sM_j^{-I_j}\right)^{\mathrm{op}} \wedge \sM_0^{-I_0} \longrightarrow \Spec \]
                defined on objects by $(\vec a;a_0) \mapsto \sN^{-I}(\vec a;a_0)$ and on morphisms by the maps \eqref{eq:thommulti}.
            \end{defn}

            Now, let $\fo_i$ be a $R$-module system on the flow category $\sM_i$ for $i\in \left\{0,\ldots,n\right\}$. These give rise to a multimodule
            \[\sF_{\vec \fo;\fo_0} \colon \left(\bigwedge_{j=1}^n \mathcal M_j^{-I_j}\right)^{\mathrm{op}} \wedge \sM_0^{-I_0} \longrightarrow \Spec \]
            given on objects by $(\vec a;a_0) \mapsto F_R(\fo_1(a_1) \wedge_R \cdots \wedge_R \fo_n(a_n), \fo_0(a_0))$.
            \begin{defn}[Multimodule extension of $R$-module systems]\label{dfn:multimodule_ext}
                Let $\fo_i$ be an $R$-module system on the flow category $\sM_i$ for $i \in \left\{0,\ldots,n\right\}$. An \emph{extension} of the $R$-module systems $\fo_i$ along a flow multimodule $\sN \colon \sM_1,\ldots,\sM_n \to \sM_0$ is a natural transformation between multimodules
                \[ \sN^{-I} \Longrightarrow \sF_{\fo_1,\ldots,\fo_n;\fo_0}.\]
            \end{defn}
            Similar to the case of flow bimodules, an $R$-orientation on a flow multimodule between $R$-oriented flow categories induces a multimodule extension of the associated $R$-module systems.
            \begin{defn}[Composition of flow multimodules]\label{dfn:compos_multimodules}
                Let
                \[
                    \sN_1 \colon \mathcal M_1,\ldots, \mathcal M_p \longrightarrow \mathcal M_0, \quad \sN_2 \colon \mathcal M_1',\ldots, \mathcal M_q' \longrightarrow \mathcal M'_0
                \]
                be two flow multimodules. Assume that $\mathcal M_0 = \mathcal M_i'$. \emph{The $i$-th composition}, denoted by $\mathcal N_2 \circ_i \mathcal N_1$, is the flow multimodule
                \[
                    \mathcal N_2 \circ_i \mathcal N_1 \colon \mathcal M_1',\ldots,\mathcal M_{i-1}',\mathcal M_1,\ldots, \mathcal M_p, \mathcal M_{i+1}',\ldots,\mathcal M_q' \longrightarrow \mathcal M'_0,
                \]
                such that for any $(a_0,\ldots,a_p) \in \prod_{i=0}^p \Ob(\mathcal M_i)$ and $(b_0,\ldots,b_q) \in \prod_{i=0}^q \Ob(\mathcal M'_i)$ with $a_0 = b_i$,
                \[
                    (\mathcal N_2 \circ_i \mathcal N_1)(b_1,\ldots,b_{i-1},a_1,\ldots,a_p,b_{i+1},\ldots,b_q;b_0)
                \]
                is declared to be the colimit of the following diagram
                \[
                \begin{tikzcd}[row sep=scriptsize, column sep=small]
                    \displaystyle\bigsqcup_{a'_0,b'_0\in \Ob(\sM_0)} (\mathcal N_1(\vec a; a'_0) \times \mathcal M_0(a'_0,b'_0) \times \mathcal N_2(\vec b_{i,b'_0};b_0)) \rar[shift left]{} \rar[shift right,swap]{} & \displaystyle\bigsqcup_{b'_0\in \Ob(\sM_0)} (\mathcal N_1(\vec a;b'_0) \times \mathcal N_2(\vec b_{i,b'_0};b_0))
                \end{tikzcd}
                \]
                where
                \[
                \vec b_{i,x} \coloneqq (b_1,\ldots,b_{i-1},x,b_{i+1},\ldots,b_q).
                \]
                Given that $\sN_1$ and $\sN_2$ are $R$-oriented flow multimodules between $R$-oriented flow categories, there exists a natural $R$-orientation on $\sN_2 \circ_i \sN_1$ that is defined in a similar way as in \cref{lem:r_ori_composition_bimods}.
            \end{defn}

        \subsection{Bordism of flow multimodules}
            \begin{defn}[Bordism of flow multimodules]\label{def:multi_bord}
                Suppose that $\mathcal N_1, \mathcal N_2 \colon \mathcal M_1,\ldots, \mathcal M_n \to \mathcal M_0$ are two flow multimodules of the flow categories $\mathcal M_0,\ldots, \mathcal M_n$. Let $d \coloneqq \sum_{i=1}^n \mu_i(a_i) - \mu_0(a_0)$. A \emph{bordism} from $\mathcal N_1$ to $\mathcal N_2$
                \[
                    \begin{tikzcd}[row sep=scriptsize, column sep=scriptsize]
                        \mathcal M_1,\ldots,\mathcal M_n \ar[r, bend left, "\mathcal N_1", ""{name=U,inner sep=1pt,below}] \ar[r, bend right, "\mathcal N_2"{below}, ""{name=D,inner sep=1pt}] & \mathcal M_0 \ar[Rightarrow, from=U, to=D, "\mathcal B"]
                    \end{tikzcd},
                \]
                is an assignment of a $\left(d + 1\right)$-dimensional smooth manifold with corners $\mathcal B(\vec a;a_0)$ for each tuple $(a_0,\vec a) \coloneqq (a_0,a_1,\ldots,a_n) \in \prod_{i=0}^n \Ob(\sM_i)$, such that for any $\vec a \in \prod_{i=1}^n \Ob(\sM_i)$, $\bigsqcup_{a_0\in \Ob(\sM_0)} \sB(\vec a;a_0)$ is compact. A bordism is furthermore equipped with action maps
                \begin{align*}
                    \mathcal M_i(a_i,a_i') \times \mathcal B(a_1,\ldots,a_i',\ldots,a_n;a_0) &\longrightarrow \mathcal B(\vec a;a_0), \quad i\in \left\{1,\ldots,n\right\} \\
                    \mathcal B(\vec a;a_0') \times \mathcal M_0(a_0',a_0) &\longrightarrow \mathcal B(\vec a;a_0) \\
                    \sigma_j \colon \mathcal N_j(\vec a;a_0) &\longrightarrow \mathcal B(\vec a;a_0), \quad j\in \{1,2\},
                \end{align*}
                each of which is a diffeomorphism onto a face of $\mathcal B(\vec a;a_0)$. The action maps are required to satisfy similar compatibility and commutativity relations as flow multimodules (see \cref{dfn:flow_multimodule}). 

                Let $D \coloneqq \sum_{\ell=1}^n \mu_\ell(a_\ell) -\mu_0(a_0') + 1$ and let
                \begin{align*}
                    \partial_i^\circ \mathcal B(\vec a;a_0) \coloneqq &\bigsqcup_{j=1}^n \left(\bigsqcup_{\substack{a_j' \in \Ob(\sM_j) \\ \mu_j(a_j) - \mu_j(a_j') = i}} (\sM_j(a_j,a_j') \times \mathcal B(a_1,\ldots,a_j',\ldots,a_n;a_0))\right) \\
                    &\qquad \sqcup \bigsqcup_{\substack{a_0' \in \Ob(\sM_0) \\ \sum_{\ell=1}^n \mu_\ell(a_\ell) -\mu_0(a_0') + 2 = i}} (\mathcal B(\vec a;a_0') \times \sM_0(a_0',a_0)).
                \end{align*}
                We moreover require that $\sB(\vec a;a_0)$ is a $\ang{D+1}$-manifold with faces given by
                \[
                    \partial_i \mathcal B(\vec a; a_0) \coloneqq \begin{cases}
                        \mathcal N_1(\vec a;a_0) \sqcup \mathcal N_2(\vec a;a_0), & i = 1 \\
                        \partial_{i-1}^\circ \mathcal B(\vec a;a_0), & i > 1
                    \end{cases}.
                \]
            \end{defn}
            The index bundle of a bordism $\sB \colon \sN_1 \Rightarrow \sN_2$ is the virtual vector bundle 
            \[
                I(a,b) \coloneqq T\sB(\vec a;a_0) - \underline{\IR},
            \]
            for any $(a_0,\vec a) \in \prod_{i=0}^n \Ob(\sM_i)$. We denote the associated $R$-line bundle by $I_R(\vec a;a_0)$.
            \begin{defn}[$R$-oriented flow bordism]\label{dfn:r_ori_flow_multibord}
                Let $(\sM_i,\fo_i)$ be $R$-oriented flow categories for $i\in\{0,\ldots,p\}$ and let
                \[
                (\sN_1,\fm_1),(\sN_2,\fm_2) \colon \sM_1,\ldots,\sM_p \longrightarrow \sM_0
                \]
                be two $R$-oriented flow multimodules. An \emph{$R$-orientation} on a bordism $\sB \colon \sN_1 \Rightarrow \sN_2$ consists of isomorphisms
                \[
                \fn(\vec a;a_0) \colon \fo_1(a_1) \otimes_R \cdots \otimes_R \fo_n(a_n) \overset{\simeq}{\longrightarrow} I_R(\vec a;a_0) \otimes_R \fo_0(a_0),
                \]
                of $R$-line bundles over $\sB(\vec a;a_0)$ which are compatible with the action and composition maps of $\sM_i$, in the sense that there are commutative diagrams of isomorphisms of $R$-line bundles similar to \eqref{eq:bimod_ext_ori1} and \eqref{eq:bimod_ext_ori2}. We denote the choice of an $R$-orientation on $\sB$ by $\fn$, and call the tuple $(\sB,\fn)$ an $R$-oriented bordism.
            \end{defn}

        \subsection{Orientations induced from bordisms}
        We prove here two technical lemmas needed in the construction of orientations in \cref{sec:the_oc_map}.
        \begin{lem}\label{lem:restr_last_strata_ori}
            Let $(\sN_1,\fm_1) \colon (\sM_1,\fo_1) \to (\sM_2,\fo_2)$ be an $R$-oriented flow bimodule. Let $\sN_2 \colon \sM_1 \to \sM_2$ be a flow bimodule and $\sB \colon \sN_1 \Rightarrow \sN_2$ be a flow bordism such that for every tuple $(a,b) \in \Ob(\sM_1) \times \Ob(\sM_2)$ there exist isomorphisms of $R$-line bundles
            \[ \fn(a,b) \colon \fo_1(a) \longrightarrow I_R^{\sB}(a,b) \otimes_R \fo_2 (b)\]
            satisfying the following conditions:
            \begin{enumerate}
                \item the pullback of $\fn(a,b)$ along the inclusion $\sN_1(a,b) \hookrightarrow \partial \sB(a,b)$ coincides with the isomorphism
                \[ \fm_1(a,b) \colon \fo_1(a) \longrightarrow I^{\sN_1}_R(a,b)\otimes_R \fo_2(b) .\]
                \item the pullback of $\fn(a,b)$ along the inclusion $\sM_1(a,a') \times \sB(a',b) \hookrightarrow \partial\sB(a,b)$ coincides with the composition
                \[ \fo_1(a)  \xrightarrow{\fo_1(a,a')} I^{\sM_1}_R(a,a') \otimes_R \fo_1(a') \xrightarrow{\id \otimes_R \fn(a,b)} I^{\sM_1}_R(a,a') \otimes_R I^{\sB}_R(a',b) \otimes_R \fo_2(b).\]
                \item the pullback of $\fn(a,b)$ along the inclusion $\sB(a,b') \times \sM_2(b',b) \hookrightarrow \partial\sB(a,b)$ coincides with the composition
                \[ \fo_1(a) \xrightarrow{\fn(a,b')} I_R^{\sB}(a,b')\otimes_R \fo_2(b') \xrightarrow{\id \otimes_R \fo_2(b',b)} I_R^{\sB}(a,b') \otimes_R I_R^{\sM_2}(b',b) \otimes_R \fo_2(b).\]
            \end{enumerate}
            Then, equipping $\sN_2$ with the isomorphisms $\fm_2(a,b)$ defined by the pullback of $\fn(a,b)$ along the inclusion $\sN_2(a,b) \hookrightarrow \partial \sB(a,b)$ defines an $R$-orientation on $\sN_2$ so that 
            \[ (\sB,\fn) \colon (\sN_1,\fm_1) \Longrightarrow (\sN_2,\fm_2)\]
            is an $R$-oriented flow bordism.
        \end{lem}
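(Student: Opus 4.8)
The plan is to check that the isomorphisms $\fm_2(a,b)$, obtained by pulling back $\fn(a,b)$ along the codimension-one face $\sN_2(a,b) \hookrightarrow \partial\sB(a,b)$ and composing with the identification \eqref{eq:bordism_iso4} of $I^{\sB}_R(a,b)|_{\sN_2(a,b)}$ with $I^{\sN_2}_R(a,b)$, assemble into an $R$-orientation on $\sN_2$. Once this is established, the remaining assertions follow tautologically: the agreement of $\fn$ with $\fm_1$ on $\sN_1(a,b)$ is hypothesis (i), the agreement with $\fm_2$ on $\sN_2(a,b)$ is the definition of $\fm_2$, and the compatibilities of $\fn$ with the action maps $\beta^1$ and $\beta^2$ of $\sB$ are hypotheses (ii) and (iii); these are precisely the data required to make $(\sB,\fn)$ an $R$-oriented bordism in \cref{dfn:r_ori_flow_bord}. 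Thus the actual content is to verify the two compatibility squares \eqref{eq:bimod_ext_ori1} and \eqref{eq:bimod_ext_ori2} of \cref{dfn:r_ori_flow_bimod} for $\fm_2$. (One should also note in passing that the faces of $\sN_2(a,b)$ as a flow bimodule in the sense of \cref{dfn:flow_bimodule} agree with its faces as a face of $\sB(a,b)$, which is part of the data of a bordism of flow bimodules.)

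To verify \eqref{eq:bimod_ext_ori1} I would fix $a,a'\in\Ob(\sM_1)$ and $b\in\Ob(\sM_2)$ and exploit the fact that the space $\sM_1(a,a')\times\sN_2(a',b)$ occurs as a codimension-two face of $\sB(a,b)$ in two different ways: as a face of the codimension-one face $\sN_2(a,b)\subset\partial_1\sB(a,b)$, and as a face of the codimension-one face $\sM_1(a,a')\times\sB(a',b)\subset\partial_{1+\mu_1(a)-\mu_1(a')}\sB(a,b)$, inside which it is $\sM_1(a,a')$ times the $\sN_2$-component of $\partial_1\sB(a',b)$. The $\ang{k}$-manifold axiom \cref{def:k_mfd}(iii) for $\sB(a,b)$ guarantees that these two faces coincide, and the index-bundle compatibility lemma preceding \cref{dfn:r_ori_flow_bord} (an analogue of \cref{lem:index_maps_compat}) guarantees that the several identifications of $I^{\sB}_R$ with $I^{\sN_2}_R$, $I^{\sM_1}_R$ and $I^{\sN_1}_R$ restrict consistently to this common face. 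I would then restrict the single isomorphism $\fn(a,b)$ to this face along each of the two descriptions: the first, by the definition of $\fm_2$ together with the behaviour of \eqref{eq:bordism_iso4} on the face, yields the top-then-right path of \eqref{eq:bimod_ext_ori1}; the second, by hypothesis (ii) for $\fn$ together with the observation that $\fn(a',b)$ restricted to $\sN_2(a',b)$ equals $\fm_2(a',b)$, yields the left-then-bottom path. Since both are restrictions of the same bundle map $\fn(a,b)$, the square commutes. The diagram \eqref{eq:bimod_ext_ori2} over $\sN_2(a,b')\times\sM_2(b',b)$ is handled identically, realizing that space as a common face of $\sN_2(a,b)$ and of the codimension-one face $\sB(a,b')\times\sM_2(b',b)\subset\partial\sB(a,b)$ and using hypothesis (iii) and the orientation isomorphism $\fo_2(b',b)$ of $\sM_2$ in place of hypothesis (ii) and $\fo_1(a,a')$.

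I expect the only real obstacle to be bookkeeping rather than mathematics: one must keep straight the four index-bundle isomorphisms \eqref{eq:bordism_iso1}--\eqref{eq:bordism_iso4} and the gluing maps $\sigma^1,\sigma^2$ of $\sB$ against their analogues for $\sN_1$, $\sN_2$, $\sM_1$ and $\sM_2$, and confirm that restricting the bordism's index-bundle compatibility squares to the relevant codimension-two faces reproduces verbatim the squares \eqref{eq:bimod_ext_ori1} and \eqref{eq:bimod_ext_ori2} for $\sN_2$. No analytic or geometric input beyond the $\ang{k}$-manifold structure of $\sB(a,b)$ enters; the argument is entirely formal, and is the bordism analogue of the fact (used implicitly in \cref{lma:Rori_to_spec_system} and \cref{lem:r_ori_composition_bimods}) that orientation data propagates along boundary inclusions.
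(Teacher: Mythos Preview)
Your proposal is correct and follows essentially the same approach as the paper. The paper packages the argument for \eqref{eq:bimod_ext_ori1} into a cube diagram whose back face is the desired square, with the front face commuting by hypothesis (ii), the left and right faces by the definition of $\fm_2$, the top trivially, and the bottom by the compatibility \eqref{eq:compat_dia_bdry} of the boundary inclusions; this is exactly your ``two restrictions of the same $\fn(a,b)$ to a common codimension-two face'' reasoning, written as a diagram chase.
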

        \begin{proof}
            The action maps and inclusion of boundary strata in $\sB$ and $\sN_2$ fit into two commutative diagrams
            \begin{equation}\label{eq:compat_dia_bdry}
                \begin{tikzcd}[row sep=scriptsize, column sep=scriptsize]
                \sM_1(a,a') \times \sN_2(a',b) \dar \rar & \sN_2(a,b) \dar \\
                \sM_1(a,a') \times \sB(a',b) \rar & \sB(a,b)
            \end{tikzcd}
            \quad
            \begin{tikzcd}[row sep=scriptsize, column sep=scriptsize]
                \sN_2(a,b') \times \sM_2(b',b) \dar \rar & \sN_2(a,b) \dar \\
                \sB(a,b') \times \sM_2(b',b) \rar & \sB(a,b)
            \end{tikzcd}.
            \end{equation}
            There is a corresponding diagram of isomorphisms of $R$-line bundles over $\sM_1(a,a') \times \sN_2(a',b)$ as follows:
            \[
            \begin{tikzcd}[row sep=scriptsize, column sep=0pt]
                 \fo_1(a) \ar[dr, "\id"] \ar[rr,"{\fo_1(a,a')}"] \ar[dd,"{\fm_2(a,b)}"] & {} & I_R^{\sM_1}(a,a') \otimes_R \fo_1(a') \ar[dd,"{\id \otimes_R \fm_2(a',b)}",near end] \ar[dr,"\id"] & {} \\
                {} &  \fo_1(a) \ar[rr,crossing over,"{\fo_1(a,a')}",near start] & {} & I_R^{\sM_1}(a,a') \otimes_R \fo_1(a') \ar[dd,"{\id \otimes_R \fn(a',b)}"] \\
                I_R^{\sN_2}(a,b) \otimes_R \fo_2(b) \ar[rr] \ar[dr] & {} & \mathrel{\parbox{3cm}{$I_R^{\sM_1}(a,a') \otimes_R \\ I_R^{\sN_2}(a',b) \otimes_R \fo_2(b)$}} \ar[dr] & {} \\
                {} &  I_R^{\sB}(a,b) \otimes_R \fo_2(b) \ar[rr] \ar[from=uu,crossing over,"{\fn(a,b)}", near end] & {} & \mathrel{\parbox{3cm}{$I_R^{\sM_1}(a,a') \otimes_R \\ I_R^{\sB}(a',b) \otimes_R \fo_2(b)$}}
            \end{tikzcd}.
            \]
            The bottom face commutes because the right diagram in \eqref{eq:compat_dia_bdry} commutes. Both the right and left faces commutes by the definition of $\fm_2$, and the top face commutes trivially. The front face commutes by item (ii). Hence the back face is also commutative. There is a similar diagram involving the action maps $\sN_2(a,b') \times \sM_2(b',b) \to \sN_2(a,b)$, which together with the above diagram show that $\fm_2$ defines an $R$-orientation on $\sN_2$. Finally it is clear by definition that $\fn$ defines an $R$-orientation on $\sB$.
        \end{proof}
        \begin{lem}\label{lem:comp_two_out_of_three}
            Let $(\sM_1,\fo_1), (\sM_2,\fo_2),$ and $(\sM_3,\fo_3)$ be three $R$-oriented flow categories and consider two flow bimodules
            \[ \sM_1 \overset{\sN_{12}}{\longrightarrow} \sM_2 \overset{\sN_{23}}{\longrightarrow} \sM_3,\]
            satisfying the condition that there exists an object $a\in \Ob(\sM_1)$ such that $\sN_{12}(a,b) \neq \varnothing$, for every $b \in \Ob(\sM_2)$. Then, given $R$-orientations $\fm_{12}$ and $\fm_{13}$ on $\sN_{12}$ and $\sN_{23} \circ \sN_{12}$, respectively, there exists an $R$-orientation $\fm_{23}$ on $\sN_{23}$ that is compatible with those on $\sN_{12}$ and $\sN_{23} \circ \sN_{12}$.
        \end{lem}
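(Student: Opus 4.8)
The plan is to build $\fm_{23}$ by a ``division'' procedure that cancels the invertible $R$-line bundles $\fo_1(a)$ and $I_R^{\sN_{12}}(a,b)$ out of $\fm_{13}$, and then to verify the compatibility diagrams of \cref{dfn:r_ori_flow_bimod} by a diagram chase in the style of \cref{lem:r_ori_composition_bimods} and \cref{lem:restr_last_strata_ori}.

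Write $\sN_{13} \coloneqq \sN_{23} \circ \sN_{12}$, equipped with its given $R$-orientation $\fm_{13}$. Fix the object $a \in \Ob(\sM_1)$ supplied by the hypothesis, so that $\sN_{12}(a,b) \neq \varnothing$ for every $b \in \Ob(\sM_2)$, and for each such $b$ choose a point $p_b \in \sN_{12}(a,b)$. For a pair $(b,c) \in \Ob(\sM_2) \times \Ob(\sM_3)$, the colimit description of \cref{dfn:composition_flow_bimods} exhibits $\sN_{12}(a,b) \times \sN_{23}(b,c)$ as a top-dimensional piece of $\sN_{13}(a,c)$ via the quotient map $q_b$, which is a diffeomorphism away from the gluing faces; hence there is a canonical identification $q_b^* I_R^{\sN_{13}}(a,c) \cong I_R^{\sN_{12}}(a,b) \otimes_R I_R^{\sN_{23}}(b,c)$. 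Pulling back $\fm_{13}(a,c)$ along $q_b$ and composing with this identification produces an isomorphism of $R$-line bundles over $\sN_{12}(a,b) \times \sN_{23}(b,c)$
\[ G(a,b,c) \colon \fo_1(a) \overset{\simeq}{\longrightarrow} I_R^{\sN_{12}}(a,b) \otimes_R I_R^{\sN_{23}}(b,c) \otimes_R \fo_3(c). \]
Since $I_R^{\sN_{12}}(a,b)$ is invertible, $G(a,b,c) \circ \fm_{12}(a,b)^{-1}$ is of the form $\id_{I_R^{\sN_{12}}(a,b)} \otimes_R h(a,b,c)$ for a unique isomorphism $h(a,b,c) \colon \fo_2(b) \to I_R^{\sN_{23}}(b,c) \otimes_R \fo_3(c)$ of $R$-line bundles over $\sN_{12}(a,b) \times \sN_{23}(b,c)$, and I would \emph{define} $\fm_{23}(b,c)$ to be the restriction $h(a,b,c)|_{\{p_b\} \times \sN_{23}(b,c)}$, viewed as an isomorphism over $\sN_{23}(b,c)$.

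The crux is to prove consistency: that $h(a,b,c)$ equals the pullback of $\fm_{23}(b,c)$ along the projection $\sN_{12}(a,b) \times \sN_{23}(b,c) \to \sN_{23}(b,c)$, so that $\fm_{23}$ is independent of the auxiliary choices and, equivalently, so that the $R$-orientation induced on $\sN_{23} \circ \sN_{12}$ by $(\fm_{12},\fm_{23})$ via \cref{lem:r_ori_composition_bimods} is exactly $\fm_{13}$. This is where the structural input enters: the compatibility of $\fm_{13}$ with the $\sM_1$-action on $\sN_{13}$ along the faces $\sM_1(a,a') \times \sN_{13}(a',c)$, its compatibility with the colimit gluing along the faces $\sN_{12}(a,b) \times \sM_2(b,b') \times \sN_{23}(b',c)$, and the compatibility of $\fm_{12}$ with the right $\sM_2$-action on $\sN_{12}$ (note that $\fm_{12}(a,b)$ already identifies $I_R^{\sN_{12}}(a,b)$ with a constant $R$-line bundle over $\sN_{12}(a,b)$). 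The diagram chase has the same shape as that in the proof of \cref{lem:restr_last_strata_ori}, with $\sN_{13}$ playing the role of the bordism and $\sN_{23}$ that of the ``last stratum''. I expect this consistency step to be the main obstacle, and it is precisely here that the nonemptiness hypothesis is indispensable: without a single $a$ reaching every $b$, the data $\fm_{12}$ and $\fm_{13}$ constrain nothing over $\sN_{23}(b,c)$.

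It then remains to check that $\{\fm_{23}(b,c)\}$ is an $R$-orientation on $\sN_{23}$, i.e.\ that the two compatibility diagrams of \cref{dfn:r_ori_flow_bimod} (with the left $\sM_2$-action and the right $\sM_3$-action) commute. Each follows from the corresponding diagram for $\fm_{13}$ on $\sN_{13}$ together with that for $\fm_{12}$ on $\sN_{12}$, using the naturality of the index-bundle identifications $\sigma^1,\sigma^2$ of \cref{lem:index_maps_compat} applied to $\sN_{12}$, $\sN_{23}$, and the colimit $\sN_{13}$; once the consistency of the previous paragraph is established these reduce to routine chases, since $\fm_{23}(b,c)$ has been pinned down on a top piece of $\sN_{13}(a,c)$ where all of the ambient compatibilities are visible. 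Finally, packaging the consistency statement together with the constant bordism $\sN_{13} \times [0,1]$ produces the $R$-oriented flow bordism that exhibits $\fm_{23}$ as compatible with both $\fm_{12}$ and $\fm_{13}$.
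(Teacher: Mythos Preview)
Your overall strategy matches the paper's: fix the special object $a$, choose a basepoint $p_b \in \sN_{12}(a,b)$ for each $b$, and define $\fm_{23}(b,c)$ as the restriction to $\{p_b\}\times\sN_{23}(b,c)$ of the ``quotient'' $q_b^*\fm_{13}(a,c) \circ \fm_{12}(a,b)^{-1}$. The paper then verifies the two compatibility squares of \cref{dfn:r_ori_flow_bimod} directly. The first (with the $\sM_3$-action on the right) is immediate from the corresponding square for $\fm_{13}$. The second (with the $\sM_2$-action on the left) is the substantive one: it requires relating the definition of $\fm_{23}(b,c)$ via the basepoint $p_b$ to the definition of $\fm_{23}(b',c)$ via $p_{b'}$ along a face $\sN_{12}(a,b)\times\sM_2(b,b')\times\sN_{23}(b',c)$, and this is handled by the gluing compatibility of $\fm_{13}$ across the colimit (the paper's equation labeled \eqref{eq:change_b}) together with the $\sM_2$-compatibility of $\fm_{12}$. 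Your proposal has the same content, just organized around the word ``consistency'' rather than around the two diagrams.

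Two comments. First, the strong form of consistency you state --- that $h(a,b,c)$ is literally pulled back from $\sN_{23}(b,c)$ --- does not follow from any of the compatibilities you list: the $\sM_1$-action constraints and the $\sM_2$-gluing constraints on $\fm_{13}$ only pin down $h$ along boundary strata, not over the interior of $\sN_{12}(a,b)\times\sN_{23}(b,c)$. The paper does not attempt to prove this strong statement either; it works with the pointwise definition via $s_{ab}$ and checks the compatibility diagrams directly, which is all that is needed for $\fm_{23}$ to be an $R$-orientation. The final compatibility with $\fm_{13}$ is then ``by construction'' at the chosen basepoints. Second, your last sentence about producing an $R$-oriented flow bordism from the constant bordism $\sN_{13}\times[0,1]$ is a misreading of what ``compatible'' means here: the conclusion is that the composition orientation from \cref{lem:r_ori_composition_bimods} agrees with $\fm_{13}$, which is a statement about orientations on the same bimodule, not a bordism statement. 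No bordism is needed or used.
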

        \begin{proof}
            Let $c\in \Ob(\sM_3)$. Since $\sN_{13} \coloneqq \sN_{23} \circ \sN_{12}$ is $R$-oriented we have isomorphisms of $R$-line bundles over $\sN_{13}(a,c)$
        	\[
        		\fm_{13}(a,c) \colon \fo_1(a) \overset{\simeq}{\longrightarrow} I_R^{\sN_{13}}(a,c) \otimes_R \fo_3(c),
        	\]
            by \cref{lem:r_ori_composition_bimods}. Let $q \colon \bigsqcup_{b \in \Ob(\sM_2)} (\sN_{12}(a,b) \times \sN_{23}(b,c)) \to \sN_{13}(a,c)$ denote the induced map and denote by $q_b$ the restriction of $q$ corresponding to $b\in \Ob(\sM_2)$. By definition we have $q_b^\ast I_R^{\sN_{13}}(a,c) \simeq I^{\sN_{12}}_R(a,b) \otimes_R I^{\sN_{23}}_R(b,c)$. Thus we have that the isomorphisms $\fm_{13}(a,c)$ is equivalent to the data of isomorphisms of $R$-line bundles over $\sN_{12}(a,b) \times \sN_{23}(b,c)$ for every $b\in \Ob(\sM_2)$
            \[
            q_b^\ast \fm_{13}(a,c) \colon \fo_1(a) \overset{\simeq}{\longrightarrow} I^{\sN_{12}}_R(a,b) \otimes_R I^{\sN_{23}}_R(b,c) \otimes_R \fo_3(c),
            \]
            that are compatible in the sense that for any $b,b'\in \Ob(\sM_2)$, the two isomorphisms $q_b^\ast \fm_{13}(a,c)$ and $q_{b'}^\ast \fm_{13}(a,c)$ agree along the common boundary strata $\sN_{12}(a,b) \times \sM_2(b,b') \times \sN_{23}(b',c)$.
            
        	Assuming that $\sN_{12}$ is $R$-oriented we have isomorphisms of $R$-line bundles over $\sN_{12}(a,b)$
        	\[
        		\fm_{12}(a,b) \colon \fo_1(a) \overset{\simeq}{\longrightarrow} I_R^{\sN_{12}}(a,b) \otimes_R \fo_2(b),
        	\]
        	for every $b \in \Ob(\sM_2)$. This equivalently yields an isomorphism of $R$-line bundles over $\sN_{12}(a,b)$
        	\[
        		\fm'_{12}(b,a) \colon \fo_2(b) \overset{\simeq}{\longrightarrow} (-I^{\sN_{12}}(a,b))_R \otimes_R \fo_1(a).
        	\]
            Let $\pi_{12}^{ab}$ and $\pi_{23}^{bc}$ denote the projections from $\sN_{12}(a,b) \times \sN_{23}(b,c)$ to the first and second factor, respectively. For every $b\in \Ob(\sM_2)$ and $c\in \Ob(\sM_3)$ fix some $x_{ab} \in \sN_{12}(a,b)$ (recall $\sN_{12}(a,b) \neq \varnothing$) and define
            \begin{align*}
                s_{ab} \colon \sN_{23}(b,c) &\longrightarrow \sN_{12}(a,b) \times \sN_{23}(b,c) \\ 
                x &\longmapsto (x_{ab},x).
            \end{align*}
            We get isomorphisms of $R$-line bundles over $\sN_{23}(b,c)$
            \begin{align*}
                \widetilde{\fm}_{12}(b,a) \coloneqq (\pi_{12} \circ s_{ab})^\ast \fm_{12}'(b,a) \colon \fo_2(b) &\overset{\simeq}{\longrightarrow} (-I^{\sN_{12}}(a,b))_R \otimes_R \fo_1(a) \\
                \widetilde{\fm}_{13}^b(a,c) \coloneqq (q_b \circ s_{ab})^\ast \fm_{13}(a,c) \colon \fo_1(a) &\overset{\simeq}{\longrightarrow} I^{\sN_{12}}_R(a,b) \otimes_R I^{\sN_{23}}_R(b,c) \otimes_R \fo_3(c),
            \end{align*}
            $b\in \Ob(\sM_2)$ and $c\in \Ob(\sM_3)$. Denote by $\nu_1$ and $\nu_2$ the two maps
            \begin{align*}
                \nu_1 \colon \sN_{12}(a,b) \times \sM_2(b,b') \times \sN_{23}(b',c) &\longrightarrow \sN_{12}(a,b') \times \sN_{23}(b',c)\\
                \nu_2 \colon \sN_{12}(a,b) \times \sM_2(b,b') \times \sN_{23}(b',c) &\longrightarrow \sN_{12}(a,b) \times \sN_{23}(b,c)
            \end{align*}
            and note that for any $b,b' \in \Ob(\sM_2)$ we have
            \begin{align}\label{eq:change_b}
                (\pi_{23}^{bc} \circ \nu_2)^\ast \widetilde{\fm}^b_{13}(a,c) &= (q_b \circ s_{ab} \circ \pi_{23}^{bc} \circ \nu_2)^\ast \fm_{13}(a,c) \\ \nonumber
                &= (q_b \circ \nu_2)^\ast \fm_{13}(a,c) \overset{\text{\eqref{eq:compos_bimod_ori2}}}{=} (q_{b'} \circ \nu_1)^\ast \fm_{13}(a,c) \\ \nonumber
                &= (q_{b'} \circ s_{ab'} \circ \pi^{b'c}_2 \circ \nu_1)^\ast \fm_{13}(a,c) \\ \nonumber
                &= (\pi^{b'c}_2 \circ \nu_1)^\ast \widetilde{\fm}^{b'}_{13}(a,c).
            \end{align}
            Given $b\in \Ob(\sM_2)$ and $c\in \Ob(\sM_3)$ define $\fm_{23}(b,c) \coloneqq (\id \otimes_R \widetilde{\fm}^b_{13}(a,c)) \circ \widetilde{\fm}_{12}(b,a)$.
            
            We finally show that $\fm_{23}(b,c)$ satisfies the required compatibilities. First, we have the diagram
            \[
                \begin{tikzcd}[row sep=scriptsize, column sep=1cm]
                    \fo_2(b) \rar{\widetilde{\fm}^b_{12}(b,a)} & (-I^{\sN_{12}}(a,b))_R \otimes_R \fo_1(a) \dar{\id\otimes_R \widetilde{\fm}^b_{13}(a,c')} \rar{\id \otimes_R \widetilde{\fm}^b_{13}(a,c)} & I^{\sN_{23}}_R(b,c) \otimes_R \fo_3(c) \dar{\sigma^3_{bc'c}\otimes_R \id} \\
                    {} & I^{\sN_{23}}_R(b,c') \otimes_R \fo_3(c') \rar{\id\otimes_R \fo_3(c',c)} & I^{\sN_{23}}_R(b,c') \otimes_R I^{\sM_{3}}_R(c',c) \otimes_R \fo_3(c)
                \end{tikzcd},
            \]
            which commutes due to the compatibilities satisfied by $\widetilde{\fm}^b_{13}$. Next, we consider the diagram
            \[
                \begin{tikzcd}[row sep=scriptsize, column sep=1.5cm]
                    \fo_2(b) \rar{\widetilde{\fm}_{12}(b,a)} \dar{\fo_2(b,b')} & (-I^{\sN_{12}}(a,b))_R \otimes_R \fo_1(a) \rar{\id \otimes_R \widetilde{\fm}^b_{13}(a,c)} \dar{\sigma^1_{abb'}\otimes_R \id} & I^{\sN_{23}}_R(b,c) \otimes_R \fo_3(c) \dar{\sigma^3_{bb'c}\otimes_R \id} \\
                    I^{\sM_2}_R(b,b')\otimes_R \fo_2(b') \rar{\id\otimes_R \widetilde{\fm}_{12}(b',a)} & \mathrel{\parbox{2.9cm}{$I^{\sM_2}_R(b,b') \\ \otimes_R (-I^{\sN_{12}}(a,b'))_R \\ \otimes_R \fo_1(a)$}} \rar{\id^2\otimes_R \widetilde{\fm}^{b'}_{13}(a,c)} & \mathrel{\parbox{2.9cm}{$I^{\sM_2}_R(b,b') \\ \otimes_R I^{\sN_{23}}_R(b',c) \\ \otimes_R \fo_3(c)$}}
                \end{tikzcd},
            \]
            where the right square commutes due to the compatibility \eqref{eq:change_b} satisfied by $\widetilde{\fm}^b_{13}$. The left square commutes due to the compatibilities satisfied by $\widetilde{\fm}_{12}$, viz.\@, after tensoring with $I^{\sN_{12}}_R(a,b)$ the (inverse of the) left bottom horizontal arrow factors as follows:
            \begin{align*}
                &(-I^{\sN_{12}}(a,b'))_R \otimes_R I^{\sN_{12}}_R(a,b) \otimes_R I^{\sM_2}_R(b,b') \otimes_R \fo_1(a) \\
                & \qquad \longrightarrow I^{\sN_{12}}_R(a,b') \otimes_R \fo_2(b') \xrightarrow{\sigma^2_{abb'}\otimes \id} I^{\sN_{12}}_R(a,b) \otimes_R I^{\sM_2}(b,b') \otimes_R \fo_2(b').
            \end{align*}
            Hence the outer square commutes, and $\fm_{23}$ defines an $R$-orientation on the flow bimodule $\sN_{23}$. This $R$-orientation is by construction compatible with the $R$-orientations on $\sN_{12}$ and $\sN_{13}$ in the sense that the natural $R$-orientation induced on the composition $\sN_{13}$ by these $R$-orientations on $\sN_{12}$ and $\sN_{13}$ via \cref{lem:r_ori_composition_bimods}, is the given one.
        \end{proof}
 
\subsection{Flow categories with local systems}\label{sec:flow_cat_local_system}
\begin{defn}
   Let $A$ be an $R$-algebra and $\sM$ be a flow category. A \emph{local system of $A$-modules on $\sM$} is a functor 
   \[ \sE \colon \sM \longrightarrow \mod{A},\]
   where $\sM$ is regarded as a spectrally enriched category by taking the suspension spectrum of its morphism spaces.
\end{defn}
Let $\sE \colon \sM \to \mod{A}$ be a local system of $A$-modules on an $R$-oriented flow category $(\sM,\fo)$. For any $x,y \in \Ob(\sM)$, the structure maps
\[ \sE(x,y) \colon \sE(x) \wedge \varSigma^\infty_+\sM(x,y) \longrightarrow \sE(y)\]
and the $R$-orientation $\fo$ induce maps
\[ \sE(x) \wedge_R \fo(x) \wedge \sM^{-I}(x,y) \longrightarrow \sE(y) \wedge_R \fo(y).\]
The assignment
\[ Z_{\sM,\fo,\sE}(m) \coloneqq \bigvee_{x\in \mu^{-1}(m)} (\sE(x) \wedge_R \fo(x)), \quad n \in \IZ \]
defines an $A$-linear $\sJ$-module $Z_{\sM,\fo,\sE} \colon \sJ \to \mod{A}$ similarly as in \cref{dfn:rlin_jmod_flow} via the maps \eqref{eq:flowtoj}.
\begin{defn}
    Let $(\sM,\fo)$ be an $R$-oriented flow category satisfying \cref{asmp:flowcjs}. Suppose that $\sE \colon \sM \to \mod A$ is a local system of $A$-modules. The \emph{CJS realization} of $(\sM,\fo,\sE)$, denoted by $|\sM,\fo,\sE|$, is defined as the geometric realization of the $A$-linear $\sJ$-module $Z_{\sM,\fo,\sE}$, see \cref{defn:geometric_realization_J-mod}.
\end{defn}
\begin{rem}\label{rem:trivial_local_sys}
    Given any flow category $\sM$ and an $A$-module $M$, the constant functor $\sM \to \mod{A}$ taking the value $M$ defines a local system on $\sM$ which we denote by $\underline M$. Note that since $|\sM,\fo,\underline M| \simeq |\sM,\fo| \wedge_R M$ with the trivial $A$-action. In particular, we have an equivalence of $R$-modules $|\sM,\fo,\underline R| \simeq |\sM,\fo|$.
\end{rem}

\begin{defn}\label{notn:full_flow_subcat}
   Let $\sM$ be a flow category and let $\sM'$ be a full subcategory of $\sM$ satisfying the condition that if $\sM(x,y) \neq \varnothing$ for $x \in \Ob(\sM')$ and $y \in \Ob(\sM)$, then $y \in \Ob(\sM')$. Define $\sM/\sM'$ to be the full subcategory of $\sM$ with object set $\Ob(\sM) \smallsetminus \Ob(\sM')$.
   
    The restriction of any $R$-orientation $\fo$ on $\sM$ defines $R$-orientations on the flow categories $\sM'$ and $\sM/\sM'$. Similarly, the restriction of a local system $\sE$ of $A$-modules on $\sM$ defines a local system of $A$-modules on $\sM'$ and $\sM/\sM'$.
\end{defn}
\begin{lem}\label{lem:realization_cofiber}
    Let $\sM$ be a flow category and let $\sM'$ be a full subcategory of $\sM$ satisfying the condition that if $\sM(x,y) \neq \varnothing$ for $x \in \Ob(\sM')$ and $y \in \Ob(\sM)$, then $y \in \Ob(\sM')$. There are induced cofiber sequences 
    \[ Z_{\fo|_{\sM'},\sE|_{\sM'}} \Longrightarrow Z_{\fo,\sE} \Longrightarrow Z_{\fo|_{\sM/\sM'}, \sE|_{\sM/\sM'}},\]
    and 
    \[ |\sM', \fo|_{\sM'}, \sE|_{\sM'}| \longrightarrow |\sM, \fo, \sE| \longrightarrow |\sM/\sM', \fo|_{\sM/\sM'}, \sE|_{\sM/\sM'}|.\]
    \qed
\end{lem}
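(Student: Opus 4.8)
The plan is to establish the cofiber sequence first at the level of $\sJ$-modules and then obtain the cofiber sequence of CJS realizations by applying the geometric realization functor, using that it preserves such cofiber sequences. First I would record the key consequence of the hypothesis on $\sM'$: it is equivalent to the assertion that $\sM(x,y) = \varnothing$ whenever $x \in \Ob(\sM')$ and $y \in \Ob(\sM)\smallsetminus\Ob(\sM') = \Ob(\sM/\sM')$. (In the Morse picture $\sM'$ should be thought of as the critical points lying in a sublevel set, out of which no gradient trajectory escapes.) Since $\Ob(\sM) = \Ob(\sM') \sqcup \Ob(\sM/\sM')$, for every $m \in \IZ$ the wedge defining the $m$-th level of $Z_{\fo,\sE}$ splits:
\[ Z_{\fo,\sE}(m) = \Big(\bigvee_{x \in \mu^{-1}(m) \cap \Ob(\sM')} \sE(x)\wedge_R\fo(x)\Big) \vee \Big(\bigvee_{x \in \mu^{-1}(m) \cap \Ob(\sM/\sM')} \sE(x)\wedge_R\fo(x)\Big), \]
and by \cref{notn:full_flow_subcat} the two wedge summands are exactly the $m$-th levels of $Z_{\fo|_{\sM'},\sE|_{\sM'}}$ and of $Z_{\fo|_{\sM/\sM'},\sE|_{\sM/\sM'}}$.

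Next I would check that $Z_{\fo|_{\sM'},\sE|_{\sM'}}$ is a sub-$\sJ$-module of $Z_{\fo,\sE}$. The $\sJ$-module structure maps are assembled, via the maps \eqref{eq:flowtoj}, out of the local-system maps $\sE(x,y)$, the orientation maps $\sM^{-\fo(x,y)}$, and the Thom spectra $\sM^{-I}(x,y)$ of the moduli spaces; restricted to the summand indexed by $x \in \Ob(\sM')$, every contribution with $y \notin \Ob(\sM')$ vanishes because $\sM(x,y) = \varnothing$. Hence the structure maps preserve the sub-wedge indexed by $\Ob(\sM')$, and since $\sM'$ is a full subcategory the induced structure maps on it are precisely those of $Z_{\fo|_{\sM'},\sE|_{\sM'}}$. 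The quotient $\sJ$-module then has $m$-th level $\bigvee_{x \in \mu^{-1}(m)\cap\Ob(\sM/\sM')} \sE(x)\wedge_R\fo(x)$, and because $\sM/\sM'$ is the full subcategory on the complementary objects its structure maps agree with those of $Z_{\fo|_{\sM/\sM'},\sE|_{\sM/\sM'}}$. This produces the cofiber sequence $Z_{\fo|_{\sM'},\sE|_{\sM'}} \Longrightarrow Z_{\fo,\sE} \Longrightarrow Z_{\fo|_{\sM/\sM'},\sE|_{\sM/\sM'}}$, which levelwise is the evident split cofiber sequence of $A$-modules. I expect the only step requiring genuine care to be this compatibility check — that no structure map leaks from the $\sM'$-summand into the $\sM/\sM'$-summand — and this is exactly the step that uses the closure hypothesis on $\sM'$, together with the fact, recorded in \cref{notn:full_flow_subcat}, that $\fo$ and $\sE$ restrict to $R$-orientations and local systems on both subcategories.

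Finally, the inclusion $Z_{\fo|_{\sM'},\sE|_{\sM'}} \hookrightarrow Z_{\fo,\sE}$ is a levelwise wedge-summand inclusion, hence a levelwise cofibration; applying the geometric realization functor of \cref{defn:geometric_realization_J-mod}, which is a homotopy colimit and therefore carries such a cofiber sequence of $\sJ$-modules to a cofiber sequence of $A$-modules, yields
\[ |\sM', \fo|_{\sM'}, \sE|_{\sM'}| \longrightarrow |\sM, \fo, \sE| \longrightarrow |\sM/\sM', \fo|_{\sM/\sM'}, \sE|_{\sM/\sM'}|. \]
One should also note that $\sM'$ and $\sM/\sM'$ inherit \cref{asmp:flowcjs} from $\sM$, since their moduli spaces are among those of $\sM$, so that all three realizations are defined.
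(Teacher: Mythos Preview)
The paper states this lemma without proof (the \qed immediately follows the statement), so there is no argument to compare against; your proposal correctly supplies the details the authors leave implicit, and the approach---identifying $Z_{\fo|_{\sM'},\sE|_{\sM'}}$ as a sub-$\sJ$-module via the closure hypothesis and then passing to realizations---is the natural one.

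One small inaccuracy worth flagging: you describe the geometric realization of \cref{defn:geometric_realization_J-mod} as ``a homotopy colimit,'' but it is actually a homotopy \emph{limit} $\holim_q |Z|_q$ of the $q$-realizations, each of which is a bar construction (hence a homotopy colimit). This does not break your argument, since in the stable setting of $A$-module spectra both homotopy limits and homotopy colimits preserve cofiber sequences; but the justification should read ``the bar construction $B(-,\sJ,\sV_q)$ preserves levelwise cofiber sequences, and then so does the homotopy limit over $q$ by stability,'' rather than appealing to a single homotopy colimit. In the applications (e.g.\ \cref{prop:morse_flow_path}) the grading is bounded below and the $\holim$ stabilizes anyway, so the point is largely cosmetic.
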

\begin{lem}\label{lem:realiation_point}
    Let $\sM_\ast[d]$ be the point flow category shifted by degree $d$ (see \cref{dfn:point_flow_cat}). Let $\fo$ be an $R$-orientation and $\sE$ a local system of $A$-modules on $\sM_\ast[d]$. Then
    \[ |\sM_\ast[d],\fo,\sE| \simeq \sE(p) \wedge_R \fo(p).\]
\end{lem}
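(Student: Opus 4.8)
The plan is a direct unwinding of the definitions. Recall from \cref{dfn:point_flow_cat} that $\sM_\ast[d]$ has a single object $p$ with grading $\mu(p) = d$, and that its only morphism space $\sM_\ast[d](p,p)$ is a $(-1)$-dimensional manifold with corners, hence empty. In particular there are no non-trivial composition or action maps: an $R$-orientation on $\sM_\ast[d]$ is precisely the choice of a rank one free $R$-module $\fo(p)$, a local system of $A$-modules is precisely the choice of an $A$-module $\sE(p)$, and by \cref{dfn:Thom_spec_flow_cat} the only Thom-spectrum morphism object is $(\sM_\ast[d])^{-I}(p,p) = \varnothing^{-I(p,p)} \simeq \ast$.

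First I would observe that the associated $A$-linear $\sJ$-module $Z \coloneqq Z_{\sM_\ast[d],\fo,\sE}$ is concentrated in a single degree:
\[
Z(m) \;=\; \bigvee_{x \in \mu^{-1}(m)} \bigl( \sE(x) \wedge_R \fo(x) \bigr) \;=\;
\begin{cases}
\sE(p) \wedge_R \fo(p), & m = d, \\
\ast, & m \neq d,
\end{cases}
\]
and that every structure map of $Z$ built from the maps \eqref{eq:flowtoj} factors through $(\sM_\ast[d])^{-I}(p,p) \simeq \ast$ and so is trivial. Then I would appeal to \cref{defn:geometric_realization_J-mod}: the CJS realization $|Z|$ is assembled by gluing the contributions of the objects in each degree along the structure maps of $Z$, so when $Z$ is supported in a single degree and all its structure maps are trivial the assembly is vacuous and $|Z| \simeq Z(d)$. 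This gives $|\sM_\ast[d],\fo,\sE| = |Z| \simeq Z(d) = \sE(p) \wedge_R \fo(p)$.

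The one point that genuinely needs checking---and which I expect to be the main obstacle---is the bookkeeping of the shift $[d]$: one must confirm that, with the conventions fixed in \cref{sec:cjs}, the CJS realization of a $\sJ$-module supported in a single degree is literally its value in that degree, with no spurious external suspension $\varSigma^{\pm d}$ introduced by the degree of $p$ (intuitively, the grading function of a flow category only enters the realization through the dimensions of the morphism spaces, of which there are none here). Granting this normalization, the asserted equivalence follows; it is moreover natural in $\sE$, and specializing to the trivial local system $\sE = \underline R$ recovers $|\sM_\ast[d],\fo| \simeq \fo(p)$ in agreement with \cref{rem:trivial_local_sys}.
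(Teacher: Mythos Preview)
Your proposal is correct and follows essentially the same approach as the paper: the paper's proof is the single line ``This is almost identical to the argument in \cref{exmp:cjspt},'' and your argument is precisely a spelled-out version of that example, identifying the associated $\sJ$-module as concentrated in degree $d$ with value $\sE(p)\wedge_R\fo(p)$ and invoking the realization of a $\sJ$-module supported in one degree. The normalization concern you flag about the shift $[d]$ is exactly what the paper records in \cref{rem:CJS_shift}.
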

\begin{proof}
    This is almost identical to the argument in \cref{exmp:cjspt}.
\end{proof}

\begin{defn}[Flow multimodule with a local system]
    Let $(\sM_i,\fo_i)$ for $i \in \{0,\dots,n\}$ be $R$-oriented flow categories. Let $\sE_1$ and $\sE_0$ be local systems of $A$-modules on $\sM_1$ and $\sM_0$, respectively. An \emph{$R$-oriented flow multimodule with a local system}
    \[ (\sN,\fm,\sE) \colon (\sM_1,\fo_1,\sE_1),(\sM_2,\fo_2),\dots,(\sM_n,\fo_n) \longrightarrow (\sM_0,\fo_0,\sE_0)\]
    is an $R$-oriented flow multimodule $(\sN,\fm) \colon (\sM_1,\fo_1),\ldots, (\sM_n,\fo_n) \to (\sM_0,\fo_0)$ equipped with a natural transformation
    \[ \sE \colon \sN \Longrightarrow F_{A}\left(\sE_1,\sE_0\right),\]
    of spectrally enriched functors $\sM_1^{\text{op}} \wedge \dots \wedge \sM_n^{\text{op}} \wedge \sM_0 \to \mod{A}$, i.e., the data of maps
    \[
    \sE(a_1,\ldots,a_n;a_0) \colon \sE_1(a_1) \wedge \varSigma^\infty_+\sN(a_1,\ldots,a_n;a_0) \longrightarrow \sE_0(a_0),
    \]
    that are compatible with the action maps in $\sN$.
\end{defn}

Given an $R$-oriented flow multimodule with a local system
    \[ (\sN,\fm,\sE) \colon (\sM_1,\fo_1,\sE_1), (\sM_2,\fo_2),\dots,(\sM_n,\fo_n) \longrightarrow (\sM_0,\fo_0,\sE_0).\]
    the natural transformation $\sE$ and the multimodule extension of $R$-module systems (see \cref{dfn:multimodule_ext}) induces maps
    \[
    \sN^{-I}(a_1,\dots,a_n;a_0) \Longrightarrow F_A\left( Z_{\sM_1,\fo_1,\sE_1}(\mu_1(a_1)) \wedge_R  \bigwedge_{i=2}^nZ_{\sM_i,\fo_i,\sE_i}(\mu_i(a_i)), Z_{\sM_0,\fo_0,\sE_0}(\mu_0(a_0))\right),
    \]
    which allows us to define a $\sJ$-module multimorphism $W_{\sN,\fm,\sE}$ similar to \eqref{eq:J-mod_multimorphism_flow_multimod}. The CJS realization of $(\sN,\fm,\sE)$ is defined as the geometric realization of $W_{\sN,\fm,\sE}$, and similar to \cref{lma:functoriality_cjs}, there is an induced map
    \[ |\sN,\fm,\sE| \colon |\sM_1,\fo_1,\sE_1| \wedge_R |\sM_2,\fo_2| \wedge_R \dots \wedge_R |\sM_n,\fo_n| \longrightarrow |\sM_0,\fo_0,\sE_0|.\]

\begin{defn}[Composition local system]\label{dfn:composition_local_system}
    Consider two $R$-oriented flow bimodules with local systems
    \begin{align*}
        (\sN_{12},\fm_{12},\sE_{12}) \colon (\sM_1,\fo_1,\sE_1) &\longrightarrow (\sM_2,\fo_2,\sE_2) \\
        (\sN_{23},\fm_{23},\sE_{23}) \colon (\sM_2,\fo_2,\sE_2) &\longrightarrow (\sM_3,\fo_3,\sE_3).
    \end{align*}
    The composition $(\sN_{23},\fm_{23}) \circ (\sN_{12},\fm_{12})$ (see \cref{lem:r_ori_composition_bimods} and \cref{dfn:composition_flow_bimods}) is equipped with the \emph{composition local system} $\sE_{23}\circ \sE_{12}$ that is defined via the maps
    \[
    \begin{tikzcd}[row sep=scriptsize,column sep=1.5cm]
        \sE_1(a) \wedge \varSigma^\infty_+\sN_{12}(a,b) \wedge \varSigma^\infty_+\sN_{23}(b,c) \rar{\sE_{12}(a,b)\wedge \id} & \sE_{2}(b) \wedge \varSigma^\infty_+\sN_{23}(b,c) \rar{\sE_{23}(b,c)} & \sE_3(c),
    \end{tikzcd}
    \]
    for every $a\in \Ob(\sM_1)$, $b\in \Ob(\sM_2)$ and $c\in \Ob(\sM_3)$.
\end{defn}
\begin{rem}
    \Cref{dfn:composition_local_system} is extended to compositions of $R$-oriented flow multimodules with local systems in the obvious way.
\end{rem}

\begin{defn}[Flow bordism with a local system]
    Let
    \[
    (\sN,\fm,\sE), (\sN',\fm',\sE') \colon (\sM_1,\fo_1,\sE_1), (\sM_2,\fo_2),\dots,(\sM_n,\fo_n) \longrightarrow (\sM_0,\fo_0,\sE_n)
    \]
    be two $R$-oriented flow multimodules with local systems of $A$-modules. An \emph{$R$-oriented flow bordism with a local system} $(\sB,\fn,\sF) \colon (\sN,\fm,\sE) \Rightarrow (\sN',\fm',\sE')$ is an $R$-oriented flow bordism $(\sB,\fn) \colon (\sN,\fm) \Rightarrow (\sN',\fm')$ equipped with a natural transformation
    \[
    \sF \colon \sB \Longrightarrow F_A(\sE,\sE'),
    \]
    of spectrally enriched functors $\sM_1^{\text{op}} \wedge \dots \wedge \sM_n^{\text{op}} \wedge \sM_0 \to \mod{A}$ that are compatible with the action maps in $\sB$, and pulls back to $\sE$ and $\sE'$ along the action maps $\sN(\vec a;a_0) \to \sB(\vec a;a_0)$ and $\sN'(\vec a;a_0) \to \sB(\vec a;a_0)$, respectively.
\end{defn}

Given an $R$-oriented flow bordism with a local system $(\sB,\fn,\sF) \colon (\sN,\fm,\sE) \Rightarrow (\sN',\fm',\sE')$ between two $R$-oriented flow multimodules with local systems, similar to \cref{lma:bordism_gives_homotopy_of_maps} its CJS realization defines a homotopy $|\sN,\fm,\sE| \simeq |\sN',\fm',\sE'|$.
           
\section{Cohen--Jones--Segal geometric realization}\label{sec:cjs}
    In this section we define the geometric realization functor which assigns to an $R$-oriented flow category an $R$-module spectrum. The construction is a generalization of the Pontryagin--Thom construction and is based on the construction introduced by Cohen--Jones--Segal \cite{cohen1995floer,cohen2009floer}. We shall define a version that uses coherent systems of collars as opposed to the original approach of Cohen--Jones--Segal based on neat embeddings of $\left\langle k\right\rangle$-manifolds.  The approach taken here is also the one taken in e.g.\@ \cite{abouzaid2021arnold,large2021spectral}. 
    
    We start with some auxiliary constructions. For $m,n \in \IZ$ with $m \geq n$, define
    \[ J(m,n) \coloneqq [0,\infty]^{\{m>x>n\}}.\]
    There are natural inclusion maps 
    \begin{align}\label{eq:jincl}
        i_{mpn} \colon J(m,p) \times J(p,n) &\hooklongrightarrow J(m,n) \\ \nonumber
        (x_1,x_2)  &\longmapsto (x_1, \infty, x_2), 
    \end{align}
    for any $m > p > n$. These inclusions come with natural collars, i.e., embeddings
    \begin{equation}\label{eq:jcol}
        c_{mpn} \colon J(m,p) \times J(p,n) \times (0,\infty]^{\{p\}} \hooklongrightarrow J(m,n).
    \end{equation}
    Moreover these collars are associative in the sense that the following diagram commutes
    \[
        \begin{tikzcd}[row sep=scriptsize, column sep=1.5cm]
            J(m,p) \times J(p,q) \times J(q,n) \times (0,\infty]^{\{p,q\}} \rar{c_{mpq} \times \id \times \id} \dar{\id \times c_{pqn}} & J(m,q) \times J(q,n) \times (0,\infty]^{\{q\}} \dar{c_{mqn}}\\
            J(m,p) \times J(p,n) \times (0,\infty]^{\{p\}} \rar{c_{mpn}} & J(m,n).
        \end{tikzcd}
    \]
    In particular, we have embeddings
    \begin{equation}\label{eq:embJ}
        \iota_{mpn} \colon J(m,p) \times J(p,n) = J(m,p) \times J(p,n) \times \{1\} \hooklongrightarrow J(m,n),
    \end{equation}
    for any $m > p > n$, which are also associative. Below whenever we refer to an embedding of $J(m,p) \times J(p,n)$ inside $J(m,n)$, we mean $\iota_{mpn}$ unless specified otherwise. Note that the normal bundle of these embeddings are given by the trivial vector bundle $\un \IR^{\{p\}}$.

    Let $\nu_{m,n}$ denote the trivial vector bundle $J(m,n) \oplus \un \IR \to J(m,n)$ and let 
    \[J^\nu(m,n) \coloneqq J(m,n)^{\nu_{m,n}}.\] 
    Notice that we have
    \begin{equation}\label{eq:jnures}
        \nu_{m,n}|_{J(m,p) \times J(p,n)} \oplus \un \IR^{\{p\}} \simeq \nu_{m,p} \oplus \nu_{p,n} .
    \end{equation}
    Thus there are maps
    \[ J^\nu(m,n) \longrightarrow J^\nu(m,p) \wedge J^\nu(p,n).\]
    \begin{defn}\label{dfn:bdry}
        Let $\partial J(m,n) \subset J(m,n)$ be the set of points where at least one coordinate is equal to $\infty$.
    \end{defn}
    \begin{rem}\label{rem:bdry}
        Note that $\partial J(m,n)$ in \cref{dfn:bdry} is different from the boundary locus of $J(m,n)$ considered as a manifold with corners. 
    \end{rem}
    We see that
    \[ (\partial J(m,p) \times J(p,n)) \cup (J(m,p) \times \partial J(p,n)) \subset \partial J(m,n).\]
    Define $J^\nu_{\text{rel }\partial}(m,n) \coloneqq J^\nu(m,n)/ \partial J^\nu(m,n)$, where $\partial J^\nu(m,n) \coloneqq (\partial J(m,n))^{\nu_{m,n}|_{\partial J(m,n)}}$. Thus we have induced maps
    \begin{equation}\label{eq:jincl1}
        \iota_{mpn}^\nu \colon J^\nu_{\text{rel }\partial}(m,n) \longrightarrow J^\nu_{\text{rel }\partial}(m,p) \wedge J^\nu_{\text{rel }\partial}(p,n).
    \end{equation}
    Moreover, these maps are coassociative, in the sense that the following diagram is commutative for any $m > p > q > n$.
    \begin{equation}\label{eq:coassoc_J}
        \begin{tikzcd}[row sep=scriptsize, column sep=1.5cm]
            J^\nu_{\text{rel }\partial}(m,n) \rar{\iota_{mpn}^\nu} \dar{\iota_{mqn}^\nu} & J^\nu_{\text{rel }\partial}(m,p) \wedge J^\nu_{\text{rel }\partial}(p,n) \dar{\id \wedge \iota_{pqn}^\nu} \\
            J^\nu_{\text{rel }\partial}(m,q) \wedge J^\nu_{\text{rel }\partial}(q,n) \rar{\iota_{mpq}^\nu \wedge \id} & J^\nu_{\text{rel }\partial}(m,p) \wedge J^\nu_{\text{rel }\partial}(p,q) \wedge J^\nu_{\text{rel }\partial}(q,n)
        \end{tikzcd}
    \end{equation}
    \begin{defn}\label{dfn:cat_J}
        Let $\sJ$ denote the spectrally enriched category defined by
        \begin{enumerate}
            \item $\Ob(\sJ) = \IZ$
            \item For $m,n \in \IZ$, define 
                \[ \sJ(m,n) \coloneqq \begin{cases}
                    \ast, &\text{if } m <n\\
                    \IS &\text{if } m=n\\
                    D(J^\nu_{\text{rel }\partial}(m,n)), &\text{otherwise.}
                \end{cases} \]
            where $D(-) = F(-,\IS)$ denotes the Spanier--Whitehead dual.
            \item The composition is given by dualizing the maps $\iota^\nu_{mpn}$ in \eqref{eq:jincl1}.
        \end{enumerate}
    \end{defn}
    \begin{defn}\label{dfn:r_linear_J-mod}
        Let $R$ denote a commutative ring spectrum. An \emph{$R$-linear $\sJ$-module} is a spectrally enriched functor
            \[ Z \colon \sJ \longrightarrow \mod{R}.\]
    \end{defn}
    More explicitly, the data of such a functor consists of an $R$-module $Z(i)$ for each $i\in \IZ$ and maps of $R$-modules $Z(i) \wedge \sJ(i,j) \to Z(j)$ for all $i,j\in \IZ$. The composition $Z(i) \simeq Z(i) \wedge \IS = Z(i) \wedge \sJ(i,i) \to Z(i)$ is the identity.
    \begin{defn}[Two sided bar construction] Given functors $Z \colon \sJ \to \mod{R}$ and $Y \colon \sJ^{\mathrm{op}} \to \Spec$, define the \emph{two sided bar construction} $B(Z,\sJ,Y)$  to be the geometric realization of the simplicial $R$-module $B_\bullet(Z,\sJ,Y)$ that is defined on objects by
        \[ [k] \longmapsto \bigvee_{a_0 \geq a_1 \geq \cdots \geq a_k} (Z(a_0) \wedge \sJ(a_0, a_1) \wedge \cdots \wedge \sJ(a_{k-1},a_k) \wedge Y(a_k)) .\]
    \end{defn}
    \begin{rem}\label{rem:two_sided_bar_constr}
        \begin{enumerate}
            \item The two sided bar construction $B(X,\sJ,Y)$ is a model for the derived tensor product $X \otimes_\sJ Y$, see \cite[Lemma A.105 and Proposition A.106]{abouzaid2021arnold} (cf.\@ \cite[Propositions IV.7.5 and IV.7.6]{elmendorf1997rings}). 
            \item Note that for a given $\ell \in \IZ$ the compositions in $\sJ$ yields a functor $\sJ^{\mathrm{op}} \to \Spec$ defined on objects by $k\mapsto \sJ(k,\ell)$.
        \end{enumerate}
    \end{rem}
    For $m\geq q$, define 
    \[ V_q(m) = [0,\infty]^{\{m > x \geq q\}}\]
    and denote by $\partial V_q(m)$ the subset of $V_q(m)$ where at least one of the coordinates is equal to $\infty$; define $\partial \mathrm{pt} = \varnothing$. Similar to the maps \eqref{eq:jincl} there are composition maps
    \begin{equation}\label{eq:v_j_compos}
        J(m,n) \times V_q(n) \longrightarrow V_q(m).
    \end{equation}
    Denote by $\sV_q(m)$ the Spanier--Whitehead dual $D(\varSigma^\infty V_q(m)/\partial V_q(m))$.
    Note that 
    \[ \sV_q(m) = \begin{cases} \IS & \text{if } m = q\\
    \ast & \text{if } m > q\end{cases}.\]
    Note that \eqref{eq:v_j_compos} preserve boundary components. Thus Spanier--Whitehead dual of the Thom collapse along \eqref{eq:v_j_compos} gives maps
    \begin{equation}\label{eq:v}
        \sJ(m,n) \wedge \sV_q(n) \longrightarrow \sV_q(m).
    \end{equation}
    For later reference, we also define the space $V^q(m) = [0,\infty]^{\{q \geq x > m\}}$, with associated boundary $\partial V^q(m)$ and the spectrum $\sV^q(m)$ defined analogously to $\sV_q(m)$.
    \begin{defn} For any $q\in \IZ$, define $\sV_q \colon \sJ^{\mathrm{op}} \to \Spec$ to be the spectrally enriched functor given on objects by 
    \[ 
        m \longmapsto \begin{cases}  \ast &\text{if } m<q,\\
        \sV_q(m) &\text{otherwise,}
        \end{cases}
    \]
    and on morphisms via the maps \eqref{eq:v}.
    \end{defn}
    \begin{defn}[$q$-realization of an $R$-linear $\sJ$-module]\label{defn:q-realization}
        Let $q\in \IZ$. Given an $R$-linear $\sJ$-module $Z\colon \sJ \to \mod{R}$, the \emph{$q$-realization} of $Z$ is defined to be the two sided bar construction
        \[
        |Z|_q \coloneqq B(Z, \sJ, \sV_q).
        \]
    \end{defn}

    For $q > p$, and any $m \geq q$ there is a map $\sV_p(m) \to \sV_q(m)$ induced by the map $V_q(m) \to V_p(m)$ given by extension by $0$. Hence we get induced maps on bar constructions 
    \[ |Z|_p \longrightarrow |Z|_q \]
    which are well-defined up to homotopy. Moreover, for $q > p > r$, the following diagram is homotopy commutative
    \[
        \begin{tikzcd}[row sep=scriptsize, column sep=scriptsize]
            \vert Z \vert_r \rar \drar & \vert Z \vert_p \dar \\
            {} & \vert Z \vert_q
        \end{tikzcd}.
    \]
    \begin{defn}[Geometric realization of an $R$-linear $\sJ$-module]\label{defn:geometric_realization_J-mod}
        Suppose that $Z \colon \sJ \to \mod R$ is an $R$-linear $\sJ$-module. The \emph{geometric realization} of $Z$ is defined as follows
        \[
        |Z| \coloneqq \holim_q |Z|_q.
        \]
    \end{defn}
    \begin{rem}\label{rem:pro-spec}
        We say that an $R$-linear $\sJ$-module $Z$ is \emph{bounded below} if $Z(q)= \ast$ for all $q\ll 0$. In this case $|Z|_{q-1} \to |Z|_q$ is a weak equivalence for all $q\ll 0$ and hence the sequence of $R$-modules $|Z|_q$ stabilizes as $q \to -\infty$. Thus, if $Z$ is bounded below we have
        \[ |Z| = |Z|_q, \quad \text{for} \  q \ll 0.\]
    \end{rem}
    
    \begin{exmp}\label{exmp:cjspt}
    Let $Z_\ast[d] \colon \sJ \to \mod{R}$ be the $R$-linear $\sJ$-module given on objects by 
    \[  Z_\ast[d](n) \coloneqq \begin{cases} R  & \text{if } n=d\\ \ast &\text{otherwise}\end{cases}.\]
    From \cref{defn:geometric_realization_J-mod} it follows that 
        \[ |Z_\ast[d]| \simeq R .\]
    \end{exmp}
    \begin{rem}\label{rem:CJS_shift}
        More generally, for any $\sJ$-module $Z$ we have $|Z[d]| \simeq |Z|$, where $Z[d]$ is the $\sJ$-module defined on objects by $Z[d](n) \coloneqq Z(n-d)$.
    \end{rem}

    \begin{defn}[Point flow category]\label{dfn:point_flow_cat}
        Let $d \in \IZ$. The \emph{point flow category} concentrated in degree $d$ is the flow category $\sM_\ast[d]$ that is the $R$-oriented flow category defined as follows:
        \begin{itemize}
            \item The set of objects is $\Ob(\sM_\ast[d]) \coloneqq \{p\}$, and $\mu(p) = d$.
            \item The morphisms are $\sM_\ast[d](p,p) \coloneqq \varnothing$.
            \item The $R$-orientation is given by $\fo_\ast(p) \coloneqq R$.
        \end{itemize}
        We use the notation $\sM_\ast \coloneqq \sM_\ast[0]$.
    \end{defn}
    \begin{rem}
        By \cref{exmp:cjspt} it follows that the CJS realization of the point flow category $\sM_\ast[d]$ is equal to $R$.
    \end{rem}

    \subsection{Functoriality of the geometric realization functor}\label{sec:functoriality_geom_realiz}
    We now discuss functoriality properties of the geometric realization functor with respect to a certain notion of morphism. The construction in this section is entirely analogous to \cite[Section 3]{large2021spectral} with the slight difference being that we work with a notion of multimodules which allows for multiple inputs and a single output, whereas loc.\@ cit.\@ works with multimodules that have a single input and multiple outputs.

    \begin{notn}\label{notn:k_index}
        Let $\vec m \coloneqq (m_1,\ldots,m_p) \in \IZ^p$ and $m_0,m_i' \in \IZ$ for some $i \in \{1,\ldots,p\}$.
        \begin{enumerate}
            \item Let 
             \[
                \vec m_i \coloneqq (m_1,\ldots,m_{i-1},m_i',m_{i+1},\ldots,m_p).
             \]
             \item Let
             \[
                S(\vec m; m_0) \coloneqq \left\{ (r_0, \ldots, r_p) \mid  m_0 \leq r_0,\; r_i \leq m_i, \; \sum_{i=1}^p r_i \geq r_0 \right\}.
             \]
        \end{enumerate}
    \end{notn}
    
    Let $w = m_1+ \dots +m_p - m_0 \geq 0$. Define 
    \[ K(\vec m;m_0) \subset \prod_{i=1}^p[0,\infty]^{\{m_i> x \geq m_i - w\}}  \times [0,\infty]^{\{w+m_0 \geq x > m_0\}}\]
    to be the subset 
    \begin{align}\label{eq:K_abstract_cube}
        K(\vec m;m_0) \coloneqq &\bigcup_{ (r_0, \ldots, r_p) \in S(\vec m;m_0)}  \left( \prod_{i=1}^p V_{r_i}(m_i)  \right) \times V^{r_0}(m_0)\\ \nonumber
        &= \bigcup_{ (r_0, \ldots, r_p) \in S(\vec m;m_0)} \left( \prod_{i=1}^p [0,\infty]^{\{m_i > x \geq r_i\}} \right) \times [0,\infty]^{\{r_0\geq x > m_0\}}. 
    \end{align}
    \begin{rem} 
    $K(\vec m;m_0)$ coincides with $\mathcal R(w)$ as defined in \cite[Definition 3.10]{large2021spectral} with $\mathcal R=\text{pt}$.
    \end{rem}
    Similar to the inclusion maps \eqref{eq:jincl} we have composition maps
    \begin{align}\label{eq:Rincl}
        K(\vec m;m_0') \times J(m_0',m_0) & \hooklongrightarrow K(\vec m;m_0) \\ \label{eq:Rincl2}
        J(m_i,m_i') \times K(\vec m_i;m_0) & \hooklongrightarrow K(\vec m;m_0), \quad i\in \{1,\ldots,p\}
    \end{align}
    that commute with each other are compatible with the inclusions \eqref{eq:jincl}. Moreover, similar to the collars \eqref{eq:jcol}, there are collars
    \begin{align*}
        K(\vec m;m_0') \times J(m_0',m_0) \times (0,\infty]^{\{m_0'\}} &\hooklongrightarrow K(\vec m;m_0)\\
        J(m_i,m_i') \times K(\vec m_i;m_0) \times (0,\infty]^{\{m_i'\}} &\hooklongrightarrow K(\vec m;m_0), \quad i\in \{1,\ldots,p\}
    \end{align*}
    that are associative and compatible with the inclusions \eqref{eq:jincl}, \eqref{eq:Rincl}, \eqref{eq:Rincl2}, and the collars \eqref{eq:jcol}. These induce embeddings 
    \begin{align}\label{eq:Remb}
        K(\vec m;m_0') \times J(m_0',m_0) \times \{1\} &\hooklongrightarrow K(\vec m;m_0) \\ \label{eq:Remb2}
        J(m_i,m_i') \times K(\vec m_i;m_0) \times \{1\} &\hooklongrightarrow K(\vec m;m_0), \quad i\in \{1,\ldots,p\}
    \end{align}
    that are associative and compatible with the embeddings \eqref{eq:embJ}. Below, whenever we refer to embeddings of $K(\vec m;m_0') \times J(m_0',m_0) $ or $J(m_i,m_i') \times K(\vec m_i;m_0)$ into $K(\vec m; m_0)$, we mean the embeddings \eqref{eq:Remb} or \eqref{eq:Remb2}, unless otherwise specified.

    The normal bundles of these embeddings coincide with the pullbacks of the bundles $\nu_{m_0',m_0}$ and $\nu_{m_i,m_i'}$ as in \eqref{eq:jnures} along the projection onto the $J(-,-)$ factors. From these we get maps
    \begin{align*}
       K(\vec m;m_0) &\longrightarrow J^\nu(m_i,m_i') \wedge K(\vec m_i;m_i), \quad i\in \{1,\ldots,p\}\\
       K(\vec m;m_0) &\longrightarrow K(\vec m;m_0') \wedge J^\nu(m_0',m_0).
    \end{align*}
    Similar to \cref{dfn:bdry} we have the following.
    \begin{defn}\label{dfn:rbdry}
        Define $\partial K(\vec m;m_0) \subset K(\vec m; m_0)$ to be the set of points where at least one coordinate is equal to $\infty$.
    \end{defn}
    We see that
    \begin{align*} 
        &(\partial K(\vec m;p) \times J(p,n)) \cup (K(\vec m;p) \times \partial J(p,n)) \\
        &\qquad \cup \bigcup_{i=1}^p((\partial J(m_i,m_i') \times K(\vec m_i;n)) \cup (J(m_i,m_i') \times \partial K(\vec m_i,n)))\subset \partial K(\vec m;n).
  \end{align*}
      Define $K_{\text{rel } \partial}(\vec m;n) \coloneqq K(\vec m;n)/ \partial K(\vec m;n)$. Via the Pontryagin--Thom collapse map we obtain induced maps
    \begin{align}\label{eq:rspecbac1}
        K_{\text{rel } \partial}(\vec m;m_0) &\longrightarrow J^\nu_{\text{rel }\partial}(m_i;m_i') \wedge K_{\text{rel } \partial}(\vec m_i;m_0), \quad i\in \{1,\ldots,p\} \\
        \label{eq:rspecbac2}
        K_{\text{rel } \partial}(\vec m;m_0) &\longrightarrow K_{\text{rel } \partial}(\vec m; m_0') \wedge J^\nu_{\text{rel }\partial}(m_0',m_0) 
    \end{align}
    These maps are coassociative with respect to the maps \eqref{eq:jincl1}. Recall the definition of the spectrally enriched category $\sJ$ from \cref{dfn:cat_J}. Let $\sJ \wedge \sJ$ denote the spectrally enriched category with $\Ob(\sJ \wedge \sJ) \coloneqq \Ob(\sJ) \times \Ob(\sJ)$, and morphisms $(\sJ \wedge \sJ)((m_1,n_1),(m_2,n_2)) \coloneqq \sJ(m_1,n_1) \wedge \sJ(m_2,n_2)$. In analogy with \cref{dfn:r_linear_J-mod}, the above allows us to form an $\IS$-linear $\sJ$-multimodule, meaning a spectrally enriched functor
    \[\sK \colon \left(\bigwedge_{i=1}^p\sJ\right)^{\mathrm{op}} \wedge \sJ \longrightarrow \Spec\]
    given on objects by
    \[ \sK(\vec m;m_0) \coloneqq D(K_{\text{rel } \partial}(\vec m;m_0)),\]
    where $D$ denotes the Spanier--Whitehead dual. The dual of the maps \eqref{eq:rspecbac1} and \eqref{eq:rspecbac2} yield maps
    \[
        \left(\bigwedge_{i=1}^p \sJ(m_i,m_i')\right) \wedge \sK(\vec m';m_0') \wedge \sJ(m_0',m_0) \longrightarrow \sK(\vec m;m_0),
    \]
    which by adjunction yields a map that defines $\sK$ on morphisms.
    \begin{rem}\label{rem:K_and_R}
        $\sK(\vec m;m_0)$ coincides with $\mathscr R(w)$ for $\mathcal R = \text{pt}$ as defined in \cite[Definition 3.13]{large2021spectral}.
    \end{rem}

    \begin{notn}
        Given a functor $F \colon (\IZ^{\mathrm{op}})^p \to \Spec$ and an integer $q\in \IZ$, we denote by 
        \[ \lim_{q_1+\dots+q_p \geq q} F \quad \text{and} \quad \colim_{q_1+\dots+q_p \geq q} F,\]
        the limit and colimit of $F$, respectively, indexed by the overcategory $(+) \downarrow q$, where $(+) \colon (\IZ^{\mathrm{op}})^p \to \IZ^{\mathrm{op}}$ is the functor $(q_1,\dots,q_p) \mapsto q_1 + \dots + q_p$.
    \end{notn}
     
    \begin{prop}\label{prop:rcon}
        For each $\vec m \in \IZ^p$ and $q\in \IZ$, there is a weak equivalence of spectra
        \[B(\sK(\vec m;-), \sJ, \sV_q) \overset{\simeq}{\longrightarrow} \lim_{q_1+\dots+ q_p \geq q} \bigwedge_{i=1}^p \sV_{q_i}(m_i),\] 
    \end{prop}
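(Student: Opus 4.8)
The plan is to identify both sides as homotopy (co)limits built from the combinatorics of the cubes $K(\vec m;m_0)$ and then to match the two indexing categories. First I would reinterpret the left-hand side: by \cref{rem:two_sided_bar_constr}(i) the spectrum $B(\sK(\vec m;-),\sJ,\sV_q)$ models the derived tensor product $\sK(\vec m;-)\otimes_\sJ\sV_q$. Every spectrum in sight is the Spanier--Whitehead dual of a finite based complex --- one has $\sK(\vec m;m_0)=D(K_{\text{rel }\partial}(\vec m;m_0))$ and $\sV_q(m)=D(\varSigma^\infty V_q(m)/\partial V_q(m))$, with all structure maps duals of Pontryagin--Thom collapses --- so $D(-)$ commutes with the bar construction and $B(\sK(\vec m;-),\sJ,\sV_q)$ is Spanier--Whitehead dual to a homotopy colimit over $\sJ^{\mathrm{op}}$ of the bar gadget assembled from the $K_{\text{rel }\partial}(\vec m;-)$-system and the $V_q(-)/\partial$-system. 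It then suffices to identify this homotopy colimit of spaces with $\colim_{q_1+\cdots+q_p\ge q}\bigwedge_i\bigl(V_{q_i}(m_i)/\partial V_{q_i}(m_i)\bigr)$, whose dual is the right-hand side.

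Next I would exploit the explicit cubical description. By \eqref{eq:K_abstract_cube}, $K(\vec m;m_0)$ is the union over $(r_0,\dots,r_p)\in S(\vec m;m_0)$ of the coordinate sub-cubes $\prod_iV_{r_i}(m_i)\times V^{r_0}(m_0)$, and the collars underlying \eqref{eq:Remb} and \eqref{eq:Remb2} record precisely how these glue along faces as $m_0$ and the $m_i$ vary. Taking the homotopy colimit over $\sJ^{\mathrm{op}}$ amounts to stacking the $V^{r_0}(m_0)$-direction cubes as $m_0\to-\infty$ while collapsing all boundary-at-infinity faces; in the colimit each $V^{r_0}(m_0)$-factor becomes contractible relative to its boundary, only the $\prod_iV_{r_i}(m_i)$-factors survive, and the constraint $\sum_ir_i\ge r_0$ defining $S(\vec m;m_0)$ passes to the constraint $\sum_iq_i\ge q$ indexing the colimit. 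I would make this precise by filtering $K_{\text{rel }\partial}(\vec m;-)$ by the number of coordinates in the $V^{r_0}$-direction and computing the associated graded; this is exactly the computation of \cite[Section~3]{large2021spectral} in the case $\mathcal R=\mathrm{pt}$, through the identifications $K(\vec m;m_0)=\mathcal R(w)$ and $\sK(\vec m;m_0)=\mathscr R(w)$ with $w=m_1+\cdots+m_p-m_0$ recalled above, so modulo the routine translation between multimodules with one output (as there) and one input (as here) one may simply invoke that result. As a cross-check one can also induct on $p$: for $p=1$ the overcategory $(+)\downarrow q$ has terminal object $q_1=q$, the right-hand side collapses to $\sV_q(m_1)$, and the equivalence follows from a co-Yoneda argument once $\sK(m_1;-)$ is seen to be weakly equivalent to a corepresentable $\sJ$-module; for $p>1$ the maps \eqref{eq:Remb2} present $\sK(\vec m;-)$ as an iterated homotopy colimit of modules built from $\sK(\vec m';-)$ with $p-1$ inputs, compatibly with the corresponding presentation of the right-hand side, and $B(-,\sJ,\sV_q)$ preserves homotopy colimits.

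The main obstacle is the bookkeeping in the second step: verifying at the level of the simplicial bar object --- not merely up to homotopy --- that collapsing the $V^{r_0}$-direction together with the boundary identifications in $K_{\text{rel }\partial}$ converts the indexing sets $S(\vec m;m_0)$, as $m_0$ decreases without bound, precisely into the overcategory $(+)\downarrow q$, and that this conversion is compatible with the degeneracy maps and with the associativity of the $\sJ$-composition. Everything else is a formal manipulation of bar constructions and Spanier--Whitehead duals.
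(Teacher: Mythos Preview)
Your invocation of Large and the filtration idea are on the right track --- the paper itself says the proof follows the outline of \cite[Proposition~3.16]{large2021spectral}. But the first step of your plan contains a genuine error. Spanier--Whitehead duality is contravariant and exchanges homotopy colimits with homotopy limits; it does not ``commute with the bar construction'' in the sense you need. The bar construction $B(\sK(\vec m;-),\sJ,\sV_q)$ is a geometric realization, hence a homotopy colimit, and its dual is a totalization of the cosimplicial object built from the spaces $K_{\text{rel }\partial}(\vec m;-)$, $J^\nu_{\text{rel }\partial}(-,-)$, $V_q(-)/\partial$ --- not a homotopy colimit over $\sJ^{\mathrm{op}}$ as you claim. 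So the reduction ``it suffices to identify this homotopy colimit of spaces with $\colim_{q_1+\cdots+q_p\ge q}\bigwedge_i(V_{q_i}(m_i)/\partial V_{q_i}(m_i))$'' is not what falls out of dualizing, and the rest of your argument is computing the wrong object.

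The paper avoids this problem by never attempting to dualize the bar construction wholesale. Instead it introduces an explicit intermediate space
\[ KV_q(\vec m) \coloneqq \bigcup_{(r_0,\ldots,r_p)\in S(\vec m;q)} \Bigl(\prod_{i=1}^p V_{r_i}(m_i)\Bigr) \times \partial[0,\infty]^{\{r_0\ge x\ge q\}} \]
with an associated spectrum $\sKV_q(\vec m)$, and proves the two equivalences $\sKV_q(\vec m)\simeq \lim\bigwedge_i\sV_{q_i}(m_i)$ and $B(\sK(\vec m;-),\sJ,\sV_q)\simeq\sKV_q(\vec m)$ separately. The first is a direct deformation-retraction argument on the underlying spaces. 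The second is where the real work lies: one constructs a map $B(\sK(\vec m;-),\sJ,\sV_q)\to\sKV_q(\vec m)$ via Pontryagin--Thom collapse at each simplicial level, then equips both sides with compatible increasing filtrations (on the bar side by the full subcategories $\sJ^{\le\ell}$, on the $\sKV_q$ side by how many coordinates in the $\partial[0,\infty]^{\{r_0\ge x\ge q\}}$-factor equal $\infty$) and checks that the induced map on associated gradeds is the identity $\sK(\vec m;\ell)\to\sK(\vec m;\ell)$. This is close in spirit to the filtration you describe in your second paragraph, but it is carried out on the spectrum side, with the intermediate object $KV_q$ playing the role that your putative ``homotopy colimit of spaces'' was meant to play. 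Your induction on $p$ is not used.
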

    \begin{proof} 
        The proof follows a similar outline as \cite[Proposition 3.16]{large2021spectral}.

        Define an intermediate space 
        \[ KV_q(\vec m) \coloneqq \bigcup_{ (r_0, \ldots, r_p) \in S(\vec m;q)} \left( \prod_{i=1}^p V_{r_i}(m_i) \right) \times \partial [0,\infty]^{\{r_0\geq x \geq q\}},
        \]
        inside $\prod_{i=1}^p[0,\infty]^{\{m_i> x \geq m_i - w\}}  \times \partial [0,\infty]^{\{w+q \geq x \geq q\}}$, where $S(\vec m;q)$ is as in \cref{notn:k_index} and $w \coloneqq m_1 + \dots + m_p - q$. Set 
        \[\partial KV_q(\vec m) \coloneqq \bigcup_{ (r_0, \ldots, r_p) \in S(\vec m;q)} \partial \left( \prod_{i=1}^p V_{r_i}(m_i) \right) \times \partial[0,\infty]^{\{r_0\geq x \geq q\}}, \]
        where $\partial \left( \prod_{i=1}^p V_{r_i}(m_i) \right) \coloneqq \bigcup_{i=1}^p \left(V_{r_1}(m_1) \times \dots \times \partial V_{r_i}(m_i) \times \dots \times V_{r_p}(m_p) \right)$, and define
        \[ \sKV_q(\vec m) \coloneqq D(KV_q(\vec m) / \partial KV_q(\vec m)).\] 
        There is a map \begin{equation}\label{eq:kv_comparison2}
            \colim_{q_1 + \dots + q_p \geq q} \prod_{i=1}^p V_{q_i}(m_i) \longrightarrow KV_q(\vec m)
        \end{equation}
        defined by the inclusions
        $\prod_{i=1}^p V_{q_i}(m_i) \simeq \prod_{i=1}^p V_{q_i}(m_i) \times \{0\}^{\{r_0 \geq x > q\}} \times \{\infty \} \hookrightarrow KV_q(\vec m)$. Note that the colimit in the domain of \eqref{eq:kv_comparison2} maps homeomorphically onto its image which is given by 
        \begin{equation}\label{eq:kv_comparison3}
            \bigcup_{q_1+\dots+q_p = q} \prod_{i=1}^p V_{q_i}(m_i) \times \{0\}^{\{r_0 \geq x > q\}} \times \{\infty\} \subset KV_q(\vec m).
        \end{equation}
        The map \eqref{eq:kv_comparison2} preserves boundaries and hence there is an induced map on relative cochains
        \begin{equation}\label{eq:kv_comparison}
            \sKV_q(\vec m) \longrightarrow \lim_{q_1+\dots+q_p \geq q} \bigwedge_{i=1}^p \sV_{q_i}(m_i).
        \end{equation}
        We show that \eqref{eq:kv_comparison} is a weak equivalence. 
        
        To see this, first notice that the inclusion \eqref{eq:kv_comparison3} admits a deformation retraction, given by retracting the coordinates of $\partial[0,\infty]^{\{r_0 \geq x \geq q\}}$ that are not equal to $\infty$, to zero, meaning that \eqref{eq:kv_comparison2} is a weak equivalence. Moreover, we note that this retract restricts to a deformation retract of 
        \begin{equation}\label{eq:kv_comparison4}
        \colim_{q_1+\dots+q_p \geq q} \partial \left(\prod_{i=1}^p V_{q_i}(m_i) \right) \longrightarrow \partial KV_q(\vec m).
        \end{equation}
        Using the fact that the maps \eqref{eq:kv_comparison2} and \eqref{eq:kv_comparison4} are weak equivalences it follows that \eqref{eq:kv_comparison} is a weak equivalence, as claimed.

        To finish the proof it remains to prove there is a weak equivalence
        \[\sKV_q(\vec m) \simeq B(\sK(\vec m;-),\sJ,\sV_q) .\]
        Note that there is a composition map $V^m(n) \times V_q(n) \to [0,\infty]^{\{m \geq x \geq q\}}$ defined by $(x_1,x_2) \mapsto (x_1,\infty,x_2)$, that together with \eqref{eq:v_j_compos} and \eqref{eq:Rincl} gives a natural map
        \[ \varPhi\colon K(\vec m;q_0) \times J(q_0,q_1) \times \dots \times J(q_{n-1},q_n) \times V_q(q_n) \longrightarrow KV_q(\vec m).\]
        The Pontryagin--Thom collapse induces maps
        \begin{equation}\label{eq:kv_comparison8}
            KV_q(\vec m) \longrightarrow  \left( K(\vec m,q_0) \times J(q_0,q_1) \times \dots \times J(q_{n-1},q_n) \times V_q(q_n) \right)^{\nu_{\varPhi}},
        \end{equation}
        where $\nu_\varPhi$ is the normal bundle of the embedding $\varPhi$, which is the trivial bundle of rank $n$. These maps moreover preserve boundaries. Taking Spanier--Whitehead duals, we get maps
        \[ \sK(\vec m;q_0) \wedge \sJ(q_0,q_1) \wedge \dots \wedge \sJ(q_{n-1},q_n) \wedge \sV_q(q_n) \longrightarrow \sKV_q(\vec m).\]
        It is not difficult to verify that these maps are compatible with simplicial face maps, giving an induced map on the geometric realization        \begin{equation}\label{eq:kv_comparison5} 
            B(\sK(\vec m;-),\sJ,\sV_q) \longrightarrow \sKV_q(\vec m),
        \end{equation}
        where we consider the target as the geometric realization of the constant simplicial object. 

        Note that $B(\sK(\vec m;-),\sJ,\sV_q)$ carries an increasing filtration given by 
        \[ F^\ell B(\sK(\vec m;-),\sJ,\sV_q) \coloneqq B(\sK(\vec m;-)|_{\sJ^{\leq \ell}},\sJ^{\leq \ell},\sV_q|_{(\sJ^{\leq \ell})^{\mathrm{op}}})\]
        where $\sJ^{\leq \ell}$ is the full subcategory of $\sJ$ spanned by $\IZ_{\leq \ell}$. The associated graded in degree $\ell$ is given by 
        \begin{equation}\label{eq:kv_comparison7}
            \Gr^\ell B(\sK(\vec m;-),\sJ,\sV_q) \simeq \begin{cases} \sK(\vec m; \ell) &\text{if } \ell \geq q\\
            \ast &\text{if } \ell  < q\end{cases}
        \end{equation}
        On the other hand, we define an increasing exhaustive filtration on $\sKV_q(\vec m)$ as follows. First define a decreasing exhaustive filtration on $KV_q(\vec m)$ by letting $F_\ell KV_q(\vec m)$ (for any $\ell \in \IZ$) be the subset of $KV_q(\vec m)$ such that
        \begin{itemize}
            \item $r_0 \geq \ell+1 \geq q$, and
            \item there exists some $i$ with $r_0 \geq i \geq \ell+1$ such that the $i$-th coordinate in the factor $\partial [0,\infty]^{\{r_0 \geq x \geq q\}}$ is equal to $\infty$.
        \end{itemize}
        Next, for any $\ell \in \IZ$ let
        \[
        F_{\ell, \text{rel }\partial} KV_q(\vec m) \coloneqq F_\ell KV_q(\vec m)/ (\partial KV_q(\vec m) \cap F_\ell KV_q(\vec m)),
        \]
        and define
        \[ F^\ell_{\text{rel }\partial} KV_q(\vec m) \coloneqq KV_{q,\text{rel }\partial}(\vec m)/F_{\ell, \text{rel }\partial} KV_q(\vec m).\]
        For any $\ell \in \IZ$, define
        \[ F^\ell \sKV_q(\vec m) = D(\varSigma^\infty F^\ell_{\text{rel }\partial} KV_q(\vec m)).\]
        This defines an increasing exhaustive filtration on $\sKV_q(\vec m)$. The associated graded in degree $\ell$ is given by
        \begin{align*}
            \Gr^\ell \sKV_q(\vec m) &= \hocofib(F^{\ell-1}\sK\sV_q(\vec m) \to F^{\ell}\sK\sV_q(\vec m)) \\
            &\simeq D\left( \hofib( \varSigma^\infty F^{\ell}_{\text{rel }\partial} KV_q(\vec m) \to \varSigma^\infty F^{\ell-1}_{\text{rel }\partial}KV_q(\vec m) ) \right)\\
            &\simeq D\left( \hocofib( \varSigma^\infty F_{\ell, \text{rel }\partial} KV_q(\vec m) \to \varSigma^\infty F_{\ell-1, \text{rel }\partial} KV_q(\vec m) ) \right)
        \end{align*}
        It can be observed that
        \[
        \hocofib(\varSigma^\infty F_{\ell-1, \text{rel }\partial} KV_q(\vec m) \to \varSigma^\infty F_{\ell, \text{rel }\partial} KV_q(\vec m)) \simeq \begin{cases}
            \varSigma^\infty K_{\text{rel }\partial}(\vec m,\ell) & \text{if }\ell \geq q \\
            \ast & \text{if }\ell < q
        \end{cases},
        \]
        implying that 
        \begin{equation}\label{eq:kv_comparison6}
            \Gr^\ell \sKV_q(\vec m) \simeq \begin{cases} \sK(\vec m; \ell) &\text{if } \ell \geq q\\
            \ast &\text{if } \ell  < q\end{cases}.
        \end{equation}
        Further, for $\ell \geq q$, observe that the Pontryagin--Thom collapse map \eqref{eq:kv_comparison8} factor as
        \[
            \begin{tikzcd}
                KV_{q,\text{rel }\partial}(\vec m) \rar \dar &\parbox{7cm}{$K_{\text{rel }\partial}(\vec m,q_0) \times J_{\text{rel }\partial}^\nu(q_0,q_1) \\ \quad \times \dots \times J_{\text{rel }\partial}^\nu(q_{n-1},q_n)\times V_{q,\text{rel }\partial}(q_n)$}\\
                KV_{q,\text{rel }\partial}(\vec m)/ F_{\ell, \text{rel }\partial} KV_q(\vec m) \ar{ru} & 
            \end{tikzcd}.
        \]
        From this it follows that the map \eqref{eq:kv_comparison5} preserves the filtrations. It can further be verified that it induces identity on the associated graded under the identifications \eqref{eq:kv_comparison7} and \eqref{eq:kv_comparison6}. 

        This completes the proof of the fact that there is an weak equivalence
        \[ \sKV_q(\vec m) \overset{\simeq}{\longrightarrow} B(\sK(\vec m;-),\sJ,\sV_q),\]
        in turn completing the proof of the proposition.
    \end{proof}
    Now, given $R$-linear $\sJ$-modules $Z_0, \ldots, Z_p$ we consider the spectrally enriched functor 
    \begin{align*}
        \sF_{Z_1,\ldots,Z_p;Z_0} \colon \left(\bigwedge_{i=1}^p\sJ\right)^{\mathrm{op}} \wedge \sJ &\longrightarrow \Spec\\
        (\vec m; m_0) &\longmapsto F_R(Z_1(m_1) \wedge \cdots \wedge Z_p(m_p);Z_0(m_0))
    \end{align*}
    \begin{defn}\label{defn:jmodmor}
        Let $Z_0, \ldots,Z_p$ be $R$-linear $\sJ$-modules. A \emph{$\sJ$-module multimorphism} is a natural transformation $\sK \Rightarrow \sF_{Z_1,\ldots, Z_p;Z_0}$.
    \end{defn}
    \begin{lem}\label{lem:functoriality_geom_realiz}
        Let $Z_0,\ldots,Z_p$ be $R$-linear $\sJ$-modules and let $W \colon \sK \Rightarrow \sF_{Z_1,\ldots, Z_p;Z_0}$ be a $\sJ$-module multimorphism. Then there is an induced map of geometric realizations
        \[ |W| \colon |Z_1| \wedge_R \cdots \wedge_R |Z_p| \longrightarrow |Z_0|.\]
    \end{lem}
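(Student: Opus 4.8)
The plan is to carry out the construction of \cite[Section~3]{large2021spectral} in the present setting, the essential input being \cref{prop:rcon}. First I would unwind the multimorphism: by the mapping-spectrum adjunction, $W$ is equivalently a collection of maps of $R$-modules
\[ W(\vec m;m_0)\colon \sK(\vec m;m_0)\wedge\big(Z_1(m_1)\wedge_R\cdots\wedge_R Z_p(m_p)\big)\longrightarrow Z_0(m_0), \]
natural in all $p+1$ variables with respect to the $\sJ$-module structure maps of the $Z_i$ and the $\sJ$-multimodule structure maps of $\sK$, and in particular compatible with contracting the output variable $m_0$ against $\sV_q$.

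Next, for each $q\in\IZ$ I would form the iterated two-sided bar construction
\[ \mathbb B_q\coloneqq B(Z_1,\sJ,B(Z_2,\sJ,\cdots B(Z_p,\sJ,B(\sK(-;-),\sJ,\sV_q))\cdots)), \]
in which the innermost bar construction contracts the output variable of $\sK$ against $\sV_q$ and the $i$-th outer one contracts the $i$-th input variable of $\sK$ against $Z_i$; this is well defined because each contraction leaves a spectrally enriched functor of the remaining variables of the correct variance (contravariant in the remaining inputs of $\sK$, which is what the next bar construction requires). Since smashing with a fixed spectrum commutes with the two-sided bar construction, the factors $Z_i(m_i)$ can be pulled out of the bar constructions that do not involve them, and the dinaturality of the maps $W(\vec m;m_0)$ then makes them descend to the relevant coends, yielding a map of $R$-modules $\Phi_q\colon\mathbb B_q\to B(Z_0,\sJ,\sV_q)=|Z_0|_q$ (cf.\ \cref{defn:q-realization}). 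These maps are compatible with the transition maps $|Z_0|_q\to|Z_0|_{q+1}$ coming from $\sV_q\to\sV_{q+1}$.

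Finally I would identify $\mathbb B_q$. By \cref{prop:rcon} there is an equivalence $B(\sK(\vec m;-),\sJ,\sV_q)\simeq\lim_{q_1+\cdots+q_p\geq q}\bigwedge_{i=1}^p\sV_{q_i}(m_i)$ that is natural in $\vec m$; feeding its homotopy inverse into the innermost slot of $\mathbb B_q$, using that the two-sided bar construction models the derived tensor product over $\sJ$ (\cref{rem:two_sided_bar_constr}(i))---hence is homotopy invariant and may be reassociated---together with the identity $B(Z_i,\sJ,\sV_{q_i})=|Z_i|_{q_i}$, should produce an equivalence
\[ \mathbb B_q\simeq\lim_{q_1+\cdots+q_p\geq q}|Z_1|_{q_1}\wedge_R\cdots\wedge_R|Z_p|_{q_p}. \]
The hardest step---and the main obstacle---is that this requires the iterated bar construction to commute with the cofiltered homotopy limit $\lim_{q_1+\cdots+q_p\geq q}$; I expect to handle this exactly as in the proof of \cref{prop:rcon}, by filtering the bar constructions by the subcategories $\sJ^{\leq\ell}$ and checking the claim on associated graded pieces, or---for the $\sJ$-modules actually arising in this paper---simply by observing that they are bounded below, so that the bar constructions are uniformly bounded below and realization commutes with the (eventually constant) limit, cf.\ \cref{rem:pro-spec}. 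Then the canonical maps $|Z_i|\to|Z_i|_{q_i}$ out of the homotopy limit defining $|Z_i|$ (\cref{defn:geometric_realization_J-mod}) assemble, after smashing over $R$, into a cone over the diagram defining $\lim_{q_1+\cdots+q_p\geq q}|Z_1|_{q_1}\wedge_R\cdots\wedge_R|Z_p|_{q_p}$, hence---via the equivalence above---into a map $|Z_1|\wedge_R\cdots\wedge_R|Z_p|\to\mathbb B_q$; composing with $\Phi_q$ and passing to the homotopy limit over $q$ then yields the desired
\[ |W|\colon |Z_1|\wedge_R\cdots\wedge_R|Z_p|\longrightarrow\holim_q|Z_0|_q=|Z_0|. \]
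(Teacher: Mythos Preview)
Your proposal is essentially correct and follows the same strategy as the paper, with one packaging difference worth noting: where you use an iterated bar construction $B(Z_1,\sJ,B(Z_2,\sJ,\dots,B(\sK,\sJ,\sV_q)\dots))$, the paper contracts all $p$ input variables at once via a single bar over $\sJ^{\wedge p}$. Concretely, the paper first maps $\bigwedge_i|Z_i|\to\holim_q\lim_{\sum q_i\geq q}\bigwedge_i|Z_i|_{q_i}$, then uses that geometric realization preserves products to identify $\bigwedge_i|Z_i|_{q_i}\simeq B\bigl(\bigwedge_i Z_i,\sJ^{\wedge p},\bigwedge_i\sV_{q_i}\bigr)$, feeds in \cref{prop:rcon} (invoking that the two-sided bar construction preserves weak equivalences), reassociates once to $B\bigl(B(\bigwedge_i Z_i,\sJ^{\wedge p},\sK),\sJ,\sV_q\bigr)$, and finally uses $W$ to obtain a natural transformation $B(\bigwedge_i Z_i,\sJ^{\wedge p},\sK)\Rightarrow Z_0$ and hence a map to $|Z_0|_q$. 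Your iterated version is equivalent to this by a Fubini-type identification of bar constructions; the $\sJ^{\wedge p}$ packaging just streamlines the bookkeeping and makes the ``commutation with limit'' issue you correctly flag less visible (the paper absorbs it into the product-of-realizations step rather than arguing it separately).
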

    \begin{proof}
        We have maps
        \[ \bigwedge_{i=1}^p |Z_i| = \bigwedge_{i=1}^p \holim_q |Z_i|_q \longrightarrow \holim_q \lim_{q_1+\dots+q_p \geq q} \bigwedge_{i=1}^p |Z_i|_{q_i} .\]
        Using the definition of the product structure on simplicial objects, and the fact that geometric realizations preserve products, it follows that
        \[ \lim_{q_1+\dots+q_p \geq q} \bigwedge_{i=1}^p |Z_i|_{q_i} \simeq B\left(\bigwedge_{i=1}^p Z_i,  \sJ^{\wedge p},\lim_{q_1+\dots+q_p \geq q} \bigwedge_{i=1}^p \sV_{q_i}\right).\]
        Since the two sided bar construction preserves weak equivalences (see \cite[Section A.3.4]{abouzaid2021arnold}), using \cref{prop:rcon}, we obtain a weak equivalence
        \[  B\left(\bigwedge_{i=1}^p Z_i, \sJ^{\wedge p}, B(\sK,\sJ,\sV_q)\right) \overset{\simeq}{\longrightarrow} B\left(\bigwedge_{i=1}^p Z_i,  \sJ^{\wedge p},\lim_{q_1+\dots+q_p \geq q} \bigwedge_{i=1}^p \sV_{q_i}\right).\]
        We then have
        \[ B\left(\bigwedge_{i=1}^p Z_i, \sJ^{\wedge p}, B(\sK,\sJ,\sV_q)\right) \simeq B\left(B\left(\bigwedge_{i=1}^p Z_i,\sJ^{\wedge p},\sK\right), \sJ, \sV_q\right) .\]
        Finally, the $\sJ$-module multimorphism structure of $W$ induces a natural transformation
        \[ B\left(\bigwedge_{i=1}^p Z_i, \sJ^{\wedge p},\sK \right) \Longrightarrow Z_0,\]
        thus completing the proof.
    \end{proof}
    
    \subsection{CJS realization of a flow category}\label{subsec:flowtojmod}
    Let $(\sM,\fo)$ be an $R$-oriented flow category (see \cref{defn:rorcoh}) with the induced $R$-module system $\fo \colon \sM^{-I} \to \mod{R}$ (see \cref{def:kmodsys}). In this section we explain how to define the CJS realization of $(\sM,\fo)$.
    \begin{defn}\label{dfn:flow_cat_coherent_collars}
        A \emph{coherent system of collars} on a flow category $\sM$ consists of a smooth embedding
        \begin{equation}\label{eq:M_coh_collars}
            \chi_{abc} \colon \sM(a,b) \times \sM(b,c) \times (0,\infty]^{\{b\}} \longrightarrow \sM(a,c)
        \end{equation}
        for each $a,b,c \in \Ob(\sM)$ such that for any $a,b,c,d \in \Ob(\sM)$, the following diagram commutes
        \begin{equation}\label{eq:collar_comp}
            \begin{tikzcd}[row sep=scriptsize, column sep=1cm]
                \sM(a,b) \times \sM(b,c) \times \sM(c,d) \times (0,\infty]^{\{b, c\}} \rar{\chi_{abc} \times \id} \dar{\id \times \chi_{bcd}} & \sM(a,c) \times \sM(c,d) \times (0,\infty]^{\{c\}} \dar{\chi_{acd}} \\
                \sM(a,b) \times \sM(b,d) \times (0,\infty]^{\{b\}} \rar{\chi_{abd}} & \sM(a,d)
            \end{tikzcd}.
        \end{equation}
        Moreover, we require that the collars $\chi_{abd}$ and $\chi_{acd}$ are disjoint unless $\sM(b,c)$ is non-empty, in which case their intersection coincides with the image of the compositions along sides of the square \eqref{eq:collar_comp}.
    \end{defn}
    \begin{asmpt}\label{asmp:flowcjs}
            The flow category $\sM$ is equipped with a coherent system of collars.
    \end{asmpt}
    The coherent system of collars induces a collection of embeddings  
    \begin{equation}\label{eq:collar_emb} 
        i_{abc} \colon \sM(a,b) \times \sM(b,c) \cong \sM(a,b) \times \sM(b,c) \times \{1\} \longrightarrow \sM(a,c) 
    \end{equation}
    and it follows from \eqref{eq:collar_comp} that these embeddings are associative, i.e.\@ that the following diagram is commutative.
    \[
        \begin{tikzcd}[row sep=scriptsize, column sep=1.5cm]
            \sM(a,b) \times \sM(b,c) \times \sM(c,d) \rar{i_{abc} \times \id} \dar{\id \times i_{bcd}} & \sM(a,c) \times \sM(c,d) \dar{i_{acd}} \\
            \sM(a,b) \times \sM(b,d) \rar{i_{abd}} & \sM(a,d)
        \end{tikzcd}
    \]
    Note that the normal bundle of $\sM(a,b) \times \sM(b,c)$ inside $\sM(a,b)$ is given by the trivial bundle with fiber $\IR^{\{b\}}$.

    Let $\nu_{a,b}$ denote the vector bundle $\sM(a,b) \oplus \un \IR \to \sM(a,b)$. Similar to \eqref{eq:jincl1}, we define $\mathcal M^\nu(a,b) \coloneqq \mathcal M(a,b)^{\nu_{a,b}}$ and $\mathcal M^\nu_{\text{rel }\partial}(a,b) \coloneqq \mathcal M^\nu(a,b)/ \partial \mathcal M^\nu (a,b)$ where $\partial \mathcal M^\nu (a,b) \coloneqq (\partial \mathcal M(a,b))^{\nu_{a,b}|_{\partial \sM(a,b)}}$. Thus we have induced maps
    \begin{equation}\label{eq:embmod}
        \mathcal M^\nu_{\text{rel }\partial}(a,b) \longrightarrow \mathcal M^\nu_{\text{rel }\partial}(a,c) \wedge \mathcal M^\nu_{\text{rel }\partial}(c,b),
    \end{equation}
    that are coassociative in the sense that a diagram similar to that of \eqref{eq:coassoc_J} commutes. 
    \begin{rem} 
    Note that unlike the notation used in \cref{dfn:bdry}, $\partial \sM(a,b)$ here refers the manifold theoretic boundary of $\sM(a,b)$ (cf.\@ \cref{rem:bdry}).
    \end{rem}
   
    Now, notice that the coherent collars on $\sM$ induce maps
    \begin{equation}\label{eq:modcol}
        \gamma_{ab} \colon \sM(a,b) \longrightarrow J(\mu(a),\mu(b)),
    \end{equation}
    such that for $\mu(a) > p > \mu(b)$, the $p$-th component of this map is given by 
    \begin{itemize}
        \item the collar parameter inside collars of boundary strata of the form $\sM(a,c) \times \sM(c,b)$ with $\mu(c)=p$, and
        \item $0$ on points outside these collars.
    \end{itemize}
    The maps \eqref{eq:modcol} are compatible with the coherent system of collars on $\sM$ and $\sJ$ in the sense that the following diagram commutes
    \[
        \begin{tikzcd}[row sep=scriptsize, column sep=1.5cm]
            \sM(a,b) \times \sM(b,c) \times (0,\infty]^{\{b\}} \rar{\chi_{abc}} \dar{\gamma_{ac} \times \gamma_{cb} \times \id} & \sM(a,c) \dar{\gamma_{ac}} \\
            J(\mu(a),\mu(b)) \times J(\mu(b), \mu(c)) \times (0,\infty]^{\{\mu(b)\}} \rar{c_{\mu(a) \mu(b) \mu(c)}} & J(\mu(a),\mu(c))
        \end{tikzcd}.
    \]
    There are now induced maps
    \[\pi_{ab} \colon \mathcal M^\nu_{\text{rel }\partial}(a,b) \longrightarrow J^\nu_{\text{rel }\partial}(\mu(a),\mu(b)).\]
    Since the maps \eqref{eq:modcol} intertwine the embeddings of the lower strata $\chi_{abc}$ \eqref{eq:M_coh_collars} and $c_{\mu(a)\mu(b) \mu(c)}$ \eqref{eq:jcol}, it follows that $\pi_{ab}$ is intertwines the cocomposition maps on $\sM^\nu_{\text{rel }\partial}(a,b)$ and $J^\nu_{\text{rel }\partial}(\mu(a),\mu(b))$ defined in \eqref{eq:embmod} and \eqref{eq:jincl1}, respectively.

    Consider the dual maps
    \[ D(\pi) \colon D(J^\nu_{\text{rel }\partial}(\mu(a),\mu(b))) \longrightarrow D(\sM^\nu_{\text{rel }\partial}(a,b)).\]
    By Atiyah duality, $D(\sM^\nu_{\text{rel }\partial}(a,b)) \simeq \sM^{-TM - \nu}(a,b) = \sM^{-I}(a,b)$, see \cref{dfn:Thom_spec_flow_cat}. Thus we have maps
    \begin{equation}\label{eq:flowtoj}
        \sJ(\mu(a),\mu(b)) \longrightarrow \sM^{-I}(a,b)
    \end{equation}
    which are compatible with composition on both sides.

    With these preliminaries we are now in position to define the CJS realization of a flow category.
    \begin{defn}\label{dfn:rlin_jmod_flow}
        Let $(\sM,\fo)$ be an $R$-oriented flow category satisfying \cref{asmp:flowcjs}. Define the $R$-linear $\sJ$-module $Z_{\sM,\fo} \colon \sJ \to \mod R$ on objects by
        \[
        Z_{\sM,\fo} (m) \coloneqq \bigvee_{a \in \mu^{-1}(m) \subset \Ob(\sM)} \fo(a),
        \]
        for $m\in \IZ$. The $\sJ$-module structure maps are defined via the compositions
        \[
        \fo(a) \wedge \sJ(\mu(a),\mu(b)) \longrightarrow \fo(a) \wedge \sM^{-I}(a,b) \xrightarrow{\sM^{-\fo(a,b)}} \fo(b),
        \]
        for any $a,b\in \Ob(\sM)$, where the first arrow is induced by the maps \eqref{eq:flowtoj} and the second arrow is induced by the $R$-orientation on $\sM$ as in \cref{lma:Rori_to_spec_system}. 
    \end{defn}
    
    \begin{defn}[Cohen--Jones--Segal realization of a flow category]\label{dfn:cjs_realization}
        Let $(\sM,\fo)$ be an $R$-oriented flow category satisfying \cref{asmp:flowcjs}. The \emph{Cohen--Jones--Segal (CJS) realization} of $(\sM,\fo)$ is defined as the $R$-module
        \[
            |\sM,\fo| \coloneqq |Z_{\sM,\fo}|,
        \]
        where $Z_{\sM,\fo} \colon \sJ \to \mod{R}$ is the $R$-linear $\sJ$-module defined in \cref{dfn:rlin_jmod_flow}.
    \end{defn}
    
    \subsection{CJS realization of a flow multimodule}
    Let $(\sM_i,\fo_i)$ be an $R$-oriented flow category (see \cref{defn:rorcoh}), satisfying \cref{asmp:flowcjs} for $i\in \{0,\ldots,p\}$. Let
    \[
    (\sN,\fm) \colon \sM_1, \ldots, \sM_p \longrightarrow \sM_0
    \]
    be an $R$-oriented flow multimodule, see \cref{dfn:r_ori_flow_multimod}. In this section we explain how to define the CJS realization of a flow multimodule.
    \begin{notn}
         Let $\vec a \coloneqq (a_1,\ldots,a_p) \in \prod_{i=1}^p \Ob(\sM_i)$.
        \begin{enumerate}
            \item If $a_i' \in \Ob(\mathcal M_i)$ for some $i \in \{1,\ldots,p\}$ we will use the notation
            \[
                \vec a_i \coloneqq (a_1,\ldots,a_{i-1},a_i',a_{i+1},\ldots,a_p).
            \]
            \item We will use the notation
            \[
                \mu(\vec a) \coloneqq (\mu_1(a_1),\ldots,\mu_p(a_p)).
            \]
        \end{enumerate}
    \end{notn}
 
    In the following, we assume that $\sN$ admits a coherent system of collars:
    \begin{asmpt}\label{asmp:bimodcjs}
        The flow multimodule $\sN$ is equipped with a \emph{coherent system of collars}. Namely, for every $a_i,a_i' \in \Ob(\sM_i)$ and $b,b' \in \Ob(\sM_0)$, there exist smooth embeddings
            \begin{align*}
                \sM_i(a_i,a_i') \times \sN(\vec a_i;b) \times (0,\infty]^{\{a_i'\}} &\longrightarrow \sN(\vec a;b), \quad i\in \{1,\ldots,p\}\\
                \sN(\vec a;b') \times \sM_0(b',b) \times (0,\infty]^{\{b'\}} &\longrightarrow \sN(\vec a;b)
            \end{align*}
            which are compatible with each other in the following sense: For any $a_i,a_i' \in \Ob(\sM_i)$ and $b,b' \in \Ob(\sM_0)$, the following diagram commutes
            \begin{equation}\label{eq:collar_comp_bimod}
                \begin{tikzcd}[row sep=scriptsize,column sep=scriptsize]
                    \mathrel{\parbox{5cm}{$\sM_i(a_i,a_i') \times \sN(\vec a_i;b') \\ \phantom{\hspace{7mm}}\times \sM_0(b',b) \times (0,\infty]^{\{a_i', b'\}}$}} \rar \dar & \sN(\vec a;b') \times \sM_0(b',b) \times (0,\infty]^{\{b'\}} \dar \\
                    \sM_i(a_i,a_i') \times \sN(\vec a_i;b) \times (0,\infty]^{\{a'_i\}} \rar & \sN(\vec a;b)
                \end{tikzcd},
            \end{equation}
            and they are compatible with the coherent system of collars on $\sM_i$ and $\sM_0$ in the sense that diagrams similar to \eqref{eq:collar_comp_bimod}, involving compositions of $\sM_i$ and $\sM_0$, commute. 
        \end{asmpt}
    The coherent system of collars induces a collection of embeddings  
    \begin{align*}
           \sM_i(a_i,a_i') \times \sN(\vec a_i;b) \cong \sM_i(a_i,a_i') \times \sN(\vec a_i;b) \times \{1\} &\longrightarrow \sN(\vec a;b), \quad i\in \{1,\ldots,p\}\\
           \sN(\vec a;b') \times \sM_0(b',b) \cong \sN(\vec a;b') \times \sM_0(b',b) \times \{1\} &\longrightarrow \sN(\vec a;b).
    \end{align*}
    It follows from the commutativity of \eqref{eq:collar_comp_bimod} that these embeddings are compatible with each other via the actions of $\sM_i$ and $\sM_0$; this is similar to the compatibilities satisfied by \eqref{eq:collar_emb}. The normal bundles of $\sM_i(a_i,a_i') \times \sN(\vec a_i;b)$ and $\sN(\vec a;b') \times \sM_0(b',b)$ inside $\sN(\vec a;b)$ are given by the rank one trivial bundles with fibers $\IR^{\{a_i'\}}$ and $\IR^{\{b'\}}$, respectively, for $i\in \{1,\ldots,p\}$. 
    Similar to \eqref{eq:embmod}, we then have induced maps 
    \begin{align}\label{eq:multimod_embmod}
        \mathcal N_{\text{rel }\partial}(\vec a;b) &\longrightarrow \mathcal N_{\text{rel }\partial}(\vec a;b') \wedge (\mathcal M_0)^\nu_{\text{rel }\partial}(b',b),\\ \label{eq:multimod_embmod2}
        \mathcal N_{\text{rel }\partial}(\vec a;b) &\longrightarrow (\mathcal M_i)^\nu_{\text{rel }\partial}(a_i,a_i') \wedge \mathcal N_{\text{rel }\partial}(\vec a_i;b'), \quad i\in \{1,\ldots,p\},
    \end{align}
    where $(\sM_i)^\nu_{\text{rel }\partial}$ are as in \eqref{eq:embmod} and
    \begin{equation}\label{eq:bimod_jincl1}
        \mathcal N_{\text{rel }\partial} (\vec a;b) = \mathcal N (\vec a;b) / \partial \mathcal N (\vec a;b).
    \end{equation}
    The maps \eqref{eq:multimod_embmod} and \eqref{eq:multimod_embmod2} moreover commute with each other and are coassociative with respect to the action of $\sM_i$ on $\sN$ for $i\in \{0,\ldots,p\}$. The coherent collars on $\sN$ (see \cref{asmp:bimodcjs}) induce maps
    \begin{equation}\label{eq:multimod_col}
        \gamma_{\vec a;b} \colon \sN(\vec a;b) \longrightarrow K(\mu(\vec a);\mu_0(b)), 
    \end{equation}
    that are defined as follows: for a point contained inside a collar of the boundary stratum
    \[
    \left(\prod_{i=1}^p \sM_i(a_i,a_i') \right) \times \sN(\vec a_i;b') \times \sM_0(b',b) \hooklongrightarrow \sN(\vec a;b),
    \]
    the image of $\gamma_{\vec a;b}$ is contained in
    \[
    \prod_{i=1}^p [0,\infty]^{\{\mu_i(a_i) > x \geq \mu_i(a_i')\}} \times [0,\infty]^{\{\mu_0(b') \geq x > \mu_0(b)\}},
    \]
    with the coordinates given by the collar parameters (extended by $0$ outside the collar) of all collars containing the point.
    
    There are now induced maps
    \[\pi_{\vec a;b} \colon \mathcal N_{\text{rel }\partial}(\vec a;b) \longrightarrow K_{\text{rel }\partial}(\mu(\vec a);\mu_0(b)),\]
    that intertwine the coactions of $(\sM_i)^\nu_{\text{rel }\partial}, (\sM_0)^\nu_{\text{rel }\partial},$ and $J^\nu_{\text{rel }\partial}$ defined in \eqref{eq:multimod_embmod}, \eqref{eq:multimod_embmod2} and \eqref{eq:rspecbac1}, \eqref{eq:rspecbac2}, respectively.

    Consider the Spanier--Whitehead dual maps
    \[ D(\pi) \colon D(K_{\text{rel }\partial}(\mu(\vec a) ;\mu_0(b))) \longrightarrow D(\sN_{\text{rel }\partial}(\vec a;b)).\]
    By Atiyah duality, $D(\sN_{\text{rel }\partial}(a,b)) \simeq \sN^{-T\sN - \nu}(a,b) = \sN^{-I}(a,b)$, see \cref{dfn:Thom_spec_flow_cat}.
    Thus we have maps
    \begin{equation}\label{eq:bimodflowtoj}
        \sK(\mu(\vec a);\mu_0(b)) \longrightarrow \sN^{-I}(\vec a;b)
    \end{equation}
    which are compatible with the action maps on both sides.
    
    We are now in position to define the CJS realization of a flow multimodule
    \[
    \sN \colon \sM_1,\ldots,\sM_p \longrightarrow \sM_0
    \]
    satisfying \cref{asmp:bimodcjs}. Recall that each $R$-oriented flow category $(\sM_i,\fo_i)$ satisfies \cref{asmp:flowcjs}. 

    Our first task is to define a $\sJ$-module multimorphism
    \[
    W_{\sN,\fm} \colon \sK \Longrightarrow \sF_{Z_1,\ldots,Z_p;Z_0}, \quad k\in \IZ_{\geq 1},
    \]
    where $Z_i \coloneqq Z_{\sM_i,\fo_i}$, given by
    \[ Z_i(m_i) = \bigvee_{a_i \in \mu_i^{-1}(m_i) \subset \Ob(\sM_{i})} \fo_i(a_i), \quad k\in \IZ_{\geq 1},\]
    where $m_i \in \IZ$, for each $i\in \{1,\ldots,p\}$, is as in \cref{dfn:rlin_jmod_flow}. The $R$-orientation $\fm$ on $\sN$ induces maps
    \[ \sN^{-I}(\vec a;b)  \longrightarrow F_R\left(\fo_1(a_1) \wedge_R \cdots \wedge_R \fo_p(a_p), \fo_0(b)\right).\]
    For $m_1,\ldots,m_p, m_0 \in \IZ$, we define
    \begin{equation}\label{eq:J-mod_multimorphism_flow_multimod}
        W_{\sN, \fm} (\vec m;m_0) \colon \sK(\vec m;m_0) \longrightarrow F_R\left(Z_1(m_1) \wedge_R \cdots \wedge_R Z_p(m_p), Z_0(m_0)\right), \quad k\in \IZ_{\geq 1},
    \end{equation}
    via the maps \eqref{eq:bimodflowtoj}. From the compatibility of the maps \eqref{eq:bimodflowtoj} with the action maps it follows that this assignment defines a natural transformation of $\IS$-linear $\sJ$-modules and thus a $\sJ$-module multimorphism.

    \begin{defn}[Cohen--Jones--Segal realization of a flow multimodule]
    Let $(\sM_i,\fo_i)$ be an $R$-oriented flow category satisfying \cref{asmp:flowcjs} for $i\in \{0,\ldots,p\}$. Let
    \[
        (\sN,\fm) \colon (\sM_1,\fo_1), \ldots, (\sM_p,\fo_p) \longrightarrow (\sM_0, \fo_0)
    \]
    be an $R$-oriented flow multimodule satisfying \cref{asmp:bimodcjs}. The \emph{Cohen--Jones--Segal (CJS) realization} of $(\sN,\fm)$ is defined by
    \[
    |\sN,\fm| \coloneqq |W_{\sN,\fm}|,
    \]
    where $W_{\sN,\fm}$ is the $\sJ$-module multimorphism defined in \eqref{eq:J-mod_multimorphism_flow_multimod}.
    \end{defn}

    \begin{prop}\label{lma:functoriality_cjs}
        Let $(\sM_i, \fo_i)$ be an $R$-oriented flow category satisfying \cref{asmp:flowcjs} for $i\in\{1,\ldots,p\}$, and let
        \[
            (\sN, \fm) \colon (\sM_1,\fo_1), \ldots, (\sM_p,\fo_p) \longrightarrow (\sM_0,\fo_0)
        \]
        be an $R$-oriented flow multimodule satisfying \cref{asmp:bimodcjs}. Then there is an induced map on the CJS realizations 
        \[ |\sN,\fm| \colon |\sM_1,\fo_1| \wedge_R \cdots \wedge_R |\sM_p,\fo_p| \longrightarrow |\sM_0,\fo_0| .\]
    \end{prop}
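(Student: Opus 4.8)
The plan is to obtain the map by specializing the general functoriality statement \cref{lem:functoriality_geom_realiz} to the $\sJ$-module multimorphism $W_{\sN,\fm}$ constructed above. Set $Z_i \coloneqq Z_{\sM_i,\fo_i}$ for $i\in\{0,\ldots,p\}$, the $R$-linear $\sJ$-modules of \cref{dfn:rlin_jmod_flow}; by \cref{dfn:cjs_realization} one has $|\sM_i,\fo_i| = |Z_i|$. The construction culminating in \eqref{eq:J-mod_multimorphism_flow_multimod} produces, from the Spanier--Whitehead duals \eqref{eq:bimodflowtoj} of the Pontryagin--Thom collapse maps together with the $R$-orientation $\fm$, a natural transformation $W_{\sN,\fm}\colon \sK \Rightarrow \sF_{Z_1,\ldots,Z_p;Z_0}$, i.e.\@ a $\sJ$-module multimorphism in the sense of \cref{defn:jmodmor}. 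Feeding $W_{\sN,\fm}$ into \cref{lem:functoriality_geom_realiz} yields a map
\[ |W_{\sN,\fm}| \colon |Z_1| \wedge_R \cdots \wedge_R |Z_p| \longrightarrow |Z_0|, \]
and since $|\sN,\fm|$ is defined to be $|W_{\sN,\fm}|$ and $|Z_i| = |\sM_i,\fo_i|$, this is precisely the asserted map; so the proof reduces to citing these two facts.

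The one thing that has to be in place --- and which was arranged in the discussion preceding the statement --- is that the maps $W_{\sN,\fm}(\vec m;m_0)$ genuinely assemble into a natural transformation of $\sJ$-multimodules. Unwinding this, one checks for each $\vec a$, $b$ that the collapse maps \eqref{eq:bimodflowtoj} are compatible with the $\sJ$-actions on $\sK$ and the $\sM_i^{-I_i}$-actions on $\sN^{-I}$ --- this rests on $\pi_{\vec a;b}$ intertwining the coactions \eqref{eq:multimod_embmod}, \eqref{eq:multimod_embmod2}, \eqref{eq:rspecbac1}, \eqref{eq:rspecbac2}, which is exactly what the coherent system of collars of \cref{asmp:bimodcjs} provides --- and that the maps $\sN^{-I}(\vec a;b)\to F_R(\fo_1(a_1)\wedge_R\cdots\wedge_R\fo_p(a_p),\fo_0(b))$ induced by $\fm$ respect these actions, which follows from the compatibility diagrams of \cref{dfn:r_ori_flow_multimod}. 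Since these verifications are of the same routine type as in the flow-category case \cref{dfn:rlin_jmod_flow} and have already been made, I expect no genuine obstacle here: the content of the proposition is essentially a repackaging of \cref{lem:functoriality_geom_realiz} through the dictionary between $R$-oriented flow categories/multimodules and $R$-linear $\sJ$-modules/$\sJ$-module multimorphisms.
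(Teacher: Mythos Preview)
Your proposal is correct and follows exactly the paper's approach: the paper's proof is the one-line statement ``This is an immediate consequence of \cref{lem:functoriality_geom_realiz},'' and you have simply unpacked why that lemma applies, namely that $W_{\sN,\fm}$ is the requisite $\sJ$-module multimorphism and $|\sN,\fm| \coloneqq |W_{\sN,\fm}|$ by definition. Your additional remarks on why $W_{\sN,\fm}$ is genuinely a natural transformation are accurate but already absorbed into the paper's preceding discussion, so nothing more is needed.
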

    \begin{proof}
        This is an immediate consequence of \cref{lem:functoriality_geom_realiz}.
    \end{proof}

    The next result follows by an argument similar to \cite[Proposition 3.16]{large2021spectral} using the fact that flow bordisms as defined in \cref{def:multi_bord} correspond to the notion of $[0,1]$-parametrized maps of flow categories as defined in \cite[Definition 3.6]{large2021spectral}. We omit the details.
    \begin{prop}\label{lma:bordism_gives_homotopy_of_maps}
        Let $(\mathcal M_i, \mathfrak o_{\mathcal M_i})$ be an $R$-oriented flow category satisfying \cref{asmp:flowcjs} for $i\in \{0,\ldots,p\}$. Let 
        \[
        (\sN_1,\fm_1),(\sN_2,\fm_2) \colon (\sM_1,\fo_1), \ldots, (\sM_p,\fo_p) \longrightarrow (\sM_0,\fo_0)
        \]
        be two $R$-oriented flow multimodules satisfying \cref{asmp:bimodcjs}. If $\mathcal (B,\fn) \colon (\mathcal N_1, \fm_1) \Rightarrow (\mathcal N_2,\fm_2)$ is an $R$-oriented bordism, then there is an induced homotopy $|\mathcal N_1,\mathfrak m_1| \simeq |\mathcal N_2, \mathfrak m_2|$.
        \qed
    \end{prop}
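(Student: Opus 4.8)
The plan is to run the CJS-realization machinery of \cref{subsec:flowtojmod} and \cref{sec:functoriality_geom_realiz} on the bordism $\sB$ itself, treating its distinguished face $\partial_1\sB(\vec a;a_0) = \sN_1(\vec a;a_0) \sqcup \sN_2(\vec a;a_0)$ as an ``interval direction''. First I would equip $\sB$ with a coherent system of collars, the evident analogue of \cref{asmp:bimodcjs}, chosen so that it restricts along the inclusions $\sigma_j$ to the fixed coherent collars on $\sN_1$ and $\sN_2$ and is compatible with the coherent collars already chosen on the $\sM_i$ and $\sM_0$. Since each $\sB(\vec a;a_0)$ is a $\ang{D+1}$-manifold whose first face is $\sN_1 \sqcup \sN_2$, the collar of that face identifies a neighbourhood of $\partial_1\sB$ with $(\sN_1 \sqcup \sN_2) \times [0,\varepsilon)$; using all the collars one then produces, exactly as in \eqref{eq:multimod_col}, collapse maps $\gamma_{\vec a;a_0} \colon \sB(\vec a;a_0) \to \widetilde K(\mu(\vec a);\mu_0(a_0))$ into a variant $\widetilde K$ of the abstract cube \eqref{eq:K_abstract_cube} carrying one additional coordinate recording the interval parameter.

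Passing to relative Thom spectra, taking Spanier--Whitehead duals, and invoking Atiyah duality, the normalization $I(\vec a;a_0) = T\sB(\vec a;a_0) - \un\IR$ of the index bundle of a bordism is exactly what is needed so that $D(\sB^\nu_{\text{rel }\partial}(\vec a;a_0)) \simeq \sB^{-I}(\vec a;a_0)$ restricts at the two ends to $\sN_j^{-I_{\sN_j}}(\vec a;a_0)$, using the isomorphisms $T\sB|_{\sN_j} \cong T\sN_j \oplus \un\IR$ (the multimodule analogues of \eqref{eq:bordism_iso3}--\eqref{eq:bordism_iso4}). Feeding the $R$-orientation $\fn$ into these duality maps, and using that by \cref{dfn:r_ori_flow_multibord} $\fn$ restricts to $\fm_j$ on $\sN_j$, I obtain a $\sJ$-module multimorphism $\widetilde W_{\sB,\fn} \colon \widetilde\sK \Rightarrow \sF_{Z_1,\dots,Z_p;Z_0}$ (in the sense of \cref{defn:jmodmor}, with $\widetilde\sK$ in place of $\sK$) whose restrictions to the two endpoint faces of $\widetilde\sK$ are precisely the multimorphisms $W_{\sN_1,\fm_1}$ and $W_{\sN_2,\fm_2}$ of \eqref{eq:J-mod_multimorphism_flow_multimod}. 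Since the extra interval coordinate makes $\widetilde\sK$ a reduced cylinder on $\sK$, applying \cref{lem:functoriality_geom_realiz} to $\widetilde W_{\sB,\fn}$ realizes it to a homotopy between $|W_{\sN_1,\fm_1}| = |\sN_1,\fm_1|$ and $|W_{\sN_2,\fm_2}| = |\sN_2,\fm_2|$, which is the assertion.

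Equivalently, and as indicated in the text, one may package this by noting that an $R$-oriented flow bordism of multimodules is exactly a $[0,1]$-parametrized map of flow multimodules in the sense of \cite[Definition 3.6]{large2021spectral}, transported to the multi-input/single-output convention used here, so that the claim becomes the multimodule version of \cite[Proposition 3.16]{large2021spectral}. Either way, the only real content that I have suppressed is bookkeeping: producing the coherent collars on $\sB$ simultaneously compatibly with all of its faces, verifying the coassociativity and naturality diagrams that make $\widetilde W_{\sB,\fn}$ a genuine $\sJ$-module multimorphism, and tracking the $-\un\IR$ twist carefully enough that the two ends of the realized homotopy are literally $|\sN_1,\fm_1|$ and $|\sN_2,\fm_2|$. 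I expect this last point --- keeping the identifications on the nose rather than merely up to equivalence --- to be the most delicate part, but it is no different in substance from what was already carried out for the CJS realization of a flow multimodule, so the obstacle is notational rather than conceptual.
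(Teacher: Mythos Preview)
Your proposal is correct and follows essentially the same approach the paper indicates: the paper does not give a proof but defers to \cite[Proposition~3.16]{large2021spectral} via the observation that an $R$-oriented flow bordism is precisely a $[0,1]$-parametrized map of flow multimodules in the sense of \cite[Definition~3.6]{large2021spectral}, which is exactly the equivalence you spell out in your second paragraph. Your sketch in fact supplies more of the suppressed bookkeeping than the paper does.
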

    Now we consider a flow bimodule whose CJS realization is homotopic to the identity map. The naive idea is to let $\sM$ be an $R$-oriented flow category and to consider $\sM(a,b) \times [0,1]$. This does however not define a flow bimodule, because the two composition maps
    \begin{align*}
    	(\sM(a,b) \times [0,1]) \times \sM(b,c) &\longrightarrow \sM(a,c) \\
    	\sM(a,b) \times (\sM(b,c) \times [0,1]) &\longrightarrow \sM(a,c)
    \end{align*}
    are equal. The idea to rectify this problem is to equip the topological manifold $\sM(a,b) \times [0,1]$ with a different smooth structure as a manifold with corners.

    Notice that there is a map
        \[\delta_{m,n} \colon K(m;n) \longrightarrow J(m,n) \]
        defined on the subset $[0,\infty]^{\{m>x\geq r\}} \times [0,\infty]^{\{r\geq x >n\}}$ by
    \begin{align}\label{eq:ktoj}
        [0,\infty]^{\{m>x\geq r\}} \times [0,\infty]^{\{r\geq x >n\}} &\longrightarrow [0,\infty]^{\{m>x>n\}}\\ \nonumber
        (\vec x_1,y_1) \times (y_2, \vec x_2) &\longmapsto (\vec x_1, y_1+y_2, \vec x_2).
    \end{align}
    The fibers of $\delta_{m,n}$ are given by a sequence of concatenation of intervals.
    
    Recall the map $\sM(a,b) \to J(\mu(a),\mu(b))$ from \eqref{eq:modcol}. Consider the smooth manifolds with corners
    \[ \id_\sM(a;b) \coloneqq \sM(a,b) \times_{J(\mu(a),\mu(b))} K(\mu(a);\mu(b)).\]
    There are induced maps
    \begin{align*}
        \sM(a,a') \times \id_{\sM}(a';b) &\longrightarrow \id_{\sM}(a;b) \\
        \id_{\sM}(a;b') \times \sM(b',b) &\longrightarrow \id_{\sM}(a;b) 
    \end{align*}
    such that defining
    \[
        \partial_i \id_{\sM}(a;b) \coloneqq \bigsqcup_{\substack{a' \in \Ob(\sM) \\ \mu(a) - \mu(a') = i}} (\sM(a,a') \times \id_{\sM}(a';b)) \sqcup \bigsqcup_{\substack{b' \in \Ob(\sM) \\ \mu(a) - \mu(b') + 1 = i}} (\id_{\sM}(a;b') \times \sM(b', b)),
    \]  
    defines a $\ang{\mu(a)-\mu(b)}$-manifold structure on $\id_\sM(a;b)$. For every $a,b\in \Ob(\sM)$, we note that there is an identification of stable vector bundles
    \[
        T\id_{\sM}(a;b) \cong T\sM(a,b) \oplus \underline{\IR} = I(a,b).
    \]
    Recall that the $R$-orientation on $\sM$ consists of an isomorphism of $R$-line bundles
    \[
        \fo(a,b) \colon \fo(a) \overset{\simeq}{\longrightarrow} I_R(a,b) \otimes_R \fo(b)
    \]
    for every pair of objects $a,b\in\Ob(\sM)$. Pulling this isomorphism back via the projection $\id_{\sM}(a;b) \to \sM(a,b)$ yields an isomorphism of $R$-line bundles over $\id_{\sM}(a;b)$:
    \begin{equation}\label{eq:ori_diagonal}
        \fo(a) \overset{\simeq}{\longrightarrow} (T\id_{\sM}(a;b))_R \otimes \fo(b).
    \end{equation}
    We have thus shown the following.
    \begin{lem}\label{lem:diagonal_bimod}
        Let $(\sM,\fo)$ be an $R$-oriented flow category. For any $a,b\in \Ob(\sM)$, the assignment $(a,b) \mapsto \id_{\sM}(a;b)$ and the isomorphism \eqref{eq:ori_diagonal} of $R$-line bundles over $\id_{\sM}(a;b)$ defines an $R$-oriented flow bimodule $\id_{\sM} \colon \sM \to \sM$.
        \qed
    \end{lem}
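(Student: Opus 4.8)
The plan is to run through the axioms of \cref{dfn:flow_bimodule} and \cref{dfn:r_ori_flow_bimod} for the assignment $(a,b)\mapsto\id_\sM(a;b)$ together with the $R$-line bundle isomorphism \eqref{eq:ori_diagonal}; every point is forced by properties already recorded for $\sM$, for the cubes $K(m;n)$, and for the map $\delta_{m,n}$ of \eqref{eq:ktoj}. I would begin with the underlying manifold structure. The projection $\id_\sM(a;b)=\sM(a,b)\times_{J(\mu(a),\mu(b))}K(\mu(a);\mu(b))\to\sM(a,b)$ exhibits $\id_\sM(a;b)$ as a fiber bundle whose fibers are those of $\delta_{\mu(a),\mu(b)}$, i.e.\ concatenations of intervals; hence $\id_\sM(a;b)$ is a smooth manifold with corners of dimension $\dim\sM(a,b)+1=\mu(a)-\mu(b)$, matching a degree-$0$ flow bimodule. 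The $\ang{\mu(a)-\mu(b)}$-structure and its faces are read off from the boundary of $K(\mu(a);\mu(b))$: pulled back along $\gamma_{ab}$, the locus where a coordinate in the ``$V_{r_1}$''-block at level $p$ equals $\infty$ gives, via the collar of $\sM$ around the stratum $\sM(a,a')\times\sM(a',b)$ with $\mu(a')=p$, the face $\sM(a,a')\times\id_\sM(a';b)$, while the locus where a coordinate in the ``$V^{r_0}$''-block at level $p$ equals $\infty$ gives $\id_\sM(a;b')\times\sM(b',b)$ with $\mu(b')=p$. Compatibility of $\gamma_{ab}$ with the coherent collars of $\sM$ (the commuting square after \eqref{eq:modcol}) together with \cref{lem:smooth_gluing} then assemble these into the required $\ang k$-manifold; alternatively one can identify $\id_\sM$ with the parametrized identity of \cite[Section 3]{large2021spectral} for $\sR=\mathrm{pt}$ and quote that verification.

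Next I would produce the action maps and check the bimodule relations. The composition maps of $\sM$ together with the inclusions \eqref{eq:Remb} and \eqref{eq:Remb2} of $K$, which are compatible with the maps $\gamma$ of \eqref{eq:modcol}, descend to the fiber products and yield diffeomorphisms $\sM(a,a')\times\id_\sM(a';b)\to\id_\sM(a;b)$ and $\id_\sM(a;b')\times\sM(b',b)\to\id_\sM(a;b)$ onto the two faces identified above. The \textsc{compatibility} and \textsc{commutativity} diagrams of \cref{dfn:flow_bimodule} are then immediate from associativity of composition in $\sM$ and of the inclusions \eqref{eq:Remb}, \eqref{eq:Remb2}. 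For the index bundle, the same bundle description gives the canonical identification $T\id_\sM(a;b)\cong T\sM(a,b)\oplus\underline{\IR}=I(a,b)$, under which the bimodule's index-comparison isomorphisms $\sigma^1,\sigma^2$ become the pullbacks along $\id_\sM(a;b)\to\sM(a,b)$ of the maps $\rho_{abc}$ of \eqref{eq:indexiso}.

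Finally, since \eqref{eq:ori_diagonal} is by definition the pullback of $\fo(a,b)$ along $\id_\sM(a;b)\to\sM(a,b)$, the two coherence squares demanded by \cref{dfn:r_ori_flow_bimod} (of the shape \eqref{eq:bimod_ext_ori1} and \eqref{eq:bimod_ext_ori2}) are precisely the pullback of the single $R$-orientation coherence square \eqref{eq:rorcoh} for $\sM$, and so commute. The one genuinely delicate step is the first: matching the combinatorial boundary stratification of $K(\mu(a);\mu(b))$—and the behavior of $\delta_{\mu(a),\mu(b)}$ over boundary points of $J(\mu(a),\mu(b))$—with the collar neighborhoods of the boundary strata of $\sM(a,b)$, so that the fiber product is genuinely a $\ang{\mu(a)-\mu(b)}$-manifold with exactly the stated faces. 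Everything after that is formal bookkeeping.
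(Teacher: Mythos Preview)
Your proposal is correct and matches the paper's approach: the paper gives no proof at all (the lemma is stated with a bare \qed, preceded by ``We have thus shown the following''), treating the result as immediate from the construction of $\id_\sM(a;b)$, the action maps, the $\ang{\mu(a)-\mu(b)}$-structure, the tangent bundle identification, and the pulled-back orientation \eqref{eq:ori_diagonal} already recorded in the text just before the lemma. Your write-up is simply a careful unpacking of exactly those points.
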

    The flow bimodule $\id_{\sM}$ is corresponds to the diagonal bimodule as defined in \cite[Lemma 6.7]{abouzaid2024foundation}. We note that the map \eqref{eq:multimod_col}, for the bimodule $\id_{\sM}$ coincides with the projection map:
    \[ \id_{\sM}(a;b) = \sM(a,b) \times_{J(\mu(a),\mu(b))} K(\mu(a);\mu(b)) \longrightarrow K(\mu(a);\mu(b)).\]
    Consequently, the image of the induced map
    \[
        \sK(\mu(a);\mu(b)) \longrightarrow \id_{\sM}^{-I}(a;b) \longrightarrow F_R(\fo(a),\fo(b)),
    \]
    coincides with the composition
    \[ 
       \sK(\mu(a);\mu(b)) \longrightarrow \sJ(\mu(a),\mu(b)) \longrightarrow \sM^{-I}(a,b) \longrightarrow F_R(\fo(a),\fo(b))
    \]
    for any $a,b\in \Ob(\sM)$.
    The above shows the following.
    \begin{prop}\label{prop:induced_id}
        Let $(\sM,\fo)$ be an $R$-oriented flow category satisfying \cref{asmp:flowcjs}. The CJS realization of the $R$-oriented flow bimodule $\id_{\sM}$ is $\id_{|\sM,\fo|}$.
        \qed
    \end{prop}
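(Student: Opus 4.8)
By definition the CJS realization of $\id_{\sM}$, equipped with the $R$-orientation $\fm$ of \cref{lem:diagonal_bimod}, is the map $|\id_{\sM},\fm| = |W_{\id_{\sM},\fm}|\colon |\sM,\fo| \to |\sM,\fo|$ induced via \cref{lma:functoriality_cjs} by the $\sJ$-module multimorphism $W_{\id_{\sM},\fm}\colon \sK \Rightarrow \sF_{Z_{\sM,\fo};Z_{\sM,\fo}}$. The plan is first to identify this multimorphism with the canonical one obtained from the $\sJ$-module structure of $Z_{\sM,\fo}$, and then to check that the realization of the latter is $\id_{|\sM,\fo|}$.

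For the first step I would use the fiber-product description $\id_{\sM}(a;b) = \sM(a,b) \times_{J(\mu(a),\mu(b))} K(\mu(a);\mu(b))$ directly. For the diagonal bimodule the map \eqref{eq:multimod_col} is the projection to the $K$-factor, while the isomorphism $T\id_{\sM}(a;b) \cong I(a,b)$ and the orientation $\fm(a;b)$ are pulled back along the projection to $\sM(a,b)$, as in \eqref{eq:ori_diagonal}. Because $\delta_{\mu(a),\mu(b)}$ from \eqref{eq:ktoj} intertwines these two projections with the collar map $\gamma_{ab}\colon\sM(a,b)\to J(\mu(a),\mu(b))$ of \eqref{eq:modcol}, the Pontryagin--Thom collapse \eqref{eq:bimodflowtoj} for $\id_{\sM}$ fits in a commuting square over the collapse \eqref{eq:flowtoj} for $\sM$ and the comparison map $\sK(\mu(a);\mu(b))\to\sJ(\mu(a),\mu(b))$; one must verify commutativity on the point-set level, tracking the relative-to-boundary quotients and the (rank-one trivial) normal bundles involved. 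Postcomposing with $\sM^{-\fo(a,b)}$ then yields the factorization
\[ W_{\id_{\sM},\fm}(\mu(a);\mu(b))\colon \sK(\mu(a);\mu(b)) \longrightarrow \sJ(\mu(a),\mu(b)) \longrightarrow \sM^{-I}(a,b) \xrightarrow{\ \sM^{-\fo(a,b)}\ } F_R(\fo(a),\fo(b)), \]
in which the composite of the last two arrows is precisely the $\sJ$-module structure map of $Z_{\sM,\fo}$ from \cref{dfn:rlin_jmod_flow}. Thus $W_{\id_{\sM},\fm}$ is the multimorphism obtained by applying $F_R(Z_{\sM,\fo}(-),-)$ to the morphism of $(\sJ,\sJ)$-bimodules $\sK\to\sJ$, whose naturality follows from compatibility of $\delta$ with the composition maps \eqref{eq:Rincl} and \eqref{eq:Rincl2}.

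For the second step, unwinding the recipe in the proof of \cref{lem:functoriality_geom_realiz} for $p=1$ presents $|W_{\id_{\sM},\fm}|$ as the composite
\[ |\sM,\fo| = \holim_q B(Z_{\sM,\fo},\sJ,\sV_q) \longrightarrow \holim_q B\bigl(B(Z_{\sM,\fo},\sJ,\sK),\sJ,\sV_q\bigr) \longrightarrow \holim_q B(Z_{\sM,\fo},\sJ,\sV_q) = |\sM,\fo|, \]
where the second arrow is induced by $B(Z_{\sM,\fo},\sJ,\sK)\to B(Z_{\sM,\fo},\sJ,\sJ)\simeq Z_{\sM,\fo}$ coming from $\sK\to\sJ$. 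By \cref{prop:rcon} in the case $p=1$, $B(\sK(m;-),\sJ,\sV_q)$ is the (homotopy) limit of $\sV_{q_1}(m)$ over $q_1\geq q$, compatibly with the tautological map from $B(\sJ(m,-),\sJ,\sV_q)=\sV_q(m)$; since the two-sided bar construction preserves weak equivalences, this identifies $B(Z_{\sM,\fo},\sJ,\sK)\to Z_{\sM,\fo}$ with an equivalence after applying $B(-,\sJ,\sV_q)$, and a bar-construction diagram chase (using the associativity equivalence $B(B(Z,\sJ,\sJ),\sJ,\sV_q)\simeq B(Z,\sJ,\sV_q)$ and the compatibility of the stabilization maps between the $q$-realizations with all of the above) shows the displayed composite is homotopic to the identity on each $q$-realization, hence on $|\sM,\fo|$.

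The main obstacle is the point-set bookkeeping in the first step: verifying that the fiber-product smooth-with-corners structure on $\id_{\sM}(a;b)$ together with its coherent collars are exactly what make the collapse map \eqref{eq:bimodflowtoj} factor strictly (not merely up to homotopy) through the comparison map $\sK\to\sJ$. Once that is established, the second step is purely formal given \cref{prop:rcon} and the formalism of the two-sided bar construction.
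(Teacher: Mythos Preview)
Your proposal is correct and follows the same approach as the paper. The paper's own argument is the short paragraph immediately preceding the proposition (the proposition itself carries a \qed): it observes that for $\id_{\sM}$ the collar map \eqref{eq:multimod_col} is the projection to the $K$-factor, and hence the composite $\sK(\mu(a);\mu(b)) \to \id_{\sM}^{-I}(a;b) \to F_R(\fo(a),\fo(b))$ agrees with $\sK(\mu(a);\mu(b)) \to \sJ(\mu(a),\mu(b)) \to \sM^{-I}(a,b) \to F_R(\fo(a),\fo(b))$, which is exactly your first step. The paper then takes your second step---that this factorization through the $\sJ$-module structure realizes to the identity---as understood, whereas you spell it out via \cref{prop:rcon} and the bar-construction formalism; this extra detail is fine and in the spirit of what is left implicit.
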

    
\subsection{Morse chain complex of a flow category}\label{subsec:morse_flow_chain}
    Throughout this section we let $k$ denote the discrete ring $\pi_0(R)$. Given an $R$-oriented flow category $(\sM,\fo)$, the homotopy groups 
    \[ \pi_\bullet(|\sM,\fo| \wedge_R Hk) \]
    can be computed as the homology of a certain $k$-linear chain complex constructed using morphisms of dimension zero in $\sM$. When $R=\IS$ and $(\sM,\fo)$ is the Morse--Smale flow category of a Morse function on a manifold, oriented with its canonical $R$-orientation (see \cref{defn:ms_cat} below), the chain complex we describe below coincides with the Morse homology chain complex with coefficients in $\IZ$.

    \begin{defn}[Morse differentials in a flow category]
        Let $(\sM,\fo)$ be an $R$-oriented flow category and let $a,b \in \Ob(\sM)$ be such that 
        \[ \mu(a) - \mu(b)=1,\]
        so that $\sM(a,b)$ is a finite collection of points $\{u_1,\ldots,u_n\}$. For every $i\in \{1,\ldots,n\}$, there are isomorphisms of $R$-line bundles over $\{u_i\}$
        \[ d_{a,b}(i) \colon \fo(a) \xrightarrow{\fo(a,b)|_{\{u_i\}}} I_R(a,b)|_{\{u_i\}} \otimes_R \fo(b) \simeq \fo(b), \quad i\in \{1,\ldots,n\}, \]
        where the first isomorphism comes from the $R$-orientation on $\sM$ \cref{defn:rorcoh} and the second isomorphism comes from the the canonical trivialization of $I(a,b)|_{\{u_i\}}$. Denote the associated map on Thom spectra by $d_{a,b}(i) \in F_R(\fo(a), \fo(b))$ (see \cref{rem:constant_R-line_isos}). Also, define 
        \[ d(a,b) \coloneqq \sum_{i=1}^n d_{a,b}(i).\]
    \end{defn}

    \begin{lem}\label{lma:d-squared_zero_morse_cpx}
        Let $(\sM,\fo)$ be an $R$-oriented flow category and let $a,c \in \Ob(\sM)$ be such that $\mu(a)-\mu(c)=2$. Then,
        \[ \sum_{\substack{b \in \Ob(\mathcal M) \\ \mu(a)-\mu(b)=1}} d(b,c) \circ d(a,b) \]
        is null-homotopic.
    \end{lem}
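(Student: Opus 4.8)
The plan is to run the standard ``$\partial^2=0$'' argument of Morse theory, carefully upgraded so as to keep track of the $R$-orientation data. First I would record the geometry: since $\mu(a)-\mu(c)=2$, \cref{dfn:flow_cat} says that $\sM(a,c)$ is a compact $\ang{1}$-manifold, i.e.\ a compact $1$-manifold with boundary, and its boundary is the finite set
\[
	\partial\sM(a,c) \;=\; \bigsqcup_{\substack{b\in\Ob(\sM)\\ \mu(a)-\mu(b)=1}} \sM(a,b)\times\sM(b,c).
\]
Since composition in $\Ho(\mod R)$ is biadditive and $d(a,b)=\sum_i d_{a,b}(i)$, $d(b,c)=\sum_j d_{b,c}(j)$, the map in question is a sum over these boundary points: writing a boundary point as $p=(u,v)\in\sM(a,b)\times\sM(b,c)$ and denoting its contribution by $c_p\coloneqq d_{b,c}(v)\circ d_{a,b}(u)$, we have $\sum_b d(b,c)\circ d(a,b)=\sum_{p\in\partial\sM(a,c)}c_p$ in $\pi_0 F_R(\fo(a),\fo(c))$.

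Next I would reinterpret $c_p$ in terms of the $R$-orientation $\fo(a,c)$ restricted to $p$. At a boundary point $p=(u,v)$ the fibers $I(a,b)|_u$ and $I(b,c)|_v$ are canonically $\un\IR$ (since $T\sM(a,b)$ and $T\sM(b,c)$ vanish there), and by \eqref{eq:indexiso} the isomorphism $\rho_{abc}$ identifies $I(a,b)|_u\oplus I(b,c)|_v$ with $I(a,c)|_p=\un\IR\oplus T\sM(a,c)|_p$ in such a way that the $T\sM(a,c)|_p$-summand is spanned by the collar (inward-normal) direction at $p$. Feeding this into the coherence square \eqref{eq:rorcoh} evaluated at $p$, one sees that $c_p$ equals the isomorphism $\fo(a)\xrightarrow{\ \simeq\ }\fo(c)$ obtained from $\fo(a,c)|_p$ together with the trivialization of $I_R(a,c)|_p$ induced (via \cref{dfn:assoc_rline} and \cref{rem:assoc_vector_bundles}) by the canonical $\un\IR$ and the inward normal.

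Then I would finish with the $1$-manifold argument. Decompose $\sM(a,c)$ into connected components; circle components carry no boundary points and contribute nothing, so only arc components $\gamma\cong[0,1]$ matter. Over such a $\gamma$ the rank-two bundle $I(a,c)|_\gamma$ is trivial, so fix a trivialization refining the canonical $\un\IR$ by a nowhere-vanishing section $s$ of $T\sM(a,c)|_\gamma$; then $\fo(a,c)|_\gamma$ determines, up to homotopy and independently of the point of $\gamma$, a single element $\phi_\gamma\in\pi_0 F_R(\fo(a),\fo(c))$. At the two endpoints of $\gamma$ the inward normal agrees with $s$ at one endpoint and equals $-s$ at the other. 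Since reversing the orientation of a real line-bundle trivialization changes the induced trivialization of the associated $R$-line bundle by the image of $-1\in\OO(1)$ under $\OO(1)\to\GL_1(\IS)\to\GL_1(R)$, i.e.\ by the unit $-1\in\pi_0(R)^\times$, the two endpoint contributions of $\gamma$ are $\phi_\gamma$ and its additive inverse $-\phi_\gamma$ in the abelian group $\pi_0 F_R(\fo(a),\fo(c))$. Summing over all components,
\[
	\sum_{p\in\partial\sM(a,c)}c_p \;=\; \sum_{\gamma}\bigl(\phi_\gamma+(-\phi_\gamma)\bigr) \;\simeq\; 0,
\]
which gives the claim.

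The main obstacle is the middle step: carefully matching the sign conventions of \eqref{eq:indexiso} and \eqref{eq:rorcoh} with the behaviour of the associated-$R$-line-bundle construction (\cref{sec:background_spectra}) under changes of trivialization of real vector bundles, so as to pin down both that $c_p$ is exactly the stated boundary restriction of $\fo(a,c)$ and that an orientation reversal of $T\sM(a,c)$ along an arc contributes precisely the unit $-1\in\pi_0(R)^\times$. Everything else is formal: biadditivity of composition in $\Ho(\mod R)$, the classification of compact $1$-manifolds, and contractibility of arc components.
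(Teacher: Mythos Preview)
Your approach is essentially the same as the paper's: both use that $\sM(a,c)$ is a compact $1$-manifold with boundary, work component-by-component over arcs, and use the $R$-orientation $\fo(a,c)$ together with the coherence square \eqref{eq:rorcoh} to match the endpoint contributions with the compositions $d(b,c)\circ d(a,b)$. The paper phrases the key step as ``$I_R(a,c)$ is canonically trivialized'' and asserts that the resulting map $\varSigma^\infty_+\sM_k(a,c)\to F_R(\fo(a),\fo(c))$ restricts at the two boundary points to the two compositions, giving a null-homotopy of their sum; you unpack precisely why this yields a null-homotopy of the \emph{sum} rather than a homotopy between them, by tracking that the trivialization of $T\sM(a,c)|_p$ coming from $\rho_{abc}$ is the inward collar direction, which reverses between the two endpoints and hence contributes the unit $-1\in\pi_0(R)^\times$. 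Your honest flag that matching the conventions in \eqref{eq:indexiso} and \eqref{eq:rorcoh} with the associated-$R$-line-bundle construction is the only real content is accurate, and your handling of it is correct.
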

    \begin{proof}
        Over $\sM(a,c)$ there is an isomorphism of $R$-line bundles $\fo(a) \xrightarrow{\simeq} \fo(c) \otimes_R I_R(a,c)$, which we claim provides the null-homotopy. Namely, $\sM(a,c)$ is a compact smooth $1$-manifold with boundary. Let $\sM_1(a,c),\ldots,\sM_N(a,c)$ enumerate the components of $\sM(a,c)$ that are diffeomorphic to $[0,1]$. For each $k\in \{1,\ldots,N\}$ we have an isomorphism of $R$-line bundles over $\sM_k(a,c)$:
        \[
            \fo(a) \overset{\simeq}{\longrightarrow} \fo(c) \otimes_R I_R(a,c)|_{\sM_k(a,c)}.
        \]
        Since $I_R(a,c)$ is canonically trivialized, there is an induced map \begin{equation}\label{eq:null_htpy_interval}
            \varSigma_+^\infty \sM(a,c) \longrightarrow F_R(\fo(a),\fo(c)),
        \end{equation}
        see \cref{rem:constant_R-line_isos}. Restriction of \eqref{eq:null_htpy_interval} under the two inclusions of the boundary strata
        \[
        \sM(a,b_k) \times \sM(b_k,c) \hooklongrightarrow \sM_k(a,c) \hooklongleftarrow \sM(a,b_k') \times \sM(b_k',c),
        \]
        coincide with maps $d(b_k,c) \circ d(a,b_k)$ and $d(b_k',c) \circ d(a,b_k')$, respectively, showing that \eqref{eq:null_htpy_interval} provides a null-homotopy of $d(b_k,c) \circ d(a,b_k) + d(b_k',c) \circ d(a,b_k')$. Summation over $k$ finishes the proof.
    \end{proof}
    \begin{defn}[Morse complex of a flow category]\label{dfn:morse_complex_flow_cat}
        The \emph{Morse complex} of the $R$-oriented flow category $(M,\fo)$ is defined as the $k$-linear chain complex 
        \[ CM(\sM,\fo) \coloneqq \left( \bigoplus_{n \in \IZ} \left( \bigoplus_{\mu(a)=n} \pi_0( \fo(a) \wedge_R Hk) \right), d_{(M,\fo)} \coloneqq \bigoplus_{\mu(a)-\mu(b)=1} \pi_0(d(a,b) \wedge_R Hk) \right),\]
    \end{defn}

    Recall that $\sJ(m+1,m)\simeq \IS$.
    \begin{lem}
        Let $X \colon \sJ \to \mod{R}$ be an $R$-linear $\sJ$-module. There are induced maps
        \[ \delta_{X,n} \colon X(n) \simeq X(n) \wedge \sJ(n,n-1) \longrightarrow X(n-1)\]
        such that $\delta_{X,n} \circ \delta_{X,n+1}$ is canonically null-homotopic.
    \end{lem}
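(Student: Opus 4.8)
The plan is to unwind the definition of an $R$-linear $\sJ$-module and reduce the vanishing of $\delta_{X,n}\circ\delta_{X,n+1}$ to the observation that the relevant composition morphism of $\sJ$ lands in a contractible spectrum, so that the double composite canonically factors through a zero object.

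First I would record the definition of $\delta_{X,n}$: by \cref{dfn:cat_J} together with the identification $\sJ(m+1,m)\simeq\IS$, each structure map $X(i)\wedge\sJ(i,j)\to X(j)$ of $X$ specializes, for $j=i-1$, to a map $X(n)\simeq X(n)\wedge\sJ(n,n-1)\to X(n-1)$, and this is $\delta_{X,n}$. The only input here is the equivalence $\sJ(n,n-1)\simeq\IS$.

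Next I would use that $X\colon\sJ\to\mod R$ is a \emph{functor}, i.e.\@ that the structure maps are compatible with composition in $\sJ$. Writing $\rho_{ij}\colon X(i)\wedge\sJ(i,j)\to X(j)$ for the structure maps and $c\colon\sJ(n+1,n)\wedge\sJ(n,n-1)\to\sJ(n+1,n-1)$ for the composition morphism of $\sJ$ (which, by \cref{dfn:cat_J}, is the Spanier--Whitehead dual of the cocomposition map of \eqref{eq:jincl1}), functoriality rewrites $\delta_{X,n}\circ\delta_{X,n+1}$ as the composite
\[ X(n+1)\;\simeq\;X(n+1)\wedge\sJ(n+1,n)\wedge\sJ(n,n-1)\;\xrightarrow{\ \id\wedge c\ }\;X(n+1)\wedge\sJ(n+1,n-1)\;\xrightarrow{\ \rho_{n+1,n-1}\ }\;X(n-1), \]
where the first equivalence uses $\sJ(n+1,n)\wedge\sJ(n,n-1)\simeq\IS\wedge\IS\simeq\IS$. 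Hence it suffices to show that $\sJ(n+1,n-1)$ (equivalently $X(n+1)\wedge\sJ(n+1,n-1)$) is contractible, since then the displayed composite factors canonically through a zero object and is therefore canonically null-homotopic.

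Finally I would compute $\sJ(n+1,n-1)$ directly. Here $J(n+1,n-1)=[0,\infty]^{\{n\}}$ is a single interval $[0,\infty]$, and $\partial J(n+1,n-1)$ in the sense of \cref{dfn:bdry} is the single endpoint $\{x_n=\infty\}$, which is a deformation retract of $[0,\infty]$. Consequently the inclusion $\partial J^\nu(n+1,n-1)\hookrightarrow J^\nu(n+1,n-1)$ of Thom spectra is an equivalence, so $J^\nu_{\mathrm{rel}\,\partial}(n+1,n-1)\simeq\ast$ and hence $\sJ(n+1,n-1)=D\bigl(J^\nu_{\mathrm{rel}\,\partial}(n+1,n-1)\bigr)\simeq\ast$. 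The only non-formal step is this last geometric identification of the target; once the structure of $J(n+1,n-1)$ and its $\partial$ is pinned down it is immediate, and it is the abstract incarnation of the ``$d^2=0$'' phenomenon, directly parallel to \cref{lma:d-squared_zero_morse_cpx} with the contracting interval there replaced by the interval $[0,\infty]$ inside $J(n+1,n-1)$.
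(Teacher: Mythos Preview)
Your proof is correct and follows essentially the same approach as the paper: the paper's proof simply asserts that the composition $\sJ(m+2,m+1)\wedge\sJ(m+1,m)\to\sJ(m+2,m)$ is canonically null-homotopic, and you have supplied the underlying reason by explicitly computing that the target $\sJ(n+1,n-1)$ is contractible (via the deformation retraction of $[0,\infty]$ onto $\{\infty\}$). Your unwinding of functoriality to factor $\delta_{X,n}\circ\delta_{X,n+1}$ through $X(n+1)\wedge\sJ(n+1,n-1)$ is exactly the step the paper leaves implicit.
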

    \begin{proof}
    This follows from the fact that the composition
    \[ \sJ(m+2,m+1) \wedge \sJ(m+1,m) \longrightarrow \sJ(m+2,m)\]
    is canonically null-homotopic.
    \end{proof}
    \begin{defn}[Morse complex of an $R$-linear $\sJ$-module]
            Let $X \colon \sJ \to \mod{R}$ be an $R$-linear $\sJ$-module. Define the \emph{Morse complex} of $X$ to be the $k$-linear chain complex
            \[ CM_\bullet(X;k) \coloneqq \left(\bigoplus_{n \in \IZ} \pi_0(X(n) \wedge_R Hk), d_X \coloneqq \sum_{n\in \IZ} \pi_0(\delta_{X,n} \wedge_R Hk) \right),\]
    \end{defn}
    \begin{prop}\label{prop:homology_wedge_ring} Assuming that $X \colon \sJ \to \mod{R}$ is a $\sJ$-module that is bounded below, we have
    \[ \pi_\bullet(|X| \wedge_R Hk) \simeq H_\bullet ( CM_\bullet(X;k) ).\]
    \end{prop}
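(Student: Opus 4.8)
The plan is to reduce the computation to the two-sided bar construction and then run the skeletal spectral sequence of a filtration by the subcategories $\sJ^{\leq\ell}$, identifying its first page with the Morse complex.

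Since $X$ is bounded below, \cref{rem:pro-spec} gives $|X| \simeq |X|_q = B(X,\sJ,\sV_q)$ for all $q \ll 0$, so it suffices to compute $\pi_\bullet(|X|_q \wedge_R Hk)$. I would filter $B(X,\sJ,\sV_q)$ by the full subcategories $\sJ^{\leq\ell}\subset\sJ$ exactly as in the proof of \cref{prop:rcon}, setting $F^\ell \coloneqq B(X|_{\sJ^{\leq\ell}},\sJ^{\leq\ell},\sV_q|_{(\sJ^{\leq\ell})^{\mathrm{op}}})$. Because $\sV_q$ is supported on the single object $q$, one has $F^\ell = \ast$ for $\ell < q$, while the filtration is exhaustive; and the same associated-graded computation as in \cref{prop:rcon} --- with $X$ playing the role of $\sK(\vec m;-)$ --- identifies $\Gr^\ell B(X,\sJ,\sV_q)$ with (a shift of) $X(\ell)$ for $\ell \geq q$, sitting in filtration degree $\ell$. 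Smashing the whole filtration with $Hk$ over $R$ (which is exact and so preserves the cofiber sequences) and passing to homotopy groups produces a spectral sequence $E^1_{\ell,t} = \pi_t(X(\ell)\wedge_R Hk) \Rightarrow \pi_{\ell+t}(|X|_q\wedge_R Hk)$, convergent because $F^\ell = \ast$ for $\ell < q$.

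It then remains to identify the first differential. By construction $d^1\colon E^1_{\ell,t}\to E^1_{\ell-1,t}$ is the boundary map of the three-term filtration $F^{\ell-2}\subset F^{\ell-1}\subset F^\ell$; unwinding the cell structure of the bar construction, only chains of ``length one'' contribute to it, so $d^1$ is induced by the $\sJ$-module structure map $X(\ell)\simeq X(\ell)\wedge\sJ(\ell,\ell-1)\to X(\ell-1)$ --- that is, by the map $\delta_{X,\ell}$ introduced in the lemma preceding this proposition. The relation $d^1\circ d^1 = 0$ is precisely the canonical null-homotopy of $\sJ(\ell+1,\ell)\wedge\sJ(\ell,\ell-1)\to\sJ(\ell+1,\ell-1)$ established there. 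Hence the $E^1$-page, equipped with $d^1$, is the chain complex $\bigl(\bigoplus_\ell \pi_\bullet(X(\ell)\wedge_R Hk),\ \textstyle\sum_\ell\pi_\bullet(\delta_{X,\ell}\wedge_R Hk)\bigr)$, and $E^2 = H_\bullet$ of it.

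Finally, in the setting at hand each $X(\ell)\wedge_R Hk$ is concentrated in degree $0$: for a $\sJ$-module arising from an $R$-oriented flow category one has $X(\ell) = \bigvee_{\mu(a)=\ell}\fo(a)$ with each $\fo(a)$ free of rank one over $R$, so $X(\ell)\wedge_R Hk \simeq \bigvee Hk$. Thus the $E^1$-page lies on the single line $t = 0$, where it is exactly $CM_\bullet(X;k)$; the differentials $d^r$ vanish for $r\geq 2$ on degree grounds, the spectral sequence collapses, and $E^\infty = E^2 = H_\bullet(CM_\bullet(X;k))$. Combined with convergence this gives $\pi_\bullet(|X|\wedge_R Hk)\cong H_\bullet(CM_\bullet(X;k))$. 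The one genuinely technical point is the identification of $d^1$ with $\delta_X$ together with the bookkeeping that places $\pi_0(X(\ell)\wedge_R Hk)$ in homological degree $\ell$; both are carried out by the same analysis of the associated graded of $B(X,\sJ,\sV_q)$ that underlies \cref{prop:rcon}, and the rest of the argument is formal.
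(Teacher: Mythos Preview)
Your approach is essentially the one the paper defers to: the paper's ``proof'' is a one-line citation of \cite[Theorem 6]{cohen2009floer}, and Cohen's argument there is precisely the filtration/spectral sequence computation you sketch --- filter the realization by degree, identify the associated graded with (shifts of) the $X(\ell)$, recognize $d^1$ as the structure map $\delta_{X,\ell}$, and collapse. A more detailed version of the same argument appears later in the paper as the proof of \cref{prop:cm_prop}, which uses the same $\sJ^{\leq \ell}$-filtration (routed through the bar resolution $B(Z,\sJ,\sJ)$ rather than directly on $|Z|_q$). So your route and the paper's are the same in substance.

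One point worth flagging: you correctly observe that the collapse of the spectral sequence, and hence the identification of $E^2$ with $H_\bullet(CM_\bullet(X;k))$, requires each $X(\ell)\wedge_R Hk$ to be concentrated in degree zero. The proposition as stated does not include this hypothesis, while the definition of $CM_\bullet(X;k)$ only records $\pi_0$. In the applications (flow categories, where $X(\ell)$ is a wedge of rank-one free $R$-modules) this is automatic, and that is clearly the intended scope; but as a bare statement about arbitrary bounded-below $\sJ$-modules the proposition is not literally correct without it (compare \cref{exmp:cjspt} and \cref{rem:CJS_shift}: the point module $Z_\ast[d]$ has $|Z_\ast[d]|\simeq R$ but $CM_\bullet$ concentrated in degree $d$). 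Your instinct to add the hypothesis explicitly is the right one. The remaining ``genuinely technical point'' you mention --- tracking the suspension so that $\pi_0(X(\ell)\wedge_R Hk)$ lands in homological degree $\ell$ --- is exactly where the care is needed, and it is handled in Cohen's paper (and, in the local-system setting, in the proof of \cref{prop:cm_prop}).
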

    \begin{proof} This can be deduced from the proof of \cite[Theorem 6]{cohen2009floer}.\end{proof}
    
    \begin{lem}\label{lem:flow_htpy_morse}
        Let $(\sM,\fo)$ be an $R$-oriented flow category that satisfies \cref{asmp:flowcjs}. Then, there is an isomorphism of $k$-linear chain complexes
        \[ CM(\sM,\fo) \cong CM_\bullet(Z_{\sM,\fo};k).\]
        In particular $\pi_\bullet(|\sM,\fo|) \cong H_\bullet (CM_\bullet( Z_{\sM, \fo};k))$.
        \qed
    \end{lem}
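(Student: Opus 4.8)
The plan is to unwind the relevant definitions: \cref{dfn:rlin_jmod_flow} (the $\sJ$-module $Z_{\sM,\fo}$ attached to an $R$-oriented flow category), \cref{dfn:morse_complex_flow_cat} (the Morse complex, in both of its guises), and the map \eqref{eq:flowtoj}, and then observe that the Morse complex of the $\sJ$-module $Z_{\sM,\fo}$ is, term by term and differential by differential, the complex $CM(\sM,\fo)$. First I would match the underlying graded groups. By \cref{dfn:rlin_jmod_flow} we have $Z_{\sM,\fo}(n)=\bigvee_{a\in\mu^{-1}(n)}\fo(a)$; since smashing over $R$ with $Hk$ commutes with wedges and $\pi_0$ of a wedge of bounded-below $R$-modules is the direct sum of the $\pi_0$'s, there is a natural isomorphism
\[
\pi_0\bigl(Z_{\sM,\fo}(n)\wedge_R Hk\bigr)\;\cong\;\bigoplus_{\mu(a)=n}\pi_0\bigl(\fo(a)\wedge_R Hk\bigr),
\]
so the graded $k$-modules underlying $CM_\bullet(Z_{\sM,\fo};k)$ and $CM(\sM,\fo)$ coincide.

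Next I would match the differentials. Fix $a,b$ with $\mu(a)-\mu(b)=1$; then $\sM(a,b)$ is a finite set of points $\{u_1,\dots,u_n\}$, so $T\sM(a,b)=0$, $\partial\sM(a,b)=\varnothing$, and the collar map $\gamma_{ab}$ of \eqref{eq:modcol} is the constant map to the single point of $J(\mu(a),\mu(b))$. Consequently $\pi_{ab}\colon\sM^{\nu}_{\text{rel }\partial}(a,b)\to J^\nu_{\text{rel }\partial}(\mu(a),\mu(b))$ is the fold map on Thom spaces, and its Atiyah dual \eqref{eq:flowtoj} decomposes as the sum over $i$ of the maps dual to the inclusion of the $i$-th point. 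By \cref{dfn:rlin_jmod_flow}, the $(a,b)$-component of $\delta_{Z_{\sM,\fo},n}$ (using $\sJ(n,n-1)\simeq\IS$) is this map followed by $\sM^{-\fo(a,b)}$, which by \eqref{eq:flowthom} is induced by the $R$-orientation isomorphism $\fo(a,b)\colon\fo(a)\to I_R(a,b)\otimes_R\fo(b)$ together with the canonical trivialization of $I(a,b)|_{\{u_i\}}$. Comparing with \cref{dfn:morse_complex_flow_cat}, the $i$-th summand is exactly $d_{a,b}(i)$, so the $(a,b)$-component of $\delta_{Z_{\sM,\fo},n}$ equals $\sum_i d_{a,b}(i)=d(a,b)$. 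Applying $\pi_0\bigl((-)\wedge_R Hk\bigr)$ then identifies $d_{Z_{\sM,\fo}}$ with $d_{(\sM,\fo)}$ under the isomorphism above, giving $CM(\sM,\fo)\cong CM_\bullet(Z_{\sM,\fo};k)$. The ``in particular'' statement then follows by combining this with $|\sM,\fo|\simeq|Z_{\sM,\fo}|$ from \cref{dfn:cjs_realization} and \cref{prop:homology_wedge_ring} applied to the bounded-below $\sJ$-module $Z_{\sM,\fo}$.

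The main obstacle is the middle step: one must verify that the canonical trivialization of $I(a,b)|_{\{u_i\}}$ entering the definition of $d_{a,b}(i)$ is precisely the one implicit in the Atiyah-duality identification $D(\sM^{\nu}_{\text{rel }\partial}(a,b))\simeq\sM^{-I}(a,b)$ used in \eqref{eq:flowtoj}, so that no discrepancy of signs or shifts is introduced when passing between the two descriptions of the differential. This is a bookkeeping matter governed by the conventions fixed in \cref{sec:background_flow} and \cref{sec:cjs} rather than a conceptual difficulty, which is why the statement is essentially an unwinding of definitions.
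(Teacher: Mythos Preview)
Your proposal is correct and is precisely the unwinding of definitions that the paper has in mind: the lemma is stated with a bare \qed and no proof, so the paper treats it as immediate. Your identification of the graded pieces via $Z_{\sM,\fo}(n)=\bigvee_{\mu(a)=n}\fo(a)$ and of the differential via the composite $\sJ(\mu(a),\mu(b))\to\sM^{-I}(a,b)\to F_R(\fo(a),\fo(b))$ restricted to the finite set $\sM(a,b)$ is exactly right, and your closing remark about the Atiyah-duality/trivialization bookkeeping correctly flags the only place where care is needed.

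One small point: for the ``in particular'' clause you invoke \cref{prop:homology_wedge_ring}, which requires the $\sJ$-module to be bounded below; this hypothesis is not part of the lemma as stated (and the conclusion there should presumably read $\pi_\bullet(|\sM,\fo|\wedge_R Hk)$), so strictly speaking that sentence in the paper carries an implicit assumption. Your proof is fine modulo this, and it is a defect of the statement rather than of your argument.
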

    \begin{defn}
        Let $(\sN,\fm) \colon (\sM_1,\fo_1) \to (\sM_2,\fo_2)$ be an $R$-oriented flow bimodule (see \cref{dfn:r_ori_flow_bimod}). Let $a \in \Ob(\sM_1)$ and $b \in \Ob(\sM_2)$ be such that 
        \[ \mu(a) - \mu(b)=0,\]
        so that $\sN(a,b)$ is a finite collection of points $\{u_1,\ldots,u_n\}$. Define
        \[ f(a,b)_i \coloneqq \fm(a,b)|_{\{u_i\}} \colon \fo_1(a) \overset{\simeq}{\longrightarrow} I_R(a,b)|_{\{u_i\}}\otimes_R \fo_2(b) \simeq \fo_2(b), \]
        where the first isomorphism comes from the $R$-orientation $\fm$ on $\sN$ (\cref{dfn:r_ori_flow_bimod}) and the second isomorphism comes from the the canonical trivialization of $I(a,b)|_{\{u_i\}}$. Define
        \[ f(a,b) \coloneqq \sum_{i=1}^n f(a,b)_i.\]
    \end{defn}

    \begin{lem} 
        Let $a \in \Ob(\sM_1)$ and $c \in \Ob(\sM_2)$ be such that $\mu(a)-\mu(b)=1$. Then,
        \[ \sum_{\substack{a' \in \Ob(\sM_1) \\ \mu(a)-\mu(a')=1}} f(a',b) \circ d(a,b) + \sum_{\substack{b' \in \Ob(\sM_1) \\ \mu(b')-\mu(b)=1}} d(b',b) \circ f(a,b') \]
        is null-homotopic.
    \end{lem}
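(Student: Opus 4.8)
The plan is to run the bimodule analogue of the argument proving \cref{lma:d-squared_zero_morse_cpx}, with the $1$-dimensional moduli space $\sM(a,c)$ there replaced by the part of the flow bimodule $\sN$ where $\mu_1(a)-\mu_2(b)=1$. First I would note that, since $\sN$ has degree $0$, the space $\sN(a,b)$ is then a compact smooth $1$-manifold with boundary, and by \cref{dfn:flow_bimodule} it is a $\ang 1$-manifold with
\[ \partial\sN(a,b)=\bigsqcup_{\substack{a'\in\Ob(\sM_1) \\ \mu_1(a)-\mu_1(a')=1}}\sM_1(a,a')\times\sN(a',b)\ \sqcup\ \bigsqcup_{\substack{b'\in\Ob(\sM_2) \\ \mu_2(b')-\mu_2(b)=1}}\sN(a,b')\times\sM_2(b',b), \]
every factor appearing here being a finite set of points. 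Let $\sN_1(a,b),\dots,\sN_N(a,b)$ enumerate the connected components diffeomorphic to $[0,1]$; the remaining components are circles and contribute nothing to $\partial\sN(a,b)$.

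Next I would invoke the $R$-orientation $\fm$ on $\sN$: over $\sN(a,b)$ it gives an isomorphism of $R$-line bundles $\fo_1(a)\xrightarrow{\ \simeq\ }I_R(a,b)\otimes_R\fo_2(b)$, where $I(a,b)=T\sN(a,b)$ is here a genuine line bundle. Restricting this isomorphism to a component $\sN_k(a,b)\cong[0,1]$ and using the canonical trivialization of $T[0,1]$, \cref{rem:constant_R-line_isos} produces a map of $R$-modules
\[ g_k\colon\varSigma^\infty_+\sN_k(a,b)\longrightarrow F_R(\fo_1(a),\fo_2(b)). \]
The key point is to compute the restrictions of $g_k$ to the two endpoints. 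Near an endpoint lying in $\sM_1(a,a')\times\sN(a',b)$ the compatibility diagram \eqref{eq:bimod_ext_ori1} expresses $\fm(a,b)$ in terms of $\fo_1(a,a')$, $\fm(a',b)$ and the index-bundle action map $\sigma^1$ of \eqref{eq:index_bun_map1}; unwinding this through the canonical trivializations of $I_1(a,a')$ and $I(a',b)$ at the point shows that the restriction of $g_k$ there equals $f(a',b)\circ d(a,a')$. Similarly, using \eqref{eq:bimod_ext_ori2} and \eqref{eq:index_bun_map2}, the restriction at an endpoint lying in $\sN(a,b')\times\sM_2(b',b)$ equals $d(b',b)\circ f(a,b')$. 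As in \cref{lma:d-squared_zero_morse_cpx}, the two endpoints of $[0,1]$ carry opposite inward collar directions, so relative to the single trivialization of $T[0,1]$ used to define $g_k$ the two boundary restrictions pick up opposite signs; since $\sN_k(a,b)$ is contractible these restrictions are homotopic, and therefore $g_k$ is a null-homotopy of the sum of the two corresponding composites. Summing over $k=1,\dots,N$, and using that each point of $\partial\sN(a,b)$ is an endpoint of exactly one interval component, the total is precisely $\sum_{a'}f(a',b)\circ d(a,a')+\sum_{b'}d(b',b)\circ f(a,b')$, which is thus null-homotopic.

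The step I expect to be the main obstacle is the endpoint identification: one must check that the trivialization of $T\sN(a,b)$ determined by the coherent collar near a corner matches the product of the canonical trivializations of $I_1(a,a')$ (resp. $I_2(b',b)$) and $I(a',b)$ (resp. $I(a,b')$) entering the definitions of $d$ and $f$, and then track the resulting signs so that the two endpoints contribute with opposite signs — this is what makes the \emph{sum}, rather than the difference, null-homotopic. This bookkeeping is the bimodule analogue of the one carried out in \cref{lma:d-squared_zero_morse_cpx}, now with the action maps $\sigma^1,\sigma^2$ in place of $\rho$; granting it, the rest of the argument is formal.
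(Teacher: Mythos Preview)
Your proposal is correct and follows exactly the approach indicated by the paper, whose proof reads in its entirety ``This is very similar to the proof of \cref{lma:d-squared_zero_morse_cpx}.'' You have simply spelled out that similarity: replace $\sM(a,c)$ by $\sN(a,b)$, use the bimodule compatibility diagrams \eqref{eq:bimod_ext_ori1}--\eqref{eq:bimod_ext_ori2} in place of \eqref{eq:rorcoh}, and use $\sigma^1,\sigma^2$ in place of $\rho$.
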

    \begin{proof}
        This is very similar to the proof of \cref{lma:d-squared_zero_morse_cpx}.
    \end{proof}
    \begin{defn}[Morse chain map of a flow bimodule]
        The \emph{Morse chain map} of the $R$-oriented flow bimodule $(\sN,\fm)$ is the $k$-linear chain map 
        \[ CM(\sN,\fm) = \bigoplus_{\mu(a)-\mu(b)=0} \pi_0( f(a,b) \wedge_R Hk) \colon CM(\sM_1,\fo_1) \longrightarrow CM(\sM_2,\fo_2).\]
    \end{defn}

    Recall that $\sK(m;m)= \IS$.
    \begin{lem}
        Let $Z_1,Z_2 \colon \sJ \to \mod{R}$ be two $R$-linear $\sJ$-modules. Let $W \colon \sK \Rightarrow \sF_{Z_1;Z_2}$ be a $\sJ$-module multimorphism (see \cref{defn:jmodmor}) and consider
        \[ \phi_{W,n}\colon Z_1(n) \simeq Z_1(n) \wedge \sK(n;n) \longrightarrow Z_2(n).\]
        Then $\phi_{W,n} \circ \delta_{Z_1,n+1}$ is canonically homotopic to $\delta_{Z_2,n} \circ \phi_{W,n+1}$.
    \end{lem}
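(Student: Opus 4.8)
The plan is to exhibit both composites as $W(n+1;n)$ precomposed with one of the two elementary composition maps of the $\sJ$-multimodule $\sK$ that land in $\sK(n+1;n)$, and then to show that these two composition maps are canonically homotopic. Concretely, taking $p = 1$, the face inclusions \eqref{eq:Remb} and \eqref{eq:Remb2} (cf.\ \eqref{eq:rspecbac1} and \eqref{eq:rspecbac2}) yield, after Spanier--Whitehead duality, maps
\[
\alpha_1 \colon \sK(n+1;n+1) \wedge \sJ(n+1,n) \longrightarrow \sK(n+1;n), \qquad \alpha_2 \colon \sJ(n+1,n) \wedge \sK(n;n) \longrightarrow \sK(n+1;n),
\]
and since $\sK(n+1;n+1) = \sK(n;n) = \IS$, both may be regarded as maps $\sJ(n+1,n) \to \sK(n+1;n)$. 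As $W \colon \sK \Rightarrow \sF_{Z_1;Z_2}$ is a natural transformation of spectrally enriched functors $\sJ^{\mathrm{op}} \wedge \sJ \to \Spec$, applying naturality to the element of $\sJ(n+1,n)$ acting on the covariant (output) slot identifies $W(n+1;n)\circ\alpha_1$ with $\delta_{Z_2,n+1}\circ\phi_{W,n+1}$, and applying it to the same element acting on the contravariant (input) slot identifies $W(n+1;n)\circ\alpha_2$ with $\phi_{W,n}\circ\delta_{Z_1,n+1}$. The lemma therefore reduces to producing a canonical homotopy $\alpha_1 \simeq \alpha_2$.

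For this I would use the geometry of $K(n+1;n)$. Since $w = (n+1)-n = 1$, this polytope --- which is $\mathcal R(1)$ in the notation of \cite[Definition 3.10]{large2021spectral} with $\mathcal R = \mathrm{pt}$ --- is a compact one-dimensional manifold with corners, hence an interval, whose two codimension-one faces appearing in the multimodule structure are exactly $K(n+1;n+1)\times J(n+1,n)$ and $J(n+1,n)\times K(n;n)$. Each of these faces is a single point, and the two of them make up $\partial K(n+1;n)$ in the sense of \cref{dfn:rbdry}; so in the quotient $K_{\text{rel }\partial}(n+1;n)$ they are collapsed to one and the same point. The maps $\alpha_1$ and $\alpha_2$ are the Spanier--Whitehead duals of the two Pontryagin--Thom collapse maps onto collar neighborhoods of these faces --- equivalently, the two ``collapse the complement of a small arc'' maps for an arc around this common point --- and sliding the arc along the interval $K(n+1;n)$ yields the desired homotopy $\alpha_1 \simeq \alpha_2$.

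The step I expect to be the main obstacle is the bookkeeping of orientations: the two collar directions traverse the interval $K(n+1;n)$ in a priori opposite senses, so one must check that the homotopy obtained by sliding the collapsed region matches the normal framings of the two faces exactly, without an intervening sign. I expect this to be forced by the strict associativity and commutativity of the multimodule $\sK$ --- the same coherence recorded in the coassociativity of \eqref{eq:rspecbac1} and \eqref{eq:rspecbac2} --- which makes $\alpha_1$ and $\alpha_2$, the two ways of composing once into $\sK(n+1;n)$, agree up to a canonical sign-free homotopy. (An alternative would be to argue that $\sK$ resolves the diagonal $\sJ$-bimodule, in the spirit of \cref{prop:rcon} and \cref{prop:induced_id}, so that $W$ represents a genuine $\sJ$-module map $Z_1 \to Z_2$ in the homotopy category, for which compatibility with $\delta$ is automatic.) Postcomposing the homotopy $\alpha_1 \simeq \alpha_2$ with $W(n+1;n)$ then gives the asserted canonical homotopy $\phi_{W,n}\circ\delta_{Z_1,n+1} \simeq \delta_{Z_2,n+1}\circ\phi_{W,n+1}$.
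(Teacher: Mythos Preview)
Your approach is essentially identical to the paper's: the paper's proof is the single sentence ``This follows from the fact that the compositions $\sJ(n+1,n)\wedge\sK(n;n)\to\sK(n+1;n)$ and $\sK(n+1;n+1)\wedge\sJ(n+1,n)\to\sK(n+1;n)$ are canonically homotopic,'' which is exactly your reduction to $\alpha_1\simeq\alpha_2$. You have simply unpacked the naturality of $W$ explicitly and supplied the geometric justification (the interval structure of $K(n+1;n)$) that the paper leaves implicit; your concern about orientations is reasonable caution but not an obstruction here.
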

    \begin{proof}
    This follows from the fact that the compositions
    \[ \sJ(n+1,n) \wedge \sK(n;n) \longrightarrow \sK(n+1;n)\]
    and
    \[ \sK(n+1;n+1) \wedge \sJ(n+1,n) \longrightarrow \sK(n+1;n)\]
    are canonically homotopic.
    \end{proof}
    \begin{defn}[Morse chain map of an $R$-linear $\sJ$-bimodule]
        Let $Z_1,Z_2 \colon \sJ \to \mod{R}$ be two $R$-linear $\sJ$-modules and $W \colon \sK \Rightarrow \sF_{Z_1;Z_2}$ a $\sJ$-module multimorphism. Define the \emph{Morse chain map} of $W$ to be the chain map 
        \[ CM_\bullet(W;k) = \bigoplus_{n \in \IZ} \pi_0( \phi_{W,n} \wedge_R Hk) \colon CM_\bullet(Z_1;k) \longrightarrow CM_\bullet(Z_2;k).\]
    \end{defn}
    \begin{prop}
        Let $Z_1,Z_2 \colon \sJ \to \mod{R}$ be two $R$-linear $\sJ$-modules that are bounded below and let $W \colon \sK \Rightarrow \sF_{Z_1;Z_2}$ be a $\sJ$-module multimorphism. The following diagram is commutative.
        \[ 
            \begin{tikzcd}[row sep=scriptsize, column sep=scriptsize]
                \pi_\bullet(|Z_1| \wedge_R Hk) \rar{\simeq}\dar{\pi_\bullet(|W| \wedge_R Hk)} & H_\bullet ( CM_\bullet(Z_1;k) ) \dar{ CM_\bullet(W;k)}\\
                \pi_\bullet(|Z_2| \wedge_R Hk) \rar{\simeq} & H_\bullet ( CM_\bullet(Z_2;k) )
           \end{tikzcd}
        \]
    \end{prop}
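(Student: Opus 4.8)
The plan is to show that the identification furnished by \cref{prop:homology_wedge_ring} is natural in $\sJ$-module multimorphisms, by realizing both horizontal isomorphisms as the $E^2 = E^\infty$-page of a skeletal spectral sequence and then tracking the map of spectral sequences induced by $W$.

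Recall from the proof of \cref{prop:homology_wedge_ring} (which follows \cite[Theorem~6]{cohen2009floer}) that for a bounded below $R$-linear $\sJ$-module $X$ one has $|X| \simeq |X|_q = B(X,\sJ,\sV_q)$ for $q \ll 0$, and that this bar construction carries the increasing skeletal filtration $F^\ell$ by the full subcategories $\sJ^{\leq \ell}$ spanned by $\IZ_{\leq \ell}$, exactly as in the proof of \cref{prop:rcon}; there one computes that $\Gr^\ell B(X,\sJ,\sV_q) \simeq X(\ell)$ for $\ell \geq q$ and is contractible otherwise. Smashing with $Hk$ over $R$ and passing to homotopy groups then produces a spectral sequence with
\[ E^1_\ell \cong \pi_0\big(X(\ell) \wedge_R Hk\big), \qquad d^1 = \pi_0\big(\delta_{X,\ell} \wedge_R Hk\big), \]
converging to $\pi_\bullet(|X| \wedge_R Hk)$. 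Since each associated graded piece is concentrated in a single internal degree, the spectral sequence collapses at $E^2$, and the resulting identification $E^2_\ell = H_\ell(CM_\bullet(X;k)) = E^\infty_\ell = \pi_\ell(|X| \wedge_R Hk)$ is precisely the isomorphism of \cref{prop:homology_wedge_ring}.

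Next I would check that the map $|W| \colon |Z_1| \to |Z_2|$ produced by \cref{lem:functoriality_geom_realiz} respects these skeletal filtrations. Concretely, $|W|$ is assembled from the zig-zag of bar constructions appearing in the proof of that lemma together with the natural transformation $B(Z_1,\sJ,\sK) \Rightarrow Z_2$ induced by $W$; each bar construction in the zig-zag inherits its $\sJ^{\leq\ell}$-filtration, the weak equivalence supplied by \cref{prop:rcon} is filtered (its proof is exactly a comparison of matching filtrations on the two sides), and the transformation induced by $W$ is filtered because $\sK(\ell;\ell) = \IS$ and, more generally, $\sK(\vec m;m_0)$ is supported in the expected range of internal degrees. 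Consequently $|W| \wedge_R Hk$ induces a morphism of the spectral sequences above. On the $E^1$-page, the induced map in filtration degree $\ell$ is the map $\pi_0(Z_1(\ell) \wedge_R Hk) \to \pi_0(Z_2(\ell) \wedge_R Hk)$ determined by the component of $W$ at $\sK(\ell;\ell) = \IS$, which is exactly $\pi_0(\phi_{W,\ell} \wedge_R Hk)$; summing over $\ell$, this is by definition the Morse chain map $CM_\bullet(W;k)$. Passing to $E^2 = E^\infty$ identifies $\pi_\bullet(|W| \wedge_R Hk)$ with the map induced on homology by $CM_\bullet(W;k)$ under the isomorphisms of \cref{prop:homology_wedge_ring}, which is precisely the asserted commutativity.

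The main obstacle is this middle step: verifying that $|W|$ is genuinely filtered and that its effect on associated graded objects is the declared one. This requires revisiting the proof of \cref{lem:functoriality_geom_realiz} and confirming that every arrow in its chain of (weak) equivalences is compatible with the filtration by $\sJ^{\leq\ell}$, and that the reduction underlying the computation of $\Gr^\ell$ in \cref{prop:rcon} interacts correctly with the multimorphism $W$. Alternatively, one can bypass the spectral-sequence bookkeeping and argue directly at the chain level: the cellular chain complex of the filtered $R$-module $|W| \wedge_R Hk$ is, by the functoriality already implicit in \cite[Theorem~6]{cohen2009floer}, assembled out of the maps $\phi_{W,\ell}$, so it computes $\pi_\bullet(|W| \wedge_R Hk)$ as $H_\bullet(CM_\bullet(W;k))$ and the square commutes on the nose.
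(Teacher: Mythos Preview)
Your proposal is correct and follows essentially the same approach as the paper, whose proof consists of the single sentence that the result ``can be deduced by an analysis similar to the one in \cite[Theorem~6]{cohen2009floer}, via filtrations on the geometric realizations''; you have spelled out what that filtration/spectral-sequence argument looks like and correctly identified the filtered compatibility of $|W|$ (through the zig-zag of \cref{lem:functoriality_geom_realiz}) as the step requiring care. One small caveat: your assertion that each associated graded piece $X(\ell)\wedge_R Hk$ is concentrated in a single internal degree is not automatic for an arbitrary bounded-below $R$-linear $\sJ$-module---it holds in the applications because each $Z_i(\ell)$ is a finite wedge of rank one free $R$-modules---and is an implicit hypothesis that the paper also leaves unstated.
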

    \begin{proof} This can be deduced by an analysis similar to the one in \cite[Theorem 6]{cohen2009floer}, via filtrations on the geometric realizations.\end{proof}

    \begin{lem}
        Let $(\sN,\fm) \colon  (\sM_1,\fo_1) \to (\sM_2,\fo_2)$ be an $R$-oriented flow bimodule of two $R$-oriented flow categories satisfying \cref{asmp:flowcjs}. Then, the following diagram commutes:
        \[ 
            \begin{tikzcd}[row sep=scriptsize, column sep=scriptsize]
                CM(\sM_1,\fo_1) \rar{\cong}\dar{CM(\sN,\fm)} & CM_\bullet(Z_{\sM_1,\fo_1}) \dar{CM_\bullet(W_{\sN,\fm})}\\
                CM(\sM_2,\fo_2) \rar & CM_\bullet(Z_{\sM_2,\fo_2}).
            \end{tikzcd}.
        \]
        \qed
    \end{lem}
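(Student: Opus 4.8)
The plan is to reduce the claimed commutativity to a componentwise identity of maps of $R$-modules and then recognize that identity as the zero-dimensional instance of the verification already built into \cref{lem:flow_htpy_morse}. Recall that the two horizontal arrows are the isomorphisms of \cref{lem:flow_htpy_morse}: in each degree $n$ they identify $\bigoplus_{\mu_i(x)=n}\pi_0(\fo_i(x)\wedge_R Hk)$ with $\pi_0(Z_{\sM_i,\fo_i}(n)\wedge_R Hk)=\pi_0\bigl((\bigvee_{\mu_i(x)=n}\fo_i(x))\wedge_R Hk\bigr)$ via the canonical splitting of homotopy groups across wedges, and they intertwine the two differentials. By construction, each vertical map is a direct sum, over pairs $(a,b)$ with $\mu_1(a)=\mu_2(b)$, of $\pi_0(-\wedge_R Hk)$ applied to a map of $R$-modules $\fo_1(a)\to\fo_2(b)$: for $CM(\sN,\fm)$ this map is $f(a,b)$, and for $CM_\bullet(W_{\sN,\fm})$ it is the $(a,b)$-component of $\phi_{W_{\sN,\fm},n}\colon Z_{\sM_1,\fo_1}(n)\to Z_{\sM_2,\fo_2}(n)$. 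Since the horizontal isomorphisms respect these wedge decompositions, it suffices to prove that for every such $(a,b)$ these two maps $\fo_1(a)\to\fo_2(b)$ coincide \emph{before} applying $\pi_0(-\wedge_R Hk)$.

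First I would observe that the degree hypothesis $\mu_1(a)=\mu_2(b)$ makes $\sN(a,b)$ a compact $0$-manifold, hence a finite set of points $\{u_1,\dots,u_n\}$ with empty boundary, and its index bundle $I(a,b)=T\sN(a,b)$ the zero bundle. Thus the canonical trivialization entering $f(a,b)_i$ is vacuous and $f(a,b)=\sum_{i=1}^n\fm(a,b)|_{u_i}$. On the other side, unwinding the definition of $W_{\sN,\fm}$ in \eqref{eq:J-mod_multimorphism_flow_multimod} and of the map \eqref{eq:bimodflowtoj}, the $(a,b)$-component of $\phi_{W_{\sN,\fm},n}$ is the composite
\[
\fo_1(a)\;\simeq\;\fo_1(a)\wedge\sK(n;n)\;\xrightarrow{\ \text{\eqref{eq:bimodflowtoj}}\ }\;\fo_1(a)\wedge\sN^{-I}(a,b)\;\xrightarrow{\ \fm\ }\;\fo_2(b),
\]
where the last arrow is the bimodule extension of the $R$-module systems determined by $\fm$.

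The key step is to identify \eqref{eq:bimodflowtoj} in this degenerate range. With $w=\mu_1(a)-\mu_2(b)=0$ one has $K(n;n)=\{*\}$ and $\partial K(n;n)=\varnothing$, so $\sK(n;n)\simeq\IS$; likewise $\sN_{\mathrm{rel}\,\partial}(a,b)=\sN(a,b)_+\simeq\bigvee_{i=1}^n\IS$ and, since $I(a,b)$ vanishes, Atiyah duality gives $\sN^{-I}(a,b)\simeq D(\sN_{\mathrm{rel}\,\partial}(a,b))\simeq\bigvee_{i=1}^n\IS$. The collapse map $\pi_{a,b}\colon\sN_{\mathrm{rel}\,\partial}(a,b)\to K_{\mathrm{rel}\,\partial}(n;n)$ is the fold map, so its Spanier--Whitehead dual---which is precisely \eqref{eq:bimodflowtoj} here---is the $n$-fold diagonal $\IS\to\bigvee_{i=1}^n\IS$. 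Inserting this into the composite above, and using that the bimodule extension of $R$-module systems restricts on the summand indexed by $u_i$ to the isomorphism $\fm(a,b)|_{u_i}\colon\fo_1(a)\xrightarrow{\simeq}I_R(a,b)|_{u_i}\otimes_R\fo_2(b)\simeq\fo_2(b)$ (the bimodule analogue of \cref{lma:Rori_to_spec_system}, evaluated at a point), one obtains that the $(a,b)$-component of $\phi_{W_{\sN,\fm},n}$ equals $\sum_{i=1}^n\fm(a,b)|_{u_i}=f(a,b)$. Applying $\pi_0(-\wedge_R Hk)$ and summing over $(a,b)$ then yields the commutativity of the square.

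I expect the main obstacle to be the bookkeeping in this last identification: one must track the Atiyah-duality equivalence $D(\sN_{\mathrm{rel}\,\partial}(a,b))\simeq\sN^{-I}(a,b)$ together with the Pontryagin--Thom collapse $\pi_{a,b}$ in the case where the ambient $K$- and $J$-cubes all degenerate to points, and check that no shift or sign is introduced along the way. This is, however, exactly the $0$-dimensional instance of the computation already carried out for \cref{lem:flow_htpy_morse} (there for dimension-zero morphism spaces of a flow category, here for degree-zero morphism spaces of a flow bimodule), so it may be imported essentially verbatim with the grading index shifted by one.
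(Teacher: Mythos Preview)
Your proof is correct and supplies exactly the detailed unwinding that the paper omits: the paper records this lemma without proof (the statement ends with a \qed), treating the commutativity as immediate from the definitions together with \cref{lem:flow_htpy_morse}. Your argument is the natural verification---identifying $\sK(n;n)\simeq\IS$, recognizing \eqref{eq:bimodflowtoj} in degree zero as the diagonal $\IS\to\bigvee_i\IS$ dual to the fold, and matching the resulting sum $\sum_i\fm(a,b)|_{u_i}$ with $f(a,b)$---and is precisely the bimodule analogue of what underlies \cref{lem:flow_htpy_morse}.
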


\subsection{Morse chain complex with local systems}
We now generalize the discussion in \cref{subsec:morse_flow_chain} to flow categories equipped with local systems.
\begin{notn}
    \begin{enumerate}
        \item We denote by $k$ the discrete ring $\pi_0(R)$.
        \item Let $(\sM,\fo)$ be an $Hk$-oriented flow category. For any $x\in \Ob(\sM)$ consider the rank one free $k$-module
        \[ o(x) \coloneqq \pi_\bullet(\fo(x)).\]
    \end{enumerate}
\end{notn}
\begin{notn}\label{notn:j_sys_fun}
	Suppose that $Z$ is a $\sJ$-module. We denote by $- \circ_k -$ the following composition
	\begin{align*}
	    C_\bullet(Z(m);k) \otimes_k C_\bullet(\sJ(m,n);k) \overset{EZ}{\longrightarrow} C_\bullet(Z(m) \wedge \sJ(m,n);k) \longrightarrow C_\bullet(Z(n);k),
	\end{align*}
    where $EZ$ is the Eilenberg--Zilber map, and the second map is induced by the $\sJ$-module structure.
\end{notn}
\begin{lem}\label{lem:j_sys_chains} 
	There exist classes $j_{m,n} \in C_{-1}(\sJ(m,n);k)$ such that 
	\begin{equation}\label{eq:morse_eq3} 
		\partial j_{m,n} = \sum_{m > \ell > n} j_{m,\ell} \circ_k j_{\ell,n}.
	\end{equation}
\end{lem}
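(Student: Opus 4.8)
The plan is to construct the classes $j_{m,n}$ by induction on $d \coloneqq m - n \geq 1$, treating \eqref{eq:morse_eq3} as a Maurer--Cartan equation for a twisting cochain on the ``upper triangular'' spectrally enriched category $\sJ$. The only ingredient needed is the degree-zero product obtained by applying the lax monoidal chains functor $C_\bullet(-;k)$ of \cref{notn:j_sys_fun} to the composition maps of $\sJ$: for $m > \ell > n$ this gives
\[
-\circ_k- \colon C_\bullet(\sJ(m,\ell);k) \otimes_k C_\bullet(\sJ(\ell,n);k) \longrightarrow C_\bullet(\sJ(m,n);k),
\]
which is associative because composition in $\sJ$ is, equivalently because the cocomposition maps \eqref{eq:jincl1} satisfy the coassociativity square \eqref{eq:coassoc_J}, and because the Eilenberg--Zilber transformation is lax monoidal. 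Since $\circ_k$ preserves homological degree and each $j$ will lie in degree $-1$, both sides of \eqref{eq:morse_eq3} lie in $C_{-2}(\sJ(m,n);k)$.

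For the base case $d = 1$ the sum in \eqref{eq:morse_eq3} is empty, so I only need a $(-1)$-cycle $j_{n+1,n}$; since $J(n+1,n)$ is a point and $\partial J(n+1,n) = \varnothing$, the Thom space $J^\nu_{\text{rel }\partial}(n+1,n)$ is a circle, so $H_{-1}(\sJ(n+1,n);k) \cong k$, and I take $j_{n+1,n}$ to be the cycle generating it coming from the Thom class of $\nu_{n+1,n}$. For the inductive step, assume the $j_{m',n'}$ are constructed and obey \eqref{eq:morse_eq3} for all $m' - n' < d$, let $m - n = d \geq 2$, and set $c_{m,n} \coloneqq \sum_{m > \ell > n} j_{m,\ell} \circ_k j_{\ell,n} \in C_{-2}(\sJ(m,n);k)$. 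The crux is that $c_{m,n}$ is a cycle: applying the graded Leibniz rule $\partial(x \circ_k y) = (\partial x)\circ_k y + (-1)^{|x|} x \circ_k (\partial y)$ with $|j_{m,\ell}| = -1$, substituting the inductive identities $\partial j_{m,\ell} = \sum_{m > p > \ell} j_{m,p}\circ_k j_{p,\ell}$ and $\partial j_{\ell,n} = \sum_{\ell > q > n} j_{\ell,q}\circ_k j_{q,n}$, and invoking associativity of $\circ_k$, the two resulting triple sums --- one of the form $(j_{m,a}\circ_k j_{a,b})\circ_k j_{b,n}$, the other $j_{m,a}\circ_k(j_{a,b}\circ_k j_{b,n})$, both over chains $m > a > b > n$ --- cancel termwise, so $\partial c_{m,n} = 0$.

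It remains to choose a primitive for $c_{m,n}$, for which it suffices that $H_{-2}(\sJ(m,n);k) = 0$ whenever $m - n \geq 2$. This holds because $J^\nu_{\text{rel }\partial}(m,n)$ is contractible in that range: the subspace $\partial J(m,n) \subset J(m,n) \cong [0,\infty]^{m-n-1}$ of points with at least one coordinate equal to $\infty$ is star-shaped about the corner $(\infty,\dots,\infty)$, hence contractible, so collapsing it --- and then forming the Thom space of the trivial line bundle $\nu_{m,n}$ --- leaves a contractible space; thus $\sJ(m,n) \simeq D(J^\nu_{\text{rel }\partial}(m,n))$ is weakly contractible and $C_\bullet(\sJ(m,n);k)$ is acyclic. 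Hence the cycle $c_{m,n}$ bounds, and choosing any $j_{m,n} \in C_{-1}(\sJ(m,n);k)$ with $\partial j_{m,n} = c_{m,n}$ completes the induction.

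The main obstacle is bookkeeping rather than conceptual: one must fix from the outset a point-set lax monoidal model of $C_\bullet(-;k)$ on spectra for which $\circ_k$ is \emph{strictly} associative --- otherwise the termwise cancellation in the cycle computation holds only up to a chain homotopy and one is forced into $A_\infty$-style corrections to $c_{m,n}$ --- and one must keep the degree and sign conventions straight so that the base classes $j_{n+1,n}$ genuinely lie in degree $-1$. Once these are in place, the homotopy-theoretic inputs, namely $H_{-1}(\sJ(n+1,n);k) \cong k$ and the contractibility of $J^\nu_{\text{rel }\partial}(m,n)$ for $m \geq n+2$, are elementary, and the obstruction-theoretic induction is routine.
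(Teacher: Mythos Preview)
Your proof is correct and takes a genuinely different route from the paper. The paper first translates the problem via Poincar\'e--Lefschetz duality: using
\[
C_{-1}(\sJ(m,n);k) \cong C^0(J(m,n),\partial J(m,n);k) \cong C_{m-n-1}(J(m,n),\partial_0 J(m,n);k),
\]
where $\partial_0 J(m,n)$ is the locus where some coordinate vanishes, the equation \eqref{eq:morse_eq3} becomes a condition on relative chains on the cubes $J(m,n)$, and one then solves it geometrically by taking $j'_{m,n}$ to be a chain representative of the relative fundamental class of the cube. Your argument instead stays on the dual side and runs a Maurer--Cartan/obstruction-theoretic induction, using the acyclicity of $C_\bullet(\sJ(m,n);k)$ for $m-n\geq 2$; the contractibility of $J^\nu_{\text{rel }\partial}(m,n)$ that you invoke is in fact the same homotopical input underlying the paper's duality step. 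The paper's route produces explicit, geometrically canonical classes (fundamental classes), which can be convenient when verifying compatibilities later; your route is more purely homological-algebraic and makes transparent that existence follows from vanishing of obstruction groups alone, at the cost of requiring a strictly associative chain model for $\circ_k$ (as you note) and yielding non-canonical primitives.
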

\begin{proof}
	Note that 
	\begin{align*}
		C_{-1}(\sJ(m,n);k) &= C_{-1}\left(D(J(m,n)^{\nu_{m,n}}/\partial J(m,n)^{\nu_{m,n}}\right);k)\\
				&\cong C^0(J(m,n),\partial J(m,n);k)\\
				&\cong C_{m-n-1}(J(m,n),\partial_0 J(m,n);k)
	\end{align*}
	where $\partial_0 J(m,n) \subset J(m,n)= [0,\infty]^{m>x>n}$ is the subset of $J(m,n)$ where at least one coordinate is equal to $0$. Here, in the last quasi-isomorphism we have used the Poincar\'e--Lefschetz duality with $k$-coefficients for the manifold with boundary $J(m,n)$ (recall that the manifold-theoretic boundary of $J(m,n)$ is $\partial J(m,n) \cup \partial_0 J(m,n)$).
	The data of chains $j_{m,n}$ satisfying \eqref{eq:morse_eq3} is thus equivalent to choices of chains 
	\[ j'_{m,n} \in C_{m-n-1}(J(m,n),\partial_0 J(m,n);k)\]
	such that 
	\[ \partial j'_{m,n} = \sum_{m > k > n} j'_{m,k} \circ_k j'_{k,n} \in C_{m-n-2}(\partial_0 J(m,n);k).\]
	Such a choice can be made inductively by defining $j'_{m,n} \in C_\bullet(J(m,n),\partial_0 J(m,n);k)$ to be a chain representative for the relative homology class of the submanifold $\partial_0 J(m,n)$. 
\end{proof}
\begin{lem}\label{lem:k_sys_chains}
	There exist classes $k_{m,n} \in C_0(\sK(m;n);k)$ such that 
	\[ \partial k_{m,n} = \sum_{m > m' \geq n} j_{m,m'} \circ_k k_{m',n} - \sum_{m \geq n' > n} k_{m,n'} \circ_k j_{n',n}.\]
\end{lem}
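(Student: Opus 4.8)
The plan is to construct the classes $k_{m,n}$ by induction on $m-n$, in complete analogy with the proof of \cref{lem:j_sys_chains}. For the base case $m=n$, recall from \cref{rem:K_and_R} that $\sK(m;m)=\IS$, so that $C_0(\sK(m;m);k)\cong k$; take $k_{m,m}$ to be the canonical generator. Both sums in the asserted identity are then empty, so $\partial k_{m,m}=0$ holds automatically.

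For the inductive step I would first record the duality identification
\[ C_\bullet(\sK(m;n);k)\;\cong\;C_{(m-n)+\bullet}\bigl(K(m;n),\partial_0 K(m;n);k\bigr), \]
established exactly as in \cref{lem:j_sys_chains}: Spanier--Whitehead duality rewrites $C_\bullet(\sK(m;n);k)=C_\bullet(D(K_{\text{rel }\partial}(m;n));k)$ as the relative cochains $C^{-\bullet}(K(m;n),\partial K(m;n);k)$, and Poincar\'e--Lefschetz duality for the manifold with corners $K(m;n)$---whose manifold-theoretic boundary is $\partial K(m;n)\cup\partial_0 K(m;n)$, with $\partial K(m;n)$ the $\infty$-locus of \cref{dfn:rbdry} and $\partial_0 K(m;n)$ the part of the boundary where some coordinate equals $0$---converts these into the displayed relative chains. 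Under this identification, and using the coherence of the embeddings \eqref{eq:Remb}--\eqref{eq:Remb2} with the collars together with the compatibility of duality with the Pontryagin--Thom maps \eqref{eq:rspecbac1}--\eqref{eq:rspecbac2}, the operations $j_{m,m'}\circ_k-$ and $-\circ_k j_{n',n}$ become the pushforwards along the codimension-one faces $J(m,m')\times K(m';n)$ (for $m>m'\geq n$) and $K(m;n')\times J(n',n)$ (for $m\geq n'>n$) of $K(m;n)$; together these exhaust $\partial K(m;n)$.

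Two points then remain. First, the right-hand side of the asserted equation is a cycle in $C_{-1}(\sK(m;n);k)$: one applies $\partial$, substitutes the inductive formulas of \cref{lem:j_sys_chains} and of the present lemma for $\partial j_{m,m'}$, $\partial k_{m',n}$, $\partial k_{m,n'}$, $\partial j_{n',n}$, and uses associativity of $\circ_k$ for the $\sK$-bimodule structure (a consequence of the coassociativity \eqref{eq:coassoc_J} and of the maps \eqref{eq:rspecbac1}--\eqref{eq:rspecbac2}), whereupon all terms cancel in pairs---the standard check that a module twisting cochain squares to zero. Second, this cycle is a boundary: under the duality it is precisely the image in $C_{m-n-1}(K(m;n),\partial_0 K(m;n);k)$ of the fundamental chain of $\partial K(m;n)$, assembled over its faces from products of the chains underlying the previously chosen $j_{m,m'}$, $k_{m',n}$, and so on; since these products agree on the lower-codimension overlaps, they glue to a chain that is the boundary of a chain representing the relative fundamental class of $K(m;n)$. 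Taking $k_{m,n}$ to be such a chain representative, chosen to restrict to the chains already constructed on the lower strata, completes the induction.

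The step I expect to be the main obstacle is the coherence book-keeping inside the inductive step: identifying $j_{m,m'}\circ_k k_{m',n}$ and $k_{m,n'}\circ_k j_{n',n}$ with honest face pushforwards requires tracking the degree shifts contributed by the collar coordinates and the orientation signs they introduce, and one must verify that the chains selected on the various faces agree on all of their mutual overlaps, such as $J(m,m')\times J(m',m'')\times K(m'';n)$ and $J(m,m')\times K(m';n')\times J(n',n)$. This is exactly why the families $\{j_{m,n}\}$ and $\{k_{m,n}\}$ must be built simultaneously and inductively; once this compatibility is arranged, the homological obstruction in the second point vanishes automatically, the relevant class being visibly a boundary.
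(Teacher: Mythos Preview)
Your proposal is correct and is precisely the argument the paper has in mind: the paper's entire proof reads ``This is proved by an argument similar to \cref{lem:j_sys_chains},'' and you have spelled out that analogy---the Poincar\'e--Lefschetz identification $C_0(\sK(m;n);k)\cong C_{m-n}(K(m;n),\partial_0 K(m;n);k)$ and the inductive choice of relative fundamental chains extending the previously constructed data on $\partial K(m;n)$. The coherence bookkeeping you flag as the main obstacle is exactly the content of the inductive step, and your description of it is accurate.
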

\begin{proof}
	This is proved by an argument similar to \cref{lem:j_sys_chains}.
\end{proof}
\begin{defn}
	Let $Z \colon \sJ \to \Spec$ be a bounded below $\sJ$-module. The \emph{Morse complex} of $Z$ with coefficients in a discrete ring $k$ is the chain complex
	\[ CM(Z;k) \coloneqq \left( \bigoplus_{m \in \IZ} C_\bullet(Z(m) ;k), d \right)\]
	where
	\[ d(\sigma) \coloneqq \partial(\sigma) + \sum_{m,n \in \IZ} \sigma_m \circ_k j_{m,n},\]
	where $\sigma_m \in C_\bullet(Z(m);k)$ is the $m$-th component of $\sigma$, the chains $j_{m,n}$ are as in \cref{lem:j_sys_chains}, and $\partial$ is the induced cellular differential on $\bigoplus_{m\in \IZ}C_\bullet(Z(m);k)$.
\end{defn}
\begin{rem}\label{rem:cm_fil}
    Note that $CM(Z;k)$ carries a natural increasing filtration defined by
    \begin{equation}\label{eq:filtr_cm}
        F^p CM(Z;k) = \bigoplus_{m \leq p} C_\bullet(Z(m);k)
    \end{equation}
    whose associated graded is given by
    \[ \Gr^p CM(Z;k) = (C_\bullet(Z(p);k),\partial).\]
\end{rem}
\begin{defn}
    Let $(\sM,\fo,\sE)$ be an $R$-oriented flow category equipped with a local system of $A$-modules. The \emph{Morse complex of $(\sM,\fo,\sE)$ with coefficients in $k$} is defined as the chain complex
    \[ CM(\sM,\fo,\sE;k) \coloneqq CM(Z_{\sM,\fo,\sE};k).\]
    
\end{defn}
\begin{lem}\label{lem:cm_flow}
    Let $(\sM,\fo,\sE)$ be an $R$-oriented flow category equipped with a local system of $A$-modules.
    \begin{enumerate}
    \item For all $x,y \in \Ob(\sM)$, let $m_{xy} \in C_{\mu(x)-\mu(y)-1}(\sM(x,y);o(x)^\vee \otimes_k o(y))$ denote the image of $j_{\mu(x)\mu(y)}$ under the induced map 
    \begin{align*}
        C_{-1} (\sJ(\mu(x),\mu(y));k) \longrightarrow C_{-1}&(\sM(x,y)^{-I(x,y)};k)\\
        &\cong C_{\mu(x)-\mu(y)-1}(\sM(x,y);o(x)^\vee \otimes_k o(y)).
    \end{align*}
    Then, For all $x,y \in \Ob(\sM)$,
    \[ 
        \partial m_{xy} = \sum_{z} m_{xz} \circ_k m_{zy} \in C_{\mu(x)-\mu(y)-2}( \partial \sM(x,y);o(x)^\vee \otimes_k o(y)).
    \]
    where $- \circ_k m_{xy}$ are the composition maps
    \[ - \circ_k - \colon C_\bullet(\sE(x); o(x)) \otimes_k C_\bullet(\sM(x,y);o(x)^\vee \otimes_k o(y)) \longrightarrow C_\bullet(\sE(y); o(y)).\]
    \item The Morse complex of $(\sM,\fo,\sE)$ can be expressed as
    \[
    CM(\sM,\fo,\sE;k) = \left(\bigoplus_{x \in \Ob(\sM)} C_\bullet(\sE(x);o(x)),d\right),
    \]
    where
    \[
    d (\sigma) \coloneqq \partial(\sigma) + \sum_{y \in \Ob(\sM)} (-1)^{|\sigma|} \sigma \circ_k m_{xy},
    \]
    where $\partial$ is the differential in the complex $C_\bullet(\sE(x);o(x))$.
    \end{enumerate}
\end{lem}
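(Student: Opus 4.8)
The plan is to deduce both statements from \cref{lem:j_sys_chains} by transporting the relation \eqref{eq:morse_eq3} satisfied by the chains $j_{m,n}$ along the composition-compatible maps $\sJ(\mu(x),\mu(y))\to\sM^{-I}(x,y)$ of \eqref{eq:flowtoj}, and then rewriting everything in terms of the orientation local systems $o(x)=\pi_\bullet(\fo(x))$ and the structure maps of $\sE$.

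For part (i), I would first record the chain-level identification underlying the statement: by Atiyah duality $\sM^{-I}(x,y)\simeq D(\sM^\nu_{\mathrm{rel}\,\partial}(x,y))$, so taking cellular chains with $k$-coefficients, together with the Thom isomorphism with local coefficients and Poincar\'e--Lefschetz duality for the manifold-with-corners $\sM(x,y)$, yields
\[
C_{-1}(\sM^{-I}(x,y);k)\;\cong\;C_{\mu(x)-\mu(y)-1}(\sM(x,y);\,o(x)^\vee\otimes_k o(y)),
\]
the coefficient twist coming from the $R$-orientation isomorphism $\fo(a,b)\colon\fo(x)\xrightarrow{\simeq}I_R(x,y)\otimes_R\fo(y)$ after applying $\pi_\bullet(-\wedge_R Hk)$; over zero-dimensional strata $I(x,y)$ is canonically trivialized, consistently with the maps $d(a,b)$ of \cref{subsec:morse_flow_chain}. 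By definition $m_{xy}$ is the image of $j_{\mu(x)\mu(y)}$ under this map. I would then push \eqref{eq:morse_eq3} forward along $\pi_{xy}\colon\sM^\nu_{\mathrm{rel}\,\partial}(x,y)\to J^\nu_{\mathrm{rel}\,\partial}(\mu(x),\mu(y))$ and use two compatibilities established in \cref{subsec:flowtojmod}: first, $\pi_{xy}$ intertwines the cocomposition maps \eqref{eq:jincl1} with the coaction \eqref{eq:embmod} that encodes the face decomposition $\partial_i\sM(x,y)=\bigsqcup_{\mu(x)-\mu(z)=i}\sM(x,z)\times\sM(z,y)$; second, the $R$-orientation compatibility \eqref{eq:rorcoh} (equivalently \eqref{eq:index_flow_cat_cmpat}) makes the orientation twists in the identification above compose correctly, so that the image of $j_{m,\ell}\circ_k j_{\ell,n}$ assembles, after summing over all $z$ with $\mu(z)=\ell$, to $\sum_{\mu(z)=\ell}m_{xz}\circ_k m_{zy}$. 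Summing over $\ell$ then gives the displayed relation
\[
\partial m_{xy}\;=\;\sum_{z}m_{xz}\circ_k m_{zy}\;\in\;C_{\mu(x)-\mu(y)-2}(\partial\sM(x,y);\,o(x)^\vee\otimes_k o(y)),
\]
where the operations $\circ_k$ are induced by the face inclusions together with the index-bundle identifications, exactly as in \cref{notn:j_sys_fun}.

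For part (ii), I would unwind $CM(\sM,\fo,\sE;k)=CM(Z_{\sM,\fo,\sE};k)$. Since $Z_{\sM,\fo,\sE}(m)=\bigvee_{x\in\mu^{-1}(m)}(\sE(x)\wedge_R\fo(x))$, the natural identification $C_\bullet(\sE(x)\wedge_R\fo(x);k)\cong C_\bullet(\sE(x);k)\otimes_k o(x)=C_\bullet(\sE(x);o(x))$ gives an isomorphism $\bigoplus_{m}C_\bullet(Z_{\sM,\fo,\sE}(m);k)\cong\bigoplus_{x\in\Ob(\sM)}C_\bullet(\sE(x);o(x))$. The differential of $CM(Z_{\sM,\fo,\sE};k)$ is $\partial+\sum_{m,n}(-\circ_k j_{m,n})$, and tracing through how the $\sJ$-module structure of $Z_{\sM,\fo,\sE}$ is assembled from \eqref{eq:flowtoj}, the $R$-orientation, and the local-system maps $\sE(x)\wedge\varSigma^\infty_+\sM(x,y)\to\sE(y)$, the component of $-\circ_k j_{m,n}$ from the summand indexed by $x$ (with $\mu(x)=m$) to the summand indexed by $y$ (with $\mu(y)=n$) is precisely $-\circ_k m_{xy}$ with $m_{xy}$ the class from part (i). The global sign $(-1)^{|\sigma|}$ records the Koszul sign incurred in the Eilenberg--Zilber product of \cref{notn:j_sys_fun} when commuting $m_{xy}$, which carries the degree of the class $j_{\mu(x)\mu(y)}\in C_{-1}$, past $\sigma$; this produces the stated formula for $d$.

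I expect the main obstacle to be entirely bookkeeping: one must verify that the three ingredients — the Atiyah/Thom-duality identification of the chain groups, the matching between the face structure of $\sM^\nu_{\mathrm{rel}\,\partial}(x,y)$ and the boundary-at-infinity decomposition of the cubes $J(\mu(x),\mu(y))$ (\cref{rem:bdry}), and the $R$-orientation compatibility \eqref{eq:rorcoh} — fit together so that pushforward along \eqref{eq:flowtoj} is strictly compatible with the composition operations $\circ_k$ on the twisted chains, and that all signs are tracked correctly. Once these compatibilities (all implicit in \cref{subsec:flowtojmod,subsec:morse_flow_chain}) are spelled out, the passage from \eqref{eq:morse_eq3} to both (i) and (ii) is formal.
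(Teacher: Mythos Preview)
Your approach is correct and is exactly what the paper has in mind: the lemma is stated without proof in the paper, being regarded as an immediate unwinding of the definition of $CM(Z_{\sM,\fo,\sE};k)$ together with the compatibilities from \cref{subsec:flowtojmod} and \cref{lem:j_sys_chains}. Your outline of that unwinding is accurate.
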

\begin{lem}\label{lem:nat_trans_CM}
    Let $Z_1,Z_2 \colon \sJ \to \Spec$ be two bounded below $\sJ$-modules, and let $F \colon Z_1 \Rightarrow Z_2$ be a natural transformation. 
    \begin{enumerate}
        \item There is an induced chain map 
        \begin{align}\label{eq:ls_morse_map} 
            CM(F;k) \colon CM(Z_1;k) &\longrightarrow CM(Z_2;k)\\
            \sum_{m \in \IZ} x_m &\longmapsto \sum_{m \in \IZ} F(m)_\bullet x_m. \nonumber
        \end{align}
        \item
        Assume that $F$ is a pointwise weak equivalence, i.e.,
        \[ F(m) \colon Z_1(m) \longrightarrow Z_2(m)\]
        is a weak equivalence for each $m \in \IZ$. Then $CM(F;k)$ is a filtered chain map of the underlying chain complexes that induces a quasi-isomorphism of the associated graded. In particular, if $Z_1$ and $Z_2$ are bounded below, i.e., $Z_1(q)=Z_2(q)=\ast$ for $q \ll 0$, then $CM(F;k)$ is a quasi-isomorphism. 
    \end{enumerate}
\end{lem}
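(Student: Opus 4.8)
The plan is to reduce the entire statement to three elementary facts about the functor $C_\bullet(-;k)$ from (CW) spectra to chain complexes: that it is functorial, that it sends weak equivalences to quasi-isomorphisms (so that $H_\bullet C_\bullet(-;k) = \pi_\bullet(-\wedge Hk)$), and that the Eilenberg--Zilber map $EZ$ of \cref{notn:j_sys_fun} is natural. Everything else is bookkeeping with the filtration of \cref{rem:cm_fil}.

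For (i), I would first record that a natural transformation $F\colon Z_1\Rightarrow Z_2$ of $\sJ$-modules is precisely a family of maps $F(m)\colon Z_1(m)\to Z_2(m)$ making the square
\[
\begin{tikzcd}[row sep=scriptsize, column sep=small]
Z_1(m)\wedge\sJ(m,n)\rar\dar[swap]{F(m)\wedge\id} & Z_1(n)\dar{F(n)}\\
Z_2(m)\wedge\sJ(m,n)\rar & Z_2(n)
\end{tikzcd}
\]
commute for all $m\geq n$, the horizontal maps being the module structure maps. Applying $C_\bullet(-;k)$, using functoriality of $C_\bullet(-;k)$ and naturality of $EZ$ in the $Z$-variable, this yields the identity $F(n)_\bullet(\sigma_m\circ_k j_{m,n})=(F(m)_\bullet\sigma_m)\circ_k j_{m,n}$ for every chain $\sigma_m\in C_\bullet(Z_1(m);k)$, where $j_{m,n}$ are the classes of \cref{lem:j_sys_chains}. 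Since moreover each $F(m)_\bullet$ intertwines the cellular differentials $\partial$, a componentwise comparison of the two sides of $d\circ CM(F;k)=CM(F;k)\circ d$, using $d(\sigma)=\partial(\sigma)+\sum_{m,n}\sigma_m\circ_k j_{m,n}$, shows $CM(F;k)$ is a chain map, proving (i).

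For (ii), observe that $CM(F;k)$ preserves the filtration $F^pCM(Z_i;k)=\bigoplus_{m\leq p}C_\bullet(Z_i(m);k)$ since $F(m)_\bullet$ carries $C_\bullet(Z_1(m);k)$ into $C_\bullet(Z_2(m);k)$ without changing $m$; on $\Gr^pCM(Z_i;k)=(C_\bullet(Z_i(p);k),\partial)$ it restricts to $F(p)_\bullet$, which is a quasi-isomorphism because $F(p)$ is a weak equivalence. This is the associated-graded assertion. For the ``in particular'' part, the bounded-below hypothesis gives $F^pCM(Z_i;k)=0$ for $p\ll 0$, while the filtration is exhaustive since $CM(Z_i;k)=\bigoplus_m C_\bullet(Z_i(m);k)$ consists of finitely supported sums; hence $CM(Z_i;k)=\colim_p F^pCM(Z_i;k)$. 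Since homology commutes with filtered colimits it suffices to show each $F^pCM(F;k)$ is a quasi-isomorphism, which follows by induction on $p$ starting from the vanishing range, using the short exact sequences $0\to F^{p-1}CM(Z_i;k)\to F^pCM(Z_i;k)\to\Gr^pCM(Z_i;k)\to 0$, their long exact homology sequences, naturality under $CM(F;k)$, and the five lemma; equivalently, one compares the two filtration spectral sequences, which are convergent by boundedness and isomorphic on the $E^1$-page by the associated-graded assertion.

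None of the steps is a genuine obstacle. The two points requiring care are: verifying that $CM(F;k)$ commutes with the off-diagonal part $\sum_{m,n}(-)\circ_k j_{m,n}$ of the differential, which is exactly where the $\sJ$-module naturality of $F$ must be matched with the naturality of $EZ$; and checking convergence of the filtration argument, which is precisely where the bounded-below hypotheses on $Z_1$ and $Z_2$ are used.
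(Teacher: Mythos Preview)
Your proof is correct and follows essentially the same approach as the paper: for (i) you use naturality of $F$ together with naturality of the Eilenberg--Zilber map to show $CM(F;k)$ commutes with both the cellular differential and the off-diagonal part $\sum(-)\circ_k j_{m,n}$, and for (ii) you use that $CM(F;k)$ respects the filtration of \cref{rem:cm_fil}, induces a quasi-isomorphism on associated gradeds since each $F(p)$ is a weak equivalence, and then invoke the bounded-below hypothesis to conclude. The paper's proof is terser but structurally identical; your explicit five-lemma induction just spells out the standard filtered-to-total quasi-isomorphism argument that the paper leaves implicit.
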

\begin{proof}
    \begin{enumerate}
        \item From the natural transformation $F \colon Z_1 \Rightarrow Z_2$ we obtain a commutative diagram
        \[ 
            \begin{tikzcd}[column sep=1.3cm]
                CM(Z_1(m);k) \otimes_k C_\bullet(\sJ(m,n);k) \rar{F(m)_\bullet \otimes_k \id} \dar{- \circ_k -} & CM(Z_1(m);k) \otimes_k CM(\sJ(m,n);k) \dar{- \circ_k -} \\
                CM(Z_1(n);k) \rar{F(n)_\bullet} & CM(Z_2(n);k)
            \end{tikzcd},
        \]
        for any $m,n \in \IZ$. It follows that 
        \[ \sum_{m \in \IZ} \sum_{n \in \IZ} F(n)_\bullet (x_m \circ_k j_{m,n}) = \sum_{m \in \IZ} \sum_{n \in \IZ} F(m)_\bullet(x_m) \circ_k j_{m,n}.\]
        Moreover, 
        \[ \sum_{n \in \IZ} F(n)_\bullet (\partial x_n) = \sum_{n \in \IZ} \partial(F(n)_\bullet x_n).\]
        Since $F(m)_\bullet$ is a chain map for any $m\in \IZ$. This finishes the proof that \eqref{eq:ls_morse_map} defines a chain map.
        \item It is clear that $CM(F;k)$ respects the filtration defined in \eqref{eq:filtr_cm}. Supposing that $F$ is a pointwise weak equivalence implies that the induced maps on the associated graded is a quasi-isomorphism. The assumption that $Z_1$ and $Z_2$ are bounded implies that the filtrations are bounded below, and hence it follows that the chain map $CM(F;k)$ is a quasi-isomorphism.
    \end{enumerate}
\end{proof}
\begin{lem}\label{lem:simpl_replacemnt_J-mod}
    Let $A$ be an $R$-algebra, and let $Z \colon \sJ \to \mod A$ be an $A$-linear $\sJ$-module. There is a natural transformation of $A$-linear $\sJ$-modules
    \[ B(Z,\sJ,\sJ) \Longrightarrow Z\]
    which is a pointwise weak equivalence.
\end{lem}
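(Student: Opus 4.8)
The plan is to first identify $B(Z,\sJ,\sJ)$ concretely: it is the $A$-linear $\sJ$-module sending $\ell \in \Ob(\sJ) = \IZ$ to the two-sided bar construction $B(Z,\sJ,\sJ(-,\ell))$, where $\sJ(-,\ell)\colon \sJ^{\mathrm{op}}\to\Spec$ is the representable functor $k \mapsto \sJ(k,\ell)$ of \cref{rem:two_sided_bar_constr}(ii); functoriality of $\sJ(-,-)$ in the second variable makes this an $A$-linear $\sJ$-module. The natural transformation $B(Z,\sJ,\sJ)\Rightarrow Z$ is the bar augmentation: on $k$-simplices $B_k(Z,\sJ,\sJ(-,\ell)) = \bigvee_{a_0 \geq \cdots \geq a_k} Z(a_0)\wedge \sJ(a_0,a_1)\wedge\cdots\wedge\sJ(a_{k-1},a_k)\wedge \sJ(a_k,\ell)$ it is the map to $Z(\ell)$ obtained on each wedge summand by iterating the $\sJ$-action on $Z$ and the composition in $\sJ$. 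Associativity and unitality of these structure maps show that this coequalizes $d_0,d_1\colon B_1 \rightrightarrows B_0$ and is compatible with all simplicial structure maps, and it is visibly natural in $\ell$; passing to geometric realizations gives the desired natural transformation $\varepsilon$ of $A$-linear $\sJ$-modules.

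For the pointwise weak equivalence, I would fix $\ell$ and exhibit an extra degeneracy on the augmented simplicial $A$-module $B_\bullet(Z,\sJ,\sJ(-,\ell))\to Z(\ell)$. Concretely, $h\colon B_k \to B_{k+1}$ sends the summand indexed by $a_0\geq\cdots\geq a_k$ to the summand indexed by $a_0\geq\cdots\geq a_k\geq \ell$ using the canonical identification $\sJ(a_k,\ell) \simeq \sJ(a_k,\ell)\wedge\IS = \sJ(a_k,\ell)\wedge\sJ(\ell,\ell)$ provided by $\sJ(\ell,\ell)=\IS$ (\cref{dfn:cat_J}), and $h\colon Z(\ell)\to B_0$ selects the $a_0=\ell$ summand via $Z(\ell) \simeq Z(\ell)\wedge\sJ(\ell,\ell)$. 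Checking the simplicial identities for $h$ shows that the augmentation is a simplicial homotopy equivalence, so its geometric realization $\varepsilon_\ell\colon B(Z,\sJ,\sJ(-,\ell))\to Z(\ell)$ is a weak equivalence of $A$-modules (see \cite[Section IV.7]{elmendorf1997rings}; cf.\ \cite[Section A.3.4]{abouzaid2021arnold}). Alternatively, one obtains the same conclusion from \cref{rem:two_sided_bar_constr}(i), which identifies $B(Z,\sJ,\sJ(-,\ell))$ with $Z\otimes_\sJ \sJ(-,\ell)$, together with the co-Yoneda equivalence $Z\otimes_\sJ \sJ(-,\ell)\simeq Z(\ell)$.

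Essentially all of the work is formal bookkeeping with the definitions of the two-sided bar construction and of $\sJ$. The only point that warrants care is keeping everything $A$-linear and simultaneously $\sJ$-natural in $\ell$: one must verify that the augmentation maps and the extra degeneracies — which use only the unit $\IS=\sJ(\ell,\ell)$ and the $Z$-action, and are therefore maps of $A$-modules — assemble into a genuine natural transformation of $A$-linear $\sJ$-modules whose realization is levelwise a weak equivalence of $A$-modules. I do not expect any essential difficulty beyond this.
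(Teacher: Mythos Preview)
Your proposal is correct and follows essentially the same approach as the paper: both construct the augmentation $B(Z,\sJ,\sJ(-,\ell))\to Z(\ell)$ by iterating the $\sJ$-action, verify it is natural in $\ell$, and then show it is a pointwise weak equivalence using the unit $\sJ(\ell,\ell)=\IS$ to produce a homotopy inverse. The paper phrases the last step as ``compositions of the unit map induce a map $Z(p) \to B(Z,\sJ,\sJ(-,p))$ that is a homotopy inverse'' and cites \cite[Lemma 6.3]{blumberg2012localization}, whereas you spell out the extra degeneracy explicitly; these are the same argument.
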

\begin{proof}
    For all $p,k\in \IZ$ and integers $a_0 \geq \cdots \geq a_k$, there is a map
    \begin{equation}\label{eq:comp_map_simpl}
        Z(a_0) \wedge \sJ(a_0,a_1) \wedge \cdots \wedge \sJ(a_{k-1},a_k) \wedge \sJ(a_k,p) \longrightarrow Z(p),
    \end{equation}
    that is induced by the $\sJ$-module structure on $Z$. These maps are easily seen to be compatible with the face and degeneracy maps in $B_\bullet(Z,\sJ,\sJ(-,p))$ and hence we get an induced map $B(Z,\sJ,\sJ(-,p)) \to Z(p)$ for any $p\in \IZ$. The composition maps in $\sJ$ induce maps
    \[
    B(Z,\sJ,\sJ(-,p)) \wedge \sJ(p,q) \longrightarrow B(Z,\sJ,\sJ(-,q)),
    \]
    for any $p,q\in \IZ$ such that the following diagram commutes
    \[
    \begin{tikzcd}[sep=scriptsize]
        B(Z,\sJ,\sJ(-,p)) \wedge \sJ(p,q) \rar \dar & Z(p) \wedge \sJ(p,q) \dar \\
        B(Z,\sJ,\sJ(,-q)) \rar & Z(q)
    \end{tikzcd},
    \]
    which is to say there is a natural transformation $B(Z,\sJ,\sJ) \Rightarrow Z$. The maps \eqref{eq:comp_map_simpl} induce a map $B(Z,\sJ,\sJ(-,p)) \to Z(p)$ for any $p\in \IZ$, and compositions of the unit map induce a map $Z(p) \to B(Z,\sJ,\sJ(-,p))$ that is a homotopy inverse to \eqref{eq:comp_map_simpl}, see \cite[Lemma 6.3]{blumberg2012localization}.
\end{proof}
\begin{defn}\label{dfn:module_morphism_induced_CM}
	Let $Z_1,Z_2 \colon \sJ \to \Spec$ be two bounded below $\sJ$-modules. Let $W \colon \sK \Rightarrow F(Z_1,Z_2)$ be a $\sJ$-module morphism. Then, define $CM(W;k)$ to be the map 
    \begin{align*}
        CM(W;k) \colon CM(Z_1;k) &\longrightarrow CM(Z_2;k) \\
        \sigma &\longmapsto \sum_{m,n\in \IZ} \sigma_m \circ_k k_{m,n},
    \end{align*}
	where $\sigma_m \in C_\bullet(Z_1(m);k)$ is the $m$-th component of $\sigma$, and the chains $k_{m,n} \in C_0(\sK(m;n);k)$ are as in \cref{lem:k_sys_chains}.
\end{defn}
\begin{defn}
    Let $(\sN,\fm,\sE) \colon (\sM_1,\fo_1,\sE_1) \to (\sM_2,\fo_2,\sE_2)$ be an $R$-oriented flow bimodule with local system. Define
    \[ CM(\sN,\fo,\sE;k) \coloneqq CM(W_{\sN,\fm,\sE}) \colon CM(\sM_1,\fo_1,\sE_1;k) \longrightarrow CM(\sM_2,\fo_2,\sE_2;k)\]
\end{defn}
\begin{lem}\label{lem:cm_mod_flow}
     Let $(\sN,\fm) \colon (\sM_1,\fo_1,\sE_1) \to (\sM_2,\fo_2,\sE_2)$ be an $R$-oriented flow bimodule with local system. Let $\{m_{xx'}\}_{x,x' \in \Ob(\sM_1)}$ and $\{m_{yy'}\}_{y,y' \in \Ob(\sM_2)}$ be as in \cref{lem:cm_flow}.
     \begin{enumerate}
        \item For all $x \in \Ob(\sM_1)$ and $y \in \Ob(\sM_2)$, let $n_{xy} \in C_{\mu(x)-\mu(y)}(\sN(x,y);o(x)^\vee \otimes_k o(y))$ denote the image of $k_{\mu(x)\mu(y)}$ under the induced map 
        \begin{align*}
            C_{-1} (\sK(\mu(x),\mu(y));k) \longrightarrow C_{0}&(\sN(x,y)^{-I(x,y)};k)\\ 
            &\cong C_{\mu(x)-\mu(y)}(\sN(x,y);o(x)^\vee \otimes_k o(y)).
        \end{align*}
        Then, for all $x,y \in \Ob(\sM)$,
        \begin{align*}
            \partial n_{xy} = \sum_{x'} m_{xx'} \circ_k & n_{x'y} + \sum_{y'}n_{xy'} \circ_k m_{y'y}\\
            &\in C_{\mu(x)-\mu(y)-1}( \partial \sN(x,y);o(x)^\vee \otimes_k o(y)),
        \end{align*}
        where $- \circ_k m_{xy}$ are the composition maps
        \[ - \circ_k - \colon C_\bullet(\sE(x); o(x)) \otimes_k C_\bullet(\sM(x,y);o(x)^\vee \otimes_k o(y)) \longrightarrow C_\bullet(\sE(y); o(y)).\]
        \item The Morse chain map associated to $(\sN,\fm,\sE)$ can be expressed as
        \begin{align*}
            CM(\sM,\fo,\sE;k) \colon CM(\sM_2,\fo_2,\sE_2) &\longrightarrow CM(\sM_2,\fo_2,\sE_2)\\
        \sigma &\longmapsto \sum_{xy} \sigma_x \circ_k n_{xy}
        \end{align*}
    \end{enumerate}
    \qed
\end{lem}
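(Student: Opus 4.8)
The plan is to obtain both parts by transporting the combinatorial identities of \cref{lem:j_sys_chains} and \cref{lem:k_sys_chains} along the chain-level maps induced by \eqref{eq:flowtoj} and \eqref{eq:bimodflowtoj}, and then to unwind the definition of $CM(W_{\sN,\fm,\sE})$ from \cref{dfn:module_morphism_induced_CM}. Nothing genuinely new is needed beyond what is recorded in \cref{subsec:flowtojmod}; the argument is a translation of \cref{lem:cm_flow} from flow categories to flow bimodules.

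For part (i), recall from \cref{subsec:flowtojmod} that the coherent system of collars on $\sN$ produces, for every $x\in\Ob(\sM_1)$ and $y\in\Ob(\sM_2)$, the map $\sK(\mu(x),\mu(y))\to\sN^{-I}(x,y)$ of \eqref{eq:bimodflowtoj}, which is compatible with the $\sM_1^{-I_1}$- and $\sM_2^{-I_2}$-actions on both sides. Applying $C_\bullet(-;k)$ and composing with the Spanier--Whitehead and Poincar\'e--Lefschetz duality identifications used to define $n_{xy}$ in the statement---exactly as in the proof of \cref{lem:j_sys_chains}---one gets a chain map
\[ C_\bullet(\sK(\mu(x),\mu(y));k) \longrightarrow C_{\bullet+\mu(x)-\mu(y)}(\sN(x,y);o(x)^\vee\otimes_k o(y)) \]
which carries $k_{\mu(x)\mu(y)}$ to $n_{xy}$ and intertwines the $\circ_k$-action of $\sJ$ on $\sK$ with the corresponding actions of $\sM_1^{-I_1}$ and $\sM_2^{-I_2}$ on $\sN^{-I}$. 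Under the same identifications these latter actions are composition with the chains $m_{xx'}$ and $m_{y'y}$ of \cref{lem:cm_flow}(i), since those are by construction the images of $j_{\mu(x)\mu(x')}$ and $j_{\mu(y')\mu(y)}$ under \eqref{eq:flowtoj}. Hence $\partial n_{xy}$ is the image of $\partial k_{\mu(x)\mu(y)}$, and substituting the formula of \cref{lem:k_sys_chains} yields $\partial n_{xy} = \sum_{x'} m_{xx'}\circ_k n_{x'y} \pm \sum_{y'} n_{xy'}\circ_k m_{y'y}$; it then remains to check that both signs come out $+$ once the Koszul signs coming from the duality identifications and from the degrees of the composed classes are accounted for, just as in \cref{lem:cm_flow}(i).

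For part (ii), by definition $CM(\sN,\fo,\sE;k) = CM(W_{\sN,\fm,\sE})$, which by \cref{dfn:module_morphism_induced_CM} sends $\sigma\mapsto\sum_{m,n}\sigma_m\circ_k k_{m,n}$. Under the description of $CM(\sM_i,\fo_i,\sE_i;k)$ as $\bigl(\bigoplus_x C_\bullet(\sE(x);o(x)),d\bigr)$ from \cref{lem:cm_flow}(ii), the class $k_{m,n}$ acts via its image $n_{xy}$ for each $x\in\mu_1^{-1}(m)$ and $y\in\mu_2^{-1}(n)$, so the sum over $m,n$ refines to the sum over $x,y$, giving $CM(\sN,\fo,\sE;k)(\sigma)=\sum_{xy}\sigma_x\circ_k n_{xy}$. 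That this assignment is a chain map for the differentials of $CM(\sM_1,\fo_1,\sE_1;k)$ and $CM(\sM_2,\fo_2,\sE_2;k)$ described in \cref{lem:cm_flow}(ii) then follows from part (i) by expanding $\partial$ over the tensor factors via the Leibniz rule, with signs again matching those in \cref{lem:cm_flow}(ii).

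I expect the main obstacle to be the sign and degree-shift bookkeeping in passing through Spanier--Whitehead and Atiyah duality---in particular reconciling the minus sign in \cref{lem:k_sys_chains} with the two plus signs asserted in part (i)---rather than anything structural. The structural content is entirely the compatibility of \eqref{eq:bimodflowtoj} with the action maps, which is already established in \cref{subsec:flowtojmod}, together with \cref{lem:cm_flow}.
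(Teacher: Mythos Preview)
The paper does not give a proof of this lemma; it is stated with a trailing \qed and left to the reader, just like the companion \cref{lem:cm_flow}. Your approach is the intended one: part (i) is exactly the bimodule analogue of \cref{lem:cm_flow}(i), obtained by pushing the identity of \cref{lem:k_sys_chains} along the maps \eqref{eq:bimodflowtoj} and using their compatibility with the $\sJ$-actions (so that $j_{m,m'}\mapsto m_{xx'}$ and $k_{m',n}\mapsto n_{x'y}$), and part (ii) is a direct unwinding of \cref{dfn:module_morphism_induced_CM} in terms of the object-indexed decomposition of $CM(\sM_i,\fo_i,\sE_i;k)$ from \cref{lem:cm_flow}(ii). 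Your caveat about the sign discrepancy between the minus in \cref{lem:k_sys_chains} and the two plus signs in the statement is well placed; this is the only nontrivial point and is indeed a Koszul bookkeeping matter arising from the Poincar\'e--Lefschetz/Atiyah duality identifications, parallel to the sign $(-1)^{|\sigma|}$ appearing in \cref{lem:cm_flow}(ii).
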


Recall the definition of the spaces $V_q(n)$ from \cref{sec:functoriality_geom_realiz} and their associated relative cochains $\sV_q(n)$. Note that there are inclusion maps 
\[
V_q(n) \hooklongrightarrow K(n;q)
\]
that are given by the inclusion
\[
[0,\infty]^{\{n > x \geq q\}} \hooklongrightarrow [0,\infty]^{\{n > x \geq q\}} \times [0,\infty]^{\{q \geq x > q\}} \subset K(n;q).
\]
These maps respect the boundaries and hence induces maps $V_q(n)/\partial V_q(n) \to K(n;q)/\partial K(n;q)$. Taking Spanier--Whitehead duals thus yield maps
\[
    \sK(n;q) \longrightarrow \sV_q(n).
\]
These maps in turn define a natural transformation of $\sJ$-modules
\begin{equation}\label{eq:morse_eq6}
    \sK(-;q) \Longrightarrow \sV_q(-).
\end{equation}

\begin{notn}
    Let $A$ be an $R$-algebra.
	\begin{enumerate}
	    \item For $X \in \mod{A}$ and $q \in \IZ$, denote by $X[q]$ the $A$-linear $\sJ$-module defined by 
		\[	
		      X[q](n) \coloneqq \begin{cases} X, &\text{for } n=q\\
			\ast, &\text{otherwise}
			\end{cases}.
		  \]  
        \item If $Z$ is an $A$-linear $\sJ$-module, and $p\in \IZ$, we denote by $Z^{\leq p}$ the $A$-linear $\sJ$-module defined by
        \[
        Z^{\leq p}(n) \coloneqq \begin{cases}
            Z(n), & n \leq p \\
            \ast, & \text{otherwise}
        \end{cases}.
        \]
	\end{enumerate}
\end{notn}

Note that there is a split cofiber sequence 
\[
\begin{tikzcd}[sep=scriptsize]
    B(Z^{\leq p-1},\sJ,\sJ) \rar & B(Z^{\leq p},\sJ,\sJ) \rar & B(Z(p)[p],\sJ,\sJ),
\end{tikzcd}
\]
that corresponds to the splitting
\[ B(Z^{\leq p},\sJ,\sJ) \simeq B(Z^{\leq p-1},\sJ,\sJ) \vee B(Z(p)[p],\sJ,\sJ), \]
such that the first map and second maps coincides with the inclusion of the first factor and projection onto the second factors, respectively.
\begin{lem}\label{lem:cm_cofib}
	There is a (split) exact sequence of chain complexes
	\[
    \begin{tikzcd}[sep=small]
        0 \rar & CM(B(Z^{\leq p-1},\sJ,\sJ);k) \rar & CM(B(Z^{\leq p},\sJ,\sJ);k) \rar & CM(B(Z(p)[p],\sJ,\sJ);k) \rar & 0
    \end{tikzcd}
    \]
	where the maps are induced from the natural transformations of $\sJ$-modules 
	\[Z^{\leq p-1} \Longrightarrow Z^{\leq p} \Longrightarrow Z(p)[p].\]
    \qed
\end{lem}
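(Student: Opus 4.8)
The plan is to obtain the sequence by applying the Morse complex functor $CM(-;k)$ to the split cofiber sequence of $\sJ$-modules
\[ B(Z^{\leq p-1},\sJ,\sJ) \longrightarrow B(Z^{\leq p},\sJ,\sJ) \longrightarrow B(Z(p)[p],\sJ,\sJ) \]
recalled just above the statement. By \cref{lem:nat_trans_CM}(i) the two natural transformations $B(Z^{\leq p-1},\sJ,\sJ)\Longrightarrow B(Z^{\leq p},\sJ,\sJ)\Longrightarrow B(Z(p)[p],\sJ,\sJ)$ — themselves obtained by applying the functor $B(-,\sJ,\sJ)$ to the natural transformations $Z^{\leq p-1}\Longrightarrow Z^{\leq p}\Longrightarrow Z(p)[p]$ — induce chain maps on Morse complexes, so it remains only to see that these assemble into a short exact sequence that is split as graded $k$-modules.

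First I would recall that $Z^{\leq p-1}$ sits inside $Z^{\leq p}$ as the sub-$\sJ$-module supported in degrees $\leq p-1$, with quotient $\sJ$-module $Z(p)[p]$ (carrying the trivial structure), so that $Z^{\leq p-1}\Longrightarrow Z^{\leq p}\Longrightarrow Z(p)[p]$ is a levelwise-split short exact sequence of $\sJ$-modules. Applying the bar construction $B(-,\sJ,\sJ)$, which is exact in the module variable, produces a levelwise cofiber sequence of $\sJ$-modules in which the first map is a levelwise cofibration; moreover, in each simplicial degree the stratification of $B_k(Z^{\leq p},\sJ,\sJ(-,m))$ according to the value of the top index $a_0$ splits it as the wedge of the summands with $a_0\leq p-1$ (which constitute $B_k(Z^{\leq p-1},\sJ,\sJ(-,m))$) and those with $a_0=p$ (which constitute $B_k(Z(p)[p],\sJ,\sJ(-,m))$); this degreewise wedge decomposition underlies the splitting recalled above. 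Since $C_\bullet(-;k)$ sends wedges of spectra to direct sums of chain complexes and cofibrations to injections, forming $CM(W;k)=\bigl(\bigoplus_m C_\bullet(W(m);k),\,d\bigr)$ as $W$ ranges over this sequence yields a short exact sequence of chain complexes, split as graded $k$-modules via the $a_0$-decomposition, with first term the subcomplex $CM(B(Z^{\leq p-1},\sJ,\sJ);k)$ and cokernel $CM(B(Z(p)[p],\sJ,\sJ);k)$; by naturality (\cref{lem:nat_trans_CM}(i)) the two comparison maps are exactly those induced by the given natural transformations.

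I expect the only point requiring real care to be the compatibility of the twisting in the differential $d=\partial+\sum_{m,n}(-\circ_k j_{m,n})$ (see \cref{notn:j_sys_fun}) with the sub/quotient structure. One needs that the inclusion $B(Z^{\leq p-1},\sJ,\sJ)\hooklongrightarrow B(Z^{\leq p},\sJ,\sJ)$ is a morphism of $\sJ$-modules, so that every operation $\circ_k j_{m,n}$ preserves the filtration by $a_0$ — this holds because it is $B(-,\sJ,\sJ)$ applied to a $\sJ$-module map — and that the chosen model of chains $C_\bullet(-;k)$ (e.g.\ cellular chains) turns the degreewise wedge decomposition of the bar constructions into the direct sum decomposition of the Morse complexes on underlying graded modules, so that the quotient of $CM(B(Z^{\leq p},\sJ,\sJ);k)$ by $CM(B(Z^{\leq p-1},\sJ,\sJ);k)$ really is $CM(B(Z(p)[p],\sJ,\sJ);k)$. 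Granting these essentially formal verifications, the displayed sequence is exact and termwise split as asserted, the parenthetical ``(split)'' recording precisely the graded-module splitting furnished by the $a_0$-stratification.
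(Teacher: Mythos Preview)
Your proposal is correct and follows exactly the route the paper intends: the paper states the split cofiber sequence $B(Z^{\leq p-1},\sJ,\sJ)\to B(Z^{\leq p},\sJ,\sJ)\to B(Z(p)[p],\sJ,\sJ)$ together with the explicit wedge splitting just before the lemma, and then records the lemma with a bare \qed, treating it as an immediate consequence. Your write-up simply unpacks that consequence, invoking \cref{lem:nat_trans_CM}(i) for the chain-map property and the levelwise wedge decomposition for the splitting, which is precisely what is implicit in the paper's presentation.
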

\begin{prop}\label{prop:cm_prop}
	Let $Z \colon \sJ \to \mod{A}$ be a bounded below $A$-linear $\sJ$-module. Then, there exists an isomorphism
    \begin{equation}\label{eq:cm_prop}
        H_\bullet \left( CM(Z;k) \right) \cong \pi_\bullet(|Z| \wedge_R Hk).
    \end{equation}
\end{prop}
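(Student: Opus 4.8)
The plan is to adapt the argument underlying \cref{prop:homology_wedge_ring} (which in turn rests on \cite[Theorem 6]{cohen2009floer}) to the chain-level Morse complex, using the bar-resolution and d\'evissage machinery assembled above. The first move is to replace $Z$ by its bar resolution $B(Z,\sJ,\sJ)$. By \cref{lem:simpl_replacemnt_J-mod} the counit $\varepsilon\colon B(Z,\sJ,\sJ)\Rightarrow Z$ is a pointwise weak equivalence; since the two-sided bar construction preserves weak equivalences and $Z$ is bounded below, $\varepsilon$ induces a weak equivalence $|B(Z,\sJ,\sJ)|\simeq|Z|$, hence $|B(Z,\sJ,\sJ)|\wedge_R Hk\simeq|Z|\wedge_R Hk$; and by \cref{lem:nat_trans_CM} it induces a quasi-isomorphism $CM(B(Z,\sJ,\sJ);k)\to CM(Z;k)$. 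Thus it suffices to prove \eqref{eq:cm_prop} for $\sJ$-modules of the form $B(Z',\sJ,\sJ)$ with $Z'$ bounded below, noting that $B(Z',\sJ,\sJ)$ is bounded below up to weak equivalence (its value at each degree is contractible below the bound of $Z'$).

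Next I would run a d\'evissage over the truncations $Z^{\leq p}$. Since $Z=\colim_p Z^{\leq p}$ and all the functors in sight -- the bar construction, the CJS realization of bounded-below modules (which by \cref{rem:pro-spec} is computed by a single $q$-realization, hence by a colimit), $-\wedge_R Hk$, $CM(-;k)$, and $\pi_\bullet$, $H_\bullet$ -- commute with filtered colimits, it is enough to treat each $B(Z^{\leq p},\sJ,\sJ)$ compatibly with the maps induced by $Z^{\leq p-1}\Rightarrow Z^{\leq p}$. Proceed by induction on $p$: for $p\ll 0$ one has $Z^{\leq p}=0$ and both sides vanish. For the inductive step one uses the split cofiber sequence of $\sJ$-modules $B(Z^{\leq p-1},\sJ,\sJ)\to B(Z^{\leq p},\sJ,\sJ)\to B(Z(p)[p],\sJ,\sJ)$ discussed before \cref{lem:cm_cofib} (split because $B(Z(p)[p],\sJ,\sJ)$ is concentrated in degree $p$ while $B(Z^{\leq p-1},\sJ,\sJ)$ is supported in degrees $\leq p-1$): applying $|-|\wedge_R Hk$ and $\pi_\bullet$ gives a split short exact sequence of graded $k$-modules, while applying $CM(-;k)$ and invoking \cref{lem:cm_cofib} gives a split short exact sequence of chain complexes, hence one on homology. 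These two sequences are compatible via the natural transformations, so the inductive hypothesis for $B(Z^{\leq p-1},\sJ,\sJ)$, the single-degree case below, and the five lemma give \eqref{eq:cm_prop} for $B(Z^{\leq p},\sJ,\sJ)$.

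It remains to handle a $\sJ$-module $M[p]$ concentrated in a single degree $p$, for $M$ an $A$-module. By the first paragraph (applied to $M[p]$) it suffices to compute $CM(M[p];k)$ and $|M[p]|$ directly. Since $M[p]$ is supported in the single degree $p$ and the chains $j_{m,n}$ of \cref{lem:j_sys_chains} vanish unless $m>n$, every off-diagonal term of the Morse differential lands in $C_\bullet(M[p](n);k)=C_\bullet(\ast;k)$ with $n<p$ and so vanishes; hence $CM(M[p];k)=(C_\bullet(M;k),\partial)$, whose homology is $\pi_\bullet(M\wedge_R Hk)$. On the other hand, writing $M[p]\simeq M\wedge_R R[p]$ and using $R$-linearity of the bar construction together with \cref{exmp:cjspt} and \cref{rem:CJS_shift} gives $|M[p]|\simeq M\wedge_R|R[p]|\simeq M$, so $\pi_\bullet(|M[p]|\wedge_R Hk)\cong\pi_\bullet(M\wedge_R Hk)$, matching the Morse side and closing the induction.

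I expect the main obstacle to be bookkeeping rather than conceptual: verifying that the topological splitting of the cofiber sequence $B(Z^{\leq p-1},\sJ,\sJ)\to B(Z^{\leq p},\sJ,\sJ)\to B(Z(p)[p],\sJ,\sJ)$ and the algebraic splitting of \cref{lem:cm_cofib} are compatible enough for the five-lemma comparison to be valid, and keeping the grading conventions straight (the shift built into $Z[p]$ against the fact that $|Z[d]|\simeq|Z|$ carries no shift). A secondary point needing care is that the bar resolutions appearing throughout are only bounded below up to weak equivalence, so the boundedness hypotheses in \cref{rem:pro-spec} and \cref{lem:nat_trans_CM} must be invoked in their homotopy-invariant form. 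Otherwise the argument is essentially a transcription of the proof behind \cref{prop:homology_wedge_ring} at the level of chains.
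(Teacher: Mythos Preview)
Your approach differs from the paper's. The paper does not argue by d\'evissage on both sides separately; instead it constructs an explicit comparison map. Concretely, it builds a $\sJ$-module morphism $W \colon \sK \Rightarrow F_A(B(Z,\sJ,\sJ),|Z|_q[q])$ (for $q\ll 0$), where $|Z|_q[q]$ is the $\sJ$-module concentrated in degree $q$ with value $|Z|_q$, and then applies \cref{dfn:module_morphism_induced_CM} to obtain a filtered chain map $CM(B(Z,\sJ,\sJ);k)\to CM(|Z|_q[q];k)\cong C_\bullet(|Z|_q;k)$, checking it is a quasi-isomorphism on associated gradeds. Both arguments filter by $Z^{\leq p}$ and reduce to the single-degree case, but the paper threads an actual map through the filtration.

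Your invocation of the five lemma is the gap. The five lemma requires a morphism of exact sequences, and you have not produced any natural transformation between $H_\bullet(CM(-;k))$ and $\pi_\bullet(|-|\wedge_R Hk)$; the phrase ``compatible via the natural transformations'' names something that has not been constructed. You can repair this for the bare statement: since $B(Z,\sJ,\sJ)$ genuinely decomposes as a wedge $\bigvee_p B(Z(p)[p],\sJ,\sJ)$ of $\sJ$-modules (the bar resolution is ``free'' in the outer variable), both $|{-}|\wedge_R Hk$ and $CM(-;k)$ split as direct sums over $p$, and your single-degree computation then gives an abstract isomorphism summand-by-summand without any five lemma. But note what this costs you: the paper needs a \emph{specific} isomorphism, because the very next result, \cref{prop:jmod_fun}, asserts that this isomorphism intertwines $CM(W;k)$ with $\pi_\bullet(|W|\wedge_R Hk)$ for every $\sJ$-module morphism $W$. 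Your abstract isomorphism, assembled from uncontrolled choices on each summand, carries no such naturality, so even once the gap is patched your argument would not feed into \cref{cor:jmod_fun}.
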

\begin{proof}

	Fix $q \ll 0$ such that $|Z|_q \simeq |Z|$. Consider the $\sJ$-module $|Z|_q[q]$ and note that  
	\begin{equation}\label{eq:morse_eq1}
	||Z|_q[q]|_q \simeq |Z|_q \quad \text{and} \quad CM(|Z|_q[q]) \cong C_\bullet(|Z|_q).
	\end{equation}
	Moreover, note that $C_\bullet(|Z|_q)$ carries an increasing filtration defined by
	\begin{equation}\label{eq:cm_prop_fil}
		F^p C_\bullet (|Z|_q) \coloneqq C_\bullet(|Z^{\leq p}|_q).
	\end{equation}
	On the other hand, consider the $\sJ$-module $B(Z,\sJ,\sJ)$. Consider the filtration on $CM(B(Z,\sJ,\sJ))$ as defined by
    \[ F^p CM( B(Z,\sJ,\sJ)) \coloneqq CM ( B(Z^{\leq p},\sJ,\sJ) ).\]
    The maps are induced by the natural transformation $B(Z^{\leq p-1},\sJ,\sJ) \Rightarrow B(Z^{\leq p},\sJ,\sJ)$ induced by the natural transformation $Z^{\leq p-1} \Rightarrow Z^{\leq p}$ given pointwise by the inclusion map. By \cref{lem:cm_cofib} we have that
    \[
        \Gr^p CM(B(Z,\sJ,\sJ)) \cong CM(B(Z(p)[p],\sJ,\sJ)).
    \]
    Next, combining \cref{lem:simpl_replacemnt_J-mod} and \cref{lem:nat_trans_CM}(ii) yields a quasi-isomorphism
    \[
        CM(B(Z,\sJ,\sJ)) \overset{\cong}{\longrightarrow} CM(Z).
    \]
    To finish the proof it is therefore sufficient to construct a filtered chain map
	\[
        CM(B(Z,\sJ,\sJ)) \longrightarrow C_\bullet(|Z|_q) \cong CM(|Z|_q[q])
    \]
    such that the induced map on associated gradeds is a quasi-isomorphism.

    To construct such a chain map, we now construct a $\sJ$-module morphism
	\[ W \colon \sK \Longrightarrow F_A(B(Z, \sJ,\sJ),|Z|_q[q]).\]
    Such a morphism is completely described by specifying for every $p \in \IZ$ a map
	\[ W(p,q) \colon B(Z,\sJ,\sJ(-,p)) \wedge \sK(p;q) \longrightarrow |Z|_q,\]
    such that the following diagram commutes for all $p \geq p'$:
	\begin{equation}\label{eq:cm_prop_dig}
        \begin{tikzcd}[sep=scriptsize]
            B(Z,\sJ,\sJ(-,p)) \wedge \sJ(p,p') \wedge \sK(p';q) \rar \dar & B(Z,\sJ,\sJ(-,p')) \wedge \sK(p';q) \dar{W(p',q)} \\
			B(Z,\sJ,\sJ(-,p)) \wedge \sK(p;q) \rar{W(p,q)} & {|Z|_q}
		\end{tikzcd}.
    \end{equation}
	To define the maps $W(p,q)$, first notice that the source and targets of these maps are the geometric realization of the simplicial $A$-modules $B_\bullet(Z,\sJ,\sJ(-,p)) \wedge \sK(p;q)$ and $B_\bullet(Z,\sJ,\sV_q)$, respectively. We define $W(p,q)$ as the geometric realization of the following simplicial maps
	\begin{align}\label{eq:simplicial_maps_W}
		W(p,q)_n \colon &Z(q_0) \wedge \sJ(q_0,q_1) \wedge \dots \wedge \sJ(q_{n-1},q_n) \wedge \sJ(q_n,p) \wedge \sK(p;q) \\ \nonumber
        &\qquad \longrightarrow Z(q_0) \wedge \sJ(q_0,q_1) \wedge \dots \wedge \sJ(q_{n-1},q_n) \wedge \sK(q_n;q) \\ \nonumber
        &\qquad \longrightarrow Z(q_0) \wedge \sJ(q_0,q_1) \wedge \dots \wedge \sJ(q_{n-1},q_n) \wedge \sV_q(q_n).
	\end{align}
	Here the first map is induced by the left $\sJ$-action on $\sK$ and the second map is induced from the natural transformation \eqref{eq:morse_eq6}. The commutativity of the diagram \eqref{eq:cm_prop_dig} follows from the commutativity of the following diagram:
	\[
		\begin{tikzcd}[sep=scriptsize]
			\parbox{4.4cm}{$Z(q_0) \wedge \sJ(q_0,q_1) \wedge \cdots\\ \wedge \sJ(q_n,p) \wedge \sJ(p,p') \wedge \sK(p';q)$} \ar[rr] \ar[dd]& & \parbox{4cm}{$Z(q_0) \wedge \sJ(q_0,q_1) \wedge \cdots\\ \wedge \sJ(q_n,p) \wedge \sK(p;q)$} \ar[d]\\	
	          & & \parbox{4cm}{$Z(q_0) \wedge \sJ(q_0,q_1) \wedge \cdots\\ \wedge \sJ(q_{n-1},q_n) \wedge \sK(q_n;q)$} \ar[d]\\
			\parbox{4cm}{$Z(q_0) \wedge \sJ(q_0,q_1) \wedge \cdots\\ \wedge \sJ(q_n,p') \wedge \sK(p';q)$} \ar[r] & \parbox{4cm}{$Z(q_0) \wedge \sJ(q_0,q_1) \wedge \cdots\\ \wedge \sJ(q_{n-1},q_n) \wedge \sK(q_n;q)$} \ar[r]& \parbox{4cm}{$Z(q_0) \wedge \sJ(q_0,q_1) \wedge \cdots\\ \wedge \sJ(q_{n-1},q_n) \wedge \sV_q(q_n)$} 
		\end{tikzcd}
	\]
	By \cref{dfn:module_morphism_induced_CM}, the $\sJ$-module morphism $W$ defines a map of filtered chain complexes
	\[ CM(B(Z,\sJ,\sJ)) \longrightarrow CM(|Z|_q[q]) \cong C_\bullet(|Z|_q).\]
	Hence it suffices to show that the induced map on associated gradeds
	\[ \Gr^p CM(B(Z,\sJ,\sJ)) \cong CM(B(Z(p)[p],\sJ,\sJ)) \longrightarrow C_\bullet(|Z(p)[p]|_q) \cong \Gr^p CM(|Z|_q[q]) \]
	is a quasi-isomorphism. 

\end{proof}
\begin{cor}\label{prop:lsys_cm_prop}
    Let $(\sM,\fo,\sE)$ be an $R$-oriented flow category equipped with a local system whose grading function $\mu$ is bounded below. Then, there exists an isomorphism
    \begin{equation}\label{eq:lsys_cm_prop}
        H_\bullet \left( CM(\sM,\fo,\sE;k) \right) \cong \pi_\bullet(|\sM,\fo,\sE| \wedge Hk).
    \end{equation}
    \qed
\end{cor}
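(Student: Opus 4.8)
The plan is to deduce the statement immediately from \cref{prop:cm_prop}. Recall that, by the definitions preceding the statement, $CM(\sM,\fo,\sE;k) = CM(Z_{\sM,\fo,\sE};k)$ and $|\sM,\fo,\sE| = |Z_{\sM,\fo,\sE}|$, where $Z_{\sM,\fo,\sE} \colon \sJ \to \mod{A}$ is the $A$-linear $\sJ$-module with $Z_{\sM,\fo,\sE}(m) = \bigvee_{x \in \mu^{-1}(m)} (\sE(x) \wedge_R \fo(x))$. Thus \eqref{eq:lsys_cm_prop} is precisely the conclusion of \cref{prop:cm_prop} applied to $Z = Z_{\sM,\fo,\sE}$, and the only hypothesis to check is that $Z_{\sM,\fo,\sE}$ is bounded below.

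First I would record that the grading function $\mu$ being bounded below means $\mu^{-1}(m) = \varnothing$ for all $m \ll 0$; hence $Z_{\sM,\fo,\sE}(m) = \ast$ for all such $m$, so $Z_{\sM,\fo,\sE}$ is bounded below in the sense of \cref{rem:pro-spec}. Applying \cref{prop:cm_prop} then yields $H_\bullet(CM(Z_{\sM,\fo,\sE};k)) \cong \pi_\bullet(|Z_{\sM,\fo,\sE}| \wedge_R Hk)$, which under the two identifications above is exactly \eqref{eq:lsys_cm_prop}. The only minor point worth spelling out is that the $A$-module structure appearing on both sides of \eqref{eq:lsys_cm_prop} is the one induced by the local system $\sE$; this is transparent from the construction of $Z_{\sM,\fo,\sE}$ as an $A$-linear $\sJ$-module.

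I do not expect a genuine obstacle here: all of the substantive content has already been isolated in \cref{prop:cm_prop} (whose proof passes through the bar-construction comparison of \cref{prop:rcon}, the simplicial replacement of \cref{lem:simpl_replacemnt_J-mod} together with \cref{lem:nat_trans_CM}, and the filtered-chain-map argument on $B(Z,\sJ,\sJ)$), while \cref{lem:cm_flow} re-expresses $CM(\sM,\fo,\sE;k)$ in the familiar form $\bigl(\bigoplus_{x \in \Ob(\sM)} C_\bullet(\sE(x);o(x)), d\bigr)$. The corollary is therefore a formal consequence once the boundedness observation is in place.
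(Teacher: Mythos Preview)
Your proposal is correct and matches the paper's approach exactly: the corollary is marked \qed with no proof, indicating it is an immediate specialization of \cref{prop:cm_prop} to $Z = Z_{\sM,\fo,\sE}$, and the only thing to observe is that boundedness below of $\mu$ gives boundedness below of $Z_{\sM,\fo,\sE}$.
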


Recall that $\sK(a;b)= D(K(a;b)/\partial K(a;b))$, where
\begin{align*}
    K(a;b) &= \bigcup_{a \geq r_1 \geq r_0 \geq b} [0,\infty]^{\{a>x\geq r_1\}} \times [0,\infty]^{\{r_0 \geq x > b\}} \\
    &\subset [0,\infty]^{\{a > x \geq b\}} \times [0,\infty]^{\{a \geq x > b\}}.
\end{align*}
Consider the map 
\[\delta_{ab} \times s_{ab} \colon K(a;b) \longrightarrow J(a,b) \times [b,a]\]
defined as follows: The first component $\delta_{ab}$ of the map is defined on the subset $[0,\infty]^{\{a>x\geq r\}} \times [0,\infty]^{\{r \geq x > b\}}$ by
\begin{align*}
	[0,\infty]^{\{a>x\geq r\}} \times [0,\infty]^{\{r \geq x > b\}} &\longrightarrow J(a,b) \\
	(\vec x,x_r) \times (y_r,\vec y) &\longmapsto \begin{cases} 
		\vec y \quad &\text{if } r=a\\ 
		(\vec x,x_r+y_r,\vec y) \quad &\text{if } a>r>b\\
		\vec x  \quad &\text{if } r=b
	\end{cases}.
\end{align*}
as in \eqref{eq:ktoj}. To define the second component $s_{ab}$, we fix an increasing homeomorphism $\varsigma \colon [-\infty,\infty] \to [0,1]$. Then $s_{ab}$ is defined on the subset $[0,\infty]^{\{a>x\geq r\}} \times [0,\infty]^{\{r \geq x > b\}}$ by
\begin{align*}
	[0,\infty]^{\{a>x\geq r\}} &\times [0,\infty]^{\{r \geq x > b\}} \longrightarrow [b,a] \\
	\vec x\times \vec y &\longmapsto a- \left(\sum_{a > i \geq r} \varsigma(x_i+\dots+x_{r}) + \sum_{r \geq j > b} \varsigma(-(y_{r}+\dots+y_j)) \right).
\end{align*}

\begin{lem}
	The map $\delta_{ab} \times s_{ab} \colon K(a;b) \longrightarrow J(a,b) \times [b,a]$ satisfies the following properties:
	\begin{enumerate}
		\item It is a homeomorphism. 
		\item It maps $\partial K(a;b)$ onto $(\partial J(a,b) \times[b,a]) \cup (J(a,b) \times \{b,a\})$.
		\item It is compatible with the action of $J(a,b)$ in the sense that the following diagrams commute
		
		\[
			\begin{tikzcd}[column sep=1.5cm]
				J(a,a') \times K(a';b) \rar{\id \times \delta_{a'b} \times s_{a'b}} \dar{\text{\eqref{eq:Rincl}}} &  J(a,a') \times J(a',b) \times [b,a'] \dar{i_{aa'b}\times \id} \\
				K(a;b) \rar{\delta_{ab} \times s_{ab}} & J(a,b) \times [b,a]
            \end{tikzcd},
        \]
        and
        \[
            \begin{tikzcd}[column sep=1.5cm]
                K(a;b') \times J(b',b) \rar{\delta_{ab'} \times s_{ab'} \times \id} \dar{\text{\eqref{eq:Rincl2}}} &  J(a,b') \times [b',a] \times J(b',b) \dar{i_{ab'b} \times \id}\\
				K(a;b) \rar{\delta_{ab} \times s_{ab}} & J(a,b) \times [b,a]
			\end{tikzcd}.
		\]
	\end{enumerate}
\end{lem}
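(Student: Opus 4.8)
The statement is purely combinatorial, and the plan is to prove all three items by unwinding the explicit formulas for $\delta_{ab}$ and $s_{ab}$, organized around the stratification of $K(a;b)$ into its top-dimensional cubes $V_r(a)\times V^r(b)$, $r\in\{b,\dots,a\}$. The first observation to record is that a point $(\vec X,\vec Y)\in K(a;b)$ lies in the stratum indexed by $r$ exactly when $\vec X$ is supported in coordinates $\{r,\dots,a-1\}$ and $\vec Y$ in $\{b+1,\dots,r\}$, that these strata cover $K(a;b)$, and that on such a stratum $\delta_{ab}$ records the tuple $(X_l+Y_l)_{b<l<a}$ (discarding the coordinate $X_b$ at the extreme stratum $r=b$ and $Y_a$ at $r=a$), while $s_{ab}$ is the prescribed $\varsigma$-weighted alternating sum.

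For (1) I would first check that $\delta_{ab}\times s_{ab}$ is well defined and continuous: its $s_{ab}$-component takes values in $[b,a]$ since it is $a$ minus a sum of $a-b$ numbers in $[0,1]$, and the per-stratum formulas agree on the overlaps of consecutive cubes (the face where the splitting coordinate vanishes)---this is precisely what the case distinctions in the definitions are designed to arrange. Next I would analyze the fibre $\delta_{ab}^{-1}(z)$ over $z\in J(a,b)$, which, as noted after the definition of $\delta_{m,n}$, is a finite concatenation of intervals, one segment in each cube glued end to end. The key point is that $s_{ab}$ restricts to a strictly monotone homeomorphism of this fibre onto $[b,a]$: since $\varsigma$ is strictly increasing, $s_{ab}$ is strictly monotone in the splitting parameter on each segment; the extreme value $s_{ab}=b$ is attained at the $r=b$ end (pushing $X_b$ to $\infty$) and $s_{ab}=a$ at the $r=a$ end; and the subintervals of $[b,a]$ contributed by consecutive cubes share an endpoint, so they chain together to cover $[b,a]$ exactly once. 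Since $K(a;b)$ is compact and $J(a,b)\times[b,a]$ is Hausdorff, this continuous bijection is automatically a homeomorphism.

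Item (2) then amounts to a finite case check: an interior coordinate $X_l$ or $Y_l$ (with $b<l<a$) equal to $\infty$ forces the $l$-th coordinate of $\delta_{ab}(\vec X,\vec Y)$ to be $\infty$, hence the image lands in $\partial J(a,b)\times[b,a]$; the only other coordinates that can be $\infty$ are $X_b$, which forces the stratum $r=b$ and $s_{ab}=b$, and $Y_a$, which forces $r=a$ and $s_{ab}=a$. Conversely $s_{ab}=b$ (resp. $a$) forces some $\varsigma$-term to equal $1$ (resp. $0$) and hence some coordinate to be $\infty$, while $\delta_{ab}(\vec X,\vec Y)\in\partial J(a,b)$ obviously forces some coordinate to be $\infty$; so the preimage of $(\partial J(a,b)\times[b,a])\cup(J(a,b)\times\{b,a\})$ is exactly $\partial K(a;b)$. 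For (3) I would substitute the formulas \eqref{eq:Rincl}, \eqref{eq:Rincl2} for the composition maps into the definitions of $\delta_{ab}$, $s_{ab}$, $i_{aa'b}$ and $i_{ab'b}$: under \eqref{eq:Rincl2} the factor $J(a,a')$ is stacked above $K(a';b)$ with $\infty$ inserted in coordinate $a'$, so $\delta_{ab}$---being ``add the two tuples and forget the ends''---restricts to $i_{aa'b}$ on the $J$-coordinates and to $\delta_{a'b}$ on the $K(a';b)$-coordinates, while the $\infty$ in coordinate $a'$ makes every telescoping term of $s_{ab}$ that reaches past $a'$ equal to $1$, so $s_{ab}$ restricts to $s_{a'b}$ (valued in $[b,a']\subseteq[b,a]$); the second square is the mirror image, using \eqref{eq:Rincl}.

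The step I expect to be the main obstacle is the fibrewise claim inside (1): verifying that $s_{ab}$ is a homeomorphism on each concatenated-interval fibre of $\delta_{ab}$. This requires checking that $s_{ab}$ is monotone and continuous across every stratum boundary and controlling its behaviour along the two non-compact ends of the fibre, and the bookkeeping with the nested telescoping sums $x_i+\dots+x_r$ and $-(y_r+\dots+y_j)$ is where the actual work lies; everything else is a finite, if slightly tedious, unwinding of definitions.
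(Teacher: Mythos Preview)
Your proposal is correct and follows essentially the same route as the paper: for (i) you analyze the fibre of $\delta_{ab}$ as a concatenation of intervals indexed by $r$ and show $s_{ab}$ is strictly monotone along it (the paper phrases this as ``strictly increasing with respect to the lexicographical ordering on $(r,t)$''), then appeal to compact--Hausdorff; for (iii) you do the same direct substitution the paper carries out, using that the inserted $\infty$ at coordinate $a'$ forces the overshooting $\varsigma$-terms to equal $1$. Your treatment of (ii) is more detailed than the paper's ``clear by construction'', and you have correctly matched the inclusion maps to \eqref{eq:Rincl2} and \eqref{eq:Rincl} (the labels in the displayed diagrams of the lemma are swapped).
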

\begin{proof}
	\begin{enumerate}
	    \item It suffices to show that $\delta_{ab} \times s_{ab}$ is a bijection. For $\vec x \in J(a,b)$, the fiber of $\delta_{ab}$ over $\vec x$ is given by
    	\begin{align*}
    		\delta_{ab}^{-1}(\vec x) = & [0,\infty]^{\{a\}} \times \{\vec x\}\\
            &\cup \{ (x_{a-1},\dots,x_{r+1},t) \times (x_r-t,\dots,x_{b+1}) \ \mid \ a > r > b, \; 0 \leq t \leq x_r \} \\
            &\cup \{\vec x\} \times [0,\infty]^{\{b\}}.
    	\end{align*}
        The first and the third summands map bijectively onto
        \[ \left[a-\sum_{a > j >b}\varsigma ( -(x_{a-1}+\dots+x_j)),a\right] \quad \text{and} \quad \left[b,a-\sum_{a > i >b}\varsigma ((x_i+\dots+x_{b+1}))\right],\]
        while the second summand is in bijection with pairs 
		\[\{(r,t) \mid a > r > b, \; 0 \leq t \leq x_r\}.\]
		The function $s_{ab}$ is strictly increasing with respect to the lexicographical ordering on the pairs $(r,t)$ under this bijection. It then suffices to observe that 
		\begin{align*}
		    s_{ab}(a-1,0) &= a-\sum_{a > j >b}\varsigma ( -(x_{a-1}+\dots+x_j)) \\
            s_{ab}(b+1,\infty) &= a-\sum_{a > i >b}\varsigma ((x_i+\dots+x_{b+1})).
		\end{align*}
        \item This is clear by construction.
        \item We check commutativity in the first diagram by hand on the subset $[0,\infty]^{\{a'>x\geq r\}} \times [0,\infty]^{\{r \geq x > b\}} \subset K(a';b)$. Namely, we have
        \begin{align*}
            &((i_{aa'b} \times \id) \circ (\id \times \delta_{a'b} \times s_{a'b}))(\vec x,((\vec y,y_r),(z_r,\vec z))) \\
            &\qquad = (i_{aa'b} \times \id)(\vec x,(\vec y,y_r+z_r,\vec z),s_{a'b}((\vec y,y_r),(z_r,\vec z))) \\
            & \qquad = ((\vec x,\infty, \vec y, y_r+z_r,\vec z),s_{a'b}((\vec y,y_r),(z_r,\vec z))),
        \end{align*}
        and
        \begin{align*}
            &((\delta_{ab} \times s_{ab}) \circ \text{\eqref{eq:Rincl}})(\vec x,\vec y,y_r,z_r,\vec z) = (\delta_{ab} \times s_{ab})((\vec x,\infty,\vec y,y_r),(z_r,\vec z)) \\
            & \qquad = ((\vec x,\infty,\vec y,y_r+z_r,\vec z),s_{ab}((\vec x,\infty,\vec y,y_r),(z_r,\vec z))).
        \end{align*}
        By definition
        \begin{align*}
            &s_{ab}((\vec x,\infty,\vec y,y_r),(z_r,\vec z)) \\
            &\qquad = a- \sum_{a > i \geq a'} \varsigma(\infty) - \sum_{a' > i \geq r} \varsigma(y_i + \dots + y_r) -\sum_{r \geq j > b} \varsigma(-(z_r+\dots+z_j))\\
            &\qquad = a-(a-a') - \sum_{a' > i \geq r} \varsigma(y_i + \dots + y_r) - \sum_{r \geq j > b} \varsigma(-(z_r+\dots+z_j)) \\
            &\qquad = s_{a'b}((\vec y,y_r),(z_r,\vec z)).
        \end{align*}
        Checking that the second diagram commutes is done similarly.
	\end{enumerate}
\end{proof}
\begin{cor}
	There is a natural transformation of $\sJ$-bimodules
	\begin{equation}\label{eq:sktosj}
		\sK \Longrightarrow \sJ
	\end{equation}
	which is a pointwise weak equivalence.
    \qed
\end{cor}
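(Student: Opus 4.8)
The plan is to obtain the natural transformation by Spanier--Whitehead dualizing the homeomorphisms $\delta_{ab}\times s_{ab}$ of the preceding lemma. For $m>n$, parts (i) and (ii) state that $\delta_{ab}\times s_{ab}$ is a homeomorphism of pairs
\[ (K(a;b),\partial K(a;b)) \overset{\cong}{\longrightarrow} \bigl( J(a,b)\times[b,a],\,(\partial J(a,b)\times[b,a])\cup(J(a,b)\times\{b,a\}) \bigr). \]
Passing to relative quotients and using the identification $(X\times I)/\bigl((\partial X\times I)\cup(X\times\partial I)\bigr)\cong (X/\partial X)\wedge S^1$ together with the Thom isomorphism for the trivial line bundle $\nu_{a,b}$ --- under which $J^\nu_{\text{rel }\partial}(a,b)\cong\varSigma(J(a,b)/\partial J(a,b))$ --- this induces a homeomorphism $\bar\delta_{ab}\colon K_{\text{rel }\partial}(a;b)\xrightarrow{\ \cong\ }J^\nu_{\text{rel }\partial}(a,b)$, and I would set $\varphi_{ab}\coloneqq D(\bar\delta_{ab}^{-1})\colon \sK(a;b)\to\sJ(a,b)$. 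When $m=n$ both sides are $\IS$ and $\varphi$ is the identity, and when $m<n$ both are trivial, so $\varphi$ is defined on all of $\sJ^{\mathrm{op}}\wedge\sJ$.

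To see that $\varphi=\{\varphi_{ab}\}$ is a morphism of $(\sJ,\sJ)$-bimodules, recall that the bimodule structure on $\sK$ is dual to the Pontryagin--Thom collapse maps \eqref{eq:rspecbac1} and \eqref{eq:rspecbac2} associated to the face inclusions \eqref{eq:Rincl} and \eqref{eq:Rincl2}, while that on $\sJ$ is dual to the collapse maps \eqref{eq:jincl1} associated to \eqref{eq:embJ}. Part (iii) of the lemma says that the maps $\delta\times s$, and hence their inverses, intertwine these face inclusions with the canonical product embeddings $i_{aa'b}$ and $i_{ab'b}$ of $J$; after passing to relative quotients and applying the Thom isomorphisms above, the inverse homeomorphisms $\bar\delta^{-1}$ therefore intertwine the corresponding collapse maps. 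Since $D$ is an equivalence on finite spectra, dualizing these squares produces exactly the naturality squares for $\varphi$.

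Finally, $\varphi$ is a pointwise weak equivalence because each $\bar\delta_{ab}$ is a homeomorphism of compact based spaces, so its dual is a homotopy equivalence of dualizable spectra. I expect the only real work to be bookkeeping: tracking how the interval coordinate $[b,a]$ of the lemma is matched against the trivial line bundle $\nu_{a,b}$ used to define $\sJ$, and checking that the collar structures, collapse maps and Thom isomorphisms are aligned so that part (iii) yields the $\sJ$-bimodule naturality squares on the nose, rather than only up to a transposition of suspension coordinates. All of this happens at the level of homeomorphisms, so no further homotopical subtleties arise.
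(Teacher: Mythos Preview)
Your proposal is correct and matches the paper's intended argument: the corollary is marked with a \qed\ precisely because it is meant to follow immediately from the preceding lemma by the mechanism you describe---dualizing the relative homeomorphism $\delta_{ab}\times s_{ab}$ and using part (iii) for bimodule naturality. Your identification of the interval factor $[b,a]$ with the suspension coming from $\nu_{a,b}$, and your caveat about aligning collars with the face inclusions, are exactly the bookkeeping the paper leaves implicit.
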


We start by defining an intermediate space and list its properties which will be used in the proof of \cref{prop:jmod_fun} below. In analogy with the construction of $KV_q$ in the proof of \cref{prop:rcon}, for every $a, b \in \IZ$, we define 
\[ KJ(a,b) \coloneqq \bigcup_{a \geq r_1 \geq r_0 > b} [0,\infty]^{\{a > x \geq r_1\}} \times \partial[0,\infty]^{\{r_0 \geq x > b\}}.\]
The boundary of $KJ(a,b)$ is defined to be
\[\partial KJ(a,b) \coloneqq \bigcup_{a \geq r_1 \geq r_0 > b} \partial[0,\infty]^{\{a > x \geq r_1\}} \times \partial[0,\infty]^{\{r_0 \geq x > b\}}.\]
Denote by $\nu_{ab}$ the trivial rank $1$ vector bundle over $KJ(a,b)$. Define
\[ \sKJ(a,b) \coloneqq D\left( \frac {KJ(a,b)^{\nu_{ab}}}{\partial KJ(a,b)^{\nu_{ab}}} \right).\]
Note that there are composition maps
\[ KJ(a,b) \times V_q(b) \longrightarrow KV_q(a) \]
which induce maps
\[\sKJ(a,b) \wedge \sV_q(b) \longrightarrow \sKV_q(a).\]
Similar to \eqref{eq:Rincl}, $\sKJ(a,b)$ is a $\sJ^{\mathrm{op}}$-module, and the above maps yield a natural transformation of $\sJ^{\mathrm{op}}$-modules
\begin{equation}\label{eq:skjtoskv}
	\sKJ \Longrightarrow \sKV_q.
\end{equation}
The proof of the following result is carried out exactly like the proof of \cref{prop:rcon}.
\begin{prop} There are natural transformations of $\sJ$-bimodules
	\[ B(\sK,\sJ,\sJ) \Longleftarrow \sKJ \Longrightarrow \sJ \]
	which are pointwise weak equivalences.
    \qed
\end{prop}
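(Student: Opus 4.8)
The plan is to transcribe the proof of \cref{prop:rcon} almost verbatim, with the $\sJ$-bimodule $\sJ$ playing the role played there by $\sV_q$ and with $\sKJ$ playing the role of the intermediate object $\sKV_q$; the proposition then follows in three steps.

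\emph{The two natural transformations.} First I would produce $\sKJ \Rightarrow \sJ$ as the Spanier--Whitehead dual of the inclusion of pairs $(J(a,b),\partial J(a,b)) \hookrightarrow (KJ(a,b),\partial KJ(a,b))$ that identifies $J(a,b) = [0,\infty]^{\{a > x > b\}}$ with the stratum of $KJ(a,b)$ with $r_1 = r_0 = b+1$, on which the unique coordinate of the factor $\partial[0,\infty]^{\{b+1 \geq x > b\}}$ equals $\infty$ (after Thomifying along the trivial line bundles $\nu_{ab}$ and $\nu_{a,b}$); this is the exact analogue of the inclusion $\colim_{q_1+\dots+q_p\geq q}\prod_i V_{q_i}(m_i)\hookrightarrow KV_q(\vec m)$ used in the proof of \cref{prop:rcon}. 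Then I would produce $\sKJ \Rightarrow B(\sK,\sJ,\sJ)$: the composition maps of $K$ and $J$ assemble into maps $K(a;q_0)\times J(q_0,q_1)\times\cdots\times J(q_{n-1},q_n)\times J(q_n,b)\to KJ(a,b)$ (the last slot composed by inserting $\infty$ at the junction $q_n$, landing in the boundary factor $\partial[0,\infty]^{\{r_0\geq x>b\}}$), and the Spanier--Whitehead duals of their Pontryagin--Thom collapses are compatible with the face and degeneracy maps of $B_\bullet(\sK(a;-),\sJ,\sJ(-,b))$, hence descend to a map on geometric realizations. Both maps are patently compatible with the left and right $\sJ$-actions, so they are maps of $\sJ$-bimodules.

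\emph{$\sKJ \Rightarrow \sJ$ is a pointwise weak equivalence.} Exactly as in the first half of the proof of \cref{prop:rcon}, the image of $J(a,b)$ in $KJ(a,b)$ is a deformation retract, obtained by retracting to $0$ those coordinates of the boundary-cube factor $\partial[0,\infty]^{\{r_0\geq x>b\}}$ that are not equal to $\infty$; this retraction restricts to a deformation retraction of $\partial J(a,b)\hookrightarrow\partial KJ(a,b)$. Hence the inclusion of pairs is a weak equivalence, and so is its Spanier--Whitehead dual $\sKJ(a,b)\to\sJ(a,b)$.

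\emph{$\sKJ \Rightarrow B(\sK,\sJ,\sJ)$ is a pointwise weak equivalence, and the main obstacle.} This is where essentially all of the work lies, and it is a direct transcription of the filtration argument in the second half of the proof of \cref{prop:rcon}. I would equip $KJ(a,b)$ with the decreasing exhaustive filtration in which $F_\ell KJ(a,b)$ is the locus where $r_0\geq\ell+1>b$ and some coordinate $x$ with $r_0\geq x\geq\ell+1$ in the factor $\partial[0,\infty]^{\{r_0\geq x>b\}}$ equals $\infty$, obtaining an increasing exhaustive filtration $F^\ell\sKJ(a,b)$ with $\Gr^\ell\sKJ(a,b)\simeq\sK(a;\ell)\wedge\sJ(\ell,b)$ for $b\leq\ell\leq a$ and $\ast$ otherwise; on the other side I would use the skeletal filtration $F^\ell B\coloneqq B(\sK(a;-)|_{\sJ^{\leq\ell}},\sJ^{\leq\ell},\sJ(-,b)|_{(\sJ^{\leq\ell})^{\mathrm{op}}})$, whose $\ell$-th associated graded is again $\sK(a;\ell)\wedge\sJ(\ell,b)$. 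The heart of the matter — and the step I expect to be the most delicate — is checking that the Pontryagin--Thom collapse maps respect both filtrations, that the filtration on $\sKJ$ is simultaneously compatible with the left $\sJ$-action inherited from $\sK$ and the right $\sJ$-action coming from the boundary-cube factor, and that the collapse maps induce the identity on associated gradeds under the identifications above; this is, however, the same verification as in \cref{prop:rcon}, the only cosmetic change being that the right-hand slot of the bar construction now carries the non-constant $\sJ$-module $\sJ(-,b)$ rather than $\sV_q$. Granting this, both filtrations are exhaustive and bounded (all strata vanish unless $b\leq\ell\leq a$), so the comparison map $\sKJ(a,b)\to B(\sK(a;-),\sJ,\sJ(-,b))$ is a weak equivalence, completing the proof.
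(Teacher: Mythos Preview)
Your proposal is correct and follows essentially the same approach as the paper, which simply states that the proof ``is carried out exactly like the proof of \cref{prop:rcon}.'' Your detailed transcription---constructing the two natural transformations via inclusions and Pontryagin--Thom collapse maps, verifying $\sKJ \Rightarrow \sJ$ is a weak equivalence via deformation retraction, and verifying the other map via matching filtrations on associated gradeds---is exactly the intended argument. One minor point: the Pontryagin--Thom construction you describe actually produces a map $B(\sK,\sJ,\sJ) \to \sKJ$ rather than the stated direction (the paper is itself inconsistent about this, cf.\ \eqref{eq:kv_comparison5} versus the final displayed line of the proof of \cref{prop:rcon}), but since the map is a weak equivalence this is immaterial.
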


\begin{prop}\label{prop:jmod_fun}
	Let $Z_1,Z_2 \colon \sJ \to \mod{A}$ be $A$-linear $\sJ$-modules and $W \colon \sK \Rightarrow F_A(Z_1,Z_2)$ be an $A$-linear $\sJ$-module morphism. Then, the following diagram is commutative
	\[
	    \begin{tikzcd}[row sep=scriptsize,column sep=1cm]
	        H_\bullet(CM(Z_1;k)) \rar{CM(W)} \dar{\cong}[swap]{\eqref{eq:cm_prop}} & H_\bullet(CM(Z_2;k)) \dar{\cong}[swap]{\eqref{eq:cm_prop}} \\
	        \pi_\bullet(|Z_1| \wedge Hk) \rar{|W|\wedge Hk} & \pi_\bullet(|Z_2| \wedge Hk)
	    \end{tikzcd}.
    	\]
\end{prop}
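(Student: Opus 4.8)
The plan is to run the argument one filtration degree at a time: the morphism $W$ is compatible with the degree filtrations on both sides, on the single-degree subquotients the statement is immediate, and the two filtrations are bounded below, so the general case follows by the usual inductive comparison. First fix $q \ll 0$ small enough that $|Z_1|_q \simeq |Z_1|$ and $|Z_2|_q \simeq |Z_2|$, which is possible since the $Z_i$ are bounded below (\cref{rem:pro-spec}); we then work throughout with $|Z_i|_q = B(Z_i,\sJ,\sV_q)$. Recall from the proof of \cref{prop:cm_prop} that the isomorphism \eqref{eq:cm_prop} for a bounded-below $Z$ is obtained as $H_\bullet$ of the zig-zag of chain maps
\[
CM(Z;k) \xleftarrow{\ \sim\ } CM(B(Z,\sJ,\sJ);k) \xrightarrow{\ \sim\ } CM(|Z|_q[q];k) \cong C_\bullet(|Z|_q;k),
\]
where the left arrow is $CM(-)$ of the augmentation $B(Z,\sJ,\sJ)\Rightarrow Z$ (\cref{lem:simpl_replacemnt_J-mod}, \cref{lem:nat_trans_CM}) and the right arrow is $CM(-)$ of the $\sJ$-module morphism $\sK\Rightarrow F_A(B(Z,\sJ,\sJ),|Z|_q[q])$ built there (\cref{dfn:module_morphism_induced_CM}), followed by $H_\bullet C_\bullet(|Z|_q;k) = \pi_\bullet(|Z|_q\wedge_R Hk) = \pi_\bullet(|Z|\wedge_R Hk)$. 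As shown in that proof, all maps in this zig-zag respect the degree filtrations (on $CM(Z;k)$ the filtration $F^p = \bigoplus_{m\le p}C_\bullet(Z(m);k)$ of \cref{rem:cm_fil}, on $CM(B(Z,\sJ,\sJ);k)$ the filtration $CM(B(Z^{\le p},\sJ,\sJ);k)$, and on $C_\bullet(|Z|_q;k)$ the filtration $C_\bullet(|Z^{\le p}|_q;k)$) and are quasi-isomorphisms on associated gradeds; hence \eqref{eq:cm_prop} is a filtered isomorphism whose restriction to $\Gr^p$ is the canonical identification of the homology of $(C_\bullet(Z(p);k),\partial)$ with $\pi_\bullet(|Z(p)[p]|_q\wedge_R Hk)\cong\pi_\bullet(Z(p)\wedge_R Hk)$ (using \cref{exmp:cjspt} and \cref{rem:CJS_shift} for the last step).

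Next I would check that $W$ is filtered. Because $\sK(m;n) = \ast$ unless $m\ge n$, the component $W(m;n)\colon Z_1(m)\wedge\sK(m;n)\to Z_2(n)$ can be nonzero only for $n\le m$, so $W$ restricts for every $p$ to a $\sJ$-module morphism $W^{\le p}\colon \sK\Rightarrow F_A(Z_1^{\le p},Z_2^{\le p})$, compatibly with the inclusions $Z_i^{\le p-1}\hookrightarrow Z_i^{\le p}$. Consequently the chain map $CM(W)$ of \cref{dfn:module_morphism_induced_CM} preserves the filtrations $F^\bullet CM(Z_i;k)$, and --- since $\sK(m;n) = \ast$ for $m<n$ forces $B(Z_1^{\le p},\sJ,\sK)$ to vanish in degrees $>p$ --- the map $|W|\colon |Z_1|_q\to|Z_2|_q$ of \cref{lem:functoriality_geom_realiz} (built from $B(Z_1,\sJ,\sK)\Rightarrow Z_2$ and \cref{prop:rcon}) restricts to $|W^{\le p}|\colon |Z_1^{\le p}|_q\to|Z_2^{\le p}|_q$. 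On the subquotient $\Gr^p$ each $\sJ$-module $Z_i$ is replaced by the single-degree module $Z_i(p)[p]$, and $W$ by the morphism determined by $\phi_{W,p}\colon Z_1(p)\simeq Z_1(p)\wedge\sK(p;p)\xrightarrow{W(p;p)}Z_2(p)$ (recall $\sK(p;p) = \IS$ and $k_{p,p}$ is the unit class, so $\Gr^p CM(W)$ picks out exactly this component). For single-degree $\sJ$-modules the statement of \cref{prop:jmod_fun} is immediate: both $CM(-)$ and $|-|_q$ of $Z_i(p)[p]$ are canonically the underlying $R$-module $Z_i(p)$, and both induced maps are $\phi_{W,p}$. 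Thus $\Gr^p(CM(W))$ and $\Gr^p(\pi_\bullet(|W|\wedge_R Hk))$ agree under \eqref{eq:cm_prop}.

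Finally, since the $Z_i$ are bounded below, the filtrations on $CM(Z_i;k)$ and on $|Z_i|_q$ are exhaustive with $F^p = 0$ for $p\ll 0$. The split cofiber sequences $Z_i^{\le p-1}\to Z_i^{\le p}\to Z_i(p)[p]$ (and the resulting split short exact sequences \cref{lem:cm_cofib} on $CM$ and the split cofiber sequences preceding it on the realizations) give, after $-\wedge_R Hk$ and $\pi_\bullet$, compatible long exact sequences on the two sides of the square; an induction on $p$ using the five lemma --- with base case $p\ll 0$, where all groups vanish --- upgrades agreement on each $\Gr^p$ to agreement on each $F^p$, and passing to the colimit over $p$ yields the square of \cref{prop:jmod_fun}. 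I expect the main obstacle to be the verification in the second paragraph that $|W|$ is genuinely filtered and restricts to $\phi_{W,p}$ on $\Gr^p$: this requires tracing $W$ through the chain of equivalences of \cref{prop:rcon} and \cref{lem:functoriality_geom_realiz}, and is the one place the argument is not pure bookkeeping --- it runs, however, exactly in parallel with the corresponding filtered analysis in the proof of \cref{prop:cm_prop}.
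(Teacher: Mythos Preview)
Your overall strategy---reduce to associated gradeds via the degree filtration---is natural, but the final step has a genuine gap. The five lemma shows that a map is an isomorphism; it does \emph{not} show that two maps agree, or that a square commutes. Concretely: suppose you know the square at filtration level $F^{p-1}$ commutes and that its associated graded at $\Gr^p$ commutes. Write $\alpha^p,\beta^p\colon H_\bullet(CM(Z_1^{\le p}))\to \pi_\bullet(|Z_2^{\le p}|\wedge Hk)$ for the two compositions around the level-$p$ square. Their difference $\delta=\alpha^p-\beta^p$ vanishes after composing with $j'\colon \pi_\bullet(|Z_2^{\le p}|)\to\pi_\bullet(|Z_2(p)[p]|)$ and vanishes on the image of $i\colon H_\bullet(CM(Z_1^{\le p-1}))\to H_\bullet(CM(Z_1^{\le p}))$, so $\delta$ factors through a map $H_\bullet(\Gr^p CM(Z_1))\to\pi_\bullet(|Z_2^{\le p-1}|)$ landing in $\ker j'$. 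There is no reason for such ``off-diagonal'' contributions to vanish, and the long exact sequences alone cannot force them to. (This is the same phenomenon as the non-uniqueness of fills of a morphism of triangles in a triangulated category.) So the sentence ``an induction on $p$ using the five lemma \dots\ upgrades agreement on each $\Gr^p$ to agreement on each $F^p$'' does not go through, and the obstacle you flag in your last paragraph is not the real one.

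The paper avoids this entirely by a different route: rather than comparing the two compositions through the filtration, it constructs an explicit commutative ladder of chain complexes connecting the zig-zag that realizes \eqref{eq:cm_prop} for $Z_1$ to the corresponding zig-zag for $Z_2$. The rungs pass through the intermediate bar constructions $B(Z_1,\sJ,\sKJ)$, $B(Z_1,\sJ,B(\sK,\sJ,\sJ))\simeq B(B(Z_1,\sJ,\sK),\sJ,\sJ)$, with the last step using the natural transformation $B(Z_1,\sJ,\sK)\Rightarrow Z_2$ induced by $W$; on the realization side one uses $\sKV_q$ and $B(\sK,\sJ,\sV_q)$ analogously. Each small square in this ladder commutes on the nose (or up to an explicit identification of iterated bar constructions), so the outer rectangle commutes without any filtration bookkeeping. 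If you want to salvage your approach, you would need to lift the agreement on $\Gr^p$ to the level of filtered chain maps or filtered spectra---i.e., show the square commutes in a filtered sense before passing to homology---and doing that carefully amounts to building exactly the sort of explicit diagram the paper builds.
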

\begin{proof}
	We have the commutative diagram of chain complexes
	\[
		\begin{tikzcd}[sep=scriptsize]
			CM(B(Z_1,\sJ,\sJ)) \rar{\text{(i)}} & C_\bullet(|Z_1|_q) \\
			CM(B(Z_1,\sJ,\sKJ)) \rar{\text{(ii)}} \uar{\text{(a)}} \dar[swap]{\text{(b)}}& C_\bullet(B(Z_1,\sJ,\sKV_q)) \uar[swap]{(\text{a}')} \dar{(\text{b}')} \\
			CM(B(Z_1,\sJ,B(\sK,\sJ,\sJ))) \rar{\text{(iii)}} \dar[leftrightarrow, swap]{\text{(c)}} & C_\bullet(B(Z_1,\sJ,B(\sK,\sJ,\sV_q))) \dar[leftrightarrow]{(\text{c}')}\\
			CM(B(B(Z_1,\sJ,\sK),\sJ,\sJ)) \rar{\text{(iv)}} \dar[swap]{\text{(d)}}& C_\bullet(B(B(Z_1,\sJ,\sK),\sJ,\sV_q)) \dar{(\text{d}')}\\
			CM(B(Z_2,\sJ,\sJ)) \rar{\text{(v)}} & C_\bullet(|Z_2|_q)
		\end{tikzcd}
	\]
	where the arrows are defined as follows:
	\begin{itemize}
		\item The horizontal arrows (i), (iv), and (v) are induced via the maps in \eqref{eq:simplicial_maps_W} with $Z = Z_1$, $Z = B(Z_1,\sJ,\sK)$ and $Z = Z_2$, respectively.
		\item The horizontal arrow (ii) is obtained, similarly as in \eqref{eq:simplicial_maps_W}, as the geometric realization of the composition:
	\begin{align*}
            &\sK(p;q_0) \wedge \sJ(q_0,q_1) \wedge \dots \wedge \sJ(q_{n-1},q_n) \wedge \sKJ(q_n,r) \wedge \sK(r;q) \\
            &\qquad \overset{\eqref{eq:morse_eq6}}{\longrightarrow} \sK(p;q_0) \wedge \sJ(q_0,q_1) \wedge \dots \wedge \sKJ(q_n,r) \wedge \sV_q(r) \\
            &\qquad \overset{\eqref{eq:skjtoskv}}{\longrightarrow}\sK(p;q_0) \wedge \sJ(q_0,q_1) \wedge \dots \wedge \sKV_q(q_n).
        \end{align*}
		\item The horizontal arrow (iii) is obtained as follows. Consider the following composition
        \begin{align*}
            &\sK(p;q_0) \wedge \sJ(q_0,q_1) \wedge \dots \wedge \sJ(q_{n-1},q_n) \wedge \sJ(q_n,r) \wedge \sK(r;q) \\
            &\qquad \longrightarrow \sK(p;q_0) \wedge \sJ(q_0,q_1) \wedge \dots \wedge \sJ(q_{n-1},q_n) \wedge \sK(q_n;q) \\
            &\qquad \longrightarrow \sK(p;q_0) \wedge \sJ(q_0,q_1) \wedge \dots \wedge \sJ(q_{n-1},q_n) \wedge \sV_q(q_n),
        \end{align*}
        where the second map is induced by the natural transformation \eqref{eq:morse_eq6}. These maps induce maps 
        \begin{equation}\label{eq:geom_realiz_intermed}
            B(\sK(p;-),\sJ,\sJ(-,r)) \wedge \sK(r;q) \longrightarrow B(\sK(p;-),\sJ,\sV_q).
        \end{equation}
        Then (ii) is induced via the simplicial maps
        \begin{align*}
    		&Z_1(q_0) \wedge \sJ(q_0,q_1) \wedge \dots \wedge \sJ(q_{n-1},q_n) \wedge B(\sK(q_n;-),\sJ,\sJ(-,p)) \wedge \sK(p;q) \\
            &\qquad \overset{\id^{\wedge(n+1)} \wedge \text{\eqref{eq:geom_realiz_intermed}}}{\longrightarrow} Z_1(q_0) \wedge \sJ(q_0,q_1) \wedge \dots \wedge \sJ(q_{n-1},q_n) \wedge B(\sK(q_n;-),\sJ,\sV_q).
    	\end{align*}
\item The vertical arrows (a), ($\text{a}'$) and (b), ($\text{b}'$) are induced, respectively, by natural transformations of $\sJ$-modules 
	\begin{align*}
		B(\sK, \sJ,\sJ) \overset{\text{\eqref{eq:sktosj}}}{\Longleftarrow} B(\sK,\sJ,\sK) &\Longleftarrow \sKJ \Longrightarrow \sJ, \quad \text{and}\\
		B(\sK, \sJ,\sV_q) &\overset{\text{\eqref{eq:kv_comparison5}}}{\Longleftarrow} \sKV_q \Longrightarrow \sV_q,
	\end{align*}
        which are pointwise weak equivalences.
		\item The vertical arrows (c) and ($\text{c}'$) are defined using the equivalences $B(B(Z,\sJ,\sK),\sJ,\sJ) \simeq B(Z,\sJ,B(\sK,\sJ,\sJ))$ and $B(B(Z,\sJ,\sK),\sJ,\sV_q) \simeq B(Z,\sJ,B(\sK,\sJ,\sV_q))$.
		\item The vertical arrows (d) and ($\text{d}'$) are defined using the natural transformation $B(Z_1,\sJ,\sK) \Rightarrow Z_2$ induced by $W$.
	\end{itemize}
    It is clear from the definitions of the various maps that the top and bottom squares are commutative. For the middle square, note that $B(Z_1,\sJ,B(\sK,\sJ,\sJ)) \simeq B(B(Z_1,\sJ,\sK),\sJ,\sJ)$ is the geometric realization of the bisimplicial $A$-module $B_\bullet(Z_1,\sJ,B_\bullet(\sK,\sJ,\sJ))$, which is pointwise equivalent to $B_\bullet(B_\bullet(Z_1,\sJ,\sK),\sJ,\sJ)$. The maps $\id^{\wedge (n+1)} \wedge \text{\eqref{eq:geom_realiz_intermed}}$ and \eqref{eq:simplicial_maps_W} are compatible with this equivalence, showing the commutativity of the middle square.
\end{proof}
\begin{cor}\label{cor:jmod_fun}
    Let $(\sM_1,\fo_1,\sE_1)$ and $(\sM_2,\fo_2,\sE_2)$ be two $R$-oriented flow categories with local systems. Let $(\sN,\fm,\sE) \colon (\sM_1,\fo_1,\sE_1) \to (\sM_2,\fo_2,\sE_2)$ be an $R$-oriented flow bimodule with local system. The following diagram is commutative
    \[
    \begin{tikzcd}[row sep=scriptsize,column sep=1cm]
        H_\bullet(CM(Z_{\sM_1,\fo_1,\sE_1};k)) \rar{CM(\sN;k)} \dar{\cong}[swap]{\eqref{eq:lsys_cm_prop}} & H_\bullet(CM(Z_{\sM_2,\fo_2,\sE_2};k)) \dar{\cong}[swap]{\eqref{eq:lsys_cm_prop}} \\
        \pi_\bullet(|\sM_1,\fo_1,\sE_1|\wedge Hk) \rar{|\sN,\fm,\sE|} & \pi_\bullet(|\sM_2,\fo_2,\sE_2|\wedge Hk)
    \end{tikzcd}.
    \]
    \qed
\end{cor}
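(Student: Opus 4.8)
The plan is to deduce the corollary directly from \cref{prop:jmod_fun} by unwinding the definitions of all the objects and maps in the square. First I would note that, by definition of the CJS realization of an $R$-oriented flow category with a local system, $|\sM_i,\fo_i,\sE_i| = |Z_{\sM_i,\fo_i,\sE_i}|$, and by definition of the Morse complex with local systems, $CM(\sM_i,\fo_i,\sE_i;k) = CM(Z_{\sM_i,\fo_i,\sE_i};k)$, for $i\in\{1,2\}$. In the same way, the horizontal maps are $CM(\sN;k) = CM(W_{\sN,\fm,\sE})$ and $|\sN,\fm,\sE| = |W_{\sN,\fm,\sE}|$, where $W_{\sN,\fm,\sE}\colon \sK \Rightarrow F_A(Z_{\sM_1,\fo_1,\sE_1}, Z_{\sM_2,\fo_2,\sE_2})$ is the $\sJ$-module morphism assembled from the $R$-orientation $\fm$ (via the multimodule extension of $R$-module systems) together with the local-system natural transformation $\sE$, exactly as in the construction of $W_{\sN,\fm}$ in the $p=1$ case.

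Second, I would identify the vertical isomorphisms. Since the grading functions $\mu_1,\mu_2$ are bounded below (which is the hypothesis under which the vertical maps of \cref{prop:lsys_cm_prop} are defined), the $A$-linear $\sJ$-modules $Z_{\sM_i,\fo_i,\sE_i}$ are bounded below, so \cref{prop:cm_prop} applies; the isomorphism \eqref{eq:lsys_cm_prop} is then literally the isomorphism \eqref{eq:cm_prop} for $Z = Z_{\sM_i,\fo_i,\sE_i}$. With these identifications, the square in the statement is precisely the square of \cref{prop:jmod_fun} with $Z_1 = Z_{\sM_1,\fo_1,\sE_1}$, $Z_2 = Z_{\sM_2,\fo_2,\sE_2}$, and $W = W_{\sN,\fm,\sE}$, and hence commutes.

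The only point requiring genuine verification — and thus the closest thing to an obstacle — is confirming that $W_{\sN,\fm,\sE}$ is an $A$-linear $\sJ$-module morphism of exactly the type demanded by \cref{prop:jmod_fun}: one must check that the maps $\sN^{-I}(\vec a;a_0) \Rightarrow F_A(\,\cdots\,)$ built from $\fm$ and $\sE$ are strictly (not merely up to homotopy) compatible with the $\sJ$-actions on $\sK$. This follows from the compatibility of the coherent system of collars on $\sN$ with those on the $\sM_i$, i.e.\ from the fact that the maps $\pi_{\vec a;b}$ intertwine the relevant coactions, exactly as was already used to produce $W_{\sN,\fm}$ in the multimodule case; once this naturality is recorded, there is nothing further to prove.
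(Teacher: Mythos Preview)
Your proposal is correct and matches the paper's approach: the corollary is stated with a \qed and no proof, precisely because it is the specialization of \cref{prop:jmod_fun} to $Z_i = Z_{\sM_i,\fo_i,\sE_i}$ and $W = W_{\sN,\fm,\sE}$, exactly as you describe. Your extra remark about verifying that $W_{\sN,\fm,\sE}$ is a genuine $\sJ$-module morphism is a fair caution, but as you note this is already built into the construction via the compatibility of the collar maps and the bimodule extension of $R$-module systems.
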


\subsection{Morse--Smale flow categories}\label{subsec:morsesmale}
    In this section we define an $R$-oriented flow category associated to a Morse--Smale function on a compact Riemannian manifold with boundary.
    \begin{notn}
        \begin{itemize}
            \item $M$ is a compact smooth manifold with boundary.
            \item $f\colon M \to \IR$ is a smooth function.
            \item $g$ is a Riemannian metric on $M$.
            \item $W^s(-)$ and $W^u(-)$ denote the stable and unstable manifolds of a point under the flow of the negative gradient vector field $-\nabla^gf$, respectively.
        \end{itemize}
    \end{notn}
    \begin{asmpt}\label{asmpt:msm}
        \begin{itemize}
            \item $f$ is a proper Morse function on $M$.
            \item $(f,g)$ is a Morse--Smale pair. Namely, for any $x,y \in \crit(f)$, $W^u(x)$ and $W^s(y)$ intersect each other transversely.
            \item The negative gradient vector field $-\nabla^gf$ points inwards along the boundary $\partial M$. 
        \end{itemize}
    \end{asmpt}
    For a Morse--Smale pair $(f,g)$, the space of unparametrized gradient flow lines from $x$ to $y$
    \[ (W^u(x) \cap W^s(y))/\IR \]
    is a smooth manifold provided that $x\neq y$. It admits a compactification by a smooth manifold with corners consisting of broken gradient trajectories \cite{wehrheim2012smooth}. Denote this compactification by $\sM(x;y)$. Concatenation of broken flow lines induces smooth maps
    \begin{equation}\label{eq:morsecomp} 
        \sM(x;y) \times \sM(y;z) \longrightarrow \sM(x;z). 
    \end{equation}
    \begin{lem}\label{lem:msm}
        Let $(f,g)$ be as in \cref{asmpt:msm}. There is an $R$-oriented flow category $(\sM(f,g),\fo_{(f,g)})$ defined by the following:
        \begin{itemize}
            \item $\Ob(\sM(f,g)) = \crit(f)$.
            \item $\mu(x) = \ind_f(x)$ for $x \in \Ob(\sM(f,g))$, where $\ind_f$ denotes the Morse index.
            \item The morphisms from $x$ to $y$ is given by $\sM(x;y)$, with compositions given by the maps \eqref{eq:morsecomp}.
            \item The $R$-orientation $\fo_{(f,g)}$ is given by 
            \[ x \longmapsto (T_x \overline W^u(x))_R,\]
            and the compatible isomorphisms
            \[
            (T_x\overline W^u(x))_R \longrightarrow I_R(x,y) \otimes_R (T_y \overline W^u(y))_R
            \]
            that are induced by the exact sequences
            \[
            \begin{tikzcd}[sep=scriptsize]
                0 \rar & T_p (W^u(x) \cap W^s(y)) \rar & T_pW^u(x) \rar & N_pW^s(y) \rar & 0
            \end{tikzcd}
            \]
            and
            \[
            \begin{tikzcd}[sep=scriptsize]
                0 \rar & \IR \rar & T_p(W^u(x) \cap W^s(y)) \rar & T_p\sM(x;y) \rar & 0
            \end{tikzcd}.
            \]
        \end{itemize}
        \qed
    \end{lem}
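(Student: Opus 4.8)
The plan is to split the verification into two parts: first, that the data $(\sM(f,g),\mu)$ with $\mu=\ind_f$ and morphism spaces $\sM(x;y)$ forms a flow category in the sense of \cref{dfn:flow_cat}; second, that the assignment $\fo_{(f,g)}$ is an $R$-orientation in the sense of \cref{defn:rorcoh}. Throughout I would also fix, once and for all, a metric on $\sM(f,g)$ (in the sense of \cref{dfn:metric_flow_cat}) and a coherent system of collars compatible with the concatenation maps \eqref{eq:morsecomp}; these exist by standard arguments and supply the identifications $\kappa_{xyz}$ and $\rho_{xyz}$ of \eqref{eq:tancomp}--\eqref{eq:indexiso} that enter \cref{defn:rorcoh}.

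For the flow category structure, under \cref{asmpt:msm} the intersection $W^u(x)\cap W^s(y)$ is a smooth submanifold of $M$ of dimension $\ind_f(x)-\ind_f(y)$ carrying a free $\IR$-action, so its quotient has dimension $\ind_f(x)-\ind_f(y)-1$, and the compactification $\sM(x;y)$ by broken trajectories is a smooth $\langle\ind_f(x)-\ind_f(y)-1\rangle$-manifold with faces $\partial_i\sM(x;z)=\bigsqcup_{\ind_f(x)-\ind_f(y)=i}\sM(x;y)\times\sM(y;z)$ and with concatenation maps \eqref{eq:morsecomp} that are diffeomorphisms onto codimension-one faces, by \cite{wehrheim2012smooth} and the references cited before \cref{lem:msm}. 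Compactness of $\bigsqcup_{y\in\crit(f)}\sM(x;y)$ follows because $f$ is proper and $-\nabla^g f$ points strictly inward along $\partial M$, so broken trajectories from $x$ can neither escape to infinity nor reach $\partial M$; this is condition (iv). I would cite \cite{wehrheim2012smooth,cohen1995floer} for these classical facts rather than reproving them.

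For the $R$-orientation, I would first note that $\overline W^u(x)$ is a manifold with corners of dimension $\ind_f(x)$ with $x$ in its interior, so $T_x\overline W^u(x)\cong\IR^{\ind_f(x)}$ and $\fo(x)\coloneqq(T_x\overline W^u(x))_R$ is a rank one free $R$-module. Over $W^u(x)\cap W^s(y)$ the linearized gradient flow gives canonical identifications of $T_pW^u(x)$ with $T_x\overline W^u(x)$ and of the normal bundle $N_pW^s(y)$ with $T_y\overline W^u(y)$, so applying the functor $(-)_R$ (which converts direct sums of vector bundles into $\otimes_R$ of the associated $R$-line bundles, \cref{rem:assoc_vector_bundles}) to the two exact sequences in the statement produces isomorphisms of $R$-line bundles over $W^u(x)\cap W^s(y)$
\[ \fo(x)\;\cong\;(T(W^u(x)\cap W^s(y)))_R\otimes_R\fo(y)\;\cong\;(\underline\IR\oplus T\sM(x;y))_R\otimes_R\fo(y)\;=\;I_R(x,y)\otimes_R\fo(y), \]
which is the desired $\fo(x,y)$. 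A point requiring care is that these bundles and isomorphisms must be shown to extend smoothly over the corners of $\sM(x;y)$; this is the Morse-theoretic instance of the statement that determinant lines of linearized operators extend over the broken-trajectory compactification, and it is where the pre-gluing maps enter.

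Finally, I would verify the coherence square \eqref{eq:rorcoh}. Restricting everything to a broken trajectory in $\sM(x;y)\times\sM(y;z)\hookrightarrow\sM(x;z)$, the three short exact sequences for the pairs $(x,z)$, $(x,y)$, $(y,z)$ assemble into a commutative diagram of exact sequences whose associated map of $R$-line bundles is, by construction of $\rho_{xyz}$ in \eqref{eq:indexiso}, exactly $\rho^{xyz}_R\otimes_R\id$; a diagram chase then yields the commutativity of \eqref{eq:rorcoh}. I expect the main obstacle to be precisely this last step together with the smoothness-over-the-corners issue above: one must match the abstract index-bundle isomorphism $\rho_{xyz}$, built from the collar and metric, with the geometric splitting of $T\sM(x;z)$ along the boundary stratum produced by pre-gluing, and then keep track of the compatibility (including signs) of the nested exact sequences under this identification. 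This bookkeeping is the Morse shadow of the orientation conventions of Floer theory, so I would carry it out only in outline, referring to \cite{cohen1995floer,large2021spectral,abouzaid2021arnold} for the analogous arguments.
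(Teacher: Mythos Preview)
Your proposal is correct and follows exactly the route the paper signals through the exact sequences written into the statement itself; the paper gives no proof at all (the lemma ends with a bare \qed), treating the construction as standard. Your elaboration---citing \cite{wehrheim2012smooth} for the $\langle k\rangle$-manifold structure, using contractibility of $\overline W^u(x)$ to transport fibers, and flagging the coherence square \eqref{eq:rorcoh} and extension over corners as the points requiring care---is precisely the argument the authors leave implicit.
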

    \begin{defn}[Morse--Smale flow category]\label{defn:ms_cat} Let $(f,g)$ be a pair satisfying \cref{asmpt:msm}. The \emph{Morse--Smale flow category} of $(f,g)$ is defined as the flow category $\sM(f,g)$ in \cref{lem:msm}. We refer to the $R$-orientation $\fo_{(f,g)}$ defined there as the \emph{canonical $R$-orientation} on $\sM(f,g)$.
    \end{defn}
    \begin{prop}\label{prop:morse_homology}
        $|\sM(f,g), \fo_{(f,g)}|\simeq \varSigma^\infty_+ M \wedge R$. In particular, for $k=\pi_0 R$, $H_\bullet(M;k) \simeq H_\bullet (CM(\sM(f,g);k)).$
    \end{prop}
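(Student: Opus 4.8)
The plan is to reduce to the case $R=\IS$, where the statement is the theorem of Cohen--Jones--Segal \cite{cohen1995floer,cohen2009floer} in the coherent-collar formulation of \cite{abouzaid2021arnold,large2021spectral}, and then to deduce the general equivalence and the homological statement formally. Throughout, recall that a Morse--Smale flow category admits a coherent system of collars coming from the gradient flow near broken trajectories, so \cref{asmp:flowcjs} holds and its CJS realization is defined.

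\emph{Reduction to $R=\IS$.} Write $\fo^\IS$ for the canonical $\IS$-orientation of $\sM(f,g)$ and $\fo=\fo_{(f,g)}$ for the canonical $R$-orientation. Since $\fo(x)=(T_x\overline W^u(x))_R$ is, together with its coherence isomorphisms, obtained from the corresponding $\IS$-data by $({-})\wedge R$ (\cref{rem:assoc_vector_bundles}(i)), the $R$-linear $\sJ$-module $Z_{\sM(f,g),\fo}$ is the levelwise smash of $Z_{\sM(f,g),\fo^\IS}$ with $R$, compatibly with the $\sJ$-action. As $\mu=\ind_f$ takes values in $\{0,\dots,\dim M\}$, these $\sJ$-modules are bounded below, so by \cref{rem:pro-spec} their geometric realizations are single $q$-realizations $|{-}|_q$ for $q\ll0$; such a $q$-realization is a two-sided bar construction and hence commutes with $({-})\wedge R$. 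Therefore $|\sM(f,g),\fo|\simeq |\sM(f,g),\fo^\IS|\wedge R$, and it remains to prove $|\sM(f,g),\fo^\IS|\simeq \varSigma^\infty_+M$.

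\emph{The case $R=\IS$.} Since $(f,g)$ is Morse--Smale, every unparametrized negative gradient trajectory strictly lowers the Morse index; hence for each $j$ the full subcategory $\sM_{\leq j}\subseteq\sM(f,g)$ spanned by the critical points of index $\leq j$ satisfies the condition in \cref{notn:full_flow_subcat}, and the quotient $\sM_{\leq j}/\sM_{\leq j-1}$ is a disjoint union of point flow categories concentrated in degree $j$, one for each index-$j$ critical point $p$, with orientation $(T_p\overline W^u(p))_\IS\simeq\varSigma^j\IS$. By \cref{lem:realiation_point} (and additivity of the CJS realization over the object set) the realization of this quotient is $\bigvee_{\ind(p)=j}\varSigma^j\IS$, so the cofiber sequences of \cref{lem:realization_cofiber} present $|\sM(f,g),\fo^\IS|$ as an iterated cofiber with exactly one $j$-cell for each index-$j$ critical point. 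This is precisely the filtered spectrum underlying the handle (hence CW) decomposition of $M$ determined by $(f,g)$: the coherent collars on $\sM(f,g)$ are built from the gradient flow, so the Pontryagin--Thom collapse maps \eqref{eq:flowtoj} defining the CJS realization are compatible with the attaching maps of that decomposition, and matching the two filtered spectra is the content of \cite[Theorem 6]{cohen2009floer} (see also \cite[Section 3]{large2021spectral}). This gives $|\sM(f,g),\fo^\IS|\simeq\varSigma^\infty_+M$, hence $|\sM(f,g),\fo|\simeq\varSigma^\infty_+M\wedge R$. The homological statement then follows by combining this with \cref{lem:flow_htpy_morse} and \cref{prop:homology_wedge_ring}:
\[
H_\bullet\bigl(CM(\sM(f,g),\fo)\bigr)\cong\pi_\bullet\bigl(|\sM(f,g),\fo|\wedge_R Hk\bigr)\cong\pi_\bullet\bigl(\varSigma^\infty_+M\wedge Hk\bigr)=H_\bullet(M;k).
\]

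The main obstacle is this last comparison of filtered spectra: one must check that the coherent-collar CJS construction reproduces not merely the $j$-cells but the attaching maps of the handle decomposition, i.e.\ that it identifies $|\sM(f,g),\fo^\IS|$ with $\varSigma^\infty_+M$ as filtered objects and not only on the associated graded. I would import this identification from \cite{cohen1995floer,cohen2009floer} and its collar reformulation in \cite{large2021spectral,abouzaid2021arnold}, so that the remaining work is the bookkeeping needed to align the orientation, grading, and $\sJ$-action conventions used here with those references.
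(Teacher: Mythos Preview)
Your proposal is correct and follows essentially the same approach as the paper: reduce to $R=\IS$ via the observation $\fo_{(f,g)}\simeq\fo_{(f,g),\IS}\wedge R$, then invoke the Cohen--Jones--Segal identification for the $\IS$-case, and finally deduce the homological statement from \cref{lem:flow_htpy_morse} using that the $\sJ$-module is bounded below. The paper is considerably terser, citing \cite[Proposition 5.1]{cohen1995floer} directly for the $\IS$-case without unpacking the filtration argument or the attaching-map comparison you sketch.
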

    \begin{proof} This follows from \cite[Proposition 5.1]{cohen1995floer} by observing that $\fo_{(f,g)} \simeq \fo_{(f,g), \IS} \wedge R$, where $\fo_{(f,g),\IS}$ is the canonical $\IS$-orientation on $\sM(f,g)$. The second assertion follows from \cref{lem:flow_htpy_morse}; since $M$ is compact and $f$ is proper, $f$ has a finite number of critical points and hence $Z_{\sM(f,g),\fo_{(f,g)}}$ is bounded below.
    \end{proof}
    
    \begin{asmpt}\label{asmpt:msmseq}
    Let $M_1 \subset M_2 \subset \cdots$ be an increasing sequence of smooth manifolds such that $M_n$ is a compact submanifold-with-boundary of $M_{n+1}$ for each $n \in \IZ_{\geq 1}$. Let $g=\{g_n\}_{n =1}^\infty$ be a sequence of Riemannian metrics on the manifolds $M_n$ and
    \[ f \colon \bigcup_{n =1}^\infty M_n \longrightarrow \IR\]
    be a function such that for every $n \in \IZ_{\geq 1}$
    \begin{enumerate}
        \item $(f|_{M_n},g_n)$ satisfies \cref{asmpt:msm}, 
        \item for $x \in \crit_{f_n}$, $\ind_{f_n} (x) = \ind_{f_{n+1}} (x)$, where $\ind$ denotes the Morse index, and
        \item $g_{n+1}|_{M_n} = g_n$.
    \end{enumerate}
    \end{asmpt}
    Note that we obtain a filtration of flow categories:
    \begin{equation}\label{eq:msincl}
        \sM(f_1,g_1) \subset \sM(f_2,g_2) \subset \cdots .
    \end{equation}
    Moreover the canonical $R$-orientations on these categories are compatible in the sense that $\fo_{(f_{n+1},g_{n+1})}|_{\sM(f_n,g_n)} = \fo_{(f_n,g_n)}$ for each $n \in \IZ_{\geq 1}$.
    \begin{lem}\label{lem:msmsep}
    Let $(f,g)$ be a pair of a Morse function and a Riemannian metric satisfying \cref{asmpt:msmseq}. Then we have the following:
    \begin{enumerate}
        \item There is a flow category $\sM(f,g)$ with 
              \begin{itemize}
                  \item set of objects given by  $\bigcup_{n=1}^\infty \Ob(\sM(f_n,g_n))$,
                  \item $\mu(x) = \ind_{f_n}(x)$ for $x\in \Ob(\sM(f_n,g_n))$, and
                  \item $\sM(f,g) (x,y)$ given by $\sM(f_n,g_n) (x,y)$ for $n \gg 1$.
              \end{itemize}
            We refer to it as the \emph{Morse--Smale flow category} of the pair $(f,g)$.
           \item $\fo_{(f,g)}(x) \coloneqq \fo_{(f_n,g_n)}(x)$, for $n \gg 1$, defines an $R$-orientation on $\sM(f,g)$. We refer to it the \emph{canonical orientation} on $\sM(f,g)$.
           \item There exist natural inclusions $\iota_n \colon \sM(f_n,g_n) \hookrightarrow \sM(f,g)$ of full subcategories, which are compatible with the inclusions \eqref{eq:msincl} and which preserve the canonical orientations.
           \item $|\sM(f,g), \fo_{(f,g)}| \simeq \varSigma^\infty_+ M \wedge R$.
    \end{enumerate}
    \qed
    \end{lem}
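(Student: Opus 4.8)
The plan is to deduce all four parts from a single \emph{stabilization} principle: for each pair of objects, the morphism space of $\sM(f_n,g_n)$, together with its $\ang{\cdot}$-manifold structure, composition maps, and orientation data, becomes independent of $n$ once $n$ is large. The key geometric input is forward-invariance of the $M_n$: since $(f|_{M_n},g_n)$ satisfies \cref{asmpt:msm}, the vector field $-\nabla^{g_n}f$ points inwards along $\partial M_n$, and because $g_{n'}|_{M_n}=g_n$ for $n'\geq n$, the same holds for $-\nabla^{g_{n'}}f$; hence a broken negative gradient trajectory starting at $x\in\crit(f_n)$ stays inside $M_n$, and since $\partial M_n$ carries no critical points it can only limit to points of $\crit(f_n)$. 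From this I would extract two consequences: (a) for $n'\geq n$ every broken trajectory counted by $\sM(f_{n'},g_{n'})(x,y)$ lies in $M_n$, where all the metrics agree, so $\sM(f_{n'},g_{n'})(x,y)$ is independent of $n'$; and (b) $\bigsqcup_y\sM(f_{n'},g_{n'})(x,y)=\bigsqcup_{y\in\crit(f_n)}\sM(f_{n'},g_{n'})(x,y)$ is a finite union and hence compact. Declaring $\sM(f,g)(x,y)$ to be the stable value, with the inherited compositions and faces, then gives a flow category in the sense of \cref{dfn:flow_cat}, with $\mu(x)=\ind_{f_{n'}}(x)$ well defined by condition (ii) of \cref{asmpt:msmseq}; this proves (i).

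For (ii), I would observe that although the closed unstable manifold $\overline W^u(x)$ grows with $n'$, its tangent space at $x$ depends only on the Hessian of $f$ and on the metric near $x$, both of which stabilize, and that the tangent spaces entering the exact sequences of \cref{lem:msm} are taken at points of broken trajectories, all of which lie in some $M_n$. Hence $\fo_{(f,g)}(x)\coloneqq\fo_{(f_{n'},g_{n'})}(x)$ together with its compatibility isomorphisms is well defined, giving the canonical $R$-orientation. Part (iii) is then immediate: forward-invariance says precisely that each $\sM(f_n,g_n)$ is a full subcategory of $\sM(f,g)$ satisfying the hypothesis of \cref{notn:full_flow_subcat}; the $\iota_n$ are compatible with \eqref{eq:msincl} by construction, and they preserve the canonical orientations because these are the common stable values.

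For (iv), set $M\coloneqq\bigcup_n M_n$ with the colimit topology, so that $\varSigma^\infty_+M\wedge R\simeq\hocolim_n(\varSigma^\infty_+M_n\wedge R)$. Each $f_n$ is a proper Morse function on a compact manifold, so $\crit(f_n)$ is finite and $Z_{\sM(f_n,g_n),\fo}$ is bounded below; and since $\mu\geq0$, $Z_{\sM(f,g),\fo}$ is bounded below as well. Because each $\mu^{-1}(m)$ for $\sM(f,g)$ is the increasing union of the corresponding sets for the $\sM(f_n,g_n)$, and because all the $\sJ$-module structure maps stabilize, $Z_{\sM(f,g),\fo}\simeq\colim_n Z_{\sM(f_n,g_n),\fo}$ as $R$-linear $\sJ$-modules; since the two-sided bar construction commutes with filtered colimits, $|Z_{\sM(f,g),\fo}|_q\simeq\hocolim_n|Z_{\sM(f_n,g_n),\fo}|_q$ for every $q$. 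By \cref{rem:pro-spec} both sides compute the corresponding geometric realizations once $q\leq 0$, so $|\sM(f,g),\fo|\simeq\hocolim_n|\sM(f_n,g_n),\fo|$, and \cref{prop:morse_homology} identifies the right-hand side with $\hocolim_n(\varSigma^\infty_+M_n\wedge R)\simeq\varSigma^\infty_+M\wedge R$.

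The main obstacle is the last identification: to run the colimit argument one needs the equivalences $|\sM(f_n,g_n),\fo|\simeq\varSigma^\infty_+M_n\wedge R$ of \cref{prop:morse_homology} to be compatible with the inclusions $\iota_n$, i.e.\@ to assemble into a map of $\IN$-indexed diagrams rather than an unstructured collection of equivalences. I would handle this by choosing the auxiliary data — the coherent systems of collars and the Thom-collapse maps underlying the Pontryagin--Thom construction of \cite[Proposition 5.1]{cohen1995floer} — compatibly for all $n$ simultaneously, which is possible precisely because the moduli spaces $\sM(f_n,g_n)(x,y)$ stabilize; naturality of that construction under the closed inclusions $M_n\hookrightarrow M_{n+1}$, along which the gradient points inward, then supplies the commuting squares. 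Everything else is a formal manipulation of wedges, filtered colimits, bar constructions, and the stabilizing pro-object $\{|Z|_q\}_q$.
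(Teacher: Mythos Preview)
The paper states this lemma without proof (it carries only a \qed), so there is no argument in the paper to compare against; your proposal fills in what the authors evidently regard as routine, and it does so correctly. Your stabilization principle via forward-invariance of the $M_n$ under $-\nabla^{g_{n'}}f$ is exactly the right mechanism for (i)--(iii), and your colimit argument for (iv) --- bounding $\mu$ below by $0$, using \cref{rem:pro-spec} to replace the $\holim_q$ by a single $|Z|_q$, commuting the bar construction with the filtered colimit, and then invoking \cref{prop:morse_homology} termwise --- is sound. You are also right to flag that the termwise equivalences of \cref{prop:morse_homology} must be made compatible with the inclusions $\iota_n$, and your suggested fix (choose collars and Pontryagin--Thom data inductively, compatibly with the stabilized moduli spaces) is the standard way to do this.
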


    \begin{notn}\label{notn:morse_flowline_compact}
    Let $(M,f,g)$ be a tuple of a compact Riemannian manifold with boundary and a Morse--Smale function satisfying assumption \cref{asmpt:msm}.
    
    For a critical point $x \in \crit(f)$, let $\Wu{x}{}$ ($\overline{W}^s(x)$) denote the the manifolds-with-corners compactification of $W^u(x)$ ($W^s(x)$) obtained by adding broken flow lines of $-\nabla^g f$ starting (ending) at $x$, see \cite[Definition 2.7]{latour1994existence} and also \cite[Section 2.4.6; Theorem 2.3; Section 4.9.b]{barraud2007lagrangian,wehrheim2012smooth,audin2014morse}.
    \end{notn}
    \begin{rem}
        We warn the reader that $\Wu{x}{}$ and $\overline{W}^s(x)$ are in general different than the closures of $W^u(x)$ and $W^s(x)$ inside $M$. In particular, these are manifolds with corners with the interiors given by $W^u(x)$ and $W^s(x)$, and are thus always contractible, see \cite[Section 4.9.c]{audin2014morse}.
    \end{rem}

    Let $X$, $Y$, and $Z$ be smooth manifolds. Choose three pairs $(f_X,g_X)$, $(f_Y,g_Y)$ and $(f_Z,g_Z)$ satisfying \cref{asmpt:msm}. Let $(\sM(f_X,g_X),\fo_{(f_X,g_X)})$, $(\sM(f_Y,g_Y),\fo_{(f_Y,g_Y)})$, and $(\sM(f_Z,g_Z),\fo_{(f_Z,g_Z)})$ denote their $R$-oriented Morse--Smale flow categories.
    \begin{defn}\label{dfn:morse_pontryagin}
        Let $\varphi \colon X \times Y \to Z$ be a smooth map such that $\varphi|_{\overline W^u_{-\nabla f_X}(x) \times \overline W^u_{-\nabla f_Y}(y)} \pitchfork \overline W^s_{-\nabla f_Z}(z)$. We define an $R$-oriented flow multimodule
        \[ \sN_\varphi \colon \sM(f_X,g_X),\sM(f_Y,g_Y) \longrightarrow \sM(f_Z,g_Z) \]
        by
        \[ \sN_\varphi(x,y;z) \coloneqq \left( \overline W^u_{-\nabla f_X}(x) \times \overline W^u_{-\nabla f_Y}(y) \right) \times_\varphi \overline W^s_{-\nabla f_Z}(z). \]
        The $R$-orientation is defined via the following composition of canonical isomorphisms of $R$-line bundles over $\sN_\varphi(x,y;z)$
        \begin{align*}
            (T\sN_\varphi(x,y;z))_R \otimes_R \fo_{(f_Z,g_Z)}(z) &\simeq (T\overline W^u_{-\nabla f_X}(x))_R \otimes_R (T\overline W^u_{-\nabla f_Y}(y))_R \\
            &\quad \otimes_R (T\overline W^s_{-\nabla f_Z}(z))_R \otimes_R (-T_z Z)_R\otimes_R \fo_{(f_Z,g_Z)}(z) \\
            &\simeq (T\overline W^u_{-\nabla f_X}(x))_R \otimes_R (T\overline W^u_{-\nabla f_Y}(y))_R \\
            &\simeq \fo_{(f_X,g_X)}(x) \otimes_R \fo_{(f_Y,g_Y)}(y).
        \end{align*}
    \end{defn}

    \begin{lem}\label{lem:InducedProductMaps}
    The map
    \[ |\sN_\varphi| \colon |\sM(f_X,g_X),\fo_{(f_X,g_X)}| \wedge_R |\sM(f_Y,g_Y),\fo_{(f_Y,g_Y)}| \longrightarrow |\sM(f_Z,g_Z), \fo_{(f_Z,g_Z)}| \]
    is homotopic to the induced map
    \[\varSigma^\infty_+ \varphi \wedge R \colon (\varSigma^\infty_+ X \wedge R) \wedge_R (\varSigma^\infty_+ Y \wedge R) \longrightarrow \varSigma^\infty_+ Z \wedge R.\]
    \hfill \qed
    \end{lem}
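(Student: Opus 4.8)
The plan is to identify the CJS realization of the Morse--Smale multimodule $\sN_\varphi$ with the map induced by $\varphi$ on stable homotopy types, by reducing to the structural results already established. First I would recall from \cref{prop:morse_homology} (and its sequential analogue \cref{lem:msmsep}) that each CJS realization $|\sM(f_\bullet,g_\bullet),\fo_{(f_\bullet,g_\bullet)}|$ is equivalent to $\varSigma^\infty_+(-) \wedge R$, and moreover that the canonical $R$-orientation is obtained from the canonical $\IS$-orientation by smashing with $R$, i.e.\ $\fo_{(f,g)} \simeq \fo_{(f,g),\IS} \wedge R$. The same observation applies to the multimodule: the composition of canonical isomorphisms of $R$-line bundles defining the $R$-orientation on $\sN_\varphi$ in \cref{dfn:morse_pontryagin} is visibly the base change along $\IS \to R$ of the corresponding composition of isomorphisms of $\IS$-line bundles (i.e.\ of ordinary orientation lines). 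Hence $(\sN_\varphi,\fm)$ is the base change of an $\IS$-oriented flow multimodule $(\sN_\varphi,\fm_\IS)$, and by the compatibility of the two-sided bar construction and geometric realization with smashing over $\IS$ (\cref{rem:trivial_local_sys}, and the fact that $|{-}| = \holim_q B({-},\sJ,\sV_q)$ commutes with $(-)\wedge R$ on bounded-below $\sJ$-modules), we get $|\sN_\varphi,\fm| \simeq |\sN_\varphi,\fm_\IS| \wedge R$. This reduces the statement to the case $R = \IS$.

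Next I would treat the case $R=\IS$ by computing the induced map directly on the level of the underlying spaces via the Pontryagin--Thom picture. The key point is that $\sN_\varphi(x,y;z) = (\Wu{x}{} \times \Wu{y}{}) \times_\varphi \Ws{z}{}$ is exactly the moduli space whose CJS realization, after assembling over all $x,y,z$, performs the following geometric operation: it collapses $X \wedge Y$ onto a neighborhood of $\bigsqcup_x \Wu{x}{} \times \bigsqcup_y \Wu{y}{}$, applies $\varphi$, and reads off which unstable cell of $f_Z$ (equivalently, which dual cell of $Z$) the image lands in. Since the $\Wu{x}{}$, $\Wu{y}{}$, $\Ws{z}{}$ are contractible manifolds with corners giving CW-type decompositions of $X$, $Y$, $Z$ respectively (cf.\ the remark after \cref{notn:morse_flowline_compact}), and since $\varphi$ was chosen so that $\varphi|_{\Wu{x}{}\times\Wu{y}{}} \pitchfork \Ws{z}{}$, this is precisely the cellular model of $\varSigma^\infty_+\varphi$. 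Concretely, I would observe that the $\sJ$-module multimorphism $W_{\sN_\varphi,\fm_\IS}$ is, under the filtration by Morse index used in \cref{subsec:morse_flow_chain}, a filtered refinement of the map $\varphi$ on Morse chains, and that on associated graded pieces it agrees with the Pontryagin--Thom collapse onto $Z$. The homotopy between $|\sN_\varphi|$ and $\varSigma^\infty_+\varphi$ is then built cell-by-cell, or more slickly by invoking the naturality of the CJS construction with respect to the smooth map $\varphi$: one sets up a flow bordism over $[0,1]$ interpolating between $\sN_\varphi$ and the "constant'' multimodule that literally smashes with $\varphi$, using a generic homotopy of $\varphi$ to a map in ``PT-collapse position'', and applies \cref{lma:bordism_gives_homotopy_of_maps}.

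The main obstacle is making the identification on associated graded pieces fully rigorous: one must match the combinatorial data of broken flow lines of $f_Z$ (encoded in the spaces $K(\vec m;m_0)$, cf.\ \cref{rem:K_and_R}) with the cellular structure on $Z$ dual to $\{\Ws{z}{}\}$, and check that the canonical orientations line up with a fixed choice of orientations on the dual cells of $Z$ — this is the point where the precise form of the orientation isomorphism in \cref{dfn:morse_pontryagin} is used, via the exact sequences $0 \to T(W^u(x)\cap W^s(y)) \to TW^u(x) \to NW^s(y)\to 0$ and the companion one trivializing $\IR$. I expect this bookkeeping to be essentially the argument of \cite[Proposition 5.1]{cohen1995floer} adapted to the multi-input setting and to manifolds with boundary, so I would state the reduction to $R=\IS$ carefully, indicate the bordism $\sB \colon \sN_\varphi \Rightarrow (\varphi\text{-collapse multimodule})$ obtained from a generic homotopy, and then refer to loc.\ cit.\ for the core computation, noting that all relevant $\sJ$-modules are bounded below because $M$ is compact (or a countable increasing union of compacta) so that \cref{prop:jmod_fun} and \cref{rem:pro-spec} apply.
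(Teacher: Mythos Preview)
The paper does not prove this lemma: it is stated with a bare \qed, presumably because the authors regard it as a routine extension of \cite[Proposition~5.1]{cohen1995floer} (which they already invoke for \cref{prop:morse_homology}) to the two-input setting. So there is no proof to compare against; your proposal is an attempt to supply what the paper omits.

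Your outline is broadly sound. The reduction to $R=\IS$ via base change of orientations is correct and matches how the paper argues in the proof of \cref{prop:morse_homology}. The identification in the $\IS$-case with the Pontryagin--Thom/cellular model of $\varphi$ is also the right idea, and deferring the core computation to \cite{cohen1995floer} is exactly what the authors implicitly do. One cleaner way to organize the $\IS$-case, which you might prefer to the somewhat vague ``constant multimodule'' bordism: observe that the product Morse function $f_X \oplus f_Y$ on $X\times Y$ has Morse--Smale flow category whose CJS realization is $|\sM(f_X,g_X)| \wedge |\sM(f_Y,g_Y)|$ (the $\sJ$-modules tensor in the evident way), and under this identification $\sN_\varphi$ becomes precisely the single-input bimodule of \cref{rem:induced_map} for the map $\varphi \colon X\times Y \to Z$. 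This reduces the multimodule statement to the bimodule one, which is the direct content of Cohen--Jones--Segal's functoriality.

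The weakest part of your write-up is the paragraph beginning ``one sets up a flow bordism over $[0,1]$ interpolating\ldots'': the phrase ``constant multimodule that literally smashes with $\varphi$'' is not a well-defined object in the framework of the paper, and the bordism you allude to is not specified. If you want a self-contained argument rather than a citation, it is better to work directly with the filtrations as in \cref{subsec:morse_flow_chain} and \cref{prop:cm_prop}, or to use the product-reduction above; either route avoids inventing an auxiliary multimodule whose existence you have not justified.
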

    \begin{rem}\label{rem:induced_map}
        Taking $Y = \varnothing$, there are analogs of \cref{dfn:morse_pontryagin} and \cref{lem:InducedProductMaps}. Namely, if $\varphi \colon X \to Z$ is a smooth map such that $\varphi|_{\overline W^u_{-\nabla f_X}(x)} \pitchfork \varphi|_{\overline W^s_{-\nabla f_Z}(z)}$, then
        \[
        |\sN_{\varphi}| \colon |\sM(f_X,g_X),\fo_{(f_X,g_X)}| \longrightarrow |\sM(f_Z,g_Z), \fo_{(f_Z,g_Z)}|
        \]
        is homotopic to the induced map $\varSigma^\infty_+ \varphi \wedge R \colon \varSigma^\infty_+ X \wedge R \to \varSigma^\infty_+ Z \wedge R$.
    \end{rem}

    \begin{defn}[Cohomological Morse--Smale flow category]\label{defn:coh_morse-smale_flow_cat}
        Let $(f,g)$ be as in \cref{asmpt:msm}. Define $\sM^{-\bullet}(f,g)$ to be the $R$-oriented flow category with
        \begin{itemize}
            \item $\Ob(\sM^{-\bullet}(f,g)) = \crit(f)$.
            \item $\mu(x) = -\ind_f(x)$ for $x\in \Ob(\sM^{-\bullet}(f,g))$, where $\ind_f$ denotes the Morse index.
            \item The morphisms from $x$ to $y$ are given by $\sM^{-\bullet}(f,g)(x;y) \coloneqq \sM(f,g)(y;x)$.
            \item The $R$-orientation $\fo^{-\bullet}_{(f,g)}$ is given by 
            \[
            x\longmapsto (-T_x\overline W^u_{-\nabla f}(x))_R \simeq (T_x \overline W^s_{-\nabla f}(x))_R \otimes_R (-T_xM)_R.
            \]
        \end{itemize}
    \end{defn}
    \begin{rem}\label{rmk:morse_cohomology_flow}
        Recall from \cref{prop:morse_homology} that the homology of the Morse complex of $\sM(f,g)$ is isomorphic to Morse homology. Similarly, the homology of the Morse complex of the $R$-oriented flow category $\sM^{-\bullet}(f,g)$ is isomorphic to Morse cohomology with reversed grading. This flow category plays an important role in \cref{sec:construction_OC} when comparing with the flow category for Lagrangians.
    \end{rem}
    \begin{defn}\label{dfn:unit_morse}
        Let $\mathcal M_\ast$ denote the $R$-oriented point flow category (see \cref{dfn:point_flow_cat}). The \emph{unit} in $|\sM^{-\bullet}(f,g),\fo^{-\bullet}_{(f,g)}|$ is the morphism 
        \[ \sN_f \colon R \longrightarrow |\sM^{-\bullet}(f,g),\fo^{-\bullet}_{(f,g)}|, \]
        that is defined as the CJS realization of the $R$-oriented flow bimodule $\sN_f \colon \sM_\ast \to \sM^{-\bullet}(f,g)$ defined by $(p,x) \mapsto \overline W^u_{-\nabla f}(x)$. The $R$-orientation on $\sN_f$ is defined via the identity isomorphism $(T_x\overline W^u_{-\nabla f}(x))_R \otimes_R (-T_x \overline W^u_{-\nabla f}(x))_R \simeq R$.
    \end{defn}
    \begin{rem}
        For $R = Hk$, where $k$ is a discrete ring, the unit $\sN_f$ defined in \cref{dfn:unit_morse} is a cohomological unit for the cup product on Morse cohomology, as expected.
    \end{rem}
    \begin{lem}\label{lma:coh_morse_flow_thom_space}
        Let $(f,g)$ be a pair satisfying \cref{asmpt:msmseq}. Assume that $M$ is closed.
        \begin{enumerate}
            \item $|\sM^{-\bullet}(f,g),\fo^{-\bullet}_{(f,g)}| \simeq M^{-TM} \wedge R$.
            \item Let $M$ be $R$-oriented. The CJS realization of $\sN_f$ in \cref{dfn:unit_morse} coincides with the unit map $R \to M^{-TM} \wedge R$.
        \end{enumerate}
    \end{lem}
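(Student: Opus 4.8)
The plan is to realize $(i)$ as the cohomological counterpart of \cref{prop:morse_homology}, and then to deduce $(ii)$ from \cref{rem:induced_map} via Spanier--Whitehead duality. First I would reduce both statements to $R=\IS$, exactly as in the proof of \cref{prop:morse_homology}: the $R$-orientation $\fo^{-\bullet}_{(f,g)}$ splits as $\fo^{-\bullet}_{(f,g),\IS}\wedge R$ for the canonical $\IS$-orientation $\fo^{-\bullet}_{(f,g),\IS}$, the CJS realization commutes with $-\wedge R$, and the unit morphism $\sN_f$ of \cref{dfn:unit_morse} is defined over $\IS$; so it suffices to prove $(i)$ and $(ii)$ with $R=\IS$. (We may assume $M$ is compact; the case of \cref{asmpt:msmseq} follows by the same stabilization argument as in \cref{lem:msmsep}.) The structural point is that $\sM^{-\bullet}(f,g)$ is, up to a grading shift, the Morse--Smale flow category of $-f$ with its canonical orientation twisted by $-TM$: the objects agree, $\crit(f)=\crit(-f)$; time reversal of gradient trajectories identifies $\sM^{-\bullet}(f,g)(x;y)=\sM(f,g)(y;x)$ with $\sM(-f,g)(x;y)$ compatibly with compositions; the two grading functions differ by the constant $n\coloneqq\dim M$, which does not affect the CJS realization by \cref{rem:CJS_shift} (and \cref{rem:bimod_deg}); and since $\overline W^u_{-\nabla(-f)}(x)=\overline W^s_{-\nabla f}(x)$ and $T_x\overline W^u_{-\nabla f}(x)\oplus T_x\overline W^s_{-\nabla f}(x)\cong T_xM$ (the Hessian eigenspace decomposition at $x$), we get $\fo^{-\bullet}_{(f,g),\IS}(x)=(-T_x\overline W^u_{-\nabla f}(x))_\IS\simeq (T_x\overline W^s_{-\nabla f}(x))_\IS\otimes_\IS(-T_xM)_\IS$, which is $\fo_{(-f,g),\IS}(x)$ twisted by the pullback of the virtual bundle $-TM$ (cf.\ the displayed isomorphism in \cref{defn:coh_morse-smale_flow_cat}).

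For $(i)$, I would invoke the twisted form of \cite[Proposition~5.1]{cohen1995floer}: the CJS realization of the Morse--Smale flow category of a Morse function $h$ on a closed manifold $M$, equipped with the canonical orientation twisted by a virtual vector bundle $\zeta\to M$, is the Thom spectrum $M^\zeta$; this is the same Pontryagin--Thom argument as in the untwisted case, since $\zeta$ is pulled back from $M$ along the stable-manifold evaluation. Applying this with $h=-f$ and $\zeta=-TM$, together with the identification of the previous paragraph, gives $|\sM^{-\bullet}(f,g),\fo^{-\bullet}_{(f,g),\IS}|\simeq M^{-TM}$, hence $|\sM^{-\bullet}(f,g),\fo^{-\bullet}_{(f,g)}|\simeq M^{-TM}\wedge R$ after smashing with $R$.

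For $(ii)$, I would use Atiyah duality: for $M$ closed, $F_R(\varSigma^\infty_+M\wedge R,R)\simeq M^{-TM}\wedge R$, and the unit map $R\to M^{-TM}\wedge R$ is by definition the $R$-linear dual of the projection $\varepsilon\colon\varSigma^\infty_+M\wedge R\to R$. The map $\varepsilon$ is itself a CJS realization: taking $X=M$ and $Z=\{\ast\}$ in \cref{rem:induced_map} (so that $\sM(f_Z,g_Z)=\sM_\ast$ and $\overline W^s_{-\nabla f_Z}(p)=\{\ast\}$), the bimodule $\sN_\varphi\colon\sM(f,g)\to\sM_\ast$ attached to the constant map $\varphi\colon M\to\{\ast\}$ is $\sN_\varphi(x,p)=\overline W^u_{-\nabla f}(x)$ with orientation the canonical isomorphism $\fo_{(f,g)}(x)\simeq (T_x\overline W^u_{-\nabla f}(x))_R\otimes_R\fo_{\sM_\ast}(p)$, and $|\sN_\varphi|=\varepsilon$. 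Comparing with \cref{dfn:unit_morse}, the bimodule $\sN_f\colon\sM_\ast\to\sM^{-\bullet}(f,g)$ has the same value spaces $\overline W^u_{-\nabla f}(x)$, and its orientation datum $\fo_{\sM_\ast}(p)\simeq (T_x\overline W^u_{-\nabla f}(x))_R\otimes_R(-T_x\overline W^u_{-\nabla f}(x))_R$ is the transpose of the one on $\sN_\varphi$. Passing to $R$-linear Spanier--Whitehead duals sends the $\sJ$-module $Z_{\sM(f,g),\fo_{(f,g)}}$ to $Z_{\sM^{-\bullet}(f,g),\fo^{-\bullet}_{(f,g)}}$ and the $\sJ$-module multimorphism $W_{\sN_\varphi}$ to $W_{\sN_f}$; granting this, $|\sN_f|=F_R(|\sN_\varphi|,R)=F_R(\varepsilon,R)$ is the unit map.

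The step I expect to be the main obstacle is making this last duality precise: the bar-construction model of the CJS realization does not manifestly commute with Spanier--Whitehead duality, so one must track Atiyah duality through the Thom spectra $\sM(a,b)^{-I(a,b)}$, the morphism spectra of $\sJ$, and the $\holim_q$. Because $\crit(f)$ is finite, all $\sJ$-modules occurring are bounded below and the $\holim_q$ stabilizes (\cref{rem:pro-spec}), so the duality reduces to a finite diagrammatic check; if the machinery of \cref{sec:construction_OC} already provides a duality operation on flow categories compatible with the CJS realization, this is immediate, and the same observation yields an alternative proof of $(i)$ directly from \cref{prop:morse_homology} and Atiyah duality, bypassing the twisted \cite{cohen1995floer} input.
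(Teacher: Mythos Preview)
Your argument for (i) is correct in outline, but the paper takes a more concrete geometric route that also makes (ii) immediate, whereas your approach to (ii) has exactly the gap you flag.

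For (i), rather than invoking a twisted version of \cite[Proposition~5.1]{cohen1995floer}, the paper realizes $\sM^{-\bullet}(f,g)$ (up to a shift) as the \emph{homological} Morse--Smale flow category of an explicit Morse function on a concrete manifold with boundary. Namely, embed $M\hookrightarrow\IR^N$ for large $N$, let $\pi\colon D\to M$ be the disk bundle of the normal bundle, and equip $D$ with the Morse function $-\pi^*f+G$ where $G(x,v)=-|v|^2$. The critical points are $\crit(f)$, the unstable manifolds are $\overline W^s_{-\nabla f}(x)\times\nu_x$, and the resulting homological flow category is a shift of $\sM^{-\bullet}(f,g)$ with its canonical orientation. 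Since the negative gradient points \emph{outward} along $\partial D$, the relevant analog of \cref{prop:morse_homology} yields the relative stable homotopy type $\varSigma^\infty(D/\partial D)\wedge R = \varSigma^\infty M^\nu\wedge R$, and $M^\nu\simeq\varSigma^N M^{-TM}$ by construction; the shift is absorbed via \cref{rem:CJS_shift}. This is morally the same identification you make (your $-f$ twisted by $-TM$ becomes the paper's $-\pi^*f-|v|^2$ on $D$), but the twist is implemented geometrically by the normal bundle rather than formally.

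For (ii), the paper stays in this same model: the unit map $R\to M^{-TM}\wedge R$ is by definition the desuspension of the Pontryagin--Thom collapse $S^N\to M^\nu$ associated to $M\hookrightarrow\IR^N\subset S^N$, and one identifies $\sN_f$ with the corresponding flow bimodule on $D$ via the outward-gradient analog of \cref{rem:induced_map}. This completely sidesteps the need to prove that the CJS bar construction commutes with Spanier--Whitehead duality. Your duality argument is plausible and the finiteness observation you make is the right one, but the verification you defer is genuine work that the paper's geometric model renders unnecessary; the paper's approach is both shorter and self-contained.
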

    \begin{proof}
        For item (i), consider the disk bundle of the normal bundle $\pi \colon D \to M$ of an embedding $M \hookrightarrow \IR^N$ for some large $N$. Equip it with a Morse function $-\pi^\ast f + G$, where $G(x,v) = -|v|^2$ in local coordinates $x \in M$ and $v\in D_x$. The homological Morse flow category of this Morse function coincides with a shift of $\sM^{-\bullet}(f,g)$. The result then follows from \cref{rem:CJS_shift} by an analog of \cref{prop:morse_homology} for Morse functions with the negative gradient pointing outwards along the boundary. For item (ii), note that the unit is defined by the desuspension of the Pontryagin--Thom map of the embedding $M \hookrightarrow \IR^N \subset S^N$. The assertion then follows by an analog of \cref{lem:InducedProductMaps} (in particular \cref{rem:induced_map}) for functions with the negative gradient pointing outwards along the boundary.
    \end{proof}
    \begin{lem}\label{lem:morse_pd}
        Let $(f,g)$ and $(f',g')$ be two pairs on $M$ and $N$, respectively, satisfying \cref{asmpt:msmseq}. Assume that $M$ is $R$-oriented and that there is a smooth map $\varphi \colon M \to N$ such that $\varphi|_{\overline W^s_{-\nabla f}(x)} \pitchfork \overline W^s_{-\nabla f'}(y)$ for every $x \in \crit f$ and $y \in \crit f'$. Then the assignment
        \[
        \sI \colon \sM^{-\bullet}(f,g) \longrightarrow \sM(f',g'), \quad \sI(x;y) \coloneqq \overline W^s_{-\nabla f}(x) \times_\varphi \overline W^s_{-\nabla f'}(y)
        \]
        defines an $R$-oriented degree $n$ flow bimodule, where $n = \dim M$.
    \end{lem}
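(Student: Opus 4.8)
The plan is to take $\sI(x;y)$ to be the stated fiber product, read off its manifold‑with‑corners structure, its two action maps and its $R$‑orientation from the corresponding data on the compactified stable manifolds $\Ws{x}{-\nabla f}$ and $\Ws{y}{-\nabla f'}$, and to invoke the $R$‑orientation of $M$ only at the last step, to make the orientation isomorphisms mutually compatible. First I would check that $\sI(x;y)$ is a compact smooth $\ang{d}$‑manifold of the expected dimension. Writing $\mu_1,\mu_2$ for the grading functions of $\sM^{-\bullet}(f,g)$ and $\sM(f',g')$ and $n'=\dim N$, one has $\dim\Ws{x}{-\nabla f}=n-\ind_f(x)$ and $\dim\Ws{y}{-\nabla f'}=n'-\ind_{f'}(y)$; reading the hypothesis $\varphi|_{\Ws{x}{-\nabla f}}\pitchfork\Ws{y}{-\nabla f'}$ in the stratified sense (so that the fiber product is smooth up to its corners), the fiber product is a manifold with corners of dimension $(n-\ind_f(x))+(n'-\ind_{f'}(y))-n'=n-\ind_f(x)-\ind_{f'}(y)=\mu_1(x)-\mu_2(y)+n$, as demanded of a degree‑$n$ flow bimodule. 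Compactness of $\bigsqcup_y\sI(x;y)$ follows from compactness of $\Ws{x}{-\nabla f}$ (properness of $f$, \cref{asmpt:msm}), the fact that $\varphi(\Ws{x}{-\nabla f})$ meets only finitely many of the exhausting pieces $M_n$ of \cref{asmpt:msmseq} and hence only finitely many critical points of $f'$ are relevant, and compactness of the fibers of the projection $\bigsqcup_y\sI(x;y)\to\Ws{x}{-\nabla f}$.

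Next I would produce the action maps and verify the $\ang{d}$‑face structure of \cref{dfn:flow_bimodule}. The compactified stable manifolds are themselves $\ang{\cdot}$‑manifolds whose faces are the products $\Ws{x'}{-\nabla f}\times\sM(f,g)(x';x)\hooklongrightarrow\partial\Ws{x}{-\nabla f}$ and, respectively, $\Ws{y'}{-\nabla f'}\times\sM(f',g')(y';y)\hooklongrightarrow\partial\Ws{y}{-\nabla f'}$, with concatenation of broken trajectories as the gluing maps (see \cite{latour1994existence,barraud2007lagrangian,wehrheim2012smooth,audin2014morse}). Since the natural map $\Ws{x}{-\nabla f}\to M$, restricted to the face indexed by $x'$, factors through the projection onto $\Ws{x'}{-\nabla f}$, forming the fiber product with $\Ws{y}{-\nabla f'}$ along $\varphi$ turns that face into $\sM(f,g)(x';x)\times\sI(x';y)=\sM^{-\bullet}(f,g)(x;x')\times\sI(x';y)$, and I take the resulting embedding as the left $\sM^{-\bullet}(f,g)$‑action; symmetrically the faces of $\Ws{y}{-\nabla f'}$ give the right $\sM(f',g')$‑action $\sI(x;y')\times\sM(f',g')(y';y)\hooklongrightarrow\sI(x;y)$. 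The compatibility and commutativity relations of \cref{dfn:flow_bimodule} then reduce to associativity of concatenation of broken flow lines and to the fact that fiber products commute with these products, and the face identification
\[ \partial_i\sI(x;y) \;=\; \bigsqcup_{\ind_f(x')-\ind_f(x)=i}\bigl(\sM^{-\bullet}(f,g)(x;x')\times\sI(x';y)\bigr) \;\sqcup\; \bigsqcup_{-\ind_f(x)-\ind_{f'}(y')+1+n=i}\bigl(\sI(x;y')\times\sM(f',g')(y';y)\bigr) \]
follows by matching these two families against \cref{dfn:flow_bimodule}, using that the fiber product of $\ang{k}$‑manifolds along a stratified‑transverse map is again a $\ang{k}$‑manifold with the expected product faces and compatible collars.

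For the $R$‑orientation, the fiber‑product exact sequence over $\sI(x;y)$,
\[ 0 \longrightarrow T\sI(x;y) \longrightarrow T\Ws{x}{-\nabla f}\oplus T\Ws{y}{-\nabla f'} \longrightarrow \varphi^\ast TN \longrightarrow 0, \]
gives $(T\sI(x;y))_R \simeq (T\Ws{x}{-\nabla f})_R\otimes_R(T\Ws{y}{-\nabla f'})_R\otimes_R(-\varphi^\ast TN)_R$. As $\Ws{y}{-\nabla f'}$ and $\Wu{y}{-\nabla f'}$ are contractible, $T\Ws{y}{-\nabla f'}\oplus T\Wu{y}{-\nabla f'}\cong\un{T_yN}\cong\varphi^\ast TN$ over $\sI(x;y)$, so tensoring with $\fo_{(f',g')}(y)=(T\Wu{y}{-\nabla f'})_R$ gives $(T\sI(x;y))_R\otimes_R\fo_{(f',g')}(y)\simeq(T\Ws{x}{-\nabla f})_R$; and on the other side $T\Ws{x}{-\nabla f}\oplus T\Wu{x}{-\nabla f}$ is isomorphic to the pullback of $TM$, so the $R$‑orientation of $M$, which trivializes $(TM)_R\simeq\varSigma^n\un R$, identifies $(T\Ws{x}{-\nabla f})_R\simeq\varSigma^n(-T\Wu{x}{-\nabla f})_R=\varSigma^n\fo^{-\bullet}_{(f,g)}(x)$ over $\sI(x;y)$. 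Composing, we obtain $\fm(x,y)\colon\fo^{-\bullet}_{(f,g)}(x)\overset{\simeq}{\longrightarrow}\varSigma^{-n}\bigl((T\sI(x;y))_R\otimes_R\fo_{(f',g')}(y)\bigr)$, the shift $\varSigma^{-n}$ being absorbed into the degree‑$n$ convention (equivalently, this is a degree‑$0$ $R$‑orientation on the regraded bimodule $\sM^{-\bullet}(f,g)[n]\to\sM(f',g')$, cf.\ \cref{rem:bimod_deg}). The compatibilities \eqref{eq:bimod_ext_ori1}--\eqref{eq:bimod_ext_ori2} with the two actions then follow from naturality of all of these exact sequences under restriction to the faces, together with the defining compatibilities of $\fo^{-\bullet}_{(f,g)}$ and $\fo_{(f',g')}$ with their composition maps; it is precisely here that the $R$‑orientation of $M$ is used, since $\fo^{-\bullet}_{(f,g)}(x)$ carries the factor $(-T_xM)_R$ whose coherent trivialization over $M$ is exactly an $R$‑orientation.

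The step I expect to be the main obstacle is the $\ang{\cdot}$‑manifold bookkeeping in the middle paragraph: one must verify that the fiber product of the compactified stable manifolds along the stratified‑transverse map $\varphi$ is genuinely a $\ang{\cdot}$‑manifold whose codimension‑one faces are exactly the two product families demanded by \cref{dfn:flow_bimodule} --- with no spurious strata, and with collars compatible with those on the $\sM(f,g)$‑ and $\sM(f',g')$‑factors --- so that the action maps are diffeomorphisms onto faces. This amounts to combining the known $\ang{\cdot}$‑structure on compactified stable manifolds with the behaviour of $\ang{k}$‑manifolds under transverse fiber products; once it is in place, the remaining verifications are dimension counts and diagram chases.
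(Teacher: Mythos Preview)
Your proposal is correct and follows essentially the same approach as the paper: dimension count via the fiber product, action maps from the face structure of the compactified stable manifolds, and the $R$-orientation assembled from the fiber-product exact sequence together with the $R$-orientation on $M$. One small point: in the paper's conventions an $R$-orientation of $M$ trivializes $(TM)_R$ as an $R$-line bundle (a map to $\BGL_1(R)$) with no suspension shift, so there is no $\varSigma^n$ to absorb and the isomorphism $\fm(x,y)\colon\fo^{-\bullet}_{(f,g)}(x)\overset{\simeq}{\to}(T\sI(x;y))_R\otimes_R\fo_{(f',g')}(y)$ holds on the nose.
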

    \begin{proof}
        By construction $\sI(x;y)$ is a smooth manifold with corners of dimension $\dim \sI(x;y) = n + \mu^{-\bullet}(x) - \mu(y)$. By analyzing broken flow lines we see that there are maps
        \begin{align*}
            \sM^{-\bullet}(f,g)(x;x') \times \sI(x';y) &\longrightarrow \sI(x;y) \\
            \sI(x;y') \times \sM(f',g')(y';y) &\longrightarrow \sI(x;y)
        \end{align*}
        that are diffeomorphisms onto faces of $\sI(x;y)$ such that defining
        \begin{align*}
            \partial_i \sI(x;y) &\coloneqq \bigsqcup_{\substack{x'\in \crit(f) \\ \mu^{-\bullet}(x) - \mu^{-\bullet}(x') = i}} (\sM^{-\bullet}(f,g)(x;x') \times \sI(x';y)) \\
            &\qquad \sqcup \bigsqcup_{\substack{y'\in \crit(f') \\ n + \mu^{-\bullet}(x) - \mu(y) +1 = i}} (\sI(x;y') \times \sM(f',g')(y';y)),
        \end{align*}
        endows $\sI(x;y)$ with the structure of a $\ang{n+\mu^{-\bullet}(x)-\mu(y)}$-manifold (see \cref{def:k_mfd}). Letting $p \in \sI(x;y)$ We have canonical isomorphisms of $R$-line bundles over $\sI(x;y)$
        \begin{align*}
            (T_p \sI(x;y))_R &\simeq (T_p \overline W^s_{-\nabla f}(x))_R \otimes_R (T_p \overline W^s_{-\nabla f'}(y))_R \otimes_R (-T_pM)_R \\
            &\simeq (T_p \overline W^s_{-\nabla f}(x))_R \otimes_R (-T_p \overline W^u_{-\nabla f'}(y)).
        \end{align*}
        The $R$-orientation on the flow bimodule $\sI$ is given by the composition of isomorphisms of $R$-line bundles over $\sI(x;y)$
        \begin{align*}
            (T_p\sI(x;y))_R \otimes_R \fo_{(f',g')}(y) &\simeq (T_p \overline W^s_{-\nabla f}(x))_R \simeq (T_x \overline W^s_{-\nabla f}(x))_R \\
            &\simeq (-T_x \overline W^u_{-\nabla f}(x))_R = \fo_{(f,g)}(x),
        \end{align*}
        where the two first isomorphisms are canonical and the last isomorphism uses the $R$-orientation on $M$.
    \end{proof}
    \begin{lem}\label{lma:pd_thom_class}
        Assume that $M$ is a closed $R$-oriented manifold of dimension $n$. The CJS realization of the flow bimodule $\sI$ defined in \cref{lem:morse_pd} coincides with the composition 
        \[
            M^{-TM} \wedge R \longrightarrow \varSigma^{\infty-n}_+ M \wedge R \longrightarrow \varSigma^{\infty-n}_+ N \wedge R
        \]
        given by the Thom isomorphism (\cref{thm:ori_thom}(ii)) followed by the map of $R$-modules induced by the smooth map $\varphi \colon M \to N$.
    \end{lem}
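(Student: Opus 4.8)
The approach is to recognize $\sI$ as a Morse--Pontryagin flow bimodule of the kind produced by \cref{dfn:morse_pontryagin} (in its one-input form, \cref{rem:induced_map}), twisted so as to make the Thom isomorphism visible. The key observation is the identity $\overline W^s_{-\nabla f}(x) = \overline W^u_{\nabla f}(x) = \overline W^u_{-\nabla(-f)}(x)$, under which the moduli spaces defining $\sI$ read $\sI(x;y) = \overline W^u_{-\nabla(-f)}(x)\times_\varphi\overline W^s_{-\nabla f'}(y)$ --- exactly the moduli spaces of the flow bimodule $\sN_\varphi\colon\sM(-f,g)\to\sM(f',g')$ associated by \cref{dfn:morse_pontryagin}/\cref{rem:induced_map} to the smooth map $\varphi$, the transversality hypothesis $\varphi|_{\overline W^s_{-\nabla f}(x)}\pitchfork\overline W^s_{-\nabla f'}(y)$ of \cref{lem:morse_pd} being precisely the one required there. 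Thus $\sI$ and $\sN_\varphi$ have the same underlying topological bimodule and the same target flow category; they differ only in the grading and $R$-orientation of the source. Here $(\sM^{-\bullet}(f,g),\fo^{-\bullet}_{(f,g)})$ and $(\sM(-f,g),\fo_{(-f,g)})$ have the same underlying category, gradings related by $\mu^{-\bullet}(x) = \ind_{-f}(x) - n$, and orientation lines related by $\fo^{-\bullet}_{(f,g)}(x)\simeq\fo_{(-f,g)}(x)\otimes_R(-T_xM)_R$; since $M$ is $R$-oriented the $R$-line bundle $(-T_M)_R$ is trivialized to $\varSigma^{-n}\underline R$, so $(\sM^{-\bullet}(f,g),\fo^{-\bullet})$ is obtained from $(\sM(-f,g),\fo_{(-f,g)})$ by a grading shift together with a global suspension of all orientation lines by $-n$.

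First I would make this identification precise at the level of CJS data, so that, using \cref{rem:bimod_deg}, \cref{rem:CJS_shift} and the functoriality of geometric realization (\cref{lem:functoriality_geom_realiz}), the standard reindexing exhibits $|\sI|$ as $\varSigma^{-n}|\sN_\varphi|$, regarded as a map $|\sM^{-\bullet}(f,g)|\simeq\varSigma^{\infty-n}_+M\wedge R\to\varSigma^{\infty-n}_+N\wedge R$, where the equivalence on the source is the one induced by the $R$-orientation of $M$ and where we invoke \cref{rem:induced_map} to identify $|\sN_\varphi|$ with $\varSigma^\infty_+\varphi\wedge R$. It then remains to check that the equivalence $|\sM^{-\bullet}(f,g)|\simeq\varSigma^{\infty-n}_+M\wedge R$ so obtained agrees with the composite of the equivalence $|\sM^{-\bullet}(f,g)|\simeq M^{-TM}\wedge R$ of \cref{lma:coh_morse_flow_thom_space}(i) with the Thom isomorphism of \cref{thm:ori_thom}(ii). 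For this I would run the comparison through the disk-bundle model already used in the proof of \cref{lma:coh_morse_flow_thom_space}: realize $\sM^{-\bullet}(f,g)$ as a shift of the homological Morse--Smale flow category of $-\pi^\ast f+G$ on the normal disk bundle $\pi\colon D\to M$ of an embedding $M\hookrightarrow\IR^N$ (with negative gradient pointing outward along $\partial D$), relate $\sI$ to the corresponding Morse--Pontryagin bimodule on $D$ for the map $\varphi\circ\pi\colon D\to N$ via the flow bordism coming from the fibrewise contraction of the disk fibres of $\pi$ onto the zero section (and \cref{lma:bordism_gives_homotopy_of_maps}), and observe that under the outward-gradient analogue of \cref{lem:InducedProductMaps} the $\pi$-factor of $\varphi\circ\pi$ is precisely the Thom isomorphism of \cref{thm:ori_thom}(ii). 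Combining, $|\sI|$ is the Thom isomorphism $M^{-TM}\wedge R\to\varSigma^{\infty-n}_+M\wedge R$ followed by the $\varphi$-induced map $\varSigma^{\infty-n}_+M\wedge R\to\varSigma^{\infty-n}_+N\wedge R$, as claimed.

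The main obstacle is the bookkeeping of grading shifts and of the suspensions of the orientation lines through the CJS construction, and --- the genuinely delicate point --- confirming that this reparametrization, implemented via the $R$-orientation of $M$, produces exactly the Thom isomorphism of \cref{thm:ori_thom}(ii) rather than merely an equivalence; this is precisely where the hypothesis that $M$ be $R$-oriented is used (to trivialize $(-T_M)_R$, equivalently to identify the contraction of the disk fibres with a genuine suspension), and it is handled following the template of the proof of \cref{lma:coh_morse_flow_thom_space}(ii). The flow bordism in the disk-bundle comparison, and the verification that the $R$-orientations agree across it, is routine but lengthy; the identification of moduli spaces in the first paragraph is essentially tautological, and the computation of $|\sN_\varphi|$ is a direct appeal to \cref{rem:induced_map}.
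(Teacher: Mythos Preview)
Your proposal is correct and follows essentially the same route as the paper: identify $\sM^{-\bullet}(f,g)$ with (a shift and $R$-orientation twist of) $\sM(-f,g)$ using the $R$-orientation of $M$, recognize $\sI$ as the Morse--Pontryagin bimodule $\sN_\varphi$ under this identification, and verify via the disk-bundle model of \cref{lma:coh_morse_flow_thom_space} that the resulting equivalence on the source is the Thom isomorphism. The only organizational difference is that the paper first reduces to the case $\varphi=\id$ (so the second factor is trivial) and then identifies $\sI$ with $\sN_{\id_D}$ on the disk bundle $D$, whereas you carry $\varphi$ through via $\varphi\circ\pi$ and a fibrewise-contraction bordism; the paper's reduction spares you that bordism and the attendant orientation check.
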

    \begin{proof}
        It suffices to consider the case when $\varphi$ is identity. The flow category $\sM^{-\bullet}(-f,g)$, ignoring orientations, coincides with $\sM(f,g)$ up to a shift. Moreover, the trivialization of $(TM)_R$ induced by the $R$-orientation further identifies the orientations on these flow categories. Therefore, we have an identification of $R$-oriented flow categories
        \begin{equation}\label{eq:pd_thom_class}
            (\sM^{-\bullet}(-f,g),\fo^{-\bullet}_{-f,g}) \cong (\sM(f,g),\fo_{f,g}).
        \end{equation} 
        This induces an identification of CJS realizations $M^{-TM}\wedge R \simeq \varSigma^{\infty-n}_+M \wedge R$, and it can be verified that this equivalence coincides with the Thom isomorphism corresponding to the $R$-orientation on $M$.
        
        Fix an embedding $M \hookrightarrow \IR^N$ for some $N$ and denote the disk bundle of the normal bundle by $D$. Up to a shift, both $\sM^{-\bullet}(-f,g)$ and $\sM^{-\bullet}(f,g)$ are homological Morse--Smale categories on $D$ (with negative gradient pointing outwards along boundary, as in the proof of \cref{lma:coh_morse_flow_thom_space}). To finish the proof, observe that the flow bimodule $\sI$ as defined in \cref{lma:pd_thom_class} coincides with the flow bimodule $\sN_{\id_D} \colon \sM^{-\bullet}(f,g) \to \sM^{-\bullet}(-f,g)$ under the identification of $R$-oriented flow categories \eqref{eq:pd_thom_class}, whose geometric realization coincides with the identity map $\id_{M^{-TM}} \colon M^{-TM} \wedge R \to M^{-TM} \wedge R$ composed with the identification $M^{-TM}\wedge R \simeq \varSigma^{\infty-n}_+M \wedge R$ above.
    \end{proof}

\subsection{Morse--Smale flow categories with local systems}
\begin{notn}
    Let $M$ be a closed manifold and let $\xi \colon M_+ \to \BGL_1(R)$ be an $R$-line bundle on $M$. For any $x \in M$ we denote by $\xi_x$ the rank one free $R$-module
    \[ \xi_x \coloneqq \{x\}_+ \hooklongrightarrow M_+ \longrightarrow \BGL_1(R).\]
    For any path $\gamma \colon [0,\ell] \to M$, we denote the induced isomorphism $\xi_x \to \xi_y$ by $\xi_\gamma$. Note that the concatenation of paths corresponds to composition of the induced isomorphisms.
\end{notn}
Given a closed manifold $M$ equipped with a Morse--Smale pair $(f,g)$, recall the construction of the Morse--Smale flow category in \cref{defn:ms_cat}.

\begin{defn}[Morse--Smale orientation twisted by an $R$-line bundle]
Let $M$ be a closed manifold equipped with an $R$-line bundle $\xi \colon M_+ \to \BGL_1(R)$ and $(f,g)$ a Morse--Smale pair on $M$ satisfying \cref{asmpt:msm}. The \emph{$\xi$-twisted $R$-orientation on the flow category $\sM(f,g)$} is the $R$-orientation $\fo_\xi$ defined by the assignment 
\[ \fo_\xi(x) \coloneqq (T_x\overline W^u(x))_R \otimes_R \xi_x,\]
and the compatible isomorphisms
\[
(T_x \overline W^u(x))_R \otimes_R \xi_x \longrightarrow I_R(x,y) \otimes_R (T_y\overline W^u(y))_R \otimes_R \xi_y,
\]
defined similarly to the canonical $R$-orientation on the Morse--Smale flow category in \cref{lem:msm}.
\end{defn}

\begin{notn}
    Let $M$ be a closed manifold and let $(f,g)$ be a Morse--Smale pair on $M$. Fix a basepoint $m_0$ on $M$. Note that using the Riemannian metric $g$, we get evaluation maps into the space of Moore paths
    \[ \ev \colon \sM(f,g)(x;y) \longrightarrow \sP_{xy} M.\]
    Denote by 
    \[\sP M  \colon \sM(f,g) \longrightarrow \mod{(\varSigma^\infty_+ \varOmega M \wedge R)}\]
    the local system of $(\varSigma^\infty_+ \varOmega M \wedge R)$-modules defined on objects by
    \[x \longmapsto \varSigma^\infty_+\sP_{m_0x} M \wedge R\]
    and on morphisms via the composition of maps
    \begin{align*}
        (\varSigma^\infty_+ \sP_{m_0x} M \wedge R) \wedge \varSigma^\infty_+\sM(f,g)(x;y) &\xrightarrow{\id \wedge \ev} (\varSigma^\infty_+\sP_{m_0x} M \wedge R) \wedge \varSigma^\infty_+\sP_{xy} M \\
        &\overset{P}{\longrightarrow} \varSigma^\infty_+\sP_{m_0y} M \wedge R,
    \end{align*}
    where the second map is the concatenation of Moore paths.
\end{notn}
    
    Let $\sP_{m_0}M$ denote the space of Moore paths based at $m_0$ and consider the projection $\pi \colon \sP_{m_0} M \to M$ defined on a Moore path $\gamma \colon [0,\ell] \to M$ by $\pi(\gamma) = \gamma(\ell)$. Abusing the notation, we denote the pullback of $\xi$ under $\pi$ by $\xi$ as well. Since $\sP_{m_0}M$ admits a deformation retract onto $\{m_0\}$, the inclusion $\xi_{m_0} \hookrightarrow (\sP_{m_0} M)^\xi$ is a homotopy equivalence of $R$-modules with homotopy inverse
    \begin{align*}
     \psi \colon (\sP_{m_0} M)^\xi &\overset{\simeq}{\longrightarrow} \xi_{m_0}.
    \end{align*}
    Moreover, the $(\varSigma^\infty_+\varOmega M\wedge R)$-action on $\varSigma^\infty_+\sP_{m_0} M\wedge R$ (given by concatenation) is compatible with the projection to $M$, in the sense that the following diagram commutes
    \[
    \begin{tikzcd}[sep=scriptsize]
        (\varSigma^\infty_+\varOmega M\wedge R) \wedge_R (\varSigma^\infty_+\sP_{m_0} M\wedge R) \rar{P} \drar[swap]{\pi} & \varSigma^\infty_+\sP_{m_0} M\wedge R \dar{\pi}\\
        & \varSigma^\infty_+M \wedge R
    \end{tikzcd},
    \]
    where $P$ is given by concatenation of Moore paths.
    It follows that the $(\varSigma^\infty_+\varOmega M\wedge R)$-action on $\varSigma^\infty_+\sP_{m_0} M\wedge R$ induces an action on the Thom spectrum
    \[ (\varSigma^\infty_+\varOmega M\wedge R) \wedge_R (\sP_{m_0} M)^\xi \longrightarrow (\sP_{m_0} M)^\xi,\] 
    and the following diagram is commutative
    \[
    \begin{tikzcd}[sep=scriptsize]
        (\varSigma^\infty_+\varOmega M\wedge R)\wedge_R (\sP_{m_0}M)^\xi \ar[rr,"P"] \dar{\id \wedge_R \psi} & & (\sP_{m_0}M)^\xi \dar{\psi}\\
        (\varSigma^\infty_+\varOmega M\wedge R) \wedge_R \xi_{m_0} \rar[hook] & (\varSigma^\infty_+\sP_{m_0}M \wedge R) \wedge_R \xi_{m_0} \rar{\psi} & \xi_{m_0}
    \end{tikzcd},
    \]
    where the lower horizontal composition coincides with the $(\varSigma^\infty_+\varOmega M\wedge R)$-module structure on $\xi_{m_0}$ induced by the map $\varOmega \xi \colon (\varOmega M)_+ \to \GL_1(R)$. 

    \begin{notn}
        Let $R_\xi$ denote $R$ considered as an $(\varSigma^\infty_+\varOmega M\wedge R)$-module via the map $\varOmega \xi \colon (\varOmega M)_+ \to \GL_1(R)$.
    \end{notn}
    
    Let $\xi \colon M_+ \to \BGL_1(R)$ be an $R$-line bundle and recall the definition of the point flow category $\sM_\ast$ from \cref{dfn:point_flow_cat}. Let $\fo_\xi(p) \coloneqq \xi_{m_0}$, and denote by $\sP_{m_0} \colon \sM_\ast \to \mod{(\varSigma^\infty_+\varOmega M\wedge R)}$ the local system defined by $\sP_{m_0}(p) \coloneqq \varSigma^\infty_+\sP_{m_0}M \wedge R$.

    Let
    \[
    \sN_\xi \colon (\sM(f,g),\fo_\xi,\sP M) \longrightarrow (\sM_\ast,\fo_\xi,\sP_{m_0})
    \]
    denote the $R$-oriented flow bimodule with local system defined by the assignment
    \[ (x,p) \longmapsto \overline W^u(x).\]
    The $R$-orientation is defined by an identification
    \[
    (T_x\overline W^u(x))_R \otimes_R \xi_x \overset{\simeq}{\longrightarrow} (T\overline W^u(x))_R \otimes_R \xi_{m_0},
    \]
    for every $x\in \Ob(\sM(f,g))$, obtained using isomorphisms between $\xi_x$ and $\xi_{m_0}$ given by elements of $\sP_{m_0x}M$. The local system on $\sN_\xi$ is a natural transformation 
    \[\sP\sN_\xi \colon \sN_\xi \Longrightarrow F_{\varSigma^\infty_+\varOmega M\wedge R}(\sP M,\sP_{m_0}),\]
    that is defined as the following composition
    \[ (\varSigma^\infty_+ \sP_{m_0x} M \wedge R) \wedge \varSigma^\infty_+\sN_\xi(x,p) \xrightarrow{\id \wedge \ev} (\varSigma^\infty_+ \sP_{m_0x} M \wedge R) \wedge \varSigma^\infty_+ \sP_x M \overset{P}{\longrightarrow} \varSigma^\infty_+ \sP_{m_0} M \wedge R,\]
    where $\ev \colon \sN_\xi(x,p) = \overline W^u(x) \to \sP_{x} M$ is the evaluation defined using the Riemannian metric $g$ on $M$.
    
\begin{lem}\label{lem:cjs_twisted_pt}
    The CJS realization of $(\sM_\ast,\fo_\xi,\sP_{m_0})$ is given by the $(\varSigma^\infty_+ \varOmega M \wedge R)$-module $R_\xi$.
\end{lem}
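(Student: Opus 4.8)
The plan is to combine \cref{lem:realiation_point} with the analysis of the Moore path space $\sP_{m_0}M$ carried out above. Since the point flow category $\sM_\ast$ (\cref{dfn:point_flow_cat}) has the single object $p$ with $\mu(p)=0$ and no non-identity morphisms, \cref{lem:realiation_point} applied with $d=0$ gives an equivalence of $(\varSigma^\infty_+ \varOmega M \wedge R)$-modules
\[
|\sM_\ast,\fo_\xi,\sP_{m_0}| \;\simeq\; \sP_{m_0}(p) \wedge_R \fo_\xi(p) \;=\; (\varSigma^\infty_+ \sP_{m_0} M \wedge R) \wedge_R \xi_{m_0},
\]
the module structure on the right being inherited from the $(\varSigma^\infty_+ \varOmega M \wedge R)$-module structure on the local system value $\sP_{m_0}(p)$.

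Next I would identify this module, with its structure maps, with the Thom spectrum $(\sP_{m_0}M)^\xi$ of the pullback $\pi^\ast\xi$ of the $R$-line bundle $\xi$ along the endpoint evaluation $\pi\colon \sP_{m_0}M \to M$, equipped with the $(\varSigma^\infty_+\varOmega M\wedge R)$-action induced by concatenation of based loops. The point is that concatenating a based loop does not change the endpoint of a Moore path, so the $\varOmega M$-action on $\sP_{m_0}M$ covers the identity of $M$ and lifts canonically to $(\sP_{m_0}M)^\xi$. Unwinding \cref{def:kmodsys} and \cref{dfn:rlin_jmod_flow} together with their local-system counterparts, the module produced by the CJS construction agrees with this one; checking this match carefully — rather than landing on a naively untwisted smash product — is the main point of the proof.

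Finally, the contraction of $\sP_{m_0}M$ onto the constant path at $m_0$ applies: as recorded in the commutative square immediately preceding the statement, the homotopy inverse $\psi\colon (\sP_{m_0}M)^\xi \xrightarrow{\ \simeq\ } \xi_{m_0}$ to the inclusion of the fiber is a map of $(\varSigma^\infty_+\varOmega M\wedge R)$-modules once $\xi_{m_0}$ is given the module structure induced by $\varOmega\xi\colon (\varOmega M)_+ \to \GL_1(R)$, because $\psi$ transports the fiber of $\pi^\ast\xi$ back along the shrinking path and on a based loop $\gamma$ this contributes precisely the monodromy $\varOmega\xi(\gamma)$. As $\xi_{m_0}$ is a rank one free $R$-module carrying exactly this $\varOmega\xi$-twisted action, it is by definition $R_\xi$, and composing the three equivalences yields $|\sM_\ast,\fo_\xi,\sP_{m_0}| \simeq R_\xi$.
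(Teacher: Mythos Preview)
Your proof is correct and follows essentially the same route as the paper. The paper computes the associated $\sJ$-module directly (it is concentrated in degree $0$ with value $(\varSigma^\infty_+ \sP_{m_0}M \wedge R) \wedge_R \xi_{m_0} \simeq (\sP_{m_0}M)^\xi$), observes that the $(\varSigma^\infty_+ \varOmega M \wedge R)$-action is $P \wedge_R \id$, and then implicitly invokes the commutative square preceding the lemma to conclude; you package the first step as an application of \cref{lem:realiation_point} and spell out the last step more explicitly, but the content is the same.
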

\begin{proof}     
    The $(\varSigma^\infty_+ \varOmega M \wedge R)$-linear $\sJ$-module $Z_{\fo_{\xi},\sP_{m_0}} \colon \sJ \to \mod{(\varSigma^\infty_+ \varOmega M \wedge R)}$ associated to $(\sM_\ast,\fo_\xi,\sP_{m_0})$ is given on objects by
    \[
    Z_{\fo_{\xi},\sP_{m_0}}(n) = \begin{cases}
        (\varSigma^\infty_+ \sP_{m_0}M \wedge R) \wedge_R \xi_{m_0} \simeq (\sP_{m_0}M)^{\xi}, & n = 0 \\
        0, & n \neq 0
    \end{cases}.
    \]
    This gives that we have $|\sM_\ast,\fo_\xi,\sP_{m_0}| \simeq R$ as an $R$-module. Next, the $(\varSigma^\infty_+ \varOmega M \wedge R)$-module structure is given by the composition
    \[
    (\varSigma^\infty_+ \varOmega M \wedge R) \wedge_R (\varSigma^\infty_+ \sP_{m_0}M \wedge R) \wedge_R \xi_{m_0} \xrightarrow{P \wedge_R \id} (\varSigma^\infty_+ \sP_{m_0}M \wedge R) \wedge_R \xi_{m_0},
    \]
    which finishes the proof.
\end{proof}
\begin{prop}\label{prop:morse_flow_path}
    There is a weak equivalence of $(\varSigma^\infty_+ \varOmega M \wedge R)$-modules
    \[ |\sM(f,g),\fo_\xi, \sP M| \simeq R_\xi.\]
\end{prop}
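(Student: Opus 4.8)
The plan is to prove the equivalence by producing an explicit comparison map and checking it is a weak equivalence via a Whitehead argument on $Hk$-homology. Concretely, recall the $R$-oriented flow bimodule with local system $\sN_\xi \colon (\sM(f,g),\fo_\xi,\sP M) \to (\sM_\ast,\fo_\xi,\sP_{m_0})$ given by $(x,p) \mapsto \overline W^u(x)$; taking its CJS realization produces a map of $(\varSigma^\infty_+\varOmega M\wedge R)$-modules
\[ |\sN_\xi| \colon |\sM(f,g),\fo_\xi,\sP M| \longrightarrow |\sM_\ast,\fo_\xi,\sP_{m_0}|,\]
whose target is $R_\xi$ by \cref{lem:cjs_twisted_pt}. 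It therefore suffices to show $|\sN_\xi|$ is a weak equivalence of underlying spectra. Since $M$ is closed the grading function on $\sM(f,g)$ is bounded, and when $R$ is connective the $\sJ$-modules involved are bounded below with connective values, so their realizations are connective; the general case reduces to the connective one by passing to connective covers as in \cref{rem:reduce_to_connective}. Assume then that $R$ is connective with $k = \pi_0 R$.

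By \cref{thm:whitehead} it is enough to check that $|\sN_\xi|\wedge_R Hk$ is a $\pi_\bullet$-isomorphism, and by \cref{prop:lsys_cm_prop} and \cref{cor:jmod_fun} this map is identified with the map on homology induced by the Morse chain map $CM(\sN_\xi;k)$. By \cref{lem:cm_flow}(ii), its source is the complex $\bigoplus_{x\in\crit f} C_\bullet(\sP_{m_0x}M;k)\otimes_k o_\xi(x)$ whose differential is the singular differential of each factor together with the sum, over index-difference-one pairs $(x,y)$ and flow lines $u\in\sM(f,g)(x,y)$, of the map $C_\bullet(\sP_{m_0x}M;k)\to C_\bullet(\sP_{m_0y}M;k)$ given by concatenation with the Moore path $\ev(u)$, twisted by the Morse orientation lines and the $\xi$-transport. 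This is precisely the Morse complex of $M$ with local coefficients in the endpoint fibration $\pi\colon\sP_{m_0}M\to M$ twisted by $\xi$; by the standard identification of such a complex with the singular chains of the total space — obtained by filtering $\sP_{m_0}M$ by the $\pi$-preimages of the sublevel sets of $f$ — its homology is $H_\bullet(\sP_{m_0}M;\pi^\ast\xi)$. Since $\sP_{m_0}M$ is contractible, $\pi^\ast\xi$ is equivalent to the constant local system with fiber $\xi_{m_0}$, so the homology is $o_\xi(m_0)$ concentrated in degree $0$, matching the homology of the target complex $C_\bullet(\sP_{m_0}M;k)\otimes_k o_\xi(m_0)$. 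Finally, by \cref{lem:cm_mod_flow} the map $CM(\sN_\xi;k)$ is given by the evaluations $\overline W^u(x)\to\sP_x M$ followed by concatenation into $\sP_{m_0}M$; on the generator of degree-zero homology, represented by a point of $\sP_{m_0x_0}M$ for a minimum $x_0$ of $f$ (where $\overline W^u(x_0)$ is a point mapping to the constant path), it is the identity up to the orientation identification, hence an isomorphism on $H_0$. Thus $CM(\sN_\xi;k)$ is a quasi-isomorphism and $|\sN_\xi|$ is a weak equivalence.

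The main obstacle I anticipate is the identification of the Morse complex with local coefficients in $\pi$ with the singular chain model of $\sP_{m_0}M$: one must verify carefully that the flow-line-counting differential supplied by \cref{lem:cm_flow}, incorporating both the $\xi$-twist and the Morse orientation lines, reproduces exactly the chain-level filtration model for the total space (a finite-dimensional analogue of the loop-space Morse theory of Abbondandolo--Schwarz). If one wishes to avoid this chain-level bookkeeping, an alternative is to filter $|\sM(f,g),\fo_\xi,\sP M|$ by the CJS realizations of the full subcategories on critical points of index $\leq\ell$ via \cref{lem:realization_cofiber} and compare the resulting tower, level by level, with the skeletal filtration of the contractible space $\sP_{m_0}M$; but the chain-level argument above is the most direct route given the tools already developed.
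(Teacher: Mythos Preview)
Your argument is essentially correct, but the paper takes the other route you sketch at the end. After producing the same comparison map $|\sN_\xi|\colon |\sM(f,g),\fo_\xi,\sP M|\to R_\xi$, the paper does not pass to $Hk$-chains at all. Instead it filters both sides by critical \emph{value}: the source by the subcategories $\sM(f,g)^{\le p}$ on critical points in $f^{-1}((-\infty,p])$, and the target $R_\xi\simeq(\sP_{m_0}M)^\xi$ by the $\pi$-preimages of the sublevel sets $M^{\le p}$. The map visibly respects these filtrations, and the associated graded at a critical value $p=f(x)$ is computed directly: on the source via \cref{lem:realization_cofiber} and \cref{lem:realiation_point} as $(\varSigma^\infty_+\sP_{m_0x}M\wedge R)\wedge_R\fo_\xi(x)$, and on the target by the standard Morse-theoretic description of cell attachments in the total space of the fibration $\pi$, yielding the same thing. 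The induced map on associated gradeds is the identity, so the comparison is an equivalence.

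The trade-off is clear. Your Whitehead approach is conceptually in the spirit of the rest of the paper (bootstrapping from discrete coefficients), but it costs you the reduction to connective $R$ and the chain-level identification of the twisted Morse complex with $C_\bullet(\sP_{m_0}M;\pi^*\xi)$, which you correctly flag as the delicate point. The paper's filtration argument works uniformly for any $R$, avoids chains entirely, and the only nontrivial input is the Morse-theoretic computation of the associated graded of $(\sP_{m_0}M)^\xi$, which is elementary once one uses that $\pi$ is a fibration. It is worth noting that the paper filters by critical value rather than by Morse index as you suggest; this makes the comparison with the topological filtration on $\sP_{m_0}M$ immediate, since the latter is naturally indexed by sublevel sets.
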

\begin{proof}

    The CJS realization of the $R$-oriented flow bimodule with a local system $(\sN_\xi,\fo_\xi,\sP \sN_\xi)$ and \cref{lem:cjs_twisted_pt} yields a map of $(\varSigma^\infty_+ \varOmega M \wedge R)$-modules
    \begin{equation}\label{eq:CJS_loc}
        |\sN_\xi,\fo_\xi,\sP \sN_\xi| \colon |\sM(f,g),\fo_\xi,\sP M| \longrightarrow R_\xi,
    \end{equation}
    and the proof of \cref{lem:cjs_twisted_pt} shows $R_\xi \simeq (\sP_{m_0}M)^\xi$ as $\varOmega M$-modules. We now verify that this map is a weak equivalence.
    
    Note that both $|\sM(f,g),\fo_\xi,\sP M|$ and $(\sP_{m_0} M)^\xi$ are filtered objects: for any $p \in \IR$, let
    \begin{align*}
        M^{\leq p} &\coloneqq f^{-1}((-\infty, p]) \\
        \sM(f,g)^{\leq p} &\coloneqq \sM(f|_{M^{\leq p}}, g|_{M^{\leq p}}) \\
        \fo_\xi^{\leq p} &\coloneqq \fo_\xi|_{\sM(f,g)^{\leq p}} \\
        \sP M^{\leq p} &\coloneqq \sP M|_{\sM(f,g)^{\leq p}}.
    \end{align*}
    Assume without loss of generality that the critical points of $f$ have distinct critical values and that $m_0$ belongs to a neighborhood of the global minimum of $f$. Letting
    \[ F^p |\sM(f,g),\fo_\xi,\sP M| \coloneqq |\sM(f,g)^{\leq p}, \fo_\xi^{\leq p}, \sP M^{\leq p}|\]
    defines an increasing filtration on $|\sM(f,g),\fo_\xi,\sP M|$. On the other hand, pulling back the filtration $M^{\leq p}$ on $M$ along the projection $\pi \colon \sP_{m_0}M \to M$, we obtain the following filtration on $(\sP_{m_0} M)^\xi$:
    \[ F^p (\sP_{m_0} M)^\xi \coloneqq (\pi^{-1}(M^{\leq p}))^{\xi|_{\pi^{-1}(M^{\leq p})}}.\]
    It is clear from the construction that the map \eqref{eq:CJS_loc} respects these filtrations.
    
    When $p$ is not a critical value, the inclusions
    \[ F^{p-\varepsilon} |\sM(f,g),\fo_\xi,\sP M| \hooklongrightarrow F^p |\sM(f,g),\fo_\xi,\sP M| \quad \text{and} \quad F^{p-\varepsilon} (\sP_{m_0} M)^\xi \hooklongrightarrow F^p(\sP_{m_0} M)^\xi\]
    are weak equivalences for $\varepsilon > 0$ small enough. To show that \eqref{eq:CJS_loc} is a weak equivalence, it suffices to show that for a critical value $p$, the induced maps on associated graded
    \begin{equation}\label{eq:CJS_loc_filtered}
        F^p|\sM(f,g),\fo_\xi,\sP M| / F^{p-\varepsilon}|\sM(f,g),\fo_\xi,\sP M| \simeq F^p (\sP_{m_0} M)^\xi / F^{p-\varepsilon} (\sP_{m_0}M)^\xi
    \end{equation}
    is an equivalence for $\varepsilon > 0$ small enough. First, by \cref{lem:realization_cofiber} and \cref{lem:realiation_point} we have
    \begin{align*}
        &F^p |\sM(f,g),\fo_\xi,\sP M|/F^{p-\varepsilon} |\sM(f,g),\fo_\xi,\sP M| \\
        &\qquad \simeq |(\sM(f,g)^{\leq p},\fo_\xi^{\leq p},\sP M^{\leq p})/(\sM(f,g)^{\leq p-\varepsilon},\fo_\xi^{\leq p-\varepsilon},\sP M^{\leq p-\varepsilon})| \\
        &\qquad \simeq (\varSigma^\infty_+ \sP_{m_0x} M \wedge R) \wedge_R \fo_{\xi}(x),
    \end{align*}
    since the flow category $\sM(f,g)^{\leq p}/\sM(f,g)^{\leq p-\varepsilon}$ is the point flow category concentrated in degree $\mu(x)$. Using the fact that $\pi \colon \sP_{m_0} M \to M$ is a fibration, a standard Morse-theoretic argument gives 
    \[ F^{p}(\sP_{m_0} M) /F^{p-\varepsilon}(\sP_{m_0} M) \simeq (\varSigma^\infty_+ \sP_{m_0x} M \wedge R) \wedge_R (T_x \overline W^u (x))_R,\]
    and therefore
    \[ F^{p}(\sP_{m_0} M)^\xi /F^{p-\varepsilon}(\sP_{m_0} M)^\xi \simeq (\varSigma^\infty_+ \sP_{m_0x} M \wedge R) \wedge_R \xi_x.\]
    Moreover, under these identifications, the construction of the map \eqref{eq:CJS_loc} implies that the map \eqref{eq:CJS_loc_filtered} is given by the identity, thus completing the proof.
\end{proof}
We now discuss a cohomological version of \cref{prop:morse_flow_path}. Namely, recall the definition of the cohomological Morse--Smale flow category $(\sM^{-\bullet}(f,g),\fo_{(f,g)}^{-\bullet})$ in \cref{defn:coh_morse-smale_flow_cat}. We twist its orientation by the $R$-line bundle $\xi$, and define
\[
    \fo_{\xi}^{-\bullet}(x) \coloneqq (-T_x \overline W^u_{-\nabla f}(x))_R \otimes_R \xi_x.
\]
We furthermore equip this $R$-oriented flow category with the following local system:
\begin{align}\label{eq:local_syst_cohomological_path}
    \sP^{-\bullet}M \colon \sM^{-\bullet}(f,g) &\longrightarrow \mod{(\varSigma^\infty_+ \varOmega M \wedge R)} \\
    x &\longmapsto \varSigma^\infty_+ \sP_{xm_0}M \wedge R. \nonumber
\end{align}
We consider the point flow category $\sM_\ast$ with twisted $R$-orientation $\fo_\xi$ as in \cref{lem:cjs_twisted_pt}, with the reversed local system defined by $\sP^{-\bullet}_{m_0}(p) = \varSigma^\infty_+ \sP_{-,m_0}M\wedge R$. Consider the the $R$-oriented flow bimodule $\sN_f \colon \sM_\ast \to \sM^{-\bullet}(f,g)$ defined in \cref{dfn:unit_morse}, and equip it with the local system
\[
    \sP \sN_f \colon \sN_f \Longrightarrow F_{\varSigma^\infty_+ \varOmega M \wedge R}(\sP_{m_0},\sP^{-\bullet}M),
\]
defined as follows. By abuse of notation, denote both the projection maps $\sP_xM \to M$ and $\sP_{-,m_0}M \to M$ by $\pi$, and consider the induced map
\[
\varPi \colon \sP_{-,m_0} M \times \sP_{x}M \longrightarrow \sP_{-,m_0} M \times_{\pi \times \pi} \sP_xM.
\]
The local system $\sP \sN_f$ is now defined via the composition
\begin{align*}
    (\varSigma^\infty_+\sP_{-,m_0}M \wedge R) \wedge \varSigma^\infty_+ \sN_f(p,x) &\xrightarrow{\id \wedge \ev} (\varSigma^\infty_+ \sP_{-,m_0}M \wedge R) \wedge \varSigma^\infty_+ \sP_{x}M \\
    &\overset{\varPi}{\longrightarrow} \varSigma^\infty_+(\sP_{-,m_0} M \times_{\pi \times \pi} \sP_{x}M) \wedge R\\
    &\overset{P}{\longrightarrow} \varSigma^\infty_+ \sP_{xm_0}M \wedge R,
\end{align*}
where $\ev \colon \sN_\xi^{-\bullet}(x,p) = \overline W^u_{-\nabla f}(x) \to \sP_{x}M$ is the evaluation map. The proof of the following result is completely analogous to \cref{prop:morse_flow_path}.
\begin{prop}\label{prop:morse_flow_path_coh}
    There are weak equivalences of $(\varSigma^\infty_+ \varOmega M \wedge R)$-modules
    \[
        |\sM^{-\bullet}(f,g),\fo_\xi^{-\bullet},\sP^{-\bullet}M| \simeq (\sP_{-,m_0}M)^{\xi} \simeq R_\xi.
    \]
    \qed
\end{prop}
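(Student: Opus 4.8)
The plan is to imitate the proof of \cref{prop:morse_flow_path}, replacing the unstable manifolds by their role in the cohomological flow category and reversing the relevant path spaces. First I would record the elementary input computing both $R$-module identifications at once: the space $\sP_{-,m_0}M$ of Moore paths ending at $m_0$ deformation retracts onto the constant path, so the inclusion $\xi_{m_0}\hooklongrightarrow(\sP_{-,m_0}M)^{\xi}$ (Thom spectrum of the pullback of $\xi$ along the starting-point projection $\pi\colon\sP_{-,m_0}M\to M$) is a homotopy equivalence of $R$-modules. The concatenation $(\varSigma^\infty_+\varOmega M\wedge R)$-action on $\varSigma^\infty_+\sP_{-,m_0}M\wedge R$ is compatible with $\pi$, hence descends to the Thom spectrum, and under the equivalence above it becomes the action on $\xi_{m_0}$ induced by $\varOmega\xi\colon(\varOmega M)_+\to\GL_1(R)$; that is, $(\sP_{-,m_0}M)^{\xi}\simeq R_\xi$ as $(\varSigma^\infty_+\varOmega M\wedge R)$-modules. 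This is the reversed-path analogue of \cref{lem:cjs_twisted_pt}, and the same computation gives $|\sM_\ast,\fo_\xi,\sP^{-\bullet}_{m_0}|\simeq R_\xi$.

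Next I would produce the comparison map. The CJS realization of the $R$-oriented flow bimodule with local system $(\sN_f,\fo_\xi,\sP\sN_f)\colon(\sM_\ast,\fo_\xi,\sP^{-\bullet}_{m_0})\to(\sM^{-\bullet}(f,g),\fo_\xi^{-\bullet},\sP^{-\bullet}M)$ yields a map of $(\varSigma^\infty_+\varOmega M\wedge R)$-modules
\[
R_\xi\simeq|\sM_\ast,\fo_\xi,\sP^{-\bullet}_{m_0}|\longrightarrow|\sM^{-\bullet}(f,g),\fo_\xi^{-\bullet},\sP^{-\bullet}M|,
\]
and it remains to check this map is a weak equivalence; composing with the identification of the previous paragraph then gives both stated equivalences.

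For this I would run the filtration argument of \cref{prop:morse_flow_path}. Both $R_\xi\simeq(\sP_{-,m_0}M)^{\xi}$ and $|\sM^{-\bullet}(f,g),\fo_\xi^{-\bullet},\sP^{-\bullet}M|$ carry compatible exhaustive filtrations indexed by the critical values of $f$ — on the realization side via the full subcategories of critical points lying above a given level (which satisfy the hypothesis of \cref{notn:full_flow_subcat}) together with the cofiber sequences of \cref{lem:realization_cofiber}, and on the path-space side by pulling back a level-set filtration of $M$ along the fibration $\pi$ and taking Thom spectra — and the comparison map respects them; away from critical values the filtration inclusions are equivalences. At a critical value $p$ with critical point $x$, \cref{lem:realization_cofiber} and \cref{lem:realiation_point} identify the associated graded of the realization with $(\varSigma^\infty_+\sP_{xm_0}M\wedge R)\wedge_R\fo_\xi^{-\bullet}(x)$, since the relevant subquotient is the point flow category concentrated in degree $-\ind_f(x)$; a local Morse-theoretic computation, using that $\pi$ is a fibration with fibre $\sP_{xm_0}M$ over $x$, identifies the associated graded on the path-space side with $(\varSigma^\infty_+\sP_{xm_0}M\wedge R)\wedge_R(-T_x\overline W^u_{-\nabla f}(x))_R\wedge_R\xi_x$, which is the same module; and by construction of $\sN_f$ and $\sP\sN_f$ the induced map is the identity under these identifications.

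The main obstacle I anticipate is bookkeeping rather than conceptual: keeping track of the negative grading, of the appearance of $(-T_x\overline W^u_{-\nabla f}(x))_R$ (equivalently, of stable rather than unstable disks) in the orientation twist, and of the reversed path spaces, so that the two filtered computations genuinely agree on the nose and the comparison map is the identity and not merely some abstract equivalence. A secondary point requiring care is that, unlike in \cref{prop:morse_flow_path}, $\sM^{-\bullet}(f,g)$ is not literally a Morse--Smale flow category on $M$ (its gradings are negative), so the ``standard Morse-theoretic argument'' on the path-space side is most safely carried out after passing to the normal disk bundle of an embedding $M\hooklongrightarrow\IR^N$, exactly as in \cref{lma:coh_morse_flow_thom_space}, and then using that this disk bundle deformation retracts onto $M$ to relate the two path spaces.
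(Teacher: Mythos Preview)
Your proposal is correct and matches the paper's approach: the paper's proof consists of a single sentence stating that the argument is completely analogous to \cref{prop:morse_flow_path}, using the $R$-oriented flow bimodule with local system $(\sN_f,\fo_f,\sP\sN_f)$ in place of $(\sN_\xi,\fo_\xi,\sP\sN_\xi)$, which is exactly what you carry out. Your anticipated bookkeeping concerns (reversed direction of the comparison map, super-level rather than sublevel filtration for the cohomological flow category, reversed path spaces) are the right ones, and your suggestion to pass to the normal disk bundle as in \cref{lma:coh_morse_flow_thom_space} is a reasonable way to make the path-space side precise, though not strictly necessary.
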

\begin{proof}
    This is completely analogous to the proof of \cref{prop:morse_flow_path}, using the $R$-oriented flow bimodule with local system $(\sN_f,\fo_f,\sP\sN_f)$, instead of $(\sN_\xi,\fo_\xi,\sP\sN_\xi)$.
\end{proof}

\section{Abstract branes and spaces of abstract cappings}\label{sec:AbstractBranes}

Let $R$ be a commutative ring spectrum. In this section, we define the space of abstract $R$-branes and study spaces of abstract caps that is needed for constructing canonical orientations on moduli spaces of strips with Lagrangian boundary conditions discussed in \cref{sec:canonical_orientations}.

\subsection{$R$-orientations}
The unit map $\eta \colon \IS \to R$ determines a map ${\B}^n{\eta} \colon {\B}^n{\GL_1}(\IS) \to {\B}^n{\GL_1}(R)$ for any $n \geq 1$.  Recall from \cref{sec:background_spectra} that given a (stable) vector bundle over a space $M$, there is an $R$-line bundle associated to it given by the composition
\[ M \longrightarrow \BO \overset{{\B}J}{\longrightarrow} \BGL_1(\IS) \overset{{\B}\eta}{\longrightarrow} \BGL_1(R), \]
where $J \colon \OO \to \GL_1(\IS)$ is the $J$-homomorphism.

We recall the following two facts from \cref{dfn:ori_vb} and \cref{rem:ori_fb_torsor}. A vector bundle is \emph{$R$-orientable} if the above associated $R$-line bundle is null-homotopic and an \emph{$R$-orientation} is a choice of such a null-homotopy.  Consequently, given an $R$-orientation for an $R$-orientable vector bundle over a $M$, the set of $R$-orientations is a torsor over $[M,\GL_1(R)]$.

\subsection{Abstract $R$-branes and spaces of disks}\label{sec:abst_r-branes_spaces_of_disks}

\begin{defn}[Abstract Lagrangian $R$-brane]\label{def:absrbrane}
The space of \emph{abstract Lagrangian $R$-branes} is
\[ (\lag)^\# \coloneqq \hofib (\lag \longrightarrow {\B}^2\OO \overset{{\B}^2J}{\longrightarrow} {\B}^2{\GL_1}(\IS) \overset{{\B}^2\eta}{\longrightarrow} {\B}^2{\GL_1}(R)), \]
where $\lag \to {\B}^2{\OO}$ is the composition of the delooping of the Bott map $\varOmega(\lag) \simeq \IZ \times \BO$ and the projection to $\BO$. 
\end{defn}

The complexification map $c \colon \BO \to \BU$ gives the bundle of stable Lagrangian distributions over $\BU$ and its homotopy fiber is $\lag$.  Define the space
\[ \sD = \left\{ (\alpha,u) \in \varOmega \BO \times \maps(\ID ^2,\BO) \mid c \circ \alpha = c \circ u|_{\partial \ID^2} \right\}. \]
There are two evaluation maps
\[ S^1 \times \sD \longrightarrow \BO \]
given by $(t,\alpha,u) \mapsto \alpha(t)$ and $(t,\alpha,u) \mapsto u|_{\partial \ID^2}(t)$, respectively.  Since $c \circ \alpha(t) = c \circ u|_{\partial \ID^2}(t)$, this pair of evaluation maps determines a unique map (up to homotopy) $S^1 \times \sD \to \lag$.  Since all of the maps above are based, by adjunction, this determines a map $\rho \colon \sD \to \varOmega(\lag)$.  This gives the vector bundle
\begin{equation}\label{eq:dind}
    \sD \overset{\rho}{\longrightarrow} \varOmega(\lag) \overset{\ind}{\longrightarrow} \BO.
\end{equation}

\begin{rem}
Every element $(\alpha,u) \in \sD$ determines a pair $(\alpha,c \circ u)$, which is a complex vector bundle over the disk along with a choice of Lagrangian distribution on its boundary. Let $D_{\alpha,u}$ denote the corresponding Cauchy--Riemann operator acting on the space of sections. The vector bundle classified by \eqref{eq:dind} is the vector bundle over $\sD$ whose fiber over $(\alpha, u)$ is given by
\[ \ind(D_{\alpha,u}) = \mathrm{ker}(D_{\alpha,u}) \otimes  (-\mathrm{coker}(D_{\alpha,u})).\]
\end{rem}

\begin{defn}
Let $\sD^\#$ denote the homotopy pullback of the looping of $(\lag)^\# \to \lag$ and $\rho \colon \sD \to \varOmega(\lag)$. The two induced maps are denoted by $\rho^\# \colon \sD^\# \to \varOmega (\lag)^\#$ and $\pi \colon \sD^\# \to \sD$.
\end{defn}

\begin{lem}\label{lem:DHashNullity}
The composition
\[ \sD^\# \longrightarrow \sD \longrightarrow \BO \longrightarrow \BGL_1(R) \]
is canonically null-homotopic.
\end{lem}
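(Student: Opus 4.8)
The claim is that $\sD^\# \to \sD \to \BO \to \BGL_1(R)$ is canonically null-homotopic. The natural strategy is to factor this map through a space that carries the relevant null-homotopy by construction, namely $(\lag)^\#$. Recall that $\sD^\#$ is defined as the homotopy pullback of $\varOmega(\lag)^\# \to \varOmega(\lag)$ along $\rho\colon \sD \to \varOmega(\lag)$, so there is a tautological map $\rho^\#\colon \sD^\# \to \varOmega(\lag)^\#$ fitting into a homotopy-commutative square with $\pi\colon \sD^\# \to \sD$ and $\rho\colon \sD \to \varOmega(\lag)$ and the looping of $(\lag)^\# \to \lag$. First I would observe that the composition $\sD \to \BO$ in \eqref{eq:dind} is, by definition, $\ind \circ \rho$ where $\ind\colon \varOmega(\lag) \to \BO$ is the deloopng of the Bott map followed by projection (up to the usual identification $\varOmega(\lag) \simeq \IZ \times \BO$). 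Hence the composition in question equals
\[ \sD^\# \overset{\rho^\#}{\longrightarrow} \varOmega(\lag)^\# \overset{\varOmega(\text{proj})}{\longrightarrow} \varOmega(\lag) \overset{\ind}{\longrightarrow} \BO \longrightarrow \BGL_1(R), \]
using the homotopy-commutativity of the defining square of $\sD^\#$.

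The key point is then that the composite $\varOmega(\lag)^\# \to \varOmega(\lag) \to \BO \to \BGL_1(R)$ is canonically null-homotopic. This is exactly the looping of the defining fibration of $(\lag)^\#$: by \cref{def:absrbrane}, $(\lag)^\#$ is the homotopy fiber of $\lag \to \B^2\OO \xrightarrow{\B^2 J} \B^2\GL_1(\IS) \xrightarrow{\B^2\eta} \B^2\GL_1(R)$, and the map $\lag \to \B^2\OO$ is the delooping of $\ind\colon \varOmega(\lag)\simeq \IZ\times\BO \to \BO$. Looping once, $\varOmega(\lag)^\#$ is the homotopy fiber of $\varOmega(\lag) \to \B\OO \xrightarrow{\B J} \B\GL_1(\IS) \xrightarrow{\B\eta} \B\GL_1(R) = \BGL_1(R)$, and the composite through the first map $\varOmega(\lag) \to \B\OO$ is (up to the standard identification $\B\OO \simeq \BO$ one loop down, i.e.\ $\Omega\B^2\OO \simeq \B\OO$ and the Bott map matching $\ind$) precisely the map $\varOmega(\lag) \xrightarrow{\ind} \BO \to \BGL_1(\IS) \to \BGL_1(R)$. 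A homotopy fiber always comes equipped with a canonical null-homotopy of its composition into the base, so $\varOmega(\lag)^\# \to \varOmega(\lag) \to \BGL_1(R)$ is canonically null-homotopic. Precomposing this null-homotopy with $\rho^\#$ and transporting along the homotopy filling the defining square of $\sD^\#$ gives the desired canonical null-homotopy of $\sD^\# \to \sD \to \BO \to \BGL_1(R)$.

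The main obstacle, such as it is, is bookkeeping rather than mathematics: one must be careful that the identification of the map $\sD \to \BO$ of \eqref{eq:dind} with $\ind\circ\rho$ is the same $\ind$ that appears (after delooping) in \cref{def:absrbrane}, and that the looping/delooping identifications $\Omega\B^2\OO \simeq \B\OO$, the Bott equivalence $\varOmega(\lag)\simeq \IZ\times\BO$, and the projection are compatible — i.e.\ that the square
\[
\begin{tikzcd}[row sep=scriptsize, column sep=1.2cm]
\varOmega(\lag) \rar{\ind} \dar[equal] & \BO \dar \\
\varOmega(\lag) \rar & \B\OO
\end{tikzcd}
\]
commutes up to coherent homotopy. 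This is implicit in the setup of \cref{def:absrbrane} and \eqref{eq:dind}, so I would simply record it as the content of the argument. A secondary point is to make the null-homotopy genuinely \emph{canonical}: it is canonical because it is the one induced by the universal property of the homotopy fiber $\varOmega(\lag)^\#$ together with the chosen homotopy in the homotopy-pullback square defining $\sD^\#$; no arbitrary choices enter beyond those already made in defining these spaces.
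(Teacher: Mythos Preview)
Your proof is correct and follows essentially the same approach as the paper's own proof: both use the homotopy pullback square defining $\sD^\#$ to factor the composition through $\varOmega(\lag)^\# \to \varOmega(\lag) \to \BO \to \BGL_1(R)$, and then observe that this bottom row is canonically null-homotopic because $\varOmega(\lag)^\#$ is the homotopy fiber of exactly this map. The paper's version is terser, omitting the bookkeeping about identifying $\ind$ with the loop of the map in \cref{def:absrbrane}, but the argument is the same.
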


\begin{proof}
We have a commutative diagram
\[
	\begin{tikzcd}[row sep=scriptsize, column sep=scriptsize]
		\sD^\# \rar \dar & \sD \dar & {} & {} \\
		\varOmega(\lag)^\# \rar & \varOmega(\lag) \rar & \BO \rar & \BGL_1(R)
	\end{tikzcd}
\]
Since the lower horizontal chain of compositions is canonically null-homotopic and since the square is a homotopy pullback square, we obtain a canonical null-homotopy of the desired composition.
\end{proof}

\subsection{Strip caps and ends}

We begin by fixing some notation.

\begin{notn}
Let $D_\pm \coloneqq \ID^2 \smallsetminus \{\mp1\}$ and fix the standard complex structure on the plane.  Define
\[ \Ends(D_+) \coloneqq \left\{ \varepsilon \colon (-\infty,0] \times [0,1] \hookrightarrow D_+ \left| \begin{matrix} \varepsilon \mbox{ is a biholomorphism onto its image} \\ \lim_{s \to -\infty} \varepsilon(s,t) = -1 \end{matrix} \right.\right\}. \]
Equip $\Ends(D_+)$ with the subspace topology in the space of smooth maps.  Similarly define $\Ends(D_-)$.  Notice that $\Ends(D_\pm)$ is contractible.
\end{notn}

 Fixing an orientation preserving diffeomorphism $(-\infty,0] \to (-1,0]$ endows $\{-\infty\} \cup (-\infty,0]$ with a smooth topology that it is diffeomorphic to $[-1,0]$.  With this, every $\varepsilon \in \Ends(D_+)$ determines a compactification of $D_+$, denoted by $\overline D_+$, that is diffeomorphic to a disk.

 \begin{notn}
    Given $\varepsilon \in \Ends(D_+)$, we pick a decomposition of $\partial \overline D_+ = \partial_1 \overline D_+ \cup \partial_2 \overline D_+$ where $\partial_1 \overline D_+$ is a (closed) neighborhood of $\{1\} \subset \partial \overline D_+$ and $\partial_2 \overline D_+$ is the closure of the complement of $\partial_1 \overline D_+$, in $\partial \overline D_+$. 
 \end{notn}
 
\begin{defn}
Given any $\varepsilon \in \Ends(D_\pm)$, we call the space $\overline D_\pm$ a \emph{positive/negative strip cap}.
\end{defn}

\subsection{Strip caps with Lagrangian boundary conditions}\label{subsec:StripCapsSpace}

We give a reworking of the definition of $\sD$ and $\sD^\#$ in terms of strip caps as opposed to disks.

\begin{defn}\label{dfn:abstract_strip_caps}
    Suppose $\gamma \colon [0,1] \to \BO$ is a continuous map. Define the space
    \[ \sD_+(\gamma) = \left\{ (\varepsilon, \alpha,u) \in \Ends(D_+) \times \sP\BO \times \maps\left(\overline D_+,\BO\right) \left| \begin{matrix} c \circ \alpha = c \circ u|_{\partial_1 \overline D_+} \\ \gamma = u|_{\partial_2 \overline D_+} \end{matrix} \right. \right\}, \]
    where we recall that $\varepsilon$ determines the compactification of $D_+$. We define $\sD_-(\gamma)$ analogously.
\end{defn}
\begin{rem}
    As with $\sD$ in \cref{sec:abst_r-branes_spaces_of_disks}, we obtain a unique map (up to homotopy)
    \[ \rho \colon \sD_+(\gamma) \longrightarrow \sP_{x_0,x_1}(\lag),\]
    via the two evaluation maps $(t,\varepsilon,\alpha,u) \mapsto \alpha(t)$ and $(t,\varepsilon,\alpha,u) \mapsto u|_{\partial_1 \overline D_+}(t)$.
\end{rem}
\begin{notn}
    For any $x \in \lag$, denote a choice of lift to $(\lag)^\#$ by $x^\#$.
\end{notn}
\begin{defn}\label{dfn:pullback_path}
    Let $\sD_+^\#(\gamma)$ denote the space such that the following diagram is a homotopy pullback: 
    \[
    \begin{tikzcd}[row sep=scriptsize,column sep=scriptsize]
        \sD_+^\#(\gamma) \rar{\pi} \dar[swap]{\rho^\#} & \sD_+(\gamma) \dar{\rho} \\
        \sP_{x_0^\#,x_1^\#}(\lag)^\# \rar & \sP_{x_0,x_1}(\lag)
    \end{tikzcd}.
    \]
    We define $\sD^\#_-(\gamma)$ analogously.
\end{defn}

\begin{lem}
Suppose $\gamma \colon [0,1] \to \BO$ is a continuous map. Consider the space $\sD$ based at $\gamma(0) \in \BO$ as defined in \cref{sec:abst_r-branes_spaces_of_disks}.  There are continuous gluing maps that make the following diagram commute:
\[
    \begin{tikzcd}[row sep=scriptsize, column sep=scriptsize]
        \sD_+^\#(\gamma) \times \sD_-^\#(\gamma) \rar \dar & \sD^\# \dar \\ 
        \sD_+(\gamma) \times \sD_-(\gamma) \rar  & \sD.  \\ 
    \end{tikzcd}
\]
\qed
\end{lem}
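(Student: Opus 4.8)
The plan is to construct the horizontal gluing maps in the bottom square first and then lift them to the top square using the universal property of the homotopy pullbacks defining $\sD^\#$, $\sD_+^\#(\gamma)$, and $\sD_-^\#(\gamma)$. For the bottom square, given $(\varepsilon_+,\alpha,u_+) \in \sD_+(\gamma)$ and $(\varepsilon_-,\beta,u_-) \in \sD_-(\gamma)$, I would use the chosen strip-end coordinates $\varepsilon_\pm \in \Ends(D_\pm)$ to glue the compactified strip caps $\overline D_+$ and $\overline D_-$ along the boundary arcs $\partial_2 \overline D_\pm$ (where both $u_\pm$ restrict to the common path $\gamma$) to produce a disk $\ID^2$. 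The maps $u_+$ and $u_-$ agree on the glued arc (both equal $\gamma$), so they assemble to a single map $u \colon \ID^2 \to \BO$; the loop data $\alpha$ (respectively $\beta$) restricted to the remaining boundary arc $\partial_1\overline D_\pm$, concatenated appropriately, produces a loop $\tilde\alpha \in \varOmega\BO$ with $c\circ\tilde\alpha = c\circ u|_{\partial \ID^2}$. This is exactly the data of a point of $\sD$. Continuity in $(\varepsilon_\pm,\alpha,\beta,u_\pm)$ follows because the gluing of strip caps along ends varies smoothly in the $C^\infty$ topology, so we obtain a continuous map $\sD_+(\gamma)\times\sD_-(\gamma)\to\sD$.

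Next I would check that this gluing map is compatible with the maps $\rho$ to the (path/loop) spaces of $\lag$: concretely, the composition $\sD_+(\gamma)\times\sD_-(\gamma)\to\sD\xrightarrow{\rho}\varOmega(\lag)$ agrees (up to the canonical homotopy coming from concatenation of Moore paths) with the composition that first applies $\rho\times\rho \colon \sD_+(\gamma)\times\sD_-(\gamma)\to \sP_{x_0,x_1}(\lag)\times\sP_{x_0,x_1}(\lag)$ and then the concatenation map $\sP_{x_0,x_1}(\lag)\times \sP_{x_0,x_1}(\lag) \to \varOmega(\lag)$ (using that $x_0^\#,x_1^\#$ are the marked lifts and that reversing the orientation on $\sD_-$ turns its path into the return segment of a loop). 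Unwinding the definitions, this is a formal consequence of how $\rho$ is built from the two boundary evaluation maps, together with the fact that the gluing identifies the two $\partial_1$ arcs of $\overline D_\pm$ with the two halves of the boundary circle of $\ID^2$.

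With the compatibility of $\rho$'s in hand, the lift to the top square is then automatic. Indeed $\sD^\#$ is the homotopy pullback of $\varOmega(\lag)^\# \to \varOmega(\lag)$ along $\rho$, and $\sD^\#_\pm(\gamma)$ are the homotopy pullbacks of $\sP_{x_0^\#,x_1^\#}(\lag)^\# \to \sP_{x_0,x_1}(\lag)$ along $\rho$ (\cref{dfn:pullback_path}). Concatenation of Moore paths lifts to a map $\sP_{x_0^\#,x_1^\#}(\lag)^\#\times\sP_{x_0^\#,x_1^\#}(\lag)^\# \to \varOmega(\lag)^\#$ compatible via the projections $(\lag)^\#\to\lag$ with the concatenation downstairs, because $(\lag)^\#$ is itself a homotopy fiber of an infinite-loop map and concatenation is the loop-space multiplication (up to coherent homotopy) on these fibers. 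Feeding the glued disk data $\sD_+(\gamma)\times\sD_-(\gamma)\to\sD$ together with the glued path-in-$(\lag)^\#$ data $\sD^\#_+(\gamma)\times\sD^\#_-(\gamma)\to \varOmega(\lag)^\#$ and the established homotopy between the two ways of mapping to $\varOmega(\lag)$ into the universal property of the homotopy pullback $\sD^\#$ produces the desired map $\sD^\#_+(\gamma)\times\sD^\#_-(\gamma)\to\sD^\#$ making the square commute (strictly, or up to a specified homotopy, depending on the chosen models).

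\textbf{Main obstacle.} The genuinely fiddly part is bookkeeping the various homotopies coherently: the square is only required to commute up to homotopy, but to feed data into the homotopy pullback defining $\sD^\#$ one needs the homotopy between $\rho\circ(\text{glue})$ and $(\text{concat})\circ(\rho\times\rho)$ to be specified, not merely to exist, and one must verify this homotopy is compatible with the analogous homotopy one level up in $(\lag)^\#$. Handling this cleanly is easiest by working with a strict model (e.g. Moore paths, so concatenation is strictly associative and unital) and by noting that, after fixing the strip-end coordinates $\varepsilon_\pm$, the gluing of $\overline D_\pm$ along $\partial_2$ can be arranged to identify the reparametrized boundary arcs with $\partial\ID^2$ on the nose, which reduces the required homotopy to a choice of reparametrization path that is itself canonical up to contractible choice since $\Ends(D_\pm)$ is contractible. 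No serious obstruction is expected beyond this careful matching of parametrizations and homotopies; hence the statement is asserted with \texttt{\textbackslash qed}.
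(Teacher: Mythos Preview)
The paper gives no proof of this lemma at all: it is stated with a bare \texttt{\textbackslash qed}. Your proposal is a correct and reasonable expansion of the omitted argument, following exactly the expected strategy: construct the bottom gluing by matching the two strip caps along their $\partial_2$ arcs (where both $u_\pm$ restrict to $\gamma$), concatenate the $\partial_1$ boundary data to obtain the loop $\alpha$, verify compatibility with $\rho$, and then lift using the universal property of the homotopy pullbacks defining $\sD^\#$ and $\sD_\pm^\#(\gamma)$. Your identification of the main obstacle (coherent bookkeeping of the reparametrization homotopies, handled by working with Moore-type models and the contractibility of $\Ends(D_\pm)$) is accurate and is precisely why the authors felt justified in omitting the details. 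One small point: the concatenation you write as $\sP_{x_0,x_1}(\lag)\times\sP_{x_0,x_1}(\lag)\to\varOmega(\lag)$ should more properly be $\sP_{x_0,x_1}\times\sP_{x_1,x_0}\to\varOmega_{x_0}$, which you implicitly acknowledge when you mention reversing the orientation on $\sD_-$; making this explicit would tighten the write-up.
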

\section{Brane structures}\label{sec:Polarizations}

In this section, we recall the definition of a stably polarized symplectic manifold and the stable Lagrangian Gauss map of a Lagrangian inside such a symplectic manifold.

\subsection{Polarizations}

We recall the definition of a polarization.

\begin{defn}
A (stable) complex vector bundle represented by a map $\eta \colon M \to \BU$ is called \emph{stably polarized} if it is equipped with a lift of $\eta$ to a map $\sE \colon M \to \BO$ such that $c \circ \sE = \eta$, where $c \colon \BO \to \BU$ is the complexification map.  In other words, $\eta$ is stably the complexification of a real vector bundle.
\end{defn}

\begin{defn}
An almost complex manifold $M$ is \emph{stably polarized} if its tangent bundle is stably polarized.
\end{defn}

\begin{rem}
    \begin{enumerate}
        \item A stable polarization is equivalent to a stable Lagrangian distribution in $TX$, globally over $X$.
        \item An almost complex manifold being stably polarized is a somewhat restrictive condition.  For example, this implies that all odd Chern classes are $2$-torsion.
    \end{enumerate}
\end{rem}

\begin{exmp}\label{exmp:cotpol}
The cotangent bundle $T^*Q$ of any smooth manifold $Q$ is stably polarized.  In fact, it is polarized: the cotangent fibers give a global Lagrangian distribution. Unless specified otherwise, we always assume this choice of polarization for cotangent bundles.
\end{exmp}

\subsection{The stable Lagrangian Gauss map}\label{subsec:stabg}
In this section, we fix a symplectic manifold $X$ with a stable polarization $\sE$ and a Lagrangian submanifold $L \subset X$. Comparing the Lagrangian distribution determined by $L$ to the global Lagrangian distribution $\sE$ in $TX$ gives rise to a Lagrangian Gauss map of $L$.

First, view $\BO$ as the bundle of Lagrangian distributions over $\BU$ via the fiber sequence
\[ \lag \longrightarrow \BO \longrightarrow \BU,\]
where the final map $c \colon \BO \to \BU$ is given by complexification.  The Lagrangian distribution of $L$ determines a section $L \to \BO$ that covers the map $TX|_L \colon L \to \BU$.  Similarly, the stable polarization of $X$ restricted to $L$ gives a section $\sE|_L \colon L \to \BO$ that covers the map $TX|_L \colon L \to \BU$.  This determines a unique (up to homotopy) lift $\sG_L \colon L \to \lag$ so that the composition $L \to \lag \to \BO$ represents the virtual (stable) vector bundle $TL- \sE|_L$.

\begin{defn}\label{defn:LagrangianGauss}
The \emph{stable Lagrangian Gauss map} of $L$ with respect to the polarization $\sE$ is the map $\sG_L \colon L \to \lag$.
\end{defn}

\begin{rem}
There is a more explicit way of describing the lift $\sG_L$.  First, stabilize so that $TX \oplus \un{\IC}^K = \sE \otimes \IC$ for the real vector bundle $\sE \to X$.  Fix a fiberwise linear embedding $i \colon \sE \to \IR^N$ and write $i^*\IR^N = \sE \oplus \nu$.  We have that 
\[ (\un{\IC}^N \to X) = i^*\IR^N \otimes \IC = (TX \oplus \un{\IC}^K) \oplus (\nu \otimes \IC). \]
For each $x \in L$, the subspace $T_xL \oplus \IR^K \oplus \nu \subset \un{\IC}^N$ is Lagrangian.  So the map $x \mapsto T_xL \oplus \IR^K \oplus \nu$ determines the map $\sG_L \colon L \to \lag$.
\end{rem}

\subsection{$R$-branes}

Let $X$ be a stably polarized symplectic manifold and let $L$ be a Lagrangian submanifold. Let $R$ be a commutative ring spectrum.

\begin{defn}\label{defn:R-ori_data}
A choice of \emph{$R$-orientation data} on $L$ is a choice of a lift of the stable Lagrangian Gauss map $\sG_L \colon L \to \lag$ to a map $\sG_L^\# \colon L \to (\lag)^\#$, where $(\lag)^\#$ is as in \cref{def:absrbrane}. 
\end{defn}

\begin{rem}\label{rem:brane}
\begin{enumerate}
	\item Given $R$-orientation data on $L$, the choices of $R$-orientation data on $L$ is a torsor over $[L,\BGL_1(R)]$. That is, the choices of such correspond to choices of $R$-line bundles on $L$.
	\item The composition
	\[ \varOmega L \longrightarrow \varOmega (\lag)^\# \longrightarrow \varOmega(\lag) \longrightarrow \BO \longrightarrow \BGL_1(R) \]
	is canonically null-homotopic.  Hence, the choice of $R$-orientation data on $L$ canonically determines an $R$-orientation of the $R$-line bundle on $\varOmega L$ associated to the vector bundle obtained by pulling back the index bundle from $\varOmega(\lag)$.
	\item Any contractible Lagrangian inside a stably polarized Liouville sector canonically admits $R$-orientation data.  In particular, the cotangent fiber in a cotangent bundle canonically admits $R$-orientation data.
\end{enumerate}
\end{rem}

\begin{rem}\label{rem:equiv_Rbrane_loopsinfRbrane}
Consider the connective cover $R_{\geq 0} \to R$.  A choice of $R$-orientation data is equivalent to a choice of $R_{\geq 0}$-orientation data. This follows from the fact map $\GL_1(R_{\geq 0}) \to \GL_1(R)$ is a weak equivalence, since $R_{\geq 0} \simeq \varOmega^\infty R$ (see \cref{defn:space_of_units} and \cref{rem:conn_cov_cnstr}).
\end{rem}

\begin{defn}\label{dfn:gradingData}
A \emph{grading} associated to a Lagrangian $L \subset X$ is a lift of $\sG \colon L \to \lag$ to the universal cover $\widetilde{\lag}$.
\end{defn}
\begin{rem}
    The existence of a lift of $\sG$ to the universal cover of $\lag$ guarantees that we obtain a well-defined $\IZ$-grading. The obstruction to the existence of such a lift is given by the Maslov class.
\end{rem}

\begin{defn}\label{defn:Brane}
A \emph{Lagrangian $R$-brane} in a stably polarized Liouville sector $X$ is an exact conical Lagrangian that stays away from the symplectic boundary of $X$ (see \cref{sec:geom_background}) together with a choice of grading and $R$-orientation data.
\end{defn}

\subsection{Integer coefficients}\label{sec:integer_coeffs}
    We now show that the data of a Lagrangian $R$-brane in the special case of $R = H\IZ$ corresponds to a choice of relative pin structure, where $H\IZ$ denotes the Eilenberg--MacLane spectrum of the integers. Let $X$ be a stably polarized Liouville sector and let $L \subset X$ denote an exact conical Lagrangian that stays away from the symplectic boundary of $X$.

    Fix a background class $b \in H^2(X;\IZ/2)$.
    \begin{asmpt}\label{asmpt:sw_lag}
        The second Stiefel--Whitney class of $L$ agrees with the pullback of $b$ under the inclusion $i \colon L \hookrightarrow X$.
    \end{asmpt}
    Fix a triangulation on $X$ such that $L$ is a subcomplex. Denote the $3$-skeletons of $X$ and $L$ determined by this triangulation by $X[3]$ and $L[3]$, respectively. Let $E_b$ be an orientable vector bundle on $X[3]$, such that $w_2(E_b) = b|_{X[3]}$; there is a unique such vector bundle up stable isomorphism. We fix a null-homotopy of $w_2(E_b|_{L[3]}) - b|_{L[3]}$ viewed as a map $L[3] \to K(\IZ/2,2)$. The following definition is standard, see e.g.\@ \cite{solomon2006intersection,abouzaid2012wrapped} and cf.\@ \cite[Chapter 8]{fukaya2009lagrangian}.
    \begin{defn}[Relative pin structure]\label{dfn:rel_pin}
        A \emph{relative pin structure} on $L \subset X$ is a choice of pin structure on the vector bundle $E_b|_{L[3]} \oplus TL|_{L[3]}$.
    \end{defn}
    \begin{rem}
        The obstruction to the existence of a pin structure is the second Stiefel--Whitney class. It vanishes for the vector bundle $E_b|_{L[3]} \oplus TL|_{L[3]}$ because of the orientability of $E_b$ and by \cref{asmpt:sw_lag}.
    \end{rem}
    \begin{defn}[$\IZ$-brane structure]
        Let $L \subset X$ be a Lagrangian satisfying \cref{asmpt:sw_lag}. A \emph{$\IZ$-brane structure} on $L$ is a choice of grading (see \cref{dfn:gradingData}) and a choice of relative pin structure.
    \end{defn}
    \begin{lem}\label{lem:hz_ori_data}
        Let $\sE \colon X \to \BO$ be a stable polarization such that $w_2(\sE) = b$. A choice of $H\IZ$-brane structure on $L$ in the sense of \cref{defn:Brane} is equivalent to a choice of $\IZ$-brane structure on $L$.
    \end{lem}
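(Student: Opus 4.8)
\textbf{Proof proposal for \cref{lem:hz_ori_data}.}

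The plan is to unwind both sides of the claimed equivalence into the language of lifts along fibrations and then match them. On the one hand, by \cref{defn:R-ori_data} together with \cref{def:absrbrane}, a choice of $H\IZ$-orientation data on $L$ is a lift of the stable Lagrangian Gauss map $\sG_L \colon L \to \lag$ to the homotopy fiber of $\lag \to {\B}^2{\OO} \xrightarrow{{\B}^2J} {\B}^2{\GL_1}(\IS) \xrightarrow{{\B}^2\eta} {\B}^2{\GL_1}(H\IZ)$. An $H\IZ$-brane structure additionally includes a grading in the sense of \cref{dfn:gradingData}, which is exactly the grading appearing in the $\IZ$-brane structure, so that data is common to both sides and can be set aside; the real content is to identify the $H\IZ$-orientation data with the relative pin structure. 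First I would use the standard identification $\GL_1(H\IZ) \simeq \{\pm 1\} = K(\IZ/2,0)$, so that ${\B}^2{\GL_1}(H\IZ) \simeq K(\IZ/2,2)$, and ${\B}^2J \colon {\B}^2{\OO} \to {\B}^2{\GL_1}(\IS) \to K(\IZ/2,2)$ becomes ${\B}^2w_1$ composed appropriately — more precisely, the relevant map $\lag \to K(\IZ/2,2)$ is the one classifying the mod $2$ class that obstructs orientability of the associated $H\IZ$-line bundle on $\lag$, which on $\BO$ (after looping the Bott map) is detected by $w_2$ of the corresponding virtual bundle (using that the $J$-homomorphism into $\GL_1(H\IZ)$ factors through $w_1$ of the determinant, hence the delooped obstruction class is $w_2$). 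I would make this precise by citing the discussion of $R$-orientations of vector bundles in \cref{sec:background_spectra}, where the $H\IZ$-line bundle associated to a vector bundle $V$ is null-homotopic iff $w_1(V)=0$ and an $H\IZ$-orientation is then a choice of orientation, i.e. the $H\IZ$-orientation theory of vector bundles is ordinary orientation theory.

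The key computation is then: the $H\IZ$-line bundle on $L$ associated to $\sG_L$ (equivalently, to the virtual bundle $TL - \sE|_L$ via the explicit description in \cref{subsec:stabg}) is null-homotopic iff $w_1(TL - \sE|_L) = 0$, and its set of null-homotopies (when nonempty) is a torsor over $[L, K(\IZ/2,1)] = H^1(L;\IZ/2)$. But this is not yet a pin structure; the subtlety is that $(\lag)^\#$ is the fiber of a map to ${\B}^2{\GL_1}(H\IZ)$, i.e. to $K(\IZ/2,2)$, not to $K(\IZ/2,1)$. So a lift $\sG_L^\#$ is a null-homotopy of the composite $L \to \lag \to K(\IZ/2,2)$, which is a map classified by $w_2$ of the relevant virtual bundle together with the background term. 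Concretely, I expect the composite $L \xrightarrow{\sG_L} \lag \to {\B}^2{\OO} \xrightarrow{{\B}^2 J} K(\IZ/2,2)$ to represent $w_2(TL) + i^*b$ (or $w_2(TL) - i^*b$), once we fix the stable polarization $\sE$ with $w_2(\sE) = b$ as in the hypothesis of \cref{lem:hz_ori_data}; here one uses $w_1(TL) = w_1(\sE|_L)$ forced by the Lagrangian condition (a Lagrangian submanifold of a polarized symplectic manifold has $w_1(TL) = w_1(\sE|_L)$ since $TL$ and $\sE|_L$ differ by a complex, hence orientable, bundle after complexification — indeed $TL\otimes\IC \cong TX|_L$), so that the degree-one obstruction vanishes automatically and the genuine obstruction lives in degree two and equals $w_2(TL) + i^*b$. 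A null-homotopy of this degree-two class is precisely a trivialization of $w_2$ of the stabilized bundle $E_b|_{L[3]} \oplus TL|_{L[3]}$ appearing in \cref{dfn:rel_pin} — i.e. a relative pin structure — after passing to the $3$-skeleton, which is harmless since $K(\IZ/2,2)$ has no cells relevant above that range for the purpose of the null-homotopy. And the torsor structures match: relative pin structures form a torsor over $H^1(L;\IZ/2)$ by \cref{dfn:rel_pin}, and $H\IZ$-orientation data form a torsor over $[L, \BGL_1(H\IZ)] = [L, K(\IZ/2,1)] = H^1(L;\IZ/2)$ by \cref{rem:brane}(i); I would check these actions agree under the identification just described.

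The main obstacle I anticipate is \emph{bookkeeping of the background class and the skeleta}: making rigorous the claim that a null-homotopy of the map $L \to K(\IZ/2,2)$ representing $w_2(TL) + i^*b$ is the \emph{same data} as a pin structure on $E_b|_{L[3]} \oplus TL|_{L[3]}$ relative to the chosen null-homotopy of $w_2(E_b|_{L[3]}) - b|_{L[3]}$. This requires carefully comparing the fiber sequence $K(\IZ/2,1) \to (\lag)^\# \to \lag$ with the fibration $\BPin \to \BO \to K(\IZ/2,2)$ (classifying $w_2$), tracking how the extra summand $E_b$ and its fixed trivialization of $w_2 - b$ converts the absolute statement into a relative one, and verifying that everything is compatible with the chosen stable polarization $\sE$ with $w_2(\sE)=b$. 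I would organize this as: (1) identify $\GL_1(H\IZ)$ and the delooped $J$-homomorphism mod $2$; (2) show the degree-one obstruction vanishes via the Lagrangian condition; (3) identify the degree-two obstruction class with $w_2(TL) + i^*b$; (4) identify null-homotopies with relative pin structures, matching torsor structures; (5) reinstate the grading data on both sides, which is literally the same. Steps (1)–(2) are formal; step (3) is a characteristic-class computation in the cohomology of $\lag$ and ${\B}^2{\OO}$ that I would do by pulling back along $\BO \simeq \varOmega(\lag)$-delooping and using $c\colon \BO \to \BU$; step (4) is where the relative-pin bookkeeping lives and is the crux.
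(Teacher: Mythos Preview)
Your overall strategy matches the paper's: identify ${\B}^2\GL_1(H\IZ)\simeq K(\IZ/2,2)$, recognize the obstruction map $\lag\to K(\IZ/2,2)$ as pulling back to $w_2(TL-\sE|_L)$ on $L$ (the paper invokes \cite[Lemma 11.7]{seidel2008fukaya} for this, organized as a homotopy pullback square relating $(\lag)^\#\to\lag$ with $\BPin\to\BO\xrightarrow{w_2}K(\IZ/2,2)$), and then match null-homotopies with relative pin structures on $E_b|_{L[3]}\oplus TL|_{L[3]}$ via the Whitney sum formula and a fixed null-homotopy of $w_2(\sE|_{X[3]}-E_b)$.

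There is, however, a genuine error in your step (2). You claim $w_1(TL)=w_1(\sE|_L)$ is forced by the Lagrangian condition, reasoning that $TL\otimes\IC\cong TX|_L\cong\sE|_L\otimes\IC$. But two real bundles with isomorphic complexifications need not have equal $w_1$ (the M\"obius and trivial real line bundles over $S^1$ both complexify to the trivial complex line), and indeed a Lagrangian Klein bottle in standard $\IR^4$ with the trivial polarization has $w_1(TK)\neq 0=w_1(\sE|_K)$. The correct source of the null-homotopy of $w_1(TL-\sE|_L)$ is the \emph{grading}: a lift of $\sG_L$ to the universal cover $\widetilde{\lag}$ factors through a simply connected space, hence canonically trivializes the pullback of $w_1\in H^1(\BO;\IZ/2)$ along $L\to\lag\to\BO$. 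This is exactly how the paper proceeds. So you cannot ``set aside'' the grading as inert common data; it is what makes the Whitney computation yield $w_2(TL-\sE|_L)=w_2(TL)+i^*b$. With this correction the rest of your outline is the paper's argument.
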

    \begin{proof}
        Consider the following diagram
       \begin{equation}\label{eq:dia_w2}
           \begin{tikzcd}[row sep=scriptsize, column sep=scriptsize]
               (\lag)^\# \rar \dar & \lag \dar \rar & K(\IZ/2,2) \\
               \BPin \rar & \BO \urar[bend right,swap]{w_2} & {} 
           \end{tikzcd},
       \end{equation}
       where both rows are homotopy fiber sequences, the square is a homotopy pullback, and the triangle commutes by \cite[Lemma 11.7]{seidel2008fukaya}.

       The choice of a grading of $L$ determines a choice of null-homotopy of $w_1(TL - \sE|_L)$. Since $w_1(E_b)=0$ it follows that $w_2(\sE|_{X[3]}-E_b)$ is also null-homotopic. Fix such a null-homotopy and note that
       \[ \sE|_{L[3]}-E_b|_{L[3]} \cong (TL|_{L[3]} - E_b|_{L[3]}) - (TL|_{L[3]}-\sE|_{L[3]}).\]
       Using the Whitney sum formula, the choices of null-homotopies of $w_1(TL - \sE|_L)$ and $w_2(\sE|_{X[3]}-E_b)$ above yields
       \[
            w_2(TL|_{L[3]} - E_b|_{L[3]}) = w_2(TL|_{L[3]}-\sE|_{L[3]}).
       \]
       The orientation on $E_b$ determines a null-homotopy of $w_2(E_b^{\oplus 2})$ and hence we get a one-to-one correspondence between null-homotopies of $w_2(TL|_{L[3]} - E_b|_{L[3]})$ and $w_2(TL|_{L[3]} \oplus E_b|_{L[3]})$. To conclude we get a one-to-one correspondence between null-homotopies of $w_2(TL|_{L[3]} \oplus E_b|_{L[3]})$ and $w_2(TL|_{L[3]}-\sE|_{L[3]})$. Since a pin structure is equivalent to the choice of a null-homotopy of the second Stiefel--Whitney class, this completes the proof of the lemma.
    \end{proof}

\section{Moduli spaces}\label{sec:ModuliSpaces}

In this section, we define moduli spaces of $J$-holomorphic disks with Lagrangian boundary conditions.  These moduli spaces and their associated index bundles will give rise to the requisite flow category-type data that is needed to define the wrapped Donaldson--Fukaya category with coefficients in a ring spectrum $R$.  

\begin{asmpt}
\begin{enumerate}
\item Let $R$ be a commutative ring spectrum.
\item Let $X$ be a stably polarized Liouville sector.
\item Let $L \subset X$ be a Lagrangian $R$-brane.
\end{enumerate}
\end{asmpt}

Since $X$ is stably polarized, any Lagrangian $L \subset X$ admits a stable Lagrangian Gauss map $\sG_L \colon L \to \lag$, see \cref{defn:LagrangianGauss}. 
\begin{notn}
    Throughout this section, $\mu$ denotes Maslov index, see \cite{robbin1993maslov}.
\end{notn}

Often we will abuse the notation and refer to the Lagrangian $R$-brane simply by the underlying Lagrangian submanifold itself. In such instances, the grading and $R$-orientation data are assumed.

\subsection{Admissible Floer data}
Let $X$ be a Liouville sector that corresponds to the Liouville pair $(\overline X,F)$ (see \cref{sec:geom_background}). If $H \in C^\infty(\overline X)$ is a Hamiltonian, we denote by $X_H$ the Hamiltonian vector field associated to, which is defined by $\omega(X_H,-) = -dH$. We consider the following space of admissible Floer data:

\begin{defn}[{\cite[Definition 2.13]{sylvan2019on}}]\label{dfn:compatible_H}
A Hamiltonian $H \in C^\infty(\overline X)$ is \emph{compatible with the Liouville pair $(\overline X,F)$} if the following holds.
\begin{enumerate}
\item $H$ is strictly positive and $H(r,-) = r^2$ in the conical end $[0,\infty)_r \times \partial_\infty\overline X$ of $\overline X$.
\item $X_H$ is tangent to $\im \sigma|_{F \times \{0\}}$ where $\sigma$ is defined in \eqref{eq:stop_proper_emb}.
\item $d \theta(X_H)$ is nowhere negative on a neighborhood of $F$, where $\theta$ is an angular coordinate on $\IC_{\mathrm{Re} \leq \varepsilon}$
\end{enumerate}
Let $\mathcal H(\overline X,F)$ be the space of Hamiltonians on $\overline X$ that are compatible with $(\overline X,F)$.
\end{defn}

\begin{defn}\label{dfn:compatible_J}
An $\omega$-compatible almost complex structure $J$ on $\overline X$ is \emph{compatible with the Liouville pair $(\overline X,F)$} if the following holds.
\begin{enumerate}
\item There is some $c > 0$ such that $d(r^2) \circ J = c r^2 \cdot \lambda$ in the conical end.
\item $J$ is invariant under the Liouville flow outside a compact set.
\item The projection $\pi_{\IC}\colon F \times \IC_{\mathrm{Re} < \varepsilon} \to \IC_{\mathrm{Re} < \varepsilon}$ is holomorphic in a neighborhood of $\left\{\mathrm{Re} = 0\right\}$.
\end{enumerate}
Denote by $\mathcal J(\overline X,F)$ the space of almost complex structures on $\overline X$ compatible with $(\overline X,F)$.
\end{defn}

\begin{rem}
The holomorphicity assumption in \cref{dfn:compatible_J}(iii) is a standard assumption used to prevent $J$-holomorphic curves from passing through the stop (the core of the horizontal boundary of the corresponding Liouville sector). The space $\mathcal J(\overline X,F)$ is non-empty and contractible, see \cite[Lemma 3.2]{sylvan2019on}.
\end{rem}

\begin{defn}\label{defn:AdmissibleFloerData}
Let $\sH\sJ(\overline{X},F) \coloneqq \sH(\overline{X},F) \times \sJ(\overline{X},F)$ denote the product space.  We say that a pair $(H,J) \in \sH\sJ(\overline{X},F)$ is \emph{admissible}.
\end{defn}
\begin{notn}
    \begin{enumerate}
        \item Let $\overline{\mathcal{R}}_k$ denote the compactified Deligne--Mumford moduli space of disks with $k+1$ boundary punctures $\xi_0,\ldots,\xi_k$.
        \item Let $\psi^\tau$ denote the time-$\tau$ Liouville flow on $\overline{X}$.
        \item Let $J_t$ denote a map $[0,1] \to \sJ(\overline X,F)$.
        \item Let $H \in \sH(\overline X,F)$.
    \end{enumerate}
\end{notn}
\begin{defn}[{\cite[Definition 4.1]{abouzaid2010a}}]\label{dfn:Floer_datum}
    A \emph{Floer datum} on a stable disk $S \in \overline{\mathcal{R}}_k$ with one negative end and $k$ positive ends consists of the following choices on each component:
    \begin{description}
        \item[Strip-like ends] Biholomorphisms $\varepsilon^-_0 \colon (-\infty,0] \times [0,1] \to N(\xi_0)$ and $\varepsilon^+_i \colon [0,\infty) \times [0,1] \to N(\xi_i)$ for $i\in \{1,\ldots,k\}$, where $N$ denotes an unspecified but fixed neighborhood. We let $\boldsymbol{\varepsilon}$ denote the tuple $(\varepsilon_0^-,\varepsilon_1^+,\ldots,\varepsilon_k^+)$.
        \item[Time-shifting map] A map $\rho_S \colon \partial \overline{S} \to [1,\infty)$ that is constantly equal to $w_{k,S}$ near the $k$-th end.
        \item[Basic $1$-form and Hamiltonian perturbations] A closed $1$-form $\alpha_S$ with $\alpha_S|_{\partial S} = 0$ and a map $H_S \colon S \to \mathcal{H}(\overline{X},F)$ defining a Hamiltonian vector field $X_S$ such that
        \[
            (\varepsilon_0^-)^\ast(X_S \otimes \alpha_S) = X_{\frac{1}{w_{0,S}}(\psi^{w_{0,S}})^\ast H} \otimes dt, \quad (\varepsilon_i^+)^\ast(X_S \otimes \alpha_S) = X_{\frac{1}{w_{i,S}}(\psi^{w_{i,S}})^\ast H} \otimes dt,
        \]
        for $i\in\{1,\ldots,k\}$.
        \item[Almost complex structures] A map $J_S \colon S \to \mathcal{J}(\overline{X},F)$ such that
        \[
        (\varepsilon_0^-)^\ast J_S = (\psi^{w_{0,S}})^\ast J_t, \quad (\varepsilon_i^+)^\ast J_S = (\psi^{w_{i,S}})^\ast J_t,
        \]
        for $i\in \{1,\ldots,k\}$.
    \end{description}
\end{defn}
\begin{rem}
    The condition on the basic $1$-forms and Hamiltonian perturbations implies that $\alpha_S$ pulls back via $\varepsilon^-_0$ and $\varepsilon^+_i$ to the $1$-forms $w_{0,S}dt$ and $w_{i,S}dt$ for $i\in \{1,\ldots,k\}$, respectively. In particular this means that $w_{0,S} = \sum_{i=1}^k w_{i,S}$.
\end{rem}
For every lower dimensional stratum in $\osr_k$, choose strip-like ends of each element that vary smoothly over the strata. If $\sigma \subset \partial \osr_k$ is a $p$-codimensional stratum of the boundary, any element is represented by a stable disk with $p$ nodes. For each node and for each real number $R$ we obtain an element of $\osr_k$ by removing the images of $(-\infty,-R] \times [0,1]$ and $[R,\infty) \times [0,1]$ in the two strip-like ends at the two sides of the node, and gluing the complements. In total this yields a chart on $\osr_k$
\begin{equation}\label{eq:gluing_corner}
    (0,\infty]^p \times \sigma \longrightarrow \osr_k,
\end{equation}
whose image is an open neighborhood of $\sigma$ such that each coordinate corresponds to a gluing parameter.
\begin{defn}\label{dfn:conformally_equiv_floer_data}
    Two choices of Floer data $(\boldsymbol{\varepsilon}^i,\rho_S^i,\alpha_S^i,H_S^i,J_S^i)$ for $i \in \{1,2\}$ on a stable disk $S \in \osr_k$ are \emph{conformally equivalent} if there is a constant $C$ so that $w^2_{S} = C w^1_{S}$, $\alpha^2_{S} = C \alpha^1_{S}$, $(\psi^C)^\ast J^2_{S} = J^1_{S}$ and $H^2_{S} = \frac{1}{C^2} (\psi^C)^\ast H^1_{S}$.
\end{defn}
\begin{defn}[{\cite[Definition 4.2]{abouzaid2010a}}]\label{dfn:univ_Floer_datum}
    A \emph{universal and conformally consistent choice of Floer data} is a choice of Floer datum for every integer $k \geq 2$ and every (representative) of $S\in \osr_k$ which varies smoothly over $\osr_k$ such that in the coordinates \eqref{eq:gluing_corner}, Floer data agree to infinite order at the boundary stratum with the Floer data obtained by gluing.
\end{defn}
\begin{lem}[{\cite[Lemma 3.8]{sylvan2019on}}]
    Universal and conformally consistent choices of Floer data exist. Any Floer datum on $S \in \osr_k$ can be extended to a universal and conformally consistent choice of Floer data that agrees with a given such choice on $\osr_m$ for all $m < k$.
    \qed
\end{lem}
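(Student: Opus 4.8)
The final statement to be proved is the existence part of \cref{dfn:univ_Floer_datum}, i.e.\@ the lemma attributed to \cite[Lemma 3.8]{sylvan2019on}: universal and conformally consistent choices of Floer data exist, and any given Floer datum on a single disk $S \in \osr_k$ can be extended to a universal and conformally consistent choice agreeing with a prescribed such choice on all $\osr_m$ with $m < k$.

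\begin{proof}[Proof sketch]
The plan is to build the choice of Floer data by induction on $k$, exploiting the fact that the relevant spaces of choices are nonempty and contractible and that the boundary strata of $\osr_k$ are fiber products of lower-dimensional moduli spaces $\osr_{m}$ with $m < k$, so that the compatibility-with-gluing constraint only ever references already-constructed data. First I would make precise the space of Floer data on a fixed stable disk $S$: choices of strip-like ends form a contractible space (as noted for $\Ends(D_\pm)$ in \cref{subsec:StripCapsSpace}), the time-shifting maps $\rho_S$ form an affine (hence contractible) space, the closed $1$-forms $\alpha_S$ with the prescribed behavior on strip-like ends form an affine space, the Hamiltonian perturbation data $H_S \colon S \to \sH(\overline X,F)$ form a contractible space since $\sH(\overline X,F)$ is convex (condition (i) of \cref{dfn:compatible_H} is affine and conditions (ii)--(iii) are convex), and the almost complex structure data $J_S \colon S \to \sJ(\overline X, F)$ form a contractible space since $\sJ(\overline X,F)$ is nonempty and contractible (as recorded after \cref{dfn:compatible_J}, citing \cite[Lemma 3.2]{sylvan2019on}). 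Hence the total space of Floer data on $S$, subject to the constraints pinning down the behavior on the strip-like ends in terms of the fixed background datum $(H,J_t)$, is a nonempty contractible space, and the same holds for families parametrized continuously/smoothly by a manifold with corners.

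Next I would set up the induction. For $k = 2$, $\osr_2$ is a point (a single disk with two inputs and one output) up to the conformal equivalence of \cref{dfn:conformally_equiv_floer_data}; one simply picks any Floer datum, or the prescribed one if given. Assume universal and conformally consistent Floer data have been chosen on $\osr_m$ for all $m < k$. The boundary $\partial \osr_k$ is stratified by stable disks with nodes, and each codimension-$p$ stratum $\sigma$ is (a quotient of) a product $\prod_j \osr_{m_j}$ with all $m_j < k$; via the gluing charts \eqref{eq:gluing_corner}, $(0,\infty]^p \times \sigma \to \osr_k$, the inductive data on the factors determine, by the gluing construction of Floer data, a Floer datum on a collar neighborhood of each boundary stratum, and these agree on overlaps to infinite order by the inductive consistency. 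This produces a Floer datum defined on (a neighborhood of) $\partial \osr_k$, smooth and satisfying the infinite-order matching condition by construction. I would then extend this from the boundary to all of $\osr_k$: since the space of Floer data forms a contractible ``bundle'' over $\osr_k$ (more precisely, the constraints are fiberwise convex/affine and the fibers are contractible), and $\partial \osr_k \hookrightarrow \osr_k$ is a cofibration of manifolds with corners, the section defined near the boundary extends to a global smooth section. The only subtlety is matching to infinite order at the boundary rather than merely continuously; this is handled by the standard device of first extending continuously, then modifying the extension in a collar using a cutoff function so that all normal derivatives along the boundary agree with those of the glued datum --- this works precisely because the set of admissible data is closed under such convex interpolation with the boundary values. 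Finally, to get the relative statement: if a Floer datum is prescribed on a single $S \in \osr_k$ and a universal choice is prescribed on all $\osr_m$, $m<k$, one simply runs the same extension keeping in addition the prescribed datum over the closed subset $\{S\} \subset \osr_k$, which is again a cofibration, and convex interpolation in the contractible fibers lets one simultaneously match the boundary data and the datum at $S$.

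The main obstacle, and the place where care is genuinely required, is the infinite-order matching at the boundary strata in the gluing charts \eqref{eq:gluing_corner}: one must ensure that the extension is not just smooth but that its full jet along $\partial\osr_k$ coincides with that of the datum obtained by gluing, coherently across all strata of all codimensions simultaneously. This is a Borel/Whitney-type jet-extension argument carried out inductively over the codimension of the strata, using the fact that the relevant spaces of admissible data are convex (so one can take convex combinations of a candidate extension with the boundary jet without leaving the admissible class) together with the associativity/compatibility of the gluing maps (the commuting square of collars in \eqref{eq:collar_comp}-type diagrams, and the associativity of the charts \eqref{eq:gluing_corner}). Everything else --- nonemptiness and contractibility of the fibers, the cofibration property of $\partial\osr_k \hookrightarrow \osr_k$ and of $\{S\}\hookrightarrow\osr_k$ --- is standard and follows the template of \cite[Section 4]{abouzaid2010a} and \cite[Lemma 3.8]{sylvan2019on}, which we follow verbatim; we therefore only indicate the structure of the argument and refer to loc.\@ cit.\@ for the details.
\end{proof}
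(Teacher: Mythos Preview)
Your proof sketch is correct and follows the standard inductive/contractibility argument of \cite[Section 4]{abouzaid2010a} and \cite[Lemma 3.8]{sylvan2019on}. Note that the paper itself gives no proof of this lemma---the \qed marks it as a direct citation---so there is nothing to compare beyond observing that your sketch is exactly the argument behind the cited references.
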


\subsection{Moduli space for the differential}

\begin{notn}
\begin{enumerate}
\item Let $L_0$ and $L_1$ be two Lagrangian $R$-branes in $\overline X$.
\item Let $H \in \sH(\overline X,F)$ and $J_t \colon [0,1] \to \sJ(\overline X,F)$. By abuse of notation we will write $(H,J_t) \in \sH\sJ(\overline X,F)$ to denote such a choice.
\end{enumerate}
\end{notn}

\begin{defn}\label{defn:ham_chords}
Define
\[ \sX(L_0,L_1;H) \coloneqq \left\{ x \colon [0,1] \to \overline{X} \mid \dot{x}(t) = X_H(x(t)), \,x(0) \in L_0, \,x(1) \in L_1 \right\}.\]
The gradings of the Lagrangians give a well-defined grading function on $\mathcal X(L_0,L_1;H)$ via the Maslov index \cite{robbin1993maslov}, which we denote by $\mu$.
\end{defn}

\begin{defn}\label{dfn:floer_strips_boudary_asymp}
Given $a,b \in \mathcal X(L_0,L_1)$, define $\widetilde{\sR}(a,b;H,J_t)$ to be the set of maps 
\begin{equation}\label{eq:floer_strips_boudary_asymp}
\widetilde{\sR}(a;b;H,J_t) \coloneqq \left\{ u\colon \IR \times [0,1] \to \overline{X} \,\left|\, \begin{matrix}
\partial_s u + J_{t}\left(\partial_t u - X_{H}\right) = 0 \\
u(\IR \times \left\{0\right\}) \subset L_0 \\
u(\IR \times \left\{1\right\}) \subset L_1 \\
\lim_{s\to -\infty} u(s,t) = b(t) \\
\lim_{s\to \infty} u(s,t) = a(t) \end{matrix} \right. \right\}.
\end{equation}
\end{defn}

There is a natural $\IR$-action given by translation of the $s$-coordinate in the domain. Define $\sR(a;b;H,J_t) \coloneqq \widetilde{\sR}(a;b;H,J_t)/\IR$ whenever this action is free and declare it to be empty otherwise.

\begin{defn}
    Define $\osr(a;b;H,J_t)$ to be the Gromov compactification of $\sR(a;b;H,J_t)$ (see \cite[(9l)]{seidel2008fukaya}, \cite[Lemma 3.9]{sylvan2019on}).
\end{defn}

\begin{notn}
Often times, we will omit the data of $H$ and $J_t$ from the notation of $\osr(a;b;H,J_t)$, simply writing $\osr(a;b)$. We will continue this abuse of notation in the following subsections of this section.
\end{notn}

Let $I(a;b) \coloneqq \underline{\IR} \oplus T\osr(a;b) \to \osr(a;b)$ denote a copy of the trivial line bundle summed with the tangent bundle of $\osr(a;b)$.  As in \cref{dfn:assoc_rline}, let $I_R(a;b)$ denote the $R$-line bundle associated to $I(a;b)$.

\begin{lem}\label{lem:transversality_for_strips}
There exists a comeager set of $(H,J_t) \in \sH\sJ(\overline{X},F)$ such that the moduli space $\osr(a;b)$ is a smooth manifold with corners of dimension $\mu(b)-\mu(a)-1$.  Moreover, the following conditions are satisfied:
\begin{enumerate}
    \item For $a,b,c \in \sX(L_0,L_1)$, there is a map
    \[ \mu_{abc}\colon \osr(a;b) \times \osr(b;c) \longrightarrow \osr(a;c) \]
    that is a diffeomorphsim onto a face of $\osr(a;c)$ such that defining
    \[ \partial_i \osr(a;c) \coloneqq \bigsqcup_{\substack{b \in \sX(L_0,L_1) \\ \mu(b)-\mu(a) = i}} \osr(a;b) \times \osr(b;c),\]
    endows $\osr(a;c)$ with the structure of a $\ang{\mu(c)-\mu(a)-1}$-manifold (see \cref{def:k_mfd}).
    \item There exist rank one free $R$-modules $\check{\fo}(a)$ for all $a \in \sX(L_0,L_1)$ and isomorphisms:
    \[ \psi_{ab} \colon \check{\fo}(b) \overset{\simeq}{\longrightarrow} \check{\fo}(a)  \otimes_RI_R(a;b)\]
    that are compatible with the maps $\mu_{abc}$ in the sense that there are commutative diagrams similar to \eqref{eq:rorcoh}.
\end{enumerate}
\end{lem}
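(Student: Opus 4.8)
The plan is to prove \cref{lem:transversality_for_strips} by the usual Floer-theoretic transversality argument combined with the abstract $R$-orientation machinery from \cref{sec:AbstractBranes}. First I would establish the smoothness and dimension statement: for a comeager set of admissible $(H,J_t)$, the linearized operator at every element of $\widetilde{\sR}(a;b)$ is surjective, so that $\widetilde{\sR}(a;b)$ is a smooth manifold of dimension $\mu(b)-\mu(a)$, on which the $\IR$-action is free (after discarding constant solutions), giving $\sR(a;b)$ of dimension $\mu(b)-\mu(a)-1$. The Gromov compactification $\osr(a;b)$ is then a compact smooth manifold with corners, with its corner structure parametrized by broken trajectories; this is standard and I would cite \cite{seidel2008fukaya,sylvan2019on,wehrheim2012smooth} for the gluing theorem supplying the smooth structure near the corners. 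The $\ang{\mu(c)-\mu(a)-1}$-manifold structure in item (i) is then obtained by reading off the codimension-$i$ faces as the disjoint unions indicated, using that concatenation of broken strips is a diffeomorphism onto a face, with the $\ang{k}$-manifold axioms of \cref{def:k_mfd} verified exactly as in \cref{lem:smooth_gluing}.

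For item (ii), the idea is to transport the abstract orientation data built in \cref{sec:AbstractBranes} along the evaluation maps from the moduli space to the spaces of abstract strip caps. Concretely, for each Hamiltonian chord $a \in \sX(L_0,L_1)$ I would first linearize the $\partial$-operator along $a$ to get a path in $\lag$ (comparing $TL_0$ and $TL_1$ transported along $a$ via the polarization), together with its lift to $(\lag)^\#$ coming from the $R$-orientation data $\sG_{L_i}^\#$ on the two branes and the canonical null-homotopy of \cref{lem:DHashNullity}; capping off determines the rank one free $R$-module $\check{\fo}(a)$, namely the $R$-line associated to the index of a Cauchy--Riemann operator on a strip cap with the prescribed Lagrangian boundary conditions. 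Then, given $u \in \osr(a;b)$, the linearized operator $D_u$ fits into a gluing exact sequence relating $\ind(D_u) \oplus \un\IR$ to the difference of the capping operators at $a$ and $b$; passing to associated $R$-line bundles (using \cref{rem:assoc_vector_bundles} and the additivity of $R$-lines under direct sum) yields the isomorphism $\psi_{ab}\colon \check\fo(b) \xrightarrow{\simeq} \check\fo(a) \otimes_R I_R(a;b)$ over $\osr(a;b)$. The compatibility with $\mu_{abc}$ is the coherence of this gluing/excision isomorphism under concatenation, which follows from the associativity of linear gluing for Cauchy--Riemann operators and matches diagram \eqref{eq:rorcoh} after identifying the index of a concatenated strip with the direct sum of the two indices plus a trivial $\IR$.

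The main obstacle I expect is the coherence in item (ii): one must choose the capping data and the gluing isomorphisms \emph{simultaneously} for all chords and all pairs, in a way that is strictly compatible (not just up to homotopy) with the composition maps, so that the diagrams analogous to \eqref{eq:rorcoh} commute on the nose. This is exactly the kind of bookkeeping handled abstractly by the spaces $\sD^\#_\pm(\gamma)$ and their gluing maps in \cref{subsec:StripCapsSpace}: the point is that the space of strip caps with fixed Lagrangian boundary data is contractible (being built from the contractible $\Ends(D_\pm)$), so all choices of capping are canonically homotopic, and the homotopy pullback definition of $\sD^\#_\pm(\gamma)$ propagates the $R$-orientation data coherently. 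I would therefore reduce the construction of $\check\fo(a)$ and $\psi_{ab}$ to pulling back the universal index bundle and its canonical null-homotopy (from \cref{lem:DHashNullity}) along the classifying maps $\osr(a;b) \to \sD^\#$, and the needed commutative diagrams then follow formally from the commutativity already recorded in \cref{subsec:StripCapsSpace}, together with the functoriality of associated $R$-line bundles under pullback. The analytic input (exponential decay, the precise form of the linear gluing isomorphism, smoothness of the classifying maps) is standard and I would only sketch it, referring to \cref{sec:canonical_orientations} where the general construction of canonical $R$-orientations for moduli of $J$-holomorphic curves is carried out in detail.
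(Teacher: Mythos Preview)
Your proposal is correct and follows essentially the same route as the paper: item (i) is dispatched by citing the standard transversality and gluing literature (the paper cites Seidel and Floer--Hofer--Salamon for the interior, and Large \cite{large2021spectral} and Fukaya--Oh--Ohta--Ono \cite{fukaya2016exponential} rather than Wehrheim for the smooth corner structure in the Floer setting), and item (ii) is deferred to \cref{sec:canonical_orientations}, where $\check\fo(a)$ is defined as the generic fiber of the trivialized $R$-line bundle $\sV_R(a)$ over the space of abstract Floer strip caps $\sC^\#(a)$, with the isomorphisms $\psi_{ab}$ coming from the gluing maps of \cref{lem:cap_gluing}.

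One small correction: the spaces $\sD^\#_\pm(\gamma)$ (and $\sC^\#_\pm(\gamma)$) are \emph{not} contractible in general---they involve path spaces in $(\lag)^\#$, not just $\Ends(D_\pm)$---so your argument ``all choices of capping are canonically homotopic'' does not go through as stated. The paper's mechanism is instead that the index bundle $\sV_\pm(\gamma) \to \sC^\#_\pm(\gamma)$ is $R$-\emph{orientable} (\cref{lem:CanonicalOrientation}), and one fixes a trivialization of the associated $R$-line bundle once and for all; the coherence in \eqref{eq:rorcoh} then comes from the gluing maps $\sC^\#(a) \times \osr(a;b) \to \sC^\#(b)$ being covered by index-bundle isomorphisms, not from contractibility of the cap spaces. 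Your final paragraph (pulling back the universal index bundle and its canonical null-homotopy) is the correct formulation and matches the paper exactly.
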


\begin{proof}
The existence of a smooth structure away from the corners is standard in the literature, see e.g.\@ \cite[(9k)]{seidel2008fukaya} and \cite{floer1995transversality}. The existence of a smooth structure respecting the corner structure and the maps $\mu_{abc}$ are due to Large \cite[Section 6]{large2021spectral} and Fukaya--Oh--Ohta--Ono \cite{fukaya2016exponential}.  This is the proof of item (i).  We defer the proof of item (ii) to \cref{sec:gluing_abs_caps}.
\end{proof}

\subsection{Moduli spaces for continuations}

\begin{notn}\label{notn:tau_continuation}
\begin{enumerate}
    \item Fix two Lagrangian $R$-branes $L_0$ and $L_1$ of $X$.
    \item Let $\tau\colon \IR \to [0,1]$ denote a smooth function such that $\tau(s) \equiv 1$ near $-\infty$ and $\tau(s) \equiv 0$ near $\infty$. 
\end{enumerate}
\end{notn}
\begin{asmpt}\label{asmpt:one-param_floer_data}
    Assume that $(H^{r},J^{r}_t)_{r \in [0,1]} \subset \sH\sJ(\overline X,F)$ is a $1$-parameter family such that $(H^0,J_t^0)$ and $(H^1,J_t^1)$ are admissible in the sense that \cref{lem:transversality_for_strips} holds.
\end{asmpt}
\begin{defn}\label{dfn:moduli_maps_cont}
Let $a \in \mathcal X(L_0,L_1;H^0)$ and $b \in \mathcal X(L_0,L_1;H^1)$.  Define $\osr^{\tau}(a;b)$ to be the Gromov compactification of the set of maps
\[ \sR^\tau(a;b) \coloneqq \left\{ u \colon \IR \times [0,1] \to \overline{X} \, \left| \, \begin{matrix} \partial_su + J^{\tau(s)}_t(\partial_t u - X_{H^{\tau(s)}_t}) = 0 \\ u(\IR \times \{0\}) \subset L_0 \\ u(\IR \times \{1\}) \subset L_1 \\ \lim_{s \to -\infty} u(s,t) = b(t) \\ \lim_{s \to \infty} u(s,t) = a(t) \end{matrix} \right. \right\}. \]
\end{defn}

Endow $\osr^\tau(a;b)$ with the Gromov topology.  With this topology it is a compact metric space (see \cite[Lemma 3.9]{sylvan2019on}).

Let $I^\tau(a;b) \to \osr^\tau(a;b)$ denote the tangent bundle (the index bundle) of $\osr^\tau(a;b)$, and let $I^\tau_R(a;b)$ denote its associated $R$-line bundle (see \cref{dfn:assoc_rline}).

\begin{lem}\label{lem:moduli_cont}
There exists a comeager set of $(H^r,J^r_t)_{r \in [0,1]} \subset \sH\sJ(\overline X,F)$ satisfying \cref{asmpt:one-param_floer_data} such that the moduli space $\osr^\tau(a;b)$ is a smooth manifold with corners of dimension $\mu(b)- \mu(a)$.  Moreover, the following conditions are satisfied:
\begin{enumerate}
    \item For $a,a' \in \sX(L_0,L_1;H^0)$ and $b,b' \in \sX(L_0,L_1;H^1)$, there are maps
    \begin{align*}
        \osr(a;a';H^0,J^0_t) \times \osr^\tau(a';b) \longrightarrow \osr^\tau(a;b) \\
        \osr^\tau(a;b') \times \osr(b';b;H^1,J^1_t) \longrightarrow \osr^\tau(a;b)
    \end{align*}
    that are diffeomorphisms onto faces of $\osr^\tau(a;b)$ such that defining
    \begin{align*}
    \partial_i \overline{\mathcal R}^\tau(a;b) &\coloneqq \bigsqcup_{\substack{a' \in \sX(L_0,L_1;H^0) \\ \mu(a') - \mu(a)= i}} (\overline{\mathcal R}(a;a';H^0,J^0_t) \times \overline{\mathcal R}^\tau(a';c))  \\
    &\qquad \sqcup \bigsqcup_{\substack{b' \in \sX(L_0,L_1;H^1) \\ \mu(b') - \mu(a) + 1= i}} (\overline{\mathcal R}^\tau(a;b') \times \osr(b';b;H^1,J^1_t)),
    \end{align*}
    endows $\overline{\mathcal R}^\tau(a;b)$ with the structure of a $\ang{\mu(b)-\mu(a)}$-manifold (see \cref{def:k_mfd}).
    \item Let $\check{\fo}_0(a)$ and $\check{\fo}_1(b)$ be the rank one free $R$-modules as in \cref{lem:transversality_for_strips}(ii) associated to $(H^0,J^0_t)$ and $(H^1,J^1_t)$, respectively. There are isomorphisms
    \[ \check{\fo}_1(b) \longrightarrow \check{\fo}_0(a) \otimes_R I^\tau_R(a;b)\]
    that extend and are compatible with the isomorphisms in \cref{lem:transversality_for_strips}(ii), and the above maps in the sense that diagrams similar to \eqref{eq:bimod_ext_ori1} and \eqref{eq:bimod_ext_ori2} commute.
\end{enumerate}
\end{lem}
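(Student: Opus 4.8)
The plan is to proceed as in the proof of \cref{lem:transversality_for_strips}, using a parametrized version of the standard transversality and gluing machinery together with the orientation-theoretic input from \cref{sec:AbstractBranes}. For item (i), I would first invoke the standard parametrized transversality results: for a generic choice of the $1$-parameter family $(H^r, J^r_t)$ of admissible Floer data (with fixed admissible endpoints), the linearized operator at each $u \in \sR^\tau(a;b)$ is surjective, so $\sR^\tau(a;b)$ is a smooth manifold of dimension $\mu(b) - \mu(a)$; note there is no $\IR$-translation action here because the continuation equation breaks $s$-translation symmetry, which is why the dimension is one larger than in the differential case. The Gromov compactification and the $\langle k\rangle$-manifold-with-corners structure respecting the boundary decomposition into products $\osr(a;a') \times \osr^\tau(a';b)$ and $\osr^\tau(a;b') \times \osr(b';b)$ follows from the same work of Large \cite[Section 6]{large2021spectral} and Fukaya--Oh--Ohta--Ono \cite{fukaya2016exponential} cited in \cref{lem:transversality_for_strips}: one glues continuation strips to Floer strips at the two ends and checks the codimension-one boundary strata are exactly the claimed products, with higher-codimension strata described by iterated products, verifying conditions (i)--(iii) of \cref{def:k_mfd}.

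For item (ii), I would construct the $R$-orientation isomorphisms by the same strip-capping procedure deferred to \cref{sec:gluing_abs_caps} in the proof of \cref{lem:transversality_for_strips}(ii). The key observation is that the $R$-orientation data $\sG_L^\#$ on each $L_i$ together with the stable polarization determines, for each Hamiltonian chord $a$, a rank one free $R$-module $\check{\fo}(a)$ built from the space of abstract strip-caps $\sD_+^\#(\gamma)$ / $\sD_-^\#(\gamma)$ (cf.\ \cref{dfn:pullback_path}, \cref{subsec:StripCapsSpace}), and that capping off the linearized $\bar\partial$-operator along a continuation strip $u \in \osr^\tau(a;b)$ with these abstract caps at the two ends yields, via the gluing formula for index bundles, a canonical isomorphism of $R$-line bundles $\check{\fo}_1(b) \xrightarrow{\simeq} \check{\fo}_0(a) \otimes_R I^\tau_R(a;b)$ over $\osr^\tau(a;b)$. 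Here it is important that the caps used are literally the same caps used to define $\check{\fo}_0$ and $\check{\fo}_1$ and to produce the isomorphisms $\psi_{ab}$ in \cref{lem:transversality_for_strips}(ii); this is what guarantees the compatibility with those isomorphisms. The compatibility with the action maps (diagrams analogous to \eqref{eq:bimod_ext_ori1} and \eqref{eq:bimod_ext_ori2}) then reduces to the associativity of gluing of Cauchy--Riemann operators: gluing a Floer strip and then a continuation strip agrees with the cap-gluing done all at once, which is precisely the coherence packaged in \cref{lem:index_maps_compat} / \cref{dfn:r_ori_flow_bimod} applied to this bimodule-type structure.

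The main obstacle, as in the companion lemma, will be establishing item (ii) rigorously: one must show that the abstractly-defined $R$-line bundles $\check{\fo}(a)$ and the cap-gluing isomorphisms are well-defined independently of the auxiliary choices (the choice of representative strip-like ends, the choice of abstract cap, the choice of path $\gamma$ in $\lag$ connecting the linearized Lagrangian data to the basepoint), and that they assemble coherently across all the boundary strata simultaneously. This is a genuinely homotopy-coherent statement about families of Fredholm operators and their index bundles, not just a fiberwise one, and it is why the proof is deferred to \cref{sec:canonical_orientations}; in the present lemma I would simply reduce to the constructions there, noting that the continuation moduli spaces carry exactly the same Cauchy--Riemann data (a complex vector bundle over a strip with Lagrangian boundary conditions, now with $s$-dependent Hamiltonian twisting that does not affect the index-theoretic or orientation-theoretic content) so that the canonical $R$-orientations constructed in \cref{sec:canonical_orientations} apply verbatim. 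The remaining routine points — that the continuation term $X_{H^{\tau(s)}}$ is a compact perturbation and hence irrelevant to the index bundle, and that the endpoints' contributions are exactly $\check{\fo}_0(a)$ and $\check{\fo}_1(b)$ — I would state without detailed verification.
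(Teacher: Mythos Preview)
Your proposal is correct and matches the paper's approach: the paper's proof is precisely the two-line deferral you anticipate, pointing to \cref{lem:transversality_for_strips} for item (i) and to \cref{sec:gluing_abs_caps} for item (ii), where the cap-gluing argument you outline (\cref{lem:cap_gluing} together with \cref{lem:CanonicalOrientation}) is carried out uniformly for all the moduli spaces at once. Your additional remarks about compact perturbations and associativity of gluing are the right justifications for why the continuation case requires no new ideas beyond the differential case.
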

\begin{proof}
For details on item (i), see the proof of \cref{lem:transversality_for_strips}.  We defer the proof of item (ii) to \cref{sec:gluing_abs_caps}.
\end{proof}

\begin{notn}
    \begin{itemize}
        \item Let $\sigma \colon \IR^2 \to [0,1]^2$ denote the smooth function $\sigma = (\tau,\tau)$ where $\tau$ is as in \cref{notn:tau_continuation}(ii).
        \item Let $\sigma_1 \coloneqq (\tau,1)$ and $\sigma_0 \coloneqq (\tau,0)$.
    \end{itemize}
\end{notn}
\begin{asmpt}\label{asmpt:two-param_floer_data}
    \begin{enumerate}
        \item Let $(H^{0,r},J^{0,r}_t)_{r\in [0,1]}$ and $(H^{1,r},J^{1,r}_t)_{r\in [0,1]}$ be two $1$-parameter families of elements of $\sH\sJ(\overline X,F)$, each one being regular in the sense that both \cref{asmpt:one-param_floer_data} and \cref{lem:moduli_cont} hold.
        \item Let $(H^{q,r},J^{q,r}_t)_{(q,r)\in [0,1]^2} \subset \sH\sJ(\overline X,F)$ be a $2$-parameter family extending the $1$-parameter families $(H^{0,r},J^{0,r}_t)_{r\in [0,1]}$ and $(H^{1,r},J^{1,r}_t)_{r\in [0,1]}$ and moreover is constant in $q$ for $r\in \{0,1\}$. We use the notation $(H^{0},J^{0}_t) \coloneqq (H^{0,r},J^{0,r}_t)$ and $(H^{1},J^{1}_t) \coloneqq (H^{1,r},J^{1,r}_t)$, respectively, for any $q\in [0,1]$.
    \end{enumerate}
\end{asmpt}

\begin{defn}
Let $a \in \sX(L_0,L_1;H^0)$ and $b \in \sX(L_0,L_1;H^1)$. Define $\osr^\sigma(a;b)$ to be the Gromov compactification of the set of maps:
\[ \sR^\sigma(a;b) \coloneqq \left\{ \begin{matrix} s_0 \in \IR, \\ u \colon \IR \times [0,1] \to \overline{X}\end{matrix} \, \left| \, \begin{matrix} \partial_su + J^{\sigma(s,s_0)}_t(\partial_t u - X_{H^{\sigma(s,s_0)}_t}) = 0 \\ u(\IR \times \{0\}) \subset L_0 \\ u(\IR \times \{1\}) \subset L_1 \\ \lim_{s \to -\infty} u(s,t) = b(t) \\ \lim_{s \to \infty} u(s,t) = a(t) \end{matrix} \right. \right\}. \]
Endow $\osr^\sigma(a;b)$ with the Gromov topology.  With this topology it is a compact metric space (see \cite[Lemma 3.9]{sylvan2019on}).
\end{defn}

Let $I^\sigma(a;b) \to \osr^\sigma(a;b)$ denote the index bundle of $\osr^\sigma(a;b)$ which is given by the virtual vector bundle $T\osr^\sigma(a;b) - \underline \IR$, and let $I^\sigma_R(a;b)$ denote its associated $R$-line bundle (see \cref{dfn:assoc_rline}).

\begin{lem}\label{lem:two_param_moduli}
There exists a comeager set of $(H^{q,r},J_t^{q,r})_{(q,r) \in [0,1]^2} \subset \sH\sJ(\overline X,F)$ satisfying \cref{asmpt:two-param_floer_data}(ii) such that the moduli space $\osr^\sigma(a;b)$ is a smooth manifold with corners of dimension $\mu(b)-\mu(a)+1$.  Moreover, the following conditions are satisfied:
\begin{enumerate}
    \item For $a,a' \in \sX(L_0,L_1;H^0)$ and $b,b' \in \sX(L_0,L_1;H^1)$, there are maps
    \begin{align*}
    \osr(a;a';H^0,J^0_t) \times \osr^\sigma(a';b) \longrightarrow \osr^\sigma(a;b) \\
    \osr^\sigma(a;b') \times \osr(b;b'; H^1, J^1_t) \longrightarrow \osr^\sigma(a;b)\\
    \osr^{\sigma_0}(a;b) \longrightarrow \osr^\sigma(a;b)\\
    \osr^{\sigma_1}(a;b) \longrightarrow \osr^\sigma(a;b)
    \end{align*}
    that are diffeomorphisms onto faces of $\osr^\sigma(a;b)$ such that defining
    \begin{align*}
        \partial_i^\circ \osr^\sigma(a;b) \coloneqq &\bigsqcup_{\substack{a' \in \sX(L_0,L_1;H^0) \\ \mu(a')-\mu(a)= i}} (\osr(a;a';H^0,J^0_t) \times \osr^\sigma(a';b)) \\
        &\qquad \sqcup \bigsqcup_{\substack{b' \in \sX(L_0,L_1;H^1) \\ \mu(b')-\mu(a)+2=i}} (\osr^\sigma(a;b') \times \osr(b';b; H^1, J^1_t))
    \end{align*}
    and
    \[
        \partial_i \osr^\sigma(a;b) \coloneqq \begin{cases}
            \osr^{\sigma_0}(a;b) \sqcup \osr^{\sigma_1}(a;b), & i = 0 \\
            \partial_{i-1}^\circ \osr^\sigma(a;b), & i > 1
        \end{cases},
    \]
    endows $\osr^\sigma(a;b)$ with the structure of a $\ang{\mu(b)-\mu(a)+2}$-manifold (see \cref{def:k_mfd}).
    \item Let $\check{\fo}_0(a)$ and $\check{\fo}_1(b)$ be the rank one free $R$-modules as in \cref{lem:transversality_for_strips}, associated to $(H^0,J^0_t)$ and $(H^1,J^1_t)$, respectively. There are isomorphisms
    \[ \check{\fo}_2(b) \longrightarrow \check{\fo}_0(a) \otimes_R I^\sigma_R(a;b) \]
    that extend and are compatible with the isomorphisms from \cref{lem:transversality_for_strips}(ii) and \cref{lem:moduli_cont}(ii) and the above maps in the sense that diagrams similar to \eqref{eq:bimod_ext_ori1} and \eqref{eq:bimod_ext_ori2} commute.
\end{enumerate}
\qed
\end{lem}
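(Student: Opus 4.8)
The strategy is to mimic the proof of \cref{lem:moduli_cont}, now for a two-parameter family rather than a one-parameter family, with the extra difficulty being the two distinguished boundary faces $\osr^{\sigma_0}(a;b)$ and $\osr^{\sigma_1}(a;b)$ coming from the restriction of $\sigma=(\tau,\tau)$ to the edges $q=0$ and $q=1$ of the square of parameters. For item (i), I would first apply the standard Gromov compactness and gluing package (as cited in the proofs of \cref{lem:transversality_for_strips} and \cref{lem:moduli_cont}, i.e.\@ Large \cite[Section 6]{large2021spectral} and Fukaya--Oh--Ohta--Ono \cite{fukaya2016exponential}) to a generic $2$-parameter family $(H^{q,r},J^{q,r}_t)_{(q,r)\in[0,1]^2}$ extending the already-regular $1$-parameter families on the edges. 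Transversality for the parametrized problem gives a smooth manifold with corners of the expected dimension $\mu(b)-\mu(a)+1$ (one more than in \cref{lem:moduli_cont} because of the extra parameter $s_0$), away from the corner strata. The corner structure then comes from analyzing the possible degenerations: breaking off a Floer strip for $(H^0,J^0_t)$ at $s\to+\infty$, breaking off a Floer strip for $(H^1,J^1_t)$ at $s\to-\infty$, and the degeneration of the $s_0$-parameter to $\pm\infty$, which produces the faces $\osr^{\sigma_0}(a;b)$ and $\osr^{\sigma_1}(a;b)$ respectively (these are the continuation moduli spaces for $\sigma$ with the second coordinate pinned to $0$ or $1$). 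Checking that these faces assemble into a $\ang{\mu(b)-\mu(a)+2}$-manifold structure is a bookkeeping exercise, entirely parallel to \cref{lem:moduli_cont}(i), keeping careful track that the codimension-one faces are exactly the two ``parameter-boundary'' faces together with the ``strip-breaking'' faces indexed as in the statement.

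For item (ii), I would defer to the construction of canonical $R$-orientations via gluing of abstract strip caps in \cref{sec:gluing_abs_caps} (which is where the proofs of \cref{lem:transversality_for_strips}(ii) and \cref{lem:moduli_cont}(ii) are also completed), and indeed the lemma statement already flags this by ending with \texttt{\textbackslash qed}. The key point is that the rank one free $R$-modules $\check\fo_0(a)$ and $\check\fo_1(b)$ are the same ones produced for the one-parameter continuation moduli spaces, so there is no new choice to make; one only needs the isomorphism $\check\fo_2(b)\to\check\fo_0(a)\otimes_R I^\sigma_R(a;b)$ extending those from \cref{lem:transversality_for_strips}(ii) and \cref{lem:moduli_cont}(ii). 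This is obtained by the same linear-gluing / index-additivity argument applied fiberwise over $\osr^\sigma(a;b)$: over the strip-breaking faces the index bundle decomposes as $I_R\otimes_R I^\sigma_R$ (resp.\@ $I^\sigma_R\otimes_R I_R$) and the orientation isomorphism is forced to agree with the one already constructed, while over the parameter-boundary faces $\osr^{\sigma_j}(a;b)$ the restriction of $I^\sigma_R$ is canonically $I^{\sigma_j}_R$ (the normal direction being the trivialized $s_0$-direction), so the isomorphism restricts to the one from \cref{lem:moduli_cont}(ii). Coherence of all these identifications is exactly the content of the commuting diagrams modeled on \eqref{eq:bimod_ext_ori1} and \eqref{eq:bimod_ext_ori2}, and follows from associativity of the abstract gluing maps as in \cref{subsec:StripCapsSpace}.

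I expect the main obstacle to be purely notational/combinatorial rather than conceptual: correctly indexing the codimension filtration of $\osr^\sigma(a;b)$ so that the shift conventions match \cref{dfn:flow_multimodule} and \cref{def:multi_bord} (note the slightly unusual indexing $\partial_i$ for $i=0$ versus $i>1$ in the statement, reflecting that $\osr^\sigma$ is a bordism between the bimodules $\osr^{\sigma_0}$ and $\osr^{\sigma_1}$), and then threading the $R$-orientation isomorphisms through all of these faces compatibly. Since the analytic inputs (transversality, Gromov compactness, gluing with corners) are identical to those already invoked for \cref{lem:moduli_cont}, and the orientation-theoretic inputs are the abstract-cap constructions of \cref{sec:gluing_abs_caps}, the proof is genuinely a routine adaptation, which is why it is appropriate to present it with the analytic part sketched as in \cref{lem:moduli_cont} and the orientation part deferred to \cref{sec:gluing_abs_caps}.
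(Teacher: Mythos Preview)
Your proposal is correct and matches the paper's approach exactly: the lemma is stated with a bare \texttt{\textbackslash qed} and no proof, precisely because (as you identify) it is a routine two-parameter analogue of \cref{lem:moduli_cont}, with the analytic inputs coming from the same sources cited there and the orientation statement deferred to \cref{sec:gluing_abs_caps}. Your discussion of the bordism-style face indexing and the restriction of $I^\sigma_R$ to the parameter-boundary faces is accurate and in fact more detailed than anything the paper writes out for this lemma.
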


\begin{asmpt}
    Let $(H^r,J^r_t)_{r\in [0,2]} \subset \sH\sJ(\overline X,F)$ be so that $(H^0,J^0_t)$, $(H^1,J^1_t)$ and $(H^2,J^2_t)$ are regular in the sense that \cref{lem:transversality_for_strips} holds.
\end{asmpt}
\begin{notn}
    \begin{itemize}
        \item Let $\tau_0 \colon \IR \to [0,1]$ be a smooth function such that $\tau(s) \equiv 1$ near $-\infty$ and $\tau(s) \equiv 0$ near $\infty$.
        \item Let $\tau_1 \colon \IR \to [1,2]$ be a smooth function such that $\tau(s) \equiv 2$ near $-\infty$ and $\tau(s) \equiv 1$ near $\infty$.
        \item Let $\tau_2 \colon \IR \to [0,2]$ be a smooth function such that $\tau(s) \equiv 2$ near $-\infty$ and $\tau(s) \equiv 0$ near $\infty$.
    \end{itemize}
\end{notn}
\begin{defn}\label{dfn:2-parameter_composition}
    Let $R \in \IR$ and define $\tau^R \colon \IR \to [0,2]$ to be a smooth family of smooth functions such that
    \begin{enumerate}
        \item $\tau^R \equiv \tau_2$ near $R = -\infty$.
        \item $\tau^R(s) = \begin{cases}
            \tau_0(s-R), & s \geq 0 \\
            \tau_1(s+R), & s \leq 0
        \end{cases}$ near $R = \infty$.
    \end{enumerate}
\end{defn}
\begin{rem}
    Note that for $R$ near $\infty$ we have
    \[
    (H^{\tau^R(s)},J^{\tau^R(s)}_t) = \begin{cases}
        (H^{\tau_0(s-R)},J^{\tau_0(s-R)}_t), & s \geq 0 \\
        (H^{\tau_1(s+R)},J^{\tau_1(s+R)}_t), & s \leq 0
    \end{cases},
    \]
    and for $R$ near $-\infty$ we have $(H^{\tau^R(s)},J^{\tau^R(s)}_t) = (H^{\tau_2(s)},J^{\tau_2(s)}_t)$ for all $s\in \IR$.
\end{rem}

\begin{lem}\label{lma:gluing_floer_data}
There exists $R_0>0$ sufficiently large such that $(H^{\tau^R(s)},J^{\tau^R(s)}_t)_{s \in \IR}$ is a regular $1$-parameter family in the sense that \cref{lem:moduli_cont} holds for every $R>R_0$.
\qed
\end{lem}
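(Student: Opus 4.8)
The statement to prove, \cref{lma:gluing_floer_data}, asserts that for $R_0 > 0$ sufficiently large, the glued $1$-parameter family $(H^{\tau^R(s)}, J^{\tau^R(s)}_t)_{s \in \IR}$ is regular (in the sense that \cref{lem:moduli_cont} holds) for every $R > R_0$.

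\textbf{The plan.} The argument is a standard gluing-type transversality statement for Floer data, analogous to the proof that compositions of continuation maps are chain-homotopic to iterated continuation maps. First I would set up the relevant moduli spaces: for a fixed $R$, the moduli space $\osr^{\tau^R}(a;c)$ of solutions to the $s$-dependent Floer equation with data $(H^{\tau^R(s)}, J^{\tau^R(s)}_t)$ interpolating from $(H^2, J^2_t)$ at $s = -\infty$ to $(H^0, J^0_t)$ at $s = +\infty$. The key observation, recorded in the \cref{dfn:2-parameter_composition} and the following \cref{rem}, is that for $R$ near $+\infty$ the family $\tau^R$ splits as a concatenation: near $s = +\infty$ it agrees with the translated $\tau_0$-family (interpolating $H^1 \to H^0$), and near $s = -\infty$ with the translated $\tau_1$-family (interpolating $H^2 \to H^1$). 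Consequently there is a gluing map
\[
    \osr^{\tau_1}(a;b) \times \osr^{\tau_0}(b;c) \longrightarrow \osr^{\tau^R}(a;c)
\]
for each $b \in \sX(L_0,L_1;H^1)$, defined for $R$ large, which is a diffeomorphism onto a neighborhood of the corresponding boundary stratum. Since the families $(H^0,J^0_t)$, $(H^1,J^1_t)$, $(H^2,J^2_t)$ are all regular (\cref{lem:transversality_for_strips}) and both intermediate $1$-parameter families $(H^{\tau_0(s)},\dots)$ and $(H^{\tau_1(s)},\dots)$ are regular (\cref{lem:moduli_cont}), the fiber products $\osr^{\tau_1}(a;b) \times \osr^{\tau_0}(b;c)$ are cut out transversely, of the expected dimension $\mu(c) - \mu(a)$.

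\textbf{Key steps.} First I would establish the a priori energy and $C^0$ bounds (uniform in $R$) on elements of $\osr^{\tau^R}(a;c)$, using the action bounds coming from the fixed asymptotics $a,c$ together with the fact that all the $H^r$ lie in the contractible space $\sH(\overline X,F)$ of admissible Hamiltonians; this is where one invokes the geometry of Liouville sectors (no escape to infinity, no curves through the stop) exactly as in \cite[Lemma 3.9]{sylvan2019on}. Second, a standard compactness-and-gluing argument (cf.\@ \cite{large2021spectral,fukaya2016exponential}) shows that for $R$ large every element of the $0$- and $1$-dimensional components of $\osr^{\tau^R}(a;c)$ is either close to a broken configuration over the regular data $(H^i,J^i_t)$ or lies in the image of the gluing map above; transversality then follows by the implicit function theorem since the pre-glued configurations have surjective linearized operators. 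Third, one upgrades this to an open-and-closed argument on $[R_0,\infty)$: the set of $R$ for which $(H^{\tau^R(s)},\dots)$ is regular is open by the usual semicontinuity of transversality, and the gluing analysis shows it contains all sufficiently large $R$; this yields the claimed $R_0$. The $R$-orientation data on these moduli spaces is then inherited automatically from the canonical orientations constructed in \cref{sec:canonical_orientations} via the spaces of abstract caps, since the gluing map respects the abstract index bundles — but strictly speaking that is the content of the deferred item (ii) in \cref{lem:moduli_cont} and not needed for the present statement.

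\textbf{Main obstacle.} The technical heart is the gluing estimate uniform in the parameter $R$: one must show that the pre-glued solutions (concatenating a $\tau_1$-solution with a $\tau_0$-solution after translating by $\pm R$) have approximate kernels/cokernels that stabilize as $R \to \infty$, so that Newton--Picard iteration converges with constants independent of $R$. This is routine in the exact, admissible setting because all the relevant linearized operators are Fredholm with controlled spectral gaps and the neck-stretching is along strip-like ends of fixed conformal type, but writing it carefully requires the weighted Sobolev machinery of \cite{large2021spectral,fukaya2016exponential}. Since the paper elsewhere systematically defers such analytic details to the cited references, I expect the actual proof given in the paper to be a short paragraph reducing to precisely these sources, and I would structure the proof the same way: cite the compactness result of \cite{sylvan2019on}, the smooth corner structure and gluing theorems of \cite{large2021spectral,fukaya2016exponential}, and conclude regularity of $(H^{\tau^R(s)},J^{\tau^R(s)}_t)$ for $R \gg 0$ by the standard concatenation-gluing argument.
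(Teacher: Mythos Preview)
The paper gives no proof: the lemma ends with \verb|\qed| immediately after the statement, in keeping with the paper's practice of deferring the analytic transversality and gluing arguments to \cite{large2021spectral,fukaya2016exponential,sylvan2019on}. Your outline is exactly the standard concatenation-gluing argument the paper is tacitly invoking, so the approach is correct and matches what the paper intends.

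Two small bookkeeping remarks. First, in the paper's conventions the gluing map is
\[
\osr^{\tau_0}(a;b) \times \osr^{\tau_1}(b;c) \longrightarrow \osr^{\tau^R}(a;c),
\]
with $a \in \sX(L_0,L_1;H^0)$, $b \in \sX(L_0,L_1;H^1)$, $c \in \sX(L_0,L_1;H^2)$ (compare the definition of $\osr^{\tau_1 \circ \tau_0}(a;c)$ just above \cref{lma:2-param_cont_moduli_composition}); you have the two factors reversed. Second, your ``open-and-closed'' step is superfluous: the compactness-plus-gluing argument already shows directly that for all $R$ sufficiently large every solution lies in the image of the gluing diffeomorphism and is therefore regular, so there is nothing further to upgrade.
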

\begin{lem}\label{lem:interpolate_compos}
    Let $\tau \colon \IR \to [0,1]$ be a smooth function such that $\tau(s) \equiv 1$ near $-\infty$ and $\tau(s) \equiv 0$ near $\infty$. Defining
    \begin{align*}
        \sigma_c \colon \IR^2 &\longrightarrow [0,1] \times [0,2] \\
        (s,s') &\longmapsto (\tau(s),\tau^s(s'))
    \end{align*}
    yields a $2$-parameter family of admissible Floer data $(H^{\sigma_c(s,s')},J^{\sigma_c(s,s')}_t)_{(s,s') \in \IR^2}$ that satisfies \cref{asmpt:two-param_floer_data}(ii).
    \qed
\end{lem}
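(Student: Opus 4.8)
The statement is a direct verification, so the plan is to unwind the requirements of \cref{asmpt:two-param_floer_data}(ii) against the explicit formula $\sigma_c(s,s')=(\tau(s),\tau^s(s'))$. First I would fix a smooth choice of admissible Floer data $(H^{q,r},J^{q,r}_t)$ parametrized by the rectangle $[0,1]\times[0,2]$ satisfying the obvious analogue of \cref{asmpt:two-param_floer_data}(ii): it is constant in $q$ along the two horizontal edges $r\in\{0,2\}$, it restricts along $r\in\{0,1,2\}$ to the regular data $(H^0,J^0_t)$, $(H^1,J^1_t)$, $(H^2,J^2_t)$, and along the vertical edge $q\in\{0,1\}$ it restricts to the continuation families underlying the concatenated profile $\tau_0\#\tau_1$ and the direct profile $\tau_2$, respectively; such a family exists by the genericity arguments already used for \cref{lem:transversality_for_strips} and \cref{lem:moduli_cont}. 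The claim to check is then that the pulled-back family $(H^{\sigma_c(s,s')},J^{\sigma_c(s,s')}_t)_{(s,s')\in\IR^2}$ has the shape needed to run the construction of \cref{lem:two_param_moduli} with $\sigma$ replaced by $\sigma_c$.

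Next I would record smoothness: $\tau$ is smooth, and by \cref{dfn:2-parameter_composition} the assignment $(R,s')\mapsto\tau^R(s')$ is a smooth family of smooth functions, so $\sigma_c$ is smooth and hence so is the composed $\IR^2$-family of elements of $\sH\sJ(\overline X,F)$. Then I would check the asymptotics in the Floer-strip direction $s'$: for each fixed $s$ the function $\tau^s$ is eventually constant as $s'\to\pm\infty$, with limiting values $2$ and $0$ (one may arrange this for the entire family $\{\tau^R\}$, since it already holds for $\tau_0,\tau_1,\tau_2$ and for the two endpoint profiles $\tau_0\#\tau_1$ and $\tau_2$), so $\sigma_c(s,\cdot)$ is eventually constant near $s'=\pm\infty$ and lands on the edges $r\in\{0,2\}$ where the data is $q$-independent; this produces well-defined asymptotic chords in $\sX(L_0,L_1;H^0)$ and $\sX(L_0,L_1;H^2)$. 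Finally I would identify the behaviour in the remaining direction: from $\tau\equiv1$ and $\tau^s\equiv\tau_2$ for $s\ll0$ one gets $\sigma_c(s,s')=(1,\tau_2(s'))$, which is exactly the $\tau_2$-continuation data; from $\tau\equiv0$ and the concatenation formula of \cref{dfn:2-parameter_composition}(ii) for $s\gg0$ one gets $\sigma_c(s,s')=(0,\tau^s(s'))$ with $\tau^s$ the glued profile, which by \cref{lma:gluing_floer_data} is precisely the data computing the fiber products of the $\tau_0$- and $\tau_1$-continuation moduli spaces. The compatibility and conformal-consistency clauses are inherited verbatim from those imposed on the rectangle family, so nothing new is needed there.

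I expect the only real difficulty to be organizational bookkeeping: one must match the four edges and corners of $[0,1]\times[0,2]$ to the correct pieces $\osr$, $\osr^{\tau_0}$, $\osr^{\tau_1}$, $\osr^{\tau_2}$ so that the boundary of the would-be moduli space $\osr^{\sigma_c}(a;b)$ acquires the stratum $\osr^{\tau_2}(a;b)$ at $s=-\infty$ and the stratum $\bigsqcup_c \osr^{\tau_0}(a;c)\times\osr^{\tau_1}(c;b)$ at $s=+\infty$, as required by the intended use of this interpolation in proving well-definedness of the composition of continuation maps. No analytic input beyond Gromov compactness and the transversality already supplied by \cref{lem:two_param_moduli} enters.
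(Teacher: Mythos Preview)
The paper offers no proof beyond the \qed, treating the lemma as an immediate verification; your proposal correctly identifies it as such and checks the right things (smoothness of $\sigma_c$, asymptotics in $s'$, and the limiting behaviour as $s\to\pm\infty$ recovering the $\tau_2$-profile and the glued $\tau_0\#\tau_1$-profile respectively).

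One clarification is worth making about your first paragraph. You speak of fixing a family $(H^{q,r},J^{q,r}_t)$ on $[0,1]\times[0,2]$ whose restrictions to the vertical edges $q\in\{0,1\}$ are ``the continuation families underlying the concatenated profile $\tau_0\#\tau_1$ and the direct profile $\tau_2$.'' This conflates the reparametrizing maps with the underlying Floer data: the profiles $\tau_2$ and $\tau_0\#\tau_1$ are different reparametrizations of the \emph{same} one-parameter family $(H^r,J^r_t)_{r\in[0,2]}$ already fixed before \cref{dfn:2-parameter_composition}. Thus the boundary conditions on the rectangle force $(H^{0,r},J^{0,r}_t)=(H^{1,r},J^{1,r}_t)=(H^r,J^r_t)$, and the interesting content lives entirely in the map $\sigma_c$ rather than in the choice of the two-parameter family. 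With that in mind, your second and third paragraphs already contain the complete verification, and your first paragraph can be dropped.
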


For $a \in \sX(L_0,L_1;H^0)$ and $c \in \sX(L_0,L_1;H^2)$ define $\osr^{\tau_1\circ\tau_0}(a;c)$ to be the colimit of the following diagram
\[
    \begin{tikzcd}[row sep=scriptsize, column sep=scriptsize]
        \displaystyle \bigsqcup_{b'} (\osr^{\tau_0}(a;b) \times \osr(b;b';H^1,J_t^1) \times \osr^{\tau_1}(b';c)) \rar[shift right,swap]{} \rar[shift left]{} & \displaystyle \bigsqcup_{b} (\osr^{\tau_0}(a;b) \times \osr^{\tau_1}(b;c))
    \end{tikzcd},
\]
where the disjoint unions are taken over $b,b' \in \sX(L_0,L_1;H^1,J_t^1)$. The following is a consequence of \cref{lma:gluing_floer_data} and \cref{lem:two_param_moduli}.
\begin{lem}\label{lma:2-param_cont_moduli_composition}
    There exists a comeager set of $(H^{q,r},J^{q,r}_t)_{(q,r) \in [0,1]\times[0,2]} \subset \sH\sJ(\overline X,F)$ satisfying \cref{asmpt:two-param_floer_data}(ii) such that the moduli space $\osr^{\sigma_c}(a;c)$ is a smooth manifold with corners of dimension $\mu(c)-\mu(a)+1$.  Moreover, the following conditions are satisfied:
    \begin{enumerate}
        \item For $a,a' \in \sX(L_0,L_1;H^0)$ and $c,c' \in \sX(L_0,L_1;H^2)$, there are maps
        \begin{align*}
            \osr(a;a';H^0,J^0_t) \times \osr^{\sigma_c}(a';c) \longrightarrow \osr^{\sigma_c}(a;c) \\
            \osr^{\sigma_c}(a;c') \times \osr(c';c;H^2,J^2_t) \longrightarrow \osr^{\sigma_c}(a;c)\\
            \osr^{\tau_1\circ\tau_0}(a;c) \longrightarrow \osr^{\sigma_c}(a;c)\\
            \osr^{\tau_2}(a;c) \longrightarrow \osr^{\sigma_c}(a;c)
        \end{align*}
        that are diffeomorphisms onto faces of $\osr^{\sigma_c}(a;c)$ such that defining
        \begin{align*}
            \partial_i^\circ \osr^{\sigma_c}(a;c) \coloneqq &\bigsqcup_{\substack{a' \in \sX(L_0,L_1;H^0) \\ \mu(a')-\mu(a) = i}} (\osr(a;a';H^0,J^0_t) \times \osr^{\sigma_c}(a;c)) \\
            &\qquad \sqcup \bigsqcup_{\substack{c' \in \sX(L_0,L_1;H^2) \\ \mu(c')-\mu(a)+2=i}} (\osr^{\sigma_c}(a;c') \times \osr(c';c; H^2, J^2_t))
        \end{align*}
        and
        \[
            \partial_i \osr^{\sigma_c}(a;c) \coloneqq \begin{cases}
            \osr^{\tau_1\circ \tau_0}(a;c) \sqcup  \osr^{\tau_2}(a;c), & i = 1 \\
            \partial_{i-1}^\circ \osr^{\sigma_c}(a;c), & i > 1
            \end{cases},
        \]
        endows $\osr^{\sigma_c}(a;c)$ with the structure of a $\ang{\mu(c)-\mu(a)+2}$-manifold (see \cref{def:k_mfd}).
        \item Let $\check{\fo}_0(a)$ and $\check{\fo}_2(c)$ be the rank one free $R$-modules as in \cref{lem:transversality_for_strips}, associated to $(H^0,J^0_t)$ and $(H^2,J^2_t)$, respectively. There are isomorphisms
        \[ \check{\fo}_2(c) \longrightarrow \check{\fo}_0(a) \otimes_R I^{\sigma_c}_R(a;c) \]
        that extend and are compatible with the isomorphisms from \cref{lem:transversality_for_strips}(ii) and \cref{lem:moduli_cont}(ii) and the above maps in the sense that diagrams similar to \eqref{eq:bimod_ext_ori1} and \eqref{eq:bimod_ext_ori2} commute.
    \end{enumerate}
    \qed
\end{lem}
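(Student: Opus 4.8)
\textbf{Proof plan for \cref{lma:2-param_cont_moduli_composition}.}
The statement is an ``assembly'' lemma: it claims that the $2$-parameter family of continuation data $\sigma_c = (\tau, \tau^s)$ of \cref{lem:interpolate_compos} gives rise to a moduli space $\osr^{\sigma_c}(a;c)$ with all the structure already established in the single-parameter cases, so the proof should not introduce any genuinely new analytic input; it should instead deduce everything from \cref{lem:transversality_for_strips}, \cref{lem:moduli_cont}, \cref{lem:two_param_moduli}, \cref{lma:gluing_floer_data}, and the colimit description of $\osr^{\tau_1\circ\tau_0}(a;c)$. First I would fix a generic choice of the $2$-parameter family $(H^{q,r},J^{q,r}_t)_{(q,r)\in[0,1]\times[0,2]}$ extending the given regular $1$-parameter families, using \cref{lem:interpolate_compos} to know that $\sigma_c$ is an admissible $2$-parameter family satisfying \cref{asmpt:two-param_floer_data}(ii). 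Standard transversality (Sard--Smale applied to the universal moduli space, exactly as referenced in the proof of \cref{lem:transversality_for_strips} via \cite{floer1995transversality,fukaya2016exponential} and \cite[Section 6]{large2021spectral}) gives that for a comeager set of such data, $\sR^{\sigma_c}(a;c)$ is a smooth manifold of the expected dimension $\mu(c)-\mu(a)+1$, and Gromov compactness (as in \cite[Lemma 3.9]{sylvan2019on}) yields the compactification $\osr^{\sigma_c}(a;c)$.

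The heart of item (i) is the identification of the codimension-one boundary. There are two ``parameter-endpoint'' faces and the usual ``strip-breaking'' faces. The strip-breaking faces at the two ends of the domain $\IR\times[0,1]$ produce the factors $\osr(a;a';H^0,J^0_t)\times\osr^{\sigma_c}(a';c)$ and $\osr^{\sigma_c}(a;c')\times\osr(c';c;H^2,J^2_t)$; these are handled precisely as in \cref{lem:moduli_cont}(i) and \cref{lem:two_param_moduli}(i). The new point is the degeneration in the auxiliary parameter $s$ (the parameter that in \cref{dfn:2-parameter_composition} governs $\tau^R$): as $s\to -\infty$ one obtains $\tau^s\equiv\tau_2$, hence the face $\osr^{\tau_2}(a;c)$; as $s\to+\infty$ the family $\tau^s$ splits into $\tau_0(\cdot-R)$ glued to $\tau_1(\cdot+R)$, and by \cref{lma:gluing_floer_data} (choosing $R_0$ large) plus a standard gluing/surjectivity-of-gluing argument one identifies this face with $\osr^{\tau_1\circ\tau_0}(a;c)$, which by definition is the colimit expressing the concatenation $\osr^{\tau_0}(a;b)\times\osr^{\tau_1}(b;c)$ over broken configurations through $\sX(L_0,L_1;H^1,J^1_t)$. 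Putting these together gives the description of $\partial_1\osr^{\sigma_c}(a;c)$ as $\osr^{\tau_1\circ\tau_0}(a;c)\sqcup\osr^{\tau_2}(a;c)$ and $\partial^\circ_{i-1}$ for $i>1$; verifying the $\ang{\mu(c)-\mu(a)+2}$-manifold axioms of \cref{def:k_mfd} is then a bookkeeping check that the listed faces meet along the prescribed lower-codimensional strata, which follows from the compatibility of the gluing charts \eqref{eq:gluing_corner} with the universal and conformally consistent Floer data.

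For item (ii), the $R$-orientation data: one pulls the isomorphisms of \cref{lem:transversality_for_strips}(ii), \cref{lem:moduli_cont}(ii), and \cref{lem:two_param_moduli}(ii) back along the boundary inclusions and invokes the orientation-propagation machinery of \cref{sec:canonical_orientations} (the canonical $R$-orientations on moduli of strips with caps) together with the abstract gluing of \cref{lem:restr_last_strata_ori}: since $\osr^{\sigma_c}(a;c)$ is a bordism between the two parameter-endpoint faces relative to the strip-breaking faces, an $R$-orientation on $\osr^{\sigma_c}(a;c)$ compatible with the boundary is equivalent to compatible $R$-orientations on those faces, and those are exactly the ones already constructed. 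The main obstacle is the careful identification of the $s\to+\infty$ face with the colimit $\osr^{\tau_1\circ\tau_0}(a;c)$ --- i.e.\@ checking that broken-Floer-trajectory gluing in the auxiliary parameter matches the concatenation maps used to define that colimit, and that the gluing is compatible (to infinite order, via \eqref{eq:gluing_corner}) with the corner structure; once this is in hand, both the $\ang{k}$-manifold structure and the orientation compatibilities are formal consequences of the earlier lemmas. I would therefore flag this gluing identification as the step deserving the most care, and everything else as routine adaptation of the already-established single- and two-parameter cases.
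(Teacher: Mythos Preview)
Your proposal is correct. The paper omits the proof entirely: the lemma statement terminates with a bare \qed, and the paper treats it as an immediate consequence of \cref{lma:gluing_floer_data} and \cref{lem:two_param_moduli} (note the sentence preceding the lemma: ``The following is a consequence of \cref{lma:gluing_floer_data} and \cref{lem:two_param_moduli}''). Your write-up supplies precisely the details the authors consider routine, and your identification of the $s\to+\infty$ face with the colimit $\osr^{\tau_1\circ\tau_0}(a;c)$ via gluing as the only step needing care matches the implicit logic of the paper.

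One minor correction on item (ii): the $R$-orientation isomorphisms for this moduli space are not obtained via \cref{lem:restr_last_strata_ori} (which is a two-out-of-three principle for bordisms used later, in \cref{sec:the_oc_map}). Rather, as for all the moduli spaces in \cref{sec:ModuliSpaces}, item (ii) is established uniformly in \cref{sec:gluing_abs_caps}: $\osr^{\sigma_c}$ appears explicitly in the list of moduli spaces in \cref{lem:cap_gluing}, and the gluing-of-abstract-strip-caps argument there produces the isomorphisms $\check{\fo}_2(c) \to \check{\fo}_0(a) \otimes_R I^{\sigma_c}_R(a;c)$ directly, with the required compatibilities following from associativity of gluing. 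So for item (ii) you should point to \cref{lem:cap_gluing} and \cref{lem:CanonicalOrientation} rather than to the bordism-restriction lemma.
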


\subsection{Moduli space for the product}\label{subsec:moduli_product}

\begin{notn}
    Let $L_0$, $L_1$ and $L_2$ be Lagrangian $R$-branes in $X$.
\end{notn}

Let $S \coloneqq \ID^2 \smallsetminus \left\{\zeta_0,\zeta_1,\zeta_2\right\}$ be the unit disk in $\IC$ with its standard orientation and three boundary punctures removed. We equip $S$ with the choice of a Floer datum, with Hamiltonian $H_S \colon S \to \sH(\overline X,F)$ and almost complex structure $J_S \colon S \to \sJ(\overline X,F)$, see \cref{dfn:Floer_datum}. By abus

\begin{defn}\label{dfn:moduli_product}
Let $a \in \mathcal X(L_0,L_1;H)$, $b \in \mathcal X(L_1,L_2;H)$ and $c \in \mathcal X\left(L_0,L_2;\frac{1}{2}(\psi^2)^\ast H\right)$.  Define $\osr(a,b;c)$ to be the Gromov compactification of the set of maps 
\[
\sR(a,b;c) \coloneqq \left\{ u \colon S \to \overline X \left| \begin{matrix} u \mbox{ satisfies Floer's equation} \\ u(z) \in \psi^{\rho_S(z)}L_i, \;\; z \in (\partial S)_i \\ \lim_{s \to -\infty}u(\varepsilon_0^-(s,t)) = \psi^{2}(c(t)) \\ \lim_{s \to \infty}u(\varepsilon_1^+(s,t)) = a(t),
\\ \lim_{s \to \infty}u(\varepsilon_2^+(s,t)) = b(t)
\end{matrix} \right. \right\}, \]
where $(\partial S)_i$ denotes the boundary arc immediately after the boundary puncture $\zeta_i \subset \partial S$, according to the boundary orientation of the disk.  Endow $\osr(a,b;c)$ with the Gromov topology.  With this topology, it is a compact metric space (see \cite[Lemma 3.9]{sylvan2019on}).
\end{defn}

Let $I(a,b;c) \to \osr(a,b;c)$ denote the tangent bundle (the index bundle) of $\osr(a,b;c)$.  Let $I_R(a,b;c)$ denote the associated $R$-line bundle.

\begin{lem}\label{lem:moduli_product}
There exists a comeager set of $H_S \colon S \to \sH(\overline X,F)$ and $J_S \colon S \to \sJ(\overline X,F)$ such that moduli space $\osr(a,b;c)$ is a smooth manifold with corners of dimension $\mu(c) - \mu(a) - \mu(b)$. Moreover, the following conditions are satisfied:
\begin{enumerate}
    \item For $a,a' \in \sX(L_0,L_1)$, $b,b' \in \sX(L_1,L_2)$, and $c,c' \in \sX(L_0,L_2)$, there are maps
    \begin{align*}
        c^1_{a a' bc} \colon \osr(a;a') \times \osr(a',b;c) &\longrightarrow \osr(a,b;c)\\
        c^2_{a b b'c} \colon \osr(b;b') \times \osr(a,b';c) &\longrightarrow \osr(a,b;c)\\
        c^0_{a bc'c} \colon \osr(a,b;c') \times \osr(c';c) &\longrightarrow \osr(a,b;c)
    \end{align*}
    that satisfy the analogous compatibility conditions to those spelled out in \cref{dfn:flow_bimodule} and are diffeomorphisms onto faces of their targets such that defining
    \begin{align*}
    \partial_i \osr(a,b;c) &\coloneqq \bigsqcup_{\substack{a' \\ \mu(a') - \mu(a) = i}} (\osr(a;a') \times \osr(a',b;c))\\
    &\qquad \sqcup \bigsqcup_{\substack{b' \\ \mu(b')-\mu(b) = i}} (\osr(b;b') \times \osr(a,b';c)) \\
    &\qquad \sqcup \bigsqcup_{\substack{c' \\ \mu(c')-\mu(b)-\mu(a)+1 = i}} (\osr(a,b;c') \times \osr(c;c')),
    \end{align*}
    endows $\osr(a,b;c)$ with the structure of a $\ang{\mu(c)-\mu(a)-\mu(b)+1}$-manifold (see \cref{def:k_mfd}).
    \item Let $\check{\fo}_0(a)$, $\check{\fo}_1(b)$, and $\check{\fo}_2(c)$ be rank one free $R$-modules as in \cref{lem:transversality_for_strips}(ii). There are isomorphisms
    \[ \check{\fo}_2(c) \overset{\simeq}{\longrightarrow} \check{\fo}_0(a) \otimes_R \check{\fo}_1(b) \otimes_R I_R(a,b;c)  \]
    that are compatible with the isomorphisms from \cref{lem:transversality_for_strips}(ii) and the above maps in the sense that diagrams similar to \eqref{eq:bimod_ext_ori1} and \eqref{eq:bimod_ext_ori2} commute.
\end{enumerate}
\end{lem}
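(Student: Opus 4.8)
The plan is to prove \cref{lem:moduli_product} by following exactly the same template used in the proofs of \cref{lem:transversality_for_strips}, \cref{lem:moduli_cont}, and \cref{lem:two_param_moduli}, so that item (i) is a standard transversality-plus-gluing statement and item (ii) is deferred to the treatment of abstract caps in \cref{sec:gluing_abs_caps}. For item (i), first I would invoke the standard Sard--Smale argument applied to the universal moduli space cut out by the perturbed Cauchy--Riemann operator on $S = \ID^2 \smallsetminus \{\zeta_0,\zeta_1,\zeta_2\}$ with the chosen universal and conformally consistent Floer datum (as in \cref{dfn:univ_Floer_datum}), using the fact that $\mathcal{H}(\overline X,F)$ and $\mathcal J(\overline X,F)$ are sufficiently large to achieve transversality; this gives a comeager set of Floer data for which $\osr(a,b;c)$ is a smooth manifold away from the corners of the expected dimension $\mu(c)-\mu(a)-\mu(b)$. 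The energy bound coming from exactness of the $L_i$, together with the holomorphicity assumption \cref{dfn:compatible_J}(iii) preventing curves from escaping into the stop, yields Gromov compactness, so the Gromov compactification is a compact space with the codimension-one strata given by once-broken configurations --- a breaking of the incoming chord $a$, of the incoming chord $b$, or of the outgoing chord $c$ --- which is precisely the asserted boundary decomposition $\partial_i \osr(a,b;c)$. The compatibility of the gluing charts with the corner structure, so that $\osr(a,b;c)$ is genuinely a $\langle \mu(c)-\mu(a)-\mu(b)+1\rangle$-manifold and the structure maps $c^0,c^1,c^2$ are diffeomorphisms onto faces satisfying the relations of \cref{dfn:flow_bimodule}, is exactly the content of the smooth gluing theory of Large \cite{large2021spectral} and Fukaya--Oh--Ohta--Ono \cite{fukaya2016exponential}, which I would cite verbatim as in \cref{lem:transversality_for_strips}(i).

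For item (ii), the construction of the rank one free $R$-modules $\check{\fo}_i$ and the isomorphism $\check{\fo}_2(c) \simeq \check{\fo}_0(a) \otimes_R \check{\fo}_1(b) \otimes_R I_R(a,b;c)$ requires the machinery of abstract strip-caps developed in \cref{sec:AbstractBranes} and the canonical orientations of \cref{sec:canonical_orientations}. The idea is that capping off the three ends of a curve in $\osr(a,b;c)$ with positive/negative strip caps carrying Lagrangian boundary conditions (in the sense of \cref{dfn:abstract_strip_caps} and \cref{subsec:StripCapsSpace}) produces a disk whose associated Cauchy--Riemann index bundle is classified by a map into $\sD^\#$; by \cref{lem:DHashNullity} the relevant composition into $\BGL_1(R)$ is canonically null-homotopic, and the $R$-orientation data on $L_0$, $L_1$, $L_2$ (part of their $R$-brane structure) together with the caps trivializes the $R$-line bundles attached to the chord spaces, producing the $\check{\fo}_i$ and the gluing isomorphism with the correct tensor-product factorization over $I_R(a,b;c)$. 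I would therefore simply assert item (ii) here with a pointer to \cref{sec:gluing_abs_caps}, exactly as is done for the strip and continuation moduli spaces, rather than reproving it.

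The main obstacle --- or rather the only genuinely non-routine point --- is ensuring that the orientation isomorphisms in item (ii) are \emph{compatible} with all the previously constructed ones: the isomorphisms $\psi_{ab}$ of \cref{lem:transversality_for_strips}(ii) attached to the differentials for the pairs $(L_0,L_1)$, $(L_1,L_2)$, $(L_0,L_2)$, and the fact that the same rank one free $R$-modules $\check{\fo}_0(a),\check{\fo}_1(b),\check{\fo}_2(c)$ appearing in \cref{lem:moduli_product}(ii) must be literally the ones already fixed in \cref{lem:transversality_for_strips}(ii). This forces the cap choices for the product moduli space to be glued consistently from the cap choices made for the strip moduli spaces, which is the reason the proof must be postponed to \cref{sec:gluing_abs_caps} where the abstract gluing maps of \cref{subsec:StripCapsSpace} are used systematically. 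Everything else --- Fredholm theory, transversality, Gromov compactness, the combinatorics of the boundary strata --- is entirely parallel to the three preceding lemmas and requires no new ideas.
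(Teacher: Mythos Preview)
Your proposal is correct and follows essentially the same approach as the paper: item (i) is handled by referring back to the proof of \cref{lem:transversality_for_strips} (standard transversality plus the smooth gluing theory of Large and Fukaya--Oh--Ohta--Ono), and item (ii) is deferred to \cref{sec:gluing_abs_caps}. Your additional discussion of the compatibility issue for the orientation isomorphisms is accurate and is precisely why the paper postpones item (ii) to that later section.
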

\begin{proof}
    For item (i), see the proof of \cref{lem:transversality_for_strips}.  We defer the proof of item (ii) to \cref{sec:gluing_abs_caps}.
\end{proof}

\begin{rem}
    Two conformally equivalent Floer data (see \cref{dfn:conformally_equiv_floer_data}) on $S$ in \cref{dfn:moduli_product} yields canonically identified moduli spaces.
\end{rem}

\subsection{Moduli space for associativity}

In order to show associativity of the product in the wrapped Donaldson--Fukaya category, we need to consider moduli spaces with three input punctures and one output puncture.

\begin{notn}
Let $L_0$, $L_1$, $L_2$, and $L_3$ be Lagrangian $R$-branes.
\end{notn}

Recall that $\osr_3$ denotes the Deligne--Mumford compactification of the moduli space of disks with a negative boundary puncture $\zeta_0$, and positive boundary punctures $\zeta_1, \zeta_2, \zeta_3$, labeled in counterclockwise order. We have equipped $\osr_3$ with a universal and conformally consistent choice of Floer data with Hamiltonians $H_S \colon S \to \sH(\overline X,F)$ and almost complex structures $J_S \colon S \to \sJ(\overline X,F)$, see \cref{dfn:univ_Floer_datum}.

\begin{defn}\label{dfn:moduli_assoc}
Let $a \in \mathcal X(L_0,L_1;H)$, $b \in \mathcal X(L_1,L_2;H)$, $c \in \mathcal X(L_2,L_3;H)$ and $d \in \mathcal X(L_0,L_3;\frac{1}{3}(\psi^3)^\ast H)$.  Define $\osr(a,b,c;d)$ to be the Gromov compactification of the set of maps
\[ \mathcal R(a,b,c;d) \coloneqq \left\{ \begin{matrix} S' \in \osr_3, \\ u \colon S' \to \overline X \end{matrix}\, \left| \, \begin{matrix} u \mbox{ solves Floer's equation} \\ u(z) \in \psi^{\rho_{S'}(z)}L_i, \;\; z \in (\partial S')_i \\ \lim_{s\to -\infty} u(\varepsilon_0^-(s,t)) = \psi^3(d(t))
\\ \lim_{s\to \infty} u(\varepsilon_1^+(s,t)) = a(t),
\\ \lim_{s\to \infty} u(\varepsilon_2^+(s,t)) = b(t),
\\ \lim_{s\to \infty} u(\varepsilon_3^+(s,t)) = c(t),\end{matrix} \right. \right\},\]
where above $(\partial S')_i$ denotes the boundary arc immediately after the boundary puncture $\zeta_i \subset \partial S'$ according to the boundary orientation of the disk. $\osr(a,b,c;d)$ is a compact metric space endowed with the Gromov topology (see \cite[Lemma 3.9]{sylvan2019on}).
\end{defn}
In the following we let $\overline{\mathcal R}_1(a,b,c;d)$ denote the colimit of the following diagram
\[
    \begin{tikzcd}[row sep=scriptsize, column sep=1.25cm]
        \displaystyle \bigsqcup_{e,e' \in \sX(L_1,L_3)} (\overline{\mathcal R}(b,c;e) \times \overline{\mathcal R}(e;e') \times \overline{\mathcal R}(a,e';d)) \rar[shift right,swap]{c^0_{bc e e'} \times \id} \rar[shift left]{\id \times c^2_{a e e' d}} & \displaystyle \bigsqcup_{e \in \sX(L_1,L_3)} (\overline{\mathcal R}(b,c;e) \times \overline{\mathcal R}(a,e;d))
    \end{tikzcd}.
\]
Similarly, we let $\overline{\mathcal R}_2(a,b,c;d)$ denote the colimit of the following diagram
\[
    \begin{tikzcd}[row sep=scriptsize, column sep=1.25cm]
        \displaystyle \bigsqcup_{f,f' \in \sX(L_0,L_2)} (\overline{\mathcal R}(a,b;f) \times \overline{\mathcal R}(f;f') \times \overline{\mathcal R}(f',c;d)) \rar[shift right,swap]{c^0_{ab f f'} \times \id} \rar[shift left]{\id \times c^1_{f f' cd }} & \displaystyle \bigsqcup_{f \in \sX(L_0,L_2)} (\overline{\mathcal R}(a,b;f) \times \overline{\mathcal R}(f,c;d))
    \end{tikzcd},
\]
where the maps $c^j$ are those in \cref{lem:moduli_product}(i).
\begin{lem}\label{lem:moduli_associativity}
There exists a comeager set of $H_S \colon S \to \sH(\overline X,F)$ and $J_S \colon S \to \sJ(\overline X,F)$ such that the moduli space $\osr(a,b,c;d)$ is a smooth manifold with corners of dimension $D \coloneqq \mu(d)-\mu(a)-\mu(b)-\mu(c)$. Moreover, the following conditions are satisfied:
\begin{enumerate}
    \item For $a,a' \in \sX(L_0,L_1)$, $b,b' \in \sX(L_1,L_2)$, $c,c' \in \sX(L_2,L_3)$, $d,d' \in \sX(L_0,L_3)$, there are maps
    \begin{align*}
        \osr(a;a') \times \osr(a',b,c;d) &\longrightarrow \osr(a,b,c;d)\\
        \osr(b;b') \times \osr(a,b',c;d) &\longrightarrow \osr(a,b,c;d)\\
        \osr(c;c') \times \osr(a,b,c';d) &\longrightarrow \osr(a,b,c;d)\\
        \osr(a,b,c';d) \times \osr(d;d') &\longrightarrow \osr(a,b,c;d)\\
        \overline{\mathcal R}_1(a,b,c;d) &\longrightarrow \osr(a,b,c;d) \\
        \overline{\mathcal R}_2(a,b,c;d) &\longrightarrow \osr(a,b,c;d)
    \end{align*}
    that satisfy the analogous compatibility conditions spelled out in \cref{dfn:flow_multimodule} and are diffeomorphisms onto faces of their targets. Let
    \begin{align*}
        \partial^\circ_i \osr(a,b,c;d) \coloneqq &\bigsqcup_{\substack{a' \in \sX(L_0,L_1) \\ \mu(a')-\mu(a) + 1= i}} (\osr(a;a') \times \osr(a',b,c;d)) \\
        &\qquad \sqcup \bigsqcup_{\substack{b' \in \sX(L_1,L_2) \\ \mu(b')-\mu(b) +1 = i}} (\osr(b;b') \times \osr(a,b',c;d)) \\
        &\qquad \sqcup \bigsqcup_{\substack{c' \in \sX(L_2,L_3) \\ \mu(c')-\mu(c) +1 = i}} (\osr(c;c') \times \osr(a,b,c';d)) \\
        &\qquad \sqcup \bigsqcup_{\substack{d' \in \sX(L_0,L_3) \\ D + 1 = i}} (\osr(a,b,c;d') \times \osr(d';d)).
    \end{align*}
    Setting
    \[
        \partial_i \osr(a,b,c;d) \coloneqq \begin{cases}
                \overline{\mathcal R}_1(a,b,c;d)\sqcup \overline{\mathcal R}_2(a,b,c;d), & i = 1 \\ \partial^\circ_{i-1} \osr(a,b,c;d), & i > 1
            \end{cases},
    \]
    endows $\osr(a,b,c;d)$ with the structure of a $\ang{D+1}$-manifold (see \cref{def:k_mfd}).
    \item Let $\check{\fo}_0(a)$, $\check{\fo}_1(b)$, $\check{\fo}_2(c)$, and $\check{\fo}_3(d)$ be rank one free $R$-modules as in \cref{lem:transversality_for_strips}(ii). There are isomorphisms
    \[ \check{\fo}_3(d) \overset{\simeq}{\longrightarrow} \check{\fo}_0(a) \otimes_R \check{\fo}_1(b) \otimes_R \check{\fo}_2(c) \otimes_R I_R(a,b,c;d)\]
    that are compatible with the isomorphisms from \cref{lem:transversality_for_strips}(ii) and the above maps in the sense that diagrams similar to \eqref{eq:bimod_ext_ori1} and \eqref{eq:bimod_ext_ori2} commute.
\end{enumerate}
\qed
\end{lem}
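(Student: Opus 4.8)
\textbf{Proof strategy for Lemma~\ref{lem:moduli_associativity}.}
The plan is to follow the now-standard template for establishing transversality and corner structure for moduli of pseudo-holomorphic disks in a Liouville sector, and then to construct the canonical $R$-orientation by the gluing-of-abstract-caps technique that is deferred to \cref{sec:gluing_abs_caps} (exactly as in \cref{lem:transversality_for_strips}, \cref{lem:moduli_product}, etc.). For item (i), I would first observe that $\overline{\mathcal R}(a,b,c;d)$ is cut out by a Fredholm section of a Banach bundle over the universal moduli space fibered over $\overline{\mathcal R}_3$; since the Lagrangians $L_0,\dots,L_3$ are exact and stay away from the symplectic boundary, and since the Floer data are chosen from $\mathcal H\mathcal J(\overline X,F)$, the usual compactness (Gromov compactness together with the no-escape lemma and the holomorphicity assumption near the stop, \cref{dfn:compatible_J}(iii)) shows that no bubbling or escape to the horizontal boundary occurs, so the Gromov compactification consists only of broken configurations of the listed types. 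Transversality for a comeager set of universal and conformally consistent Floer data follows from the standard Sard--Smale argument (the somewhere-injectivity needed here is automatic since the relevant curves are, generically, not constant and have injective points by exactness), and the manifold-with-corners structure compatible with the stratification is obtained by the gluing analysis of Large \cite[Section 6]{large2021spectral} and Fukaya--Oh--Ohta--Ono \cite{fukaya2016exponential}, as invoked in \cref{lem:transversality_for_strips}. The only point requiring care beyond the binary-product case \cref{lem:moduli_product} is that the codimension-one boundary has two qualitatively different pieces: the codimension-one strata of $\overline{\mathcal R}_3$ itself (producing the two faces $\overline{\mathcal R}_1(a,b,c;d)$ and $\overline{\mathcal R}_2(a,b,c;d)$ obtained by degenerating the domain disk), and the strip-breaking strata at each of the four punctures (producing the $\partial^\circ_i$ terms). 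I would verify that the Deligne--Mumford gluing charts \eqref{eq:gluing_corner} and the strip-breaking gluing are mutually compatible so that the total face structure assembles into a $\ang{D+1}$-manifold, and that the face indexed $i=1$ is precisely $\overline{\mathcal R}_1 \sqcup \overline{\mathcal R}_2$.

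For item (ii), the index bundle $I(a,b,c;d) = T\overline{\mathcal R}(a,b,c;d)$ decomposes, via linear gluing along strip-like ends, as a sum of the analytical index of the $\bar\partial$-operator on the fixed domain plus the tangent space to $\overline{\mathcal R}_3$; the construction of $R$-line bundle isomorphisms then proceeds exactly as in the proof of item (ii) of \cref{lem:transversality_for_strips}, which is carried out in \cref{sec:gluing_abs_caps}: one fixes, for each Hamiltonian chord $x$, a rank-one free $R$-module $\check{\fo}(x)$ built from a choice of abstract strip-cap with Lagrangian boundary data (\cref{dfn:abstract_strip_caps}, \cref{dfn:pullback_path}), using that the $R$-orientation data on the $L_i$ trivializes the associated $R$-line bundle coming from the index of the Cauchy--Riemann operator (\cref{lem:DHashNullity}). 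Gluing the abstract caps to the boundary punctures of a curve in $\overline{\mathcal R}(a,b,c;d)$ yields an $R$-line bundle whose fiber is canonically $R$ (it is the index $R$-line bundle of a closed-up disk problem, which is $R$-oriented by \cref{lem:DHashNullity}), and reading off what this says gives the desired isomorphism
\[
\check{\fo}_3(d) \overset{\simeq}{\longrightarrow} \check{\fo}_0(a) \otimes_R \check{\fo}_1(b) \otimes_R \check{\fo}_2(c) \otimes_R I_R(a,b,c;d).
\]
Compatibility with the isomorphisms from \cref{lem:transversality_for_strips}(ii) and with the boundary-face maps of item (i) is then a matter of checking that the gluing of abstract caps is associative and commutes with the two kinds of gluing (strip-breaking and domain degeneration); this is the same coherence verification performed for the product moduli space, now with one extra input, and holds because all the cap-gluing maps are defined by the same concatenation operation.

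The main obstacle I anticipate is \emph{not} the Fredholm/transversality theory — that is entirely parallel to the cases already treated — but rather the coherence bookkeeping in item (ii): one must ensure that the single family of rank-one free $R$-modules $\{\check{\fo}(x)\}$ and the single family of isomorphisms can be chosen once and for all so that the diagrams for the differential (\cref{lem:transversality_for_strips}), the product (\cref{lem:moduli_product}), and now associativity all commute simultaneously and compatibly with the simplicial/operadic structure of the $\overline{\mathcal R}_k$. Concretely, the degeneration of the domain disk in $\overline{\mathcal R}_3$ into two disks must be matched, on the level of index $R$-line bundles, with the tensor product of the two isomorphisms supplied by \cref{lem:moduli_product}(ii) — including signs, i.e.\ the correct Koszul-type reordering of the tensor factors $\check{\fo}_i$ — and this is precisely the content of the deferred \cref{sec:gluing_abs_caps}. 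I would therefore prove the analytic statements here and, for item (ii), reduce to a single uniform statement about gluing abstract strip-caps to be established in that section, from which the associativity case follows as one more instance.
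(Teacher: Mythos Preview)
Your proposal is correct and matches the paper's approach exactly: the paper gives no separate proof of this lemma (it ends with a bare \qed), relying on the template already established in \cref{lem:transversality_for_strips} and \cref{lem:moduli_product}, namely the smooth corner structure via Large \cite[Section 6]{large2021spectral} and Fukaya--Oh--Ohta--Ono \cite{fukaya2016exponential} for item (i), and deferral of item (ii) to the abstract strip-cap gluing in \cref{sec:gluing_abs_caps} (specifically \cref{lem:cap_gluing}). Your write-up is in fact more detailed than what the paper provides, and the coherence bookkeeping you flag as the main issue is precisely what \cref{lem:cap_gluing} handles uniformly for all the moduli spaces at once.
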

\begin{rem}
    A conformal equivalence (see \cref{dfn:conformally_equiv_floer_data}) of any of the Floer data in the universal and conformally consistent choice of Floer data on $\osr_3$ in \cref{dfn:moduli_assoc} induces a canonical identification between the two corresponding moduli spaces $\osr(a,b,c;d)$.
\end{rem}

\subsection{Moduli spaces for units}

\begin{notn}
Let $L$ be a Lagrangian $R$-brane.
\end{notn}

Consider $S \coloneqq \ID^2 \smallsetminus \{-1\}$ be the unit disk in $\IC$ with its standard orientation and a negative puncture at $-1$.  We equip $S$ with a choice of Floer datum, with Hamiltonian $H_S \colon S \to \sH(\overline X,F)$ and almost complex structure $J_S \colon S \to \sJ(\overline X,F)$, see \cref{dfn:Floer_datum} (note that even though $S$ is not stable this definition still makes sense).

\begin{defn}\label{dfn:moduli_floer_unit}
Let $a \in \sX(L,L)$.  Define $\osr_L(a)$ to be the Gromov compactification of the set of maps
\[ \sR_L(a) \coloneqq \left.\left\{ u \colon S \to \overline{X} \, \left| \, \begin{matrix} u \mbox{ solves the Floer equation} \\ u(z) \in \psi^{\rho_{S}(z)}L \; \; z \in \partial S \\ \lim_{s \to -\infty} u(\varepsilon^-(s,t)) = a(t) \end{matrix} \right. \right\}\right. \]
Endow $\osr_L(a)$ with the Gromov topology.  With this topology, it is a compact metric space (see \cite[Lemma 3.9]{sylvan2019on}).
\end{defn}

\begin{lem}\label{lem:moduli_units}
There exists a comeager set of $H_S \colon S \to \sH(\overline X,F)$ and $J_S \colon S \to \sJ(\overline X,F)$ such that the moduli space $\osr_L(b)$ is a smooth manifold with corners of dimension $\mu(b)$.  Moreover, the following conditions are satisfied:
\begin{enumerate}
    \item For $a,b \in \sX(L,L)$, there are maps
    \[ \osr_L(a) \times \osr(a;b) \longrightarrow \osr(b) \]
    that are diffeomorphisms onto faces of $\osr(b)$ such that defining
    \[ \partial_i \osr_L(b) \coloneqq \bigsqcup_{\substack{a \in \sX(L,L) \\ \mu(a) + 1 = i}} \osr_L(a) \times \osr(a;b), \]
    endows $\osr_L(b)$ with the structure of a $\ang{\mu(b)}$-manifold (see \cref{def:k_mfd}).
    \item Let $\check{\fo}(a)$ be the rank one free $R$-module as in \cref{lem:transversality_for_strips}(ii). There are isomorphisms
    \[ \check{\fo}(b) \overset{\simeq}{\longrightarrow} I_R(b) \]
    that are compatible with the isomorphisms from \cref{lem:transversality_for_strips}(ii) in the sense that diagrams similar to \eqref{eq:bimod_ext_ori1} and \eqref{eq:bimod_ext_ori2} commute.
\end{enumerate}
\qed
\end{lem}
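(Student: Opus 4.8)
The plan is to follow the same strategy that underlies Lemmas \ref{lem:transversality_for_strips}, \ref{lem:moduli_cont}, and \ref{lem:moduli_product}: establish the manifold-with-corners structure via the standard gluing and transversality machinery, then defer the construction of the compatible $R$-orientations (item (ii)) to the abstract-caps formalism of \cref{sec:canonical_orientations}. First I would treat item (i). The moduli space $\osr_L(b)$ is cut out by Floer's equation on the once-punctured disk $S = \ID^2\smallsetminus\{-1\}$ with moving Lagrangian boundary condition $\psi^{\rho_S(z)}L$ and a single negative asymptotic $a$. Generic choice of the Floer datum $(H_S,J_S)$ (within a comeager set) makes all such moduli spaces regular, so $\osr_L(b)$ is a smooth manifold of dimension equal to the Fredholm index, which here is $\mu(b)$ (the virtual dimension of the disk with one output and no inputs, with asymptotic Conley--Zehnder/Maslov contribution $\mu(b)$). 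Gromov compactness together with the gluing analysis of Large \cite{large2021spectral} and Fukaya--Oh--Ohta--Ono \cite{fukaya2016exponential} — exactly as cited in the proof of \cref{lem:transversality_for_strips} — upgrades this to a manifold with corners whose codimension-one boundary faces are the products $\osr_L(a)\times\osr(a;b)$ over $a$ with $\mu(a)+1 = \mu(b)$ (a broken configuration consisting of the ``unit disk'' limiting onto a Floer strip). One then checks that the natural concatenation maps $\osr_L(a)\times\osr(a;b)\to\osr_L(b)$ are diffeomorphisms onto these faces and that the face assignment $\partial_i\osr_L(b)$ satisfies the axioms of \cref{def:k_mfd}; this is a routine matching of broken-configuration indices, identical in structure to the differential case. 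Note that although $S$ is unstable (it has a two-dimensional automorphism group acting on the disk with one puncture), the Floer datum involves a time-shifting map $\rho_S$ which breaks this symmetry, so the construction of \cref{dfn:Floer_datum} still applies, as remarked in \cref{dfn:moduli_floer_unit}.

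For item (ii), the plan is to produce, for every $b\in\sX(L,L)$, an isomorphism of $R$-line bundles $\check\fo(b)\xrightarrow{\simeq}I_R(b)$ over $\osr_L(b)$ compatible with the boundary identifications of item (i) and with the isomorphisms $\psi_{ab}\colon\check\fo(b)\to\check\fo(a)\otimes_R I_R(a;b)$ of \cref{lem:transversality_for_strips}(ii). As in the deferred proofs of those lemmas, this is carried out in \cref{sec:gluing_abs_caps} by decomposing the linearized Cauchy--Riemann operator over $\osr_L(b)$ into a piece coming from an abstract strip-cap near the boundary arc $\partial S$ and a piece coming from the negative strip-like end carrying the asymptotic data $b$. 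Using the spaces $\sD^\#_\pm(\gamma)$ of \cref{dfn:pullback_path} and \cref{dfn:abstract_strip_caps}, the $R$-brane structure on $L$ (i.e.\@ the lift $\sG_L^\#$ of the Lagrangian Gauss map) trivializes the relevant index bundle after passing to associated $R$-line bundles — this is precisely the content of \cref{lem:DHashNullity}. The rank-one free $R$-module $\check\fo(b)$ is defined by a contractible choice of abstract capping data at $b$ (the end deformation spaces $\Ends(D_\pm)$ are contractible), and the gluing/linear-algebra argument identifies $\check\fo(b)$ with $I_R(b)$; coherence under degeneration to the boundary faces $\osr_L(a)\times\osr(a;b)$ follows from the compatibility of gluing maps with the abstract-cap gluing maps, which is built into the construction in \cref{sec:canonical_orientations}.

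The main obstacle — though it is the same obstacle that \cref{sec:canonical_orientations} is designed to overcome, so from the vantage point of this lemma it is ``assumed'' — is the coherence of the $R$-orientations: one must ensure that the trivialization $\check\fo(b)\to I_R(b)$ is not merely constructed pointwise on each moduli space but is compatible, simultaneously and up to coherent homotopy, with the composition maps of \emph{all} the moduli spaces ($\osr(a;b)$, $\osr(a,b;c)$, $\osr_L(b)$, and the continuation spaces) in which $b$ and its capping data participate. Concretely, the isomorphisms must be chosen so that diagrams analogous to \eqref{eq:bimod_ext_ori1}--\eqref{eq:bimod_ext_ori2} commute strictly, which requires the inductive choice of abstract capping data over the compactified Deligne--Mumford spaces $\osr_k$ (and their strata) to be made consistently, exactly as the universal and conformally consistent Floer data of \cref{dfn:univ_Floer_datum} are chosen. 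Granting the framework of \cref{sec:AbstractBranes} and \cref{sec:canonical_orientations}, the proof of the present lemma is then a direct application of that framework to the particular moduli space $\osr_L(b)$, and we defer the details of item (ii) to \cref{sec:gluing_abs_caps} as in the preceding lemmas.

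\begin{proof}
For item (i), the existence of a smooth structure on $\osr_L(b)$ away from the corners for a comeager set of $(H_S,J_S)$ is standard; see \cite[(9k)]{seidel2008fukaya} and \cite{floer1995transversality}. The smooth structure compatible with the corners together with the maps $\osr_L(a)\times\osr(a;b)\to\osr_L(b)$ being diffeomorphisms onto the faces $\partial_i\osr_L(b)$ follows from the gluing analysis of Large \cite[Section 6]{large2021spectral} and Fukaya--Oh--Ohta--Ono \cite{fukaya2016exponential}, exactly as in the proof of \cref{lem:transversality_for_strips}. The dimension count is the Fredholm index of the linearized operator on the once-punctured disk with one negative asymptotic $b$, which equals $\mu(b)$. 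We defer the proof of item (ii) to \cref{sec:gluing_abs_caps}.
\end{proof}
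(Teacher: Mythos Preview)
Your proposal is correct and matches the paper's approach: the paper gives no separate proof for this lemma (it ends with \qed), treating it as following the same pattern as \cref{lem:transversality_for_strips}, \cref{lem:moduli_cont}, and \cref{lem:moduli_product}, with item (ii) deferred to \cref{sec:gluing_abs_caps}. Your reconstruction of that pattern is accurate.

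One small refinement worth noting for item (ii): in \cref{lem:cap_gluing} the paper's treatment of $\osr_L(a)$ is even simpler than the decomposition you describe. Rather than gluing an abstract cap to an element of $\osr_L(a)$, the paper observes that each element of $\osr_L(a)$ already \emph{is} an abstract Floer strip cap --- the domain $\ID^2\smallsetminus\{-1\}$ with the given Floer datum and boundary condition on $L$ defines a point of $\sC^\#(a)$ directly --- so there is a natural inclusion $\osr_L(a)\hookrightarrow\sC^\#(a)$ under which the index bundle $I_L(a)$ is the pullback of $\sV(a)$. The isomorphism $\check\fo(b)\simeq I_R(b)$ then comes immediately from the trivialization of $\sV_R(a)$ chosen in the proof following \cref{lem:cap_gluing}, with no gluing step required for this particular moduli space.
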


For some $\rho \in \IR_{\geq 0}$, let $Z_+^\rho \coloneqq [\rho,\infty) \times [0,1]$ and $Z_-^{\rho} \coloneqq (-\infty,-\rho] \times [0,1]$. Let $S_1$ denote the domain of an element of $\overline{\mathcal R}(a,b;c)$ that is equipped with the positive strip-like end $\varepsilon_1 \colon Z_+^0 \to S_1$ near the boundary puncture $\zeta_1$, and let $S_1^\rho \coloneqq S_1 \smallsetminus \varepsilon_1(Z_+^\rho)$. Similarly let $S_2$ denote the domain of an element of $\overline{\mathcal R}_L(x_1)$ equipped with the negative strip-like end $\varepsilon_L \colon Z_-^0 \to S_2$, and let $S^2_\rho \coloneqq S_2 \smallsetminus \varepsilon_L(Z_-^\rho)$. Define $S^\rho \coloneqq S_1 \cup_{\varphi_\rho} S_2$ where $\varphi_\rho$ is the following composition
\[
    \varepsilon_1(Z_+^\rho) \overset{\varepsilon_1^{-1}}{\longrightarrow} Z_+^\rho \xrightarrow{(\cdot (-1), \id)} Z_-^\rho \overset{\varepsilon_L}{\longrightarrow} \varepsilon_L(Z_-^\rho).
\]
We compactify the set $\{S^\rho \mid \rho \geq 0\}$ as follows. Let $S^\infty$ denote the nodal curve with components $S_1$ and $S_2$ glued along $\zeta_1 \in S_1$ and $-1 \in S_2$, and let $S^{-1}$ denote $S_2$ after removing the boundary puncture $\zeta_1$. We let $\{S^\rho\}_{-1 \leq \rho \leq 0}$ be a smooth deformation between $S_{-1}$ and $S_0$. Furthermore equip $S^\rho$ with Floer data with Hamiltonian $H_{S^\rho} \colon S^\rho \to \sH(\overline X,F)$ and almost complex structure $J_{S^\rho} \colon S^\rho \to \sJ(\overline X,F)$ inherited from $S_1^\rho$ and $S_2^\rho$ as well as with strip-like ends $\varepsilon_0$ and $\varepsilon_2$ inherited from $S_2$ at positive puncture $\zeta_0$ and negative puncture $\zeta_2$.
\begin{defn}\label{dfn:conn_sum_dom_unit}
Let $a \in \sX(L,L)$ and $b,c\in \sX(L,K)$.  Define $\overline{\mathcal B}_L(a;b)$ to be the Gromov compactification of the set of maps
\[ \sB_L(a;b) \coloneqq \left.\left\{ \begin{matrix} \rho \in [-1,\infty] ,\\ u \colon S^\rho \to \overline{X} \end{matrix} \, \left| \, \begin{matrix} u \mbox{ solves the Floer equation} \\ u((\partial S_1)_0) \subset L \\ u((\partial S_1)_1) \subset L \\ u((\partial S_1)_2) \subset K \\ u(\partial S_2) \subset L \\ \lim_{s \to -\infty} u(\varepsilon_0^-(s,t)) = b(t) \\ \lim_{s \to \infty} u(\varepsilon_2^+(s,t)) = a(t) \end{matrix} \right. \right\}\right. \]
Endow $\sB_L(a;b)$ with the Gromov topology.  With this topology, it is a compact metric space (see \cite[Lemma 3.9]{sylvan2019on}).
\end{defn}

In the following let $\overline{\mathcal B}_1(a;b)$ denote the colimit of the following diagram
\[
    \begin{tikzcd}[row sep=scriptsize, column sep=1.25cm]
            \displaystyle \bigsqcup_{a,a' \in \sX(L,L)} (\overline{\mathcal R}_L(a) \times \overline{\mathcal R}(a;a') \times \overline{\mathcal R}(a',b;c)) \rar[shift right,swap]{} \rar[shift left]{} & \displaystyle \bigsqcup_{a \in \sX(L,L)} (\overline{\mathcal R}_L(a) \times \overline{\mathcal R}(a,b;c))
        \end{tikzcd},
\]
where the two arrows are the action maps from \cref{lem:moduli_units} and \cref{lem:moduli_product}, respectively. Let $\tau_0 \colon \IR \to [0,1]$ be defined by $\tau_0 \equiv 0$. Recall the definition of $\osr^{\tau_0}$ in \cref{dfn:moduli_maps_cont}.
\begin{lem}\label{lem:moduli_unit_bordism}
    There exists a comeager set of $H_{S^\rho} \colon S^\rho \to \sH(\overline X,F)$ and $J_{S^\rho} \colon S^\rho \to \sJ(\overline X,F)$ such that the moduli space $\overline{\mathcal B}_L(a;b)$ is a smooth manifold with corners of dimension $\mu(b)-\mu(a)+1$.  Moreover, the following conditions are satisfied:
    \begin{enumerate}
        \item For $a,a' \in \sX(L,L)$ and $b,b' \in \sX(L,K)$ there are maps
        \begin{align*}
            \overline{\mathcal R}(a;a') \times \overline{\mathcal B}_L(a';b) &\longrightarrow \overline{\mathcal B}_L(a;b) \\
            \overline{\mathcal B}_L(a;b') \times \overline{\mathcal R}(b';b) &\longrightarrow \overline{\mathcal B}_L(a;b) \\
            \overline{\mathcal B}_1(a;b) &\longrightarrow \overline{\mathcal B}_L(a;b) \\
            \osr^{\tau_0}(a;b) &\longrightarrow \overline{\mathcal B}_L(a;b),
        \end{align*}
        that satisfy the analogous compatibility conditions spelled out in \cref{dfn:flow_multimodule} are diffeomorphisms onto faces of their targets such that setting
        \begin{align*}
            \partial_i^\circ \overline{\mathcal B}_L(a;b) \coloneqq &\bigsqcup_{\substack{a' \in \sX(L,L) \\ \mu(a')-\mu(a) = i}} (\overline{\mathcal R}(a;a') \times \overline{\mathcal B}_L(a';b)) \\
            &\qquad \sqcup \bigsqcup_{\substack{b' \in \sX(L,K) \\ \mu(b')-\mu(a)+2 = i}} (\overline{\mathcal B}_L(a;b') \times \overline{\mathcal R}(b';b))
        \end{align*}
        and defining
        \[
            \partial_i \overline{\mathcal B}_L(a;b) \coloneqq \begin{cases}
                \overline{\mathcal B}_1(a;b) \sqcup \osr^{\tau_0}(a;b), & i = 1 \\
                \partial_{i-1}^\circ \overline{\mathcal B}_L(a;b), & i > 1 \\
            \end{cases},
        \]
        endows $\overline{\mathcal B}_L(a;b)$ with the structure of a $\ang{\mu(b)-\mu(a)+2}$-manifold (see \cref{def:k_mfd}).
        \item Let $\check{\fo}_0(a)$ and $\check{\fo}_1(b)$ be rank one free $R$-modules as in \cref{lem:transversality_for_strips}(ii). There are isomorphisms
        \[ \check{\fo}_1(b) \overset{\simeq}{\longrightarrow} \check{\fo}_0(a) \otimes_R I_R(a;b)\]
        that are compatible with the isomorphisms from \cref{lem:transversality_for_strips}(ii) and the above maps in the sense that diagrams similar to \eqref{eq:bimod_ext_ori1} and \eqref{eq:bimod_ext_ori2} commute.
    \end{enumerate}
    \qed
\end{lem}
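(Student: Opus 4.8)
The plan is to construct the moduli space $\overline{\mathcal B}_L(a;b)$ as a "one-parameter gluing bordism" interpolating between the composition of a $J$-holomorphic triangle contributing to the product and a continuation strip, and to verify that it carries the structure of an $R$-oriented flow bordism in the sense of \cref{dfn:bordism_of_flow_bimods}, with the $R$-orientation induced by abstract cappings exactly as in the proofs of \cref{lem:transversality_for_strips}(ii), \cref{lem:moduli_product}(ii), and \cref{lem:moduli_units}(ii) deferred to \cref{sec:gluing_abs_caps}. First I would set up the analytic transversality statement: for generic $H_{S^\rho}$ and $J_{S^\rho}$ extending the previously chosen Floer data on the ends $S_1^\rho$ and $S_2^\rho$, the universal moduli space over $\rho \in [-1,\infty]$ is a smooth manifold with corners of the asserted dimension $\mu(b)-\mu(a)+1$. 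This follows from the now-standard machinery (Seidel's Chapter 9, Floer--Hofer--Salamon, and Large's corner-respecting refinement in \cite[Section 6]{large2021spectral}), together with the gluing analysis that identifies the $\rho=\infty$ end of the compactified parameter space with the fibered product $\overline{\mathcal R}_L(a)\times\overline{\mathcal R}(a,b;c)$ (before quotienting by breaking, i.e.\ the colimit $\overline{\mathcal B}_1(a;b)$) and the $\rho=-1$ end with $\osr^{\tau_0}(a;b)$, the continuation moduli space for the constant homotopy $\tau_0\equiv 0$.

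Next I would establish the face structure (item (i)). The codimension-one boundary faces come from three sources: breaking at the output puncture $\zeta_0$ (an $\overline{\mathcal R}(a;a')$ factor), breaking at the genuine input $\zeta_2$ (an $\overline{\mathcal R}(b';b)$ factor), and the two degenerations of the gluing parameter $\rho\to\infty$ and $\rho\to -1$. The index bookkeeping is exactly parallel to \cref{lem:moduli_product} and \cref{lem:moduli_cont}; I would package it via the $\partial_i^\circ$ notation and the $\ang{\mu(b)-\mu(a)+2}$-manifold structure, checking the overlap conditions of \cref{def:k_mfd}(iii) by the standard "associativity of gluing" argument (Fukaya--Oh--Ohta--Ono exponential gluing, or Large's framework). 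One subtlety worth flagging is the deformation $\{S^\rho\}_{-1\le\rho\le 0}$ smoothing out the removal of the puncture $\zeta_1$: the Floer data must be chosen so that on this segment the perturbation degenerates compatibly, which is why the continuation function is the \emph{constant} $\tau_0$; this makes the $\rho=-1$ face literally $\osr^{\tau_0}(a;b)$.

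For item (ii), the $R$-orientations, I would proceed exactly as in \cref{sec:gluing_abs_caps}: the index bundle $I(a;b)\to\overline{\mathcal B}_L(a;b)$ is trivialized as an $R$-line bundle by choosing, for each Hamiltonian chord $x\in\sX(L_0,L_1)$, an abstract strip cap $\overline D_\pm$ with a lift to $(\lag)^\#$ (a point of $\sD^\#_\pm(\gamma)$), using that the Lagrangian Gauss map of a Lagrangian $R$-brane lifts along $(\lag)\to(\lag)^\#$. Gluing abstract caps to both ends of every curve in the bordism, and invoking \cref{lem:DHashNullity}, produces the canonical $R$-orientation of the glued index problem; continuity of this construction over $\overline{\mathcal B}_L(a;b)$ gives the isomorphism $\check{\fo}_1(b)\xrightarrow{\simeq}\check{\fo}_0(a)\otimes_R I_R(a;b)$, and its restriction to each boundary face agrees with the orientations on $\overline{\mathcal R}_L$, $\overline{\mathcal R}(a,b;c)$, $\osr^{\tau_0}$, and the outgoing/incoming strip faces by functoriality of the capping construction under gluing. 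The genuinely hard part is not the formal structure but the compatibility bookkeeping: ensuring that the abstract cappings chosen once and for all (in \cref{sec:canonical_orientations}) are used \emph{consistently} across the triangle moduli space, the unit moduli space $\overline{\mathcal R}_L$, and the continuation moduli space $\osr^{\tau_0}$, so that the induced $R$-orientation on the $\rho=\infty$ face is precisely the composition orientation from \cref{dfn:compos_multimodules} (or rather its bimodule analog) and the one on the $\rho=-1$ face is the continuation orientation of \cref{lem:moduli_cont}(ii). Since all of this is deferred to \cref{sec:gluing_abs_caps} and mirrors the earlier lemmas verbatim, the proof is \qedhere (with details in that section).

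\begin{proof}
For item (i), the analytic transversality statement is proved exactly as in \cref{lem:transversality_for_strips}; see \cite[(9k)--(9l)]{seidel2008fukaya}, \cite{floer1995transversality}, and \cite[Section 6]{large2021spectral} for the corner-respecting smooth structure. The identification of the $\rho=\infty$ face with $\overline{\mathcal B}_1(a;b)$ and of the $\rho=-1$ face with $\osr^{\tau_0}(a;b)$ is the standard gluing description of the ends of the one-parameter family $\{S^\rho\}$, and the remaining faces arise from breaking at the punctures $\zeta_0$ and $\zeta_2$ as in \cref{lem:moduli_product}. Item (ii) is deferred to \cref{sec:gluing_abs_caps}, where it is obtained by the same abstract-capping construction used in the proofs of \cref{lem:transversality_for_strips}(ii), \cref{lem:moduli_product}(ii), and \cref{lem:moduli_units}(ii), together with \cref{lem:DHashNullity}.
\end{proof}
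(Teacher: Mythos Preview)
The paper states this lemma with a bare \qed and gives no proof at all; it is implicitly relying on the same transversality, gluing, and corner-structure arguments spelled out in the proofs of \cref{lem:transversality_for_strips}, \cref{lem:moduli_cont}, and \cref{lem:moduli_product}, together with the abstract-capping orientations deferred to \cref{sec:gluing_abs_caps}. Your proposal is precisely that implicit argument made explicit, so it is correct and takes the same approach as the paper.

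One small slip: in your description of the codimension-one faces you swapped the roles of the two punctures. Breaking at the positive (input) end $\zeta_2$ gives the $\overline{\sR}(a;a')$ factor, and breaking at the negative (output) end $\zeta_0$ gives the $\overline{\sR}(b';b)$ factor, not the other way around. This is only a labeling issue and does not affect the argument.
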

\section{Canonical orientations}\label{sec:canonical_orientations}

In this section, we construct canonical $R$-orientations for moduli spaces of $J$-holomorphic disks.  For ease of exposition, we restrict to the case of stably polarized Liouville sectors although our construction of canonical orientations can be carried out (with additional work) for weaker assumptions on the Liouville sectors.

\begin{notn}
    \begin{enumerate}
        \item Let $X$ be a Liouville sector with a choice of stable polarization $\sE \colon X \to \BO$.
        \item Let $L_0,L_1 \subset X$ be Lagrangian $R$-branes
        \item Let $H_t \colon X \to \IR$ be a time-dependent non-degenerate Hamiltonian and let $J_t$ be a time dependent almost complex structure on $X$ such that for each fixed $t$ and each time-$1$ chord $\gamma$ of $H$ from $L_0$ to $L_1$, $J_t$ is constant in a Darboux ball containing $\gamma(t)$.  We assume that $(H_t,J_t) \in \sH\sJ(\overline X,F)$, see \cref{defn:AdmissibleFloerData}.
    \end{enumerate}
\end{notn}

\subsection{Local operators}

Associated to a time-$1$ chord $\gamma \in \sX(L_0,L_1;H)$, we have a local operator obtained from the linearized Floer equation at the constant solution with image equal to $\gamma$.

\begin{defn}
    Given a time-$1$ chord $\gamma$ of $H$ from $L_0$ to $L_1$, the \emph{local operator} at $\gamma$ is the translation invariant operator
    \[ Y_\gamma \in \varOmega^{0,1}_{\IR \times [0,1]} \otimes_J \End(\gamma^*TX) \]
    that is determined as follows.  The linearization of the Floer equation at the constant solution $u(s,t) = \gamma(t)$ determines an invertible operator:
    \[ D_\gamma \colon W^{1,2}\left(\IR \times [0,1], (\gamma^*TX, L_0|_{\gamma(0)}, L_1|_{\gamma(1)})\right) \longrightarrow L^2\left(\IR \times [0,1], \varOmega^{0,1}_{\IR \times [0,1]} \otimes_J \gamma^*TX\right), \]
    where $W^{1,2}\left(\IR \times [0,1], (\gamma^*TX, L_0|_{\gamma(0)}, L_1|_{\gamma(1)})\right)$ denotes the space of sections of $\gamma^*TX$ over $\IR \times [0,1]$ which map $\IR \times \{0\}$ to $TL_0|_{\gamma(0)}$ and $\IR \times \{1\}$ to $TL_1|_{\gamma(1)}$.  It is of the form
    \[ D_\gamma(\xi) = \overline{\partial}_{J_t}(\xi) + Y_t(\xi), \]
    where $Y_t = Y_\gamma$.
\end{defn}

\subsection{The space of abstract Floer strip caps}

Recall the definition of $\sD(\gamma)$ from \cref{dfn:abstract_strip_caps} and that the following diagram is a homotopy pullback by \cref{dfn:pullback_path}
\[
\begin{tikzcd}[row sep=scriptsize,column sep=scriptsize]
    \sD_+^\#(\gamma) \rar{\pi} \dar[swap]{\rho^\#} & \sD_+(\gamma) \dar{\rho} \\
    \sP_{x_0^\#,x_1^\#}(\lag)^\# \rar & \sP_{x_0,x_1}(\lag)
\end{tikzcd}.
\]
We now assume that $L_0$ and $L_1$ are equipped with $R$-brane structures.

\begin{defn}\label{dfn:abstract_floer_strip_caps}
The set of \emph{positive abstract Floer strip caps} associated to a time-$1$ chord $\gamma$ of $H$ from $L_0$ to $L_1$ is the set of tuples $(v, L, J, Y, g)$ with $(\varepsilon,\alpha,u) \coloneqq \pi(v)$ that satisfy the following:
\begin{enumerate}
	\item $v \in \sD_+^\#(\gamma^\ast \sE)$ such that $\rho^\# (v)(i) = \sG_{L_i}^\# (\gamma(i))$ for $i \in \{0,1\}$.
	\item $L \in \IR_{>0}$.
	\item $J$ is an almost complex structure on the vector bundle over $D_+$ classified by $c \circ u$ such that $(\varepsilon|_{(-\infty,-L) \times [0,1]})^*J$ agrees with the given almost complex structure $J_t$ along $\gamma$.
	\item $Y \in \varOmega^{0,1}_{D_+} \otimes_J \End(c \circ u)$ such that $Y_\gamma = (\varepsilon|_{(-\infty,-L) \times [0,1]})^*Y$.
	\item $g$ is a metric on $D_+$ such that $(\varepsilon|_{(-\infty,-L) \times [0,1]})^*g$ agrees with the standard metric on $\IR \times [0,1]$.

\end{enumerate}
Let $\sC_+^\#(\gamma)$ denote the set of such tuples and endow it with the subspace topology of the product topology.  We define $\sC_-^\#(\gamma)$ and $\sC^\#$ analogously.
\end{defn}
\begin{rem}
    Note that $\sC^\#_\pm(\gamma)$ depends on the two Lagrangian $R$-branes $L_1$ and $L_2$. Sometimes we make the choice of Lagrangian $R$-branes explicit and use the notation $\sC^\#_{\pm;L_1,L_2}(\gamma)$.
\end{rem}
We define spaces $\sC_\pm(\gamma)$ analogously to \cref{dfn:abstract_floer_strip_caps} and note that there are projection maps
\begin{align}\label{eq:projection_strip_caps}
    p_\pm \colon \sC_\pm^\#(\gamma) &\longrightarrow \sC_\pm(\gamma) \\
    (v,L,J,Y,g) &\longmapsto (\pi(v),L,J,Y,g). \nonumber
\end{align}

\begin{lem}\label{lma:forgetful_maps_caps}
There are forgetful maps

\begin{center}
    \begin{minipage}{0.3\textwidth}
	\begin{align*}
		\sC^\#_\pm(\gamma) &\longrightarrow \sD^\#_\pm(\gamma^\ast \sE) \\
		(v,L,J,Y,g) &\longmapsto v
	\end{align*}
\end{minipage}
\begin{minipage}{0.3\textwidth}
	\begin{align*}
		\sC_\pm(\gamma) &\longrightarrow \sD_\pm(\gamma^\ast \sE) \\
		(v,L,J,Y,g) &\longmapsto v
	\end{align*}
\end{minipage}

\end{center}
that are continuous homotopy equivalences which moreover commute with the projections $p_\pm$. 
\end{lem}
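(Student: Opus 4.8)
\textbf{Proof plan for \cref{lma:forgetful_maps_caps}.}
The plan is to analyze the two forgetful maps (the $\#$-decorated one and its undecorated shadow) separately and to show that each is a fibration with contractible fibers, hence a homotopy equivalence; continuity is immediate from the definitions since all the structures involved (almost complex structures $J$, perturbation terms $Y$, metrics $g$, and the real parameter $L$) are forgotten by a projection of product spaces, and commutativity with $p_\pm$ is likewise tautological from the way these data appear as extra factors. First I would treat the undecorated map $\sC_\pm(\gamma) \to \sD_\pm(\gamma^\ast\sE)$. The fiber over a point $(\varepsilon,\alpha,u) \in \sD_\pm(\gamma^\ast\sE)$ consists of tuples $(L,J,Y,g)$ satisfying conditions (ii)--(v) of \cref{dfn:abstract_floer_strip_caps}. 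I claim this fiber is contractible: the space of $L \in \IR_{>0}$ is convex; having fixed $L$, the space of almost complex structures $J$ on the given complex vector bundle $c\circ u$ that agree with the prescribed $J_t$ on the cylindrical end is a nonempty convex (or at least contractible, being a space of sections of a bundle with contractible fibers modeled on the Siegel upper half-space) set; given $J$, the space of $(0,1)$-forms $Y$ valued in $\End(c\circ u)$ with a prescribed restriction on the end is an affine subspace of a vector space, hence contractible; and the space of metrics $g$ with the prescribed restriction on the end is convex. One then assembles these contractible fiber pieces via a standard argument (e.g.\ the map is a quasi-fibration, or more simply one constructs an explicit deformation retraction fiberwise by interpolating each datum toward a standard choice, using a cutoff function in the cylindrical coordinate to maintain the boundary conditions on the end) to conclude the map is a homotopy equivalence.

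Next I would treat the $\#$-decorated map $\sC_\pm^\#(\gamma) \to \sD_\pm^\#(\gamma^\ast\sE)$. Here I would exploit the homotopy pullback square defining $\sD_\pm^\#$ in \cref{dfn:pullback_path} together with the analogous homotopy pullback (implicit in \cref{dfn:abstract_floer_strip_caps}) expressing $\sC_\pm^\#(\gamma)$ as the homotopy pullback of $\sC_\pm(\gamma) \to \sD_\pm(\gamma^\ast\sE)$ along $\pi \colon \sD_\pm^\#(\gamma^\ast\sE) \to \sD_\pm(\gamma^\ast\sE)$. Precisely: a point of $\sC_\pm^\#(\gamma)$ is a point $v \in \sD_\pm^\#(\gamma^\ast\sE)$ (with the compatibility $\rho^\#(v)(i) = \sG^\#_{L_i}(\gamma(i))$) together with data $(L,J,Y,g)$ depending only on $\pi(v)$, i.e.\ $\sC_\pm^\#(\gamma) \simeq \sD_\pm^\#(\gamma^\ast\sE) \times_{\sD_\pm(\gamma^\ast\sE)} \sC_\pm(\gamma)$ (homotopy fiber product). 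Since homotopy pullback preserves weak equivalences in each variable, the homotopy equivalence $\sC_\pm(\gamma) \to \sD_\pm(\gamma^\ast\sE)$ established in the previous paragraph base-changes to the desired homotopy equivalence $\sC_\pm^\#(\gamma) \to \sD_\pm^\#(\gamma^\ast\sE)$. The constraint $\rho^\#(v)(i) = \sG^\#_{L_i}(\gamma(i))$ is imposed identically on both sides (it only involves $v$, not the forgotten data), so it does not affect the comparison.

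The step I expect to require the most care is verifying that the undecorated forgetful map is genuinely a fibration (or at least that fiberwise contractibility of $(L,J,Y,g)$ can be globalized to a homotopy equivalence of total spaces): one must check that the contractions of the pieces can be chosen continuously in the base point $(\varepsilon,\alpha,u)$, and in particular that the prescribed restrictions on the cylindrical end $(-\infty,-L)\times[0,1]$ vary continuously as $\varepsilon$ and $L$ vary. This is a routine but slightly fiddly point about gluing the ``end datum'' $(\,J_t,\,Y_\gamma,\,\text{standard metric}\,)$ into a general choice over the compact part using a cutoff function in the $\varepsilon$-coordinate. Once this is in place, the rest of the argument is formal: continuity, compatibility with $p_\pm$, and the $\#$-decorated case all follow immediately as indicated above.
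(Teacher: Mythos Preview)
Your proposal is correct and follows essentially the same approach as the paper: the paper's proof simply asserts that the set of tuples $(L,J,Y,g)$ satisfying items (ii)--(v) of \cref{dfn:abstract_floer_strip_caps} is contractible, which is exactly the fiberwise contractibility you establish in detail. Your homotopy pullback treatment of the $\#$-decorated case is more elaborate than needed (the fiber over $v \in \sD_\pm^\#(\gamma^\ast\sE)$ is literally the same contractible space of $(L,J,Y,g)$, since these data depend only on $\pi(v)$), but it is correct and arrives at the same conclusion.
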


\begin{proof}
Continuity and the condition that they commute with the projections $p_\pm$ are clear.  The statement about homotopy equivalence follows from the fact that the set of tuples $(L,J,Y,g)$ satisfying items (ii)--(v) in \cref{dfn:abstract_floer_strip_caps} is contractible.
\end{proof}

\subsection{Index bundles of abstract strip caps}\label{sec:index_bundle_abstract_strip_caps}

An element $(v,L,J,Y,g) \in \sC_\pm^\#(\gamma)$ with $(\varepsilon,\alpha,u) \coloneqq \pi(v)$ determines the linear operator
\begin{align*}
    D_v \colon W^{1,2}\left(\overline D_\pm, (c \circ u, \alpha(0), \alpha(1))\right) &\longrightarrow L^2\left(\overline D_\pm, \varOmega^{0,1}_{\overline D_\pm} \otimes_J (c \circ u)\right) \\
    \xi &\longmapsto \overline{\partial}_J(\xi) + Y(\xi).
\end{align*}

\begin{defn}\label{dfn:index_bundle_caps}
The \emph{index bundle} associated to $\sC_\pm^\#(\gamma)$ is the bundle $\sV_\pm(\gamma) \to \sC_\pm^\#(\gamma)$ with fiber over $(v,L,J,Y,g)$ given by $\ind(D_v)$.
\end{defn}

\begin{lem}\label{lem:CanonicalOrientation}
The index bundle $\sV_\pm(\gamma) \to \sC^\#_\pm(\gamma)$ is $R$-orientable.
\end{lem}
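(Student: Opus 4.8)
The plan is to reduce the $R$-orientability of the index bundle $\sV_\pm(\gamma) \to \sC^\#_\pm(\gamma)$ to the analogous statement for the space $\sD^\#_\pm(\gamma^*\sE)$ of abstract disks, and then to apply \cref{lem:DHashNullity}. First I would observe that the index bundle $\sV_\pm(\gamma)$ is, by construction, the pullback of the index bundle of the family of Cauchy--Riemann operators parametrized by $\sD^\#_\pm(\gamma^*\sE)$ along the forgetful map $\sC^\#_\pm(\gamma) \to \sD^\#_\pm(\gamma^*\sE)$ of \cref{lma:forgetful_maps_caps}. Indeed, the operator $D_v$ attached to a tuple $(v,L,J,Y,g)$ depends on the extra data $(L,J,Y,g)$ only up to continuous deformation through Fredholm operators of the same index: the space of such tuples is contractible (this is exactly what makes the forgetful map a homotopy equivalence in \cref{lma:forgetful_maps_caps}), so the index bundle is canonically identified with the pullback of the index bundle over $\sD^\#_\pm(\gamma^*\sE)$ whose fiber over $(\alpha,u)$ is $\ind(D_{\alpha,u})$ in the notation following \eqref{eq:dind}. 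Up to the compactification $\overline D_\pm \cong \ID^2$, this is precisely the vector bundle classified by \eqref{eq:dind} pulled back to $\sD^\#$, i.e.\ the composition $\sD^\#_\pm(\gamma^*\sE) \to \sD^\# \to \varOmega(\lag) \xrightarrow{\ind} \BO$ (using the gluing maps relating $\sD^\#_\pm(\gamma)$ to $\sD^\#$).

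Next I would invoke \cref{lem:DHashNullity}, which states precisely that the composition $\sD^\# \to \sD \to \BO \to \BGL_1(R)$ is canonically null-homotopic. Composing the classifying map of $\sV_\pm(\gamma)$, which factors as $\sC^\#_\pm(\gamma) \to \sD^\#_\pm(\gamma^*\sE) \to \sD^\# \to \BO$, with the map $\BO \to \BGL_1(R)$ therefore yields a null-homotopic map; by the definition of $R$-orientability recalled in the subsection on $R$-orientations (a vector bundle is $R$-orientable precisely when its associated $R$-line bundle $M \to \BO \xrightarrow{\B J} \BGL_1(\IS) \xrightarrow{\B\eta} \BGL_1(R)$ is null-homotopic), this shows $\sV_\pm(\gamma)$ is $R$-orientable, and in fact canonically $R$-oriented via the chosen null-homotopy. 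The one point requiring a little care is the passage between $\sC^\#_\pm(\gamma)$, defined using the Moore path spaces $\sD^\#_\pm(\gamma)$ and the strip-cap compactifications, and the original space $\sD^\#$ defined using disks; this is handled by the gluing maps in the last unnumbered lemma of \cref{subsec:StripCapsSpace}, together with the fact that the constraints (i) in \cref{dfn:abstract_floer_strip_caps} (that $\rho^\#(v)$ restricts to $\sG^\#_{L_i}(\gamma(i))$ at the endpoints) only cut down $\sD^\#_\pm(\gamma^*\sE)$ to a subspace, over which the relevant composition to $\BGL_1(R)$ is still null-homotopic by restriction.

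The main obstacle, such as it is, is bookkeeping rather than conceptual: one must check that the identification of $\sV_\pm(\gamma)$ with the pullback of \eqref{eq:dind} is compatible with the homotopy-pullback definitions of $\sD^\#$ and $\sD^\#_\pm(\gamma)$, so that the canonical null-homotopy of \cref{lem:DHashNullity} genuinely pulls back to a null-homotopy of the associated $R$-line bundle of $\sV_\pm(\gamma)$. Concretely I would argue as follows: the classifying map of $\sV_\pm(\gamma)$ fits into a commuting diagram with the square defining $\sD^\#_\pm(\gamma)$ (\cref{dfn:pullback_path}) on one side and the square defining $\sD^\#$ on the other, linked by the gluing maps; chasing the canonical null-homotopy of $\varOmega(\lag)^\# \to \varOmega(\lag)\to \BO \to \BGL_1(R)$ coming from \cref{rem:brane}(ii) (equivalently, from the fact that $(\lag)^\#$ is defined as a homotopy fiber over $\B^2\GL_1(R)$) around this diagram produces the desired null-homotopy. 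Since the excerpt ends at the statement, I would state the lemma as producing not merely $R$-orientability but a \emph{canonical} $R$-orientation of $\sV_\pm(\gamma)$, as this canonicity is what the later gluing arguments for orientations on the Floer moduli spaces will need.
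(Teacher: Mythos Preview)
Your proposal is correct and follows essentially the same route as the paper: reduce to $\sD^\#_\pm(\gamma^*\sE)$ via the forgetful map of \cref{lma:forgetful_maps_caps}, then use the gluing with an opposite cap to land in $\sD^\#$ and invoke \cref{lem:DHashNullity}. One small imprecision worth tightening: there is no direct map $\sD^\#_\pm(\gamma^*\sE) \to \sD^\#$; the gluing lemma only gives $\sD^\#_+(\gamma) \times \sD^\#_-(\gamma) \to \sD^\#$, under which the pullback of $\sV$ is $\sV_+(\gamma)\oplus\sV_-(\gamma)$, so one must fix a point in the opposite cap space (whence the index bundles differ only by a trivial summand)---this is exactly how the paper phrases it.
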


\begin{proof}
Let $\sV \to \sD^\#$ denote the vector bundle that is classified by the composition
\[
\ind \circ \rho^\# \colon \sD^\# \longrightarrow \BO.
\]
Similarly, $\sV_\pm(\gamma) \to \sD^\#_\pm(\gamma)$ is the vector bundle classified by the map
\[
\sD^\#_\pm(\gamma) \overset{\rho^\#}{\longrightarrow} \sP_{x_0^\#,x_1^\#}(\lag)^\# \longrightarrow \sP_{x_0,x_1}(\lag) \overset{\ind}{\longrightarrow} \BO.
\]
Similarly as in \cref{lem:cap_gluing} below, following \cite[Section 8]{large2021spectral} there is a continuous gluing map that is covered my maps of index bundles which are fiberwise isomorphisms:
\[
    \begin{tikzcd}[row sep=scriptsize, column sep=scriptsize]
        \sV_+(\gamma) \times \sV_-(\gamma) \rar \dar & \sV \dar \\ 
        \sD_+^\#(\gamma) \times \sD_-^\#(\gamma) \rar  & \sD^\#
        .  \\ 
    \end{tikzcd}
\]
By \cref{lem:DHashNullity}, $\sV$ is $R$-orientable.  It follows that $\sV_\pm(\gamma)$ is $R$-orientable. Since the index bundle $\sV_\pm(\gamma) \to \sC^\#_\pm(\gamma)$ is the pullback of the corresponding index bundle over $\sD^\#_\pm(\gamma)$, the result follows.
\end{proof}

\subsection{Gluing abstract strip caps}\label{sec:gluing_abs_caps}
We now tie in our discussion back to our moduli spaces of $J$-holomorphic disks with Lagrangian boundary conditions.  To reduce clutter below, we denote $\sC_+^\#$ and $\sV_+$ by $\sC^\#$ and $\sV$, respectively.

\begin{lem}\label{lem:cap_gluing}
Consider moduli spaces of $J$-holomorphic disks as in \cref{lem:transversality_for_strips,lem:moduli_cont,lem:two_param_moduli,lma:2-param_cont_moduli_composition,lem:moduli_product,lem:moduli_associativity,lem:moduli_units}.  There are continuous gluing maps
\begin{center}
    \begin{minipage}{0.45\textwidth}
        \begin{align*}
            \sC^\#(a) \times \osr(a;b) &\longrightarrow \sC^\#(b) \\
            \sC^\#(a) \times \osr^{\tau}(a;b) &\longrightarrow \sC^\#(b) \\
            \sC^\#(a) \times \osr^\sigma(a;b) &\longrightarrow \sC^\#(b) \\
            \sC^\#(a) \times \osr^{\sigma_c}(a;b) &\longrightarrow \sC^\#(b)
        \end{align*}
    \end{minipage}
    \begin{minipage}{0.45\textwidth}
        \begin{align*}
            \sC^\#(a) \times \sC^\#(b) \times \osr(a,b;c) &\longrightarrow \sC^\#(c) \\
            \sC^\#(a) \times \sC^\#(b) \times \sC^\#(c) \times \osr(a,b,c;d) &\longrightarrow \sC^\#(d) \\
            \osr_L(a) &\longrightarrow \sC^\#(a)
        \end{align*}
    \end{minipage}
\end{center}
that are covered by maps of index bundles which are fiberwise isomorphisms, so that the following diagrams commute

\[
	\begin{tikzcd}[row sep=scriptsize, column sep=scriptsize]
		\sV(a) \times I(a;b) \rar \dar & \sV(b) \dar \\ \sC^\#(a) \times \osr(a;b) \rar & \sC^\#(b)
	\end{tikzcd}, \qquad
    \begin{tikzcd}[row sep=scriptsize, column sep=scriptsize]
		\sV(a) \times I^\tau(a;b) \rar \dar & \sV(b) \dar \\ \sC^\#(a) \times \osr^{\tau}(a;b) \rar & \sC^\#(b)
	\end{tikzcd}
\]
\[
	\begin{tikzcd}[row sep=scriptsize, column sep=scriptsize]
		\sV(a) \times I^\sigma(a;b) \ar[r] \ar[d] & \sV(b) \ar[d] \\ \sC^\#(a) \times \osr^\sigma(a;b) \ar[r] & \sC^\#(b)
	\end{tikzcd}, \qquad
    \begin{tikzcd}[row sep=scriptsize, column sep=scriptsize]
		\sV(a) \times I^{\sigma_c}(a;b) \ar[r] \ar[d] & \sV(b) \ar[d] \\ \sC^\#(a) \times \osr^{\sigma_c}(a;b) \ar[r] & \sC^\#(b)
	\end{tikzcd}
\]
\[
	\begin{tikzcd}[row sep=scriptsize, column sep=tiny]
		\sV(a) \times \sV(b) \times I(a,b;c) \ar[r] \ar[d] & \sV(c) \ar[d] \\\sC^\#(a) \times \sC^\#(b) \times \osr(a,b;c) \ar[r] & \sC^\#(c)
	\end{tikzcd}, \qquad
    \begin{tikzcd}[row sep=scriptsize, column sep=tiny]
		\sV(a) \times \sV(b) \times \sV(c) \times  I(a,b,c;d) \ar[r] \ar[d] & \sV(d) \ar[d] \\ \sC^\#(a) \times \sC^\#(b) \times \sC^\#(c) \times \osr(a,b,c;d) \ar[r] & \sC^\#(d)
	\end{tikzcd}
\]
\[
	\begin{tikzcd}[row sep=scriptsize, column sep=scriptsize]
		I_L(a) \ar[r] \ar[d] & \sV(a) \ar[d] \\ \osr_L(a) \ar[r] & \sC^\#(a)
	\end{tikzcd}.
\]

Moreover, these gluing maps are associative in the appropriate sense.
\end{lem}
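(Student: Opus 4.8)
\textbf{Proof plan for \cref{lem:cap_gluing}.} The plan is to build all seven gluing maps by the same mechanism, following the blueprint of \cite[Section 8]{large2021spectral}: a standard pregluing-plus-Newton-iteration construction that produces, for each pair consisting of an abstract Floer strip cap at one chord and a $J$-holomorphic curve asymptotic to that chord, a new abstract Floer strip cap at the other chord. First I would treat the basic case $\sC^\#(a) \times \osr(a;b) \to \sC^\#(b)$ in full detail and then indicate how the remaining six cases are obtained by the same argument with the strip $\osr(a;b)$ replaced by the relevant moduli space ($\osr^\tau$, $\osr^\sigma$, $\osr^{\sigma_c}$, the pair-of-pants $\osr(a,b;c)$, the associahedral $\osr(a,b,c;d)$, and the unit disk $\osr_L(a)$ — in the last case there is no input cap, since the disk itself furnishes the output cap). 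The key point is that an element of $\sC^\#(a)$ consists of geometric-analytic data (a complex vector bundle on a strip cap with Lagrangian boundary condition, an almost complex structure, a perturbation term $Y$, a metric) which near the puncture coincides with the linearization data at $\gamma = a$, together with the lift $v \in \sD^\#_+(a^\ast\sE)$ recording the $(\lag)^\#$-structure; and that near the puncture the $J$-holomorphic curve $u \in \osr(a;b)$ supplies exactly the asymptotic data needed to glue.

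The steps, in order. (1) Recall from \cref{dfn:abstract_floer_strip_caps} that the underlying data of a cap is a point of $\sD^\#_+(a^\ast\sE)$ plus a contractible choice of $(L,J,Y,g)$; by \cref{lma:forgetful_maps_caps} the forgetful map $\sC^\#_+(\gamma) \to \sD^\#_+(\gamma^\ast\sE)$ is a homotopy equivalence, so up to homotopy the construction is governed by the spaces $\sD^\#_\pm$. (2) On the level of the base spaces $\sD_\pm$ (forgetting the $\#$-decoration), define the pregluing: given $(\varepsilon,\alpha,u_{\mathrm{cap}}) \in \sD_+(a^\ast\sE)$ and a curve $u \in \osr(a;b)$, cut off near the puncture at parameter $L$ and concatenate the strip cap with the finite part of $u$, using the metric $g$ and the fixed identification of the strip-like end to match the boundary conditions; this produces a point of $\sD_+(b^\ast\sE)$, and the construction of the index data is exactly the linear gluing of Cauchy--Riemann operators recorded in \cite[Section 8]{large2021spectral}, giving the fiberwise isomorphism $\sV_+(a) \times I(a;b) \cong \sV_+(b)$ over the glued point. (3) Promote to the $\#$-level: because $\sD^\#_\pm(\gamma)$ is defined as a homotopy pullback along $\sP(\lag)^\# \to \sP(\lag)$ (see \cref{dfn:pullback_path}), and because concatenation of paths in $\lag$ lifts canonically to concatenation in $(\lag)^\#$, the pregluing map on base spaces lifts canonically to a map $\sC^\#_+(a) \times \osr(a;b) \to \sC^\#_+(b)$, covered by the induced map of index bundles. (4) Verify continuity (immediate from the smooth dependence of the cutoff-and-concatenate operation on all parameters, including the Gromov-topology behaviour at the boundary strata of $\osr(a;b)$) and the fiberwise-isomorphism property (from the linear gluing theorem). (5) For the multi-input cases $\osr(a,b;c)$ and $\osr(a,b,c;d)$, glue one input cap at a time; for the continuation moduli $\osr^\tau, \osr^\sigma, \osr^{\sigma_c}$, the construction is identical since the Floer data on the cap near the puncture still matches the linearization data at the asymptotic chord; for $\osr_L(a)$, the domain $S = \ID^2 \smallsetminus \{-1\}$ with its Floer datum \emph{is} a strip cap at $a$ once equipped with the canonical $R$-orientation data coming from \cref{rem:brane}(iii) (a disk is contractible), so the map $\osr_L(a) \to \sC^\#(a)$ is the tautological one. (6) Associativity: the glued caps are constructed by iterated cutoff-and-concatenate operations, and the relevant compatibilities follow from the associativity of path concatenation in $(\lag)^\#$ together with the associativity of the linear gluing of Cauchy--Riemann operators — this is where one uses that the moduli-space composition maps $\mu_{abc}$, $c^j$, etc., are diffeomorphisms onto faces and that the Floer data were chosen conformally consistently (\cref{dfn:univ_Floer_datum}), so the gluing parameters organize into the $\ang{k}$-manifold corner charts \eqref{eq:gluing_corner}.

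\textbf{Main obstacle.} The genuinely non-formal content is Step (2)--(4): establishing that the pregluing-plus-correction construction produces \emph{continuous} families over the compactified moduli spaces, with the index-bundle isomorphisms matching up at all corner strata. This is the usual analytic heart of any Floer-homotopy-theoretic construction, and the only honest way to do it is to invoke the detailed gluing analysis of \cite[Section 8]{large2021spectral} (itself building on \cite{large2021spectral}'s treatment of neat embeddings and the gluing results of Fukaya--Oh--Ohta--Ono and Wehrheim referenced in \cref{lem:transversality_for_strips}) and observe that our stably-polarized setting only changes the \emph{target} of the classifying maps (replacing framings by $(\lag)^\#$-lifts), not the analysis. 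I expect the proof to consist largely of a careful statement of how the construction of loc.~cit.\ adapts, with the $(\lag)^\#$-lift handled by the homotopy-pullback formalism of Steps (1) and (3); the remaining bookkeeping — continuity at the boundary, fiberwise isomorphism, associativity — is then routine but tedious, and I would present it at the level of detail of \cite[Section 8]{large2021spectral} rather than reproving the linear gluing estimates.
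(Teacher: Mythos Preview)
Your proposal is correct and follows essentially the same approach as the paper: invoke Large's gluing construction from \cite[Section 8.2]{large2021spectral} to produce the map into $\sC(b)$, then use the $R$-brane structures (via the homotopy pullback defining $\sC^\#$) to obtain the canonical lift to $\sC^\#(b)$, treating $\osr_L(a) \hookrightarrow \sC^\#(a)$ as the tautological inclusion. The paper's proof is much terser---it is essentially a pointer to Large together with the observation that only the classifying-space target changes---but your elaboration of the steps (pregluing, linear gluing of index bundles, lift via path concatenation in $(\lag)^\#$, associativity from consistent Floer data) is exactly the content being invoked.
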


\begin{proof}
The proof of this lemma is essentially carried out by Large in \cite[Section 8.2]{large2021spectral}.  Even though Large is dealing with a particular (and different) notion of Lagrangian brane, the geometric aspect of his argument carry over and apply directly in our context.  We will focus on the case of $\osr(a;b)$ as the other cases (except the final case) are analogous.  In short, the data specified in the definition of $\sC^\#(a)$ may be matched up with the analogous data determined by the linearization of the Floer equation at an element of $\sR(a;b)$.  Following Large's argument in \cite[Section 8.2]{large2021spectral}, one produces a coherent gluing map
\[ G \colon \sC^\#(a) \times \overline{\sR}(a;b) \longrightarrow \sC(b). \]
The stable polarization of $X$ and the $R$-brane structures on the Lagrangians determine a unique lift of this gluing map to $\sC^\#(b)$, giving the desired map.

Finally, for the last instance of $\osr_L(a)$.  Notice that there is a natural map $\osr_L(a) \hookrightarrow \sC^\#(a)$.  The index bundle on $\osr_L(a)$ is simply the pullback of the index bundle on $\sC^\#(a)$.  This yields the final gluing map.
\end{proof}

\begin{proof}[Proof of item (ii) in \cref{lem:transversality_for_strips,lem:moduli_cont,lem:two_param_moduli,lma:2-param_cont_moduli_composition,lem:moduli_product,lem:moduli_associativity,lem:moduli_units}]
This follow immediately from \cref{lem:CanonicalOrientation} and \cref{lem:cap_gluing}.  Namely, by \cref{lem:CanonicalOrientation} the index bundle $\sV(a)$ admits an $R$-orientation. Hence the associated $R$-line bundle $\sV_R(a)$ over $\sC^\#(a)$ is trivializable, and we fix such a trivialization. Abusing the notation we will also denote the associated rank one free $R$-module by $\sV_R(a)$ given by the generic fiber.  Setting $\check{\fo}(a) \coloneqq \sV_R(a)$ and using the compatibility conditions above yield the desired data.
\end{proof}

\section{The spectral wrapped Donaldson--Fukaya category}\label{sec:donaldson-fukaya}

The purpose of this section is to accumulate the work from \cref{sec:ModuliSpaces} and \cref{sec:canonical_orientations} to give the construction of the wrapped Donaldson--Fukaya category of a stably polarized Liouville sector with coefficients in a commutative ring spectrum $R$.  It is a category enriched in the homotopy category of $R$-modules. The objects are Lagrangian $R$-branes and the morphism space associated to two Lagrangian $R$-branes is the $R$-module spectrum obtained as the CJS realization of a flow category associated to the two Lagrangian $R$-branes.

\subsection{Flow category associated to a pair of Lagrangian $R$-branes}\label{subsec:flow_lag}

\begin{notn}
\begin{enumerate}
    \item Let $R$ be a commutative ring spectrum.
    \item Let $X$ be a stably polarized Liouville sector.
    \item Let $L_0$ and $L_1$ denote Lagrangian $R$-branes.
    \item Let $(H,J_t) \in \sH\sJ(\overline{X},F)$ be a regular pair in the sense that \cref{lem:transversality_for_strips} holds.
    \item Let $\sX(L_0,L_1;H)$ denote the set of Hamiltonian time-$1$ chords between $L_0$ and $L_1$ from \cref{defn:ham_chords}.
\end{enumerate}
\end{notn}

\begin{defn}[Flow category associated to Lagrangian $R$-branes]\label{dfn:flow_cat_lag}
The \emph{flow category associated to $L_0$, $L_1$, and $(H,J_t) \in \sH\sJ(\overline X,F)$} is denoted by $\sC\sW(L_0,L_1;H,J_t)$ and is defined as follows:
\begin{itemize}
    \item The set of objects is given by $\Ob(\sC\sW(L_0,L_1;H,J_t)) \coloneqq \mathcal X(L_0,L_1,H)$.
    \item The grading function is given by $-\mu$ (i.e.\@ \emph{negative} of the Maslov index).
    \item The morphisms from $a$ to $b$ are given by $\sC\sW(L_0,L_1;H,J_t)(a;b) \coloneqq \overline{\sR}(a;b;H,J_t)$.
    \item The $R$-orientation is given by $\fo_{(L_0,L_1;H)}$ is given by $\fo(a) \coloneqq (-\sV(a))_R$ and the isomorphism of $R$-line bundles
    \[
        \psi_{ab} \colon \fo(a) \overset{\simeq}{\longrightarrow} I_R(x;y) \otimes_R \fo(b),
    \]
    given by \cref{lem:transversality_for_strips}(ii). Here $\sV(a) = \sV_+(a)$ is as in \cref{sec:index_bundle_abstract_strip_caps}, and $(-\sV(a))_R$ denotes by abuse of notation the generic fiber of the associated $R$-line bundle $(-\sV(a))_R$, 
\end{itemize}
\end{defn}

\begin{rem}
    \begin{enumerate}
        \item By \cref{lem:transversality_for_strips} it is clear that $\sC\sW(L_0,L_1;H,J_t)$ defines an $R$-oriented flow category (see \cref{dfn:flow_cat}).
        \item The flow category $\sC\sW(L_0,L_1;H,J_t)$ can also be equipped with a coherent system of collars as defined in \cref{dfn:flow_cat_coherent_collars}.  These collars are determined by the gluing parameters for the $J$-holomorphic disks, see \cite[Section 8.2]{large2021spectral} for details. In the following we assume implicitly that we have chosen a coherent system of collars for $\sC\sW(L_0,L_1;H,J_t)$.
    \end{enumerate}
\end{rem}

\begin{notn}
    \begin{enumerate}
        \item Let $c \in \sX(L_0,L_1;H)$. Denote the action functional of $c$ associated to Floer's equation by
        \[
            \sA(c) \coloneqq \int c^\ast \lambda - \int H(c(t)) - (h_{1}(c(1)) - h_{0}(c(0))),
        \]
        where $h_0 \colon L_0 \to \IR$ and $h_1 \colon L_1 \to \IR$ are smooth functions satisfying $\lambda|_{L_0} = dh_0$ and $\lambda|_{L_1} = dh_1$, respectively.
        \item For any $A \in \IR$, set
        \[
            \sX^{\leq A}(L_0,L_1;H) \coloneqq \left\{c \in \sX(L_0,L_1;H) \mid \sA(c) \leq A\right\}.
        \]
    \end{enumerate}
\end{notn}
\begin{defn}\label{defn:action_fil}
    Let $A \in \IR$ and define $\sC\sW^{\leq A}(L_0,L_1;H,J_t)$ to be the $R$-oriented flow category that is defined in the same way as $\sC\sW(L_0,L_1;H,J_t)$ (see \cref{dfn:flow_cat_lag}) except that the set of objects is
    \[
        \Ob(\sC\sW^{\leq A}(L_0,L_1;H,J_t)) \coloneqq \mathcal X^{\leq A}(L_0,L_1;H).
    \]
    We equip it with the restriction of the $R$-orientation to the action filtered pieces, which we denote by $\fo^{\leq A}_{(L_0,L_1;H)}$.
\end{defn}
By \cref{defn:action_fil}, it is clear that if $A \leq B$, then $\sC\sW^{\leq A}(L_0,L_1;H,J_t)$ is a full subcategory of $\sC\sW^{\leq B}(L_0,L_1;H,J_t)$.

\begin{defn}\label{dfn:wrapped_floer_cohomology}
Let $(H,J_t) \in \sH\sJ(\overline X,F)$. Let $L_0$ and $L_1$ be two Lagrangian $R$-branes in $X$. The \emph{wrapped Floer cohomology with coefficients in $R$} of $L_0$ and $L_1$ is given by the following homotopy colimit of CJS realizations (see \cref{dfn:cjs_realization})
\[ HW(L_0,L_1;H,J_t) \coloneqq \hocolim_{k\to \infty} |\sC\sW^{\leq A_k}(L_0,L_1;H,J_t), \fo^{\leq A_k}_{(L_0,L_1;H)}|,\]
where $\{A_k\}_{k=1}^\infty \subset \IR$ is any strictly increasing sequence diverging to $\infty$. For any $A \in \IR$ we use the notation
\[
HW^{\leq A}(L_0,L_1;H,J_t) \coloneqq |\sC\sW^{\leq A}(L_0,L_1;H,J_t), \fo^{\leq A}_{(L_0,L_1;H)}|.
\]
\end{defn}
\begin{rem}\label{rem:Novikov}
    \begin{enumerate}
        \item Since any sequence $\{A_k\}_{k=1}^\infty$ as in \cref{dfn:wrapped_floer_cohomology} is cofinal in $\IR$ with its standard total order, it follows that $HW(L_0,L_1;H,J_t)$ is independent of the choice of the sequence $\{A_k\}_{k=1}^\infty$.
        \item If the grading function $\mu$ on the flow category $\sC\sW(L_0,L_1;H,J_t)$ is bounded below (e.g.\@ if either $L_0$ or $L_1$ is compact), we have
        \[
        HW(L_0,L_1;H,J_t) = |\sC\sW(L_0,L_1;H,J_t), \fo_{(L_0,L_1;H)}|.
        \]
        \item In general, the homotopy colimits over action filtration is necessary in order to get a homotopy type lifting the usual wrapped Floer cohomology. Our definition of CJS realization corresponds to the one appearing in \cite{abouzaid2021arnold}. Recall from \cref{dfn:cjs_realization} that
        \[
        HW(L_0,L_1;H,J_t) = \hocolim_{k\to \infty}\holim_q |\sC\sW^{\leq A_k}(L_0,L_1;H,J_t), \fo^{\leq A_k}_{(L_0,L_1;H)}|_q.
        \]
        Swapping the order of the homotopy colimit and the homotopy limit yields the homotopy type $|\sC\sW(L_0,L_1;H,J_t), \fo_{(L_0,L_1;H)}|$, which lifts a Novikov-type completion of wrapped Floer cohomology (cf.\@ \cite{venkatesh2018rabinowitz}).
    \end{enumerate} 
\end{rem}

We justify our definition. Let $HW^{\bullet}(L_0,L_1;\IZ)$ denote the classical wrapped Floer cohomology with integer coefficients, defined using pin structures relative to the background class $w_2(\mathcal E)$ (see \cite[Section 3.3]{abouzaid2012wrapped}), where $\sE \colon X \to BO$ is the stable polarization.
\begin{lem}\label{lem:wfuk_discrete_coeff}
    Let $R = H\IZ$. There is an isomorphism of $\IZ$-modules
    \[
    \pi_\bullet(HW(L_0,L_1;H,J_t)) \cong HW^{-\bullet}(L_0,L_1;\IZ).
    \]
\end{lem}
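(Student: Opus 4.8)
\textbf{Proof proposal for \cref{lem:wfuk_discrete_coeff}.}

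The plan is to identify the $R$-oriented flow category $\sC\sW^{\leq A}(L_0,L_1;H,J_t)$ for $R = H\IZ$, after applying the Cohen--Jones--Segal realization, with a chain-level model of classical wrapped Floer cohomology, and then pass to the colimit over the action filtration. First I would invoke \cref{lem:flow_htpy_morse} together with \cref{prop:homology_wedge_ring}: since for $R = H\IZ$ we have $k = \pi_0(R) = \IZ$ and $Hk = H\IZ = R$, the Hurewicz map is the identity and we get $\pi_\bullet(|\sC\sW^{\leq A}(L_0,L_1;H,J_t),\fo^{\leq A}|) \cong H_\bullet(CM(\sC\sW^{\leq A}(L_0,L_1;H,J_t),\fo^{\leq A}))$, where the right side is the homology of the Morse complex of the flow category as in \cref{dfn:morse_complex_flow_cat}. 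The grading function on this flow category is $-\mu$, so after accounting for the sign this Morse complex, in degree $-\bullet$, is generated by the chords in $\sX^{\leq A}(L_0,L_1;H)$ with differential counting index-one elements of $\overline{\sR}(a;b;H,J_t)$ weighted by the canonical $R$-orientation.

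The key step is to check that the signs appearing in this differential, which come from the canonical $R = H\IZ$-orientation constructed in \cref{sec:canonical_orientations} (namely the trivialization of the associated $H\IZ$-line bundle of the index bundle $\sV(a)$ over $\sC^\#(a)$, together with the gluing compatibilities of \cref{lem:cap_gluing}), agree with the standard sign conventions for the wrapped Floer differential with coefficients in $\IZ$ defined via relative pin structures. Here I would use \cref{lem:hz_ori_data}: an $H\IZ$-brane structure is the same as a grading plus a relative pin structure with respect to the background class $w_2(\sE)$, which is precisely the data used to orient Floer moduli spaces in the classical setup of \cite[Section 3.3]{abouzaid2012wrapped} (see also \cite{seidel2008fukaya,fukaya2009lagrangian}). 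Concretely, the $H\IZ$-line bundle associated to a vector bundle is the determinant local system, and the null-homotopy of the Gauss map through $(\lag)^\#$ transported to a trivialization of $\sV_R(a)$ recovers, via the gluing map of \cref{lem:cap_gluing}, the determinant-line orientations of the operators $D_v$; these are exactly the orientation data on $\overline{\sR}(a;b)$ used classically. Thus $CM(\sC\sW^{\leq A}(L_0,L_1;H,J_t),\fo^{\leq A})$ is isomorphic, as a cochain complex after the grading reversal, to the action-truncated wrapped Floer cochain complex $CW^{\bullet}_{\leq A}(L_0,L_1;\IZ)$, and hence its homology is $HW^{-\bullet}_{\leq A}(L_0,L_1;\IZ)$.

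Finally I would pass to the colimit. By \cref{dfn:wrapped_floer_cohomology} we have $HW(L_0,L_1;H,J_t) = \hocolim_{k\to\infty}|\sC\sW^{\leq A_k}(L_0,L_1;H,J_t),\fo^{\leq A_k}|$, and since homotopy groups commute with sequential homotopy colimits, $\pi_\bullet(HW(L_0,L_1;H,J_t)) \cong \colim_{k\to\infty} HW^{-\bullet}_{\leq A_k}(L_0,L_1;\IZ)$. The right-hand side is, by definition, the classical wrapped Floer cohomology $HW^{-\bullet}(L_0,L_1;\IZ)$ as a colimit over action (equivalently, over rescalings of the Hamiltonian), and the continuation maps on the flow-categorical side induced by enlarging $A$ match the classical action-filtration inclusion maps on homology; this last compatibility can be checked by the same sign/orientation comparison as above applied to the relevant flow bimodules, or simply noted because on the nose the truncated complexes include into one another. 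The main obstacle I anticipate is the sign bookkeeping in the middle step: reconciling the orientation conventions built into the abstract-caps construction (the precise ordering in the gluing maps of \cref{lem:cap_gluing} and the identifications $\fo(a) = (-\sV(a))_R$) with the classical relative-pin sign conventions is delicate, and one must be careful that the grading reversal $\mu \mapsto -\mu$ and the use of \emph{positive} strip caps $\sC_+$ versus \emph{negative} ones are handled consistently so that one genuinely gets wrapped \emph{cohomology} rather than homology.
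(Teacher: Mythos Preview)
Your proposal is correct and follows essentially the same approach as the paper's proof: reduce to the action-filtered pieces via \cref{lem:flow_htpy_morse}, identify the Morse complex of the flow category with the classical wrapped cochain complex of \cite[Section 3.3]{abouzaid2012wrapped} (using \cref{lem:hz_ori_data} to match the $H\IZ$-orientation data with relative pin structures and thereby reconcile the sign conventions), and then pass to the homotopy colimit using that homotopy groups commute with sequential homotopy colimits. The paper makes the orientation comparison slightly more explicit by pointing out that the Cauchy--Riemann operator at a chord in \cite{abouzaid2012wrapped} is literally an instance of a positive abstract Floer strip cap, so the determinant line $\fo_y$ there is identified with $\check{\fo}(y)$ via the chosen trivialization of $\sV_R(y)$; but this is exactly the mechanism you sketch.
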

\begin{proof}
    Pick a strictly increasing sequence $\{A_k\}_{k=1}^\infty$ diverging to $\infty$. By \cref{lem:flow_htpy_morse} the homotopy groups of $HW^{\leq A_k}(L_0,L_1;H,J_t)$ are isomorphic to the homology of the Morse complex associated with the underlying $R$-oriented flow category $(\sC\sW^{\leq A_k}(L_0,L_1;H,J_t),\fo^{\leq A_k}_{(L_0,L_1;H)})$ (see \cref{dfn:morse_complex_flow_cat}). It is clear by construction that this complex coincides with the complex $CW^{-\bullet}(L_0,L_1;\IZ)$ as defined in \cite[Section 3.3]{abouzaid2012wrapped}, when ignoring signs and orientations (after taking the action-filtered piece of generators of action $\leq A_k$). Viz., the moduli spaces of $J$-holomorphic disks are equal, and the grading function in $\sC\sW^{\leq A_k}(L_0,L_1;H,J_t)$ is the negative of the Maslov index. To compare signs and orientations, let us recall the discussion in \cref{sec:integer_coeffs}. Namely, by \cref{lem:hz_ori_data}, the choices of $H\IZ$-orientation data on $L_0$ and $L_1$ are equivalent to choices of pin structures on $L_0$ and $L_1$ relative to the background class $w_2(\sE)$. Moreover, the Cauchy--Riemann operator at a Hamiltonian chord $y$ considered on \cite[p.\@ 67]{abouzaid2012wrapped} in particular gives an instance of a positive abstract Floer strip cap at $y$ in the sense of \cref{dfn:abstract_floer_strip_caps}. In particular, the trivialization of $\sV_R(y)$ selected in proof of \cref{lem:transversality_for_strips}(ii) at the end of \cref{sec:gluing_abs_caps} fixes an identification of the determinant line $\fo_y$ from \cite[Definition 6.1]{abouzaid2012wrapped} with $\check{\fo}(y)$ as in \cref{lem:transversality_for_strips} defined via \cref{lem:cap_gluing}. Note that this trivialization corresponds to the choice of relative pin structure on $\varLambda_y$ described on \cite[p.\@ 68]{abouzaid2012wrapped}. Tracing through these identifications, it can be checked that the isomorphisms between index bundles obtained form \cite[Lemma 6.1]{abouzaid2012wrapped} coincide with the isomorphisms obtained in \cref{lem:transversality_for_strips} via \cref{lem:cap_gluing}. Finally taking the homotopy colimit as $k\to \infty$ and using that homotopy groups commute with filtered (in particular sequential) homotopy colimits yields the result.
\end{proof}
\begin{rem}\label{rem:wfuk_discrete_coeff_k}
    \Cref{lem:wfuk_discrete_coeff} also holds true for $R = Hk$ where $k$ is any commutative discrete ring $k$.
\end{rem}
\begin{rem}
    We remark that \cref{lem:wfuk_discrete_coeff} stands in contrast with \cite[Section 5.2]{large2021spectral}, where the homotopy groups of the CJS realization of the corresponding flow category yields wrapped Floer \emph{homology}.
\end{rem}
We now show invariance of the isomorphism class of the $R$-module $HW(L_0,L_1;H;J_t)$ under changes of the Floer data $(H,J_t) \in \sH\sJ(\overline X,F)$.
\begin{lem}\label{lem:IndOfData}
Given two choices of admissible Floer data $(H^0,J_t^0), (H^1,J_t^1) \in \sH\sJ(\overline X,F)$ in the sense that \cref{lem:transversality_for_strips} holds, there is an equivalence of $R$-modules
\[
HW(L_0,L_1;H^0;J_t^0) \simeq HW(L_0,L_1;H^1,J_t^1).
\]
\end{lem}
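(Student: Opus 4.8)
\textbf{Proof proposal for \cref{lem:IndOfData}.}
The plan is to construct an $R$-oriented flow bimodule realizing a continuation map, show its CJS realization is an equivalence, and then pass to the action-filtered colimits. First I would fix a homotopy $(H^r,J^r_t)_{r\in[0,1]}\subset\sH\sJ(\overline X,F)$ interpolating between the two admissible choices, generic enough that \cref{asmpt:one-param_floer_data} and \cref{lem:moduli_cont} hold; for each sufficiently large action window $A$, the continuation moduli spaces $\osr^\tau(a;b)$ of \cref{dfn:moduli_maps_cont} organize (via \cref{lem:moduli_cont}(i) for the $\ang{\cdot}$-manifold structure, the coherent collars from \cref{lem:cap_gluing}, and the canonical $R$-orientations from \cref{lem:moduli_cont}(ii) combined with \cref{lem:CanonicalOrientation}) into an $R$-oriented flow bimodule
\[
\sC_\tau \colon \sC\sW^{\leq A}(L_0,L_1;H^0,J^0_t)\longrightarrow\sC\sW^{\leq A}(L_0,L_1;H^1,J^1_t).
\]
Its CJS realization (\cref{lma:functoriality_cjs}) gives a map of $R$-modules $HW^{\leq A}(L_0,L_1;H^0,J^0_t)\to HW^{\leq A}(L_0,L_1;H^1,J^1_t)$; these maps are compatible with the action-filtration inclusions, so taking $\hocolim_{k\to\infty}$ along a sequence $A_k\to\infty$ produces a map $HW(L_0,L_1;H^0,J^0_t)\to HW(L_0,L_1;H^1,J^1_t)$.

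The next step is to verify this map is an equivalence. I would use the reverse homotopy $(H^{1-r},J^{1-r}_t)$ to build a bimodule $\sC_{\bar\tau}$ in the opposite direction, and then use the two-parameter moduli spaces $\osr^\sigma(a;b)$ of \cref{lem:two_param_moduli} to build $R$-oriented flow bordisms between the composites $\sC_{\bar\tau}\circ\sC_\tau$ (resp.\ $\sC_\tau\circ\sC_{\bar\tau}$) and the diagonal bimodule $\id$ of \cref{lem:diagonal_bimod}: the two boundary faces $\osr^{\sigma_0},\osr^{\sigma_1}$ recorded in \cref{lem:two_param_moduli}(i) are exactly a broken continuation and a constant continuation (which is the diagonal, via \cref{lma:2-param_cont_moduli_composition} identifying the broken composite $\osr^{\tau_1\circ\tau_0}$ with the composition of bimodules). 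By \cref{lma:bordism_gives_homotopy_of_maps} the induced maps on CJS realizations are homotopic; by \cref{lem:r_ori_composition_bimods} and \cref{prop:induced_id} the composite of the two realizations is homotopic to the identity at each finite action level, hence (since $\hocolim$ is functorial) on $HW$. Thus the continuation map is an equivalence of $R$-modules.

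The main obstacle I anticipate is purely bookkeeping rather than conceptual: one must check that all the $R$-orientations are genuinely compatible through the composition and bordism constructions — i.e.\ that the gluing maps of \cref{lem:cap_gluing} intertwine the canonical trivializations of the abstract strip-cap index bundles $\sV(a)$ coherently enough that $\sC_{\bar\tau}\circ\sC_\tau$ carries \emph{the} induced orientation matching that of $\id$, and likewise for the bordisms (invoking \cref{lem:restr_last_strata_ori} to propagate orientations from one boundary face to the rest). Once that compatibility is in place — it follows from the associativity clause of \cref{lem:cap_gluing} and the fact that the orientation data in all of \cref{lem:transversality_for_strips}--\cref{lem:moduli_units} is constructed from the \emph{same} trivializations of $\sV_R$ — the argument is complete. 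A minor additional point: one should note the statement is symmetric, so it suffices to exhibit a single equivalence; the inverse is supplied automatically by the reverse homotopy.
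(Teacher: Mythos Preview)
Your proposal is correct and follows essentially the same strategy as the paper: build continuation bimodules from \cref{lem:moduli_cont}, use a two-parameter family to produce an $R$-oriented bordism between the composite and the constant continuation, identify the latter with the diagonal bimodule so that \cref{prop:induced_id} applies, invoke \cref{lma:bordism_gives_homotopy_of_maps}, and pass to the homotopy colimit over action. One small correction: the relevant two-parameter moduli space is $\osr^{\sigma_c}$ from \cref{lma:2-param_cont_moduli_composition} (whose $i=1$ boundary faces are the composite $\osr^{\tau_1\circ\tau_0}$ and the constant $\osr^{\tau_2}$), not $\osr^\sigma$ from \cref{lem:two_param_moduli} (whose $i=1$ faces $\osr^{\sigma_0},\osr^{\sigma_1}$ are two \emph{single} continuations); the paper also does not invoke \cref{lem:restr_last_strata_ori} here, as the orientations on all strata are already supplied by \cref{lem:moduli_cont}(ii) and \cref{lma:2-param_cont_moduli_composition}(ii).
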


\begin{proof}
This follows the same line of argument as the standard proof of invariance for Lagrangian Floer cohomology, cf.\@ \cite[Section 3.4]{salamon1999lectures} in the closed string case.

Consider an admissible $1$-parameter family $(H^r,J^r_t)_{r\in [0,1]} \subset \sH\sJ(\overline X,F)$ in the sense that \cref{lem:moduli_cont} holds. Pick a strictly increasing sequence $\{A_k\}_{k=1}^\infty$ diverging to $\infty$. By \cref{lem:moduli_cont}, the moduli spaces $\osr^\tau(a;b)$ thus determine an $R$-oriented flow bimodule
\[\osr^\tau\colon \sC\sW^{\leq A_k}(L_0,L_1;H^0,J^0_t) \longrightarrow \sC\sW^{\leq A_k}(L_0,L_1;H^1,J^1_t).\] 
Similarly, by reversing the admissible $1$-parameter family $(H^r,J^r_t)_{r\in [0,1]}$ by $r \mapsto 1-r$ we have that $\osr^{-\tau}$ defines an $R$-oriented flow bimodule
\[\osr^{-\tau} \colon \sC\sW^{\leq A_k}(L_0,L_1;H^1,J^1_t) \longrightarrow \sC\sW^{\leq A_k}(L_0,L_1;H^0,J^0_t).\]
Consider a smooth family of functions $\tau^R$ as defined in \cref{dfn:2-parameter_composition} with $\tau_0(s) = \tau(s)$, $\tau_1(s) = \tau(-s)$ and $\tau_2(s) = 0$ for all $s \in \IR$. Let $\sigma_c$ be as in \cref{lem:interpolate_compos} and choose an admissible $2$-parameter family $(H^{q,r},J^{q,r}_t)_{(q,r) \in [0,1]^2} \subset \sH\sJ(\overline X,F)$ in the sense that \cref{lma:2-param_cont_moduli_composition} holds. The flow bimodule $\osr^{\tau_2}$ coincides with the identity bimodule $\id_{\sC\sW^{\leq A}(L_0,L_1;H^0,J^0_t)}$ and hence by \cref{prop:induced_id} its CJS realization of $\osr^{\tau_2}$ is homotopic to the identity map on $HW^{\leq A_k}(L_0,L_1;H^0;J_t^0)$.

Secondly, it follows from \cref{lma:2-param_cont_moduli_composition} that the moduli spaces of $J$-holomorphic disks $\osr^{\sigma_c}(a;b)$ define an $R$-oriented flow bordism $\osr^{-\tau} \circ \osr^\tau \Rightarrow \osr^{\tau_2}$. Therefore \cref{lma:bordism_gives_homotopy_of_maps} shows that the two morphisms $|{\osr^{\tau_2}}| \simeq \id_{HW^{\leq A_k}(L_0,L_1;H^0;J_t^0)}$ and $|{\osr^{-\tau} \circ \osr^{\tau}}| = |{\osr^{-\tau}}| \circ |{\osr^{\tau}}|$ are homotopic.  Repeating the argument with the roles of $\tau$ and $-\tau$ swapped, we conclude that
\[ |{\osr^\tau}| \colon HW^{\leq A_k}(L_0,L_1;H^0;J_t^0) \longrightarrow HW^{\leq A_k}(L_0,L_1;H^1,J_t^1) \]
is a homotopy equivalence. Taking the homotopy colimit as $k\to \infty$ finishes the proof.
\end{proof}

\begin{notn}
In light of \cref{lem:IndOfData}, we write
\[ HW(L_0,L_1) = HW(L_0,L_1;H,J_t) \]
for some choice of admissible Floer data $(H,J_t) \in \sH\sJ(\overline X, F)$.
\end{notn}

To obtain a well-defined wrapped Donaldson--Fukaya category in our setting with quadratic Hamiltonians, we need to recall a standard rescaling trick, see \cite[Section 3.2]{fukaya2008symplectic}, \cite[Section 3.2]{abouzaid2011a} and \cite[Section 3.2]{sylvan2019on}.

Namely, let $\psi^\tau$ denote the time-$\tau$ flow of the Liouville vector field of $X$. If $(H,J_t) \in \sH\sJ(\overline{X},F)$, then $\left(\frac{1}{\tau}(\psi^\tau)^\ast H, (\psi^\tau)^\ast J_t\right) \in \sH\sJ(\overline{X},F)$, and there is a canonical bijection
\begin{align*}
    \mathcal{X}(L_0,L_1;H) &\overset{\simeq}{\longrightarrow} \mathcal{X}\left(\psi^\tau L_0,\psi^\tau L_1;\frac{1}{\tau} (\psi^\tau)^\ast H\right)\\
    a &\longmapsto a_\tau
\end{align*}
We have a canonical identification
\[
    \overline{\mathcal{R}}(a;b;H;J_t) \simeq \overline{\mathcal{R}}\left(a_\tau;b_\tau;\frac{1}{\tau}(\psi^\tau)^\ast H, (\psi^\tau)^\ast J_t\right),
\]
which induces a canonical identification of $R$-oriented flow categories
\begin{equation}\label{eq:ident_conformal_rescaling}
    \mathcal{CW}(L_0,L_1;H,J_t) \cong \mathcal{CW}\left(\psi^\tau L_0, \psi^\tau L_1; \frac{1}{\tau} (\psi^\tau)^\ast H, (\psi^\tau)^\ast J_t\right),
\end{equation}
and on their action-filtered versions. It follows that there is a canonical identification of $R$-modules
\[
    HW(L_0,L_1;H,J_t) \simeq HW\left(\psi^\tau L_0, \psi^\tau L_1; \frac{1}{\tau} (\psi^\tau)^\ast H, (\psi^\tau)^\ast J_t\right).
\]

\subsection{The triangle product}
Let $L_0, L_1, L_2$ be Lagrangian $R$-branes. Pick strictly increasing sequences $\{A_k\}_{k=1}^\infty$ and $\{B_\ell\}_{\ell=1}^\infty$ diverging to $\infty$. It is a consequence of \cref{lem:moduli_product} and Stokes' theorem that there is an $R$-oriented flow multimodule
\[
    \sC\sW^{\leq A_k}(L_0,L_1;H,J_t),\sC\sW^{\leq B_\ell}(L_1,L_2;H,J_t) \to \sC\sW^{\leq A_k+B_\ell}\left(\psi^2L_0,\psi^2L_2;\frac 12 (\psi^2)^\ast H,(\psi^2)^\ast J_t\right).
\]
Via the identification in \eqref{eq:ident_conformal_rescaling} yields an $R$-oriented flow multimodule
\begin{equation}\label{eq:multimodule_mu2}
    \osr^{k,\ell} \colon \sC\sW^{\leq A_k}(L_0,L_1),\sC\sW^{\leq B_\ell}(L_1,L_2) \longrightarrow \sC\sW^{\leq A_k+B_\ell}(L_0,L_2).
\end{equation}

\begin{defn}\label{defn:triangle_product}
The $R$-oriented flow multimodule $\osr^{k,\ell}$ defined in \eqref{eq:multimodule_mu2} induces via \cref{lma:functoriality_cjs} a morphism on the CJS realizations, and taking homotopy colimit as $k,\ell\to \infty$ yields an $R$-module map
\begin{equation}\label{eq:triangle_product}
    \mu^2 \colon HW(L_0,L_1) \wedge_{R} HW(L_1,L_2) \longrightarrow HW(L_0,L_2).
\end{equation}
\end{defn}
\begin{rem} By construction of a flow bordism similar to that in the proof of \cref{lem:IndOfData} it follows that \eqref{eq:triangle_product} is independent of the choice of Hamiltonian $H_S \colon S \to \sH(\overline X,F)$ and almost complex structure $J_S \colon S \to \sJ(\overline X,F)$.
\end{rem}

This product structure is appropriately associative:

\begin{lem}\label{lma:associativity_comp_wfuk}
The following diagram is homotopy commutative.
\[
    \begin{tikzcd}[row sep=scriptsize, column sep=1cm]
        HW(L_0,L_1) \wedge_{R} HW(L_1,L_2) \wedge_{R} HW(L_2,L_3) \rar{\mu^2 \wedge_{R} \id} \dar{\id \wedge_{R} \mu^2} & HW(L_0,L_2) \wedge_{R} HW(L_2,L_3) \dar{\mu^2} \\
        HW(L_0,L_1) \wedge_{R} HW(L_1,L_3) \rar{\mu^2} & HW(L_0,L_3)
    \end{tikzcd}
\]
\end{lem}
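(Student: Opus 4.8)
\textbf{Proof strategy for \cref{lma:associativity_comp_wfuk}.}
The plan is to realize both composites in the square as the CJS realization of a single $R$-oriented flow multimodule coming from the moduli spaces $\osr(a,b,c;d)$ of $J$-holomorphic disks with three inputs and one output (\cref{lem:moduli_associativity}), and then to use the decomposition of its codimension-one boundary into the two ``associator'' pieces $\overline{\mathcal R}_1(a,b,c;d)$ and $\overline{\mathcal R}_2(a,b,c;d)$ together with \cref{lma:bordism_gives_homotopy_of_maps}. First I would fix strictly increasing sequences $\{A_k\}$, $\{B_\ell\}$, $\{C_m\}$ diverging to $\infty$ and, applying the rescaling identification \eqref{eq:ident_conformal_rescaling} as in \cref{defn:triangle_product}, promote the moduli spaces $\osr(a,b,c;d)$ to an $R$-oriented flow multimodule
\[
\osr^{k,\ell,m}_3 \colon \sC\sW^{\leq A_k}(L_0,L_1),\sC\sW^{\leq B_\ell}(L_1,L_2),\sC\sW^{\leq C_m}(L_2,L_3) \longrightarrow \sC\sW^{\leq A_k+B_\ell+C_m}(L_0,L_3),
\]
where the $R$-orientation is the one furnished by \cref{lem:moduli_associativity}(ii); Stokes' theorem guarantees that action filtrations add up correctly, exactly as in \cref{defn:triangle_product}.

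Next I would analyze the boundary. By \cref{lem:moduli_associativity}(i), the first face $\partial_1\osr(a,b,c;d)$ is $\overline{\mathcal R}_1(a,b,c;d) \sqcup \overline{\mathcal R}_2(a,b,c;d)$, and by construction $\overline{\mathcal R}_1$ is the composite of the two-input moduli spaces $\osr(b,c;-)$ and $\osr(a,-;d)$ along $\sX(L_1,L_3)$, i.e.\@ (after the rescaling identifications) the $i$-th composition of the flow multimodules underlying $\mu^2$ in the sense of \cref{dfn:compos_multimodules}; similarly $\overline{\mathcal R}_2$ is the corresponding composition of $\osr(a,b;-)$ and $\osr(-,c;d)$. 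One then checks that $\osr_3^{k,\ell,m}$, viewed with its remaining face structure $\partial_i^\circ$ removed, is precisely an $R$-oriented flow bordism between the two composed flow multimodules
\[
\osr^{k,\ell} \circ_1 \osr^{\ell+?,m} \quad\text{and}\quad \osr^{?,m}\circ_2 \osr^{k,\ell},
\]
where the $R$-orientations on the two composites induced via \cref{dfn:compos_multimodules} agree with the restrictions of the orientation on $\osr_3$ to $\overline{\mathcal R}_1$ and $\overline{\mathcal R}_2$ — this is the compatibility clause in \cref{lem:moduli_associativity}(ii), which says the orientation isomorphisms extend those on the boundary strata. Then \cref{lma:bordism_gives_homotopy_of_maps} produces a homotopy $|\overline{\mathcal R}_1| \simeq |\overline{\mathcal R}_2|$ on CJS realizations at each finite action level, and by \cref{prop:rcon}/\cref{lma:functoriality_cjs} these realizations compute $\mu^2\circ(\mu^2\wedge_R\id)$ and $\mu^2\circ(\id\wedge_R\mu^2)$ respectively (using that composition of flow multimodules realizes to composition of the induced maps, which follows from the functoriality of the bar construction as in the proof of \cref{lem:functoriality_geom_realiz}). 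Finally I would pass to the homotopy colimit over $k,\ell,m\to\infty$; since homotopy groups and smashes over $R$ commute with filtered homotopy colimits, the homotopy at each level assembles to the desired homotopy commutative square.

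The main obstacle will be bookkeeping rather than a deep new idea: one must verify that the composition of flow multimodules in the sense of \cref{dfn:compos_multimodules} (a colimit over $b'\in\Ob(\sM_2)$ glued along the action of the intermediate flow category) is canonically identified with the boundary faces $\overline{\mathcal R}_i$ as they arise from Gromov compactification, \emph{including} the coherent systems of collars (\cref{asmp:bimodcjs}) — the collars on $\osr_3$ must restrict on the boundary to the product collars used to define composition, so that the map \eqref{eq:multimod_col} is compatible. This is exactly the kind of gluing-parameter compatibility established by Large \cite[Section 8.2]{large2021spectral} and invoked throughout \cref{sec:ModuliSpaces}, so no genuinely new analysis is needed; the only care is to track the $R$-orientations through \cref{lem:cap_gluing} so that the orientation bordism hypothesis of \cref{lma:bordism_gives_homotopy_of_maps} is literally satisfied, and to ensure the action-filtration indices in the composites match those of the three-input moduli space (which again follows from additivity of the action functional under concatenation).
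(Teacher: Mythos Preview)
Your proposal is correct and follows essentially the same approach as the paper: the moduli spaces $\osr(a,b,c;d)$ from \cref{lem:moduli_associativity} are used as an $R$-oriented flow bordism between the two compositions $\osr \circ_1 \osr$ and $\osr \circ_2 \osr$ at each finite action level, \cref{lma:bordism_gives_homotopy_of_maps} gives the homotopy on CJS realizations, and one passes to the homotopy colimit as $k,\ell,m \to \infty$. The paper's proof is considerably terser and does not spell out the collar and orientation bookkeeping you flag, but the argument is the same.
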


\begin{proof}
    It follows from \cref{lem:moduli_associativity} that the moduli spaces of $J$-holomorpic disks with three positive boundary punctures and one negative boundary puncture define an $R$-oriented flow bordism between the two $R$-oriented flow multimodules
    \[
    \osr \circ_1 \osr,\; \osr \circ_2\osr \colon \sC\sW^{\leq A_k}(L_0,L_1),\sC\sW^{\leq B_\ell}(L_1,L_2),\sC\sW^{\leq C_m}(L_2,L_3) \to \sC\sW^{\leq A_k+B_\ell+C_m}(L_0,L_3),
    \]
    whose CJS realizations are (the action-filtered versions of) $\mu^2 \circ (\mu^2 \wedge_R \id)$ and $\mu^2 \circ (\id \wedge_R \mu^2)$, respectively. Applying \cref{lma:bordism_gives_homotopy_of_maps} and taking homotopy colimits as $k,\ell,m\to\infty$ therefore finishes the proof.
\end{proof}

\subsection{Unit map}

Recall the definition of the $R$-oriented point flow category $\mathcal M_\ast$ from \cref{dfn:point_flow_cat}, and that its CJS realization is $R$. Pick a strictly increasing sequence $\{A_k\}_{k=1}^\infty$ diverging to $\infty$. We note that \cref{dfn:moduli_floer_unit} implies that there is an $R$-oriented flow bimodule
\begin{align*}
    \osr_L \colon \sM_\ast &\longrightarrow \sC\sW^{\leq 0}(L,L) \\
    (p,x) &\longmapsto \osr_L(x).
\end{align*}

\begin{defn}\label{dfn:unit_hw}
The \emph{unit} in $HW(L,L)$ is the composition 
\[ \eta_L \colon R \overset{|\osr_L|}{\longrightarrow} HW^{\leq 0}(L,L) \longrightarrow HW(L,L), \]
\end{defn}

We now have that $\eta_L$ is a unit in the following sense.
\begin{lem}\label{lem:unit_in_fuk}
The following composition is homotopic to the identity
\begin{align*}
    HW(L_0,L_1) \simeq R \wedge_{R} HW(L_0,L_1) &\xrightarrow{\eta_{L_0} \wedge_{R} \id} HW(L_0,L_0) \wedge_{R} HW(L_0,L_1) \\
    &\;\;\overset{\mu^2}{\longrightarrow} HW(L_0,L_1)
\end{align*}
\end{lem}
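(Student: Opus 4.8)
The plan is to prove \cref{lem:unit_in_fuk} by exhibiting an $R$-oriented flow bordism between two $R$-oriented flow multimodules whose CJS realizations are, respectively, the composition $\mu^2 \circ (\eta_{L_0} \wedge_R \id)$ and (a map homotopic to) the identity on $HW(L_0,L_1)$. This parallels the proofs of \cref{lem:IndOfData} and \cref{lma:associativity_comp_wfuk}: the geometry is already encoded in the moduli space $\overline{\mathcal B}_L(a;b)$ of \cref{dfn:conn_sum_dom_unit} and its boundary structure described in \cref{lem:moduli_unit_bordism}. First I would fix a strictly increasing sequence $\{A_k\}_{k=1}^\infty$ diverging to $\infty$ and, using \cref{lem:moduli_unit_bordism}, observe that the moduli spaces $\overline{\mathcal B}_{L_0}(a;b)$ (with $K = L_1$) assemble into an $R$-oriented flow bordism of $R$-oriented flow multimodules
\[
    \overline{\mathcal B}_{L_0} \colon \overline{\mathcal R}^{\tau_0} \Longrightarrow \overline{\mathcal R}_{L_0} \circ \overline{\mathcal R} \colon \sM_\ast, \sC\sW^{\leq A_k}(L_0,L_1) \longrightarrow \sC\sW^{\leq A_k}(L_0,L_1),
\]
where on the left $\overline{\mathcal R}^{\tau_0}$ is the continuation flow bimodule for the constant homotopy $\tau_0 \equiv 0$ (precomposed with the projection $\sM_\ast,\sC\sW^{\leq A_k}(L_0,L_1)\to\sC\sW^{\leq A_k}(L_0,L_1)$), and on the right $\overline{\mathcal R}_{L_0}\circ\overline{\mathcal R}$ is the composition along $\sC\sW^{\leq 0}(L_0,L_0)$ of the unit bimodule $\overline{\mathcal R}_{L_0}\colon\sM_\ast\to\sC\sW^{\leq 0}(L_0,L_0)$ (see \cref{dfn:unit_hw}) with the product multimodule $\overline{\mathcal R}^{0,k}$ of \eqref{eq:multimodule_mu2} (after the conformal rescaling identification \eqref{eq:ident_conformal_rescaling}). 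The boundary identification $\partial_1\overline{\mathcal B}_L(a;b) = \overline{\mathcal B}_1(a;b) \sqcup \overline{\mathcal R}^{\tau_0}(a;b)$ from \cref{lem:moduli_unit_bordism}(i) is precisely what identifies the two ends of the bordism with these two multimodules, and item (ii) there provides the compatible isomorphisms of the associated $R$-line bundles, so \cref{dfn:r_ori_flow_multibord} is satisfied.

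Next I would pass to CJS realizations. By \cref{lma:bordism_gives_homotopy_of_maps}, the bordism $\overline{\mathcal B}_{L_0}$ induces a homotopy between $|\overline{\mathcal R}^{\tau_0}|$ and $|\overline{\mathcal R}_{L_0}\circ\overline{\mathcal R}|$ as maps $R \wedge_R HW^{\leq A_k}(L_0,L_1) \to HW^{\leq A_k}(L_0,L_1)$. On the one hand, $\overline{\mathcal R}^{\tau_0}$ is the continuation bimodule for a constant $1$-parameter family, hence it coincides with the diagonal bimodule $\id_{\sC\sW^{\leq A_k}(L_0,L_1)}$ of \cref{lem:diagonal_bimod} (compare the argument in the proof of \cref{lem:IndOfData} identifying $\overline{\mathcal R}^{\tau_2}$ with the identity bimodule), so by \cref{prop:induced_id} its CJS realization is homotopic to the identity on $HW^{\leq A_k}(L_0,L_1)$. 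On the other hand, by the functoriality of the CJS realization under composition of flow multimodules (the analog of \cref{dfn:composition_flow_bimods} and \cref{lem:r_ori_composition_bimods} at the level of multimodules, yielding $|\overline{\mathcal R}_{L_0}\circ\overline{\mathcal R}| \simeq |\overline{\mathcal R}^{0,k}| \circ (|\overline{\mathcal R}_{L_0}| \wedge_R \id)$), this realization is exactly the action-filtered version of $\mu^2 \circ (|\overline{\mathcal R}_{L_0}| \wedge_R \id)$. Unwinding \cref{dfn:unit_hw} and \cref{defn:triangle_product}, after taking the homotopy colimit over $k\to\infty$, this is precisely $\mu^2 \circ (\eta_{L_0} \wedge_R \id)$, completing the argument.

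The main obstacle I anticipate is bookkeeping rather than conceptual: one must check carefully that the conformal rescaling identification \eqref{eq:ident_conformal_rescaling} used to define $\mu^2$ is compatible with the domain degeneration built into $\overline{\mathcal B}_L$ (the family $\{S^\rho\}$ interpolating between a once-punctured disk capping off one input of the pair-of-pants and the continuation strip $S^{-1}$), so that the $\rho=\infty$ end of the bordism really realizes $\mu^2 \circ (\eta_{L_0}\wedge_R\id)$ and not a rescaled variant; this is handled by the Liouville-flow rescaling trick recalled after \cref{lem:IndOfData}, but it requires matching the weights $w_{i,S}$ appearing in the Floer data along the degeneration. A second, more routine point is verifying that the $R$-orientation on $\overline{\mathcal B}_L$ coming from \cref{lem:moduli_unit_bordism}(ii) restricts correctly at both ends — i.e.\@ that it agrees with the canonically induced $R$-orientation on the composition $\overline{\mathcal R}_{L_0}\circ\overline{\mathcal R}$ (via \cref{lem:r_ori_composition_bimods}) and with that on $\id_{\sC\sW^{\leq A_k}(L_0,L_1)}$ (via \cref{lem:diagonal_bimod}) — but this is guaranteed by the compatibility clauses already stated in \cref{lem:moduli_unit_bordism}(ii) together with the gluing-of-abstract-caps construction in \cref{lem:cap_gluing}, so no new orientation analysis is needed. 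Finally, since homotopy groups commute with filtered homotopy colimits, passing to the limit over $k$ presents no difficulty, and we conclude that the composite in the statement is homotopic to $\id_{HW(L_0,L_1)}$.
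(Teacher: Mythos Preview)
Your proposal is correct and follows essentially the same argument as the paper's own proof: use the moduli space $\overline{\mathcal B}_{L_0}$ of \cref{lem:moduli_unit_bordism} as an $R$-oriented flow bordism between $\overline{\mathcal R}^{\tau_0}$ and the composition $\overline{\mathcal R} \circ_1 \overline{\mathcal R}_{L_0}$, invoke \cref{lma:bordism_gives_homotopy_of_maps}, identify the CJS realization of $\overline{\mathcal R}^{\tau_0}$ with the identity via \cref{prop:induced_id}, and pass to the homotopy colimit over the action filtration. The only cosmetic differences are that the paper evaluates the multimodule at the unique object $p \in \sM_\ast$ to work with a bimodule $\sU^k$ rather than carrying $\sM_\ast$ along, and it phrases the relation between $\overline{\mathcal R}^{\tau_0}$ and $\id_{\sC\sW^{\leq A_k}(L_0,L_1)}$ as ``flow bordant'' rather than ``coincides''; neither affects the substance of the argument.
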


\begin{proof}
Pick a strictly increasing sequence $\{A_k\}_{k=1}^\infty$ diverging to $\infty$. Consider the $R$-oriented flow multimodule
\[
\osr^{k} \circ_1 \osr_{L_0} \colon \sM_\ast,\sC\sW^{\leq A_k}(L_0,L_1) \longrightarrow \sC\sW^{\leq A_k}(L_0,L_1),
\]
and consider its associated $R$-oriented flow bimodule
\[
\sU^{k} \coloneqq (\osr^{k} \circ_1 \osr_{L_0})(p,-) \colon \sC\sW^{\leq A_k}(L_0,L_1) \longrightarrow \sC\sW^{\leq A_k}(L_0,L_1).
\]
By \cref{lem:moduli_unit_bordism} we may define an $R$-oriented flow bordism
\[
    \mathcal B_{L_0} \colon \sU^{k} \Longrightarrow \osr^{\tau_0},
\]
where $\osr^{\tau_0} \colon \sC\sW^{\leq A_k}(L_0,L_1) \to \sC\sW^{\leq A_k}(L_0,L_1)$ is the $R$-oriented flow bimodule defined in the proof of \cref{lem:IndOfData}, and $\tau_0 \colon \IR \to [0,1]$ is defined by $\tau_0 \equiv 0$. We note that $\osr^{\tau_0}$ is flow bordant to the identity bimodule $\id_{\sC\sW^{\leq A_k}(L_0,L_1)}$ and hence by \cref{prop:induced_id} the CJS realization of $\osr^{\tau_0}$ is homotopic to the identity map on $\sC\sW^{\leq A_k}(L_0,L_1)$, and finally applying \cref{lma:bordism_gives_homotopy_of_maps} shows $|\sU^{k}| \simeq \id_{HW^{\leq A_k}(L_0,L_1)}$ which finishes the proof after passing to the homotopy colimit as $k \to \infty$, since $\hocolim_{k \to \infty} |\sU^{k}| = \mu^2 \circ (\eta_{L_0} \wedge_R \id)$.
\end{proof}

\subsection{Definition of the wrapped Donaldson--Fukaya category}

\begin{defn}\label{dfn:wfuk_K}
The \emph{wrapped Donaldson--Fukaya category with coefficients in $R$} of $X$ is the category $\mathcal W(X;R)$ enriched over the homotopy category of $R$-modules whose objects are given by the set of Lagrangian $R$-branes in $X$, the morphisms from $L_0$ to $L_1$ are given by
\[ \mathcal W(X;R)(L_0,L_1) \coloneqq HW(L_0,L_1), \]
and the composition maps are given by $\mu^2$ in \cref{defn:triangle_product}.
\end{defn}

\begin{rem}
    To recover the wrapped Donaldson--Fukaya category with coefficients in a discrete ring $k$ we consider $R = H k$, where $H k$ is the Eilenberg--MacLane spectrum of $k$, and take the homotopy groups of its morphism spectra (see \cref{lem:wfuk_discrete_coeff} and \cref{rem:wfuk_discrete_coeff_k}). 
\end{rem}
\begin{rem}\label{rem:relation_ps}
    Porcelli--Smith \cite{porcelli2024bordism,porcelli2024spectral} have considered a version of $\sW(X;R)$ defined via bordism groups of flow categories (cf.\@ Abouzaid--Blumberg \cite{abouzaid2024foundation}).

    The Donaldson--Fukaya category $\sF(X;(\varTheta,\varPhi))$ considered in \cite{porcelli2024spectral} is defined for a tangential pair $\varTheta \to \varPhi$ lying over $\BO \xrightarrow{c} \BU$, and our category $\sW(X;R)$ is related to theirs in case $(\varTheta,\varPhi) = ((\lag)^\#,\BO)$. Namely,
    \[
    \Ob(\sW(X;R)) = \Ob(\sF(X;((\lag)^\#,\BO))),
    \]
    and for any two Lagrangian $R$-branes $L_0,L_1 \in \Ob(\sW(X;R))$, we expect $\pi_\bullet(\sW(X;R)(L_0,L_1))$ to be related to $\sF(X;((\lag)^\#,\BO))(L_0,L_1)$ via a certain base change as follows: The tangential pair $((\lag)^\#,\BO)$ determines a pair of Thom ring spectra $(R_\varTheta,\varSigma^\infty_+\varOmega {\OO})$ defined in \cite[Section 3.10]{porcelli2024spectral}, see \cite[Example 2.8]{porcelli2024spectral}. There are ring maps $\varSigma^\infty_+\varOmega {\OO} \to \IS$, and $R_\varTheta \to R$, which implies that under \cite[Conjectures 5.10 and 5.17]{porcelli2024spectral} we expect there to be an isomorphism (up to taking duals, identifying Floer homology and Floer cohomology)
    \[
    \pi_\bullet(F \wedge_{R_\varTheta} R) \overset{\cong}{\longrightarrow} \pi_\bullet(\sW(X;R)(L_0,L_1)),
    \]
    for any $L_0,L_1 \in \Ob(\sW(X;R))$, where $\sF(X;((\lag)^\#,\BO))(L_0,L_1) \cong \pi_\bullet(F)$ for a certain $R_\varTheta$-module $F$ obtained via \cite[Conjectures 5.10 and 5.17]{porcelli2024spectral}.
\end{rem}

The following is one result for wrapped (Donaldson--)Fukaya categories that continues to hold with spectral coefficients without any additional work.
\begin{thm}[Cf.\@ {\cite[Section 3.6]{ganatra2020covariantly}}]\label{thm:inclusion_pushforward}
    Any inclusion of stably polarized Liouville sectors $X \hookrightarrow X'$ (see \cref{dfn:inclusion_sectors}) induces a canonical pushforward functor $\sW(X;R) \to \sW(X';R)$.
\end{thm}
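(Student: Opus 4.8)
The plan is to construct the pushforward functor at the level of the geometric data underlying $\sW(X;R)$ rather than by abstract nonsense, mimicking the standard construction of Ganatra--Pardon--Shende for wrapped Fukaya categories over discrete rings and adapting it to the flow-categorical framework developed in \cref{sec:background_flow,sec:cjs,sec:donaldson-fukaya}. First I would recall from \cref{sec:geom_background} that an inclusion $X \hookrightarrow X'$ of stably polarized Liouville sectors, by definition, is a codimension-zero embedding of Liouville sectors together with a homotopy of stable polarizations compatible with the restriction; in particular the Lagrangian Gauss map, grading, and $R$-orientation data of any Lagrangian $R$-brane $L \subset X$ are transported to $R$-brane structures on the image $L \subset X'$. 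This defines the functor on objects: $L \mapsto L$, viewed as a Lagrangian $R$-brane in $X'$.

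For morphisms, the key point is that for Lagrangian $R$-branes $L_0, L_1 \subset X$ one can choose admissible Floer data $(H', J'_t) \in \sH\sJ(\overline{X}', F')$ which, near the image of $X$, restricts to admissible Floer data $(H, J_t)$ on $X$, and such that no $J'_t$-holomorphic strip with boundary on $L_0, L_1$ escapes the image of $X$ (this is the standard ``no escape'' / confinement lemma for Liouville sectors, proved via the maximum principle using that the Liouville vector field points outward along $\partial X$; see \cite{ganatra2020covariantly}). Consequently the set of Hamiltonian chords is unchanged, $\sX(L_0, L_1; H') = \sX(L_0, L_1; H)$, and the compactified moduli spaces $\osr(a;b)$ computed in $X'$ agree with those computed in $X$, as smooth manifolds with corners and with identified index bundles. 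Since the canonical $R$-orientations of \cref{sec:canonical_orientations} are built from the stable polarization and the $R$-brane structures, which are transported compatibly, the $R$-oriented flow categories $\sC\sW^{\leq A}(L_0, L_1; H, J_t)$ computed in $X$ and in $X'$ are canonically identified, hence so are their CJS realizations and the colimits $HW(L_0,L_1)$. This gives the map on morphism spectra; taking it to be an equivalence (in fact an identification) in $\Ho(\mod R)$, functoriality with respect to $\mu^2$ follows because the moduli spaces $\osr(a,b;c)$ for the triangle product, and the associativity bordisms $\osr(a,b,c;d)$, are likewise confined and hence identified — so the flow multimodules \eqref{eq:multimodule_mu2} and the bordisms witnessing associativity match on the nose. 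The unit maps match for the same reason, using \cref{dfn:moduli_floer_unit} and the confinement of $\osr_L(a)$.

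The final bookkeeping step is to promote these fiberwise identifications to an honest functor: one must check that the identifications of $HW(L_0,L_1)$ are compatible with the continuation maps of \cref{lem:IndOfData} (so that the assignment is independent of auxiliary Floer data in a coherent way) and with the rescaling identification \eqref{eq:ident_conformal_rescaling}. This is where a small amount of care is needed, since one wants the square relating two choices of restricted Floer data on $X'$ to commute up to coherent homotopy; but the relevant continuation-map bordisms are themselves confined to $X$, so the argument of \cref{lem:IndOfData} applies verbatim inside $X$ and in $X'$ and produces compatible homotopies. Assembling this, the assignment $L \mapsto L$, $HW(-,-) \to HW(-,-)$ respects composition $\mu^2$ and units up to the coherences required for a functor of categories enriched over $\Ho(\mod R)$.

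The main obstacle I anticipate is the confinement / ``no escape'' result in the stably polarized setting: one must verify that $J'$-holomorphic curves (strips for the differential, but also the disks with multiple punctures for $\mu^2$, associativity, and units, including their Gromov-compactified degenerations) with all boundary components on Lagrangians contained in $X$ cannot leave the image of $X$, for a suitable choice of $(H', J'_t)$ that is ``cylindrical'' near $\partial X$ inside $X'$. This is essentially \cite[Section 3.6 and Lemma 3.30-ish]{ganatra2020covariantly} adapted to our Floer data conventions (quadratic Hamiltonians, the stops $F$, and the admissibility conditions of \cref{dfn:compatible_H,dfn:compatible_J}); once this geometric input is in place, everything else is a matter of transporting the flow-categorical and orientation data, which is formal given the functoriality of the CJS realization established in \cref{lma:functoriality_cjs} and \cref{lma:bordism_gives_homotopy_of_maps}.
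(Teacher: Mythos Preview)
Your proposal is correct and follows essentially the same approach as the paper, which dispatches the result in two sentences by citing \cite[Section~3.6]{ganatra2020covariantly} and noting that the construction is purely geometric. The one small refinement worth noting is the confinement mechanism: the paper points specifically to condition~(iii) of \cref{dfn:compatible_J} (holomorphicity of the projection $\pi_{\IC}$ near the horizontal boundary) as what prevents $J$-holomorphic curves with boundary in $X$ from escaping, rather than a Liouville-vector-field maximum principle.
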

\begin{proof}
    This follows from the discussion in \cite[Section 3.6]{ganatra2020covariantly} as the construction of the pushforward functor is purely geometric. The key point is that the assumption on compatible almost complex structures in \cref{dfn:compatible_J}(iii) ensures that $J$-holomorphic curves with Lagrangian boundary conditions in $X$ are contained in $X$.
\end{proof}
\begin{rem}
    An equivalent formulation in view of convexification is that an inclusion of stops $F' \hookrightarrow F$ induces a canonical pushforward functor $\sW(\overline X, F;R) \to \sW(\overline X,F';R)$ (cf.\@ \cite[(4.14)]{sylvan2019on}).
\end{rem}

\subsection{Equivalences and connective coefficients}\label{subsec:DF_conn_cover}
Recall the definition of the connective cover $R_{\geq 0} \to R$ of $R$ from \cref{sec:background_connective}.

From \cref{rem:equiv_Rbrane_loopsinfRbrane} and the construction of $\sW(X;-)$, it follows that the set of objects of $\sW(X,R)$ and $\sW(X;R_{\geq 0})$ coincide.  In fact, there is a functor of categories enriched over $\Ho \mod{R_{\geq 0}}$,
\[ -\wedge_{R_{\geq 0}} R \colon \sW(X;R_{\geq 0}) \longrightarrow \sW(X;R)\]
which is the identity on objects and is given on morphisms by
\[ HW(L,K;R_{\geq 0}) \longmapsto HW(L,K;R_{\geq 0}) \wedge_{R_{\geq 0}} R, \ \text{for all} \  L,K \in \Ob(\sW(X;R_{\geq 0})).\]

Consequently, if Lagrangian $R$-branes $L$ and $K$ are isomorphic as objects in $\sW(X;R_{\geq 0})$, then they are isomorphic in $\sW(X;R)$ as well. This functor in fact partially reflects isomorphisms:

\begin{lem}\label{cor:lifting_equivalences_to_conn_covers}
Suppose that the homotopy groups of $HW(L,L;R_{\geq 0})$, $HW(K,K;R_{\geq 0})$, $HW(L,K;R_{\geq 0})$, and $HW(K,L;R_{\geq 0})$ are all bounded from below. If $L \cong K$ in $\sW(X;R)$, then $L\cong K$ in $\sW(X;R_{\geq 0})$.
\end{lem}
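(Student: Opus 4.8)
The statement is an ``isomorphism reflection up to connective cover'' result, and the natural strategy is to combine the functor $-\wedge_{R_{\geq 0}} R$ with a Whitehead-type argument in the category of $R_{\geq 0}$-modules. First I would fix an isomorphism $f \colon L \to K$ in $\sW(X;R)$ with inverse $g \colon K \to L$; by definition these are elements $f \in \pi_0 HW(L,K;R)$ and $g \in \pi_0 HW(K,L;R)$ with $\mu^2(g,f)$ and $\mu^2(f,g)$ equal to the units $\eta_L$, $\eta_K$ respectively. The goal is to lift $f$ and $g$ to morphisms in $\sW(X;R_{\geq 0})$ that remain mutually inverse there.

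The key technical input is that for any two Lagrangian $R$-branes, the $R$-module map $HW(L,K;R_{\geq 0}) \wedge_{R_{\geq 0}} R \to HW(L,K;R)$ is an equivalence: this holds because the CJS realization is built levelwise from Thom spectra of the abstract index bundles, the associated $R$-line bundles are obtained from the $R_{\geq 0}$-line bundles by base change along $R_{\geq 0} \to R$ (using $\rho_R^{abc}$ and the compatibility in \cref{rem:assoc_vector_bundles}), and the bar construction / homotopy (co)limits defining the realization commute with $-\wedge_{R_{\geq 0}} R$. Granting this, one has a natural isomorphism $HW(L,K;R) \simeq HW(L,K;R_{\geq 0}) \wedge_{R_{\geq 0}} R$ compatible with $\mu^2$. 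Now invoke the connectivity hypothesis: $M \coloneqq HW(L,K;R_{\geq 0})$ is an $R_{\geq 0}$-module whose homotopy groups are bounded below, hence after a shift it is connective, and $\pi_0(M \wedge_{R_{\geq 0}} R) \cong \pi_0 M$ since $R_{\geq 0} \to R$ is $0$-connected. Therefore the class $f \in \pi_0 HW(L,K;R) = \pi_0(M \wedge_{R_{\geq 0}} R)$ lifts (uniquely) to a class $\widetilde f \in \pi_0 HW(L,K;R_{\geq 0})$, and similarly $g$ lifts to $\widetilde g \in \pi_0 HW(K,L;R_{\geq 0})$. The same $\pi_0$-isomorphism applied to $HW(L,L;R_{\geq 0})$ and $HW(K,K;R_{\geq 0})$ — again using boundedness below — shows that $\mu^2(\widetilde g, \widetilde f) - \eta_L$ and $\mu^2(\widetilde f, \widetilde g) - \eta_K$ map to zero under the injective maps $\pi_0 HW(L,L;R_{\geq 0}) \hookrightarrow \pi_0 HW(L,L;R)$ and $\pi_0 HW(K,K;R_{\geq 0}) \hookrightarrow \pi_0 HW(K,K;R)$, hence vanish. (Here one uses that the unit $\eta_L$ for $R_{\geq 0}$-coefficients maps to the unit for $R$-coefficients under $-\wedge_{R_{\geq 0}} R$, which is immediate from \cref{dfn:unit_hw} since the bimodule $\osr_L$ and its realization are compatible with base change.) Thus $\widetilde f$ is an isomorphism in $\sW(X;R_{\geq 0})$ with inverse $\widetilde g$, giving $L \cong K$ there.

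The main obstacle is establishing the base-change equivalence $HW(L,K;R_{\geq 0}) \wedge_{R_{\geq 0}} R \simeq HW(L,K;R)$ and its compatibility with $\mu^2$ and with units. One must check that the canonical $R$-orientation data constructed in \cref{sec:canonical_orientations} for $R$-coefficients is obtained from that for $R_{\geq 0}$-coefficients by applying $-\wedge_{R_{\geq 0}} R$ — this is plausible because the abstract brane space $(\lag)^\#$ and the null-homotopies in \cref{lem:DHashNullity} and \cref{lem:CanonicalOrientation} are defined via $\B^2\eta$, and the unit map factors as $\IS \to R_{\geq 0} \to R$, so the $R_{\geq 0}$-orientations push forward to the $R$-orientations. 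Then one checks that the functor $q$-realization, the two-sided bar construction, and the homotopy colimit over the action filtration all commute with $-\wedge_{R_{\geq 0}} R$; the first two are formal (geometric realizations and bar constructions of simplicial $R_{\geq 0}$-modules commute with extension of scalars), and the homotopy colimit commutes with the left adjoint $-\wedge_{R_{\geq 0}} R$. The boundedness-below hypothesis is exactly what is needed to ensure $\holim_q$ stabilizes (\cref{rem:pro-spec}) so that the $\holim$ also commutes with base change, and to run the $\pi_0$-injectivity and $\pi_0$-lifting arguments above. Once these compatibilities are in place, the remainder is the elementary $\pi_0$-bookkeeping sketched above.
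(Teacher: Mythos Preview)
Your proposal is correct and follows essentially the same route as the paper: establish the base-change equivalence $HW(L,K;R_{\geq 0})\wedge_{R_{\geq 0}} R \simeq HW(L,K;R)$, then use bounded-belowness to lift the isomorphism and its inverse uniquely and verify the unit relations. The paper packages the lifting step via connective covers (\cref{lem:conn_smash_conn_is_id} and \cref{lem:obstruction_lifting_conn_covers}) rather than your $\pi_0$-language, but the content is the same; one small misattribution in your write-up is that the $\holim_q$ stabilization comes for free on each action-filtered piece (finitely many chords), not from the bounded-below hypothesis on homotopy groups --- that hypothesis is used only in the $\pi_0$-lifting/injectivity step, and your one-line justification ``since $R_{\geq 0}\to R$ is $0$-connected'' there really does require an argument like the cell-induction in \cref{lem:conn_smash_conn_is_id}.
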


\begin{proof}
    We may assume without loss of generality that $HW(L,K;R_{\geq 0})$ and $HW(K,L;R_{\geq 0})$ are $q$-connective for some $q \in \mathbb Z$. 
    
    Since $L \cong K$ in $\sW(X;R)$, there exist morphisms $\alpha \colon R \to HW(L,K;R)$ and $\beta \colon R \to HW(K,L;R)$ and homotopies $\mu^2 \circ(\beta \wedge_R \alpha) \simeq \eta_{K;R}$ and $\mu^2\circ(\alpha \wedge_R \beta) \simeq \eta_{L;R}$, where $\eta_{K;R} \colon R \to HW(K,K;R)$ and $\eta_{L;R} \colon R \to HW(L,L;R)$ denote the unit maps, see \cref{dfn:unit_hw}. By construction of $\sW(X;-)$ and \cref{lem:conn_smash_conn_is_id}, we have equivalences
    \[ HW(L,K;R)_{\geq 0} \simeq (HW(L,K;R_{\geq 0}) \wedge_{R_{\geq 0}} R)_{\geq 0} \simeq HW(L,K;R_{\geq 0})_{\geq 0}. \]
    In particular, from \cref{lem:obstruction_lifting_conn_covers}, it follows that the natural map
    \[HW(L,K;R_{\geq 0}) \longrightarrow HW(L,K;R)\]
    induces a bijection
    \[ [R_{\geq 0},HW(L,K;R)_{\geq 0}]_{R_{\geq 0}}  \simeq [R_{\geq 0}, HW(L,K;R)]_{R_{\geq 0}}, \]
     and, in particular, $\alpha$ admits a unique lift to a morphism $\widetilde{\alpha} \colon R_{\geq 0} \to HW(L,K;R_{\geq 0})$.  Similarly, the morphism $\beta$ admits a unique lift to a morphisms $\widetilde{\beta} \colon R_{\geq 0} \to HW(K,L;R_{\geq 0})$.  By uniqueness, we have homotopies $\mu^2\circ(\widetilde{\beta} \wedge_{R_\geq 0} \widetilde{\alpha}) \simeq \eta_{K;R_{\geq 0}}$ and $\mu^2\circ(\widetilde{\alpha} \wedge_{R_\geq 0} \widetilde{\beta}) \simeq \eta_{L;R_{\geq 0}}$. This gives the isomorphism between $L$ and $K$ in $\sW(X;R_{\geq 0})$.
\end{proof}

\begin{rem}\label{rem:finite_mor_space_for_compact}
    If $L$ and $K$ are compact Lagrangian $R$-branes, then the assumption in \cref{cor:lifting_equivalences_to_conn_covers} holds.  This follows from the finiteness of the set of Hamiltonian chords and the definition of the CJS realization (see \cref{dfn:cjs_realization}).
\end{rem}

\subsection{Triviality for subcritical Weinstein sectors}
    We now show that $\sW(X;R)$ is trivial for subcritical Weinstein sectors (see \cref{dfn:subcrit_weinstein_sector}).
    \begin{thm}\label{thm:subcrit_trivial}
        Suppose that $X$ is a stably polarized subcritical Weinstein sector. For any $L_0,L_1 \in \Ob(\sW(X;R))$ we have that $HW(L_0,L_1)$ is the zero $R$-module.
    \end{thm}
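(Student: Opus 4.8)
The strategy is to reduce the statement to the well-known vanishing of wrapped Floer cohomology over discrete rings for subcritical Weinstein sectors, using the Whitehead-type bootstrapping philosophy indicated in \cref{sec:sketch_proof}. First I would reduce to the case that $R$ is connective: by \cref{lem:IndOfData} and the discussion in \cref{subsec:DF_conn_cover}, we have $HW(L_0,L_1;R) \simeq HW(L_0,L_1;R_{\geq 0}) \wedge_{R_{\geq 0}} R$, so it suffices to prove that $HW(L_0,L_1;R_{\geq 0})$ is the zero module. Hence assume $R$ is connective and set $k \coloneqq \pi_0(R)$.

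Next I would invoke the Hurewicz/Whitehead machinery of \cref{sec:sketch_proof}. For each action window $A$, the $R$-module $HW^{\leq A}(L_0,L_1;R)$ is the CJS realization of the flow category $\sC\sW^{\leq A}(L_0,L_1)$, which (after passing to an appropriate action-filtered piece) has a grading function bounded below when truncated, so that $Z_{\sC\sW^{\leq A}(L_0,L_1),\fo}$ is bounded below and \cref{prop:cm_prop} applies. Therefore
\[
\pi_\bullet\!\left(HW^{\leq A}(L_0,L_1;R) \wedge_R Hk\right) \cong H_\bullet\!\left(CM(\sC\sW^{\leq A}(L_0,L_1);k)\right).
\]
By construction (cf.\@ the identification of the Morse complex of the flow category with the usual wrapped complex in \cref{lem:wfuk_discrete_coeff} and \cref{rem:wfuk_discrete_coeff_k}), the right-hand side is the action-filtered wrapped Floer cohomology $HW^{-\bullet}_{\leq A}(L_0,L_1;k)$. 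Taking the homotopy colimit over $A$ and using that homotopy groups and $-\wedge_R Hk$ commute with filtered homotopy colimits, we get
\[
\pi_\bullet\!\left(HW(L_0,L_1;R) \wedge_R Hk\right) \cong HW^{-\bullet}(L_0,L_1;k).
\]
Now I would cite the standard fact that for a subcritical Weinstein sector the wrapped Floer cohomology over any discrete commutative ring $k$ vanishes; for Weinstein \emph{domains} this is due to Cieliebak and is proved via a handle-attachment/Künneth argument, and for subcritical Weinstein sectors it follows from the sectorial descent results of Ganatra--Pardon--Shende combined with the subcritical splitting $X \simeq X' \times \IC$. In particular $HW(L_0,L_1;R)\wedge_R Hk$ has trivial homotopy groups, hence is contractible.

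Finally, I would upgrade this to the vanishing of $HW(L_0,L_1;R)$ itself. Since $R$ is connective, so is each $HW^{\leq A}(L_0,L_1;R)$ (being a CJS realization of a bounded-below $R$-oriented flow category), and therefore so is the homotopy colimit $HW(L_0,L_1;R)$; write $M \coloneqq HW(L_0,L_1;R)$, which is $(-1)$-connective after a shift if necessary. If $M$ were not contractible, it would have a lowest nonvanishing homotopy group $\pi_n(M) \neq 0$, and the Hurewicz map $M \to M \wedge_R Hk$ of \cref{sec:sketch_proof} would induce an injection $\pi_n(M) \otimes_{\pi_0 R} k \hookrightarrow \pi_n(M \wedge_R Hk)$ (the relevant part of the $R$-module Whitehead package, \cref{thm:whitehead}), contradicting $\pi_\bullet(M\wedge_R Hk)=0$. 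Hence $M \simeq 0$. The main obstacle is ensuring the boundedness-below hypotheses hold uniformly enough to apply \cref{prop:cm_prop} and the connective Whitehead theorem at the level of the action colimit rather than just each finite window --- this is handled by the fact that for fixed degree the colimit over $A$ stabilizes on homotopy groups, so no completion issues arise (cf.\@ \cref{rem:Novikov}) --- together with citing the geometric vanishing input over discrete rings in the sectorial setting in a form that matches our conventions.
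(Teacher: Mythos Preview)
Your approach is genuinely different from the paper's, and has a real gap. The paper's proof is purely geometric and never invokes the Whitehead/Hurewicz machinery: because $X$ is subcritical, any Lagrangian can be displaced from every compact set by a Hamiltonian isotopy (Cieliebak), so one can choose Floer data $(H^r,J_t^r)$ for which every chord has action at least $Ce^{2r}$. Thus for each fixed $A$ the action-filtered flow category $\sC\sW^{\leq A}(L_0,L_1;H^r,J_t^r)$ is \emph{empty} for $r\gg 0$, and its CJS realization is zero on the nose. Invariance under Floer data (\cref{lem:IndOfData}) then gives $HW^{\leq A}(L_0,L_1)\simeq 0$ for all $A$, hence the colimit vanishes.

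The gap in your bootstrapping argument is the boundedness-below hypothesis needed for \cref{thm:whitehead}. You correctly note that each $HW^{\leq A}(L_0,L_1;R)$ is bounded below (finitely many generators), but the lower bound will in general go to $-\infty$ as $A\to\infty$, so the colimit $M=HW(L_0,L_1;R)$ need not have a lowest nonvanishing homotopy group. Your proposed fix (``for fixed degree the colimit stabilizes'') does not repair this: the Hurewicz step requires a \emph{global} lower bound, not degreewise finiteness. Nor can you apply Whitehead to each piece separately, because over $k$ the action-filtered pieces $HW^{\leq A}(L_0,L_1;k)$ are typically \emph{nonzero}; the subcritical vanishing is a colimit phenomenon (continuation maps eventually kill all classes), not a piecewise one. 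The paper's geometric argument sidesteps this entirely by making the flow category empty at the level of objects, so there is nothing to bootstrap.
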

    \begin{proof}
        The Floer theoretic proof in the setting of discrete coefficients is essentially geometric, and it carries over to the present situation almost immediately, see e.g.\@ \cite[Section 11.2]{chantraine2017geometric}. Namely, for dimension reasons we without loss of generality assume that $L_0$ and $L_1$ both are disjoint from the core of $X$, and there is a Hamiltonian isotopy that displaces $L_1$ (say) from every compact subset of $X$, see \cite{cieliebak2002subcritical}. Hence we can find a one-parameter family $(H^r,J_t^r)_{r \in \IR_{\geq 0}} \subset \sH\sJ(\overline X,F)$ such that any object of $\sC\sW(L_0,L_1;H^r,J_t^r)$ has action bounded from below by $Ce^{2r}$ for any $r \gg 0$ large enough, see \cite[Lemma 11.2]{chantraine2017geometric}. Consequently for any $A > 0$ there exists $r \gg 0$ such that $\sC\sW^{\leq A}(L_0,L_1;H^{r},J_t^{r})$ is the empty category, whose CJS realization is zero. Finally pick a strictly increasing sequence of positive real numbers $\{A_k\}_{k=1}^\infty$ such that $A_k \to \infty$ as $k \to \infty$, and an accompanying sequence of real numbers $\{r_k\}_{k=1}^\infty$ such that $\sC\sW^{\leq A_k}(L_0,L_1;H^{r_k},J^{r_k}_t)$ is the empty category, for every $k \in \IZ_{\geq 1}$. Therefore we obtain equivalences
        \begin{align*}
            HW(L_0,L_1) &\simeq \hocolim_{k\to\infty} |\sC\sW^{\leq A_k}(L_0,L_1),\fo^{\leq A_k}_{(L_0,L_1)}| \\
            &\hspace{-6mm}\overset{\text{\cref{lem:IndOfData}}}{\simeq}\hocolim_{k\to\infty} |\sC\sW^{\leq A_k}(L_0,L_1;H^{r_k},J_t^{r_k}),\fo^{\leq A_k}_{(L_0,L_1;H^{r_k})}| \simeq 0.
        \end{align*}
    \end{proof}
\section{Open-closed maps and $R$-equivalent Lagrangians}\label{sec:construction_OC}

\begin{notn}\label{asmpt:lagrangian_compact}
\begin{enumerate}
    \item Let $X$ be a stably polarized Liouville sector.
    \item Let $R$ be a commutative ring spectrum.
\end{enumerate}
\end{notn}

\subsection{Morse and Floer theory}

We discuss the relationship between $HW(L,L)$ and the Morse theory of $L$, where $L$ is a compact Lagrangian $R$-brane.

\begin{notn}\label{notn:oc_morse}
\begin{enumerate}
    \item Let $f \colon L \to \IR$ denote a $C^2$-small positive Morse--Smale function on $L$.
    \item Let $U \subset X$ be a neighborhood of $L$ that is identified with a neighborhood of the zero section in $T^*L$, and let $\pi \colon U \to L$ be the associated projection map. We denote by $H_f \colon X \to \IR$ a fixed extension of $f$ which coincides with $\pi^*f$ on $U$.
\end{enumerate}
\end{notn}

Recall the definition of the cohomological Morse--Smale flow category $\sM^{-\bullet}(f,g)$ from \cref{defn:coh_morse-smale_flow_cat}.
\begin{lem}\label{lem:PSS}
There exists a comeager set of compatible almost complex structures $J_t$ on $X$ in the sense of \cref{dfn:compatible_J} such that 
\begin{enumerate}
    \item $(H_f,J_t) \in \sH\sJ(\overline X,F)$ is regular in the sense that \cref{lem:transversality_for_strips} holds,
    \item $f$ is Morse--Smale with respect to the restriction of the associated metric $g \coloneqq \omega(-,J_t-)$ to $L$, and
    \item there is an identification of $\sM^{-\bullet}(f,g)$ and $\sC\sW(L,L;H_f,J_t)$ as $R$-oriented flow categories that is unit preserving.
\end{enumerate}
\end{lem}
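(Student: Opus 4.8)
The plan is to establish \cref{lem:PSS} by combining the standard PSS-type comparison between Floer theory of a $C^2$-small Hamiltonian and Morse theory with the canonical orientation machinery developed in \cref{sec:canonical_orientations}. First I would recall the classical picture: for a $C^2$-small Morse--Smale function $f$ on $L$ extended to $H_f$ as in \cref{notn:oc_morse}, the time-$1$ chords of $H_f$ from $L$ to $L$ are precisely the critical points of $f$ (all of them are constant chords inside the neighborhood $U \cong T^*L$), so $\Ob(\sC\sW(L,L;H_f,J_t)) = \crit(f) = \Ob(\sM^{-\bullet}(f,g))$. The grading matches because the Maslov index of a constant chord at $x$ equals $n - \ind_f(x)$ up to the conventional shift, and the flow category $\sC\sW$ uses $-\mu$ as its grading function (see \cref{dfn:flow_cat_lag}), which is exactly the convention $-\ind_f$ used in \cref{defn:coh_morse-smale_flow_cat} after accounting for the shift by $n$; I would pin down the precise normalization of $f$ (small enough) so this holds on the nose. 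The transversality statements (i) and (ii) are obtained by a standard Aronszajn/Floer--Hofer--Salamon argument: for $C^2$-small $H_f$ and generic compatible $J_t$, every finite-energy Floer strip between two critical points is contained in $U$ and is $C^1$-close to a negative gradient trajectory of $f$, giving a diffeomorphism $\osr(x;y;H_f,J_t) \cong \sM^{-\bullet}(f,g)(x;y) = \overline{W}^s_{-\nabla f}(x) \times_{?} \cdots$ — more precisely the moduli space of broken Floer strips is identified with the moduli space of broken gradient trajectories, compatibly with the composition/concatenation maps. This is the content of the classical theorem (cf.\@ \cite{floer1995transversality}, and in the wrapped setting \cite{abouzaid2011a,abouzaid2012wrapped}); I would cite these and emphasize that the identification respects the $\ang k$-manifold corner structure.

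The new content is item (iii), the compatibility of $R$-orientations. Here the key observation is that the canonical $R$-orientation $\fo_{(L,L;H_f)}$ on $\sC\sW(L,L;H_f,J_t)$ is built (in \cref{dfn:flow_cat_lag} and the proof of item (ii) of \cref{lem:transversality_for_strips} at the end of \cref{sec:gluing_abs_caps}) from the $R$-line bundle $(-\sV(a))_R$ associated to the index bundle of abstract Floer strip caps $\sC^\#(a)$, trivialized using the $R$-orientability statement \cref{lem:CanonicalOrientation}, which in turn rests on \cref{lem:DHashNullity}. On the Morse side, the canonical orientation $\fo^{-\bullet}_{(f,g)}$ is $x \mapsto (-T_x\overline W^u_{-\nabla f}(x))_R \simeq (T_x\overline W^s_{-\nabla f}(x))_R \otimes_R (-T_xM)_R$. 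I would show these two $R$-line bundle data agree by tracing through the gluing map of \cref{lem:cap_gluing}: a constant Floer strip cap at a critical point $x$ of $f$, for the $C^2$-small Hamiltonian, has a Cauchy--Riemann operator whose kernel/cokernel is computed by the Hessian of $f$ at $x$ together with the Lagrangian boundary condition, so its index bundle is canonically isomorphic (via the contraction of $\sC^\#(x)$ onto this distinguished point, which exists because the space of tuples $(L,J,Y,g)$ is contractible by \cref{lma:forgetful_maps_caps}) to $(-\sE|_x) \oplus T_xL = -\nu_x \oplus$ (unstable directions), matching $-\overline W^u(x)$ after the stable identification induced by the $R$-brane structure. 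In other words, the $R$-orientation data on $L$ (the lift $\sG_L^\#$) furnishes exactly the trivialization needed to identify $\fo(x) = (-\sV(x))_R$ with $(-T_x\overline W^u(x))_R$; this is the same linear-algebra input that appears in \cref{lem:hz_ori_data} for $R = H\IZ$, just phrased spectrally. The compatibility with the isomorphisms $\psi_{xy}$ along morphism spaces then follows from the naturality of the gluing maps in \cref{lem:cap_gluing} and the exact sequences in \cref{lem:msm}.

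For the unit-preserving claim, I would recall that the unit in $HW(L,L)$ is the CJS realization of the flow bimodule $\osr_L \colon \sM_\ast \to \sC\sW^{\leq 0}(L,L)$ from \cref{dfn:moduli_floer_unit} and \cref{dfn:unit_hw}, built from moduli of half-disks $\osr_L(a)$, while the Morse-side unit is $\sN_f \colon \sM_\ast \to \sM^{-\bullet}(f,g)$ of \cref{dfn:unit_morse}, built from $\overline W^u_{-\nabla f}(x)$. Under the $C^2$-small degeneration, each half-disk moduli space $\osr_L(x)$ is identified with the compactified unstable manifold $\overline W^u_{-\nabla f}(x)$ (a Floer cap degenerates to a gradient half-trajectory emanating from $x$), and the last gluing map in \cref{lem:cap_gluing}, namely $\osr_L(a) \hookrightarrow \sC^\#(a)$ with the index bundle pulled back, shows that the $R$-orientation on $\osr_L$ matches the identity-type orientation on $\sN_f$ (the identification $(T_x\overline W^u)_R \otimes_R (-T_x\overline W^u)_R \simeq R$ used in \cref{dfn:unit_morse}). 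Hence the CJS realizations of $\osr_L$ and $\sN_f$ agree under the identification of flow categories, so the unit is preserved. I expect the main obstacle to be the careful bookkeeping in item (iii): verifying that the contraction of $\sC^\#(x)$ onto the distinguished constant-cap point carries the canonical trivialization of $\sV_R(x)$ to the one determined by the $R$-brane structure via the Morse-theoretic exact sequences, coherently over all the morphism and composition maps. This is essentially the spectral refinement of the sign verification carried out in the proof of \cref{lem:wfuk_discrete_coeff}, and I would structure the argument to parallel that proof, reducing everything to the compatibility statements already packaged in \cref{lem:cap_gluing}.
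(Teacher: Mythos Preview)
Your proposal is correct and follows essentially the same route as the paper: the $C^2$-small confinement of Floer strips to $U$ and their identification with Morse trajectories (the paper cites \cite{floer1988morse} and \cite[Theorem 2]{floer1989witten}), the identification $(\sV_+(x))_R \simeq (T\overline W^u(x))_R$ at the level of index bundles of strip caps (the paper cites \cite[Proposition 1]{floer1989witten} rather than unpacking the Hessian computation you describe), and the half-disk/unstable-manifold matching for the units. The paper's proof is terser and leans more heavily on direct citations to Floer's original papers, whereas you spell out the mechanisms; in particular your discussion of the grading normalization is slightly muddled (the paper simply asserts the bijection on objects without belaboring the shift), but the substance is the same.
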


\begin{proof}
    Firstly, there is a natural bijection between the object sets of $\sC\sW(L,L;H_f,J_t)$ and $\sM^{-\bullet}(f,g)$. Next, since $f$ is $C^2$-small, we can assume without loss of generality that all solutions of the Floer equation with boundary on $L$ are contained inside $U$ (see e.g.\@ the discussion on p.\@ 526 following the proof of Lemma 3.6 in \cite{floer1988morse}).
    By \cite[Theorem 2]{floer1989witten}, there exists a comeager set of compatible almost complex structures $J_t$ satisfying items (i) and (ii) in the statement (cf.\@ \cite[Theorem 10.1.2]{audin2014morse}); it furthermore yields an identification
    \begin{align}\label{eq:ident_morse_floer}
         \sC\sW(L,L;H_f,J_t)(x;y) = \osr(x;y) &\overset{\cong}{\longrightarrow} \sM^{-\bullet}(f,g)(x;y) \\
         (u_1,\ldots,u_n) &\longmapsto (u_1|_{\IR \times \{0\}},\ldots,u_n|_{\IR \times \{0\}}) \nonumber
    \end{align}
    for any two objects $x,y$. Arguing as in \cite[Proposition 1]{floer1989witten} (cf.\@ \cite[Proposition 10.1.7]{audin2014morse} and \cite[Remark 6.1]{abouzaid2012wrapped}) we obtain isomorphisms
    \[
        (\sV_+(x))_R \simeq (T\overline W^u(x))_R, \quad (\sV_-(x))_R \simeq (T\overline W^s(x))_R
    \]
    for any object $x$, where $\sV_{\pm}(x)$ is the index bundle of the space of abstract Floer strip caps, see \cref{dfn:index_bundle_caps}. This implies $\fo_{(L,L;H_f)}(x) \simeq \fo^{-\bullet}_{(f,g)}(x)$ for any object $x$ and hence that \eqref{eq:ident_morse_floer} is an identification of $R$-oriented flow categories.
    To identify the units, recall the definitions of the unit flow bimodules $\sM_\ast \to \sC\sW(L,L;H_f,J_t)$ and $\sM_\ast \to \sM^{-\bullet}(f,g)$ from \cref{dfn:unit_hw} and \cref{dfn:unit_morse}, respectively. In the former case it is given by the assignment $(p,x) \mapsto \osr_L(x)$ and the isomorphism $(\sV_+(x))_R \overset{\simeq}{\to} (T\osr_L(x))_R$ from \cref{lem:moduli_units}(ii) (which is the identity isomorphism over $\osr_L(x) \hookrightarrow \sC^\#(x)$, see the proof of \cref{lem:cap_gluing}). In the latter case it is given by the assignment $(p,x) \mapsto \overline W^u(x)$ and the identity isomorphism. Pick a Floer datum on $S = \ID^2 \smallsetminus \{-1\} \subset \IC$ consisting of a Hamiltonian $H_S \colon S \to \sH(\overline X,F)$ such that $H_{S}(z)$ extends $f$ in the sense of \cref{notn:oc_morse}(ii), for every $z\in S$. Recall from \cref{notn:morse_flowline_compact} (and in particular \cite[Theorem 2.3]{wehrheim2012smooth}) that $\overline W^u(x)$ denotes the space of half infinite gradient flow lines of $-\nabla f$, compactified by broken flow lines. Let $\gamma \subset \partial S$ denote the half-infinite arc between $-1$ and $-i$. Hence, by a similar argument for the identification \eqref{eq:ident_morse_floer} we obtain an identification via
    \begin{align*}
        \osr_L(x) &\overset{\cong}{\longrightarrow} \overline W^u(x) \\
        (u_1,\ldots,u_n,u_\circ) &\longmapsto (u_1|_{\IR \times \{0\}}, \ldots, u_n|_{\IR \times \{0\}}, u_\circ|_\gamma),
    \end{align*}
    where $u_i \in \osr(y_{i-1},y_i)$ and $u_\circ \in \osr_L(y_{n+1})$ where $x \coloneqq y_0$, which finishes the proof.
\end{proof}

\begin{rem}
    Note that in the case $R = Hk$, \cref{lem:PSS} yields $HW^{-\bullet}(L,L;k) \cong H^{-\bullet}(L;k)$ by working with action-filtered flow categories and after passing to the homology of their Morse complexes (equivalently, homotopy groups of their CJS realizations) see \cref{rmk:morse_cohomology_flow} and \cref{rem:wfuk_discrete_coeff_k}. This is consistent with the classical PSS isomorphism, see e.g.\@ \cite[(12.14)]{seidel2008fukaya}.
\end{rem}

\subsection{The open-closed map}\label{sec:the_oc_map}

Let $S \coloneqq \ID^2 \smallsetminus \{1\}$ be the unit disk in $\IC$, equipped with its standard orientation with a positive puncture at $1$. We equip $S$ with a choice of Floer datum with Hamiltonian $H_S \colon S \to \sH(\overline X,F)$ and almost complex structure $J_S \colon S \to \sJ(\overline X,F)$, see \cref{dfn:Floer_datum}.

\begin{defn}\label{def:oc_pre}
Let $a \in \sX(L,L)$.  Define $\widetilde{\mathcal{OC}}(a)$ to be the Gromov compactification of the set of maps
\[  \left\{ u \colon S \to \overline{X}  \, \left| \, \begin{matrix} u \mbox{ solves Floer's equation} \\ u(z)\in \psi^{\rho_S(z)}L \; \; z\in \partial S \\ \lim_{s \to \infty} u(\varepsilon^+(s,t)) = a(t) \end{matrix} \right. \right\}. \]
Endow $\widetilde{\mathcal{OC}}(a)$ with the Gromov topology.  With this topology, it is a compact metric space.
\end{defn}

\begin{lem}\label{lem:pre_oc_moduli_space}
There exists a comeager set of $H_S \colon S \to \sH(\overline X,F)$ and $J_S \colon S \to \sJ(\overline X,F)$ such that the moduli space $\widetilde{\mathcal{OC}}(a)$ is a smooth manifold with corners of dimension $-\mu^{\mathrm{Maslov}}(a)+n$.  Moreover, the following conditions are satisfied:
\begin{enumerate}
    \item For $a,b \in \sX(L,L)$, there are maps
    \[ \osr(a;b) \times \widetilde{\mathcal{OC}}(b) \longrightarrow \widetilde{\mathcal{OC}}(a) \]
    that are diffeomorphisms onto faces of $\widetilde{\mathcal{OC}}(a)$ such that defining
    \[ \partial_i \widetilde{\mathcal{OC}}(a) \coloneqq \bigsqcup_{\mu^{\mathrm{Maslov}}(b)-\mu^{\mathrm{Maslov}}(a)= i} \osr(a;b) \times \widetilde{\mathcal{OC}}(b),\]
    endows $\widetilde{\mathcal{OC}}(a)$ with the structure of a $\ang{-\mu^{\mathrm{Maslov}}+n}$-manifold (see \cref{def:k_mfd}).
    \item Let $\check{\fo}(a)$ be the rank one free $R$-module as in \cref{lem:transversality_for_strips}. Fixing an $R$-orientation of $L$ determines an isomorphism
    \[ R \overset{\simeq}{\longrightarrow} \check{\fo}(a) \otimes_R I_R^{\widetilde{\mathcal{OC}}}(a)\]
    that is compatible with the isomorphisms from \cref{lem:transversality_for_strips} and the above maps in the sense that diagrams similar to \eqref{eq:bimod_ext_ori1} and \eqref{eq:bimod_ext_ori2} commute.
\end{enumerate}
\end{lem}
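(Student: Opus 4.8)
The plan is to reduce the statement to the general transversality and orientation package already in place for moduli of $J$-holomorphic disks. Item (i) is proved exactly as in \cref{lem:transversality_for_strips,lem:moduli_units}: the moduli space $\widetilde{\sOC}(a)$ is cut out by a Fredholm problem whose linearization at a solution $u$ has index equal to $-\mu^{\mathrm{Maslov}}(a)+n$ (the $+n$ coming from the boundary marked point being a free interior modulus on $\partial S$, as $S$ has no negative ends constraining the output side of a Lagrangian), so generic choice of a Floer datum $(H_S,J_S)$ achieves regularity. The codimension-one boundary of $\widetilde{\sOC}(a)$ is governed by Gromov compactness: the only way a sequence of solutions can degenerate is by breaking off a Floer strip at the positive puncture, which produces exactly the stratum $\osr(a;b)\times\widetilde{\sOC}(b)$ for $\mu^{\mathrm{Maslov}}(b)-\mu^{\mathrm{Maslov}}(a)=i$. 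The $\langle k\rangle$-manifold structure and the gluing maps being diffeomorphisms onto faces then follow from the same neat-embedding/corner-structure results of Large and Fukaya--Oh--Ohta--Ono cited in the proof of \cref{lem:transversality_for_strips}.

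For item (ii), the key input is again \cref{sec:canonical_orientations}. One first enlarges the list of gluing maps in \cref{lem:cap_gluing} with one more:
\[
\sC^\#(a) \longrightarrow \widetilde{\sOC}(a)^{??}
\]
— more precisely, following the pattern of the last case ($\osr_L(a)$) in \cref{lem:cap_gluing}, there is a natural map $\widetilde{\sOC}(a)\hookrightarrow\sC^\#(a)$ (the Cauchy--Riemann operator at an element of $\widetilde{\sOC}(a)$, together with its domain data, gives an abstract Floer strip cap at $a$), and the index bundle $I^{\widetilde{\sOC}}(a)\to\widetilde{\sOC}(a)$ is the pullback along this map of the bundle $\sV(a)$. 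The difference from the $\osr_L(a)$ case is that now the disk $S=\ID^2\smallsetminus\{1\}$ is closed-up into a disk with one boundary puncture and \emph{no} Lagrangian label on the "$\partial_2$" part of the boundary; consequently the relevant Cauchy--Riemann index over $\sD^\#$ needs an extra $R$-orientation datum, which is precisely the choice of $R$-orientation of $L$ (i.e.\ of the fundamental class / Thom class of $TL$). This is analogous to how in \cref{lem:morse_pd} and \cref{lma:pd_thom_class} an $R$-orientation of the underlying manifold is what trivializes the relevant $R$-line bundle. Thus, fixing an $R$-orientation of $L$ produces a trivialization
\[
R \overset{\simeq}{\longrightarrow} \check{\fo}(a)\otimes_R I_R^{\widetilde{\sOC}}(a),
\]
and the compatibility diagrams (of the shape \eqref{eq:bimod_ext_ori1}, \eqref{eq:bimod_ext_ori2}) commute because the gluing map for $\widetilde{\sOC}$ is compatible, in the sense of \cref{lem:cap_gluing}, with the gluing maps for $\osr(a;b)$ and with the chosen trivializations of $\sV_R(a)$ from the end of \cref{sec:gluing_abs_caps}.

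Concretely, the steps I would carry out, in order, are: (1) set up the Fredholm problem for $\widetilde{\sOC}(a)$ and compute its index, deducing the dimension formula; (2) invoke the standard transversality results (Floer, Seidel, Large) to get regularity for generic $(H_S,J_S)$ and the smooth-with-corners structure compatible with the strip-breaking maps, establishing item (i); (3) extend \cref{lem:cap_gluing} with the extra gluing map $\widetilde{\sOC}(a)\to\sC^\#(a)$ covered by the index-bundle identification $I^{\widetilde{\sOC}}(a)\simeq\sV(a)|_{\widetilde{\sOC}(a)}$, noting $\sV(a)$ is $R$-orientable by \cref{lem:CanonicalOrientation}; (4) observe that the "missing" boundary label on $S$ means the canonical $R$-orientation of $\sV(a)$ combined with an $R$-orientation of $L$ yields the trivialization $R\simeq\check{\fo}(a)\otimes_R I_R^{\widetilde{\sOC}}(a)$; (5) check the compatibility diagrams by the same bookkeeping as in the proof of item (ii) in \cref{lem:transversality_for_strips,lem:moduli_units} at the end of \cref{sec:gluing_abs_caps}. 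I expect the main obstacle to be step (4): making precise how an $R$-orientation of $L$ (as opposed to an $R$-brane structure, which only orients $TL-\sE|_L$) enters, i.e.\ identifying which line is being trivialized and why it is exactly $L^{-TL}$-type data rather than the index line of an abstract cap. Once that identification is pinned down — using the same relation between $\sD^\#$ and $\BGL_1(R)$ nullhomotopies as in \cref{lem:DHashNullity}, now with the boundary condition on only part of $\partial S$ — the rest is routine and parallels the earlier moduli-space lemmas verbatim.
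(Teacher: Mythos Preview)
Your treatment of item (i) is fine and matches the paper. For item (ii), your proposal has the right shape but contains a direction mix-up and leaves unresolved exactly the point you flag as the main obstacle. The disk $S = \ID^2 \smallsetminus \{1\}$ carries a \emph{positive} puncture, so an element of $\widetilde{\sO\sC}(a)$ is naturally a \emph{negative} abstract Floer strip cap at $a$, not a positive one; the map you want is $\widetilde{\sO\sC}(a) \to \sC_-^\#(a)$, and the index bundle is the pullback of $\sV_-(a)$, not of $\sV(a)=\sV_+(a)$. Since $\check{\fo}(a) = (\sV_+(a))_R$, pulling back $\sV_-(a)$ does not directly yield the claimed trivialization, and your step (4) remains vague about what is actually being trivialized.

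The paper resolves your ``main obstacle'' in one step: rather than treating $\widetilde{\sO\sC}(a)$ as a cap on its own, it glues a positive cap to it to form a full disk,
\[
\sC_+^\#(a) \times \widetilde{\sO\sC}(a) \longrightarrow \sC_+^\#(a) \times \sC_-^\#(a) \longrightarrow \sD^\#,
\]
with $\sD^\#$ based at $a(0)$. The associated $R$-line bundle of the index bundle over $\sD^\#$ is canonically identified with the constant bundle with fiber $(T_{a(0)}L)_R$ (the index at the constant disk), so gluing gives
\[
(\sV_+(a))_R \otimes_R I_R^{\widetilde{\sO\sC}}(a) \simeq (T_{a(0)}L)_R.
\]
This is precisely where the $R$-orientation of $L$ enters: it trivializes $(T_{a(0)}L)_R$, producing $R \simeq \check{\fo}(a) \otimes_R I_R^{\widetilde{\sO\sC}}(a)$. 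There is no ``missing boundary label''; the entire boundary of $S$ lies on $L$, and the relevant line is $(TL)_R$ rather than an abstract-cap index line. Compatibility with strip-breaking then follows from associativity of gluing, as in your step (5).
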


\begin{proof}
See \cref{lem:transversality_for_strips} and \cite[Section 5.3]{abouzaid2010a}.  The argument for smoothness follows from a similar discussion as in the proof of \cref{lem:transversality_for_strips}(i).

Turning to $R$-orientations, as in \cref{lem:cap_gluing}, there are analogous gluing maps
\[ \sC_+^\#(a) \times \widetilde{\mathcal{OC}}(a) \longrightarrow \sC_+^\#(a) \times \sC^\#_-(a) \longrightarrow \sD^\#,\]
where the data of $\sD^\#$ is based at $a(0)$. The gluing maps are covered by maps of index bundles.  The $R$-line bundle associated to the index bundle of $\sD^\#$ is canonically trivializable and is given by the associated $R$-line bundle over the constant element. It follows that we have an isomorphism of $R$-line bundles
\[ (\sV_+(a))_R \otimes_R I^{\widetilde{\sO\sC}}_R(a) \simeq (T_{a(0)} L)_R. \]
Analogously, we have an isomorphism of $R$-line bundles
\[ (\sV_+(a))_R \otimes_R I(a;b) \otimes_R I^{\widetilde{\sO\sC}}_R(b) \simeq (T_{b(0)} L)_R. \]
Using the $R$-orientability of $L$, we obtain the desired isomorphism in item (ii).  Similarly the desired compatibilities follow.
\end{proof}

\begin{rem}\label{rem:oc_stable}
    By \cref{lem:pre_oc_moduli_space} it follows that the assignment $(a,p) \mapsto \widetilde{\sO\sC}(a)$ defines an $R$-oriented flow bimodule $\sC\sW(L,L;H_f,J_t) \to \sM_\ast$. Similar to the identification between the $R$-oriented flow bimodules $\sM_\ast \to \sC\sW(L,L;H_f,J_t)$ and $\sM_\ast \to \sM^{-\bullet}(f,g)$ in the proof of \cref{lem:PSS}, there is an identification between $\widetilde{\mathcal{OC}}$ and the assignment $\sM^{-\bullet}(f,g) \to \sM_\ast$, $(x,p) \mapsto \overline W^s(x)$ as $R$-oriented flow bimodules. The $R$-orientation on $\widetilde{\sO\sC}$ induced by \cref{lem:pre_oc_moduli_space}(ii) corresponds under this identification with the isomorphism $(T_x\overline W^u(x))_R \otimes_R (T_x\overline W^s(x))_R \simeq (T_xL)_R \simeq R$ induced by the given $R$-orientation on $L$.
\end{rem}

We shall use $\widetilde{\mathcal{OC}}$ to define an $R$-oriented flow bimodule from $\sC\sW(L,L)$ to the (homological) Morse--Smale flow category associated to the Liouville sector $X$.

\begin{notn}
    \begin{enumerate}
        \item We extend $f \colon L \to \IR$ to a Morse function $F \colon X \to \IR$ as follows:
        Using a bump function, add a positive definite form in the normal direction of $L$ to obtain a function on $X$, that vanishes outside of a neighborhood of $L$.  Then perturb this function away from the neighborhood of $L$ to obtain a Morse function $F \colon X \to \IR$ whose gradient points outwards along $\partial_\infty X$ and points outwards along the horizontal boundary of $X$.  Notice that $\crit f \subset \crit F$ and $W^u_{-\nabla F}(x) = W^u_{-\nabla f} (x)$ for $x \in \crit f$.
        \item Let $J_t \colon [0,1] \to \sJ(\overline X,F)$ be a time-dependent compatible almost complex structure on $X$ and $g = \omega(-,J_t-)$ its associated Riemannian metric so that the conclusion of \cref{lem:PSS} holds and so that $F$ is Morse--Smale with respect to $g$.
    \end{enumerate}
\end{notn}

Consider the evaluation map
\begin{align*}
    \ev \colon \widetilde{\mathcal{OC}}(x) & \longrightarrow X, \\
    u &\longmapsto u(0)
\end{align*}
and note that it is well-defined since we do not quotient by the automorphisms of the domain in the definition of $\widetilde{\mathcal{OC}}(a)$ in \cref{def:oc_pre}.
\begin{defn}[Open-closed flow bimodule]\label{dfn:oc}
Let $L$ be $R$-orientable. The \emph{open-closed flow bimodule} is the $R$-oriented degree $n$ flow bimodule $\mathcal{OC} \colon \sC\sW(L,L;R) \longrightarrow \mathcal M(F,g)$, defined by 
\begin{equation}\label{eq:open-closed_map}
\mathcal{OC}(a;x) \coloneqq \widetilde{\mathcal{OC}}(a) \times_{\ev} \overline{W}^s_{-\nabla F}(x).
\end{equation}
This is a smooth manifold with corners of dimension 
\[\dim \mathcal{OC}(a;x) = \dim \widetilde{\mathcal{OC}}(a) - \codim_X \left(\overline{W}^s_{-\nabla F}(x)\right) = n + \mu(a) - \mu(x).\] 
The $R$-orientation is given by the intersection orientation:
\begin{align*}
    \check{\fo}(a) \otimes_R I_R^{\sO\sC}(a;x) \otimes_R \fo_{(F,g)}(x) \simeq \check{\fo}(a)\otimes_R I_R^{\widetilde{\sO\sC}}(a) \simeq (TL)_R \simeq R,
\end{align*}
where $\check{\fo}(a)$ is the rank one free $R$-module in \cref{lem:pre_oc_moduli_space}(ii).
\end{defn}
\begin{rem}\label{rem:oc_L}
    We can also define an open-closed flow bimodule $\sD \colon \sC\sW(L,L;R) \to \sM(f,g)$ similar to $\sO\sC$ in \cref{dfn:oc}, but using the Morse function $f\colon L \to \IR$ instead of $F$ and evaluation at a boundary point of the domain. It is clear by definition that $|\sO\sC|$ is homotopic to $ |j| \circ |\sD|$, where $j \colon \sM(f,g) \to \sM(F,g)$ is the Morse--Smale flow bimodule whose CJS realization $|j| \colon \varSigma^\infty_+ L \wedge R \to \varSigma^\infty_+ X \wedge R$ is the inclusion.
\end{rem}

\begin{defn}[$R$-fundamental class]
    \begin{enumerate}
        \item Given an $R$-oriented closed manifold $Z$ of dimension $d$, its \emph{$R$-fundamental class} is defined to be the class $[Z]_R \in H_d(Z;R) = [\IS,\varSigma^{\infty-d}_+Z \wedge R] = [R,\varSigma^{\infty-d}_+Z \wedge R]_{R}$ represented by the composition
        \[ R \longrightarrow Z^{-TZ} \wedge R \longrightarrow \varSigma^{\infty-d}_+Z \wedge R, \]
        where the first arrow is the unit of $Z^{-TZ} \wedge R$ and the second arrow is given by the Thom isomorphism and is determined by the $R$-orientation of $Z$ (see \cref{thm:ori_thom}(ii)).
        \item Let $X$ be a manifold and $Z \subset X$ be a closed $R$-oriented submanifold. The \emph{$R$-homology class represented by $Z$} is defined to be the image of $[Z]_R$ under the map $H_d(Z;R) \to H_d(X;R)$ induced by the inclusion $Z \hookrightarrow X$. Abusing the notation we continue to denote this class by $[Z]_R \in H_d(X;R)$. 
    \end{enumerate}
\end{defn}

Pick a strictly increasing sequence $\{A_k\}_{k=1}^\infty$ diverging to $\infty$. Similar to \cref{dfn:wrapped_floer_cohomology}, we also obtain an $R$-oriented degree $n$ flow bimodule
\[
\sO\sC^{k} \colon \sC\sW^{\leq A_k}(L,L;R) \longrightarrow \sM(F,g)
\]
whose CJS realization is a map $|\sO\sC^{k}| \colon HW^{\leq A_k}(L,L;R) \to \varSigma^{\infty-n}_+ X \wedge R$. It induces a map after passing to the homotopy colimit as $k\to \infty$ that we denote by
\[
|\sO\sC| \colon HW(L,L;R) \longrightarrow \varSigma^{\infty-n}_+ X \wedge R.
\]

\begin{lem}\label{lem:oc_fc}
The homotopy class of the composition
\[ R \overset{\eta_L}{\longrightarrow} HW(L,L;R) \overset{|\mathcal {OC}|}{\longrightarrow} \varSigma^{\infty-n}_+ X \wedge R\]
coincides with $[L]_R \in H_n(X;R)$.
\end{lem}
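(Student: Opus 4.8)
The strategy is to reduce the statement to an identification of $|\sO\sC|$ composed with the unit $\eta_L$ with the Pontryagin--Thom description of the fundamental class $[L]_R$, using the Morse-theoretic model established in \cref{lem:PSS}. By \cref{rem:finite_mor_space_for_compact}, since $L$ is compact, the grading function on $\sC\sW(L,L)$ is bounded below, so by \cref{rem:Novikov}(ii) we may work with the unfiltered flow category directly and avoid the homotopy colimit over action; thus it suffices to show that $|\sO\sC| \circ \eta_L \simeq [L]_R$ where now all the relevant CJS realizations are taken without action truncation. First I would invoke \cref{lem:PSS} to replace $\sC\sW(L,L;H_f,J_t)$ by the cohomological Morse--Smale flow category $\sM^{-\bullet}(f,g)$ as $R$-oriented flow categories, in a unit-preserving way. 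Under this identification, by \cref{rem:oc_stable}, the flow bimodule $\widetilde{\sO\sC}$ corresponds to the assignment $(x,p) \mapsto \overline W^s_{-\nabla f}(x)$ with the $R$-orientation induced by the chosen $R$-orientation of $L$, and the unit $\eta_L$ corresponds to the unit $\sN_f$ of \cref{dfn:unit_morse} given by $(p,x) \mapsto \overline W^u_{-\nabla f}(x)$.

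Next I would pass from the $L$-side open-closed bimodule to the $X$-side one. By \cref{rem:oc_L}, $|\sO\sC| \simeq |j| \circ |\sD|$, where $\sD \colon \sC\sW(L,L;R) \to \sM(f,g)$ is the open-closed bimodule built using the Morse function $f$ on $L$ itself and evaluation at a boundary point, and $j \colon \sM(f,g) \to \sM(F,g)$ is the Morse--Smale bimodule realizing the inclusion $\varSigma^\infty_+ L \wedge R \to \varSigma^\infty_+ X \wedge R$ on CJS realizations. So the claim reduces to computing the homotopy class of $R \xrightarrow{\eta_L} HW(L,L;R) \xrightarrow{|\sD|} \varSigma^{\infty-n}_+ L \wedge R$ and checking it equals $[L]_R \in H_n(L;R)$ (then pushing forward to $H_n(X;R)$ via $|j|$ is automatic). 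Under the Morse identification, $|\sD| \circ \eta_L$ is the CJS realization of the composition of $R$-oriented flow bimodules $\sN_f \colon \sM_\ast \to \sM^{-\bullet}(f,g)$ followed by the open-closed bimodule $\sM^{-\bullet}(f,g) \to \sM(f,g)$ given by $(x,y) \mapsto \overline W^u_{-\nabla f}(x) \times_{\ev} \overline W^s_{-\nabla f}(y)$. Here I expect to recognize this composite bimodule $\sM_\ast \to \sM(f,g)$ as precisely the flow bimodule $\sI$ (or a close analog thereof) from \cref{lem:morse_pd}, applied with $\varphi = \id_L$: its CJS realization, by \cref{lma:pd_thom_class}, is the composition $L^{-TL} \wedge R \to \varSigma^{\infty-n}_+ L \wedge R$ given by the Thom isomorphism determined by the $R$-orientation of $L$, precomposed with the unit $R \to L^{-TL} \wedge R$. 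That composite is exactly the definition of $[L]_R$.

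The main obstacle I anticipate is bookkeeping of $R$-orientations: I must verify that the $R$-orientation on the composite bimodule $\sM_\ast \to \sM(f,g)$ coming from gluing $\sN_f$'s canonical $R$-orientation (the identity isomorphism $(T_x\overline W^u(x))_R \otimes_R (-T_x\overline W^u(x))_R \simeq R$) with the open-closed $R$-orientation (the intersection orientation $(\sV_+(x))_R \otimes_R I^{\widetilde{\sO\sC}}_R(x) \simeq (T_{x}L)_R$ from \cref{lem:pre_oc_moduli_space}(ii), together with $(\sV_+(x))_R \simeq (T_x\overline W^u(x))_R$) agrees, under the isomorphisms of \cref{rem:oc_stable}, with the $R$-orientation on $\sI$ of \cref{lem:morse_pd}. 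This amounts to a diagram chase through the canonical isomorphisms of $R$-line bundles built from the tangent/normal exact sequences of the unstable and stable manifolds of $f$ and the chosen $R$-orientation on $L$; the key nontrivial input is that the composite identification $(T_x\overline W^u(x))_R \otimes_R (T_x\overline W^s(x))_R \simeq (T_xL)_R \simeq R$ used on both sides is literally the same, which is guaranteed by \cref{rem:oc_stable}. Once the orientations are matched, \cref{lem:InducedProductMaps}, \cref{rem:induced_map}, \cref{prop:induced_id}, and \cref{lma:pd_thom_class} together identify the underlying map of spectra with the Pontryagin--Thom collapse for an embedding $L \hookrightarrow \IR^N$, desuspended and smashed with $R$, which is the standard geometric representative of $[L]_R$. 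Finally, applying $|j|$ and the functoriality of fundamental classes under the inclusion $L \hookrightarrow X$ completes the proof.
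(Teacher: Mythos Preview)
Your proposal is correct and follows essentially the same approach as the paper: identify $\sC\sW(L,L)$ and its unit with $\sM^{-\bullet}(f,g)$ and $\sN_f$ via \cref{lem:PSS}, identify the open-closed bimodule with $\sI$ from \cref{lem:morse_pd}, and conclude via \cref{lma:coh_morse_flow_thom_space} and \cref{lma:pd_thom_class}. Two minor points: the Morse model of the open-closed bimodule is $(x,y)\mapsto \overline W^s_{-\nabla f}(x)\times_{\ev}\overline W^s_{-\nabla f}(y)$ (both stable manifolds, not $\overline W^u$ in the first slot), and it is this bimodule $\sM^{-\bullet}(f,g)\to\sM(f,g)$ that equals $\sI$, not the composite $\sM_\ast\to\sM(f,g)$; the paper simply applies $|\sN_f|$ and $|\sI|$ separately rather than factoring through $\sD$ and $j$, but this is only an organizational difference.
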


\begin{proof}
    By \cref{lem:PSS} there is an identification between the flow bimodules 
    
    \begin{center}
        \begin{minipage}{0.45\textwidth}
            \begin{align*}
                \osr_L \colon \sM_\ast &\longrightarrow \sC\sW(L,L;H_f,J_t) \\
                (p,a) &\longmapsto \osr_L(a)
            \end{align*}
        \end{minipage}
        and
        \begin{minipage}{0.45\textwidth}
            \begin{align*}
                \sN_f \colon \sM_\ast &\longrightarrow \sM^{-\bullet}(f,g) \\
                (p,x) &\longmapsto \overline W^u(x).
            \end{align*}
        \end{minipage}
    \end{center}
    
    As pointed out in \cref{rem:oc_stable}, there is also an identification between $\widetilde{\mathcal{OC}}$ and the $R$-oriented flow bimodule $\sM^{-\bullet}(f,g) \to \sM_\ast$, $(x,p) \mapsto \overline W^s(x)$ using the same idea as in the proof of \cref{lem:PSS}. It follows that there is an identification between the $R$-oriented flow bimodules $\mathcal{OC}$ and $\sI$ as defined in \cref{lem:morse_pd} (with $\varphi$ being the inclusion $L \hookrightarrow X$). We conclude that there is an identification between the following two compositions of $R$-oriented flow bimodules:
    \[
        \begin{tikzcd}[column sep=scriptsize, row sep=scriptsize]
            \sM_\ast \rar{\osr_L} & \sC\sW(L,L;H_f,J_t) \rar{\mathcal{OC}} & \sM(F,g)
        \end{tikzcd}
    \]
    and
    \[
        \begin{tikzcd}[column sep=scriptsize, row sep=scriptsize]
            \sM_\ast \rar{\sN_f} & \sM^{-\bullet}(f,g) \rar{\sI} & \sM(F,g)
        \end{tikzcd}.
    \]
    We pass to the action filtration on $\sC\sW(L,L;H_f;J_t)$, take CJS realizations, and consider the induced maps on the homotopy colimits over the action. For the latter composition we have
    \begin{enumerate}
        \item $|\sM^{-\bullet}(f,g),\fo^{-\bullet}_{(f,g)}| \simeq L^{-TL} \wedge R$ by \cref{lma:coh_morse_flow_thom_space}(i),
        \item $\eta_f \colon R \to L^{-TL} \wedge R$ is the unit map by \cref{lma:coh_morse_flow_thom_space}(ii), and
        \item $|\sI| \colon L^{-TL} \wedge R \to \varSigma^{\infty-n}_+L \wedge R \to \varSigma^{\infty-n}_+X \wedge R$ is the composition of the Thom isomorphism determined by the $R$-orientation on $L$, followed by the map of $R$-modules induced by the inclusion $L \hookrightarrow X$, see \cref{lma:pd_thom_class}.
    \end{enumerate}
\end{proof}

\begin{lem}\label{lem:oc_unital_wrt_mu2}
    Let $L, K \in \Ob(\mathcal W(X;R))$ be Lagrangian $R$-branes such that $L \cong K$ in $\sW(X;R)$. Let 
    \[ \osr^{\mathrm{sw}} \colon \sC\sW(L,K), \sC\sW(K,L) \longrightarrow \sC\sW(K,K)\]
    denote the $R$-oriented flow bimodule defined by $\osr^{\mathrm{sw}}(a,b;c) \coloneqq \osr(b,a;c)$, where $\osr$ is as in \cref{defn:triangle_product}.  Given an $R$-orientation on $L$, there exists an $R$-oriented flow bordism (see \cref{def:multi_bord}) between the two compositions in the following diagram:
    \[
        \begin{tikzcd}[row sep=scriptsize, column sep=scriptsize]
            \sC\sW(L,K), \sC\sW(K,L) \rar{\osr} \dar{\osr^{\mathrm{sw}}} & \sC\sW(L,L) \dar{\mathcal{OC}_L} \\
            \sC\sW(K,K) \rar{\mathcal{OC}_K^{\tw}} & \sM(F,g)
        \end{tikzcd},
    \]
    where $\OC_K^{tw}$ is an $R$-oriented flow bimodule which coincides with $\OC_K$ when forgetting the $R$-orientation.
\end{lem}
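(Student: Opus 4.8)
The plan is to construct the desired flow bordism by gluing together the geometric moduli spaces of $J$-holomorphic disks that realize the two compositions, thinking of the bordism as parametrized by a single real gluing parameter. Concretely, the composition $\OC_L \circ \osr$ is computed by moduli spaces of disks with two positive inputs (asymptotic to chords of $L$-$K$ and $K$-$L$ respectively) and one interior marked point evaluated against a stable manifold in $X$ --- obtained by first running Floer's triangle equation and then applying the open-closed equation. Similarly $\OC_K^{\tw} \circ \osr^{\mathrm{sw}}$ is computed by the analogous moduli spaces with the order of the two inputs swapped, followed by the open-closed equation for $K$. I would construct a one-parameter family of Floer data on a varying domain $S^\rho$ (a disk with two positive boundary punctures and one interior marked point, together with a choice of where the boundary arc with $L$-boundary condition vs. $K$-boundary condition sits relative to the marked point), with $\rho \in [-\infty,\infty]$. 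At $\rho = \infty$ the domain degenerates to the nodal configuration realizing $\OC_L \circ \osr$, and at $\rho = -\infty$ it degenerates to the one realizing $\OC_K^{\tw} \circ \osr^{\mathrm{sw}}$. This is the exact analogue of the ``connect-sum of domains'' construction used in \cref{lem:moduli_unit_bordism} and in the proof of \cref{lem:oc_unital_wrt_mu2}'s neighbors.

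\textbf{Key steps, in order.} First, I would set up the parametrized moduli space $\overline{\mathcal B}^{\mathrm{sw}}(a,b;x)$ over $\rho \in [-\infty,\infty]$ with asymptotics $a \in \sX(L,K)$, $b \in \sX(K,L)$, interior evaluation against $\overline W^s_{-\nabla F}(x)$, and establish (via a transversality argument identical to those invoked for \cref{lem:moduli_product,lem:moduli_associativity,lem:moduli_units}) that for generic Floer data it is a smooth manifold with corners of dimension $n + \mu(a) + \mu(b) - \mu(x) + 1$, with $\ang{\cdot}$-manifold structure whose codimension-one faces are: (i) the two endpoint moduli spaces at $\rho = \pm\infty$ (giving the two compositions), (ii) breaking off a strip at $a$, $b$, or $x$, and (iii) the usual Morse breaking along $\sM(F,g)$. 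Second, I would identify the index bundle of $\overline{\mathcal B}^{\mathrm{sw}}$ with the relevant virtual bundle and invoke the abstract-cap machinery of \cref{sec:canonical_orientations} --- specifically \cref{lem:CanonicalOrientation} and the gluing maps of \cref{lem:cap_gluing}, extended to this parametrized domain --- to produce a coherent $R$-orientation $\mathfrak n$ on $\overline{\mathcal B}^{\mathrm{sw}}$ restricting to $\fm_{\OC_L \circ \osr}$ and $\fm_{\OC_K^{\tw} \circ \osr^{\mathrm{sw}}}$ at the two ends. The key input here is that the ``twist'' in $\OC_K^{\tw}$ is precisely the book-keeping needed for the $R$-orientations at the two ends to be compatible with a single global orientation on the bordism: concretely, $\OC_K^{\tw}$ is defined by pulling back the intersection $R$-orientation of $\OC_K$ through the isomorphism $HW(L,K;R) \wedge_R HW(K,L;R) \to HW(K,K;R)$ supplied by the chosen isomorphism $L \cong K$ and the $R$-orientation on $L$, which is exactly what the $\rho$-family interpolates. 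Third, I would verify the action-filtration compatibility: passing to $\sC\sW^{\leq A_k}$-truncations, all of the above moduli spaces respect the action filtration by Stokes' theorem (as in \cref{defn:triangle_product}), so the bordism descends to each filtered piece and one concludes by the discussion preceding \cref{lem:oc_fc}.

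\textbf{Main obstacle.} I expect the principal difficulty to be the bookkeeping of $R$-orientations at the $\rho = -\infty$ end --- i.e., pinning down exactly what ``$\OC_K^{\tw}$'' must be so that the boundary orientation of the bordism matches. This amounts to chasing the canonical isomorphisms of $R$-line bundles through the gluing $\sD_+^\#(a) \times \sD_+^\#(b) \times \osr(b,a;c) \to \sD^\#$ at one end versus $\sD_+^\#(a) \times \sD_+^\#(b) \times \osr(a,b;c) \to \sD^\#$ at the other, and recording the discrepancy (a sign, or more precisely an automorphism of a rank-one free $R$-module) that arises from permuting the two strip-cap factors and from the choice of isomorphism $L \cong K$ entering the definition of $\OC_K^{\tw}$; this discrepancy is absorbed into the definition of the twisted $R$-orientation on $\OC_K$. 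Once this identification of $\OC_K^{\tw}$ is made correctly, the geometric gluing and the existence of the coherent orientation follow from the already-established \cref{lem:cap_gluing} exactly as in the other moduli spaces of this section, so no genuinely new analysis is required. A secondary, purely technical point is checking that the interior evaluation map $\ev$ remains transverse to $\overline W^s_{-\nabla F}(x)$ throughout the $\rho$-family, which is handled by the same Sard--Smale argument used for the unparametrized open-closed moduli space in \cref{lem:pre_oc_moduli_space}.
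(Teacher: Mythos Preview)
Your geometric construction is essentially the paper's: a one-parameter family of disks with two positive punctures and a moving interior marked point, degenerating at the two ends to the nodal configurations realizing $\OC_L\circ\osr$ and $\OC_K\circ\osr^{\mathrm{sw}}$, then cut down by $\overline W^s_{-\nabla F}(x)$. The transversality and $\langle\cdot\rangle$-manifold structure are indeed routine.

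However, your treatment of the $R$-orientation at the $\rho=-\infty$ end has a genuine gap. You propose to define $\OC_K^{\tw}$ by ``pulling back the intersection $R$-orientation of $\OC_K$'' through the isomorphism $L\cong K$. But there is no intersection $R$-orientation on $\OC_K$ to begin with: that construction (\cref{dfn:oc}) requires an $R$-orientation on $K$, and only $L$ is assumed $R$-oriented. Moreover, the isomorphism $L\cong K$ lives in the homotopy category $\sW(X;R)$, not at the level of flow categories, so it cannot be used directly to transport $R$-orientations of flow bimodules.

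The paper's route is more indirect and this is where the hypothesis $L\cong K$ actually enters. The $R$-orientation on the bordism $\sB$ is built using only $(T_{a(0)}L)_R$ and the $R$-orientability of $L$; restricting it to the $\rho=-\infty$ stratum yields an $R$-orientation on the \emph{composition} $\OC_K^{\tw}\circ\osr^{\mathrm{sw}}$, not on $\OC_K^{\tw}$ itself. To extract an $R$-orientation on $\OC_K^{\tw}$, the paper invokes the two-out-of-three lemma (\cref{lem:comp_two_out_of_three}), whose hypothesis is that $\osr^{\mathrm{sw}}(a,b;x)\neq\varnothing$ for every object $x$ of $\sC\sW(K,K)$ (and some $a,b$). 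This non-emptiness is verified using $L\cong K$: working with a Morse Hamiltonian on $K$, the isomorphism forces the generator corresponding to the global maximum to be hit by $\mu^2$ after passing to $\pi_0 R$-coefficients, hence all generators are hit. Then \cref{lem:restr_last_strata_ori} assembles everything into an $R$-oriented flow bordism. Your proposal misses both the need for \cref{lem:comp_two_out_of_three} and the actual role of the isomorphism $L\cong K$.
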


\begin{proof}
    Let $S_r$ denote $\ID^2 \smallsetminus \{\pm 1\} \subset \IC$ equipped with an interior marked point $p_r$ located at $ir \in \IC$ for $r\in (-1,1)$. Fix a Floer datum on $S_r$ with Hamiltonian $H_{S_r} \colon S_r \to \sH(\overline X,F)$, almost complex structure $J_{S_r} \colon S_r \to \sJ(\overline X,F)$, and positive strip-like ends $\varepsilon^{\pm 1}$ near $\pm 1 \in S_r$, respectively, that varies smoothly with $r$, see \cref{dfn:Floer_datum}. Denote the upper and lower boundary arcs of $S_r$ by $(\partial S_r)_{\pm 1}$. We define $S_1$ to be the nodal disk with components $\ID^2 \smallsetminus \{\pm 1, i\}$ and $\ID^2 \smallsetminus \{-1\}$ that are glued along $i$ in the first disk, and $-1$ in the second disk. The first disk is equipped with a negative strip-like end near $i$, and the second disk is equipped with a positive strip-like end near $-1$. We define $S_{-1}$ similarly, except that the first disk now has a boundary puncture at $-i$ (equipped with a negative strip-like end), instead of $i$. As $r \to \pm 1$, we assume that the domain deforms to look like a connected sum, similar to how the domains of the maps in the moduli space defined in \cref{dfn:conn_sum_dom_unit} are constructed. By abuse of notation, we keep denoting this deformed domain by $S_r$ for $r\in [-1,1]$, and the marked point is denoted by $p_r$. For $a \in \Ob(\sC\sW(L,K))$ and $b \in \Ob(\sC\sW(K,L))$, let $\widetilde \sB(a,b)$ denote the Gromov compactification (which is applicable by \cite[Lemma 3.9]{sylvan2019on}) of the moduli space of maps 
    \[ \left\{ \begin{matrix} r \in [-1,1] ,\\ u \colon S_r \to \overline{X} \end{matrix} \, \left| \, \begin{matrix} u \mbox{ solves Floer's equation} \\ u(z) \in \psi^{\rho_{S_r}(z)}L, \; z \in (\partial S_r)_{-1} \\ u(z) \in \psi^{\rho_{S_r}(z)}K, \; z \in (\partial S_r)_1  \\ \lim_{s \to \infty} u(\varepsilon^{1}(s,t)) = a(t) \\ \lim_{s \to \infty} u(\varepsilon^{-1}(s,t)) = b(t)  \end{matrix} \right. \right\}.\]
     Similar to the proof of \cref{lem:pre_oc_moduli_space}, we have gluing maps
    \[
    \sC^\#_+(a) \times \sC^\#_+(b) \times \widetilde{\sB}(a,b) \longrightarrow \sD^\#,
    \]
    where the data of $\sD^\#$ is based at $a(0)$, which are covered by maps of index bundles. The $R$-line bundle associated to the index bundle of $\sD^\#$ is canonically trivialized and is identified with the constant line bundle with fiber the index bundle over the constant element. It follows that we have isomorphisms of $R$-line bundles
    \[
    (\sV_+(a))_R \otimes_R (\sV_+(b))_R \otimes_R I_R^{\widetilde{\sB}}(a,b) \simeq (T_{a(0)}L)_R
    \]
    and
    \begin{equation}\label{eq:bordism_R_ori}
        (\sV_+(a))_R \otimes_R I_R(a;a') \otimes_R (\sV_+(b))_R \otimes_R I_R(b;b') \otimes_R I_R^{\widetilde{\sB}}(a',b') \simeq (T_{a'(0)}L)_R.
    \end{equation}
    By the $R$-orientability of $L$, this defines an $R$-orientation on $\widetilde{\sB}(a,b)$. Consider the evaluation map
    \begin{align*}
        \ev \colon \widetilde \sB(a,b) &\longrightarrow X \\
        (r,u) &\longmapsto u(p_r),
    \end{align*}
    where $p_r$ is the marked point in the domain. Let $x \in \Ob(\sM(F,g))$ and define
    \[
    \sB(a,b;x) \coloneqq \widetilde \sB(a,b) \times_{\ev} \overline{W}^s(x).
    \]
    For a generic choice of $H_{S_r} \colon S_r \to \sH(\overline X,F)$ and $J_{S_r} \colon S_r \to \sJ(\overline X,F)$ we have that $\sB(a,b;x)$ is a smooth manifold with corners of dimension $\mu(a)+\mu(b)-\mu(x)+1$. We note that there are maps 
    \begin{align*}
        \sC\sW(L,K)(a;a') \times \sB(a',b;x) &\longrightarrow \sB(a,b;x) \\
        \sC\sW(K,L)(b;b') \times \sB(a,b';x) &\longrightarrow \sB(a,b;x) \\
        \sB(a,b;x') \times \sM(F,g)(x';x) &\longrightarrow \sB(a,b;x) \\
        (\mathcal{OC}_L \circ \osr)(a,b; x) &\longrightarrow \sB(a,b;x) \\
        (\mathcal{OC}_K \circ \osr^{\mathrm{sw}})(a,b; x) &\longrightarrow \sB(a,b;x),
    \end{align*}
    as unoriented flow bimodules. Let $D \coloneqq \mu(a)+\mu(b)-\mu(x)+1$ and set
    \begin{align*}
    \partial^\circ_i\sB(a,b;x) \coloneqq &\bigsqcup_{\substack{a' \in \Ob(\sC\sW(L,K)) \\ \mu(a)-\mu(a')=i}} (\sC\sW(L,K)(a;a') \times \sB(a',b;x)) \\
    &\quad \sqcup \bigsqcup_{\substack{b' \in \Ob(\sC\sW(K,L)) \\ \mu(b)-\mu(b')=i}} (\sC\sW(K,L)(b;b') \times \sB(a,b';x)) \\
    &\quad \sqcup \bigsqcup_{\substack{x' \in \Ob(\sM(F,g)) \\ \mu(a)+\mu(b)-\mu(x')+2=i}} (\sB(a,b;x') \times \sM(F,g)(x';x)).
    \end{align*}
    Defining
    \[
        \partial_i \sB(a,b;x) \coloneqq \begin{cases}
            (\mathcal{OC}_L \circ \osr)(a,b; x)\sqcup (\mathcal{OC}_K \circ \osr^{\mathrm{sw}})(a,b; x), & i = 1 \\
            \partial^\circ_{i-1} \sB(a,b;x), & i > 1
        \end{cases},
    \]
    endows $\sB(a,b;x)$ with the structure of a $\ang{D+1}$-manifold. Thus we have that the assignment
    \begin{align*}
        \sB \colon \sC\sW(L,K),\sC\sW(K,L) &\longrightarrow \sM(F,g) \\
        (a,b;x) &\longmapsto \sB(a,b;x),
    \end{align*}
    is a flow bordism $\mathcal {OC}_L \circ \osr \Rightarrow \mathcal {OC}_K \circ \osr^{\mathrm{sw}}$.
    
    Finally, we discuss $R$-orientations.  Using the intersection orientation (cf. \eqref{eq:bordism_R_ori}), we have isomorphisms of $R$-line bundles
    \begin{equation}\label{eq:OC_bordism_ori}
        \check{\fo}(a) \otimes_R \check{\fo}(b) \otimes_R I_R^{\sB}(a,b;x) \otimes_R \fo_{(F,g)}(x) \simeq (T_{a(0)}L)_R,
    \end{equation}
    and along a face of the boundary stratum $(\sO\sC_L \circ \osr)(a,b;x) \hookrightarrow \sB(a,b;x)$ this isomorphism restrict to the following composition
    \begin{align*}
        &\check{\fo}(a) \otimes_R \check{\fo}(b) \otimes_R I_R^{\osr}(a,b;a') \otimes_R I_R^{\sO\sC}(a';x) \otimes_R \fo_{(F,g)}(x) \\
        &\quad \simeq \check{\fo}(a') \otimes_R I_R^{\sO\sC}(a';x) \otimes_R \fo_{(F,g)}(x) \simeq (T_{a(0)}L)_R \simeq (T_{a'(0)}L)_R \simeq R,
    \end{align*}
    which is the composition orientation (see \cref{dfn:composition_flow_bimods}).  Compatibilities along the boundary strata of $\sB(a,b;x)$ of the form $\sC\sW(L,K)(a;a') \times \sB(a',b;x)$ and $\sC\sW(L,K)(b;b') \times \sB(a,b';x)$ follows from the $R$-orientability of $L$.  We equip the boundary stratum $(\sO\sC_K \circ \osr^{\mathrm{sw}})(a,b;x)$ with the $R$-orientation coming from the restriction of the isomorphism \eqref{eq:OC_bordism_ori} and the given $R$-orientation on $\osr^{\mathrm{sw}}$ via \cref{lem:moduli_product}(ii). Next, aiming to apply \cref{lem:comp_two_out_of_three}, we need to show that for any $x \in \Ob(\sC\sW(K,K))$, there exists $a\in \Ob(\sC\sW(L,K))$ and $b\in \Ob(\sC\sW(K,L))$ such that $\osr^{\mathrm{sw}}(a,b;x) \neq \varnothing$. Choose a Hamiltonian $H_f$ associated to a Morse--Smale function on $K$ with a unique global maximum $M$. Via the identification in \cref{lem:PSS}(iii), there is some object $x_{\mathrm{max}} \in \Ob(\sC\sW(K,K))$ corresponding to $M$, and it suffices to show $\osr^{\mathrm{sw}}(a,b;x_{\mathrm{max}}) \neq \varnothing$ for some $a\in \Ob(\sC\sW(L,K))$ and $b\in \Ob(\sC\sW(K,L))$. By the assumption that $L \cong K$ in $\sW(X;R)$ there exists $\alpha \colon R \to HW(L,K)$ and $\beta \colon R \to HW(K,L)$ such that $|\osr^{\mathrm{sw}}| \circ (\alpha \wedge_R \beta) \simeq \eta_K$, where $\eta_K$ is the unit in $HW(K,K)$, see \cref{dfn:unit_hw}. After taking smash products with $H\pi_0R$, and passing to homotopy groups (see \cref{prop:homology_wedge_ring} and \cref{lem:wfuk_discrete_coeff}) we have that the following diagram commutes
    \[
    \begin{tikzcd}[row sep=scriptsize,column sep=2cm]
        \pi_0R \rar{|\osr^{\mathrm{sw}}|_\bullet \circ(\alpha_\bullet\otimes \beta_\bullet)} \drar[out=315,in=180][swap]{(\eta_K)_\bullet} & HW^{-\bullet}(K,K;\pi_0R) \dar{\cong} \\
        {}& H^{-\bullet}(K;\pi_0R)
    \end{tikzcd}.
    \]
    If $\osr^{\mathrm{sw}}(a,b;x_{\mathrm{max}}) = \varnothing$ for all $a\in \Ob(\sC\sW(L,K))$ and $b\in \Ob(\sC\sW(K,L))$, it would imply that the maximum $M \in H^{-\bullet}(K;\pi_0R)$ is not a generator, which is a contradiction. We conclude $\osr^{\mathrm{sw}}(a,b;x_{\mathrm{max}}) \neq \varnothing$ for some $a\in \Ob(\sC\sW(L,K))$ and $b\in \Ob(\sC\sW(K,L))$, and hence it follows that for any $x\in \Ob(\sC\sW(K,K))$ we have $\osr^{\mathrm{sw}}(a,b;x) \neq \varnothing$ for some $a\in \Ob(\sC\sW(L,K))$ and $b\in \Ob(\sC\sW(K,L))$.
    
    Therefore \cref{lem:comp_two_out_of_three} can be applied to obtain an induced $R$-orientation on the flow bimodule $\sO\sC_K$, which we denote by $\sO\sC^{\mathrm{tw}}_K$. Lastly, by \cref{lem:restr_last_strata_ori} it follows that $\sB$ defines an $R$-oriented flow bordism $\sO\sC_L \circ \osr \Rightarrow \sO\sC^{\mathrm{tw}}_K\circ \osr^{\mathrm{sw}}$.
\end{proof}

\subsection{Equivalences of Lagrangians under open-closed maps}
In this subsection, we prove the following result:
\begin{thm}\label{thm:R-ori_lags_represent_same_class}
    Let $L$ and $K$ be closed Lagrangian $R$-branes such that $L \cong K$ in $\sW(X;R)$.
    \begin{enumerate}
        \item If $L$ is $R$-orientable, then $K$ is $R$-orientable.
        \item Given an $R$-orientation on $L$, there exists an $R$-orientation on $K$ such that $[L]_R = [K]_R \in H_n(X;R)$.
    \end{enumerate}
\end{thm}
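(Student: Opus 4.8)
The plan is to deduce both parts of \cref{thm:R-ori_lags_represent_same_class} from the compatibility between the open-closed flow bimodule and the triangle product established in \cref{lem:oc_unital_wrt_mu2}, together with the computation of $|\sO\sC| \circ \eta_L$ in \cref{lem:oc_fc}.

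First I would prove part (i). The obstruction to $R$-orientability of a Lagrangian brane $K$ is the $R$-line bundle on $K$ associated to the index bundle pulled back along $\sG_K^{\#} \colon K \to (\lag)^{\#}$; recall from \cref{rem:brane}(ii) that this is precisely the obstruction to choosing a coherent trivialization. I claim $R$-orientability of $K$ is detected by the existence of a $\sC\sW(K,K;R)$-orientation making the relevant open-closed data $R$-orientable, i.e. the existence of the twisted bimodule $\sO\sC_K^{\tw}$. Indeed, the construction of $\sO\sC^{\tw}_K$ in the proof of \cref{lem:oc_unital_wrt_mu2} produces, via \cref{lem:comp_two_out_of_three}, an $R$-orientation on $\sO\sC_K$ starting only from an $R$-orientation on $L$ (plus the hypothesis $L \cong K$, which guarantees the non-emptiness condition $\osr^{\mathrm{sw}}(a,b;x) \neq \varnothing$). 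But the $R$-orientation on $\widetilde{\sO\sC}(a)$ is, by \cref{lem:pre_oc_moduli_space}(ii) (and \cref{rem:oc_stable}), equivalent to a trivialization of the pullback of the index bundle over $\sD^{\#}$ based at $a(0)$ twisted against $(T_{a(0)}K)_R$; chasing the identification in \cref{lem:PSS}, a coherent choice of such trivializations over all chords is equivalent to an $R$-orientation of $K$ itself. So the existence of $\sO\sC^{\tw}_K$ yields an $R$-orientation on $K$. I would make this argument precise by running the construction of $\sO\sC^{\tw}_K$ with $K$ playing the role of $L$ throughout, using that the Morse--Smale flow category identification of \cref{lem:PSS} identifies the brane orientation data with the flow-categorical orientation data.

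Next I would prove part (ii). Fix the given $R$-orientation on $L$ and let $\sO\sC^{\tw}_K$ be the induced $R$-orientation on $\sO\sC_K$ from \cref{lem:oc_unital_wrt_mu2}; by part (i) this corresponds to an $R$-orientation on $K$, which is the one I will use. The strategy is to take the CJS realization of the $R$-oriented flow bordism $\sB \colon \sO\sC_L \circ \osr \Rightarrow \sO\sC^{\tw}_K \circ \osr^{\mathrm{sw}}$ from \cref{lem:oc_unital_wrt_mu2} and precompose with the classes $\alpha \colon R \to HW(L,K;R)$, $\beta \colon R \to HW(K,L;R)$ witnessing the isomorphism $L \cong K$. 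By \cref{lma:bordism_gives_homotopy_of_maps} we get a homotopy $|\sO\sC_L| \circ \mu^2 \circ (\alpha \wedge_R \beta) \simeq |\sO\sC^{\tw}_K| \circ \mu^2 \circ (\beta \wedge_R \alpha)$, where I have used that $|\osr^{\mathrm{sw}}|$ is $\mu^2$ with its inputs transposed. Since $L \cong K$, the right-hand side satisfies $\mu^2 \circ (\beta \wedge_R \alpha) \simeq \eta_K$ and, by a parallel argument using the unit homotopy for $L$, the left-hand side's composite $\mu^2\circ(\alpha \wedge_R \beta) \simeq \eta_L$. (Here one must be slightly careful: one of $\mu^2\circ(\alpha\wedge_R\beta)$, $\mu^2\circ(\beta\wedge_R\alpha)$ is $\eta_L$ and the other is $\eta_K$, which is exactly how \cref{lem:oc_unital_wrt_mu2} is set up to be applied.) Combining, $|\sO\sC_L| \circ \eta_L \simeq |\sO\sC^{\tw}_K| \circ \eta_K$ as morphisms $R \to \varSigma^{\infty-n}_+ X \wedge R$. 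By \cref{lem:oc_fc} the left side represents $[L]_R$; the same lemma applied to $K$ with the $R$-orientation from $\sO\sC^{\tw}_K$ shows the right side represents $[K]_R$. Hence $[L]_R = [K]_R$ in $H_n(X;R)$.

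The main obstacle I anticipate is part (i) — specifically, pinning down the claim that the $R$-orientation on $\sO\sC_K$ produced by \cref{lem:comp_two_out_of_three} genuinely descends to an $R$-orientation of the Lagrangian $K$ in the sense of \cref{defn:R-ori_data}, rather than merely to flow-categorical orientation data for one particular choice of Floer datum. This requires carefully tracing the identifications: the index bundle $\sV_+(a) \to \sC^{\#}_+(a)$ and its relation to the pullback of the universal index bundle over $\sD^{\#}$ (\cref{lem:CanonicalOrientation}), the gluing maps of \cref{lem:cap_gluing} that make these compatible over all chords, and the Morse-theoretic identification of \cref{lem:PSS} which links $\sV_{\pm}(x)$ to $(T\overline W^{u/s}(x))_R$ and hence to $(T_x K)_R$. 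The point is that a coherent system of trivializations of $\sV_+(a)_R \otimes_R (T_{a(0)}K)_R^{-1}$ compatible with composition is the same data as a null-homotopy of the $R$-line bundle classified by $\sG_K^{\#}$ followed by the index construction, which is exactly an $R$-orientation of $K$; but verifying this equivalence of data is the delicate bookkeeping step. Everything else is a formal consequence of the bordism-invariance and functoriality results already assembled in \cref{sec:background_flow} and \cref{sec:cjs}.
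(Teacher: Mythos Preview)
Your overall strategy for part (ii) --- pass through the bordism of \cref{lem:oc_unital_wrt_mu2}, precompose with $\alpha\wedge_R\beta$, and invoke \cref{lem:oc_fc} --- matches the paper's opening move. The genuine gap is exactly the obstacle you flag: you never establish that the $R$-orientation on the flow bimodule $\sO\sC_K^{\tw}$ produced by \cref{lem:comp_two_out_of_three} arises from an $R$-orientation of $K$ in the sense of trivializing $(TK)_R$. The data you obtain is a system of isomorphisms $\check{\fo}(a)\otimes_R I_R^{\sO\sC}(a;x)\otimes_R\fo_{(F,g)}(x)\simeq R$ over moduli spaces, and there is no mechanism in the paper (nor an obvious one) for ``delooping'' this back to a global trivialization of the $R$-line bundle on $K$. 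Since \cref{lem:oc_fc} is stated only for the open-closed bimodule built from a given $R$-orientation on the Lagrangian, you cannot apply it to $\sO\sC_K^{\tw}$ without first closing this gap; so both (i) and (ii) remain open in your argument.

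The paper bypasses this issue entirely. Rather than extracting an $R$-orientation from $\sO\sC_K^{\tw}$, it works directly with the class $|\sD_K^{\tw}|\circ\eta_K\in H_n(K;R)$ (where $\sD_K^{\tw}$ is the open-closed bimodule landing in $K$ rather than $X$) and shows it satisfies the Rudyak criterion for an $R$-fundamental class: restriction to each point is a unit. This is checked by first lifting the isomorphism $L\cong K$ to the connective cover $R_{\geq 0}$ via \cref{cor:lifting_equivalences_to_conn_covers}, then using \cref{lem:fundamental_class_detection} to reduce the unit condition to $Hk$ with $k=\pi_0(R)$, and finally performing an explicit Morse-theoretic computation over $Hk$ (the single maximum of a Morse function on $K$ maps to a generator). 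This simultaneously yields (i) --- since a fundamental class exists iff $K$ is $R$-orientable --- and (ii), since by construction the resulting $[K]_R$ equals $|\sO\sC_K^{\tw}|\circ\eta_K = |\sO\sC_L|\circ\eta_L = [L]_R$. The connective-cover/Hurewicz reduction is the key idea your proposal is missing.
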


Before giving the proof of \cref{thm:R-ori_lags_represent_same_class}, we first recall some facts about $R$-orientations on closed manifolds.

\begin{notn}
Let $M$ be a closed $n$-dimensional manifold.  For $p \in M$, define the composition
\[ \varepsilon_p \colon H_n(M;R) \longrightarrow H_n(M,M \smallsetminus \{p\};R) \longrightarrow \widetilde{H}_n(S^n;R) \longrightarrow H_0(\mathrm{pt};R) \cong \pi_0(R) \eqqcolon k. \]
\end{notn}

Recall that a choice of $R$-orientation of $TM$ is equivalent to a choice of element $[M]_R \in H_n(M;R)$ such that $\varepsilon_p([M]_R) \in k$ is a unit for all $p \in M$, see \cite[Proposition V.2.2]{rudyak1998manifoldOrientations}.  Using the Hurewicz map (see \eqref{eq:hurewicz_map}), we have the following equivalent formulation of orientability for closed manifolds.

\begin{lem}\label{lem:fundamental_class_detection}
Consider the connective cover $p \colon R_{\geq 0} \to R$ and let $\Hw \colon R_{\geq 0} \to Hk$ denote the Hurewicz map.  Suppose that there exists a class $[M]_{R_{\geq 0}} \in H_n(M;R_{\geq 0})$ such that $\Hw([M]_{R_{\geq 0}}) \in H_n(M;Hk)$ is a unit, then $[M]_R 
 \coloneqq p_\bullet([M]_{R_{\geq 0}}) \in H_n(M;R)$ determines an orientation of $TM$.
\end{lem}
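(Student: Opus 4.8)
\textbf{Proof plan for \cref{lem:fundamental_class_detection}.}

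The plan is to reduce the statement to the known criterion for $R$-orientability recalled just above the lemma, namely that $[M]_R \in H_n(M;R)$ orients $TM$ precisely when $\varepsilon_p([M]_R) \in k$ is a unit for every $p \in M$. So the task is to show that $\varepsilon_p(p_\bullet([M]_{R_{\geq 0}}))$ is a unit in $k$ for all $p$, given that $\Hw([M]_{R_{\geq 0}}) \in H_n(M;Hk)$ is a unit, i.e.\ that the corresponding element $\varepsilon_p^{Hk}(\Hw([M]_{R_{\geq 0}}))$ is a unit in $k$ for all $p$. The first step is to observe that the construction of $\varepsilon_p$ is natural in the coefficient ring spectrum: for any map of commutative ring spectra $\varphi \colon R' \to R''$ there is a commutative square relating $\varepsilon_p$ with $R'$-coefficients, $\varepsilon_p$ with $R''$-coefficients, and the maps induced by $\varphi$ on both $H_n(M;-)$ and on $\pi_0(-) = k'$ versus $k''$. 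This follows immediately because every arrow in the definition of $\varepsilon_p$ (the two excision/collapse maps and the suspension iso) is a map of $(-\wedge R')$- or $(-\wedge R'')$-modules and hence commutes with $\wedge \varphi$.

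Next I would apply this naturality twice. First to the connective cover map $p \colon R_{\geq 0} \to R$: since $\pi_0(p) \colon \pi_0(R_{\geq 0}) \to \pi_0(R)$ is an isomorphism (both equal $k$) by definition of the connective cover, the square shows $\varepsilon_p(p_\bullet([M]_{R_{\geq 0}})) = \varepsilon_p([M]_{R_{\geq 0}})$ under this identification of coefficients. Thus it suffices to show $\varepsilon_p([M]_{R_{\geq 0}}) \in k$ is a unit. Now apply naturality to the Hurewicz map $\Hw \colon R_{\geq 0} \to Hk$ (which exists by the connectivity discussion in \cref{sec:sketch_proof}); here $\pi_0(\Hw) \colon \pi_0(R_{\geq 0}) = k \to \pi_0(Hk) = k$ is the identity, so the square gives $\varepsilon_p^{Hk}(\Hw_\bullet([M]_{R_{\geq 0}})) = \varepsilon_p([M]_{R_{\geq 0}})$ in $k$. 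By hypothesis the left-hand side is a unit, hence so is $\varepsilon_p([M]_{R_{\geq 0}})$, and therefore $\varepsilon_p([M]_R)$ is a unit. Since $p$ was arbitrary, \cite[Proposition V.2.2]{rudyak1998manifoldOrientations} yields that $[M]_R$ determines an $R$-orientation of $TM$.

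The only genuinely substantive point — and the one I expect to require the most care — is the naturality square for $\varepsilon_p$ in the coefficient spectrum, together with the bookkeeping that under the maps $R_{\geq 0} \to R$ and $R_{\geq 0} \to Hk$ the induced maps on $\pi_0$ are the identity on $k$ (so that "being a unit" transports unambiguously). Everything else is formal. One should also remark that $\Hw([M]_{R_{\geq 0}})$ being "a unit" in $H_n(M;Hk) \cong H_n(M;k)$ is exactly the statement that its image under each $\varepsilon_p^{Hk}$ is a unit in $k$, i.e.\ that it is a fundamental class in the classical sense; this is the bridge between the hypothesis as phrased and the criterion being verified.
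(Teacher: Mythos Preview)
Your proposal is correct and follows essentially the same approach as the paper: both arguments use the naturality of $\varepsilon_p$ in the coefficient ring spectrum applied to $\Hw \colon R_{\geq 0} \to Hk$ (together with the fact that $\pi_0(\Hw)$ is an isomorphism) to deduce that $\varepsilon_p([M]_{R_{\geq 0}})$ is a unit, and then push forward along $p \colon R_{\geq 0} \to R$. The paper phrases the last step as ``$p_\bullet$ preserves units'' rather than spelling out a second naturality square, but this is the same content.
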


\begin{proof}
Since $p_\bullet$ preserves units, it suffices to show that $[M]_{R_{\geq 0}}$ is a unit.  To see this, notice that by the naturality of the Hurewicz map and the construction of $\varepsilon_p$, we have a commutative diagram:
\[
   \begin{tikzcd}[row sep=scriptsize, column sep=2.5mm]
        H_n(M;R_{\geq 0}) \rar \dar & H_n(M,M \smallsetminus \{p\};R_{\geq 0}) \rar \dar & \widetilde{H}_n(S^n;R_{\geq 0}) \rar \dar & H_0(\mathrm{pt};R_{\geq 0}) \rar \dar & \pi_0(R_{\geq 0}) \dar{\cong} \\
       H_n(M;Hk) \rar & H_n(M,M \smallsetminus \{p\};Hk) \rar & \widetilde{H}_n(S^n;Hk) \rar & H_0(\mathrm{pt};Hk) \rar & \pi_0(Hk). \\
    \end{tikzcd}
\]
The right most vertical arrow is an isomorphism by construction.  Since $\Hw([M]_{R_{\geq 0}})$ is a unit, we have that the bottom horizontal composition is an isomorphism.  From this and the above discussion, the lemma follows.
\end{proof}

\begin{proof}[Proof of \cref{thm:R-ori_lags_represent_same_class}]
Using \cref{lem:oc_unital_wrt_mu2} and \cref{lma:bordism_gives_homotopy_of_maps}, it follows that $|\mathcal{OC}_L| \circ \eta_L$ is homotopic to $|\mathcal{OC}^{\tw}_K| \circ \eta_K$.  By \cref{lem:oc_fc}, $|\mathcal{OC}_L| \circ \eta_L$ coincides with $[L]_R \in H_n(X;R)$.  We will show that $|\OC_K^{\tw}| \circ \eta_K$ coincides with $[K]_R \in H_n(X;R)$ for some choice of $R$-orientation on $K$.

As described in \cref{rem:oc_L} we have $|\sO\sC^{\tw}_K| = |j| \circ |\sD^\tw_K|$, where $\sD_K^{\tw}$ is the open-closed flow bimodule defined using a Morse function on $K$ and $j \colon \sM(f_K,g) \to \sM(F_K,g)$ is the Morse--Smale flow bimodule representing the inclusion. Therefore we have 
\[
|\OC_K^{\tw}| \circ \eta_K = |j| \circ |\sD^{\tw}_K| \circ \eta_K
\]
and thus it suffices to show that $|\sD^{\tw}_K| \circ \eta_K \colon R \to \varSigma^\infty_+K \wedge R$ represents an $R$-fundamental class.

Since $L$ is isomorphic to $K$ in $\sW(X;R)$ and since $L$ and $K$ are compact, by \cref{cor:lifting_equivalences_to_conn_covers} and \cref{rem:finite_mor_space_for_compact}, $L$ is isomorphic to $K$ in $\sW(X; R_{\geq 0})$.  Moreover, this isomorphism is natural with respect to the map $R_{\geq 0} \to R$.  As in \cref{lem:oc_unital_wrt_mu2}, the equivalence in $\sW(X;R_{\geq 0})$ and the choice of $R$-orientation of $L$ yields $R_{\geq 0}$-oriented flow bimodules ${\OC_K^{\tw}}'$ and ${\sD_K^{\tw}}'$ that agree with $\OC_K^{\tw}$ and $\sD_K^{\tw}$ as unoriented flow bimodules.  Moreover, these choices of $R_{\geq 0}$-orientations are compatible with the $R$-orientations on $\OC_K^{\tw}$ and $\sD_K^{\tw}$ in the sense that the induced maps
\begin{align*}
    |\OC_K^{\tw}| \colon HW(K,K;R) &\longrightarrow \varSigma_+^{\infty-n}X\wedge R \\
    |\sD_K^{\tw}| \colon HW(K,K;R) &\longrightarrow \varSigma_+^{\infty-n}K\wedge R
\end{align*}
are induced from the maps
\begin{align*}
    |{\OC_K^{\tw}}'| \colon HW(K,K;R_{\geq 0}) &\longrightarrow \varSigma_+^{\infty-n}X\wedge R_{\geq 0}\\
    |{\sD_K^{\tw}}'| \colon HW(K,K;R_{\geq 0}) &\longrightarrow \varSigma_+^{\infty-n}K\wedge R_{\geq 0}
\end{align*}
by applying the functor $- \wedge_{R_{\geq 0}}R$, where the map $R_{\geq 0} \to R$ realizes $R$ as an $R_{\geq 0}$-module.

So to show that $|\sD^{\tw}_K| \circ \eta_K$ represents an $R$-fundamental class in $H_n(K;R)$, it suffices to show that $|{\sD^{\tw}_K}'| \circ \eta_K$ represents an $R$-fundamental class in $H_n(K;R_{\geq 0})$.  By \cref{lem:fundamental_class_detection}, it further suffices to show that $|{\sD^{\tw}_K}'| \circ \eta_K$ (with ${\sD^{\tw}_K}'$ and $\eta_K$ viewed as $Hk$-oriented flow bimodules) represents a unit in $H_n(K;Hk)$. Without loss of generality we assume that the Morse--Smale function data for $K$ is $C^2$-small so that there is an identification of the $Hk$-oriented flow categories $\sC\sW(K,K;Hk)$ and $\sM^{-\bullet}(f,g)$, by \cref{lem:PSS}.  Moreover, the flow bimodule ${\sD_K^{\mathrm{tw}}}'$ (with the induced $Hk$-orientation) corresponds to an $Hk$-oriented flow bimodule
\[
\sI^{\mathrm{tw}} \colon \sM^{-\bullet}(f,g) \longrightarrow \sM(f,g),
\]
whose CJS realization corresponds (via the quasi-equivalence $\mod{Hk} \simeq \mod k$) to a morphism $\sI_\bullet^{\mathrm{tw}} \colon H^0(K;k) \to H_n(K;k)$, see \cref{lem:morse_pd}. By connectedness and compactness of $K$, we may assume that the Morse function $f$ has a single global minimum $x_{\text{min}}$ and a single global maximum $x_{\text{max}}$. We notice that the only $0$-dimensional piece of $\sI^{\tw}$ is given by $\sI^{\mathrm{tw}}(x_{\text{min}},x_{\text{max}}) = \{x_{\text{max}}\}$.  Hence $\sI_\bullet^{\mathrm{tw}}$ maps the generator $x_{\mathrm{min}} \in H^0(K;k)$ to the generator $x_{\mathrm{max}} \in H_n(K;k)$, and the $Hk$-orientation on $\sI^{\mathrm{tw}}$ determines the sign of this map. It follows that $|{\sD^{\tw}_K}'| \circ \eta_K$ represents a unit in $H_n(K;Hk)$, yielding the result.
\end{proof}

\section{The cotangent fiber and the based loops of the zero section}\label{sec:loops}
    The purpose of this section is to prove an equivalence of $R$-algebras $HW(F,F;R) \simeq \varSigma^\infty_+ \varOmega Q \wedge R$.  We do this by relating the flow category $\sC\sW(F,F;R)$ with the Morse--Smale flow category encoding the stable homotopy type of the based loop space following the standard approach of \cite{milnor1963morse} through finite dimensional approximations.
    \subsection{Morse theory of based loops}
    \begin{notn}
    Throughout this section, we fix the following:
    \begin{enumerate}
    \item Let $R$ be a connective commutative ring spectrum.
    \item Let $Q$ be a closed manifold.
    \item Let $F \coloneqq T^\ast_\xi Q \subset T^\ast Q$ be a fixed cotangent fiber.
    \item Let $\mu$ denote the Maslov index, see \cite{robbin1993maslov}.
    \end{enumerate}
    \end{notn}
    Recall from \cref{exmp:cotpol} that we equip $T^*Q$ with the tautological polarization corresponding to the Lagrangian distribution given by the tangent space of the cotangent fibers. Moreover, by \cref{rem:brane}(iii), $F$ admits a canonical $R$-brane structure. Thus the flow category $\sC\sW(F,F;R)$ admits an $R$-orientation.
    
    Let
    \begin{align*}
        \varOmega Q &\coloneqq \left\{\gamma \colon [0,1] \to Q \mid \text{$\gamma$ piecewise smooth and }\gamma(0) = \gamma(1) = \xi \right\}
    \end{align*}
    For $c > 0$, define
    \[
        \varOmega Q^c \coloneqq \left\{\gamma \in \varOmega Q \mid E(\gamma) \leq c\right\},
    \]
    where $E(\gamma) \coloneqq \int_0^1 | \dot{\gamma}|^2 dt$ is the energy functional. 
 
    \begin{defn}
    We call a finite sequence $\un t = (t_0,\ldots,t_k)$ such that $0 = t_0 < t_1 < \cdots < t_{k-1} < 1 = t_k$ a \emph{subdivision of $[0,1]$}. We denote the length of the sequence $\un t$ by $|\un t|$. We say that $\un t'$ is a \emph{refinement of $\un t$} if $\un t$ is a subsequence of $\un t'$.
    \end{defn}

    \begin{defn}[Broken geodesic] 
        For a subdivision $\un t$ of $[0,1]$, define $\varOmega_{\un t} Q$ to be the space of piecewise geodesics broken at $\un t$, that is, $\gamma \in \varOmega Q$ such that $\gamma|_{[t_i,t_{i+1}]}$ is a smooth geodesic for each $i\in \left\{0,\ldots,k-1\right\}$. For $c > 0$, define 
        \[ \varOmega_{\un t} Q^c = \varOmega Q^c \cap \varOmega_{\un t} Q.\]
    \end{defn}
    If $\un t'$ is a refinement of $\un t$, we have inclusions
        \[ \varOmega_{\un t} Q \hooklongrightarrow \varOmega_{\un t'} Q  \quad \text{and} \quad \varOmega_{\un t} Q^c \hooklongrightarrow \varOmega_{\un t'} Q^c.\]
    The following classical fact allows us to use broken geodesics to provide finite dimensional approximations of loop spaces:
    \begin{lem}\label{lma:fin_dim_approx}
        Let $c > 0$ be such that there exists no geodesic loop with energy $c$.  For any sufficiently fine subdivision $\un t$ of $[0,1]$, the following holds.
        \begin{itemize}
            \item $\varOmega_{\un t}Q^c$ is a deformation retract of $\varOmega Q^c$.
            \item $\varOmega_{\un t}Q^c$ is a smooth finite dimensional manifold that embeds as a codimension $0$ submanifold with boundary in $Q^{|\un{t}|-2}$.
            \item The restriction of the action functional $E$ to $\varOmega_{\un t} Q^c$ defines a Morse function whose critical points are the unbroken geodesic loops of energy less than $c$. Moreover, the Morse index coincides with the index of the Hessian of $E$ at the unbroken geodesic as defined in \cite[Section 13]{milnor1963morse}. 
        \end{itemize}
    \end{lem}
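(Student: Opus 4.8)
The plan is to prove \cref{lma:fin_dim_approx} by assembling classical results from Milnor's book \cite{milnor1963morse}, adapted to the based (rather than free) loop space. First I would fix attention on the energy functional $E$ on $\varOmega Q$ equipped with a complete Riemannian metric, and recall that sublevel sets $\varOmega Q^c$ are the relevant compactly-behaved pieces. The hypothesis that $c$ is not the energy of any geodesic loop guarantees that $\varOmega Q^c$ is a \enquote{regular} sublevel set in the sense that no critical points occur on its boundary, which is what will make all three assertions go through cleanly.

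For the first two bullet points I would follow \cite[Section 16]{milnor1963morse} essentially verbatim. Choosing a subdivision $\un t = (t_0, \dots, t_k)$ fine enough that on each subinterval $[t_i, t_{i+1}]$ the restricted path lies in a geodesically convex ball (possible by compactness of $Q$ and a Lebesgue-number argument applied to the finite-energy constraint), one defines the broken-geodesic space $\varOmega_{\un t} Q^c$ and the retraction $r \colon \varOmega Q^c \to \varOmega_{\un t} Q^c$ that replaces $\gamma|_{[t_i, t_{i+1}]}$ by the unique short geodesic with the same endpoints. The deformation retraction is obtained by interpolating, exactly as in Milnor. The manifold structure comes from the evaluation map $\varOmega_{\un t}Q^c \to Q^{k-1}$, $\gamma \mapsto (\gamma(t_1), \dots, \gamma(t_{k-1}))$, which is an embedding onto an open subset (with boundary, coming from the energy cutoff $\le c$) of $Q^{k-1}$; here $k - 1 = |\un t| - 2$ since $|\un t| = k+1$. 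The smoothness and the codimension-$0$ claim are then local statements about geodesics depending smoothly on their endpoints.

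For the third bullet point I would invoke \cite[Sections 15 and 16]{milnor1963morse}: the function $E|_{\varOmega_{\un t}Q^c}$ is smooth, its critical points are precisely the broken geodesics that are in fact unbroken (since a broken geodesic is critical for the broken-geodesic model iff there is no corner, i.e.\ iff it is a genuine geodesic), and at such a critical point the Hessian of $E|_{\varOmega_{\un t}Q^c}$ agrees with the index form on the finite-dimensional space of broken Jacobi fields, which by the Morse index theorem \cite[Theorem 15.1]{milnor1963morse} has the same index and nullity as the full Hessian of $E$ on $\varOmega Q$. Non-degeneracy of the critical points, i.e.\ that $E$ restricts to a Morse function, requires the additional genericity that the geodesic loops of energy $< c$ are non-degenerate; since the statement is about \enquote{any sufficiently fine subdivision} and the hypothesis already excludes $c$ itself from being a critical value, I would state this as part of the standing assumption on $Q$ (or, if the metric is not bumpy, note that one perturbs the metric first, which is harmless since $F$ is being used only to compute a homotopy type).

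The main obstacle, and the only place that genuinely needs care beyond citing Milnor, is the interface between the \enquote{boundary with corners} structure on $\varOmega_{\un t} Q^c$ coming from the energy cutoff and the clean \enquote{submanifold with boundary} claim in the lemma: one must check that $c$ being a regular value of $E$ (guaranteed by the no-geodesic-of-energy-$c$ hypothesis) makes $\{E = c\}$ a smooth hypersurface in $\varOmega_{\un t} Q$, so that $\varOmega_{\un t} Q^c$ is a manifold with boundary rather than with corners. This is where the precise phrasing of the hypothesis is used. Everything else is a transcription of \cite[Sections 13, 15, 16]{milnor1963morse} with $Q^{k-1}$ in place of Milnor's $M^{k-1}$ and endpoints fixed at $\xi$ rather than free.
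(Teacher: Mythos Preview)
Your proposal is correct and takes essentially the same approach as the paper: the paper's proof is simply the one-line citation ``This follows from \cite[Lemma 16.1 and Theorem 16.2]{milnor1963morse},'' and what you have written is a faithful expansion of precisely those results. Your additional remarks on the regularity of the level set $\{E=c\}$ and on non-degeneracy of geodesic loops are accurate side observations, but neither is needed beyond what the citation already covers.
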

    \begin{proof}
        This follows from \cite[Lemma 16.1 and Theorem 16.2]{milnor1963morse}.
    \end{proof}

    \begin{notn}\label{notn:ColimData}
        \begin{enumerate}
           \item Pick an increasing sequence of positive real numbers $\{c_n\}_{n=1}^\infty$ such that $\lim_{n \to \infty }c_n = \infty$ and a sequence $\{\un t_n\}_{n=1}^\infty$ of refining subdivisions of $[0,1]$ such that
            \[ BQ(n) \coloneqq \varOmega_{\un t_n} Q^{c_n}\]
            satisfies \cref{lma:fin_dim_approx} for each $n\in \IZ_{\geq 1}$. We have inclusions $\iota_n \colon BQ(n) \hookrightarrow BQ(n+1)$ and $BQ(n)$ is a is a submanifold with boundary of $BQ(n+1)$. 
            \item For every $n \in \IZ_{\geq 1}$, choose a Riemannian metric $g_n$ on $BQ(n)$ so that $(E,g_n)$ is a Morse--Smale pair and such that $\iota_n^\ast g_{n+1} = g_n$. 
            \item Define
            \[ BQ = \bigcup_{n=1}^\infty BQ(n).\]
            It follows that $(E,g=\{g_n\})$ satisfies \cref{asmpt:msmseq}.\
            \item The inclusion $BQ \hookrightarrow \varOmega Q$ is a homotopy equivalence. Fix a homotopy inverse 
            \[
            r^{\varOmega Q} \colon \varOmega Q \longrightarrow BQ.
            \]
        \end{enumerate}
    \end{notn}

    \begin{notn}\label{notn:morse_flow_cat_loop_sp}
    We use the notation
    \[
        \sM_{\varOmega Q} \coloneqq \sM(E,g),
    \]
    Where $\sM(E,g)$ is the Morse--Smale flow category as defined in in \cref{lem:msmsep}. We denote the canonical $R$-orientation by $\fo_{\varOmega Q}$. Recall that it is given by the following:
    \begin{itemize}
        \item $\Ob(\sM_{\varOmega Q})$ is given by the set of smooth geodesic loops based at $\xi \in Q$.
        \item The morphism space $\sM_{\varOmega Q}(\gamma,\gamma')$ is given by the compactified moduli space of negative gradient trajectories of $E$ from $\gamma$ to $\gamma'$ inside $BQ$.
        \item $\fo_{\varOmega Q}$ is given by the $R$-line bundle associated to the tangent space of the unstable manifolds of the negative gradient flow.
    \end{itemize}
    \end{notn}
    
    \begin{defn}
    Define the \emph{Moore loop space} by
    \[\varOmega Q^{\mathrm{Moore}} \coloneqq \left\{\gamma \colon [0,\ell] \longrightarrow Q \mid \text{$\ell \in \IR_{>0}$, $\gamma$ piecewise smooth and }\gamma(0) = \gamma(\ell) = \xi \right\}. \]
    \end{defn}
    There is a natural inclusion $\varOmega Q \hookrightarrow \varOmega Q^{\mathrm{Moore}}$ that is a homotopy equivalence. We fix a homotopy inverse which we denote by $r^{\mathrm{Moore}}$.

    \begin{notn}
    Let 
    \begin{align*}
        P \colon \varOmega Q^{\mathrm{Moore}} \times \varOmega Q^{\mathrm{Moore}} &\longrightarrow \varOmega Q^{\mathrm{Moore}} \\
        (\gamma_1,\gamma_2) &\longmapsto \begin{cases}
            \gamma_1(t), & t \in [0,\ell_1] \\
            \gamma_2(t-\ell_1), & t\in [\ell_1, \ell_1+\ell_2]
        \end{cases},
    \end{align*}
    denote the Pontryagin product. It defines a strictly associative product on the Moore loop space. It is defined on $BQ$ and $\varOmega Q$ via homotopy inverses of the inclusions $BQ \hookrightarrow \varOmega Q \hookrightarrow \varOmega Q^{\mathrm{Moore}}$.
    \end{notn}
 \begin{defn}\label{defn:PontryaginFlowMultimodule}
    The \emph{Pontryagin flow multimodule}
    \[
        \sP \colon \sM_{\varOmega Q}, \sM_{\varOmega Q} \longrightarrow \sM_{\varOmega Q},
    \]
    is the $R$-oriented flow multimodule defined by the assignment
     \[ \sP(\gamma_1,\gamma_2;\gamma) \coloneqq (\overline W^u_{-\nabla E|_{BQ(n)}}(\gamma_1) \times \overline W^u_{-\nabla E|_{BQ(n)}}(\gamma_2)) \times_P \overline W^s_{-\nabla E|_{BQ(n)}}(\gamma),\]
     for some $n \gg 0$, where $\overline W^u$ and $\overline W^s$ are as in \cref{notn:morse_flowline_compact}. We equip $\sP$ with the canonical $R$-orientation, see \cref{dfn:morse_pontryagin}.
     \end{defn}

\begin{rem}
    We have implicitly chosen the metrics in \cref{notn:ColimData} generically so that the fiber product $\sP(\gamma_1,\gamma_2;\gamma)$ is a smooth manifold with corners for all choices of $\gamma_1, \gamma_2,$ and $\gamma$.
\end{rem}

\begin{rem}\label{rem:pontryagin_ring_str}
Notice that $\sP$ is a Morse-theoretic lift of the Pontryagin product $P_\bullet$ on $H_\bullet(\varOmega Q; \IZ)$.  Moreover it follows that the CJS realization of the Pontryagin flow multimodule in \cref{defn:PontryaginFlowMultimodule}
\[ |\sP| \colon \left( \varSigma^\infty_+ \varOmega Q \wedge R \right) \wedge_R \left( \varSigma^\infty_+ \varOmega Q \wedge R \right) \longrightarrow \left( \varSigma^\infty_+ \varOmega Q \wedge R \right) \]
induces the coincides with the Pontryagin product on $\varSigma^\infty_+ \varOmega Q \wedge R$ up to homotopy, see \cref{lem:msmsep} and \cref{lem:InducedProductMaps}.
\end{rem}

\subsection{From the cotangent fiber to the zero section}
Equip $T^\ast Q$ with the tautological polarization. This yields canonical choices of $R$-brane structures on the zero section $Q$ and the cotangent fiber $F$.

Let $S \coloneqq \ID^2 \smallsetminus \left\{\zeta_0,\zeta_1,\zeta_2\right\}$ be the unit disk in $\IC$ with its standard orientation and three boundary punctures enumerated along the boundary orientation. Denote by $(\partial S)_i$ the boundary segment between $\zeta_i$ and $\zeta_{i+1}$ for $i\in \left\{0,1,2\right\}$, where $\zeta_3 \coloneqq \zeta_0$. We equip $S$ with the choice of a Floer datum, with Hamiltonian $H_S \colon S \to \sH(T^\ast Q)$ and almost complex structure $J_S \colon S \to \sJ(T^\ast Q)$ such that $X_{H_S}$ vanishes in a neighborhood of $Q$, see \cref{dfn:Floer_datum}.

\begin{defn}
Let $a \in \Ob(\sC\sW(F,F;R))$, and denote by $\overline{\sM}(a)$ the Gromov compactification of the following moduli space of maps
\[
\sM(a) \coloneqq \left\{ u \colon S \to \overline X \left| \begin{matrix} u \mbox{ satisfies Floer's equation} \\ u(z) \in \psi^{\rho_S(z)}F, \;\; z \in (\partial S)_0 \\ u(z) \in \psi^{\rho_S(z)}F, \;\; z \in (\partial S)_1 \\ u(z) \in \psi^{\rho_S(z)}Q, \;\; z \in (\partial S)_2 \\ \lim_{s \to -\infty}u(\varepsilon_0^-(s,t)) = \xi \\ \lim_{s \to \infty}u(\varepsilon_1^+(s,t)) = \xi,
\\ \lim_{s \to \infty}u(\varepsilon_2^+(s,t)) = a(t)
\end{matrix} \right. \right\}. \]
Endow $\overline{\sM}$ with the Gromov topology. With this topology, it is a compact metric space.
\end{defn}

\begin{rem}\label{rmk:half-strip_before_eval}
Notice that $\overline{\sM}(a)$ is defined simply as $\osr(a,\xi;\xi)$ (see \cref{dfn:moduli_product}).  Thus by \cref{lem:moduli_product}, $\overline{\sM}(a)$ admits the structure of a $\ang{-\mu(a)}$-manifold.
\end{rem}
\begin{lem}\label{lem:grading_F_bbd_below}
    The grading function on the flow category $\sC\sW(F,F;R)$ is bounded below. In particular
    \[
    HW(F,F;R) = |\sC\sW(F,F;R);\fo_{(F,F)}|.
    \]
\end{lem}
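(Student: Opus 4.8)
The plan is to prove that the grading function on $\sC\sW(F,F;R)$ is bounded below, which (recalling that the grading function of this flow category is $-\mu$, where $\mu$ is the Maslov index) amounts to showing that the Maslov index $\mu(a)$ is bounded \emph{above} over all Hamiltonian chords $a \in \Ob(\sC\sW(F,F;R)) = \sX(F,F;H)$. Once this is established, the second sentence follows immediately from \cref{rem:Novikov}(ii): if the grading function $\mu$ on $\sC\sW(F,F;R)$ is bounded below, then the homotopy colimit over the action filtration stabilizes and $HW(F,F;R) = |\sC\sW(F,F;R),\fo_{(F,F)}|$. (Strictly, \cref{rem:Novikov}(ii) is phrased for the grading function of the flow category being bounded below, which is exactly the statement we are proving here, so no further work is needed for the ``in particular''.)

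To bound $\mu(a)$ from above, I would use the identification of Hamiltonian chords of a quadratic Hamiltonian on $F = T^*_\xi Q$ with broken geodesic/Reeb-type data together with the Morse-theoretic description of the cotangent fiber wrapped complex. Concretely, after the conformal rescaling discussed around \eqref{eq:ident_conformal_rescaling}, the chords from $F$ to $F$ correspond to (homotopy classes of) paths in $Q$ from $\xi$ to $\xi$, i.e.\ points of $\varOmega Q$, and the relevant gradings are computed by the Conley--Zehnder/Maslov index of the corresponding Reeb chords on the unit cosphere bundle. The key classical input is that for a cotangent fiber the wrapped Floer cohomology $HW^{-\bullet}(F,F;k)$ is supported in degrees $\bullet \in \IZ_{\leq 0}$ (it is isomorphic to the Pontryagin ring $H_{-\bullet}(\varOmega Q;k)$, concentrated in non-positive cohomological degrees), and more to the point: for each fixed free homotopy class of chord there are only finitely many chords below a given action, and the Maslov index is monotone with respect to action in a way that forces $\mu$ to be bounded above. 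I would make this precise by invoking the finite-dimensional approximation picture already set up in \cref{sec:loops}: via the forthcoming identification relating $\sC\sW(F,F;R)$ with the Morse--Smale flow category $\sM_{\varOmega Q}$ of the energy functional on broken geodesics, whose grading function is $+\mu^{\mathrm{Morse}}$, hence \emph{non-negative} and therefore bounded below after negation; equivalently $\mu(a) \leq 0$ for all $a$ under the correct sign conventions (the generators of $HW(F,F)$ live in non-positive cohomological degree, i.e.\ $-\mu(a) \leq 0$).

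More carefully, the steps would be: (1) Choose the admissible Floer datum $(H,J_t)$ so that $H$ is a small perturbation of a quadratic Hamiltonian and the chords $\sX(F,F;H)$ are in bijection with Reeb chords of the cosphere bundle $S^*_\xi Q \subset S^*Q$ ending on $S^*_\xi Q$ (this is standard; cf.\ the setup in \cref{sec:loops}). (2) Recall the standard computation (e.g.\ Abbondandolo--Schwarz, or the references \cite{abouzaid2012nearby,fukaya2008symplectic} cited in the introduction) that the Maslov index of such a chord, with the conventions used here where the grading function is $-\mu$, satisfies $-\mu(a) = |\gamma_a| \geq 0$ where $|\gamma_a|$ is the Morse index of the corresponding broken-geodesic critical point (equivalently the degree of the corresponding class in $H_\bullet(\varOmega Q;k)$); in particular $\mu(a) \leq 0$. (3) Conclude that the grading function $-\mu$ of $\sC\sW(F,F;R)$ is $\geq 0$, hence bounded below, and apply \cref{rem:Novikov}(ii).

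The main obstacle is step (2): pinning down the precise sign/normalization so that the Maslov index contribution is genuinely non-negative (and not merely bounded below by some constant), which requires care with the grading conventions fixed in \cref{dfn:flow_cat_lag} (grading $= -\mu$) versus the Morse-index conventions in \cref{lma:fin_dim_approx} and \cite[Section 13]{milnor1963morse}. I would handle this either by a direct Conley--Zehnder index computation for the linearized flow along a Reeb chord in $T^*Q$ (using that the cotangent fiber is a ``graph-type'' Lagrangian and the index equals the Morse index of the geodesic energy functional, up to a fixed shift that can be absorbed), or more cleanly by deferring to the identification of $\sC\sW(F,F;R)$ with the Morse flow category $\sM_{\varOmega Q}$ established later in \cref{sec:loops} (see \cref{thm:intro_hw_loop} and the surrounding discussion), whose grading function is manifestly the non-negative Morse index of the energy functional; since that identification is grading-preserving, the grading function of $\sC\sW(F,F;R)$ inherits the lower bound $0$. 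If one wishes to keep this lemma logically prior to \cref{sec:loops}, the self-contained route via the Conley--Zehnder computation for chords of a quadratic Hamiltonian on a cotangent fiber is the way to go, and the only real work is bookkeeping the index shift.
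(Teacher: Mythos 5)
Your proposal is correct and follows essentially the same route as the paper: the paper also deduces the bound from a grading-preserving correspondence between the Hamiltonian chords in $\Ob(\sC\sW(F,F;R))$ and the geodesic-loop objects of $\sM_{\varOmega Q}$ (citing the literature for this bijection rather than redoing the Conley--Zehnder bookkeeping), whose Morse-index grading is non-negative, and then invokes \cref{rem:Novikov} for the ``in particular''. Your worry about circularity is reasonable but moot, since the correspondence needed is only at the level of object sets with gradings, which the paper imports from an external reference rather than from the later identification of flow categories.
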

\begin{proof}
    First, there is a one-to-one grading preserving correspondence between the sets $\Ob(\sC\sW(F,F;R))$ and $\Ob(\sM_{\varOmega Q})$ (see e.g.\@ \cite[Lemma 15]{asplund2021fiber}), which shows that the grading function on $\sC\sW(F,F;R)$ is bounded below. The rest follows from the definition of the CJS realization, see \cref{rem:Novikov}(i).
\end{proof}
Fix a Riemannian metric on $Q$. We define the evaluation map by
\begin{align}\label{eq:ev_loop}
    \ev^{\mathrm{Moore}} \colon \sM(a) &\longrightarrow \varOmega Q^{\mathrm{Moore}} \\
    u &\longmapsto u|_{(\partial S)_2} \nonumber
\end{align}
where $u|_{(\partial S)_2}$ is parameterized by arc-length via the Riemannian metric on $Q$. This evaluation map admits a continuous extension to all of $\overline{\sM}(a)$ (see \cite[Lemma 2.2]{abouzaid2012wrapped} and \cite[Lemma 7]{asplund2021fiber} for details).  Abusing the notation, we also denote this extension by $\ev^{\mathrm{Moore}}$. Finally, define
\begin{equation}\label{eq:ev_loop_geo}
    \ev \coloneqq r^{\varOmega Q} \circ r^{\mathrm{Moore}} \circ \ev^{\mathrm{Moore}} \colon \overline{\sM}(a) \longrightarrow BQ,
\end{equation}
where $r^{\varOmega Q}$ is the fixed homotopy inverse of the inclusion $BQ \hookrightarrow \varOmega Q$ defined in \cref{notn:ColimData}(iv).

\begin{defn}
For $\gamma \in \Ob(\sM_{\varOmega Q})$ and $a \in \Ob(\sC\sW(F,F;R))$ define
\[
    \sN(a;\gamma) \coloneqq \overline{\sM}(a) \times_{\ev} \overline W^s_{- \nabla E|_{BQ(n)}}(\gamma),
\]
for some $n \gg 0$, where $\overline W^s$ is as in \cref{notn:morse_flowline_compact}.
\end{defn}

\begin{rem}
Like for the Pontryagin multimodule we have implicitly made a generic choice of the metrics in \cref{notn:ColimData} so that $\sN(a;\gamma)$ is a smooth manifold with corners.
\end{rem}
\begin{lem}\label{lem:evbimod}
    The assignment $(a,\gamma) \mapsto \sN(a;\gamma)$ defines a flow bimodule
    \[
        \sN \colon \sC\sW(F,F;R) \longrightarrow \sM_{\varOmega Q}.
    \]
    Moreover, this flow bimodule admits an $R$-orientation. We refer to this bimodule as the \emph{evaluation flow bimodule}. 
\end{lem}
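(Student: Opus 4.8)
\textbf{Proof proposal for \cref{lem:evbimod}.}

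The plan is to verify the axioms of an $R$-oriented flow bimodule (\cref{dfn:flow_bimodule} and \cref{dfn:r_ori_flow_bimod}) for the assignment $(a,\gamma) \mapsto \sN(a;\gamma) = \overline{\sM}(a) \times_{\ev} \overline W^s_{-\nabla E|_{BQ(n)}}(\gamma)$, following the same pattern used for the open-closed flow bimodule $\sO\sC$ in \cref{dfn:oc} and for the evaluation-type constructions appearing throughout \cref{subsec:morsesmale}. First I would record the dimension count: by \cref{rmk:half-strip_before_eval}, $\overline{\sM}(a) = \osr(a,\xi;\xi)$ is a $\langle -\mu(a)\rangle$-manifold of dimension $-\mu(a)$, and $\overline W^s_{-\nabla E}(\gamma)$ has codimension $\mu_{\sM_{\varOmega Q}}(\gamma)$ in $BQ(n)$ for $n$ large; since $\ev$ is a submersion onto $BQ(n)$ (this requires the genericity of the metrics $g_n$ in \cref{notn:ColimData}, already implicitly assumed as in the Pontryagin case), the fiber product $\sN(a;\gamma)$ is a smooth manifold with corners of dimension $-\mu(a) - \mu_{\sM_{\varOmega Q}}(\gamma)$. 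Recalling that the grading function on $\sC\sW(F,F;R)$ is $-\mu$ by \cref{dfn:flow_cat_lag}, this is exactly $\mu_{\sC\sW(F,F;R)}(a) - \mu_{\sM_{\varOmega Q}}(\gamma)$, the correct dimension for a degree-$0$ flow bimodule.

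Next I would construct the action maps. The action of $\sM_{\varOmega Q}$ on the right comes from breaking of negative gradient trajectories of $E$: concatenation gives $\sN(a;\gamma') \times \sM_{\varOmega Q}(\gamma';\gamma) \to \sN(a;\gamma)$, exactly as for the stable-manifold factor in \cref{dfn:morse_pontryagin}. The action of $\sC\sW(F,F;R)$ on the left comes from Gromov breaking of the strip factor $\overline{\sM}(a) = \osr(a,\xi;\xi)$ along the output $a$, i.e. from the maps $\osr(a;a') \times \osr(a',\xi;\xi) \to \osr(a,\xi;\xi)$ of \cref{lem:moduli_product}(i); one checks that the extended evaluation $\ev$ restricts correctly on these boundary faces so that the fiber product with $\overline W^s(\gamma)$ is compatible, giving $\sC\sW(F,F;R)(a;a') \times \sN(a';\gamma) \to \sN(a;\gamma)$. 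The compatibility and commutativity diagrams of \cref{dfn:flow_bimodule} then follow from the corresponding compatibilities in \cref{lem:moduli_product}(i) together with the associativity of geodesic breaking, and the $\langle \mu_{\sC\sW}(a)-\mu_{\sM_{\varOmega Q}}(\gamma)\rangle$-manifold structure with the prescribed faces $\partial_i \sN(a;\gamma)$ is assembled from \cref{rmk:half-strip_before_eval}, \cref{lem:msmsep}, and \cref{lem:smooth_gluing}. Compactness of $\bigsqcup_\gamma \sN(a;\gamma)$ follows from compactness of $\overline{\sM}(a)$ (Gromov compactness) and properness of $E$ on the finite-dimensional approximation.

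For the $R$-orientation I would proceed exactly as in \cref{dfn:oc}: the canonical isomorphism of $R$-line bundles over $\sN(a;\gamma)$
\[
    \check{\fo}(a) \otimes_R I_R^{\sN}(a;\gamma) \otimes_R \fo_{\varOmega Q}(\gamma) \simeq \check{\fo}(a) \otimes_R I_R^{\overline{\sM}}(a) \otimes_R (T_{\ev}\overline W^u(\gamma))_R^{\vee}\otimes_R\fo_{\varOmega Q}(\gamma)
\]
(the intersection orientation identity $T\sN \oplus T\overline W^s \simeq T\overline{\sM} \oplus TBQ(n)$ together with $TBQ(n)|_{\overline W^u}\simeq T\overline W^u\oplus T\overline W^s$) reduces, via $\fo_{\varOmega Q}(\gamma) = (T\overline W^u(\gamma))_R$ and the canonical trivialization provided by \cref{lem:moduli_product}(ii) of $\check{\fo}(a)\otimes_R I_R^{\overline{\sM}}(a)$ (recall $\overline{\sM}(a)=\osr(a,\xi;\xi)$ and $F$ carries the canonical $R$-brane structure, so the relevant abstract strip-cap index bundles are trivialized), to an isomorphism $\fo_{(F,F)}(a) \xrightarrow{\simeq} I_R^{\sN}(a;\gamma)\otimes_R\fo_{\varOmega Q}(\gamma)$. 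The compatibility of this $R$-orientation with the two action maps — diagrams of the form \eqref{eq:bimod_ext_ori1} and \eqref{eq:bimod_ext_ori2} — follows on the left from \cref{lem:moduli_product}(ii) and on the right from the defining compatibilities of the canonical $R$-orientation on $\sM_{\varOmega Q}$ in \cref{lem:msm} and \cref{lem:msmsep}. The main obstacle is the gluing/transversality bookkeeping needed to identify the codimension-one faces of $\sN(a;\gamma)$ precisely with the union of $\sC\sW(F,F;R)(a;a')\times\sN(a';\gamma)$ and $\sN(a;\gamma')\times\sM_{\varOmega Q}(\gamma';\gamma)$ — in particular checking that $\ev^{\mathrm{Moore}}$ extends continuously over all strata of $\overline{\sM}(a)$ compatibly with concatenation of Moore paths (using \cite[Lemma 2.2]{abouzaid2012wrapped}, \cite[Lemma 7]{asplund2021fiber}) and that no extra boundary faces arise from bubbling of the strip factor; this is where one must invoke the finite-dimensional geodesic approximation of \cref{lma:fin_dim_approx} and the genericity of metrics to ensure the fiber products stay transverse along every stratum.
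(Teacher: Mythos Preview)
Your proposal is correct and follows essentially the same approach as the paper: dimension count via $\overline{\sM}(a)=\osr(a,\xi;\xi)$, action maps from Gromov breaking on the left and geodesic breaking on the right, and the $R$-orientation built from the product orientation on $\osr(a,\xi;\xi)$ together with the fiber-product/intersection identity. Two small imprecisions worth flagging: first, $\ev$ need not be a submersion onto $BQ(n)$---only transversality of $\ev$ to each $\overline W^s(\gamma)$ is required (and obtained from generic metrics), which you in fact note later; second, your stated fiber-product identity should read $T\sN \oplus TBQ(n) \simeq T\overline{\sM} \oplus T\overline W^s$ rather than the version you wrote. The paper's derivation of the orientation isomorphism is slightly cleaner than your sketch: from \cref{lem:moduli_product}(ii) one gets $\fo_{(F,Q)}(\xi)\otimes_R (T\overline{\sM}(a))_R \simeq \fo_{(F,F)}(a)\otimes_R \fo_{(F,Q)}(\xi)$, and the point is simply that $\fo_{(F,Q)}(\xi)$ cancels from both sides to give $(T\overline{\sM}(a))_R \simeq \fo_{(F,F)}(a)$---this is the precise content behind your phrase ``the relevant abstract strip-cap index bundles are trivialized.''
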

\begin{proof}
    Firstly, by \cite[Lemma 2.2]{abouzaid2012wrapped} and \cite[Lemma 2]{asplund2021fiber}, $\dim \overline{\sM}(a) = -\mu(a)$ and
    \[
        \dim \sN(a;\gamma) = -\mu(a) - \ind_{\mathrm{Morse}}(\gamma).
    \]
    For all $a,a' \in \Ob(\sC\sW(F,F))$ and $\gamma, \gamma' \in \Ob(\sM_{\varOmega Q})$, there are maps
    \begin{align*}
        \sC\sW(F,F)(a;a') \times \sN(a';\gamma) &\longrightarrow \sN(a;\gamma) \\
        \sN(a;\gamma') \times \sM_{\varOmega Q}(\gamma';\gamma) &\longrightarrow \sN(a;\gamma),
    \end{align*}
    that are diffeomorphisms onto faces of $\sN(a;\gamma)$ (see \cite[(4.3.35--36)]{abouzaid2015symplectic} for the analogous statement for closed strings) such that defining
    \begin{align*}
        \partial_i \sN(a;\gamma) \coloneqq &\bigsqcup_{\substack{a' \in \Ob(\sC\sW(F,F;R)) \\ \mu(a')-\mu(a)=i}}\sC\sW(F,F)(a;a') \times \sN(a';\gamma) \\
        &\qquad \sqcup \bigsqcup_{\substack{\gamma' \in \Ob(\sM_{\varOmega Q}) \\ -\mu(a)-\ind_{\mathrm{Morse}}(\gamma') + 1= i}} \sN(a;\gamma') \times \sM_{\varOmega Q}(\gamma';\gamma),
    \end{align*}
    endows $\sN(a;\gamma)$ with the structure of a $\ang{-\mu(a)-\ind_{\mathrm{Morse}}(\gamma) + 1}$-manifold. Recall from \cref{dfn:flow_cat_lag} that the grading function on $\sC\sW(F,F)$ is given by $-\mu$.

    We now discuss $R$-orientations. Consider the $R$-line bundles $\fo_{(F,F)}(a)$ and $\fo_{(F,Q)}(\xi)$ associated with $a$ and $\xi$ respectively. The $R$-orientation of $\osr(a,\xi;\xi)$ from \cref{lem:moduli_product} together with \cref{dfn:wrapped_floer_cohomology} induces the following isomorphism of $R$-line bundles over $\overline{\sM}(a)$:
    \[ \fo_{(F,Q)}(\xi) \otimes_R (T\overline{\sM}(a))_R \overset{\simeq}{\longrightarrow} \fo_{(F,F)}(a) \otimes_R \fo_{(F,Q)}(\xi).\]
    It follows that we have isomorphisms of $R$-line bundles over $\overline{\sM}(a)$
    \begin{equation}\label{eq:evlem1} 
        (T\overline{\sM}(a))_R \overset{\simeq}{\longrightarrow} \fo_{(F,F)}(a).
    \end{equation}
    These isomorphisms are moreover compatible with the action of $\overline{\sM}$ on $\overline{\sM}(a) = \osr(a,\xi;\xi)$. Now, since $\sN(a;\gamma) = \overline{\sM}(a) \times_{\ev} \overline W^s_{- \nabla E|_{BQ}}(\gamma)$, we have an isomorphism of $R$-line bundles over $\sN(a;\gamma)$
    \begin{equation}\label{eq:evlem2} 
     (T\sN(a;\gamma))_R \otimes_R (T\overline{W}^u_{- \nabla E|_{BQ}}(\gamma))_R \simeq (T\overline{\sM}(a))_R,
    \end{equation}
    
    As $\overline{W}^u(\gamma)$ is contractible, there is a canonical identification
    \begin{equation}\label{eq:evlem3} 
     (T\overline{W}^u_{- \nabla E|_{BQ}}(\gamma))_R \simeq (T_\gamma \overline{W}^u_{- \nabla E|_{BQ}}(\gamma))_R = \fo_{\varOmega Q}(\gamma).
    \end{equation}
    Combining \eqref{eq:evlem1} and \eqref{eq:evlem3}, we have isomorphisms of $R$-line bundles over $\sN(a;\gamma)$
    \[ \fo_{(F,F)}(a) \overset{\simeq}{\longrightarrow} \fo_{\varOmega Q} (\gamma) \otimes_R (T\sN(a;\gamma))_R \]
\end{proof}
By \cref{lem:grading_F_bbd_below}, the CJS realization of $\sN$ yields is map of $R$-modules
\[
|\sN| \colon HW(F,F;R) \longrightarrow \varSigma^\infty_+ \varOmega Q \wedge R.
\]
Recall from \cref{rem:pontryagin_ring_str} that $\varSigma^\infty_+ \varOmega Q \wedge R$ is an $R$-algebra. The goal of the remaining part of this section is to show that $|\sN|$ defines an equivalence of $R$-algebras.
\begin{lem}\label{lem:ring_struct}
    The following diagram is homotopy commutative
    \[
        \begin{tikzcd}[row sep=scriptsize, column sep=2cm]
            HW(F,F;R) \wedge_{R} HW(F,F;R) \dar{\mu^2} \rar{|\sN| \wedge_{R} |\sN|} & (\varSigma^\infty_+ \varOmega Q \wedge R) \wedge_{R} (\varSigma^\infty_+ \varOmega Q \wedge R) \dar{|\sP|} \\
            HW(F,F;R) \rar{|\sN|} & \varSigma^\infty_+ \varOmega Q \wedge R
        \end{tikzcd}
    \]
\end{lem}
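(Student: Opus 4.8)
The plan is to construct an $R$-oriented flow bordism between two flow multimodules $\sC\sW(F,F;R), \sC\sW(F,F;R) \to \sM_{\varOmega Q}$ and then invoke \cref{lma:bordism_gives_homotopy_of_maps}. The two multimodules in question are: on one hand, the composition $\sN \circ_1 \osr$ (glue the evaluation bimodule $\sN \colon \sC\sW(F,F;R) \to \sM_{\varOmega Q}$ onto the product multimodule $\osr \colon \sC\sW(F,F;R), \sC\sW(F,F;R) \to \sC\sW(F,F;R)$ from \cref{defn:triangle_product}), whose CJS realization is $|\sN| \circ \mu^2$ by functoriality of the CJS realization under composition (\cref{dfn:compos_multimodules}, \cref{lma:functoriality_cjs}); on the other hand, the composition $\sP \circ (\sN, \sN)$ (glue the Pontryagin multimodule $\sP$ from \cref{defn:PontryaginFlowMultimodule} onto two copies of $\sN$), whose CJS realization is $|\sP| \circ (|\sN| \wedge_R |\sN|)$ for the same reason. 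So it suffices to produce an $R$-oriented bordism between these two compositions.

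First I would set up the geometric bordism at the level of $J$-holomorphic curves. Since $\sN(a;\gamma)$ is built from $\overline\sM(a) = \osr(a,\xi;\xi)$ (see \cref{rmk:half-strip_before_eval}) by fiber product with $\overline W^s_{-\nabla E}(\gamma)$ along the evaluation map on the $Q$-boundary arc, the composition $\sN \circ_1 \osr$ is built from moduli of disks with four boundary punctures (two inputs asymptotic to $a_1, a_2 \in \Ob(\sC\sW(F,F;R))$, one negative puncture, one auxiliary puncture forced to $\xi$, with an evaluation on the $Q$-arc), glued appropriately. Meanwhile, $\sP \circ (\sN, \sN)$ is built from a pair of disks $\osr(a_1,\xi;\xi) \times \osr(a_2,\xi;\xi)$ fiber-producted into a Morse trajectory space via the Pontryagin product $P$ on Moore loops. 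The bordism should come from a one-parameter family of domains: a single disk with two $F,F$-input punctures and a long neck degeneration that, at one end, produces the glued-up four-punctured configuration computing $\sN \circ_1 \osr$, and at the other end produces the split configuration $\osr(a_1,\xi;\xi)$ and $\osr(a_2,\xi;\xi)$ whose two $Q$-boundary arcs concatenate (this is precisely where the Pontryagin product $P$ enters, matching the concatenation of the two evaluation paths). This is the standard ``associativity/compatibility'' degeneration relating wrapped multiplication and the loop-space Pontryagin product, as in \cite{abouzaid2012wrapped, abouzaid2015symplectic}; one checks transversality for generic Floer data and identifies the codimension-one boundary strata of the resulting manifold-with-corners exactly as in \cref{def:multi_bord}.

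Next I would handle the $R$-orientation on this bordism. The index bundle of the bordism carries a canonical $R$-orientation obtained by the same gluing-of-abstract-strip-caps machinery of \cref{sec:gluing_abs_caps}: over the relevant space of abstract Floer strip caps $\sC^\#$ the index bundle $\sV$ is $R$-orientable by \cref{lem:CanonicalOrientation}, and \cref{lem:cap_gluing} supplies fiberwise-isomorphic gluing maps compatible with the geometric gluings; the evaluation fiber-product with the contractible $\overline W^s(\gamma)$ contributes no new orientation data, exactly as in the proof of \cref{lem:evbimod} via \eqref{eq:evlem1}--\eqref{eq:evlem3}. One then verifies that the induced $R$-orientations on the two boundary faces coincide with the $R$-orientations on $\sN \circ_1 \osr$ and $\sP \circ (\sN, \sN)$ coming from \cref{lem:r_ori_composition_bimods} and \cref{dfn:compos_multimodules}; this is a bookkeeping check of the compatible-isomorphisms diagrams, using that the orientation on $\sP$ is the canonical Morse--Smale one (\cref{dfn:morse_pontryagin}) and that the concatenation of evaluation paths is the geometric incarnation of $P$. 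Applying \cref{lma:bordism_gives_homotopy_of_maps} to this $R$-oriented bordism (after passing to action-filtered pieces and taking the homotopy colimit, using \cref{lem:grading_F_bbd_below} so no colimit is actually needed on the $HW(F,F;R)$ side) yields the homotopy commutativity of the square, i.e.\ that $|\sN|$ intertwines $\mu^2$ and $|\sP|$ up to homotopy.

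\textbf{The main obstacle} I expect is not the construction of the geometric bordism — that degeneration is well-understood over $\IZ$ — but rather the careful verification that the \emph{canonical $R$-orientations} on the three pieces ($\osr$, $\sP$, $\sN$, their two composites, and the bordism) are mutually compatible in the precise sense demanded by the diagrams \eqref{eq:bimod_ext_ori1}, \eqref{eq:bimod_ext_ori2}, and the analogous multimodule versions. The delicate point is the interaction of the abstract-strip-cap orientation on the Floer side with the Morse-theoretic orientation (via the unstable manifolds of $-\nabla E$ on $BQ$) on the loop-space side through the evaluation map on the $Q$-boundary arc; one must check that the identification \eqref{eq:evlem1} is respected by the degeneration interpolating between a single $Q$-arc and two concatenated $Q$-arcs, and that this matches the orientation on $\sP$ governed by the intersection-orientation formula in \cref{dfn:morse_pontryagin}. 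This is essentially the spectral refinement of the sign verification in \cite[Section 6]{abouzaid2012wrapped}, carried out at the level of $R$-line bundles rather than determinant lines, and it is where almost all the work lies.
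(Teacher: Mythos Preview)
Your proposal is correct and matches the paper's proof essentially line for line: the paper builds the bordism as $\sB(a_1,a_2;\gamma) \coloneqq \overline{\sM}(a_1,a_2) \times_{\ev} \overline W^s_{-\nabla E|_{BQ}}(\gamma)$ with $\overline{\sM}(a_1,a_2) = \osr(a_1,a_2,\xi;\xi)$ from \cref{dfn:moduli_assoc}, identifies its two distinguished boundary faces with $\sN\circ\osr$ and $\sP\circ(\sN,\sN)$, and then carries out exactly the orientation-compatibility check you flag as the main obstacle (via \cref{lem:moduli_associativity}(ii) and the isomorphisms \eqref{eq:evlem1}--\eqref{eq:evlem3}) before invoking \cref{lma:bordism_gives_homotopy_of_maps} and \cref{lem:grading_F_bbd_below}.
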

\begin{proof}
    For $a_1,a_2 \in \Ob(\sC\sW(F,F;R))$, denote by $\overline{\sM}(a_1,a_2)$ the moduli space $\osr(a_1,a_2, \xi;\xi)$ as defined in \cref{dfn:moduli_assoc}.  Analogous to the construction of $\sN$, we define an $R$-oriented flow bordism $\sB$ via the assignment
    \begin{equation}\label{eq:ring_struct}
        \sB(a_1,a_2;\gamma) \coloneqq \overline{\sM}(a_1,a_2) \times_{\ev} \overline W^s_{- \nabla E|_{BQ}}(\gamma),
    \end{equation}
    where $\overline W^s$ is as in \cref{notn:morse_flowline_compact}. For all $a_1,a_1',a_2,a_2' \in \Ob(\sC\sW(F,F))$ and $\gamma,\gamma' \in \Ob(\sM_{\varOmega Q})$ there are maps
    \begin{align}
        \sC\sW(F,F)(a_1;a_1') \times \sB(a_1',a_2;\gamma) &\longrightarrow \sB(a_1,a_2;\gamma)\nonumber \\
        \sC\sW(F,F)(a_2;a_2') \times \sB(a_1,a_2';\gamma) &\longrightarrow \sB(a_1,a_2;\gamma)\nonumber \\
        \sB(a_1,a_2;\gamma') \times \sM_{\varOmega Q}(\gamma';\gamma) &\longrightarrow \sB(a_1,a_2;\gamma)\nonumber \\
        \label{eq:boundary_stratum_pontryagin}
        i \colon (\sP \circ (\sN, \sN))(a_1,a_2;\gamma) &\longrightarrow \sB(a_1,a_2;\gamma) \\
        (\sN \circ \osr)(a_1,a_2;\gamma) &\longrightarrow \sB(a_1,a_2;\gamma) \nonumber
    \end{align}
    that are diffeomorphisms onto faces of $\sB(a_1,a_2;\gamma)$ (see \cite[(4.4.30--31)]{abouzaid2015symplectic} for the analogous statement for closed strings), where $\sP \circ (\sN, \sN) \coloneqq (\sP \circ_1 \sN) \circ_2 \sN$ (see \cref{dfn:compos_multimodules}), and where $\osr$ is the triangle product on the flow category $\sC\sW(F,F)$, see \eqref{eq:multimodule_mu2}. Analogously to the formulas in \cref{def:multi_bord} these maps endow $\sB(a_1,a_2;\gamma)$ with the structure of a $\ang{-\mu(a_1)-\mu(a_2)-\ind_{\mathrm{Morse}}(\gamma)+2}$-manifold, showing that the assignment \eqref{eq:ring_struct} indeed defines a flow bordism
    \[
    \sB \colon \sN \circ \osr \Longrightarrow \sP \circ (\sN, \sN).
    \]

    The $R$-orientation on $\sB$ is defined as follows. The isomorphism in \cref{lem:moduli_associativity}(ii) induces an isomorphism of $R$-line bundles over $\overline{\sM}(a_1,a_2)$
    \[
    \fo_{(F,Q)}(\xi) \otimes_R (T \overline{\sM}(a_1,a_2))_R \overset{\simeq}{\longrightarrow} \fo_{(F,F)}(a_1) \otimes_R \fo_{(F,F)}(a_2) \otimes_R \fo_{(F,Q)}(\xi),
    \]
    and hence $T \overline{\sM}(a_1,a_2) \simeq \fo_{(F,F)}(a_1) \otimes_R \fo_{(F,F)}(a_2)$. Now we thus get isomorphisms
    \begin{equation}\label{eq:caps_to_floer}
        (T\sB(a_1,a_2;\gamma))_R \otimes_R (T \overline W^u_{-\nabla E|_{BQ}}(\gamma))_R \simeq (T\overline{\sM}(a_1,a_2))_R \simeq \fo_{(F,F)}(a_1) \otimes_R \fo_{(F,F)}(a_2).
    \end{equation}
    Where $(T\overline W^u_{-\nabla E|_{BQ}}(\gamma))_R = \fo_{\varOmega Q}(\gamma)$ by contractibility of $\overline W^u(\gamma)$ and the definition. There are maps
    \begin{align*}
        j \colon \osr(a_1,a_2;a) \times \overline{\sM}(a) &\longrightarrow \overline{\sM}(a_1,a_2)\\
        k \colon \overline{\sM}(a_1) \times \overline{\sM}(a_2) &\longrightarrow \overline{\sM}(a_1,a_2)
    \end{align*}
    that are diffeomorphisms onto faces of $\overline{\sM}(a_1,a_2)$. It follows from the compatibilities for orientations on moduli spaces $\osr(-,-;-)$ and $\osr(-;-)$ (see also \cref{lem:cap_gluing}) that the following diagrams are commutative
    \[
        \begin{tikzcd}[row sep=scriptsize, column sep=scriptsize]
            (j^\ast T \overline{\sM}(a_1,a_2))_R \dar \drar & {} \\
            (T \osr(a_1,a_2;a))_R \otimes_R (T \overline{\sM}(a))_R \rar & \fo_1(a_1) \otimes_R \fo_2(a_2)
        \end{tikzcd}
    \]
    \[
        \begin{tikzcd}[row sep=scriptsize, column sep=scriptsize]
            (k^\ast T \overline{\sM}(a_1,a_2))_R \dar \drar & {} \\
            (T \overline{\sM}(a_1))_R \otimes_R (T \overline{\sM}(a_2))_R \rar & \fo_1(a_1) \otimes_R \fo_2(a_2)
        \end{tikzcd}.
    \]
    We also have that the first isomorphism in \eqref{eq:caps_to_floer} restricts to the isomorphisms
    \[
    (T \osr(a_1,a_2;a))_R \otimes_R (T\sN(a;\gamma))_R \otimes_R \fo_{\varOmega Q}(\gamma) \simeq (T \osr(a_1,a_2;a))_R \otimes_R (T \overline{\sM}(a))_R
    \]
    and
    \begin{align*}
        &(T \sN(a_1;\gamma_1))_R \otimes_R (T \sN(a_2;\gamma_2))_R \otimes_R (T\sP(\gamma_1,\gamma_2;\gamma))_R \otimes_R \fo_{\varOmega Q}(\gamma)\\
        & \qquad \simeq (T \overline{\sM}(a_1))_R \otimes_R (T \overline{\sM}(a_2))_R
    \end{align*}
    Hence it follows that the $R$-orientation on $\sB(a_1,a_2;\gamma)$ restricts to the given $R$-orientations on the boundary strata that are diffeomorphic to $\sN\circ\osr$ and $\sP \circ (\sN,\sN)$, respectively. By \cref{lem:grading_F_bbd_below}, the CJS realization of $\sB$ yields a homotopy between the compositions $\mu^2\circ |\sN|$ and $(|\sN|\wedge_R |\sN|) \circ |\sP|$, see \cref{lma:functoriality_cjs,lma:bordism_gives_homotopy_of_maps}.
\end{proof}
\begin{lem}\label{lem:floer_loop_integer}
    Let $R = H\IZ$. The $R$-module map $|\sN| \colon HW(F,F;H\IZ) \to \varSigma^\infty_+ \varOmega Q \wedge H\IZ$ coincides with $\sF^1$ from \cite[Lemma 4.5]{abouzaid2012wrapped}, after passing to homotopy groups.
\end{lem}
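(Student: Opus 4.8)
The plan is to show that the map $|\sN| \colon HW(F,F;H\IZ) \to \varSigma^\infty_+ \varOmega Q \wedge H\IZ$ agrees, after passing to homotopy groups, with the chain-level map $\sF^1$ constructed by Abouzaid in \cite[Lemma 4.5]{abouzaid2012wrapped}. The strategy is entirely analogous to the proof of \cref{lem:wfuk_discrete_coeff}: both maps are assembled from the same geometry---moduli spaces of $J$-holomorphic triangles with one boundary arc on the zero section, evaluated along that arc into (a finite-dimensional model of) the based loop space---so the content is to match up the algebraic bookkeeping (gradings, orientations, and the passage from flow-categorical CJS realizations to the Morse/Floer chain complexes).

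First, I would invoke \cref{lem:grading_F_bbd_below} so that $HW(F,F;H\IZ)$ is the CJS realization of the honest flow category $\sC\sW(F,F;H\IZ)$ (no action colimit needed, since the grading is bounded below), and likewise note that $\varSigma^\infty_+ \varOmega Q \wedge H\IZ \simeq |\sM_{\varOmega Q},\fo_{\varOmega Q}|$ via \cref{lem:msmsep}. Then, applying \cref{lem:flow_htpy_morse} (together with \cref{cor:jmod_fun} applied to the evaluation flow bimodule $\sN$ of \cref{lem:evbimod}), the map $\pi_\bullet(|\sN| \wedge_{H\IZ} H\IZ) = \pi_\bullet(|\sN|)$ is identified with the map on homology of Morse complexes induced by the $\IZ$-linear chain map $CM(\sN,\fo;\IZ)$; this chain map counts the $0$-dimensional components of $\sN(a;\gamma)$ with signs dictated by the induced $\IZ$-orientation. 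On the other side, $\sF^1$ in \cite[Lemma 4.5]{abouzaid2012wrapped} is \emph{by construction} a count of such $0$-dimensional fiber products $\overline{\sM}(a) \times_{\ev} \overline W^s(\gamma)$ with signs coming from the relative pin structures on $F$ and on the zero section $Q$. Since $\sN(a;\gamma)$ is literally built as $\overline{\sM}(a) \times_{\ev} \overline W^s_{-\nabla E}(\gamma)$ with $\overline{\sM}(a) = \osr(a,\xi;\xi)$, and the finite-dimensional broken-geodesic model of \cref{lma:fin_dim_approx} is exactly the one Abouzaid uses, the underlying (unsigned) counts agree tautologically. It remains to check the signs coincide.

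The sign comparison proceeds exactly as in the proof of \cref{lem:wfuk_discrete_coeff}. Via \cref{lem:hz_ori_data}, the $H\IZ$-orientation data on $F$ (canonical, by \cref{rem:brane}(iii)) and on $Q$ (canonical, since the zero section has canonically trivial Lagrangian Gauss map) correspond to the standard relative pin structures used in \cite{abouzaid2012wrapped}. The $H\IZ$-orientation on $\sN$ is built from the isomorphism \eqref{eq:evlem1} $(T\overline{\sM}(a))_R \simeq \fo_{(F,F)}(a)$, which in turn comes from the product orientation of \cref{lem:moduli_product}(ii) applied to $\osr(a,\xi;\xi)$, composed with the contractibility identification \eqref{eq:evlem3} for $\overline W^u(\gamma)$; under the dictionary of \cref{sec:gluing_abs_caps} (the trivialization of $\sV_R(y)$ fixed at the end of that section identifies $\check{\fo}(y)$ with Abouzaid's determinant line $\fo_y$), this is precisely the orientation prescription in \cite[Section 4]{abouzaid2012wrapped} for the moduli spaces defining $\sF^1$. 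Tracing these identifications through, exactly as in \cref{lem:wfuk_discrete_coeff}, shows $CM(\sN,\fo;\IZ)$ equals the chain map defining $\sF^1$, hence they induce the same map on homology; composing with the isomorphisms $\pi_\bullet(HW(F,F;H\IZ)) \cong HW^{-\bullet}(F,F;\IZ)$ and $\pi_\bullet(\varSigma^\infty_+\varOmega Q \wedge H\IZ) \cong H_\bullet(\varOmega Q;\IZ)$ from \cref{lem:wfuk_discrete_coeff} and \cref{prop:morse_homology} finishes the proof.

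The main obstacle is the sign matching: one must verify that the orientation on $\sN(a;\gamma)$ coming from the abstract-strip-cap framework of \cref{sec:canonical_orientations} (concretely, from the composition of \eqref{eq:evlem1}, \eqref{eq:evlem2}, \eqref{eq:evlem3}) is \emph{literally} the intersection-theoretic orientation that Abouzaid places on $\overline{\sM}(a) \times_{\ev} \overline W^s(\gamma)$ using pin structures. This is the same kind of careful but routine diagram-chase carried out in the proof of \cref{lem:wfuk_discrete_coeff}, made slightly more involved here by the presence of the evaluation fiber product and the broken-geodesic approximation; however, since both orientations are ultimately pinned down by the same relative pin structures on $F$ and $Q$ and the same Cauchy--Riemann linearizations, no genuine discrepancy can arise, and the verification amounts to unwinding definitions.
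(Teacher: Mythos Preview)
Your proposal is correct and follows essentially the same approach as the paper: identify the homotopy groups via \cref{lem:wfuk_discrete_coeff}, observe that the underlying moduli spaces are identical so the unsigned counts agree, and reduce the sign comparison to the same dictionary between $H\IZ$-brane data and relative pin structures already established in the proof of \cref{lem:wfuk_discrete_coeff}. The paper's proof is considerably more terse---it simply records the background class $b = \pi^\ast w_2(Q)$ determined by the tautological polarization, invokes \cref{lem:wfuk_discrete_coeff}, and says the orientation comparison for $\sN$ against \cite[Section 6.1]{abouzaid2012wrapped} is ``similar''---but your expanded account is a faithful elaboration of that outline.
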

\begin{proof}
    First, we recall from \cref{exmp:cotpol} the standard polarization on $T^\ast Q$ corresponds to the Lagrangian distribution given by the tangent spaces of the cotangent fibers. This determines the background class $b = \pi^\ast w_2(Q) \in H^2(T^\ast Q;\IZ/2)$, see \cref{sec:integer_coeffs}. By \cref{lem:wfuk_discrete_coeff}, the homotopy groups of $HW(F,F;R)$ coincide with $HW^{-\bullet}_b(F,F;\IZ)$ as defined in \cite[Section 3.3]{abouzaid2012wrapped}. The comparison of $H\IZ$-orientations of the flow bimodule $\sN$ and the orientations discussed in \cite[Section 6.1]{abouzaid2012wrapped} is similar to the discussion in \cref{lem:wfuk_discrete_coeff}.
\end{proof}
\begin{thm}\label{prop:evinv} $|\sN| \colon HW(F,F;R) \to \varSigma^\infty_+ \varOmega Q \wedge R$ is an equivalence of homotopy $R$-algebras.
\end{thm}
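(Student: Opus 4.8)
The plan is to prove that $|\sN|$ is an equivalence first, and then upgrade it to an equivalence of homotopy $R$-algebras using \cref{lem:ring_struct}. Since $R$ is connective, by the Whitehead-type arguments referenced in the excerpt (in particular \cref{prop:homology_wedge_ring} and \cref{cor:jmod_fun}) it suffices to show that $|\sN| \wedge_R Hk$ induces an isomorphism on homotopy groups, where $k = \pi_0(R)$. By \cref{lem:grading_F_bbd_below} the flow category $\sC\sW(F,F;R)$ has grading function bounded below, so the relevant $\sJ$-modules are bounded below and \cref{prop:homology_wedge_ring} applies: $\pi_\bullet(HW(F,F;R) \wedge_R Hk) \cong H_\bullet(CM(\sC\sW(F,F;R);k))$. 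On the target side, $\pi_\bullet(\varSigma^\infty_+ \varOmega Q \wedge R \wedge_R Hk) \cong H_\bullet(\varOmega Q;k)$, the latter being computed by the Morse complex of $\sM_{\varOmega Q}$ via \cref{prop:morse_homology} and \cref{lem:msmsep}. Moreover, by \cref{cor:jmod_fun}, the map induced by $|\sN|$ on these homology groups is precisely the Morse chain map $CM(\sN;k)$ associated to the $Hk$-oriented flow bimodule $\sN$.

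\textbf{Reduction to the integral case.} The key observation is that the Morse chain map $CM(\sN;k)$ is, by construction, the chain-level map counting rigid elements of the evaluation moduli spaces $\sN(a;\gamma)$, with signs and orientations governed by the $R$-orientation data. For $R = H\IZ$, \cref{lem:floer_loop_integer} identifies $|\sN|$ (on homotopy groups) with the map $\sF^1$ of Abouzaid \cite[Lemma 4.5]{abouzaid2012wrapped}, which is the celebrated isomorphism $HW^{-\bullet}(F,F;\IZ) \cong H_\bullet(\varOmega Q;\IZ)$; the same proof applies verbatim over an arbitrary commutative discrete ring $k$, since Abouzaid's argument is a filtered comparison of Morse-type complexes that works with coefficients in any ring (see also \cref{rem:wfuk_discrete_coeff_k}). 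Therefore $CM(\sN;k) \colon CM(\sC\sW(F,F;Hk);k) \to CM(\sM_{\varOmega Q};k)$ induces an isomorphism on homology. Combining this with the Whitehead theorem \cref{thm:whitehead} (applied to the connective $R$-modules $HW(F,F;R)$ and $\varSigma^\infty_+ \varOmega Q \wedge R$, both of which are connective since $R$ is connective and the flow categories in question have grading bounded below) gives that $|\sN|$ is an equivalence of $R$-modules.

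\textbf{The algebra structure.} Finally, \cref{rem:pontryagin_ring_str} endows $\varSigma^\infty_+ \varOmega Q \wedge R$ with the Pontryagin product as a homotopy $R$-algebra, while \cref{defn:triangle_product} and \cref{lma:associativity_comp_wfuk} make $HW(F,F;R)$ a homotopy $R$-algebra via $\mu^2$. By \cref{lem:ring_struct}, the diagram intertwining $\mu^2$, $|\sP|$, and $|\sN|$ commutes up to homotopy, so $|\sN|$ is a map of homotopy $R$-algebras. Since it is a weak equivalence of underlying $R$-modules, it is an equivalence of homotopy $R$-algebras. The unit is also preserved: $\eta_F$ maps to the class of the constant loop, which is the unit for the Pontryagin product; this can be checked by a bordism argument connecting the moduli space defining $\eta_F$ to the unit flow bimodule of $\sM_{\varOmega Q}$, analogous to the unital statement proved in \cref{lem:PSS}.

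\textbf{Main obstacle.} The principal technical point is establishing \cref{lem:floer_loop_integer} convincingly — i.e., matching the $H\IZ$-orientation conventions on $\sN$ coming from the abstract strip-cap framework of \cref{sec:canonical_orientations} with the concrete orientation conventions of \cite[Section 6]{abouzaid2012wrapped}. This is the same sign-bookkeeping issue that arises in \cref{lem:wfuk_discrete_coeff}, and the expectation is that tracing through \cref{lem:cap_gluing} and the identifications of index bundles reduces it to a routine (if tedious) comparison; once the integral statement is pinned down, the passage to arbitrary connective $R$ via the Hurewicz/Whitehead machinery is essentially formal. A secondary subtlety is verifying the compatibility of the $R$-orientation on the bordism $\sB$ in \cref{lem:ring_struct} along both distinguished boundary strata, but this follows the same pattern as the orientation compatibilities already established for $\osr(a,b,c;d)$ in \cref{lem:moduli_associativity}.
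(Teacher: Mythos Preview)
Your proposal is correct and follows essentially the same route as the paper's proof: reduce to showing that $|\sN|\wedge_R Hk$ is an isomorphism on homotopy groups via \cref{prop:homology_wedge_ring}, identify this with Abouzaid's map $\sF^1$ via \cref{lem:floer_loop_integer} (and \cref{rem:wfuk_discrete_coeff_k} for general $k$), and then apply \cref{thm:whitehead} and \cref{lem:ring_struct}. Your additional remarks on unit preservation and the orientation bookkeeping in \cref{lem:floer_loop_integer} are supplementary but not needed for the statement as written.
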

\begin{proof}
    By \cref{lem:grading_F_bbd_below} and the definition of the CJS realization (see \cref{dfn:cjs_realization}), it follows that $HW(F,F;R)$ is connective. The ring spectrum $\varSigma^\infty_+ \varOmega Q \wedge R$ is also connective. Let $k\coloneqq \pi_0 R$. It is sufficient to show that 
    \[ \pi_\bullet(|\sN| \wedge_R Hk) \colon \pi_\bullet(HW(F,F;R) \wedge_R Hk) \longrightarrow \pi_\bullet((\varSigma^\infty_+ \varOmega Q \wedge R) \wedge_R Hk)\]
    is an isomorphism of $k$-modules, since \cref{thm:whitehead} implies that $|\sN|$ is an equivalence of $R$-modules, and \cref{lem:ring_struct} shows that $|\sN|$ is an equivalence of homotopy $R$-algebras. 
    
    From \cref{prop:homology_wedge_ring}, it follows that
    \[ \pi_\bullet(HW(F,F;R) \wedge_R Hk) \cong HW^{-\bullet}(F,F;k)\]
    and
    \[ \pi_\bullet((\varSigma^\infty_+ \varOmega Q \wedge R) \wedge_R Hk) \cong H_\bullet(\varOmega Q;k) \]
    are computed by the homology of the Morse complexes of these flow categories and
    \[ \pi_\bullet(|\sN| \wedge_R Hk) = H_\bullet(\sN;k) \] 
    by the Morse chain map. By \cref{lem:floer_loop_integer} (see also \cref{rem:wfuk_discrete_coeff_k}), the associated Morse chain map of $\sN$ coincides with map $\mathcal F^1$ constructed in \cite[Lemma 4.5]{abouzaid2012wrapped} in homology, which is a homology isomorphism. This finishes the proof.
\end{proof}
We define an $R$-algebra structure on $HW(F,F;R)$ by pulling back the $R$-algebra structure on $\varSigma^\infty_+ \varOmega Q \wedge R$ via $|\sN|$ from \cref{prop:evinv}, which we by abuse of notation also denote by $HW(F,F;R)$. It is immediate that this $R$-algebra structure on $HW(F,F;R)$ is a lift of the given homotopy $R$-algebra structure and that $|\sN|$ is an equivalence of $R$-algebras.
\begin{cor}\label{cor:hw_loops_alg}
    The map $|\sN| \colon HW(F,F;R) \to \varSigma^\infty_+ \varOmega Q \wedge R$ is an equivalence of $R$-algebras.
    \qed
\end{cor}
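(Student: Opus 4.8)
\textbf{Proof proposal for \cref{cor:hw_loops_alg}.} The plan is to deduce this from \cref{prop:evinv} together with the transport-of-structure principle for ring spectra along weak equivalences. First recall that by \cref{prop:evinv} the map $|\sN| \colon HW(F,F;R) \to \varSigma^\infty_+\varOmega Q \wedge R$ is a weak equivalence of $R$-modules, and that by \cref{lem:ring_struct} it intertwines the product $\mu^2$ on $HW(F,F;R)$ with the Pontryagin product $|\sP|$ on $\varSigma^\infty_+\varOmega Q \wedge R$ up to homotopy (and by \cref{rem:pontryagin_ring_str}, $|\sP|$ is homotopic to the Pontryagin product coming from the strictly associative concatenation $P$ on the Moore loop space). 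The key geometric input — that the Moore loop space carries a \emph{strict} Pontryagin product, hence that $\varSigma^\infty_+\varOmega Q \wedge R$ is an honest (in particular $A_\infty$) $R$-algebra — is already recorded in \cref{sec:loops}.

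Given this, the argument is formal. Since $\varSigma^\infty_+\varOmega Q \wedge R$ is an $R$-algebra and $|\sN|$ is a weak equivalence of $R$-modules, choose a homotopy inverse $\varphi \colon \varSigma^\infty_+\varOmega Q \wedge R \to HW(F,F;R)$ in $\mod R$ and transport the $R$-algebra structure along $|\sN|$: the composite
\[ HW(F,F;R) \wedge_R HW(F,F;R) \xrightarrow{|\sN| \wedge_R |\sN|} (\varSigma^\infty_+\varOmega Q \wedge R) \wedge_R (\varSigma^\infty_+\varOmega Q \wedge R) \xrightarrow{|\sP|} \varSigma^\infty_+\varOmega Q \wedge R \xrightarrow{\varphi} HW(F,F;R) \]
together with the transported unit defines an $R$-algebra structure on $HW(F,F;R)$, well-defined up to a contractible space of choices, for which $|\sN|$ is by construction a map of $R$-algebras; being also a weak equivalence, it is an equivalence of $R$-algebras. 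This is the structure referred to in the paragraph preceding the corollary, and it is exactly this construction that the phrase ``pulling back the $R$-algebra structure along $|\sN|$'' names.

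Finally, one checks that this transported $R$-algebra structure lifts the homotopy $R$-algebra structure on $HW(F,F;R)$ furnished by $\mu^2$ (\cref{defn:triangle_product}, \cref{lma:associativity_comp_wfuk}, \cref{lem:unit_in_fuk}). Indeed, the underlying homotopy-associative product of the transported structure is $\varphi \circ |\sP| \circ (|\sN| \wedge_R |\sN|)$, which by \cref{lem:ring_struct} is homotopic to $\varphi \circ |\sN| \circ \mu^2 \simeq \mu^2$; similarly the transported unit is homotopic to $\eta_F$ by \cref{lma:coh_morse_flow_thom_space} and the construction of $|\sN|$. Hence the transported structure agrees, on the level of the homotopy category of $R$-modules, with the one already defined, completing the proof. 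I do not expect a genuine obstacle here: all the substance is carried by \cref{prop:evinv} and \cref{lem:ring_struct}, and the only point requiring care is fixing the ambient model for $\mod R$ (e.g.\ symmetric spectra or orthogonal $R$-modules) so that ``transport of an $A_\infty$-structure along a weak equivalence'' is a legitimate operation with contractible choices — a standard fact in that setting.
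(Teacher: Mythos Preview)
Your proposal is correct and is exactly the paper's approach: the paragraph immediately preceding the corollary \emph{defines} the $R$-algebra structure on $HW(F,F;R)$ by pulling back along the weak equivalence $|\sN|$ (your transport-of-structure construction), so that $|\sN|$ is tautologically an equivalence of $R$-algebras, and the compatibility with $\mu^2$ follows from \cref{lem:ring_struct} as you say. The only quibble is your citation of \cref{lma:coh_morse_flow_thom_space} for the unit compatibility, which is not the right reference; the paper does not address the unit explicitly here either, simply asserting the lift is immediate.
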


\section{Structured lift of loop space modules}\label{sec:str_lift}
Using the wrapped Donaldson--Fukaya category discussed in \cref{sec:donaldson-fukaya} we obtain a Yoneda embedding
\[ \sY_F \colon \sW(T^*Q;R) \longrightarrow \homod{(\varSigma^\infty_+ \varOmega Q \wedge R)},\]
defined on objects as $L \mapsto HW(F,L)$. Via the triangle product $\mu^2$ (see \cref{defn:triangle_product}), this is a homotopy module over $HW(F,F)$, and hence a homotopy module over $\varSigma^\infty_+ \varOmega Q \wedge R$ by \cref{cor:hw_loops_alg}.
The aim of this section is to construct of lift of this functor to a functor
\[ \sY_{\varOmega Q} \colon \sW(T^*Q;R) \longrightarrow \Ho \mod{(\varSigma^\infty_+ \varOmega Q \wedge R)},\]
where the codomain is the homotopy category of (highly structured) modules over $\varSigma^\infty_+ \varOmega Q \wedge R$. 

We start by constructing a functor $\sY_{\varOmega Q}$ more generally for any closed Lagrangian $R$-brane $Q$ inside a stably polarized Liouville sector. It is constructed by equipping the relevant flow categories with local systems as defined in \cref{sec:flow_cat_local_system}. We then specialize to the case of the zero section in a cotangent bundle and verify that it indeed gives a lift of the functor $\sY_F$ above.

\subsection{Yoneda functor valued in local systems} 
\begin{notn}
    \begin{enumerate}
        \item Let $R$ be a commutative ring spectrum.
        \item Let $X$ be a stably polarized Liouville sector.
        \item Let $Q \subset X$ be a closed Lagrangian $R$-brane.
        \item Throughout this section we fix a basepoint $q_0 \in Q$ and let $F = T^\ast_{q_0}Q$.
        \item For any $q_1,q_2 \in Q$, we denote by $\sP_{q_1q_2}Q$ the space of Moore paths in $Q$ from $q_1$ to $q_2$.
    \end{enumerate} 
\end{notn}
\begin{thm}\label{thm:loc_sys_yoneda}
	There exists a functor
	\[ \sY_{\varOmega Q} \colon \sW(X;R) \longrightarrow \Ho \mod{(\varSigma^\infty_+ \varOmega Q \wedge R)}\]
	enriched over the homotopy category of $R$-modules.
\end{thm}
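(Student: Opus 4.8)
The plan is to construct $\sY_{\varOmega Q}$ by upgrading the (homotopy) Yoneda functor $\sY_F$ to one valued in genuine module spectra, using the machinery of flow categories with local systems from \cref{sec:flow_cat_local_system}. The key geometric input is the same as in the proof of \cref{prop:evinv}: the moduli space $\overline{\sM}(a)$ of triangles with boundary on $(F,F,Q)$ (and more generally $\overline{\sM}(a_1,\ldots,a_n)$) carries evaluation maps to the path space of $Q$. First I would attach to each flow category $\sC\sW(F,L;R)$ a local system of $(\varSigma^\infty_+ \varOmega Q \wedge R)$-modules $\sP L$, defined on an object (Hamiltonian chord) $a$ by $x \mapsto \varSigma^\infty_+ \sP_{q_0, \mathrm{ev}(a)}Q \wedge R$, with structure maps given by composing with the boundary evaluation map $\overline{\sR}(a;b) \to \sP Q$ along $\partial S \cap Q$ followed by concatenation of Moore paths — exactly as in the definition of $\sP M$ in \cref{subsec:morsesmale} but now for Lagrangian Floer strips. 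For the zero section in a general stably polarized Liouville sector one uses that $Q$ is a closed Lagrangian brane so that the relevant moduli spaces of discs with a boundary component on $Q$ (and an evaluation to $\varOmega Q$) are cut out transversally after a generic choice of Floer data.

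The main steps, in order, would be: (1) define the local system $\sP L$ on $\sC\sW(F,L;R)$ for each closed Lagrangian $R$-brane $L$ (more generally any Lagrangian $R$-brane), checking compatibility of the evaluation-and-concatenation maps with the action maps of the flow category, so that $(\sC\sW(F,L;R),\fo_{(F,L)},\sP L)$ is an $R$-oriented flow category with a local system of $(\varSigma^\infty_+\varOmega Q\wedge R)$-modules in the sense of \cref{sec:flow_cat_local_system}; (2) set $\sY_{\varOmega Q}(L) \coloneqq |\sC\sW(F,L;R),\fo_{(F,L)},\sP L|$, which by construction is a $(\varSigma^\infty_+\varOmega Q\wedge R)$-module (after passing to action-filtered pieces and taking the homotopy colimit as in \cref{dfn:wrapped_floer_cohomology}); (3) for a morphism, i.e. an element of $HW(L_0,L_1;R)$, realized as the CJS realization of the product flow multimodule $\osr^{k,\ell}$ of \eqref{eq:multimodule_mu2}, equip that multimodule with the composition local system (\cref{dfn:composition_local_system}) coming from concatenating the evaluations of the two triangle moduli, obtaining an $R$-oriented flow multimodule with local system whose CJS realization is a map of $(\varSigma^\infty_+\varOmega Q\wedge R)$-modules $\sY_{\varOmega Q}(L_0) \wedge_R HW(L_0,L_1;R) \to \sY_{\varOmega Q}(L_1)$; (4) check functoriality (associativity and units) up to homotopy, using bordisms of flow multimodules with local systems — this is the content of \cref{lma:associativity_comp_wfuk} and \cref{lem:unit_in_fuk} carried through in the category of modules with local systems, together with \cref{lem:InducedProductMaps}-type identifications — so that $\sY_{\varOmega Q}$ descends to a functor to $\Ho\mod{(\varSigma^\infty_+\varOmega Q\wedge R)}$ enriched over $\Ho\mod R$; (5) finally check that forgetting the local system (i.e. composing with the forgetful functor $\Ho\mod{(\varSigma^\infty_+\varOmega Q\wedge R)} \to \homod{(\varSigma^\infty_+\varOmega Q\wedge R)}$, equivalently applying $-\wedge_{\varSigma^\infty_+\varOmega Q\wedge R}(\varSigma^\infty_+\varOmega Q\wedge R)$ with the trivial action, cf.\@ \cref{rem:trivial_local_sys}) recovers $\sY_F$ — here one uses that $|\sC\sW(F,L;R),\fo,\sP L|$ with the path local system collapsed to the constant one is $HW(F,L;R)$, and that the induced module map agrees with $\mu^2$ up to homotopy via a bordism argument as in \cref{prop:evinv}.

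The hard part will be step (3)–(4): verifying that the evaluation maps to $\varOmega Q$ are strictly (not just homotopy-) compatible with the composition and action structures at the level of flow multimodules with local systems, so that the CJS realizations assemble into an honest module map and the coherences (associativity, unitality) hold up to the homotopies furnished by flow bordisms with local systems. Concretely, one must produce, for triples $L_0,L_1,L_2$, an $R$-oriented flow bordism with local system between the composite multimodule $\sP$-structure on $\osr \circ_1 \osr$ and the one on $\osr$, interpolating the two ways of concatenating Moore-path evaluations; this is the local-system-enriched analogue of \cref{lem:moduli_associativity}. The geometry (moduli of discs with one boundary on $Q$ and three Hamiltonian-chord punctures on $F$, with evaluation to $\varOmega Q$) is standard — it is exactly the setup underlying the Abbondandolo--Schwarz / Abouzaid isomorphism $HW(F,F)\simeq C_{-\ast}(\varOmega Q)$ — but packaging it as compatible local systems on the flow-categorical level requires care. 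A secondary technical point is that for a general closed Lagrangian $R$-brane $Q$ (as opposed to the zero section), one must ensure the relevant moduli spaces with boundary on $Q$ and evaluation to $\varOmega Q$ are regular and that the local system is well-defined independently of auxiliary choices up to the natural homotopies, which is handled by the usual continuation/bordism arguments already used repeatedly in \cref{sec:donaldson-fukaya}.
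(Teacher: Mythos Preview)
Your proposal has a genuine gap in the choice of flow category carrying the local system. You propose to put the path-space local system on $\sC\sW(F,L)$, with structure maps coming from ``the boundary evaluation map $\overline{\sR}(a;b) \to \sP Q$ along $\partial S \cap Q$.'' But the Floer strips in $\sC\sW(F,L)(a;b)$ have boundary on $F$ and $L$, not on $Q$, so there is no $Q$-boundary to evaluate along; the local system you describe cannot be defined on this flow category. More fundamentally, the theorem is stated for a general stably polarized Liouville sector $X$ with a closed Lagrangian $R$-brane $Q \subset X$, and in this generality there is no cotangent fiber $F$ as an object of $\sW(X;R)$, so $\sC\sW(F,L)$ is not even available.

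The paper's construction instead places the local system on $\sC\sW(Q,L)$: objects are Hamiltonian chords $x$ starting on $Q$, and one sets $\sE_{\varOmega Q}(x) = \varSigma^\infty_+ \sP_{q_0,x(0)}Q \wedge R$. The structure maps come from evaluating the $Q$-boundary of a Floer strip $u \in \sC\sW(Q,L)(x;y)$ to a path $u|_{\IR\times\{0\}} \in \sP_{x(0),y(0)}Q$ and concatenating. One then defines $HW(\varOmega Q,L) \coloneqq |\sC\sW(Q,L),\fo_{Q,L},\sE_{\varOmega Q}|$. On morphisms, the triangle moduli $\osr(p,q;r)$ with $p \in \sX(Q,L)$, $q \in \sX(L,K)$, $r \in \sX(Q,K)$ carry a local system via evaluation along the $Q$-boundary arc $(\partial S)_0$, giving a flow multimodule with local system whose CJS realization is the structure map $\mu^2_{\varOmega Q}$; associativity is then a bordism-with-local-system argument using the four-punctured disk. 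Your steps (3)--(4) become straightforward once the correct flow category is used, precisely because every relevant moduli space has a $Q$-boundary. The comparison with $\sY_F$ that you outline in step (5) is not part of proving this theorem; it is a separate result (specific to cotangent bundles) established later via an explicit flow bimodule $\sC\sW(F,L) \to \sC\sW(\varOmega Q,L)$ built from the triangles $\osr(q_0,p;q)$.
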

Construction of this functor occupies the rest of this subsection.
\subsubsection{Yoneda functor on objects}\label{subsec:obj_lift_gen}
Let $L$ be a Lagrangian $R$-brane inside $X$. We now construct a $(\varSigma^\infty_+ \varOmega Q \wedge R)$-module $\sY_{\varOmega Q}(L)$.

\begin{notn}
    For a Hamiltonian chord $x \in \Ob(\sC\sW(Q,L))$ we shall abuse the notation and sometimes write $x$ for the starting point $x(0) \in Q$.
\end{notn}

Similar to \eqref{eq:ev_loop} we have an evaluation map for any $x,y \in \Ob(\sC\sW(Q,L))$
\begin{align*}
    \ev \colon \sC\sW(Q,L)(x,y) &\longrightarrow \sP_{xy}Q \\
    u &\longmapsto u|_{\IR \times \{0\}}.
\end{align*}

\begin{defn}
    Let $L \subset X$ be any Lagrangian $R$-brane. Define the local system $\sE_{\varOmega Q} \colon \sC\sW(Q, L) \to \mod{(\varSigma^\infty_+ \varOmega Q \wedge R)}$ on objects by $\sE_{\varOmega Q}(x) \coloneqq \varSigma^\infty_+\sP_{q_0x}Q \wedge R$ and on morphisms by the composition
    \begin{align*}
        (\varSigma^\infty_+\sP_{q_0x}Q \wedge R) \wedge \varSigma^\infty_+\sC\sW(Q,L)(x,y) &\xrightarrow{\id \wedge \ev} (\varSigma^\infty_+\sP_{q_0x}Q \wedge R) \wedge \varSigma^\infty_+\sP_{xy}Q \\
        &\overset{P}{\longrightarrow} \varSigma^\infty_+\sP_{q_0y}Q\wedge R,
    \end{align*}
    where the second map is the concatenation of Moore paths. Let $\sC\sW(\varOmega Q,L) \coloneqq (\sC\sW(Q,L),\sE_{\varOmega Q})$ and define
    \[ HW(\varOmega Q,L) \coloneqq |\sC\sW(\varOmega Q,L),\fo_{Q,L}|.\]
\end{defn}

\subsubsection{Yoneda functor on morphisms}\label{subsec:mor_lift_gen}
    We now extend the assignment $L \mapsto HW(\varOmega Q,L)$ constructed in \cref{subsec:obj_lift_gen} to a functor
	\[ \sY_{\varOmega Q}\colon \sW(T^*Q;R) \longrightarrow \Ho\mod{(\varSigma^\infty_+ \varOmega Q \wedge R)}.\]
    Concretely, this amounts to constructing for any Lagrangian $R$-branes $L, K \subset T^*Q$, a map
    \[ \sY_{\varOmega Q} \colon HW(L,K) \longrightarrow \left(\Ho\mod{(\varSigma^\infty_+ \varOmega Q \wedge R)}\right) \left( HW(\varOmega Q,L) , HW(\varOmega Q, K) \right)\]
    Moreover, we need to verify that this assignment respects compositions up to homotopy.

    For any Lagrangian $R$-branes $L$ and $K$, define the $R$-oriented flow multimodule with local system 
    \[
    (\sR_{\varOmega Q},\fo_{\varOmega Q},\sF_{\varOmega Q}) \colon \sC\sW(\varOmega Q,L) , \sC\sW(L,K) \longrightarrow \sC\sW(\varOmega Q,K),
    \]
    completely analogously to the $R$-oriented flow multimodule with local system in \eqref{eq:twisted_multimod}. Analogously to \eqref{eq:action_filtered_twisted_mu2}, the flow multimodule $(\sR_{\varOmega Q},\fo_{\varOmega Q})$ restricts to an $R$-oriented flow multimodule with local system
    \[
    (\sR^{\ell}_{\varOmega Q},\fo^{\ell}_{\varOmega Q},\sF^{\ell}_{\varOmega Q}) \colon \sC\sW(\varOmega Q,L), \sC\sW^{\leq B_\ell}(L,K) \longrightarrow \sC\sW(\varOmega Q,K).
    \]
\begin{defn}\label{dfn:morphisms_twisted_yoneda}
    The CJS realizations of $(\sR^{\ell}_{\varOmega Q},\fo^{\ell}_{\varOmega Q},\sF^{\ell}_{\varOmega Q})$ induces a map of $(\varSigma^\infty_+\varOmega Q \wedge R)$-modules after passing to the homotopy colimit as $\ell \to \infty$ that we denote by
    \[ \mu^2_{\varOmega Q} \colon HW(\varOmega Q,L) \wedge_R HW(L,K) \longrightarrow HW(\varOmega Q, K).\]
\end{defn}
\begin{prop}\label{prop:mor_lift2}
    The following diagram commutes up to homotopy:
    \[
        \begin{tikzcd}
            HW(\varOmega Q,L) \wedge_R HW(L,K)  \wedge_R HW(K,M) \rar{\id \wedge_R \mu^2} \dar{\mu^2_{\varOmega Q} \wedge_R \id} & HW(\varOmega Q,K) \wedge_R HW(K,M) \dar{\mu^2_{\varOmega Q}}\\
            HW(\varOmega Q,K) \wedge_R HW(K,M) \rar{\mu^2_{\varOmega Q}} & HW(\varOmega Q,M).
        \end{tikzcd}
    \]
\end{prop}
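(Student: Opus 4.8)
The plan is to prove \cref{prop:mor_lift2} by the same strategy used for \cref{lma:associativity_comp_wfuk} and \cref{lem:ring_struct}: exhibit the two composites in the square as the CJS realizations of the two primary boundary faces of a single $R$-oriented flow bordism with a local system, and then invoke the local-system analogue of \cref{lma:bordism_gives_homotopy_of_maps}. Concretely, for Lagrangian $R$-branes $L,K,M \subset X$ I would introduce the moduli space of $J$-holomorphic disks $\overline{\sR}(a,b,c;d)$ with one negative boundary puncture asymptotic to $d \in \Ob(\sC\sW(Q,M))$ and three positive boundary punctures asymptotic to $a \in \Ob(\sC\sW(Q,L))$, $b \in \Ob(\sC\sW(L,K))$, $c \in \Ob(\sC\sW(K,M))$, with the four boundary arcs lying on Liouville-rescaled copies of $Q$, $L$, $K$, $M$. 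This is a moduli space of exactly the type treated in \cref{lem:moduli_associativity} (the three input Lagrangians being $Q,L,K$, with $M$ the remaining boundary label), so for a generic choice of Floer data it is a $\ang{D+1}$-manifold with $D = \mu(d)-\mu(a)-\mu(b)-\mu(c)$, whose two codimension-one ``primary'' faces $\overline{\sR}_1(a,b,c;d)$ and $\overline{\sR}_2(a,b,c;d)$ are the two associativity degenerations: one collapses the $L$--$K$--$M$ portion via the ordinary triangle product $\osr$ (and then applies $\sR_{\varOmega Q}$), the other collapses the $Q$--$L$--$K$ portion via $\sR_{\varOmega Q}$ (and then applies $\sR_{\varOmega Q}$ again).

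Next I would equip $\overline{\sR}(a,b,c;d)$ with its canonical $R$-orientation obtained by gluing abstract strip caps (\cref{lem:cap_gluing}), together with a local system of $(\varSigma^\infty_+ \varOmega Q \wedge R)$-modules defined, exactly as in \cref{subsec:obj_lift_gen} and \cref{dfn:morphisms_twisted_yoneda}, by restricting disks to the $Q$-labeled boundary arc, pushing forward under the evaluation map into the Moore path space, and concatenating with the incoming path. The essential structural point is then that, with these induced orientations and local systems, $\overline{\sR}_1$ is precisely the composition of $R$-oriented flow multimodules with local systems $(\sR_{\varOmega Q},\fo_{\varOmega Q},\sF_{\varOmega Q}) \circ_2 \osr$ and $\overline{\sR}_2$ is $(\sR_{\varOmega Q},\fo_{\varOmega Q},\sF_{\varOmega Q}) \circ_1 (\sR_{\varOmega Q},\fo_{\varOmega Q},\sF_{\varOmega Q})$, in the sense of \cref{dfn:compos_multimodules} and \cref{dfn:composition_local_system}; the remaining higher-codimension faces are the breakings by $\sC\sW$-strips at the three inputs and by $\sM_{\varOmega Q}$-type Morse trajectories along the $Q$-boundary, which are absorbed by the multimodule action maps and the local system, exactly as in \cref{lem:evbimod} and \cref{lem:ring_struct}. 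Thus $\overline{\sR}(a,b,c;d)$ assembles into an $R$-oriented flow bordism with a local system between these two composites.

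Finally I would pass to the action-filtered truncations $\sC\sW^{\leq B_\ell}(L,K)$ and $\sC\sW^{\leq C_m}(K,M)$, take CJS realizations, and use \cref{lma:functoriality_cjs} together with the local-system version of \cref{lma:bordism_gives_homotopy_of_maps} to obtain a homotopy of $(\varSigma^\infty_+\varOmega Q\wedge R)$-module maps between the action-filtered versions of the two composites; taking the homotopy colimit as $\ell,m \to \infty$ then yields the claimed homotopy commutativity, since homotopy groups commute with filtered homotopy colimits. The main obstacle I anticipate is not conceptual but the orientation-and-local-system bookkeeping of Step two: one must check that the canonical $R$-orientation on $\overline{\sR}(a,b,c;d)$ restricts on the two primary faces to the orientations assigned to the iterated compositions by \cref{dfn:compos_multimodules} — this is where the associativity of the abstract-cap gluing maps in \cref{lem:cap_gluing} is used — and, simultaneously, that the path-space local system on $\overline{\sR}(a,b,c;d)$ restricts to the composition local systems of \cref{dfn:composition_local_system}, which relies on the strict associativity of Moore-path concatenation so that the evaluation maps are compatible with the holomorphic gluing maps.
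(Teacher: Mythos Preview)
Your proposal is correct and follows essentially the same approach as the paper: build the $R$-oriented flow bordism from the four-punctured disk moduli spaces $\osr_3(p,q,r;s)$ of \cref{lem:moduli_associativity}, equip it with the local system coming from evaluation along the $Q$-labeled boundary arc followed by Moore-path concatenation, verify that this restricts on the two primary faces to the composition local systems on $\sR_{\varOmega Q}\circ_1 \sR_{\varOmega Q}$ and $\sR_{\varOmega Q}\circ_2 \osr$, and then pass to action-filtered CJS realizations and take the colimit. One small slip: there are no ``$\sM_{\varOmega Q}$-type Morse trajectories'' here---you are in the general setting of \cref{subsec:mor_lift_gen}, not the cotangent-bundle comparison of \cref{sec:loops}, so the non-primary boundary faces are purely Floer strip-breakings in $\sC\sW(Q,L)$, $\sC\sW(L,K)$, $\sC\sW(K,M)$, $\sC\sW(Q,M)$, and the references to \cref{lem:evbimod} and \cref{lem:ring_struct} should be replaced by the multimodule compatibilities already recorded in \cref{lem:moduli_associativity}.
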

\begin{proof}
    Similar to the proof of \cref{prop:mor_lift1} it suffices to pick strictly increasing sequences $\{B_\ell\}_{\ell=1}^\infty, \{C_m\}_{m=1}^\infty \subset \IR$ that diverge to $\infty$, and consider the following diagram on CJS realizations of the appropriately action filtered versions of the flow multimodules involved:
    \begin{equation}\label{eq:action_filtered_twisted_mu2_assoc}
        \begin{tikzcd}[column sep=large]
            \parbox{5cm}{$HW(\varOmega Q,L) \wedge_R HW^{\leq B_\ell}(L,K)\\
            \phantom{HW(\varOmega Q,L)}\wedge_R HW^{\leq C_m}(K,M)$} \rar{\id \wedge_R |\osr^{\ell,m}|} \dar{|\sR^{k,\ell}_{\varOmega Q}| \wedge_R \id} & HW(\varOmega Q,K) \wedge_R HW^{\leq B_\ell+C_m}(K,M) \dar{|\sR^{\ell+m}_{\varOmega Q}|}\\
            HW(\varOmega Q,K) \wedge_R HW^{\leq C_m}(K,M) \rar{|\sR^{m}_{\varOmega Q}|} & HW(\varOmega Q,M).
        \end{tikzcd}
    \end{equation}
    It suffices to prove that the above diagram is homotopy commutative, since passing to homotopy colimit as $\ell,m\to \infty$ finishes the proof.

    To that end, consider the $R$-oriented flow bordism $\sB$ defined by the assignment
    \[
    (p,q,r;s) \longmapsto \sR_3(p,q,r;s),
    \]
    where $\sR_3$ denotes the $R$-oriented flow bordism defined in \cref{dfn:moduli_assoc} and \cref{lem:moduli_associativity}(ii). It follows from \cref{lem:moduli_associativity} that $\sB$ defines an $R$-oriented flow bordism $\sR_{\varOmega Q} \circ_1 \sR_{\varOmega Q} \Rightarrow \sR_{\varOmega Q} \circ_2 \osr$. It therefore suffices to show that the local systems are respected. Equip $\sB$ with the local system $\sB \Rightarrow F_A(\sF_{\varOmega Q},\sE_M)$ specified by the composition of maps
    \[
    (\varSigma^\infty_+\sP_{q_0p}Q\wedge R) \wedge \varSigma^\infty_+\sB(p,q,r;s) \xrightarrow{\id \wedge \ev} (\varSigma^\infty_+\sP_{q_0p}Q \wedge R) \wedge \varSigma^\infty_+\sP_{ps}Q \overset{P}{\longrightarrow} \varSigma^\infty_+\sP_{q_0s}Q\wedge R
    \]
    where the first map is induced by the evaluation map $\sB(p,q,r;s) \to \sP_{ps}Q$, $u\mapsto u|_{(\partial S)_0}$ and the second map $P$ is concatenation of Moore paths. It is clear that this local system restricts to the corresponding local system of the two codimension $1$ boundary strata 
    \begin{align*}
        (\sR_{\varOmega Q} \circ_1 \sR_{\varOmega Q})(p,q,r;s) &\hooklongrightarrow \sB(p,q,r;s) \\
        (\sR_{\varOmega Q} \circ_2 \osr)(p,q,r;s) &\hooklongrightarrow \sB(p,q,r;s)
    \end{align*}
    The restriction of the $R$-oriented flow bordism with local system $\sB$ to the action filtered pieces yields an $R$-oriented flow bordism with local system
    \[
    \sR^{m}_{\varOmega Q} \circ_1 \sR^{k,\ell}_{\varOmega Q} \Longrightarrow \sR^{\ell+m}_{\varOmega Q} \circ_2 \osr^{\ell,m},
    \]
    and thus passing to CJS realizations \cref{lma:bordism_gives_homotopy_of_maps} shows that \eqref{eq:action_filtered_twisted_mu2_assoc} is homotopy commutative.
\end{proof}
This completes the proof of \cref{thm:loc_sys_yoneda}.

\subsection{Local systems corresponding to $R$-branes on the base Lagrangian}
In this subsection we analyze the restriction of the Yoneda functor $\sY_{\varOmega Q}$ from \cref{thm:loc_sys_yoneda} on the subcategory of $R$-branes supported on the Lagrangian $Q$. In the special case of the zero section in a cotangent bundle, this will be used in the proof of \cref{thm:intro_main} in \cref{sec:modules} below.

Let $Q$ be a closed Lagrangian $R$-brane, and let $Q'$ be any other $R$-brane structure on the Lagrangian submanifold underlying $Q$. The data of such an $R$-brane structure gives a map $\xi\colon Q_+ \to \BGL_1(R)$, which is equivalent to a ring map $\varOmega \xi \colon (\varOmega Q)_+ \to \GL_1(R)$. This in particular gives a $(\varSigma^\infty_+ \varOmega Q\wedge R)$-module structure on $R$ via the composition of ring maps
\begin{equation}\label{eq:loop_mod_struct_R}
    \varSigma^\infty_+ {\GL}_1(R) \longrightarrow \varSigma^\infty_+ \varOmega^\infty R \longrightarrow R,
\end{equation}
and we denote the resulting $(\varSigma^\infty_+ \varOmega Q\wedge R)$-module by $R_\xi$.

\begin{prop}\label{prop:brane_twisted_R_mod}
There exists a weak equivalence of $(\varSigma^\infty_+ \varOmega Q \wedge R)$-modules
\[ HW(\varOmega Q,Q') \simeq R_\xi.\]
\end{prop}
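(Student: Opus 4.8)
The plan is to prove \cref{prop:brane_twisted_R_mod} by constructing a Morse-theoretic model for $HW(\varOmega Q, Q')$ that compares directly with the cohomological Morse--Smale flow category of $Q$ twisted by the $R$-line bundle $\xi$, and then invoking the computation carried out in \cref{prop:morse_flow_path_coh}. More precisely, choose a $C^2$-small Morse--Smale function $f$ on $Q$ together with a compatible almost complex structure so that the conclusion of \cref{lem:PSS} holds for the pair of Lagrangians $(Q, Q')$; here the two copies differ only in $R$-brane structure, so the underlying moduli spaces of Floer strips are identified with Morse trajectory spaces, but the $R$-orientations now differ by the $R$-line bundle $\xi$ coming from the $R$-brane structure on $Q'$. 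This should yield an identification of the $R$-oriented flow category $\sC\sW(Q,Q';H_f,J_t)$ with the $\xi$-twisted cohomological Morse--Smale flow category $(\sM^{-\bullet}(f,g),\fo^{-\bullet}_\xi)$ introduced just before \cref{prop:morse_flow_path_coh}.

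The key step is to match up the local systems. On the Floer side, the local system $\sE_{\varOmega Q}$ on $\sC\sW(Q,Q')$ is defined via the evaluation map $u \mapsto u|_{\IR\times\{0\}}$ into the space of Moore paths in $Q$ based at $q_0$, exactly as in \cref{subsec:obj_lift_gen}. Under the identification of Floer strips with Morse trajectories from \cref{lem:PSS}, this evaluation map corresponds (via the Riemannian metric) to the evaluation of the compactified unstable manifold $\overline W^u_{-\nabla f}(x)$ into $\sP_{x,q_0}Q$, which is precisely the local system $\sP^{-\bullet}Q$ appearing in \eqref{eq:local_syst_cohomological_path}. I would then check that the $R$-orientations are intertwined: the twist by $\xi$ that appears in $\fo^{-\bullet}_\xi(x) = (-T_x\overline W^u_{-\nabla f}(x))_R \otimes_R \xi_x$ is exactly the twist introduced into $\fo_{(Q,Q';H_f)}$ by the $R$-orientation data on $Q'$, using \cref{rem:brane}(i) (the choices of $R$-orientation data form a torsor over $[Q,\BGL_1(R)]$) and tracing through the gluing construction of \cref{lem:cap_gluing}. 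This gives an equivalence of $R$-oriented flow categories with local systems
\[
(\sC\sW(Q,Q';H_f,J_t),\fo_{(Q,Q';H_f)},\sE_{\varOmega Q}) \;\cong\; (\sM^{-\bullet}(f,g),\fo^{-\bullet}_\xi,\sP^{-\bullet}Q),
\]
compatible with the $(\varSigma^\infty_+\varOmega Q\wedge R)$-module structures on both sides (on the Floer side coming from the path-concatenation action, on the Morse side from the construction preceding \cref{prop:morse_flow_path_coh}).

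Passing to CJS realizations, the left-hand side is $HW(\varOmega Q, Q')$ by \cref{dfn:morphisms_twisted_yoneda} and the discussion in \cref{subsec:obj_lift_gen} (using that the grading function on $\sC\sW(Q,Q')$ is bounded below since $Q$ is compact, so the action-filtered homotopy colimit stabilizes, cf.\@ \cref{rem:Novikov}(ii) and \cref{lem:grading_F_bbd_below}), while the right-hand side is computed by \cref{prop:morse_flow_path_coh} to be $R_\xi$ as a $(\varSigma^\infty_+\varOmega Q\wedge R)$-module. The main obstacle I anticipate is the careful bookkeeping in the orientation-matching step: one must verify that the trivialization of the index bundle $\sV_+(x)$ selected in the proof of \cref{lem:transversality_for_strips}(ii) via \cref{lem:cap_gluing}, when applied with the two different $R$-brane structures on the two boundary components, produces exactly the $\xi$-twist and no extraneous sign or bundle, and that this is compatible with the evaluation maps defining the local systems. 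This is the same kind of identification carried out in \cref{lem:PSS} and in the proof of \cref{lem:wfuk_discrete_coeff}, so it should go through, but it is the technical heart of the argument. A minor additional point is that one needs $f$ to be $C^2$-small and generic enough that all Floer solutions stay in a Weinstein neighborhood of $Q$ and that the metric is Morse--Smale, which is exactly the content of \cref{lem:PSS} and poses no new difficulty.
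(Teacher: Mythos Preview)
Your proposal is correct and follows essentially the same approach as the paper: reduce to the $C^2$-small Morse model via \cref{lem:PSS}, identify the $R$-orientation on $\sC\sW(Q,Q')$ with the $\xi$-twisted Morse orientation $\fo_\xi^{-\bullet}$, match the local system $\sE_{\varOmega Q}$ with $\sP^{-\bullet}Q$, and then invoke \cref{prop:morse_flow_path_coh}. The paper carries out the orientation-matching step you flag as the technical heart by an explicit path-concatenation argument in $(\lag)^\#$ relating the abstract Floer strip cap spaces $\sC^\#_{Q,Q}(q)$ and $\sC^\#_{Q,Q'}(q)$, yielding $\fo_{Q,Q'}(q)\cong\fo_{Q,Q}(q)\otimes_R\xi_q$; this is exactly the verification you anticipated.
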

\begin{proof}
    Without loss of generality, we may pick an admissible Hamiltonian $H_f$ induced by a $C^2$-small Morse function $f \colon Q \to \IR$, cf.\@ \cref{notn:oc_morse,lem:PSS}. Using this Hamiltonian, we have an identification of $R$-oriented flow categories
    \[
        (\sC\sW(Q,Q),\fo_{Q,Q}) \cong (\sM^{-\bullet}(f,g),\fo_{(f,g)}^{-\bullet})
    \]
    by \cref{lem:PSS}(iii).

    Let $q \in \Ob(\sC\sW(Q,Q))$, and consider the spaces of abstract Floer strip caps $\sC^\#_{Q,Q}(q)$ and $\sC^\#_{Q,Q'}(q)$. There are compositions 
    \begin{align*}
        \sC^\#_{Q,Q}(q) &\longrightarrow \sD^\#(q^\ast \sE) \overset{\rho^\#}{\longrightarrow} \varOmega_{q}(\lag)^\# \\
        \sC^\#_{Q,Q'}(q) &\longrightarrow \sD^\#(q^\ast \sE) \overset{\rho^\#}{\longrightarrow} \sP_{q,q'}(\lag)^\#,
    \end{align*}
    where we abuse the notation and write $q,q' \in (\lag)^\#$ for the points $\sG^\#_Q(q)$ and $\sG^\#_{Q'}(q)$, respectively. By choosing a path $\gamma_q \in \sP_{q',q}(\lag)^\#$, concatenation with $\gamma_q$ yields a map $\sC^\#_{Q,Q'}(q) \to \sC^\#_{Q,Q}(q)$ by \cref{dfn:pullback_path}, and we obtain the following commutative diagram
    \begin{equation}\label{eq:index_bundles_path_compos}
        \begin{tikzcd}[sep=scriptsize]
            \sC^\#_{Q,Q'}(q) \rar{\rho^\#} \dar & \sP_{q,q'}(\lag)^\# \rar{\ind} \dar{P(-,\gamma_q)} & \BGL_1(R) \dar[equals] \\ 
            \sC^\#_{Q,Q}(q) \rar{\rho^\#} & \varOmega_q(\lag)^\# \rar{\ind} & \BGL_1(R)
        \end{tikzcd}.
    \end{equation}

    The map $\xi \colon Q_+ \to \BGL_1(R)$ is the induced map from the two choices of $R$-brane structure $\sG^\#_Q$ and $\sG^\#_{Q'}$, and $\xi_q$ denotes the pullback of $\xi$ under $\left\{q\right\} \hookrightarrow Q$. Recall by \cref{dfn:flow_cat_lag} that $-\fo_{Q,Q'}(q)$ is the rank one free $R$-module that is given by the pullback of the index bundle $\sV_{Q,Q'}(q) = \ind \circ \rho^\#$ under the inclusion of a point $\left\{q\right\} \hookrightarrow \sC^\#_{Q,Q'}(q)$. 

    We abuse the notation and denote the map $\left\{q\right\} \to \sP_{q',q}(\lag)^\#$, $q \mapsto \gamma_q$ by $\gamma_q$, and let $\iota_q$ denote the pullback of $\rho^\#$ under the inclusion $\left\{q\right\} \hookrightarrow \sC^\#_{Q,Q'}(q)$ above. Now we claim that we have a commutative diagram
    \[
        \begin{tikzcd}[sep=scriptsize]
            \left\{q\right\} \times \left\{q\right\}\dar{\iota_q \times \gamma_q} \ar[rr,"-\fo_{Q,Q'}(q) \times \xi_q"] && \BGL_1(R) \times \BGL_1(R) \dar \\
            \sP_{q,q'}(\lag)^\# \times \sP_{q',q}(\lag)^\# \rar{P} & \varOmega_q(\lag)^\# \rar{\ind} & \BGL_1(R)
        \end{tikzcd}.
    \]
    Combining this with \eqref{eq:index_bundles_path_compos} yields a commutative diagram
    \[
        \begin{tikzcd}[sep=scriptsize]
            \left\{q\right\} \times \left\{q\right\} \dar \ar[rrr,"-\fo_{Q,Q'}(q) \times \xi_q"] &&& \BGL_1(R) \dar[equals] \\
            \left\{q\right\} \rar & \sC^\#_{Q,Q}(q) \rar{\rho^\#} & \varOmega_q(\lag)^\# \rar{\ind} & \BGL_1(R)
        \end{tikzcd},
    \]
    where the lower horizontal composition is the rank one free $R$-module $\fo_{Q,Q}(q)$. This shows that we have an isomorphism of rank one free $R$-modules $\fo_{Q,Q'}(q) \cong \fo_{Q,Q}(q) \otimes_R \xi_{q}$ for any $q\in \Ob(\sC\sW(Q,Q))$. We conclude that we have an identification of $R$-oriented flow categories
    \[
        (\sC\sW(Q,Q'),\fo_{Q,Q'}) \cong (\sM^{-\bullet}(f,g),\fo_{\xi}^{-\bullet}).
    \]
    Finally, the local system $\sE_{\varOmega Q}$ on $\sC\sW(Q,Q)$ gets identified with the local system $\sP^{-\bullet}Q$ on $\sM^{-\bullet}(f,g)$ (see \eqref{eq:local_syst_cohomological_path}). Therefore we have an identification of $R$-oriented flow categories with local system
    \[
        (\sC\sW(\varOmega Q,Q), \fo_{Q,Q'}) \cong (\sM^{-\bullet}(f,g),\fo_{\xi}^{-\bullet},\sP^{-\bullet}Q).
    \]
    The result then follows from \cref{prop:morse_flow_path_coh}.
\end{proof}

\subsection{The case of cotangent bundles}
We now specialize to the case the zero section in a cotangent bundle. Our aim is to prove the following:
\begin{thm}\label{thm:yoneda_cot}
	Let $X=T^*Q$ and $L=Q$. Then the local system valued Yoneda functor
	\[ \sY_{\varOmega Q} \colon \sW(T^*Q;R) \longrightarrow \Ho \mod{(\varSigma^\infty_+ \varOmega Q \wedge R)}\]
	constructed in \cref{thm:loc_sys_yoneda} is a lift of the Yoneda functor 
	\[ \sY_F \colon \sW(T^*Q;R) \longrightarrow \homod{(\varSigma^\infty_+ \varOmega Q \wedge R)}.\]
	More explicitly, 
	\begin{enumerate}
		\item For any $L \in \Ob (\sW(T^*Q;R))$, there is an equivalence of homotopy $(\varSigma^\infty_+ \varOmega Q \wedge R)$-modules
			\[ HW(F,L) \overset{\simeq}{\longrightarrow} HW(\varOmega Q,L).\]
		\item For every pair of Lagrangian $R$-branes $L,K \subset T^*Q$, the following diagram commutes up to homotopy:
	    		\[
	    		\begin{tikzcd}
                    HW(F,L) \wedge_R HW(L,K) \rar{\mu^2} \dar{|\sN| \wedge_R \id} & HW(F,K) \dar{|\sN|}\\
                    HW(\varOmega Q,L) \wedge_R HW(L,K) \rar{\mu^2_{\varOmega Q}} & HW(\varOmega Q,K).
			    \end{tikzcd}
	    		\]
	\end{enumerate}
\end{thm}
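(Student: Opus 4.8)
\textbf{Proof proposal for \cref{thm:yoneda_cot}.}

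The plan is to construct an explicit equivalence relating the cotangent fiber $F = T^\ast_{q_0}Q$ with the based loop space module structure, by comparing the two relevant flow categories through the evaluation flow bimodule $\sN$ from \cref{lem:evbimod} augmented with local systems. The key observation is that both $HW(F,L)$ and $HW(\varOmega Q,L)$ are built from flow categories whose objects are Hamiltonian chords, but $HW(\varOmega Q,L)$ carries a local system recording path data in $Q$ while $HW(F,L)$ instead remembers this data geometrically through moduli spaces of triangles with one boundary on the cotangent fiber. First I would recall from \cref{sec:loops} that $HW(F,F;R) \simeq \varSigma^\infty_+ \varOmega Q \wedge R$ as $R$-algebras via $|\sN|$, so that the homotopy module structure on $HW(F,L)$ is already defined. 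The task in item (i) is to upgrade the comparison to the level of highly structured modules.

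For item (i), I would introduce a flow multimodule with local system that simultaneously incorporates the triangle moduli spaces $\overline{\sM}(a)$ (with one boundary on $F$ and one on $L$) used to define $\sN$, and the path-space evaluation maps used to define $\sE_{\varOmega Q}$. Concretely, for $a \in \Ob(\sC\sW(F,L))$ and $x \in \Ob(\sC\sW(Q,L))$ one considers moduli spaces of quilted-type configurations: a triangle contributing to $HW(F,F)$ glued to a strip in $\sC\sW(Q,L)$, with the matching condition along the cotangent fiber, together with the evaluation of the $Q$-boundary into the Moore path space. This produces an $R$-oriented flow bimodule with local system from $\sC\sW(F,L)$ to $\sC\sW(\varOmega Q,L)$ whose CJS realization gives the desired map $HW(F,L) \to HW(\varOmega Q,L)$; the local system part ensures it is $(\varSigma^\infty_+ \varOmega Q \wedge R)$-linear, and compatibility with $\mu^2$ follows from the associated flow bordism coming from moduli spaces with an extra boundary puncture, exactly as in the proof of \cref{lem:ring_struct} and \cref{prop:mor_lift2}. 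To see it is an equivalence, I would use the Whitehead theorem \cref{thm:whitehead}: since both modules are connective over $R$, it suffices to check that after $- \wedge_R Hk$ the induced map on homotopy groups is an isomorphism, and over the discrete ring $k = \pi_0 R$ this reduces to the classical statement that the module $HW^{-\bullet}(F,L;k)$ over $H_\bullet(\varOmega Q;k)$ agrees with $HW^{-\bullet}_b(F,L;k)$ as studied in \cite{abouzaid2012wrapped}, which is the content of \cite[Lemma 4.5]{abouzaid2012wrapped} and its module-structure refinement (cf.\@ \cref{lem:floer_loop_integer}).

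For item (ii), the commuting square is essentially a statement about compatibility of two different ways of composing: first evaluate $HW(F,L) \wedge_R HW(L,K) \to HW(F,K)$ via $\mu^2$ and then apply $|\sN|$, versus first apply $|\sN|$ to the $F$-module and then use the local-system-valued product $\mu^2_{\varOmega Q}$. I would prove this by constructing an $R$-oriented flow bordism with local system between the two triple-composition flow multimodules, built from moduli spaces of disks with three positive punctures (labelled by a chord on $F$-$L$, a chord on $L$-$K$, and a Hamiltonian chord on $F$, with one negative puncture) fibered over the evaluation into $BQ$. This is the analog of \cref{lem:ring_struct}'s bordism $\sB$, now with the extra $L$-$K$ input, and the argument that the $R$-orientations and local systems restrict correctly to the two boundary strata is parallel to what is done there. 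Applying \cref{lma:bordism_gives_homotopy_of_maps} and passing to the homotopy colimit over action filtrations yields the homotopy commutativity.

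\textbf{Main obstacle.} The hard part will be setting up the quilted moduli spaces in item (i) carefully enough to exhibit the map $HW(F,L) \to HW(\varOmega Q,L)$ as a genuine $(\varSigma^\infty_+ \varOmega Q \wedge R)$-module map rather than just a homotopy module map — that is, producing the flow bimodule with local system whose CJS realization has the required strict linearity, and checking that its $R$-orientation is compatible with the canonical orientations from \cref{sec:canonical_orientations} (via the abstract strip cap gluing of \cref{lem:cap_gluing}). Once the geometric input — that the relevant moduli spaces are cut out transversally and compactify as $\ang{k}$-manifolds with the expected boundary strata — is in hand, the homotopy-theoretic conclusion is forced by the Whitehead theorem together with the known discrete-coefficient computation, so the analytic bookkeeping of orientations and local systems on the boundary faces is where the real work lies.
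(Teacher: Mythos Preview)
Your overall strategy matches the paper's: construct an $R$-oriented flow bimodule with local system from $\sC\sW(F,L)$ to $\sC\sW(\varOmega Q,L)$, take CJS realizations, prove equivalence via Whitehead and the discrete-coefficient result from \cite{abouzaid2012wrapped}, and handle item (ii) via a flow bordism with local system built from four-punctured disks. A few points where your description diverges from the paper are worth flagging.

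For item (i), the paper's bimodule is simpler than your quilted description: since $\sC\sW(Q,F)$ has the single object $q_0 = Q \cap F$, one takes directly $\sN(p,q) \coloneqq \osr(q_0,p;q)$, the moduli of triangles with boundary arcs on $Q$, $F$, $L$ (see \cref{prp:equiv_fiber_loop_space_local}). The local system is evaluation $u \mapsto u|_{(\partial S)_0}$ into the Moore path space $\sP_{q_0 q}Q$, not into $BQ$; the latter was the target for the \cref{sec:loops} comparison of $HW(F,F)$ with Morse theory of $\varOmega Q$, which is a different $\sN$ (the paper overloads this notation). For item (ii), the bordism is built from $\sR_3(q_0,a,p;q)$ with $a \in \sX(F,L)$, $p \in \sX(L,K)$, $q \in \sX(Q,K)$; there is no separate ``chord on $F$'' input, and no fiber product with a stable manifold in $BQ$ is needed---just the boundary evaluation into path space (see \cref{prop:mor_lift1}).

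Your ``main obstacle'' is slightly misplaced. The CJS realization of a flow bimodule with local system of $A$-modules is automatically a map of $A$-modules by construction, so there is no strict-linearity issue to solve. The real content---which you do mention in passing---is showing that $|\sN|$ intertwines the \emph{homotopy} $(\varSigma^\infty_+\varOmega Q \wedge R)$-module structure on $HW(F,L)$ (defined via $\mu^2$ and the algebra equivalence of \cref{cor:hw_loops_alg}) with the strict module structure on $HW(\varOmega Q,L)$. This is carried out in \cref{prop:compos_bimod_struct} via a two-square diagram whose top square is a bordism argument from $\sR_3(q_0,a,p;q)$ and whose bottom square is a direct comparison at the level of $\sJ$-modules; the orientation bookkeeping is then routine given \cref{lem:moduli_associativity}(ii).
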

We verify the conditions (i) and (ii) in \cref{thm:yoneda_cot} and \cref{prop:compos_bimod_struct} respectively.
\subsubsection{Lift at the level of objects}\label{subsec:obj_lift}
\begin{lem}\label{lem:loop_sp_local_sys}
    There is an equivalence of $R$-modules $HW(\varOmega Q,F) \simeq \varSigma^\infty_+ \varOmega Q \wedge R$.
\end{lem}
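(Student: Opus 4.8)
The plan is to identify the flow category with local system $\sC\sW(\varOmega Q, F) = (\sC\sW(Q,F), \sE_{\varOmega Q})$ with a Morse--theoretic model and then invoke \cref{prop:morse_flow_path}. First I would observe that, exactly as in \cref{lem:PSS} and \cref{prop:brane_twisted_R_mod}, we may choose an admissible Hamiltonian $H_g$ arising from a $C^2$--small Morse--Smale function $g\colon Q\to\IR$ together with a compatible almost complex structure such that the Hamiltonian chords from $Q$ to $F=T^\ast_{q_0}Q$ are in bijection with short geodesic paths (equivalently, with nearby points to $q_0$, or with the critical points of $g$ after a further identification), and such that the associated moduli spaces of Floer strips with boundary on $Q$ and $F$ are identified with spaces of (broken) negative gradient trajectories. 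Under this identification the $R$-orientation $\fo_{Q,F}$ is identified with the canonical ($\xi$-twisted, with trivial $\xi$) $R$-orientation on the relevant Morse--Smale flow category, using the index bundle computation for abstract Floer strip caps as in the proof of \cref{lem:PSS}.

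Next I would check that under this identification the local system $\sE_{\varOmega Q}$, which sends a chord $x$ to $\varSigma^\infty_+ \sP_{q_0 x}Q\wedge R$ and acts by concatenation of Moore paths via the evaluation $u\mapsto u|_{\IR\times\{0\}}$, is identified with the path local system $\sP Q$ (in the notation preceding \cref{prop:morse_flow_path}) on the Morse--Smale flow category of $g$ on $Q$: the evaluation map on Floer strips restricted to the boundary component lying on $Q$ corresponds precisely to the path traced out by a gradient trajectory on $Q$, so the concatenation structures match. Hence we obtain an identification of $R$-oriented flow categories with local system
\[
(\sC\sW(\varOmega Q, F),\fo_{Q,F}) \;\cong\; (\sM(g,g), \fo_{\xi}, \sP Q)
\]
with $\xi$ trivial, i.e. with $\fo_\xi = \fo_{\varOmega Q}$ being the canonical (untwisted) orientation and $\sP Q$ the path local system based at $q_0$. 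Taking CJS realizations and applying \cref{prop:morse_flow_path} with $\xi$ the trivial $R$-line bundle (so $R_\xi \simeq \varSigma^\infty_+\varOmega Q\wedge R$ as a $\varSigma^\infty_+\varOmega Q\wedge R$-module, the free rank one module) gives
\[
HW(\varOmega Q, F) = |\sC\sW(\varOmega Q,F),\fo_{Q,F}| \;\simeq\; |\sM(g,g),\fo_{\varOmega Q},\sP Q| \;\simeq\; \varSigma^\infty_+\varOmega Q\wedge R,
\]
as $R$-modules (in fact as $(\varSigma^\infty_+\varOmega Q\wedge R)$-modules, though only the underlying $R$-module equivalence is asserted here).

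The main obstacle I expect is the careful bookkeeping of the two evaluation/ boundary components in the Floer strip: the chord $x$ from $Q$ to $F$ has one boundary arc on $Q$ and one on $F$, and one must verify that the relevant geometric limit (as in \cite{abouzaid2012wrapped}) identifies the moduli space of such strips, together with its evaluation to the path space of $Q$, with the Morse trajectory spaces for $g$ on $Q$ decorated by their path local system — this is the analog of the argument in the proof of \cref{lem:PSS} and of \cite[Lemma 4.5]{abouzaid2012wrapped} but now with the Moore path data retained. Once this identification of $R$-oriented flow categories with local system is in place, the conclusion is immediate from \cref{prop:morse_flow_path}; no further analytic input is needed, so the remaining steps are routine.
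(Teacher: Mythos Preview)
Your approach contains two concrete errors that make it fail, and it misses the one observation that makes the lemma immediate.

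First, the proposed identification $(\sC\sW(\varOmega Q,F),\fo_{Q,F}) \cong (\sM(g,g),\fo_\xi,\sP Q)$ cannot hold. The zero section $Q$ and the cotangent fiber $F=T^\ast_{q_0}Q$ intersect transversally in the single point $q_0$, so for a $C^2$-small Hamiltonian the flow category $\sC\sW(Q,F)$ has exactly one object and no morphisms; it is the point flow category. The Morse--Smale flow category $\sM(g,g)$ on $Q$, by contrast, has one object for each critical point of $g$. These are not isomorphic flow categories, and your parenthetical ``with the critical points of $g$ after a further identification'' has no content here.

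Second, even if one could make such an identification, invoking \cref{prop:morse_flow_path} would yield $R_\xi$, which by definition is $R$ equipped with a $(\varSigma^\infty_+\varOmega Q\wedge R)$-module structure coming from $\varOmega\xi$. As an $R$-module $R_\xi\simeq R$, not $\varSigma^\infty_+\varOmega Q\wedge R$; your assertion that for trivial $\xi$ one has $R_\xi\simeq \varSigma^\infty_+\varOmega Q\wedge R$ as the free rank one module is simply false.

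The paper's proof is a one-liner exploiting the first observation above. Since $\sC\sW(Q,F)$ is the point flow category at $q_0$, the local system $\sE_{\varOmega Q}$ is constant with value $\sE_{\varOmega Q}(q_0)=\varSigma^\infty_+\sP_{q_0q_0}Q\wedge R=\varSigma^\infty_+\varOmega Q\wedge R$, and by \cref{rem:trivial_local_sys} and \cref{exmp:cjspt} the CJS realization is $|\sC\sW(Q,F),\fo_{Q,F}|\wedge_R \sE_{\varOmega Q}(q_0)\simeq R\wedge_R(\varSigma^\infty_+\varOmega Q\wedge R)\simeq \varSigma^\infty_+\varOmega Q\wedge R$.
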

\begin{proof}
    Since $\sC\sW(Q,F)$ consists of a single object $q_0 \in Q \cap F$, we combine the observations in \cref{rem:trivial_local_sys} and \cref{exmp:cjspt} to get
    \begin{align*}
        HW(\varOmega Q,F) &= |\sC\sW(Q,F),\fo_{Q,F},\sE_{\varOmega Q}| \simeq |\sC\sW(Q,F),\fo_{Q,F}| \wedge_R \sE_{\varOmega Q}(q_0) \\
        &\simeq \sE_{\varOmega Q}(q_0) = \varSigma^\infty_+\varOmega Q \wedge R.
    \end{align*}
\end{proof}

Let $Q^\#$ be an $R$-brane structure supported on the zero section $Q \subset T^\ast Q$. The data of the $R$-brane structure gives a map $Q_+ \to \BGL_1(R)$, which is equivalent to a ring map $(\varOmega Q)_+ \to \GL_1(R)$. This gives a $(\varSigma^\infty_+ \varOmega Q\wedge R)$-module structure on $R$ via the composition of ring maps
 \[
 \varSigma^\infty_+ {\GL}_1(R) \longrightarrow \varSigma^\infty_+ \varOmega^\infty R \longrightarrow R.
 \]
\begin{prop}\label{prp:equiv_fiber_loop_space_local}
    There is an equivalence of $R$-modules 
    \[ HW(F,L) \simeq HW(\varOmega Q,L).\]
\end{prop}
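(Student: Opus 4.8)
The plan is to deduce this from the equivalence of $R$-algebras $|\sN| \colon HW(F,F;R) \xrightarrow{\simeq} \varSigma^\infty_+ \varOmega Q \wedge R$ established in \cref{prop:evinv} and \cref{cor:hw_loops_alg}, together with the local-system-valued flow multimodule machinery from \cref{sec:str_lift}. The key point is that $HW(\varOmega Q, L)$ and $HW(F,L)$ are both built from the same underlying $R$-oriented flow category $\sC\sW(Q,L)$ (resp.\ $\sC\sW(F,L)$, after identifying the geometry), and the difference is precisely the local system $\sE_{\varOmega Q}$ versus the trivial one. So the strategy is to produce a map of $R$-modules $HW(F,L) \to HW(\varOmega Q, L)$ as a CJS realization of an $R$-oriented flow bimodule with local system and then check it is an equivalence on homotopy groups after smashing with $H\pi_0(R)$, invoking the Whitehead theorem \cref{thm:whitehead} once connectivity is verified (which holds since we may reduce to the connective case as in \cref{rem:reduce_to_connective} and the relevant flow categories have grading bounded below when $L$ is closed, cf.\ \cref{lem:grading_F_bbd_below}; for general $L$ one works with action-filtered pieces and takes homotopy colimits).

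First I would construct the comparison map. Following the construction of the evaluation flow bimodule $\sN$ in \cref{lem:evbimod}, one builds an $R$-oriented flow bimodule $\sC\sW(\varOmega Q, F) \to \sC\sW(\varOmega Q, L)$ — more precisely, using moduli spaces of strips interpolating between Hamiltonian chords for $F$ and for $Q$ (say the continuation maps of \cref{lem:moduli_cont}, together with the triangle product $\osr$ of \cref{subsec:moduli_product}), equipped with the composition local system of \cref{dfn:composition_local_system}. Alternatively, and more cleanly, one uses the module action $\mu^2_{\varOmega Q}$ of \cref{dfn:morphisms_twisted_yoneda}: the distinguished element $\eta \in HW(\varOmega Q, F) \simeq \varSigma^\infty_+ \varOmega Q \wedge R$ (from \cref{lem:loop_sp_local_sys}) together with the action $HW(\varOmega Q, F) \wedge_R HW(F, L) \to HW(\varOmega Q, L)$ induces a map $HW(F,L) \to HW(\varOmega Q, L)$ of $R$-modules. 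By \cref{prop:mor_lift2} (and the compatibility of $\sN$ with $\mu^2$ from \cref{lem:ring_struct}), this map intertwines the $HW(F,F;R)$-module structures, hence the $(\varSigma^\infty_+ \varOmega Q \wedge R)$-module structures after transporting along $|\sN|$.

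Next I would verify it is an equivalence. Smashing with $Hk$ where $k = \pi_0(R)$, by \cref{prop:homology_wedge_ring} (and \cref{cor:jmod_fun}) the induced map on homotopy groups is computed by a Morse-type chain map between the complex computing $HW^{-\bullet}(F,L;k)$ with local coefficients in $\varSigma^\infty_+\varOmega Q \wedge Hk$ and the complex computing $HW^{-\bullet}(\varOmega Q, L; k)$; this is exactly the chain-level statement underlying Abouzaid's theorem that wrapped Floer cohomology of the fiber with local coefficients recovers the one with trivial coefficients, i.e.\ the local-system analogue of \cite[Lemma~4.5]{abouzaid2012wrapped}. Concretely, one shows the map is the Morse realization of a quasi-isomorphism over the discrete ring $k$ using the standard Floer-theoretic argument: a continuation-type bordism between the composition of the comparison maps (in both orders) and the identity, which at the level of flow bordisms with local systems is supplied by moduli spaces of the type appearing in the proof of \cref{lem:IndOfData} decorated with the composition local system. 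Then \cref{thm:whitehead} upgrades the $k$-equivalence to an $R$-module equivalence.

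The main obstacle I anticipate is bookkeeping the local systems and $R$-orientations simultaneously through all the gluing and bordism arguments — in particular, making sure the comparison bimodule genuinely carries both an $R$-orientation (via the abstract strip-cap gluing of \cref{lem:cap_gluing}, applied to the relevant $Q$- and $F$-boundary conditions) and a compatible $(\varSigma^\infty_+\varOmega Q \wedge R)$-linear local system whose restriction to the boundary strata matches $\sE_{\varOmega Q}$ on both ends. A subtlety specific to cotangent bundles is that the evaluation maps $\ev$ into the Moore path space $\sP_{q_0 x}Q$ must be compatible across the $F$-versus-$Q$ interpolation — this is handled exactly as in \cref{lem:evbimod} and the discussion around \eqref{eq:ev_loop_geo}, using that the $R$-brane structure on $F$ is the canonical one (\cref{rem:brane}(iii)) so that no extra twisting appears. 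Everything else is a routine assembly of the results already in the excerpt.
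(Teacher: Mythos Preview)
Your proposal is essentially correct and follows the same approach as the paper. The paper constructs the comparison map directly as the CJS realization of the $R$-oriented flow bimodule with local system $\sN(p,q) \coloneqq \osr(q_0,p;q)$ from $\sC\sW(F,L)$ (with trivial local system) to $\sC\sW(\varOmega Q,L)$, equipped with the local system given by evaluating along the $Q$-boundary arc into $\sP_{q_0q}Q$. This is exactly what your second description (acting by the distinguished element of $HW(\varOmega Q,F)$ via $\mu^2_{\varOmega Q}$) unpacks to, since $\sC\sW(Q,F)$ has the single object $q_0$ and $\sR_{\varOmega Q}(q_0,p;q) = \osr(q_0,p;q)$. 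The verification is also the same: pass to action-filtered pieces, identify the induced Morse chain map with (the restriction of) Abouzaid's $\sF^1$ from \cite[Lemma~4.5]{abouzaid2012wrapped} via \cref{cor:jmod_fun}, and apply \cref{thm:whitehead}. Two of your elaborations are unnecessary for this statement: the intertwining of $(\varSigma^\infty_+\varOmega Q \wedge R)$-module structures is the content of the subsequent \cref{prop:compos_bimod_struct} and \cref{prop:equiv_of_loop-homod}, not of this proposition (which asserts only an $R$-module equivalence); and no continuation-type bordism producing a two-sided inverse is needed, since Whitehead applied to the single map $|\sN^k|$ already gives the equivalence.
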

\begin{proof}
    Consider the $R$-oriented flow category $\sC\sW(F,L)$ as being equipped with the trivial local system $\underline R$ (see \cref{rem:trivial_local_sys}). Consider the $R$-oriented flow bimodule 
    \begin{equation}\label{eq:bimod_fiber_loc}
        (\sN,\fm) \colon \sC\sW(F,L) \longrightarrow \sC\sW(\varOmega Q, L)
    \end{equation}
    defined by $\sN(p,q) \coloneqq \osr(q_0,p;q)$ (see \cref{dfn:moduli_product}), where $Q \cap F = \{q_0\}$. The $R$-orientation $\fm$ is obtained from \cref{lem:moduli_product}(ii). Equip $(\sN,\fm)$ with the local system
    \[
        \sE \colon \sN \Longrightarrow F_{\varSigma^\infty_+\varOmega Q \wedge R}(\underline R, \sE_{\varOmega Q}),
    \]
    induced by the map
    \begin{align*}
        \sE(p,q) \colon \sN(p,q) &\longrightarrow \sP_{q_0q} Q\\
        u &\longmapsto u|_{(\partial S)_0}.
    \end{align*}
    Pick a strictly increasing sequence $\{A_k\}_{k=1}^\infty \subset \IR$ diverging to $\infty$. The restriction of the local system $\underline R$ to the action filtered flow category $\sC\sW^{\leq A_k}(F,L)$ is denoted by $\underline R^{\leq A_k}$. Then $(\sN,\fm,\sE)$ restricts to an $R$-oriented flow bimodule with local system
    \[
    (\sN^{k},\fm^{k},\sE^{k}) \colon (\sC\sW^{\leq A_k}(F,L),\fo^{\leq A_k}_{(F,L)},\underline R^{\leq A_k}) \longrightarrow (\sC\sW(Q,L),\fo_{Q,L},\sE_{\varOmega Q}).
    \]
    Passing to the CJS realization yields a map
    \begin{equation}\label{eq:flow_bimod_loc}
    |\sN^k,\fm^k,\sE^k| \colon HW^{\leq A_k}(F,L) \longrightarrow HW(\varOmega Q,L).
    \end{equation}
    From \cref{cor:jmod_fun} the map of Morse chain complexes induced by $(\sN^k,\fm^k,\sE^k)$ coincides with the restriction of the map $\sF^1$ as defined in \cite[Lemma 4.5]{abouzaid2012wrapped} to the action filtered piece, and is thus a quasi-isomorphism. Using Whitehead's theorem \cref{thm:whitehead} it follows that \eqref{eq:flow_bimod_loc} is a weak equivalence. Passing to the homotopy colimit as $k\to \infty$ finishes the proof.
\end{proof}

Consider the $R$-oriented flow multimodule with local system
\begin{equation}\label{eq:twisted_multimod}
    \sR_{\varOmega Q} \colon \sC\sW(\varOmega Q,F),\sC\sW(F,L) \longrightarrow \sC\sW(\varOmega Q,L)
\end{equation}
whose underlying flow multimodule is $\osr$ that is defined in \cref{dfn:moduli_product}, with $R$-orientation $\fo_{\varOmega Q}$ obtained from \cref{lem:moduli_product}(ii). The local system is defined for every $p \in \Ob(\sC\sW(F,L))$ and $q\in \Ob(\sC\sW(Q,L))$ by the map
\[
\sF_{\varOmega Q}(q_0,p;q) \colon (\varSigma^\infty_+\varOmega_{q_0}Q\wedge R) \wedge \varSigma^\infty_+\sR_{\varOmega Q}(q_0,p;q) \longrightarrow (\varSigma^\infty_+ \sP_{q_0q}Q \wedge R)
\]
induced by the evaluation map $\sR_{\varOmega Q}(q_0,p;q) \to \sP_{q_0q}Q$, $u \mapsto u|_{(\partial S)_0}$ and concatenation of Moore paths.

Moreover, let $(\sN,\fm,\sE)$ be the $R$-oriented flow bimodule with local system defined in the proof of \cref{prp:equiv_fiber_loop_space_local}. Let $\sN_F \colon \sC\sW(F,L) \to \sC\sW(F,L)$ be defined similarly to $\sN$, with the Lagrangian $R$-brane $F$ instead of $L$.
\begin{prop}\label{prop:compos_bimod_struct}
    The following diagram is homotopy commutative
    \[
    \begin{tikzcd}[row sep=scriptsize, column sep=1cm]
        HW(F,F) \wedge_R HW(F,L) \rar{\mu^2} \dar{|\sN_F| \wedge_R \id} & HW(F,L) \dar{|\sN|} \\
        HW(\varOmega Q,F) \wedge_R HW(F,L) \rar{|\sR_{\varOmega Q}|} \dar{\varphi \wedge_R |\sN| } & HW(\varOmega Q,L) \dar[equal] \\
        (\varSigma^\infty_+ \varOmega Q \wedge R) \wedge_R HW(\varOmega Q,L) \rar{\beta} & HW(\varOmega Q,L)
    \end{tikzcd},
    \]
    where $\beta$ is the $(\varSigma^\infty_+\varOmega Q\wedge R)$-module structure on $HW(\varOmega Q,L)$, and $\varphi \colon HW(\varOmega Q,F) \overset{\simeq}{\to} \varSigma^\infty_+\varOmega Q \wedge R$ is the equivalence of $R$-modules from \cref{lem:loop_sp_local_sys}.
\end{prop}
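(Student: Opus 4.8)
The plan is to prove the two squares of the displayed diagram separately (up to homotopy) and then stack them.

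\textbf{Top square.} Here I would produce an $R$-oriented flow bordism with local system realizing $|\sN|\circ\mu^2 \simeq |\sR_{\varOmega Q}|\circ(|\sN_F|\wedge_R\id)$, in close analogy with the proof of \cref{lem:ring_struct}. Working (by \cref{lem:IndOfData}) with a $C^2$-small admissible Hamiltonian as in \cref{lem:PSS} so that $Q\cap F=\{q_0\}$, and taking the boundary conditions $L_0=Q$, $L_1=F$, $L_2=F$, $L_3=L$, consider for $a\in\Ob(\sC\sW(F,F;R))$, $b\in\Ob(\sC\sW(F,L))$, $q\in\Ob(\sC\sW(Q,L))$ the moduli space $\osr(q_0,a,b;q)$ of \cref{lem:moduli_associativity}. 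Equip it with the $R$-orientation furnished by \cref{lem:moduli_associativity}(ii) and \cref{lem:cap_gluing}, and with the local system obtained by evaluating along the $Q$-boundary arc $(\partial S)_0$ into $\varOmega Q$ via the Moore-path reparametrization and the fixed homotopy inverses $r^{\mathrm{Moore}},r^{\varOmega Q}$, exactly as in \eqref{eq:ev_loop}--\eqref{eq:ev_loop_geo}. By \cref{lem:moduli_associativity}(i) the two codimension-one faces $\overline{\mathcal R}_1(q_0,a,b;q)$ and $\overline{\mathcal R}_2(q_0,a,b;q)$ are identified, respectively, with the composites $\sN\circ_1\osr$ (where $\osr$ is the triangle-product multimodule \eqref{eq:multimodule_mu2}) and $\sR_{\varOmega Q}\circ_1\sN_F$; one then checks that the $R$-orientations restrict to the composition orientations of \cref{dfn:composition_flow_bimods} and that the $(\partial S)_0$-local system restricts to the composition local systems of \cref{dfn:composition_local_system} on each face, the $Q$-arc of a glued configuration being the concatenation of the $Q$-arcs of its two bubble components. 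After the usual passage to action-filtered subcategories of $\sC\sW(F,L)$ and $\sC\sW(\varOmega Q,L)$, taking CJS realizations, applying the local-system version of \cref{lma:bordism_gives_homotopy_of_maps}, and passing to the homotopy colimit over the action filtration, this yields the top square (using functoriality of the CJS realization to identify $|\sN\circ_1\osr|$ with $|\sN|\circ\mu^2$ and $|\sR_{\varOmega Q}\circ_1\sN_F|$ with $|\sR_{\varOmega Q}|\circ(|\sN_F|\wedge_R\id)$).

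\textbf{Bottom square.} This I would argue directly, without a bordism. First, the CJS realization $|\sR_{\varOmega Q}|$ of the $R$-oriented flow multimodule with local system $\sR_{\varOmega Q}$ is by construction a map of $(\varSigma^\infty_+\varOmega Q\wedge R)$-modules, where $HW(\varOmega Q,F)=|\sC\sW(Q,F),\fo_{Q,F},\sE_{\varOmega Q}|$ carries the module structure induced by the $\mod{(\varSigma^\infty_+\varOmega Q\wedge R)}$-valued local system $\sE_{\varOmega Q}$ on $\sC\sW(Q,F)$. Second, the equivalence $\varphi$ of \cref{lem:loop_sp_local_sys} is, by inspection of its construction (collapse onto the single object $q_0$, composed with the homotopy inverse of the Moore-loop inclusion), an equivalence of $(\varSigma^\infty_+\varOmega Q\wedge R)$-modules identifying the $\sE_{\varOmega Q}$-action on $\sE_{\varOmega Q}(q_0)=\varSigma^\infty_+\sP_{q_0q_0}Q\wedge R$ with left multiplication on $\varSigma^\infty_+\varOmega Q\wedge R$. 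Third, precomposing $|\sR_{\varOmega Q}|$ with the unit $\eta\colon R\to\varSigma^\infty_+\varOmega Q\wedge R$ (the constant loop at $q_0$) in the first slot produces the CJS realization of the flow bimodule with local system whose underlying moduli are $\osr(q_0,-;-)$ together with the $(\partial S)_0$-evaluation and the $R$-orientations of \cref{lem:moduli_product}(ii) — that is, precisely $(\sN,\fm,\sE)$ from the proof of \cref{prp:equiv_fiber_loop_space_local} — so that $|\sR_{\varOmega Q}|\circ((\varphi^{-1}\circ\eta)\wedge_R\id)\simeq|\sN|$. Combining this with the $(\varSigma^\infty_+\varOmega Q\wedge R)$-linearity of $|\sR_{\varOmega Q}|$ in the first slot gives $|\sR_{\varOmega Q}|\simeq\beta\circ(\varphi\wedge_R|\sN|)$, which is the bottom square. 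Stacking the two squares yields the proposition.

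\textbf{Main obstacle.} The genuinely delicate point will be the identification of the restricted data on the boundary faces in the top square: verifying that the $Q$-boundary-arc evaluation of $\osr(q_0,a,b;q)$, after Moore-path reparametrization and composition with the fixed homotopy inverses, degenerates under Gromov limits to the concatenation of the $Q$-arc evaluations of the two bubble components (so that it restricts to the composition local systems of \cref{dfn:composition_local_system}), and simultaneously that the abstract-strip-cap gluing of \cref{lem:cap_gluing} delivers $R$-orientations on these faces matching the composition orientations of \cref{dfn:composition_flow_bimods}. These are exactly the compatibilities already established in \cref{lem:evbimod} and \cref{lem:ring_struct}, so no essentially new analytic input is required, but the bookkeeping with four boundary arcs and mixed $Q/F/L$ conditions is where the care lies.
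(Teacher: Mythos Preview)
Your proposal is correct and follows essentially the same route as the paper: the top square is obtained from the $R$-oriented flow bordism with local system given by $\osr(q_0,a,b;q)$ (the paper's $\sR_3(q_0,a,p;q)$) with boundary evaluation into the path space $\sP_{q_0q}Q$ (not $\varOmega Q$---a minor slip in your write-up), and the bottom square rests on the observation $\sR_{\varOmega Q}(q_0,p;q)=\sN(p,q)$. The only cosmetic difference is that for the bottom square you invoke $(\varSigma^\infty_+\varOmega Q\wedge R)$-linearity of $|\sR_{\varOmega Q}|$ and the free-module universal property, whereas the paper verifies the same identity directly at the level of the underlying $\sJ$-module data before passing to CJS realizations.
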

\begin{proof}
    We first prove that the top square is homotopy commutative. We have two $R$-oriented flow multimodules with local system
    \[
        \sN \circ \osr,\; \sR_{\varOmega Q} \circ_1 \sN_F \colon\sC\sW(F,F),\sC\sW(F,L) \longrightarrow \sC\sW(\varOmega Q,L).
    \]
    The local system on $\sR_{\varOmega Q} \circ_1 \sN_F$ is given by the composition local system $\sE_1$ (see \cref{dfn:composition_local_system}), and the local system $\sE_2$ on $\sN \circ \osr$ is determined by the one on $\sN$. We now consider an $R$-oriented flow bordism $\sB$ defined by the assignment
    \begin{equation}\label{eq:flow_bordism_twisted_mu2}
        (a,p;q) \longmapsto \sR_3(q_0,a,p;q),
    \end{equation}
    where $\sR_3$ denotes the $R$-oriented flow bordism defined in \cref{dfn:moduli_assoc} and \cref{lem:moduli_associativity}(ii). It follows from \cref{lem:moduli_associativity} that $\sB$ defines an $R$-oriented flow bordism $\sB \colon \sN \circ \osr \Rightarrow \sR_{\varOmega Q} \circ_1 \sN_F$. It therefore suffices to show that local systems are respected. Equip $\sB$ with the local system $\sB \Rightarrow F_{\varSigma^\infty_+\varOmega Q \wedge R}(\sE_1,\sE_2)$ induced by the evaluation map
    \begin{align*}
        \sB(a,p;q) &\longrightarrow \sP_{q_0q}Q \\
        u &\longmapsto u|_{(\partial S)_0}.
    \end{align*}
    It is clear that this local system restricts to the corresponding local system of the two codimension $1$ boundary strata $(\sN\circ \osr)(a,p;q) \hookrightarrow \sB(a,p;q)$ and $(\sR_{\varOmega Q} \circ_1 \sN_F)(a,p;q) \hookrightarrow \sB(a,p;q)$. Pick a strictly increasing sequence $\{B_\ell\}_{k=1}^\infty \subset \IR$ that diverge to $\infty$. The flow multimodule $\sR_{\varOmega Q}$ restricts to the action filtered flow categories to an $R$-oriented flow multimodule with local system
    \begin{equation}\label{eq:action_filtered_twisted_mu2}
        (\sR_{\varOmega Q}^{\ell},\fo^{\ell}_{\varOmega Q},\sF^{\ell}_{\varOmega Q}) \colon \sC\sW(\varOmega Q,F),\sC\sW^{\leq B_\ell}(F,L) \longrightarrow \sC\sW(\varOmega Q,L),
    \end{equation}
    similar to the action filtered flow multimodule $\osr^{k,\ell}$ defined in \eqref{eq:multimodule_mu2}. Similarly, the flow bimodule $\sN_F$ restricts to the action filtered pieces
    \[
    \sN_F^k \colon \sC\sW^{\leq A_k}(F,F) \longrightarrow \sC\sW(\varOmega Q,F).
    \]
    Restricting the $R$-oriented flow bordism $\sB$ to appropriate action filtrations and passing to CJS realizations yields a homotopy commutative diagram
    \[
    \begin{tikzcd}[row sep=scriptsize]
        HW^{\leq A_k}(F,F) \wedge_R HW^{\leq B_\ell}(F,L) \rar{|\osr^{k,\ell}|} \dar{|\sN_F^k| \wedge_R \id} & HW^{\leq A_k+B_\ell}(F,L) \dar{|\sN^{k+\ell}|}\\
        HW(\varOmega Q,F) \wedge_R HW^{\leq B_\ell}(F,L) \rar{|\sR_{\varOmega Q}^{\ell}|} & HW(\varOmega Q,L)
    \end{tikzcd},
    \]
    which implies that the top square is homotopy commutative, after passing to the homotopy colimit as $k,\ell \to \infty$.

    Next we show that the bottom square is homotopy commutative. The map $\beta$ is induced from the natural transformation $(\varSigma^\infty_+\varOmega Q \wedge R) \wedge_R Z_{\varOmega Q,L} \Rightarrow Z_{\varOmega Q,L}$ that is defined on objects by the composition of Moore paths:
    \begin{align*}
        (\varSigma^\infty_+\varOmega_{q_0}Q\wedge R) \wedge_R Z_{\varOmega Q,L}(m) &= (\varSigma^\infty_+\varOmega_{q_0}Q\wedge R) \wedge_R \bigvee_{p \in \mu^{-1}(m)} ((\varSigma^\infty_+\sP_{q_0p}Q\wedge R) \wedge_R \fo_{Q,L}(p)) \\
        &= \bigvee_{p \in \mu^{-1}(m)} ((\varSigma^\infty_+\varOmega_{q_0} Q\wedge R) \wedge_R (\varSigma^\infty_+\sP_{q_0p}Q \wedge R) \wedge_R \fo_{Q,L}(p)) \\
        &\longrightarrow \bigvee_{p \in \mu^{-1}(m)} ((\varSigma^\infty_+\sP_{q_0p}Q \wedge R) \wedge_R \fo_{Q,L}(p)) = Z_{\varOmega Q,L}(m).
    \end{align*}
    The $R$-oriented flow multimodule with local system $(\sR_{\varOmega Q},\fo_{\varOmega Q},\sF_{\varOmega Q})$ defined in \eqref{eq:twisted_multimod} induces a natural transformation $\sR_{\varOmega Q}^{-I} \Rightarrow F_{\varSigma^\infty_+ \varOmega Q\wedge R}(Z_{\varOmega Q,F} \wedge_R Z_{F,L},Z_{\varOmega Q,L})$, that induces maps $\rho_{0,p;q}$ for any $p \in \Ob(\sC\sW(F,L))$ and $q\in \Ob(\sC\sW(Q,L))$ defined as the composition
    \[
        Z_{\varOmega Q,F}(0) \wedge_R Z_{F,L}(\mu(p)) \longrightarrow Z_{\varOmega Q,F}(0) \wedge_R Z_{F,L}(\mu(p)) \wedge \sR_{\varOmega Q}^{-I}(q_0,p;q) \longrightarrow Z_{\varOmega Q,L}(\mu(q)).
    \]
   Similarly, the $R$-oriented flow bimodule with local system $(\sN,\fo,\sE)$ induces a natural transformation $\sN^{-I} \Rightarrow F_{\varSigma^\infty_+ \varOmega Q\wedge R}(Z_{F,L}, Z_{\varOmega Q,L})$ that induces maps $\nu_{p,q}$ for any $p \in \Ob(\sC\sW(F,L))$ and $q\in \Ob(\sC\sW(Q,L))$ defined as the composition
    \[
    Z_{F,L}(\mu(p)) \longrightarrow Z_{F,L}(\mu(p))\wedge \sN^{-I}(p,q) \longrightarrow Z_{\varOmega Q,L}(\mu(q)).
    \]
    Now, for every $p \in \Ob(\sC\sW(F,L))$ and $q\in \Ob(\sC\sW(Q,L))$ the following diagram is commutative
    \begin{equation}\label{eq:natural_transf}
        \begin{tikzcd}[row sep=scriptsize,column sep=1cm]
            Z_{\varOmega Q,F}(0) \wedge_R Z_{F,L}(\mu(p)) \rar{\rho_{0,p;q}} \dar{\id \wedge \nu_{p,q}} & Z_{\varOmega Q,L}(\mu(q)) \dar[equal] \\
            (\varSigma^\infty_+\varOmega_{q_0}Q \wedge R) \wedge_R Z_{\varOmega Q,L}(\mu(q)) \rar{\beta} & Z_{\varOmega Q,L}(\mu(q))
        \end{tikzcd},
    \end{equation}
    since for every $p\in \Ob(\sC\sW(F,L))$ and $q\in \Ob(\sC\sW(\varOmega Q,L))$, we have $\sR_{\varOmega Q}(q_0,p;q) = \sN(p,q)$. The flow bimodule $\sN$ restricts to action filtrations to the flow bimodule
    \[
    \sN^\ell \colon \sC\sW^{\leq B_\ell}(F,L) \longrightarrow \sC\sW(\varOmega Q,L).
    \]
    Passing to the CJS realizations of $\sR_{\varOmega Q}^{\ell}$ and $\sN^\ell$ yields a diagram
    \[
    \begin{tikzcd}[row sep=scriptsize, column sep=1cm]
        HW(\varOmega Q,F) \wedge_R HW^{\leq B_\ell}(F,L) \rar{|\sR^{\ell}_{\varOmega Q}|} \dar{\varphi \wedge_R |\sN^\ell| } & HW(\varOmega Q,L) \dar[equal] \\
        (\varSigma^\infty_+\varOmega_{q_0}Q \wedge R) \wedge_R HW(\varOmega Q,L) \rar{\beta} & HW(\varOmega Q,L)
    \end{tikzcd},
    \]
    which is commutative up to homotopy, since the diagram \eqref{eq:natural_transf} is commutative after passing to action filtrations. Passing to the homotopy colimit as $k,\ell \to \infty$ finishes the proof.
\end{proof}
\begin{cor}\label{prop:equiv_of_loop-homod}
    The equivalence of $R$-modules $|\sN| \colon HW(F,L) \overset{\simeq}{\to} HW(\varOmega Q,L)$ is an equivalence of homotopy $(\varSigma^\infty_+ \varOmega Q \wedge R)$-modules.
    \qed
\end{cor}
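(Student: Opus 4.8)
The plan is to deduce the corollary by combining the $R$-module equivalence of \cref{prp:equiv_fiber_loop_space_local} with the multiplicativity statement of \cref{prop:compos_bimod_struct}. Recall that, as explained at the start of \cref{sec:str_lift}, the homotopy $(\varSigma^\infty_+\varOmega Q\wedge R)$-module structure on $HW(F,L) = \sY_F(L)$ is obtained from the $HW(F,F)$-module structure given by the triangle product $\mu^2$, transported along the $R$-algebra equivalence $HW(F,F;R)\xrightarrow{\simeq}\varSigma^\infty_+\varOmega Q\wedge R$ of \cref{cor:hw_loops_alg}; on the target side, $\sY_{\varOmega Q}(L) = HW(\varOmega Q,L)$ carries the honest module structure $\beta$ over $\varSigma^\infty_+\varOmega Q\wedge R$ coming from concatenation of Moore paths in the local system $\sE_{\varOmega Q}$. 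So it suffices to check that $|\sN|\colon HW(F,L)\to HW(\varOmega Q,L)$ intertwines these two structures up to homotopy, since it is already an $R$-module equivalence.

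First I would assemble the left and bottom squares of the diagram in \cref{prop:compos_bimod_struct}: the left square gives $|\sN|\circ\mu^2 \simeq |\sR_{\varOmega Q}|\circ(|\sN_F|\wedge_R\id)$, and the bottom square gives $|\sR_{\varOmega Q}|\simeq \beta\circ(\varphi\wedge_R|\sN|)$, where $\varphi\colon HW(\varOmega Q,F)\xrightarrow{\simeq}\varSigma^\infty_+\varOmega Q\wedge R$ is the equivalence of \cref{lem:loop_sp_local_sys}. Concatenating these two homotopies yields
\[ |\sN|\circ\mu^2 \ \simeq\ \beta\circ\bigl((\varphi\circ|\sN_F|)\wedge_R|\sN|\bigr), \]
which says precisely that $|\sN|$ is equivariant for the $HW(F,F)$-actions, where $HW(F,F)$ acts on the target $HW(\varOmega Q,L)$ by restricting $\beta$ along the composite $HW(F,F)\xrightarrow{|\sN_F|} HW(\varOmega Q,F)\xrightarrow{\varphi}\varSigma^\infty_+\varOmega Q\wedge R$. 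Precomposing the displayed homotopy with the inverse of $(\varphi\circ|\sN_F|)\wedge_R\id$ then turns it into the assertion that $|\sN|$ carries the $(\varSigma^\infty_+\varOmega Q\wedge R)$-action on $HW(F,L)$ (pulled back via $\varphi\circ|\sN_F|$) to $\beta$.

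The remaining point — and the one I expect to require the most care — is to identify the composite $\varphi\circ|\sN_F|$ with the $R$-algebra equivalence of \cref{cor:hw_loops_alg} (equivalently, the map $|\sN|$ of \cref{prop:evinv}), so that the pulled-back structure above is \emph{the} homotopy $(\varSigma^\infty_+\varOmega Q\wedge R)$-module structure on $\sY_F(L)$. Both maps are CJS realizations of essentially the same moduli-theoretic input: evaluate a $J$-holomorphic triangle with one boundary arc on the zero section along that arc, reparametrize by arc length, and apply the finite-dimensional approximation retraction of \cref{notn:ColimData}. Concretely, since $\sC\sW(Q,F)$ has the single object $q_0$, the equivalence $\varphi$ just identifies $HW(\varOmega Q,F)$ with the value $\sE_{\varOmega Q}(q_0)=\varSigma^\infty_+\varOmega Q\wedge R$ of the local system, and under this identification the flow bimodule $\sN_F\colon\sC\sW(F,F)\to\sC\sW(\varOmega Q,F)$ (with $\sN_F(p;q)=\osr(q_0,p;q)$) realizes the same map as the evaluation flow bimodule $\sN\colon\sC\sW(F,F)\to\sM_{\varOmega Q}$ of \cref{sec:loops}; I would make this precise by exhibiting an $R$-oriented flow bordism with local system between the two and invoking \cref{lma:bordism_gives_homotopy_of_maps}, the only bookkeeping being the reordering of the boundary punctures and the passage between the local-system and $\times_{\ev}\overline W^s$ models of the evaluation. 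Granting this identification, the displayed homotopy is exactly the equivariance of $|\sN|$ for the homotopy $(\varSigma^\infty_+\varOmega Q\wedge R)$-module structures, and since $|\sN|$ is an equivalence of $R$-modules by \cref{prp:equiv_fiber_loop_space_local}, it is an equivalence of homotopy $(\varSigma^\infty_+\varOmega Q\wedge R)$-modules, as claimed.
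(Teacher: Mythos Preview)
Your proposal is correct and follows the same approach as the paper, which simply marks the corollary as immediate from \cref{prop:compos_bimod_struct} (the \qed in the statement). You correctly unpack the outer rectangle of that diagram as the equivariance statement $|\sN|\circ\mu^2 \simeq \beta\circ\bigl((\varphi\circ|\sN_F|)\wedge_R|\sN|\bigr)$, and your observation that one must identify $\varphi\circ|\sN_F|$ with the Section~\ref{sec:loops} equivalence of \cref{cor:hw_loops_alg} is a genuine point the paper leaves implicit: the two maps arise from the same moduli spaces $\osr(q_0,a;q_0)$ but land in different models of $\varSigma^\infty_+\varOmega Q\wedge R$ (the Morse--Smale flow category $\sM_{\varOmega Q}$ versus the one-object flow category with local system $\sC\sW(\varOmega Q,F)$), and your sketch of the comparison via a flow bordism with local system is the right idea. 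So your proof is, if anything, more careful than the paper's.
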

\subsubsection{Lift of morphisms}\label{subsec:mor_lift}
We now turn to morphisms:
\begin{prop}\label{prop:mor_lift1}
    The following diagram commutes up to homotopy:
    \[
        \begin{tikzcd}
            HW(F,L) \wedge_R HW(L,K) \rar{\mu^2} \dar{|\sN| \wedge_R \id} & HW(F,K) \dar{|\sN|}\\
            HW(\varOmega Q,L) \wedge_R HW(L,K) \rar{\mu^2_{\varOmega Q}} & HW(\varOmega Q,K).
        \end{tikzcd}
    \]    
\end{prop}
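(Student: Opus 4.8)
The plan is to realize both composites around the square as Cohen--Jones--Segal realizations of flow multimodules with local systems built by composing the evaluation bimodule $\sN$ with the triangle-product multimodules, and then to interpolate between the two composed multimodules using the associativity moduli spaces of $J$-holomorphic disks, exactly along the lines of the proof of \cref{prop:compos_bimod_struct} but with the quadruple of Lagrangian $R$-branes $(Q, F, L, K)$ in place of $(Q, F, F, L)$.

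Concretely, first fix strictly increasing sequences $\{A_k\}, \{B_\ell\} \subset \IR$ diverging to $\infty$. By functoriality of the CJS realization (\cref{lma:functoriality_cjs}) together with its compatibility with composition of flow multimodules carrying local systems (\cref{dfn:composition_local_system} and the remark following it), the composite $|\sN| \circ \mu^2$ agrees, on action-filtered pieces, with $|\sN^{k+\ell} \circ \osr^{k,\ell}|$, where $\sN^{k+\ell} \circ \osr^{k,\ell} \colon \sC\sW^{\leq A_k}(F,L), \sC\sW^{\leq B_\ell}(L,K) \to \sC\sW(\varOmega Q, K)$ carries the composition of the trivial local system on $\osr^{k,\ell}$ with $\sE_{\varOmega Q}$; similarly $\mu^2_{\varOmega Q} \circ (|\sN| \wedge_R \id)$ agrees with $|\sR^\ell_{\varOmega Q} \circ_1 \sN^k|$, carrying the composition local system along the first input. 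Next, for $p \in \Ob(\sC\sW(F,L))$, $r \in \Ob(\sC\sW(L,K))$ and $q \in \Ob(\sC\sW(Q,K))$, set $\sB(p,r;q) \coloneqq \sR_3(q_0, p, r; q)$, where $q_0 \in \sX(Q,F)$ is the generator corresponding to $Q \cap F = \{q_0\}$ and $\sR_3$ is the associativity flow bordism of \cref{lem:moduli_associativity}(ii) for $(Q, F, L, K)$. By \cref{lem:moduli_associativity} the assignment $(p,r;q) \mapsto \sB(p,r;q)$ is an $R$-oriented flow bordism whose two distinguished codimension-one faces $\osr_1$ and $\osr_2$ are canonically identified, as $R$-oriented flow multimodules, with $\sN \circ \osr$ and $\sR_{\varOmega Q} \circ_1 \sN$ respectively; here one uses the colimit descriptions of $\osr_1$, $\osr_2$ together with $\sN(-,-) = \osr(q_0, -; -)$ and the fact that the underlying flow multimodule of $\sR_{\varOmega Q}$ is $\osr$. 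I would then equip $\sB$ with the local system induced by the evaluation map $\sB(p,r;q) \to \sP_{q_0 q}Q$, $u \mapsto u|_{(\partial S)_0}$, to the boundary arc carrying the $Q$-condition, followed by concatenation of Moore paths. Since gluing of disks concatenates their $Q$-boundary arcs (leaving the $L$- and $K$-arcs untouched), this local system restricts on $\osr_1$ and $\osr_2$ to precisely the two composition local systems above; the $R$-orientation on $\sB$ likewise restricts to the composition $R$-orientations there by \cref{lem:moduli_associativity}(ii) and \cref{lem:r_ori_composition_bimods}, exactly as in \cref{prop:compos_bimod_struct}.

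Finally, restricting $\sB$ to action-filtered source categories produces $R$-oriented flow bordisms with local system $\sN^{k+\ell} \circ \osr^{k,\ell} \Rightarrow \sR^\ell_{\varOmega Q} \circ_1 \sN^k$, so \cref{lma:bordism_gives_homotopy_of_maps} yields homotopies between the corresponding maps of CJS realizations, and taking the homotopy colimit as $k, \ell \to \infty$ gives the asserted homotopy commutativity. The main obstacle is the bookkeeping in the middle step: one must verify carefully that the evaluation-based local system on the associativity bordism restricts to the composition local systems on both distinguished faces (tracing through \cref{dfn:composition_local_system} and the definitions of the local systems on $\osr$, $\sN$, and $\sR_{\varOmega Q}$), and that the induced $R$-orientation on $\sB$ coincides with the composition one on those faces; the remainder is a direct transcription of the proofs of \cref{prop:compos_bimod_struct} and \cref{prop:mor_lift2}.
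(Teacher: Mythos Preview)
Your proposal is correct and follows essentially the same approach as the paper's proof: both construct the $R$-oriented flow bordism with local system $\sB(p,r;q) \coloneqq \sR_3(q_0,p,r;q)$ from the associativity moduli spaces for the tuple $(Q,F,L,K)$, verify that its evaluation-based local system restricts correctly on the two distinguished faces, pass to action-filtered pieces, and invoke \cref{lma:bordism_gives_homotopy_of_maps} before taking the homotopy colimit. The paper's argument is in fact terser than yours, simply pointing back to the construction in \eqref{eq:flow_bordism_twisted_mu2} and noting that the same reasoning applies; your more explicit unpacking of the boundary identifications and local-system restrictions is accurate and faithful to that construction.
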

\begin{proof}
    Pick strictly increasing sequences $\{A_k\}_{k=1}^\infty, \{B_\ell\}_{\ell=1}^\infty \subset \IR$ that diverge to $\infty$. Let $\osr^{k,\ell}$ be the action filtered flow multimodule defined in \eqref{eq:multimodule_mu2}. Passing to CJS realizations of the appropriately action filtered flow multimodules involved, we obtain a diagram
    \[
        \begin{tikzcd}
            HW^{\leq A_k}(F,L) \wedge_R HW^{\leq B_\ell}(L,K) \rar{|\osr^{k,\ell}|} \dar{|\sN^k| \wedge_R \id} & HW^{\leq A_k+B_\ell}(F,K) \dar{|\sN^{k+\ell}|}\\
            HW(\varOmega Q,L) \wedge_R HW^{\leq B_\ell}(L,K) \rar{|\sR^{\ell}_{\varOmega Q}|} & HW(\varOmega Q,K),
        \end{tikzcd}
    \]
    and to conclude the result it suffices to prove that this diagram is homotopy commutative. Similarly to the $R$-oriented flow bordism with local system $\sB$ defined in \eqref{eq:flow_bordism_twisted_mu2}, we may define an $R$-oriented flow bordism with local system which restricts on action filtrations to an $R$-oriented flow bordism with local system $\sN^{k+\ell} \circ \osr^{k,\ell} \Rightarrow \sR^{\ell}_{\varOmega Q} \circ_1 \sN^k$. Therefore \cref{lma:bordism_gives_homotopy_of_maps} yields that the above action filtered diagram is homotopy commutative, which finishes the proof.
\end{proof}

\section{Closed exact Lagrangians are isomorphic to the zero section}\label{sec:modules}
\begin{notn}
	\begin{enumerate}
	    \item Let $R$ be a commutative ring spectrum.
        \item Let $Q$ be a closed smooth manifold.
        \item Let $F \coloneqq T^\ast_\xi Q$ denote a fixed cotangent fiber.
	\end{enumerate}
\end{notn}

Recall from \cref{exmp:cotpol} that we equip $T^*Q$ with the standard polarization given by the tangent spaces of the cotangent fibers. It follows that $\mathcal W(T^*Q;R)$ is well-defined. Recall also that the stable Lagrangian Gauss map of the zero section is null-homotopic and hence it supports an $R$-brane structure. The cotangent fiber $F$ admits a canonical $R$-brane structure since it is contractible (see \cref{rem:brane}(iii)). More generally, any nearby Lagrangian admits an $R$-brane structure:
\begin{thm}[{\cite[Corollary 1.3]{jin2020microlocal}, \cite[Theorem B]{abouzaid2020twisted}}]
Let $L \subset T^\ast Q$ be a nearby Lagrangian. The composition
\[
L \longrightarrow \lag \longrightarrow {\B}^2{\GL_1}(\IS)
\]
is null-homotopic. In particular, any nearby Lagrangian supports an $R$-brane structure for any commutative ring spectrum $R$.
\qed
\end{thm}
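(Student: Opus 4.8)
The plan is to derive the statement by matching it, via the constructions of \cref{sec:AbstractBranes,sec:Polarizations}, to the microlocal sheaf results of Jin \cite{jin2020microlocal} and Abouzaid--Kragh \cite{abouzaid2020twisted}, so that the ``proof'' is mostly bookkeeping together with a citation. First I would unwind the map appearing in the statement: by \cref{def:absrbrane} and \cref{subsec:stabg} it is the composition
\[
L \xrightarrow{\sG_L} \lag \xrightarrow{\beta^{-1}} {\B}^2{\OO} \xrightarrow{{\B}^2{J}} {\B}^2{\GL_1}(\IS),
\]
namely the ``$\IS$-linear Maslov data'' obstruction --- precisely the obstruction to lifting the stable Lagrangian Gauss map $\sG_L$ to the homotopy fiber $(\lag)^\#$ formed with $R = \IS$ (see \cref{def:absrbrane}). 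Thus the assertion is that every nearby Lagrangian carries $\IS$-orientation data in the sense of \cref{defn:R-ori_data}, and hence --- pushing forward along ${\B}^2{\eta} \colon {\B}^2{\GL_1}(\IS) \to {\B}^2{\GL_1}(R)$ --- $R$-orientation data for an arbitrary commutative ring spectrum $R$; together with the vanishing of the Maslov class of a nearby Lagrangian, which furnishes a grading in the sense of \cref{dfn:gradingData} and is part of the same circle of results, this gives the ``in particular'' about $R$-brane structures.

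Second, for the null-homotopy itself I would invoke one of the two cited inputs. Jin's $J$-homomorphism theorem \cite[Corollary 1.3]{jin2020microlocal} identifies, for a closed exact Lagrangian $L \subset T^*Q$, the obstruction to promoting the microlocal stalk of a sheaf quantization of $L$ to a local system of $\IS$-modules along $L$ with exactly the composition displayed above; one then feeds in the geometric fact that a nearby Lagrangian admits a sheaf quantization (Guillermou's theorem, in its sphere-spectrum-coefficient form) whose microlocal stalk is by construction an honest $\IS$-module local system, so that the obstruction vanishes. Alternatively, Abouzaid--Kragh's twisted generating functions \cite[Theorem B]{abouzaid2020twisted} package the $\varOmega Q$-fibered structure carried by a nearby Lagrangian and, applying the $J$-homomorphism to the stabilized fiberwise data, produce the same null-homotopy directly.

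The main obstacle is that both routes rest on a genuinely hard external input --- the existence of a sphere-spectrum-level sheaf quantization, respectively a twisted generating function, for an arbitrary nearby Lagrangian --- and neither is reproved here; the Floer-theoretic machinery developed in this paper does not bear on it. Accordingly I would present the proof as the unwinding of the first paragraph followed by citations of \cite[Corollary 1.3]{jin2020microlocal} and \cite[Theorem B]{abouzaid2020twisted}, with \cref{def:absrbrane}, \cref{defn:R-ori_data}, and \cref{dfn:gradingData} handling the translation into the $R$-brane language used throughout.
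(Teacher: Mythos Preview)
Your proposal is correct and matches the paper's treatment: the theorem is stated with a \qed and no proof, relying entirely on the citations to \cite{jin2020microlocal} and \cite{abouzaid2020twisted}. Your unwinding of the map via \cref{def:absrbrane}, \cref{defn:R-ori_data}, and \cref{dfn:gradingData} is accurate and more explicit than the paper itself, but the substance is the same—cite the external input and observe that the ``in particular'' follows by pushing forward along ${\B}^2\eta$.
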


\begin{notn}
    \begin{enumerate}
        \item Let $Q^\#$ denote a choice of $R$-brane structure on the zero section $Q$.
        \item Let $Q$ denote the $R$-brane structure on the zero section $Q$ determined by the canonical null-homotopy of the stable Lagrangian Gauss map.
    \end{enumerate}
\end{notn}

\begin{lem}\label{lma:rank1}
    Let $R$ be a connective ring spectrum. For any nearby Lagrangian $R$-brane $L \subset T^*Q$ we have an equivalence of $R$-modules $HW(F,L;R) \simeq \varSigma^\ell R$, for some $\ell \in \IZ$.
\end{lem}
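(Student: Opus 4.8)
The plan is to reduce the statement to the analogous fact over discrete rings via the Whitehead-type machinery set up in \cref{sec:cjs} and \cref{sec:background_connective}. First I would recall that by \cref{lem:grading_F_bbd_below} the grading function on $\sC\sW(F,L;R)$ is bounded below (this uses that both $F$ and $L$ are closed or cotangent fibers, so the set of Hamiltonian chords is finite), and hence $HW(F,L;R) = |\sC\sW(F,L;R),\fo_{(F,L)}|$ is a connective $R$-module. The connectivity is the crucial input that makes the Whitehead theorems applicable.

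The main step is then to apply \cref{cor:whitehead_cor}: to produce an equivalence $\varSigma^\ell R \overset{\simeq}{\to} HW(F,L;R)$ it suffices to know that $HW(F,L;R)$ is $(\ell-1)$-connective and that
\[
\pi_\bullet(HW(F,L;R) \wedge_R Hk) = \begin{cases} k, & \bullet = \ell \\ 0, & \text{else},\end{cases}
\]
where $k = \pi_0(R)$. By \cref{prop:homology_wedge_ring} (together with \cref{lem:flow_htpy_morse} and the identification of the Morse complex of the flow category with wrapped Floer cochains of \cref{lem:wfuk_discrete_coeff} and \cref{rem:wfuk_discrete_coeff_k}), we have $\pi_\bullet(HW(F,L;R) \wedge_R Hk) \cong HW^{-\bullet}(F,L;k)$. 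So the statement reduces to the classical computation that $HW^{-\bullet}(F,L;k)$ is a free rank one $k$-module concentrated in a single degree $-\ell$; this is precisely the content of \cite[Section 1]{fukaya2008symplectic} and \cite[Appendix C]{abouzaid2012nearby} over a field, and the argument there — which only uses that $L$ is a nearby Lagrangian, via an action/index estimate showing the wrapped complex of $F$ with $L$ has homology of total rank one — works verbatim over any commutative discrete ring $k$. Once we know $HW^{-\bullet}(F,L;k) = k[\ell]$, connectivity of $HW(F,L;R)$ gives that it is $(\ell-1)$-connective (indeed $\pi_i(HW(F,L;R) \wedge_R Hk)$ vanishes for $i < \ell$, and a connective $R$-module with $\pi_\bullet(-\wedge_R Hk)$ vanishing below degree $\ell$ is $(\ell-1)$-connective by the Hurewicz-type argument preceding \cref{thm:whitehead}), so \cref{cor:whitehead_cor} applies and yields the desired equivalence.

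I would also note that the connectivity hypothesis on $R$ is used only to have the Hurewicz map $\Hw \colon R \to Hk$ and the Whitehead theorems; the general (non-connective) case is handled separately via connective covers as indicated in the remark following \cref{thm:EquivToZero}, so it is harmless to assume $R$ connective here. The main obstacle is bookkeeping rather than conceptual: one must be careful that the comparison $\pi_\bullet(HW(F,L;R)\wedge_R Hk) \cong HW^{-\bullet}(F,L;k)$ is set up correctly through the action-filtered colimit (this is where $F$ and $L$ being closed/cotangent fibers removes the need for the homotopy colimit, cf. \cref{rem:Novikov}(ii)), and that the classical rank-one computation is invoked with the correct grading conventions — in particular the sign $-\ell$ versus $\ell$ coming from the convention that the grading function in $\sC\sW$ is the negative of the Maslov index. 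Modulo these conventions, the proof is a direct application of the already-established machinery.
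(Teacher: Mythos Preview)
Your approach is correct and essentially identical to the paper's: both reduce to the classical rank-one computation $HW^{-\bullet}(F,L;k) \cong k[\ell]$ via the identification $\pi_\bullet(HW(F,L;R)\wedge_R Hk)\cong HW^{-\bullet}(F,L;k)$, and then invoke \cref{cor:whitehead_cor}. One minor correction: \cref{lem:grading_F_bbd_below} is stated only for $\sC\sW(F,F;R)$, not $\sC\sW(F,L;R)$; the paper instead simply notes that since $L$ is compact (so there are finitely many chords) and $R$ is connective, $HW(F,L;R)$ is bounded below --- which is exactly your parenthetical reasoning.
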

\begin{proof}
    Let $k \coloneqq \pi_0R$.  Notice that an $R$-brane structure induces a natural $Hk$-brane structure. There is an associated $Hk$-orientation on the flow category $\mathcal{CW}(F,L)$, so that 
    \[ HW(F,L;Hk) \simeq HW(F,L;R) \wedge_{R} Hk.\]
    Following \cref{lem:wfuk_discrete_coeff} (and \cref{rem:wfuk_discrete_coeff_k}), we have that the ordinary wrapped Floer cohomology of $F$ and $L$ with coefficients in $k$ is given by 
    \[ HW^{-\bullet}(F,L;k) \cong \pi_\bullet(HW(F,L;R) \wedge_R Hk).\] 
    By \cite[Lemma C.1]{abouzaid2012nearby} and \cite[Corollary 1.3]{fukaya2008symplectic}, $HW^{-\bullet}(F,L;k) \cong k[\ell]$ for some $\ell \in \IZ$. Since $R$ is connective and $L$ is compact, $HW(F,L;R)$ is bounded below. The conclusion now follows from \cref{cor:whitehead_cor}.
\end{proof}
\begin{lem}\label{lem:iso_loop_yoneda}
    There exists an $R$-brane structure $Q^\#$ on $Q$, such that there is a weak equivalence of $(\varSigma^\infty_+ \varOmega Q \wedge R)$-modules
    \[
    HW(\varOmega Q,L;R) \simeq HW(\varOmega Q,Q^\#;R).
    \]
\end{lem}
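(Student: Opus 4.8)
The plan is to identify the $(\varSigma^\infty_+\varOmega Q\wedge R)$-module $HW(\varOmega Q,L;R)$ with a rank one free $R$-module equipped with a twisted loop-space action, and then to realize that action by a suitable $R$-brane structure on the zero section. First I would invoke \cref{prop:equiv_of_loop-homod}, which provides an equivalence $|\sN|\colon HW(F,L;R)\overset{\simeq}{\longrightarrow}HW(\varOmega Q,L;R)$ of homotopy $(\varSigma^\infty_+\varOmega Q\wedge R)$-modules, where the action on $HW(F,L;R)$ is transported from the $R$-algebra equivalence $HW(F,F;R)\simeq\varSigma^\infty_+\varOmega Q\wedge R$ of \cref{cor:hw_loops_alg} via the triangle product $\mu^2$. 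By \cref{lma:rank1}, $HW(F,L;R)\simeq\varSigma^\ell R$ as $R$-modules for some $\ell\in\IZ$. Combining these, $HW(\varOmega Q,L;R)$ is, as a homotopy $(\varSigma^\infty_+\varOmega Q\wedge R)$-module, the free rank one $R$-module $\varSigma^\ell R$ equipped with some $(\varSigma^\infty_+\varOmega Q\wedge R)$-action.

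The second step is to classify this action. An $(\varSigma^\infty_+\varOmega Q\wedge R)$-module structure on the free rank one $R$-module $\varSigma^\ell R$ is adjoint to an $R$-algebra map $\varSigma^\infty_+\varOmega Q\wedge R\to F_R(\varSigma^\ell R,\varSigma^\ell R)\simeq R$, equivalently, by the base-change adjunction for $-\wedge R$, to a ring map $\varSigma^\infty_+\varOmega Q\to R$. Since $\varOmega Q$ is grouplike, each loop class acts invertibly, so this ring map factors through $\GL_1(R)$, giving a map of grouplike $E_1$-spaces $(\varOmega Q)_+\to\GL_1(R)$; delooping (using that $Q$ is connected, so $\B\varOmega Q\simeq Q$) yields a map $\xi\colon Q_+\to\BGL_1(R)$, that is, an $R$-line bundle on $Q$. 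This realization statement is recorded as \cref{lem:ModuleActionRealized}.

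To produce the brane structure, I would argue as follows. The zero section $Q$ carries the canonical $R$-brane structure coming from the null-homotopic stable Lagrangian Gauss map; by \cref{rem:brane}(i) the choices of $R$-orientation data on $Q$ form a torsor over $[Q,\BGL_1(R)]$, so twisting the canonical one by $\xi$ produces $R$-orientation data with associated $R$-line bundle $\xi$, and choosing the grading (see \cref{dfn:gradingData}) shifted by $\ell$ relative to the canonical one assembles this into an $R$-brane structure $Q^\#$ on the zero section. By \cref{prop:brane_twisted_R_mod} (applied with the canonical $R$-brane structure as base point and accounting for the shift coming from the grading), $HW(\varOmega Q,Q^\#;R)\simeq\varSigma^\ell R_\xi$ as $(\varSigma^\infty_+\varOmega Q\wedge R)$-modules, and this is precisely the module identified in the previous steps; hence $HW(\varOmega Q,L;R)\simeq HW(\varOmega Q,Q^\#;R)$, as claimed.

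I expect the main obstacle to be bookkeeping rather than substance: one must check that the equivalences of \cref{prop:equiv_of_loop-homod}, which are equivalences of \emph{homotopy} modules only, still pin down the classifying $R$-line bundle $\xi$ unambiguously, and that the grading ambiguity on the zero section accounts exactly for the shift $\varSigma^\ell$ of \cref{lma:rank1}. Since both $\xi$ and the grading choice are homotopy-theoretic data and \cref{prop:brane_twisted_R_mod} reproduces them at the level of $(\varSigma^\infty_+\varOmega Q\wedge R)$-modules, this should go through, but it is the point demanding the most care.
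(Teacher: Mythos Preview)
Your approach is correct and closely parallels the paper's, but with one unnecessary detour. The paper's proof is more direct: rather than routing through the homotopy-module equivalence $HW(F,L;R)\simeq HW(\varOmega Q,L;R)$ of \cref{prop:equiv_of_loop-homod}, it observes that $HW(\varOmega Q,L;R)$ is \emph{already} a highly structured $(\varSigma^\infty_+\varOmega Q\wedge R)$-module by virtue of the local-system construction of $\sY_{\varOmega Q}$, and simply uses \cref{prp:equiv_fiber_loop_space_local} together with \cref{lma:rank1} to identify its underlying $R$-module as $\varSigma^\ell R$. The structured action on a rank one free $R$-module is then read off directly as a map $\zeta\colon(\varOmega Q)_+\to\GL_1(R)$, which one interprets as an $R$-brane structure $Q^\#$, and \cref{prop:brane_twisted_R_mod} finishes the proof. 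This sidesteps precisely the bookkeeping issue you flag in your final paragraph: there is no need to worry about whether a homotopy-module equivalence pins down the classifying line bundle, because the structured action is present on $HW(\varOmega Q,L;R)$ from the outset. Your route via $HW(F,L;R)$ would also work, but it introduces the homotopy-vs-structured subtlety unnecessarily; the paper's approach buys a cleaner argument at the cost of relying on the local-system machinery of \cref{sec:str_lift} having already produced a genuine module.
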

\begin{proof}
    By combining \cref{prp:equiv_fiber_loop_space_local,lma:rank1}, we have an isomorphism of $R$-modules
    \[
    HW(\varOmega Q,L;R) \simeq \varSigma^\ell R,
    \]
    for some $\ell \in \IZ$. The $(\varSigma^\infty_+ \varOmega Q \wedge R)$-module structure on $HW(\varOmega Q,L;R)$ is therefore encoded by a map $\zeta \colon (\varOmega Q)_+ \to \GL_1(R)$, which precisely corresponds to a choice of $R$-brane structure $Q^\#$ on $Q$. Equipping $Q$ with this $R$-brane structure and appealing to \cref{prop:brane_twisted_R_mod} therefore finishes the proof.
\end{proof}

The following is our main result of this paper.
\begin{thm}\label{thm:modules_main}
	Let $L$ be a nearby Lagrangian $R$-brane in $T^*Q$. Then, there exists an $R$-brane structure $Q^\#$ on the zero section $Q$ such that $L \cong Q^{\#}$ in $\sW(T^\ast Q;R)$.
\end{thm}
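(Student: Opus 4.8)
The plan is to follow the strategy outlined in \cref{sec:sketch_proof}, essentially combining \cref{prop:evinv}, \cref{cor:hw_loops_alg}, and the structured lift of the Yoneda functor from \cref{sec:str_lift}, and then applying Whitehead-type theorems over $R$. First I would reduce to the connective case: by \cref{rem:equiv_Rbrane_loopsinfRbrane} an $R$-brane structure is the same as an $R_{\geq 0}$-brane structure, and by \cref{cor:lifting_equivalences_to_conn_covers} together with \cref{rem:finite_mor_space_for_compact} (both $L$ and $Q^\#$ are compact, so all relevant morphism spectra are bounded below) an isomorphism in $\sW(T^*Q;R)$ can be detected in $\sW(T^*Q;R_{\geq 0})$. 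So it suffices to assume $R$ is connective.

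Next, consider the structured Yoneda functor $\sY_{\varOmega Q} \colon \sW(T^*Q;R) \to \Ho\mod{(\varSigma^\infty_+ \varOmega Q \wedge R)}$ from \cref{thm:loc_sys_yoneda}, which by \cref{thm:yoneda_cot} and \cref{prop:equiv_of_loop-homod} lifts the ordinary Yoneda functor $\sY_F$. By \cref{lma:rank1}, $HW(F,L;R) \simeq \varSigma^\ell R$ for some $\ell \in \IZ$, so by \cref{prp:equiv_fiber_loop_space_local} we have $HW(\varOmega Q,L;R) \simeq \varSigma^\ell R$ as $R$-modules. The $(\varSigma^\infty_+ \varOmega Q \wedge R)$-module structure on this rank one free $R$-module is classified by a ring map $(\varOmega Q)_+ \to \GL_1(R)$, and by \cref{lem:iso_loop_yoneda} (which invokes \cref{prop:brane_twisted_R_mod}) there is an $R$-brane structure $Q^\#$ on the zero section with $HW(\varOmega Q,L;R) \simeq HW(\varOmega Q,Q^\#;R)$ as $(\varSigma^\infty_+ \varOmega Q \wedge R)$-modules. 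Thus $\sY_{\varOmega Q}(L) \cong \sY_{\varOmega Q}(Q^\#)$ in $\Ho\mod{(\varSigma^\infty_+ \varOmega Q \wedge R)}$.

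It then remains to show $\sY_{\varOmega Q}$ is fully faithful on nearby Lagrangian $R$-branes, i.e. that $\mu^2_{\varOmega Q}$ induces an equivalence
\[
HW(L,K;R) \overset{\simeq}{\longrightarrow} F_{\varSigma^\infty_+ \varOmega Q \wedge R}(HW(\varOmega Q,L;R), HW(\varOmega Q,K;R))
\]
for nearby Lagrangians $L$ and $K$. By \cref{thm:yoneda_cot}(ii) and \cref{prp:equiv_fiber_loop_space_local} this reduces to fully faithfulness of $\sY_F$. Here I would apply the Whitehead theorem \cref{thm:whitehead}: after smashing with $Hk$ (where $k = \pi_0 R$) and using \cref{prop:homology_wedge_ring} and \cref{lem:wfuk_discrete_coeff} to identify homotopy groups with ordinary wrapped Floer cohomology over $k$, the claim becomes the classical fully faithfulness of the cotangent-fiber Yoneda functor on nearby Lagrangians admitting $k$-brane structures, which is \cite[Theorem 1.13]{ganatra2022sectorial} together with \cite[Lemma C.1]{abouzaid2012nearby}; one also needs that both sides are connective (or at least bounded below), which holds by \cref{lem:grading_F_bbd_below} for $HW(F,L;R)$ and by \cref{lma:rank1} for the target. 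Concluding, since $\sY_{\varOmega Q}$ is fully faithful and $\sY_{\varOmega Q}(L) \cong \sY_{\varOmega Q}(Q^\#)$, the objects $L$ and $Q^\#$ are isomorphic in $\sW(T^*Q;R)$.

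The main obstacle I anticipate is not any single one of these steps individually — each invokes a result already established — but rather bookkeeping the compatibility of the various identifications: ensuring the $(\varSigma^\infty_+ \varOmega Q \wedge R)$-module equivalence $HW(\varOmega Q,L;R) \simeq HW(\varOmega Q,Q^\#;R)$ from \cref{lem:iso_loop_yoneda} is compatible with the Yoneda functor being fully faithful, so that an abstract module isomorphism actually comes from a morphism in $\sW(T^*Q;R)$ that is an isomorphism. This is handled exactly as in the proof sketch in \cref{sec:sketch_proof}: the module isomorphism lies in the image of the (fully faithful) functor $\sY_{\varOmega Q}$, hence is realized by an isomorphism between $L$ and $Q^\#$ in the source category.
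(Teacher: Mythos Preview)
Your proposal is correct and follows essentially the same route as the paper's proof: reduce to connective $R$ via \cref{cor:lifting_equivalences_to_conn_covers}, use \cref{lem:iso_loop_yoneda} to find the brane $Q^\#$ with $\sY_{\varOmega Q}(L)\cong\sY_{\varOmega Q}(Q^\#)$, and then deduce $L\cong Q^\#$ from a Whitehead argument comparing with the classical fully faithful functor over $Hk$. The one technical point you gloss over is that applying \cref{thm:whitehead} to the map $HW(L,K;R)\to F_{\varSigma^\infty_+\varOmega Q\wedge R}(\sY(L),\sY(K))$ requires knowing that $F_{\varSigma^\infty_+\varOmega Q\wedge R}(\sY(L),\sY(K))\wedge_R Hk\simeq F_{\varSigma^\infty_+\varOmega Q\wedge Hk}(\sY(L)\wedge_R Hk,\sY(K)\wedge_R Hk)$, which is not formal; the paper handles this in \cref{lma:whitehead_cats} using that $\underline R$ is a finite CW $(\varSigma^\infty_+\varOmega Q\wedge R)$-module (via the path fibration) and that $\sY(L)\simeq\varSigma^\ell R$ is finite CW over $R$.
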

\begin{rem}\label{rem:reduce_to_connective}
    Recall, from the discussion in \cref{subsec:DF_conn_cover}, that there is a functor
    \[
        \sW(T^*Q;R_{\geq 0}) \longrightarrow \sW(T^*Q;R)
    \]
    that is the identity on objects and reflects isomorphisms between compact $R$-branes (see \cref{cor:lifting_equivalences_to_conn_covers}).  Consequently, to prove \cref{thm:modules_main}, it suffices to prove the result under the assumption that $R$ is connective.  With this in mind, we now fix $R$ to be a connective commutative ring spectrum for the remainder of this section.
\end{rem}

\begin{proof}[Proof of \cref{thm:modules_main}]
    Let $k\coloneqq \pi_0R$. Recall the definition of the Hurewicz map in \eqref{eq:hurewicz_map}. Note that an $R$-brane structure determines a $k$-brane structure.  In particular, for any Lagrangian $R$-branes $L$ and $K$, the wrapped Floer cohomology with coefficients in the discrete ring $k$, $HW^{-\bullet}(L,K;k)$, is well-defined. Let $\mathcal{F}(T^*Q;R) \subset \mathcal W(T^\ast Q;R)$ denote the full subcategory of closed Lagrangian $R$-branes. Using \cref{prop:mor_lift1,prop:mor_lift2}, we have a commutative diagram as in \eqref{eq:diagram_yoneda}
    \[
        \begin{tikzcd}[row sep=scriptsize, column sep=1.5cm]
            \mathcal{F}(T^\ast Q;R) \dar[swap]{\Hw_{\sF(T^\ast Q;R)}} \rar{\sY_{\varOmega Q}} & \Ho\mod{(\varSigma^\infty_+ \varOmega Q \wedge R)} \dar{\Hw_{(\varSigma^\infty_+ \varOmega Q \wedge R)}} \\
            \mathcal{F}(T^\ast Q;Hk) \rar{\sY_{\varOmega Q}^{Hk}} & \Ho\mod{(\varSigma^\infty_+ \varOmega Q \wedge Hk)}
        \end{tikzcd},
    \]
    where $\sY_{\varOmega Q}^{Hk}$ is the local system valued Yoneda functor, as in \cref{sec:str_lift}, with $Hk$-coefficients. By construction, $\sY_{\varOmega Q}^{Hk}$ coincides with (the restriction to $\sF(T^*Q;Hk)$ of) the cohomological functor associated to the one from \cite[Theorem 1.1]{abouzaid2012wrapped}. The latter is fully-faithful as a consequence of the well-known fact that the wrapped Fukaya category of $T^\ast Q$ with discrete coefficients is generated by a cotangent fiber (see \cite{abouzaid2011a,ganatra2022sectorial}). It follows that $\sY_{\varOmega Q}^{Hk}$ is fully faithful. Moreover, we note that the restriction of $\sY_{\varOmega Q}$ on the full subcategory $\sW(T^*Q;R) \wedge Hk \subset \sW(T^*Q;Hk)$ coincides with $\sY_{\varOmega Q} \wedge Hk$. From \cref{lem:iso_loop_yoneda}, there exists an $R$-brane structure $Q^\#$ on the zero section so that $\sY_{\varOmega Q}(L) \simeq \sY_{\varOmega Q}(Q^\#)$ in $\mod{(\varSigma^\infty_+ \varOmega Q \wedge R)}$. Hence \cref{lma:whitehead_cats} implies that $L \cong Q^\#$ in $\sW(T^*Q;R)$.
\end{proof}

\appendix
    \section{Geometric background}\label{sec:geom_background}
	In this section, we give some background on the various geometric notions appearing in this paper. A \emph{Liouville domain} is a pair $(X^{2n},\lambda)$ of a smooth $2n$-dimensional manifold $X$ with boundary $\partial_\infty X$ and a one-form $\lambda$ such that $\omega \coloneqq d \lambda$ is symplectic and the $\omega$-dual of $\lambda$ (called the \emph{Liouville vector field}) is outwards pointing along $\partial_\infty X$. The completion of a Liouville domain $(X^{\mathrm{in}},\lambda^{\mathrm{in}})$ is the non-compact exact symplectic manifold $X \coloneqq X^{\mathrm{in}} \cup_{\{0\} \times \partial X^{\mathrm{in}}} ([0,\infty) \times \partial_\infty X^{\mathrm{in}})$ equipped with the one-form
    \[
    \lambda \coloneqq \begin{cases}
        \lambda^{\mathrm{in}}, & \text{in } X^{\mathrm{in}} \\
        e^r\lambda^{\mathrm{in}}|_{\partial X^{\mathrm{in}}}, & \text{in } [0,\infty)_r \times \partial_\infty X^{\mathrm{in}}
    \end{cases}.
    \]
    This is a smooth one-form because the Liouville vector field specifies a collar neighborhood $(-\varepsilon,0]_r \times \partial_\infty X^{\mathrm{in}} \subset X^{\mathrm{in}}$ on which the Liouville one-form is equal to $e^r\lambda^{\mathrm{in}}|_{\partial_\infty X^{\mathrm{in}}}$. A (finite type) \emph{Liouville manifold} is the completion of a Liouville domain $(X, \lambda)$. The \emph{boundary} of a Liouville manifold is defined to be the boundary of the Liouville domain it is the completion of. The \emph{core} of a Liouville manifold, denoted by $\Core X$, is the attractor of the backwards flow of the Liouville vector field. A \emph{Liouville embedding} $f \colon (X, \lambda) \to (Y, \mu)$ is a proper embedding  of underlying smooth manifolds  such that $f^\ast \mu - \lambda$ is exact and compactly supported. Note that the Liouville flow determines an identification $(X\smallsetminus \Core X,\lambda) \cong (\IR \times \partial_\infty X,e^r \lambda|_{\partial_\infty X})$.
	\begin{defn}[Liouville pair]
		A Liouville pair is a tuple $(X,F)$ consisting of a Liouville $2n$-manifold $X^{2n}$ and a Liouville $(2n-2)$-manifold $F$ together with a choice of Liouville embedding $(F, \lambda_F) \hookrightarrow (X \smallsetminus \Core X, \lambda_X)$ such that the induced map $F \to \partial_\infty X$ is an embedding. We call $F$ together with a choice of such a Liouville embedding a \emph{Liouville hypersurface}.
	\end{defn}
	\begin{rem}
		A Liouville hypersurface is sometimes called a \emph{stop}, cf.\@ Sylvan's original definition \cite{sylvan2019on} and \cite[Definition 2.33]{ganatra2020covariantly}. A Liouville pair is sometimes called a \emph{stopped Liouville manifold}.
	\end{rem}
	\begin{defn}[Liouville sector {\cite[Definition 1.1]{ganatra2020covariantly}}]\label{defn:liouville_sector}
		A \emph{Liouville sector} is a Liouville manifold-with-boundary $(X, \lambda, Z)$ for which there is a function $I \colon \partial X \to \IR$ such that the following holds.
		\begin{itemize}
			\item $I$ is \emph{linear at infinity}, meaning $ZI = I$ outside a compact set, where $Z$ denotes the Liouville vector field.
			\item The Hamiltonian vector field $X_I$ of $I$ is outward pointing along $\partial X$.
		\end{itemize}
	\end{defn}
    \begin{rem}
        Note that a Liouville sector $(X,\lambda,Z)$ has two kinds of boundaries: The boundary $\partial_\infty X$ of a defining Liouville domain for $X$, and the \emph{horizontal boundary} $\partial X$. E.g.\@ if $M$ is a smooth manifold with boundary $\partial M$, $T^\ast M$ is a Liouville sector with boundary $\partial_\infty T^\ast M = ST^\ast M$ and horizontal boundary $\partial T^\ast M = T^\ast M|_{\partial M}$.
    \end{rem}
    \begin{defn}[Inclusion of Liouville sectors]\label{dfn:inclusion_sectors}
        \begin{enumerate}
            \item ({\cite[Definition 2.4]{ganatra2020covariantly}}) An \emph{inclusion of Liouville sectors} is a proper map $i \colon (X,\lambda) \hookrightarrow (X',\lambda')$ which is a diffeomorphism onto its image, such that $i^\ast \lambda' - \lambda$ is exact and compactly supported.
            \item By an \emph{inclusion of stably polarized Liouville sectors}, we mean an inclusion of Liouville sectors $X \hookrightarrow X'$ such that the pullback of the stable polarization of $X'$ along the inclusion coincides (up to homotopy) with the stable polarization of $X$.
    \end{enumerate}
	\end{defn}
	By \cite[Proposition 2.25]{ganatra2020covariantly}, there is a conical neighborhood of $\partial X$ that is Liouville isomorphic to $(F^{2n-2} \times T^\ast [0,\varepsilon), \lambda_F+pdq)$ for some $\varepsilon > 0$, where $(q,p)$ are local coordinates in $T^\ast [0,\varepsilon)$ and $F^{2n-2}$ is a Liouville $(2n-2)$-manifold that is called the \emph{symplectic boundary} of $X$.

	For every Liouville sector $X$, one can modify the Liouville form to obtain a Liouville pair $(\overline X, F)$ called the \emph{convexification} of $X$, see \cite[Section 2.7]{ganatra2020covariantly}. Moreover, up to a contractible choice, there is a one-to-one correspondence between Liouville sectors and Liouville pairs \cite[Lemma 2.32]{ganatra2020covariantly}. The convexification comes with a smooth proper embedding
    \begin{equation}\label{eq:stop_proper_emb}
        \sigma \colon F \times \IC_{\mathrm{Re} < \varepsilon} \longrightarrow \overline X
    \end{equation}
    such that $\sigma^\ast \lambda_X = \lambda_F + \frac 12(xdy-ydx) + df$ for some compactly supported real-valued function $f \colon \overline X \to \IR$, see \cite[Lemma 2.31]{ganatra2020covariantly}, which coincides with the definition of a stop \cite[Definition 2.3]{sylvan2019on}.

    A \emph{Weinstein manifold} is a Liouville manifold $X$ that admits a Morse function $X \to \IR$ for which the Liouville vector field is a pseudogradient. A Weinstein manifold is of \emph{finite type} if there exists a Weinstein Morse function with finitely many critical points.
    \begin{defn}[Weinstein pair]
    	A \emph{Weinstein pair} is a Liouville pair $(X,F)$ such that both $X$ and $F$ are Weinstein manifolds, and the associated Liouville embedding $(F,\lambda_F) \hookrightarrow (X \smallsetminus \Core X, \lambda)$ preserves some choices of Weinstein Morse functions.
    \end{defn}
    \begin{defn}[Weinstein sector]
    	A \emph{Weinstein sector} is a Liouville sector such that its convexification admits the structure of a Weinstein pair.
    \end{defn}
    \begin{rem}
    	Any finite type Weinstein manifold can equivalently be defined as the result of a finite number of successive \emph{Weinstein handle attachments} \cite{weinstein1991contact}. A Weinstein handle of index $k$ of a $2n$-dimensional Weinstein manifold is topologically equivalent to $\ID^k \times \ID^{2n-k}$ with $k \in \{0,\ldots,n\}$. Simiarly, any finite type Weinstein sector admit a description in terms of successive attachments of Weinstein handles and Weinstein half-handles (see \cite[Definition 2.7]{chantraine2017geometric}).
    \end{rem}
    \begin{defn}[Subcritical Weinstein sector]\label{dfn:subcrit_weinstein_sector}
    	We say that a $2n$-dimensional Weinstein sector is \emph{subcritical} if it is the result of successive attachments of Weinstein handles and Weinstein half-handles of index $< n$.
    \end{defn}
    \section{Spectra and orientations}\label{sec:background_spectra}
Here we will discuss some basic notions related to spectra and orientations. The main references for this section are \cite{may2006ring}, \cite{elmendorf1997rings} and \cite[Section 1.2]{ando2008units}.

\begin{notn}
    Throughout this paper, whenever we refer to the category of spectra we mean the category of $\IS$-modules, as defined in \cite[Section II.1]{elmendorf1997rings}. This is a complete, cocomplete category with a (point-set level) symmetric monoidal structure given by the $\IL$-smash product $\wedge_{\IS}$ (see \cite[Proposition 1.4 and Theorem 1.6]{elmendorf1997rings}). We will often denote $\wedge_{\IS}$ by $\wedge$. 
\end{notn}

\begin{defn}[Ring spectra and modules over them]\label{defn:hgh_str_ring_spec}
    By a \emph{(commutative) ring spectrum}  $R$ we mean a (commutative) $\wedge_\IS$-monoid object in $\IS$-modules. A left (right) \emph{$R$-module} is a left (right) module object in $\IS$-modules over the $\wedge$-monoid $R$. We denote the category of $R$-modules, enriched over spectra, by $\mod{R}$.
\end{defn}
\begin{rem}\label{rem:hgh_str_ring_spec}
        Ring spectra, commutative ring spectra, and modules as defined above correspond to $A_\infty$ ring spectra, $E_\infty$ ring spectra, and $A_\infty$-modules in the classical sense. 
\end{rem}
The category of $R$-modules is closed under limits and colimits and, similarly as in usual algebra, admits a symmetric monoidal structure given by
    \[
        \begin{tikzcd}[row sep=scriptsize, column sep=scriptsize]
            M \wedge_R N \coloneqq \colim \big(M \wedge R \wedge N \rar[shift left]{} \rar[shift right,swap]{} &  M \wedge N\big)
        \end{tikzcd}.
    \]
There is a free $R$-module functor $X \mapsto R \wedge X$ which is right adjoint to the forgetful functor $\mod{R} \to \mod{\IS}$. For any $R$-module $Y$, free $R$-module $X$, we have an isomorphism
    \[
        Y \wedge_R (R \wedge X) \simeq Y \wedge X.
    \]
\begin{defn}[$R$-algebra]\label{defn:r_alg}
    A \emph{(commutative) $R$-algebra} is a (commutative) monoid object in the category of $R$-modules. 
\end{defn}
\begin{defn}[Space of units of a ring spectrum]\label{defn:space_of_units} The \emph{space of units} of $R$ is the topological space defined by the following homotopy limit
    \[
        \GL_1(R) \coloneqq \holim \left(\begin{tikzcd}[row sep=scriptsize, column sep=scriptsize]
            {} & \varOmega^\infty R \dar \\
            (\pi_0 R)^\times \rar[hook] & \pi_0 R
        \end{tikzcd}\right),
    \]
    where the vertical map is the composition $\varOmega^\infty R \to \pi_0 \varOmega^\infty R \xrightarrow{\cong} \pi_0 R$.
\end{defn}
\begin{rem} 
    \begin{enumerate} 
        \item Since $\varOmega^\infty R$ is an $E_\infty$-space it follows that $\GL_1(R)$ admits the structure of a group-like $E_\infty$-space, in other words it is an infinite loop space.
        \item Note that for a space $X$ we have $[X,\GL_1(R)] = H^0(X_+;R)^\times$.
   \end{enumerate}
\end{rem}
The space of units of $R$ admits a delooping $\BGL_1(R)$. In fact, there is a fibration sequence, see \cite[Section 3]{ando2008units} or \cite[Corollary 1.14]{ando2014an}, 
\[ \GL_1(R) \longrightarrow \EGL_1(R) \longrightarrow \BGL_1(R).\]
Moreover, the construction of $\BGL_1(R)$ is functorial and thus the unit map $\IS \to R$ induces a map $\BGL_1(\IS) \longrightarrow \BGL_1(R)$.

\begin{defn}[$R$-line bundle]
    \begin{enumerate}
        \item An \emph{$R$-line bundle} on a space $X$ is a map $X_+ \to \BGL_1(R)$.
        \item Given two $R$-line bundles $f, g \colon X_+ \to \BGL_1(R)$, an \emph{isomorphism} $\varphi \colon f \to g$ is a homotopy from $f$ to $g$.
        \item \emph{The trivial $R$-line bundle} on $X$ is the constant map $X_+ \to \BGL_1(R)$ mapping $X$ to the base point of $\BGL_1(R)$. We often denote this line bundle by $\underline R$.
        \item A \emph{trivialization} of an $R$-line bundle $f \colon X_+ \to \BGL_1(R)$ is an isomorphism between $f$ and the trivial line bundle on $X$. An $R$-line bundle is \emph{trivializable} if it admits a trivialization.
        \item Given two $R$-line bundles $f \colon X_+ \to \BGL_1(R)$ and $g \colon Y_+ \to \BGL_1(R)$, their \emph{tensor product} is denoted by $f \otimes_R g \colon (X \times Y)_+ \to \BGL_1(R)$ and is defined as the following composition
            \[
                \begin{tikzcd}[row sep=scriptsize, column sep=scriptsize]
                    (X \times Y)_+ \cong X_+ \wedge Y_+ \rar{f \times g} &\BGL_1(R) \wedge \BGL_1(R) \rar & \BGL_1(R)
                \end{tikzcd}
            \]
            where the second map is give by the ring structure on $\BGL_1(R)$.
    \end{enumerate}
\end{defn}

\begin{defn}[$R$-Thom spectrum]\label{dfn:R-Thom_spec}
    Let $f \colon X \to \BGL_1(R)$ be an $R$-line bundle and 
    \[
        P \coloneqq \holim \left(\begin{tikzcd}[row sep=scriptsize, column sep=scriptsize]
            {}& \EGL_1(R) \dar \\
            X \rar{f} & \BGL_1(R)
        \end{tikzcd}\right)
    \]
    be its homotopy fiber. We define the \emph{$R$-Thom spectrum} of $f$ to be the derived smash product
    \[ X^f \coloneqq P \wedge_{\GL_1(R)}^{\IL} R.\]
\end{defn}

\begin{rem}\label{rem:thommul}
    \begin{enumerate}
        \item For the trivial $R$-line bundle $\underline R$ on $X$, 
        \[X^{\underline R} \simeq X \wedge R.\]
        \item $R$-Thom spectra are compatible with tensor products in the sense that
        \[ (X \times Y)^{f \otimes_R g} \simeq X^f \wedge_R Y^g\]
    \end{enumerate}
\end{rem}

\begin{defn}[Rank one free $R$-module]\label{defn:free_rmod}
    By a \emph{rank one free $R$-module} we we mean an $R$-line bundle on the point space $\text{pt}$, in other words a map $\text{pt}_+ \to \BGL_1(R)$.
\end{defn}
\begin{rem}
    Associated to any rank one free $R$-module $f$, there is an $R$-module viz.\@ its $R$-Thom spectrum $\text{pt}_+^f$. Since $f$ is isomorphic to the trivial $R$-line bundle on $\text{pt}_+$, there exists an $R$-module isomorphism 
    \[\text{pt}_+^f \simeq \text{pt}_+^{\underline R} \simeq R.\]
\end{rem}
\begin{notn}\label{notn:thom_spectra}
    Let $f\colon \text{pt}_+ \to \BGL_1(R)$ be a rank one free $R$-module.
    \begin{enumerate}
        \item We often abuse notation and also denote the $R$-module $\text{pt}_+^f$ by $f$. 
        \item Given a space $X$, we will denote the pullback of $f$ to $X$ by $\underline f_X$. Note that $X^{\underline f_X} \simeq X \wedge f$.
    \end{enumerate}
\end{notn}
\begin{rem}
    An $\infty$-categorical approach is taken in \cite{ando2014an} to define $R$-line bundles, their trivializations, and Thom spectra. Let $\mod{R}$ be the $\infty$-category of $R$-modules and $\line{R}$ be subcategory of rank one free $R$-modules and isomorphisms between them. It is shown in \cite[Proposition 2.9]{ando2014an} that the topological space associated to the $\infty$-groupoid $\line{R}$ is equivalent to $\BGL_1(R)$. A line bundle as defined above can thus be seen to be equivalent to a functor $X \to \line{R}$, where $X$ is thought of as the $\infty$-groupoid associated to the space $X$. The latter is the definition of $R$-line bundle adopted there. Moreover, the Thom spectrum associated with the functor $f \colon X \to \line{R}$ is defined to be the $\infty$-categorical colimit
    \[ X^f \coloneqq \colim (X \overset{f}{\longrightarrow} \line{R} \longrightarrow  \mod{R}).\]
    It is shown in \cite[Proposition 3.20]{ando2014an} that this definition agrees with the one in \cref{dfn:R-Thom_spec}.
\end{rem}

\begin{defn}[The $R$-line bundle associated to a vector bundle]\label{dfn:assoc_rline}
    Given any stable vector bundle $f \colon X \to \BO$ we define its \emph{associated $R$-line bundle} $f_R \colon X \to \BGL_1(R)$ by the following composition
    \[
        f_R \colon X \overset{f}{\longrightarrow} \BO \longrightarrow \BGL_1(\IS) \longrightarrow \BGL_1(R).
    \]
\end{defn}
\begin{rem}\label{rem:assoc_vector_bundles}
\begin{enumerate}
    \item Let $f$ be a vector bundle and let $f_R$ be its associated $R$-line bundle. Then we have
        \[ X^{f_R} \simeq R \wedge X^f,\]
        where $X^f$ here denotes the usual Thom spectrum associated to the stable vector bundle $f$.
    \item Given two vector bundles $f$ and $g$, we have
        \[ (f \oplus g)_R \simeq f_R \otimes_R g_R.\]
    \item Given a non-stable rank $n$ vector bundle, $E \colon X \to \BO(n)$, one can consider its stabilization, denoted $f \colon X \to \BO$.  The classical Thom spectrum of $E$, denoted by $X^E$, is related to $X^f$ by $\varSigma^{-n}X^E \simeq X^f$.
\end{enumerate}
\end{rem}
\begin{rem}\label{rem:constant_R-line_isos}
    Let $f,g$ be rank one free $R$-modules and $X$ be a space such that there is an isomorphism of $R$-line bundles $\underline f_X \simeq \underline g_X$. The map on Thom spaces gives a $R$-module map
    \[ f \colon X \wedge f \longrightarrow g.\]
    This in turn induces a map $\varSigma_+^\infty X \to F_R(f,g)$.
\end{rem}

\begin{defn}[$R$-orientation on vector bundles]\label{dfn:ori_vb}
    A vector bundle $f \colon X \to \BO$ is \emph{$R$-orientable} if its associated line bundle $f_R$ is trivializable. An \emph{$R$-orientation} on $f$ is choice of trivialization of $f_R$.
\end{defn}
\begin{rem}\label{rem:ori_fb_torsor}
    If a vector bundle $f$ is $R$-orientable, the set of $R$-orientations up to homotopy is a torsor over $[X, \varOmega \BGL_1(R)] = [X, \GL_1(R)]$.
\end{rem}

This notion of orientations is related to the usual notion of orientations defined through Thom classes as follows: Let $f \colon X \to \BGL_1(R)$ be an $R$-line bundle and $X^f$ be the associated Thom spectrum. For any $x \in X$, let $(X^f)_x$ denote the $R$-Thom spectrum associated to the rank one free $R$-module
\[ \{x\} \longrightarrow X \overset{f}{\longrightarrow} \BGL_1(R).\]
\begin{defn}[$R$-Thom class]
    A \emph{Thom class} of an $R$-line bundle $f \colon X \to \BGL_1(R)$ is a map of $R$-modules
    \[ \tau \colon X^f \longrightarrow R \]
    such that the composition
    \[ (X^f)_x \longrightarrow X^f \overset{\tau}{\longrightarrow} R \]
    is a weak equivalence for all $x \in X$. An \emph{$R$-Thom class of a vector bundle} is a Thom class of the associated $R$-line bundle.
\end{defn}
\begin{thm}[{\cite[Theorem 2.24 and Corollary 2.26]{ando2014an}}]\label{thm:ori_thom}
    \begin{enumerate}
        \item The space of trivializations of an $R$-line bundle $f \colon X \to \BGL_1(R)$ is homotopy equivalent to the space of Thom classes of $f$.
        \item If $f$ is $R$-oriented and $\tau \colon X^f \to R$ is the associated Thom class, the following composition
        \[
            \begin{tikzcd}[row sep=scriptsize, column sep=scriptsize]
                X^f \rar{\mathrm{pr} \wedge \id} & X \wedge X^f \rar{\id \wedge \tau} & X \wedge R
            \end{tikzcd}
        \]
            is a weak equivalence.
    \end{enumerate}
    \qed
\end{thm}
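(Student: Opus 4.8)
The plan is to work in the $\infty$-categorical framework recalled above, in which an $R$-line bundle on $X$ is a functor $f\colon X\to\line{R}$, its Thom spectrum is the colimit $X^f=\colim\bigl(X\xrightarrow{f}\line{R}\to\mod{R}\bigr)$, and (using the identification of the underlying space of $\line{R}$ with $\BGL_1(R)$) a trivialization of $f$ is an equivalence $f\simeq\un R$ in $\mathrm{Fun}(X,\line{R})$. First I would prove (i) purely formally. By the adjunction between $\colim$ and the constant-diagram functor, $\maps_{\mod{R}}(X^f,R)\simeq\maps_{\mathrm{Fun}(X,\mod{R})}(f,\un R)$; that is, an $R$-module map $\tau\colon X^f\to R$ is the same datum as a natural transformation $\tau\colon f\Rightarrow\un R$, and the fibrewise description of $(X^f)_x$ shows that the composite $(X^f)_x\to X^f\xrightarrow{\tau}R$ is precisely the component $\tau_x\colon f_x\to R$. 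Hence $\tau$ is a Thom class in the sense of the definition exactly when every $\tau_x$ is an equivalence, i.e.\ when $\tau$ is a natural equivalence; since $f$ and $\un R$ land in $\line{R}$, such a $\tau$ is an equivalence in $\mathrm{Fun}(X,\line{R})$. Passing to inverses identifies the space of equivalences $f\simeq\un R$ with the space of equivalences $\un R\simeq f$, which is by definition the space of trivializations of $f$. This proves (i).

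For (ii) I would reduce to the trivial bundle. Since $f$ is $R$-orientable, fix a trivialization $\theta\colon f\simeq\un R$; this gives an equivalence $X^f\simeq X_+\wedge R$, and by naturality of the Thom-diagonal construction $X^f\to X_+\wedge X^f$ in the line bundle, under $\theta$ this map is carried to $\Delta_X\wedge\id_R\colon X_+\wedge R\to X_+\wedge X_+\wedge R$, where $\Delta_X$ is the diagonal of $X$. Under $\theta$, the Thom class $\tau$ becomes a map $\tau'\colon X_+\wedge R\to R$ which, by (i) applied to $\un R$, is a Thom class of the trivial bundle; adjointly it is a map $X_+\to\varOmega^\infty R$ each of whose restrictions to a point is an equivalence $R\to R$, hence a map factoring through $\GL_1(R)$. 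The composition in the statement therefore becomes
\[
X_+\wedge R\xrightarrow{\ \Delta_X\wedge\id\ }X_+\wedge X_+\wedge R\xrightarrow{\ \id\wedge\tau'\ }X_+\wedge R,
\]
which over each point $x\in X$ is multiplication by the unit $\tau'(x)\in\GL_1(R)$; that is, it is the automorphism of the $X_+$-parametrized $R$-module $X_+\wedge R$ given by twisting by $\tau'\colon X_+\to\GL_1(R)$. Its homotopy inverse is the twist by $(\tau')^{-1}$, so the composition is a weak equivalence, as claimed.

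\textbf{Main obstacle.} The content here is not a deep idea but making the two models of the Thom spectrum talk to each other: one must verify that the point-set model $X^f=P\wedge^{\IL}_{\GL_1(R)}R$ of \cref{dfn:R-Thom_spec} agrees with the colimit description $\colim(X\to\line{R}\to\mod{R})$, that the map $X^f\to X_+\wedge X^f$ of the statement coincides with the Thom diagonal of that model and is natural with respect to equivalences of $R$-line bundles (so that the reduction to the trivial bundle in (ii) is legitimate), and that the underlying space of $\line{R}$ is $\BGL_1(R)$ (so that trivializations in the sense of the $R$-line bundle definition are exactly natural equivalences $f\simeq\un R$). As an alternative to the reduction in (ii), one can argue directly that the stated composition is a fibrewise equivalence over $X$, using that both $X^f$ and $X_+\wedge R$ are local systems of rank-one free $R$-modules over $X$ and that a map of such inducing equivalences on all fibres is an equivalence; this is the route taken in \cite{ando2014an}, and either way the bookkeeping is the only real work.
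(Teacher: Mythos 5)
Your proposal is correct, but note that the paper offers no proof of this statement at all: it is quoted verbatim from \cite[Theorem 2.24 and Corollary 2.26]{ando2014an}, which is why it ends with \qed. What you have written is essentially a reconstruction of the argument in that cited reference — the colimit/constant-diagram adjunction identifying $R$-module maps $X^f \to R$ with natural transformations $f \Rightarrow \un R$ for part (i), and the reduction to the trivial bundle (equivalently, a fibrewise-equivalence argument) for part (ii) — and the model-comparison points you flag as the "main obstacle" are exactly the ones the paper itself discharges by citing \cite[Propositions 2.9 and 3.20]{ando2014an}, so there is no gap beyond that bookkeeping.
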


    \section{Hurewicz maps and Whitehead theorems}\label{sec:Whitehead}
\begin{notn}
    \begin{enumerate}
        \item Let $R$ be a connective and commutative ring spectrum.
        \item Let $k \coloneqq \pi_0(R)$.
    \end{enumerate}
\end{notn}
By \cite[Proposition IV.3.1]{elmendorf1997rings}, there exists a map of $\IS$-algebras $\Hw \colon R \to Hk$ such that the induced map $\Hw_\bullet \colon \pi_0(R) \to \pi_0(Hk)$ is the identity isomorphism. As discussed in \cite[p.\@ 78]{elmendorf1997rings}, since $R$ is a commutative $\IS$-algebra, so is $Hk$, and moreover $\Hw$ endows $Hk$ with the structure of a commutative $R$-algebra.  If $M$ is an $R$-module, then we define the \emph{Hurewicz map} to be the map of $R$-modules
\begin{equation}\label{eq:hurewicz_map}
\Hw_M \colon M \overset{\simeq}{\longrightarrow} M \wedge_R R \xrightarrow{\id \wedge_R \Hw} M \wedge_R Hk.
\end{equation}
\begin{thm}[Hurewicz {\cite[Theorem IV.3.6]{elmendorf1997rings}}]\label{thm:Hurewicz}
    If $M$ is an $(n-1)$-connective $R$-module, then $\pi_i(M \wedge_R Hk) = 0$ for $i < n$ and
    \[ (\Hw_M)_n \colon \pi_n(M) \longrightarrow \pi_n(M \wedge_R Hk) \]
    is an isomorphism.
    \qed
\end{thm}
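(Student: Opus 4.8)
The plan is to prove \cref{thm:Hurewicz} — a standard fact about modules over a connective ring spectrum, recorded as \cite[Theorem IV.3.6]{elmendorf1997rings} — by reducing it to a connectivity estimate for relative smash products. First I would observe that since $\Hw \colon R \to Hk$ is a map of connective ring spectra inducing an isomorphism on $\pi_0$, its homotopy fiber $F \coloneqq \hofib(\Hw)$ satisfies $\pi_i(F) = 0$ for $i \leq 0$: this follows from the long exact homotopy sequence of $F \to R \to Hk$, using $\pi_i(Hk) = 0$ for $i \neq 0$ and $\pi_i(R) = 0$ for $i < 0$. Since $\Hw$ is in particular a map of $R$-modules, we obtain a cofiber sequence of $R$-modules $F \to R \to Hk$.

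Next I would smash this cofiber sequence over $R$ with $M$, producing the cofiber sequence
\[
M \wedge_R F \longrightarrow M \wedge_R R \simeq M \longrightarrow M \wedge_R Hk,
\]
in which the second map is precisely the Hurewicz map $\Hw_M$ of \eqref{eq:hurewicz_map}. The crux is the claim that $M \wedge_R F$ is highly connected, namely $\pi_i(M \wedge_R F) = 0$ for $i \leq n$, using that $\pi_i(M) = 0$ for $i < n$ and $\pi_i(F) = 0$ for $i \leq 0$. Granting this, the associated long exact sequence
\[
\pi_i(M \wedge_R F) \longrightarrow \pi_i(M) \longrightarrow \pi_i(M \wedge_R Hk) \longrightarrow \pi_{i-1}(M \wedge_R F)
\]
immediately yields $\pi_i(M \wedge_R Hk) = 0$ for $i < n$, since both outer terms then vanish; and for $i = n$ it shows that $(\Hw_M)_n \colon \pi_n(M) \to \pi_n(M \wedge_R Hk)$ is an isomorphism, as $\pi_n(M \wedge_R F) = 0$ and $\pi_{n-1}(M \wedge_R F) = 0$.

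The remaining point — which I expect to be the main obstacle, though it is itself classical — is the connectivity estimate: over a connective ring spectrum $R$, if $\pi_i(M) = 0$ for $i < a$ and $\pi_i(N) = 0$ for $i < b$, then $\pi_i(M \wedge_R N) = 0$ for $i < a + b$; here $a = n$ and $b = 1$. The cleanest route is via $R$-cell structures: over connective $R$, any $N$ with $\pi_i(N) = 0$ for $i < b$ admits a CW approximation built out of free cells $\varSigma^j R$ with $j \geq b$ (the relative analogue of the minimal cell approximation, obtained by attaching cells in ascending degrees starting from $b$ to kill homotopy). Since $M \wedge_R \varSigma^j R \simeq \varSigma^j M$ has $\pi_i = 0$ for $i < a + j$, with $a + j \geq a + b$ because $j \geq b$, and since this vanishing range is preserved under the cofiber sequences and filtered colimits that assemble the cell structure, $M \wedge_R N$ has $\pi_i = 0$ for $i < a + b$. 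Alternatively, one could invoke the Künneth/Tor spectral sequence $E^2_{s,t} = \mathrm{Tor}^{\pi_\bullet R}_{s,t}(\pi_\bullet M, k) \Rightarrow \pi_{s+t}(M \wedge_R Hk)$, whose terms with $s+t < n$ vanish and whose bottom surviving entry is $E^2_{0,n} = \pi_n(M) \otimes_k k \cong \pi_n(M)$; this reads off both conclusions at once, at the cost of setting up the spectral sequence.
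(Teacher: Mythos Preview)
Your argument is correct and is essentially the standard proof; the paper itself does not give a proof at all, simply citing \cite[Theorem IV.3.6]{elmendorf1997rings} and marking the statement with a \qed. Your route via the cofiber sequence $F \to R \to Hk$, smashing with $M$ over $R$, and invoking the connectivity estimate for $M \wedge_R F$ is exactly how one proves this in the EKMM framework, so there is nothing to compare beyond noting that you have supplied what the paper deliberately omits.
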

A corollary of the Hurewicz theorem for connective $R$-modules is the following Whitehead theorem for connective $R$-modules.
\begin{thm}[Whitehead]\label{thm:whitehead}
    Let $M$ and $M'$ be $n$-connective and $n'$-connective $R$-modules for some $n, n' \in \IZ$, respectively. If an $R$-module map $\varphi \colon M \to M'$ induces an equivalence
    \[ \varphi_\bullet \colon \pi_i(M \wedge_R Hk) \longrightarrow \pi_i(M' \wedge_R Hk)\]
    for all $i \in \IZ$, then $\varphi$ is an equivalence of $R$-modules.
\end{thm}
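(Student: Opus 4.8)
The plan is to reduce to the Hurewicz theorem \cref{thm:Hurewicz} by passing to the cofiber of $\varphi$. Set $C \coloneqq \hocofib(\varphi \colon M \to M')$, which is again an $R$-module and sits in a cofiber sequence $M \to M' \to C$. First I would record that $C$ is bounded below: the long exact sequence in homotopy groups, together with the hypotheses $\pi_i(M) = 0$ for $i < n$ and $\pi_i(M') = 0$ for $i < n'$, forces $\pi_i(C) = 0$ for all $i < \min(n', n+1)$.

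Next I would use that smashing with $Hk$ over $R$ is exact --- it is a left adjoint on $R$-modules and hence preserves homotopy cofiber sequences --- so that $C \wedge_R Hk$ is the homotopy cofiber of $\varphi \wedge_R \id_{Hk} \colon M \wedge_R Hk \to M' \wedge_R Hk$. By assumption the latter map induces an isomorphism on $\pi_i$ for every $i \in \IZ$, hence is a weak equivalence; therefore its cofiber is contractible, i.e.\ $\pi_\bullet(C \wedge_R Hk) = 0$.

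Finally, suppose for contradiction that $C$ is not contractible. Since $C$ is bounded below there is a least integer $n_0$ with $\pi_{n_0}(C) \neq 0$, so that $C$ is $(n_0-1)$-connective in the sense of \cref{thm:Hurewicz}. That theorem then yields an isomorphism $\pi_{n_0}(C) \cong \pi_{n_0}(C \wedge_R Hk) = 0$, a contradiction. Hence $C \simeq 0$, and the cofiber sequence $M \to M' \to C$ shows that $\varphi$ is an equivalence of $R$-modules.

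I do not anticipate a genuine obstacle: the argument is the standard formal deduction of a Whitehead-type theorem from a Hurewicz-type theorem. The only points needing (routine) care are verifying the boundedness below of $C$ from the two connectivity bounds via the long exact sequence, and the fact that $- \wedge_R Hk$ commutes with the formation of homotopy cofibers --- which is automatic since $\mod{R}$ is a stable model category and this functor is left Quillen (after a cofibrant replacement of $Hk$ as an $R$-module, which does not affect homotopy groups). All of the real content is \cref{thm:Hurewicz}, of which this theorem is the customary corollary.
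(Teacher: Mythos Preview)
Your argument is correct and is exactly the approach the paper indicates: take the homotopy cofiber of $\varphi$, observe it is bounded below with vanishing $\pi_\bullet(- \wedge_R Hk)$, and apply \cref{thm:Hurewicz} to conclude it is contractible. The paper's proof is just a one-line pointer to this standard deduction.
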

\begin{proof}
By considering the homotopy cofiber of the map $\varphi$ and using \cref{thm:Hurewicz}, the proof of this theorem is the same as the proof of the usual Whitehead theorem.
\end{proof}
\begin{cor}\label{cor:whitehead_cor}
If $M$ is an $(n-1)$-connective $R$-module and
\[
	\pi_\bullet(M \wedge_R Hk) = \begin{cases}
		k, & \bullet = n \\
		0, & \text{else}
	\end{cases},
\]
then there is an equivalence of $R$-modules $\varSigma^{n} R \overset{\simeq}{\to} M$.
\end{cor}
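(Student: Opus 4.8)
The plan is to deduce \cref{cor:whitehead_cor} directly from \cref{cor:whitehead} (the Hurewicz theorem) and \cref{thm:whitehead} (the Whitehead theorem), by exhibiting an $R$-module map $\varSigma^n R \to M$ and checking that it satisfies the hypotheses of the Whitehead theorem. Concretely, since $M$ is $(n-1)$-connective, the Hurewicz theorem \cref{thm:Hurewicz} gives that the Hurewicz map induces an isomorphism $\pi_n(M) \xrightarrow{\simeq} \pi_n(M \wedge_R Hk) = k$. Let $\alpha \in \pi_n(M)$ be the unique class mapping to the unit $1 \in k$ under this isomorphism. Such a class $\alpha$ is represented by an $R$-module map $\varphi \colon \varSigma^n R \to M$ (using that $\varSigma^n R$ is the free $R$-module on $S^n$, so $R$-module maps out of it are computed by $\pi_n(M)$).

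The next step is to verify that $\varphi$ induces an isomorphism on all homotopy groups after smashing with $Hk$. We have $\varSigma^n R \wedge_R Hk \simeq \varSigma^n Hk$, whose homotopy is $k$ concentrated in degree $n$. The map
\[
\varphi_\bullet \colon \pi_\bullet(\varSigma^n R \wedge_R Hk) \longrightarrow \pi_\bullet(M \wedge_R Hk)
\]
is therefore automatically an isomorphism in all degrees $\bullet \neq n$, since both sides vanish there (the target vanishes away from degree $n$ by hypothesis). In degree $n$, it suffices to observe that $\varphi_\bullet$ sends the generator of $\pi_n(\varSigma^n R \wedge_R Hk) = k$ to the generator, which holds by construction of $\alpha$: the square relating $\varphi_\bullet$ on homotopy groups before and after smashing with $Hk$ (via the Hurewicz maps $\Hw_{\varSigma^n R}$ and $\Hw_M$, which is natural in the $R$-module) commutes, and $\Hw_{\varSigma^n R}$ is an isomorphism on $\pi_n$ by \cref{thm:Hurewicz} applied to the $(n-1)$-connective module $\varSigma^n R$.

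Finally, $\varSigma^n R$ is $n$-connective and $M$ is $(n-1)$-connective, so both modules satisfy the connectivity hypotheses of \cref{thm:whitehead}. Applying the Whitehead theorem to $\varphi$ yields that $\varphi$ is an equivalence of $R$-modules, completing the proof. I do not expect any serious obstacle here; the only mild subtlety is being careful that $\varphi_\bullet$ in degree $n$ really does hit a generator rather than, say, a non-unit multiple of one — but this is forced by the naturality of the Hurewicz map together with the fact that we chose $\alpha$ to be the preimage of $1 \in k$. This is essentially a bookkeeping argument once \cref{thm:Hurewicz} and \cref{thm:whitehead} are in hand.
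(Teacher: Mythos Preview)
Your argument is correct. Both your proof and the paper's ultimately construct an $R$-module map $\varSigma^n R \to M$ that becomes an equivalence after $-\wedge_R Hk$, and then invoke \cref{thm:whitehead}. The difference lies in how the map is produced. You use \cref{thm:Hurewicz} directly: since $M$ is $(n-1)$-connective, the Hurewicz map identifies $\pi_n(M)\cong \pi_n(M\wedge_R Hk)=k$, and the preimage of $1$ gives the desired map via the free--forgetful adjunction $\pi_n(M)\cong [\varSigma^n R,M]_R$. The paper instead replaces $M$ by a CW $R$-module, examines its $0$-skeleton (a wedge of copies of $R$), and uses the long exact sequence of the pair $(M\wedge_R Hk,\,M^0\wedge_R Hk)$ to locate a cell $R\to M^0\to M$ hitting the generator.

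Your route is more streamlined: it avoids CW approximation and the skeleton argument, relying only on the Hurewicz theorem already recorded in the appendix and on naturality of $\Hw$. The paper's approach is slightly more concrete in that it exhibits the map as an actual cell inclusion, which could be useful if one wanted finer control over the CW structure, but for the statement at hand your argument is cleaner. One small typo: in your opening sentence you write ``\cref{cor:whitehead}'' where you mean \cref{thm:Hurewicz}.
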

\begin{proof}
	Without loss of generality we may assume that $n = 0$.  We can also assume that $M$ is a CW $R$-module (see \cite[Definition III.2.5 and Theorem III.2.10]{elmendorf1997rings}). The $0$-skeleton, $M^0$, is a wedge of sphere $R$-modules of dimension $0$, $R \simeq S^0_R$.  Note that $M^0 \wedge_R Hk$ and $M \wedge_R Hk$ are CW $Hk$-modules and we have an exact sequence induced from the long exact sequence of the pair $(M \wedge_R Hk, M^0 \wedge_R Hk)$:
    \[
        \begin{tikzcd}[row sep=scriptsize, column sep=scriptsize]
            0 \rar & \pi_1(M \wedge_R Hk,M^0 \wedge_R Hk) \rar & \pi_0(M^0 \wedge_R Hk) \rar & \pi_0(M\wedge_R Hk) \rar & 0
        \end{tikzcd}.
    \]
	Using the CW $Hk$-module structure, it follows that there is a map $R \simeq S_R \to M^0$ of some dimension $0$ $R$-sphere, so that the composition $R \to M^0 \to M$ induces an isomorphism on $\pi_\bullet(- \wedge_R Hk)$.  The desired result now follows from \cref{thm:whitehead}.
\end{proof}

We also record other consequences of the Whitehead theorem that are used in the proofs of our main results. Let $\mathcal C$ denote a category enriched over the homotopy category of $R$-modules, $A$ be an $R$-algebra, and 
\[
\sY \colon \mathcal C \longrightarrow \mod{A} 
\]
be a functor of $\Ho \mod{R}$-enriched categories. Let $\mathcal C \wedge_R Hk$ denote the category with the same objects as $\mathcal C$ and $(\mathcal C \wedge_R Hk)(L,K) \coloneqq \mathcal C(L,K) \wedge_R Hk$. The Hurewicz map $\Hw$ in \eqref{eq:hurewicz_map} induces a functor
\begin{align*}
\Hw_{\mathcal C} \colon \mathcal C &\longrightarrow \mathcal C \wedge_R Hk \\
L &\longmapsto L \\
(L \to K) &\longmapsto \Hw(L \to K)
\end{align*}
The Hurewicz map also induces a functor 
\[
\Hw_{A} \colon \mod{A} \longrightarrow \mod{(A \wedge_R Hk)},
\]
such that the following diagram commutes
\begin{equation}\label{eq:diagram_yoneda}
\begin{tikzcd}[row sep=scriptsize]
	 \mathcal C \dar{\Hw_{\mathcal C}} \rar{\sY} & \mod{A} \dar[swap]{\Hw_{A}} \\
	 \mathcal C \wedge_R Hk \rar{\sY \wedge_R Hk} & \mod{(A \wedge_R Hk)}.
\end{tikzcd}
\end{equation}

\begin{lem}\label{lma:whitehead_cats}
Assume the following:
\begin{enumerate}
    \item $\sY \wedge_R Hk$ in \eqref{eq:diagram_yoneda} is fully faithful.
    \item $A \simeq \varSigma^\infty_+ \varOmega Q \wedge R$ for a path-connected finite CW-complex $Q$.
    \item $\sY(L)$ and $\sY(K)$ are both equivalent to $\varSigma^n R$ as $R$-modules, for some $n \in \IZ$.
    \item $\sC(L,K)$ and $\sC(K,L)$ are connective.
\end{enumerate}
Then, if $\sY(L)$ is isomorphic to $\sY(K)$ in $\mod{A}$, then $L$ is isomorphic to $K$ in $\sC$.
\end{lem}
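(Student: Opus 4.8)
```latex
\begin{proof}[Proof proposal]
The plan is to bootstrap the discrete-coefficient fully-faithfulness of the Yoneda functor (assumption (i)) to an isomorphism over $R$ by means of the Whitehead theorem \cref{thm:whitehead}, exactly as was already done for the main theorem in \cref{sec:modules}. The guiding principle is that a chosen isomorphism $\sY(L)\cong\sY(K)$ in $\mod{A}$ produces, via the fully faithful functor $\sY\wedge_R Hk$, an isomorphism after smashing with $Hk$, and then a Whitehead-type argument upgrades this to an isomorphism in $\sC$ itself, provided the relevant morphism spectra are connective and rank-one free. The connectivity hypothesis on $\sC(L,K)$ and $\sC(K,L)$ in (iv) is precisely what makes Whitehead applicable; assumption (iii) guarantees the morphism spectra are (shifts of) rank-one free $R$-modules, which lets us pin down isomorphisms by a single homotopy class of maps.

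First I would fix an isomorphism $\Phi\colon \sY(L)\to\sY(K)$ in $\mod A$, with inverse $\Psi$. Since $\sY$ is enriched over $\Ho\mod R$, for any objects $L,K$ we have a map of $R$-modules $\sY\colon \sC(L,K)\to F_A(\sY(L),\sY(K))$. I would show that this map becomes an equivalence after applying $-\wedge_R Hk$: indeed, $\sC(L,K)\wedge_R Hk = (\sC\wedge_R Hk)(L,K)$, and by the commutativity of diagram \eqref{eq:diagram_yoneda} together with assumption (i) (that $\sY\wedge_R Hk$ is fully faithful) the induced map
\[
\sC(L,K)\wedge_R Hk \longrightarrow F_{A\wedge_R Hk}(\sY(L)\wedge_R Hk,\sY(K)\wedge_R Hk)
\]
is an equivalence. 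Here I would use that $\sY(L)\simeq\sY(K)\simeq\varSigma^n R$ implies $\sY(L)\wedge_R Hk$ and $\sY(K)\wedge_R Hk$ are $\varSigma^n Hk$, so the mapping spectrum $F_{A\wedge_R Hk}(\varSigma^n Hk,\varSigma^n Hk)$ is computed by the module structure, and smashing $F_A(\varSigma^n R,\varSigma^n R)$ with $Hk$ over $R$ commutes appropriately with the mapping-spectrum formation because $\varSigma^n R$ is free of rank one (using the isomorphism $Y\wedge_R(R\wedge X)\simeq Y\wedge X$ and perfectness of $\varSigma^n R$). Assumption (ii), that $A\simeq\varSigma^\infty_+\varOmega Q\wedge R$ with $Q$ a finite CW complex, ensures $A\wedge_R Hk\simeq\varSigma^\infty_+\varOmega Q\wedge Hk$ and keeps all the relevant modules well-behaved (connective, with the right finiteness so that smashing past $F_{A}(-,-)$ is legitimate).

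Next I would produce a morphism $\varphi\in[\,R,\sC(L,K)\,]$ whose image under $\sY$ is $\Phi$ (up to the shift identification), and similarly $\psi$ whose image is $\Psi$. Since $\sC(L,K)$ is connective by (iv) and, via the argument above, its homotopy groups after $\wedge_R Hk$ agree with those of $F_{A\wedge_R Hk}(\varSigma^nHk,\varSigma^nHk)\simeq Hk$ in degree $0$ (the degree-zero endomorphisms of the free rank-one module), I can invoke \cref{cor:whitehead_cor} to conclude $\sC(L,K)\simeq R$ as an $R$-module (and likewise $\sC(K,L)\simeq R$). In particular $\pi_0\sC(L,K)\cong k$, so $\Phi$, which is a unit in $\pi_0 F_{A\wedge_R Hk}(\ldots)$, lifts to a class $\varphi$; by \cref{lem:obstruction_lifting_conn_covers}-style uniqueness (or directly, since the target is $R$ and $\sY$ is an equivalence after $\wedge_R Hk$), the lift is unique. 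Then $\sY(\psi\circ\varphi)$ and $\sY(\mathrm{id}_L)$ agree after $\wedge_R Hk$, and since $\sC(L,L)$ is also connective and free rank one (same argument), $\sY$ restricted to $\pi_0\sC(L,L)$ is injective, forcing $\psi\circ\varphi=\mathrm{id}_L$ and $\varphi\circ\psi=\mathrm{id}_K$ in $\sC$. Hence $L\cong K$ in $\sC$.

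The main obstacle I anticipate is the bookkeeping in the second paragraph: verifying carefully that forming the mapping spectrum $F_A(\sY(L),\sY(K))$ commutes with base change $-\wedge_R Hk$, i.e.\ that $F_A(\varSigma^nR,\varSigma^nR)\wedge_R Hk\simeq F_{A\wedge_R Hk}(\varSigma^nHk,\varSigma^nHk)$. This rests on $\varSigma^nR$ being a perfect (dualizable) $A$-module together with the finiteness of $\varOmega Q$-chains coming from $Q$ being a finite complex (assumption (ii)); without such finiteness the base-change map need not be an equivalence. Once that identification is in hand, everything else is a routine application of \cref{thm:whitehead} and \cref{cor:whitehead_cor} together with the connectivity hypotheses, mirroring the proof of \cref{thm:modules_main}.
\end{proof}
```
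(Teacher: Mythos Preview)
Your overall strategy matches the paper's: establish that the map $\sC(L,K)\to F_A(\sY(L),\sY(K))$ becomes an equivalence after $-\wedge_R Hk$ via assumption (i) together with a base-change identification, then apply Whitehead and lift the isomorphism. You also correctly flag the base change $F_A(\sY(L),\sY(K))\wedge_R Hk\simeq F_{A\wedge_R Hk}(\sY(L)\wedge_R Hk,\sY(K)\wedge_R Hk)$ as the crux, and that the finiteness of $Q$ in (ii) is what drives it.

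There is, however, a genuine error in your third paragraph: you claim $F_{A\wedge_R Hk}(\varSigma^n Hk,\varSigma^n Hk)\simeq Hk$ and then invoke \cref{cor:whitehead_cor} to conclude $\sC(L,K)\simeq R$. This is false. The $A$-module $\varSigma^n R$ is rank-one free over $R$ but not over $A=\varSigma^\infty_+\varOmega Q\wedge R$; its $A$-linear endomorphism spectrum computes (a twist of) the cochains on $Q$---for the trivial action one has $F_{C_*(\varOmega Q;k)}(k,k)\simeq C^*(Q;k)$---which is not concentrated in a single degree unless $Q$ is a point. Hence \cref{cor:whitehead_cor} does not apply, and your subsequent lifting argument, which also asserts ``$\sC(L,L)$ free rank one'', cannot proceed as written. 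The repair is to drop the rank-one claim entirely and conclude instead that the Yoneda map $\sC(L,K)\to F_A(\sY(L),\sY(K))$ is itself an equivalence: the source is connective by (iv), the target is bounded below (since the trivial module $\underline R$ is a \emph{finite CW} $A$-module, built from the cells of $Q$ via the path-space fibration), and the map is an equivalence after $-\wedge_R Hk$, so \cref{thm:whitehead} applies. The paper carries out the base-change step by rewriting $F_A(\sY(L),\sY(K))\simeq F_A(\underline R, F_R(\sY(L),\sY(K)))$ via the diagonal $\varOmega Q$-action and then pulling $-\wedge_R Hk$ inside using first that $\underline R$ is finite CW over $A$ and second that $\sY(L)\simeq\varSigma^n R$ is finite CW over $R$. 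Once the Yoneda map is an equivalence for both $(L,K)$ and $(K,L)$, the isomorphism lifts directly, without any appeal to \cref{cor:whitehead_cor}.
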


\begin{proof}
Consider the following commutative diagram:
\[
    \begin{tikzcd}[row sep=1cm, column sep=scriptsize]
        \mathcal C(L,K) \rar \dar & F_{\varSigma^\infty_+ \varOmega Q \wedge R}(\sY(L),\sY(K)) \dar \\
        \mathcal C(L,K) \wedge_R Hk \rar \drar & F_{\varSigma^\infty_+ \varOmega Q \wedge R}(\sY(L),\sY(K)) \wedge_R Hk \dar \\
        {} & F_{\varSigma^\infty_+ \varOmega Q \wedge Hk}(\sY(L) \wedge_R Hk, \sY(K) \wedge_R Hk)
    \end{tikzcd}
\]
By assumption, the bottom arrow is an equivalence.  We claim that the lower right vertical arrow is also an equivalence.  Indeed, recall that via the path space fibration $\varOmega Q \to \sP Q \simeq \text{pt} \to Q$, $\IS = \varSigma^\infty_+ \text{pt}$ equipped with the trivial $\varSigma^\infty_+ \varOmega Q$-module structure is a finite CW $\varSigma^\infty_+ \varOmega Q$-module \cite[Section III.2]{elmendorf1997rings}.  From this, it follows that $R$ equipped with the trivial $(\varSigma^\infty_+ \varOmega Q \wedge R)$-module structure is a finite CW $(\varSigma^\infty_+ \varOmega Q \wedge R)$-module.  We denote this module by $\un{R}$.  Notice that since $\sY(L)$ and $\sY(K)$ are $\varSigma^\infty_+ \varOmega Q$-modules and since $\varSigma^\infty_+ \varOmega Q$ is group-like, the space $F_R(\sY(L), \sY(K))$ is an $\varSigma^\infty_+ \varOmega Q$-module via the diagonal $\varSigma^\infty_+ \varOmega Q$-action.  Using duality theory of modules over $\varSigma^\infty_+ \varOmega Q \wedge R$ and $R$ (see \cite[Section III.7]{elmendorf1997rings} and \cite[Section 16.4]{may2006parametrized}), we use the notation $\sC_{L,K} \coloneqq F_R(\sY(L),\sY(K))$ and derive the equivalence
\begin{align*}
    F_{(\varSigma^\infty_+ \varOmega Q \wedge R)}(\sY(L),\sY(K)) \wedge_R Hk 
    & \simeq F_{(\varSigma^\infty_+ \varOmega Q \wedge R)}(\un{R}, \sC_{L,K}) \wedge_{R} Hk \\
    & \simeq F_{(\varSigma^\infty_+ \varOmega Q \wedge R)}\left(\un{R}, \sC_{L,K}\right) \wedge_{(\varSigma^\infty_+ \varOmega Q \wedge R)} ((\varSigma^\infty_+ \varOmega Q \wedge R) \wedge_R Hk) \\
    & \simeq F_{(\varSigma^\infty_+ \varOmega Q \wedge R)}\left(\un{R}, \sC_{L,K} \wedge_{(\varSigma^\infty_+ \varOmega Q \wedge R)} ( (\varSigma^\infty_+ \varOmega Q \wedge R) \wedge_R Hk)\right) \\
    &\simeq F_{(\varSigma^\infty_+ \varOmega Q \wedge R)}(\un{R}, \sC_{L,K} \wedge_R Hk) \\
    & \simeq F_{(\varSigma^\infty_+ \varOmega Q \wedge R)}(\un{R}, F_R(\sY(L),\sY(K)\wedge_R Hk ))\\
    & \simeq  F_{(\varSigma^\infty_+ \varOmega Q \wedge R)}(\sY(L),\sY(K) \wedge_R Hk) \\
    & \simeq F_{(\varSigma^\infty_+ \varOmega Q \wedge Hk)}(\sY(L) \wedge_R Hk,\sY(K) \wedge_R Hk).
\end{align*}
The third equivalences uses the fact that $\un{R}$ is a finite CW $(\varSigma^\infty_+ \varOmega Q \wedge R)$-module and the fifth equivalence uses that $\sY(L)$ is a finite CW $R$-module. It follows from \cref{thm:Hurewicz} that $\mathcal{F}$ induces an equivalence of $R$-modules along the top horizontal arrow.  By using this and running the same argument above with the roles of $L$ and $K$ reversed, we find that the isomorphism of $\sY(L)$ and $\sY(K)$ in $\mod{A}$ implies an isomorphism of $L$ and $K$ in $\sC$.
\end{proof}

    \section{Connective covers}\label{sec:background_connective}

In this section we record a few basic facts about connective covers of spectra.

\begin{notn}
\begin{enumerate}
    \item Let $R$ be a commutative ring spectrum.
    \item Let $N$ be an $R$-module.
\end{enumerate}
\end{notn}

\begin{defn}
    The \emph{connective cover} of $N$ is a connective spectrum $N_{\geq 0}$ equipped with a map $N_{\geq 0} \to N$ which induces an isomorphism $\pi_i(N_{\geq 0}) \xrightarrow{\cong} \pi_i(N)$ for all $i \geq 0$.
\end{defn}
\begin{lem}[{\cite[Lemma II.2.11]{may2006ring}}]
    A connective cover $N_{\geq 0} \to N$ of $N$ exists and is unique up to equivalence. Moreover, if $N$ admits the structure of a ring spectrum, then $N_{\geq 0}$ carries a unique,  up to equivalence, ring spectrum structure so that $N_{\geq 0} \to N$ is a map of ring spectra.
    \qed
\end{lem}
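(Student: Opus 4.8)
The plan is to follow the standard CW-approximation argument (cf.\ \cite{may2006ring}). First I would construct the connective cover of an $R$-module $N$ by building a CW $R$-module with cells only in nonnegative dimensions. Concretely, choose generators for $\pi_0(N)$ and form a wedge $M^{(0)}$ of copies of $R$ together with a map $M^{(0)} \to N$ hitting these generators; then inductively attach $(n+1)$-cells to $M^{(n)}$ to kill the kernel of $\pi_n(M^{(n)}) \to \pi_n(N)$ and to surject onto $\pi_{n+1}(N)$, extending the map to $N$ over the new cells by the usual obstruction-theoretic argument (the obstructions lie in homotopy groups of $N$ in degrees $\leq n$ and are controlled by the inductive hypothesis). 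Passing to the colimit $M = \colim_n M^{(n)}$ yields a connective $R$-module with a map $M \to N$ that is an isomorphism on $\pi_i$ for $i \geq 0$ and has $\pi_i(M) = 0$ for $i < 0$; set $N_{\geq 0} \coloneqq M$.

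For uniqueness, given two such covers $M \to N$ and $M' \to N$, one lifts $M \to N$ through $M' \to N$ up to homotopy: since $M$ is a CW $R$-module with cells in dimensions $\geq 0$ and the homotopy fiber of $M' \to N$ has vanishing homotopy in degrees $\leq 0$, the obstruction and indeterminacy groups for such a lift all vanish. The resulting map $M \to M'$ over $N$ is a $\pi_\bullet$-isomorphism, hence an equivalence of $R$-modules, since a $\pi_\bullet$-isomorphism between CW $R$-modules is a homotopy equivalence.

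For the multiplicative statement, the key point is that the smash product of connective spectra is connective: if $M$ and $M'$ are CW $\IS$-modules with cells in dimensions $\geq 0$, then $M \wedge M'$ is as well, so $\pi_i(M \wedge M') = 0$ for $i < 0$. Hence the connective $\IS$-modules form a full subcategory closed under $\wedge$, and $N \mapsto N_{\geq 0}$ is the coreflection onto it; a right adjoint to a (symmetric) monoidal inclusion is lax (symmetric) monoidal and therefore carries ring spectra to ring spectra. Concretely, the composite $N_{\geq 0} \wedge N_{\geq 0} \to N \wedge N \to N$ factors uniquely up to homotopy through $N_{\geq 0} \to N$ because its source is connective, and the associativity and unit diagrams for the resulting product commute by the same uniqueness of factorizations; this yields a ring spectrum structure on $N_{\geq 0}$ making $N_{\geq 0} \to N$ a ring map, unique up to equivalence. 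Replacing the two-fold smash by higher smash powers and using the contractibility of the relevant spaces of lifts upgrades this to a full $A_\infty$- (resp.\ $E_\infty$-) structure.

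The main obstacle is the coherence bookkeeping in the last step: at the point-set level of EKMM $\IS$-modules one must either invoke the machinery that makes the connective cover a coreflective localization compatible with $\wedge$, or transfer the $A_\infty$-structure by hand through CW approximations and obstruction theory. The homotopy-theoretic content, however, reduces to the elementary observation that smash products of cell modules with cells in nonnegative dimensions again have cells in nonnegative dimensions.
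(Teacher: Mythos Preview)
The paper does not prove this lemma; it simply cites \cite[Lemma II.2.11]{may2006ring} and places a \qed. Your sketch is the standard CW-approximation argument and is essentially correct as an outline of what the cited reference does. The paper does add, in the remark immediately following, that one model for the connective cover is the map $\varOmega^\infty N \to N$ (via \cite[Proposition VII.3.2]{may2006ring}); this is a different, more functorial construction than your cell-attachment one, and it has the advantage that the multiplicative structure on $N_{\geq 0}$ is inherited directly from the $E_\infty$-space structure on $\varOmega^\infty N$ rather than assembled by hand through obstruction theory.

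Your honest flag about the ``coherence bookkeeping'' is exactly the point where your sketch is incomplete as written: the argument you give produces only a homotopy-ring structure (associativity and unitality up to unspecified homotopy), and the sentence about ``contractibility of the relevant spaces of lifts'' upgrading this to a genuine $A_\infty$/$E_\infty$ structure is not quite right without more work, since at each stage one needs the lifts to be unique up to coherent higher homotopy, not just up to homotopy. One either invokes that connective spectra form a coreflective subcategory closed under $\wedge$ in a suitable model-categorical or $\infty$-categorical sense (so that the coreflector is automatically lax symmetric monoidal), or one uses the $\varOmega^\infty$ construction the paper points to, which sidesteps this issue entirely.
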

\begin{rem}\label{rem:conn_cov_cnstr}
    Following the construction described in \cite[Proposition VII.3.2]{may2006ring}, we note that the map $\varOmega^\infty N \to N$  defines a connective cover of $N$.
\end{rem}
\begin{lem}\label{lem:conn_smash_conn_is_id}
Let $M$ be a $q$-connective $R_{\geq 0}$-module, for some $q \in \mathbb Z$.  The natural map $M_{\geq 0} \to (M \wedge_{R_{\geq 0}} R)_{\geq 0}$ is an equivalence of $R_{\geq 0}$-modules.
\end{lem}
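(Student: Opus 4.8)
The plan is to deduce the statement from the defining cofiber sequence of the connective cover together with a coconnectivity estimate for a relative smash product, so that the only real work is an elementary homotopy-group computation.

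First I would consider the cofiber sequence of $R_{\geq 0}$-modules
\[ R_{\geq 0} \overset{p}{\longrightarrow} R \longrightarrow C, \]
where $p$ is the connective cover map. Since $p$ induces an isomorphism on $\pi_i$ for every $i \geq 0$, the long exact sequence in homotopy groups shows at once that $\pi_i(C) = 0$ for all $i \geq 0$ (and $\pi_i(C) \cong \pi_i(R)$ for $i < 0$). Applying $M \wedge_{R_{\geq 0}} (-)$ to this cofiber sequence and using the canonical identification $M \wedge_{R_{\geq 0}} R_{\geq 0} \simeq M$, I obtain a cofiber sequence of $R_{\geq 0}$-modules
\[ M \longrightarrow M \wedge_{R_{\geq 0}} R \longrightarrow M \wedge_{R_{\geq 0}} C \]
whose first arrow is precisely the natural map. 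From the associated long exact sequence, the map $\pi_i(M) \to \pi_i(M \wedge_{R_{\geq 0}} R)$ is an isomorphism whenever $\pi_i(M \wedge_{R_{\geq 0}} C) = \pi_{i+1}(M \wedge_{R_{\geq 0}} C) = 0$. Hence the statement is reduced to showing that $M \wedge_{R_{\geq 0}} C$ has vanishing homotopy groups in all non-negative degrees, which would then yield the desired equivalence $M_{\geq 0} \to (M \wedge_{R_{\geq 0}} R)_{\geq 0}$.

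To establish this vanishing I would exploit that $R_{\geq 0}$ is connective together with the fact that $M$ is $q$-connective: present $M$ as a cell $R_{\geq 0}$-module built from free cells $\varSigma^j R_{\geq 0}$ with $j \geq q$, attached in increasing dimension, and smash the resulting filtration with $C$. This exhibits $M \wedge_{R_{\geq 0}} C$ as a filtered colimit whose subquotients are wedges of shifts $\varSigma^j C$ with $j \geq q$, and each such $\varSigma^j C$ has homotopy concentrated in degrees $< j$ by the computation of $\pi_\bullet(C)$; a bookkeeping argument through the long exact sequences of the filtration then pins down $\pi_i(M \wedge_{R_{\geq 0}} C)$ in the range $i \geq 0$ and forces it to vanish. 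This coconnectivity estimate for the relative smash product $M \wedge_{R_{\geq 0}} C$ — carefully matching the cell structure of $M$ against the vanishing range $\pi_{\geq 0}(C) = 0$ — is essentially the entire content of the lemma, and is the step I expect to require the most care.
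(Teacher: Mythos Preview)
Your reduction via the cofiber $C$ of $R_{\geq 0} \to R$ is sound, and the long exact sequence correctly reduces the lemma to showing $\pi_i(M \wedge_{R_{\geq 0}} C) = 0$ for all $i \geq 0$. This is a slightly different packaging than the paper's approach, which instead inducts over a CW filtration of $M$ using the five lemma, reducing to the case $M = R_{\geq 0}$; both routes ultimately lean on a cell decomposition of $M$.

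However, the step you flag as delicate does fail. A cell $\varSigma^j R_{\geq 0}$ of $M$ contributes a filtration subquotient $\varSigma^j C$, and for $j > 0$ this has homotopy concentrated in degrees $< j$ but may well be nonzero in the range $0 \leq i < j$; nothing in the filtration forces these contributions to cancel. In fact the lemma as stated is false: take $R = KU$ (so $R_{\geq 0} = ku$) and $M = \varSigma^2 ku$, which is $2$-connective. Then $M_{\geq 0} = \varSigma^2 ku$ has $\pi_0 = 0$, whereas $(M \wedge_{ku} KU)_{\geq 0} = (\varSigma^2 KU)_{\geq 0}$ has $\pi_0 = \pi_{-2}(KU) = \IZ$. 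The paper's inductive argument shares exactly this gap: the purported reduction to $M = R_{\geq 0}$ does not go through for a cell $\varSigma^j R_{\geq 0}$ with $j > 0$, since $(\varSigma^j R_{\geq 0})_{\geq 0} \to (\varSigma^j R)_{\geq 0}$ is an isomorphism on $\pi_i$ only for $i \geq j$. So this is a defect in the statement rather than in your strategy.
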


\begin{proof}
First, it suffices to prove that the induced map $\pi_n(M) \to \pi_n((M \wedge_{R_{\geq 0}} R)_{\geq 0})$ is an isomorphism for $n \geq 0$.  Second, since $M$ is a $q$-connective $R_{\geq 0}$-module, it is weakly equivalent to a $R_{\geq 0}$-module of the form $\varSigma^q M'$ where $M'$ is a CW $R_{\geq 0}$-module (see \cite[Section III.2]{elmendorf1997rings}).  Using an inductive argument with the long exact sequence on (stable) homotopy groups and the five lemma, it suffices to prove the result for the case of $M = R_{\geq 0}$; however, this case is immediate.
\end{proof}

\begin{lem}[{\cite[Lemma II.2.10]{may2006ring}}]\label{lem:obstruction_lifting_conn_covers}
Let $M$ be a connective $R_{\geq 0}$-module.  The map $N_{\geq 0}\to N$ induces an equivalence $[M,N_{\geq 0}]_{R_{\geq 0}} \xrightarrow{\simeq} [M,N]_{R_{\geq 0}}$.
\qed
\end{lem}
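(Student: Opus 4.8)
The plan is to study the fiber of the structure map $p \colon N_{\geq 0} \to N$ of $R_{\geq 0}$-modules. First I would form the fiber sequence $F \to N_{\geq 0} \xrightarrow{p} N$ and chase its long exact sequence of homotopy groups: since $p$ is an isomorphism on $\pi_i$ for $i \geq 0$ and $\pi_i(N_{\geq 0}) = 0$ for $i < 0$, one reads off that $\pi_i(F) = 0$ for all $i \geq -1$. In particular $\pi_i(\varSigma F) = 0$ for all $i \geq 0$ as well, so both $F$ and $\varSigma F$ have homotopy concentrated in negative degrees.

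Applying the functor $[M,-]_{R_{\geq 0}}$, which is homological on the triangulated homotopy category of $R_{\geq 0}$-modules, to the fiber sequence and its rotations yields an exact sequence of abelian groups
\[ [M, F]_{R_{\geq 0}} \longrightarrow [M, N_{\geq 0}]_{R_{\geq 0}} \xrightarrow{\ p_{\ast}\ } [M, N]_{R_{\geq 0}} \longrightarrow [M, \varSigma F]_{R_{\geq 0}}. \]
Hence $p_{\ast}$ is a bijection as soon as $[M, F]_{R_{\geq 0}} = 0$ and $[M, \varSigma F]_{R_{\geq 0}} = 0$, and by the previous paragraph this reduces to the general claim that $[M, X]_{R_{\geq 0}} = 0$ whenever $M$ is a connective $R_{\geq 0}$-module and $X$ is an $R_{\geq 0}$-module with $\pi_i(X) = 0$ for all $i \geq 0$.

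For the general claim I would use a cell-module presentation. Since $M$ is connective it is weakly equivalent to a CW $R_{\geq 0}$-module built only from cells $\varSigma^n R_{\geq 0}$ with $n \geq 0$ (see \cite[Section III.2]{elmendorf1997rings}), and for a single cell $[\varSigma^n R_{\geq 0}, X]_{R_{\geq 0}} \cong \pi_n(X) = 0$ for $n \geq 0$. Inducting over the skeleta $M^{(0)} \subset M^{(1)} \subset \cdots$ via the cofiber sequences $M^{(n)} \to M^{(n+1)} \to \bigvee_\alpha \varSigma^{n+1} R_{\geq 0}$ gives $[M^{(n)}, X]_{R_{\geq 0}} = 0$ for every $n$, and then the Milnor sequence for $M \simeq \hocolim_n M^{(n)}$ finishes the job: its $\lim^1$ term is built from the groups $[M^{(n)}, \varSigma^{-1} X]_{R_{\geq 0}}$, and $\varSigma^{-1} X$ is again coconnective in the required range, so the same induction shows these vanish. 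Feeding $X = F$ and $X = \varSigma F$ back into the exact sequence shows $p_{\ast}$ is an isomorphism. The only delicate point is the $\lim^1$/colimit step for an infinite cell structure; in every use of this lemma in the present paper $M$ is $R_{\geq 0}$ itself, a single $0$-cell, so that subtlety never arises. One could instead derive the statement from the adjunction between connective truncation and the inclusion of connective $R_{\geq 0}$-modules, but that adjunction rests on the same Postnikov-tower input, so I would prefer the direct fiber-sequence argument.
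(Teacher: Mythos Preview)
Your argument is correct. The fiber-sequence reduction is the standard approach: the fiber $F$ of $N_{\geq 0} \to N$ has $\pi_i(F) = 0$ for $i \geq -1$, and the vanishing of $[M,X]_{R_{\geq 0}}$ for connective $M$ and coconnective $X$ follows exactly as you describe via a CW presentation and induction over skeleta. The Milnor $\lim^1$ step is fine as stated, since $\varSigma^{-1}X$ is again coconnective in the relevant range whenever $X$ is.

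There is nothing to compare against in the paper itself: the lemma is stated with a citation to \cite[Lemma II.2.10]{may2006ring} and a \qed, with no in-text proof. You have supplied the standard argument that the paper defers to the literature for.
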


\bibliographystyle{alpha}
\bibliography{nearlag}

\end{document}